\providecommand{\U}[1]{\protect\rule{.1in}{.1in}}
\theoremstyle{plain}
\newtheorem{corollary}{Corollary}
\newtheorem{definition}{Definition}
\newtheorem{lemma}{Lemma}
\newtheorem{problem}{Problem}
\newtheorem{proposition}{Proposition}
\newtheorem{remark}{Remark}
\newtheorem{theorem}{Theorem}
\numberwithin{equation}{section}
\begin{document}
\title[Minimal Capacity]{Sets of Minimal Capacity and\\Extremal Domains}
\author[H. Stahl]{Herbert R Stahl}
\curraddr{Beuth Hochschule/FB II; Luxemburger Str. 10; 13 353 Berlin; Germany}
\email{HerbertStahl@aol.com}
\thanks{The research has been supported by the grant STA 299/13-1 der DFG}
\subjclass[2000]{ Primary 30C70, 31C15; Seconary 41A21.}
\keywords{Extremal Problems, Sets of Minimal Capacity, Extremal Domains.}

\begin{abstract}
Let $f$ be a function meromorphic in a neighborhood of infinity. The central
problem in the present investigation is to find the largest domain
$D\subset\overline{\mathbb{C}}$ to which the function $f$ can be extended in a
meromorphic and single-valued manner. 'Large' means here that the complement
$\overline{\mathbb{C}}\setminus D$ is minimal with respect to (logarithmic)
capacity. Such extremal domains play an important role in Pad\'{e} approximation.

In the paper a unique existence theorem for extremal domains and their
complementary sets of minimal capacity is proved. The topological structure of
sets of minimal capacity is studied, and analytic tools for their
characterization are presented; most notable are here quadratic differentials
and a specific symmetry property of the Green function in the extremal domain.
A local condition for the minimality of the capacity is formulated and
studied. Geometric estimates for sets of minimal capacity are given.

Basic ideas are illustrated by several concrete examples, which are also used
in a discussion of the principal differences between the extremality problem
under investigation and some classical problems from geometric function theory
that possess many similarities, which for instance is the case for
Chebotarev's Problem.

\end{abstract}
\maketitle

\section{\label{s1}Introduction}

\qquad We assume that $f$ is a function meromorphic in a neighborhood of
infinity, and consider domains $D\subset\overline{\mathbb{C}}$ to which the
function $f$\ can be extended in a meromorphic and single-valued manner. The
basic problem of our investigation is to find the domain with a complement of
minimal (logarithmic) capacity. It will be shown that for any function $f$
that is meromorphic at infinity such a domain exists and is essentially
unique. The domain is called extremal, and its complement is called the
minimal set (or\ the set of minimal capacity). Formal definitions are given in
the Sections \ref{s2} and \ref{s3}.\smallskip

Extremal domains play an important role in rational approximation, and there
especially in the convergence theory of Pad\'{e} approximants (cf.
\cite{GoncharRakhmanov87}, \cite{Nuttall80}, \cite{NuttallSingh77},
\cite{Nuttall90}, \cite{Stahl86a}, \cite{Stahl86b}, \cite{Stahl87a},
\cite{Stahl89}, \cite{Stahl96}, \cite{Stahl97}, \cite{BakerGravesMorris}
Chapter 6). Variants of the concept will also be useful in other areas of
rational approximation and the theory of orthogonal polynomials.\smallskip

Several elements of the material in the present article have already been
studied in \cite{Stahl85a}, \cite{Stahl85b}, and \cite{Stahl85c}. Results from
there will be revisited, proofs will be redone, and the whole concept will be
extended and reformulated.\smallskip%
\begin{figure}
[ptb]
\begin{center}
\includegraphics[
height=2.412in,
width=3.5163in
]%
{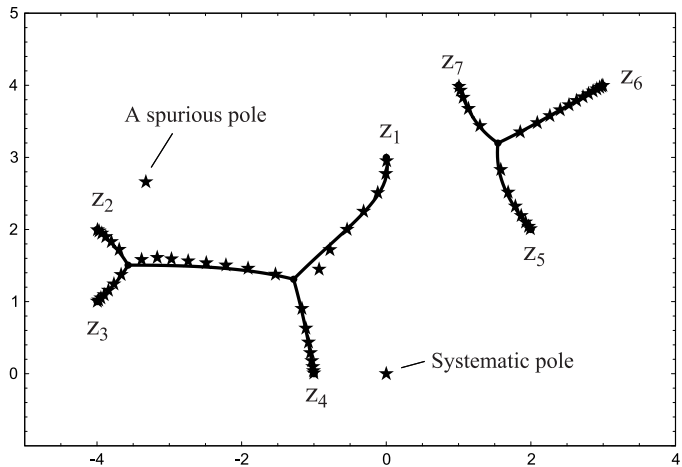}%
\caption{The poles of the Pad\'{e} approximant $[63/62]_{f}$ to the function
(\ref{f1a}) are represented by stars, and the associated minimal set
$K_{0}(f,\infty)$\ is represented by $8$ unbroken lines.}%
\label{fig0}%
\end{center}
\end{figure}

\subsection{\label{s11}A Concrete Example}

\qquad As an illustration of the role played by extremal domains in the theory
of Pad\'{e} approximation, we consider a concrete example. Let $f$ be the
algebraic function defined by
\begin{equation}
f(z):=\sqrt[4]{\prod\nolimits_{j=1}^{4}(1-z_{j}/z)}+\sqrt[3]{\prod
\nolimits_{j=5}^{7}(1-z_{j}/z)}\label{f1a}%
\end{equation}
with $7$ branch points $z_{1},\ldots,z_{7}$ that have been chosen rather
arbitrarily, but with the intention to get an evenly spread out configuration.
The seven values are given in (\ref{f65c}), further below, but their location
can readily be read from Figure \ref{fig0}.

The rather simple construction of the function $f$ makes it easy to understand
all possible meromorphic and single-valued continuations of $f$. Indeed, $f$
possesses a single-valued continuation throughout a domain $D\subset
\overline{\mathbb{C}}$ if, and only if, $\infty\in D$ and if each of the two
sets $\{z_{1},\ldots,z_{4}\}$ and $\{z_{5},z_{6},z_{7}\}$ of branch points is
connected in the complement $\overline{\mathbb{C}}\setminus D$.

The union of the $8$ arcs in Figure \ref{fig0} form the set of minimal
capacity for the function $f$, which we denote by $K_{0}(f,\infty)$, and by
$D_{0}(f,\infty):=\overline{\mathbb{C}}\setminus K_{0}(f,\infty)$ we denote
the extremal domain. Their definition and details about the calculation of the
minimal set $K_{0}(f,\infty)$ will be given in Section \ref{s2} and in the
discussion of Example \ref{e65} in Section \ref{s6}, further below.

Let $[63/62]_{f}$\ be the Pad\'{e} approximant of numerator and denominator
degree $63$\ and $62$, respectively, to the function $f$ developed at
infinity. In Figure \ref{fig0}, the poles of this approximant are represented
by stars. For any $n\in\mathbb{N}$\ the Pad\'{e} approximant $[n+1/n]_{f}=p/q$
is defined by the relation
\begin{equation}
f(z)q(\frac{1}{z})-p(\frac{1}{z})=\text{O}(z^{-2n-2})\text{ \ \ as
\ \ }z\rightarrow\infty\label{f1b}%
\end{equation}
with $p$ and $q$ polynomials of degree at most $n+1$ and $n$, respectively. An
comprehensive introduction to Pad\'{e} approximation can be found in
\cite{BakerGravesMorris}.

The connection between Pad\'{e} approximation and the minimal set
$K_{0}(f,\infty)$ will be established in the next theorem, which covers
functions of type (\ref{f1a}). It has been proved in \cite{Stahl97} (cf. also
\cite{BakerGravesMorris} Theorem 6.6.9), and is given here in a somewhat
shortened and specialized form.

\begin{theorem}
\label{t1a} For $n\rightarrow\infty$, the Pad\'{e} approximants $[n+1/n]_{f}$
converge to the function (\ref{f1a}) in capacity in the extremal domain
$D_{0}(f,\infty)\subset\overline{\mathbb{C}}$ associated with $f$, and this
convergence is optimal in the sense that it does not hold throughout any
domain $\widetilde{D}\subset\overline{\mathbb{C}}$ with $\operatorname*{cap}%
(\widetilde{D}\setminus D_{0}(f,\infty))>0$.
\end{theorem}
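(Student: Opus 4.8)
The proof splits naturally into two halves: (i) convergence in capacity throughout $D_0(f,\infty)$, and (ii) optimality. For part (i), I would invoke the general convergence theory for Padé approximants to functions with branch points — the key fact being that $K_0(f,\infty)$ carries an equilibrium-type measure (the \emph{$S$-property}, i.e.\ the symmetry of the Green function normal derivatives across $K_0$), which is precisely the analytic characterization of the minimal set announced in the abstract. Given this $S$-property, the denominators $q$ of the Padé approximants $[n+1/n]_f$ are (near-)orthogonal polynomials with respect to a varying weight supported on $K_0(f,\infty)$, and standard potential-theoretic estimates (the $n$-th root asymptotics of $q$, the localization of its zeros near $K_0$, and the comparison of $f\cdot q(1/z)$ with $p(1/z)$ on compact subsets of $D_0$) yield geometric convergence in capacity on every compact subset of $D_0(f,\infty)$. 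This is exactly the content of the cited \cite{Stahl97} (and \cite{BakerGravesMorris} Theorem 6.6.9), so for functions of type (\ref{f1a}) part (i) can be quoted essentially verbatim once the minimal set has been identified with the $S$-curve.

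For part (ii), the optimality, I would argue by contradiction: suppose convergence in capacity held throughout a domain $\widetilde D$ with $\operatorname{cap}(\widetilde D\setminus D_0(f,\infty))>0$. The set $E:=\widetilde D\setminus D_0(f,\infty)$ is then a subset of the minimal set $K_0(f,\infty)$ of positive capacity. Because $f$ is the specific algebraic function (\ref{f1a}), whose branch structure is completely transparent — $f$ extends single-valuedly across a domain exactly when each of $\{z_1,\dots,z_4\}$ and $\{z_5,z_6,z_7\}$ stays connected in the complement — the set $K_0(f,\infty)$ consists of analytic arcs along which $f$ has a genuine jump (a nonvanishing two-sided discontinuity), except at the finitely many branch points $z_1,\dots,z_7$. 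On such arcs one can write down the jump function explicitly from (\ref{f1a}). The plan is then to show that convergence in capacity of $[n+1/n]_f\to f$ on $\widetilde D$ forces the jump of $f$ to vanish on $E$ up to a set of capacity zero: indeed, if $R_n\to f$ in capacity on a full neighborhood (inside $\widetilde D$) of a point $z_0$ in the interior of an arc of $K_0$, then approaching $z_0$ from the two sides gives two limits that must both equal $f(z_0)$ in the capacity sense, contradicting the nonvanishing jump. Since $f$'s jump is nonzero off the branch points, $E$ must have capacity zero, a contradiction.

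The genuinely delicate step is the implication ``convergence in capacity on both sides of an arc $\Rightarrow$ the jump vanishes (q.e.).'' Convergence in capacity is weaker than uniform or even pointwise convergence, so one cannot simply evaluate limits at $z_0$; instead I would use that a sequence converging in capacity has a subsequence converging pointwise quasi-everywhere, together with the fact that the exceptional sets of capacity zero cannot ``fill up'' a one-sided neighborhood of an arc of positive capacity. More carefully: let $R_n=[n+1/n]_f$. On the one hand $R_n$ is meromorphic in $\widetilde D$ with its poles not accumulating anywhere with positive capacity off the limit; on the other hand, the two boundary values $f_+$ and $f_-$ along the arc differ by a nonzero analytic function. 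Testing $R_n$ against a small disk centered at an interior arc point and split by the arc, and using that the spherical distance $|R_n-f|$ is small outside a set of small capacity on \emph{both} halves of the disk, one derives that $f_+$ and $f_-$ agree off a capacity-zero set — impossible. The remaining issue, ruling out the pathological case where $\widetilde D\setminus D_0$ meets $K_0$ only in the finite branch-point set, is handled by noting that a finite set has capacity zero, so such an $E$ could not have positive capacity in the first place. Assembling these pieces completes the optimality argument and hence the theorem. $\blacksquare$
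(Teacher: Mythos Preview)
The paper does not actually prove Theorem~\ref{t1a}. It is stated in the Introduction purely as motivation, with the sentence immediately preceding it reading: ``It has been proved in \cite{Stahl97} (cf.\ also \cite{BakerGravesMorris} Theorem 6.6.9), and is given here in a somewhat shortened and specialized form.'' No argument for it appears anywhere in Sections~\ref{s9}--\ref{s110}; the paper's own contribution is the existence, uniqueness, and structure theory of $K_0(f,\infty)$ (Theorems~\ref{t22a}, \ref{t41a}, \ref{t51a}, etc.), which serve as \emph{input} to the cited convergence theorem rather than as steps in its proof.

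Your sketch is therefore not to be compared against a proof in this paper but against the argument in \cite{Stahl97}. With that in mind: your plan for part~(i) is on target --- the $S$-property of $K_0(f,\infty)$ (Theorem~\ref{t51a} here) is precisely the hypothesis that drives the $n$-th-root asymptotics and zero distribution of the Pad\'e denominators in \cite{Stahl97}, and for algebraic $f$ of the form~(\ref{f1a}) the orthogonality relation you allude to is exactly what is exploited there. For part~(ii), your jump-discontinuity idea is also the right mechanism, and your identification of the delicate step (passing from capacity convergence on two-sided neighborhoods to q.e.\ agreement of boundary values) is honest. In \cite{Stahl97} the optimality is obtained somewhat differently: one shows that the normalized zero-counting measures of the denominators converge weakly to the equilibrium measure $\omega_{K_0}$, so the poles of $[n+1/n]_f$ cluster on all of $K_0(f,\infty)$ with full equilibrium mass; convergence in capacity on any $\widetilde D$ with $\operatorname{cap}(\widetilde D\setminus D_0)>0$ would then be incompatible with this pole accumulation. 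Your boundary-value argument can be made rigorous, but the pole-distribution route is cleaner and avoids the subsequence-plus-q.e.\ juggling you flagged as delicate.
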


Theorem \ref{t1a} shows that extremal domains are convergence domains for
Pad\'{e} approximants, and this is also the case for our concrete example. In
Figure \ref{fig0} we observe that $61$\ out of $63$ poles of the Pad\'{e}
approximant $[63/62]_{f}$ are distributed very nicely along the $8$ arcs that
form the minimal set $K_{0}(f,\infty)=\overline{\mathbb{C}}\setminus
D_{0}(f,\infty)$. They are asymptotically distributed in accordance to the
equilibrium distribution on the minimal set $K_{0}(f,\infty)$ (cf.
\cite{Stahl97}, Theorem 1.8), and they mark the places, where we don't have convergence.

There are two poles that step out of line, and each one by a different reason:
One of them lies close to the origin, where it approximates the simple pole of
the function $f$ at the origin. Because of its correspondence to a pole of
$f$, it is called systematic.

The other one, which lies at $z=-3.35+2.66\,i$, does not correspond to a
singularity of the function $f$, and does obviously also not belong to any of
the chains of poles along the arcs in $K_{0}(f,\infty)$. Such poles are called
spurious in the theory of Pad\'{e} approximation. Spurious poles always appear
in combination with a nearby zero of the approximant. These pairs of poles and
zeros are close to cancellation. They are a phenomenon that unfortunately
cannot be ignored in Pad\'{e} approximation (cf. \cite{Stahl98},
\cite{Stahl97a}, or \cite{BakerGravesMorris} Chapter 6). Convergence in
capacity is compatible with the possibility of such spurious poles.

The convergence in capacity in Theorem \ref{t1a} implies that almost all poles
of the Pad\'{e} approximants $[n+1/n]_{f}$ have to leave the extremal domain
$D_{0}(f,\infty)$; they cluster on the minimal set $K_{0}(f,\infty)$. That
they do this in a rather regular way is shown in Figure \ref{fig0}. The
picture does not change much for other values of $n$ only that the location,
and possibly also the number of spurious poles may be different in each
case.\smallskip

If one wants to summarize the somewhat complicated convergence theory for
diagonal Pad\'{e} approximants in a short sentence one can say that extremal
domains are for Pad\'{e} approximants what discs are for power series.

\subsection{\label{s12}The Outline of the Manuscript}

\qquad In the next two Sections \ref{s2} and \ref{s3}, two alternative formal
definitions are given for the extremality problem under investigation. In the
second approach, the role of the function $f$ is taken over by a concrete
Riemann surface $\mathcal{R}$ over $\overline{\mathbb{C}}$. Both formulations
are equivalent.\smallskip

Illustrative examples are discussed in Section \ref{s6}, but before that in
the two Sections \ref{s4} and \ref{s5}, general results about minimal sets and
extremal domains are formulated and discussed. All proofs are postponed to
later sections.\smallskip

In Section \ref{s7}, a local version of the extremality problem is formulated
and discussed. After that in Section \ref{s8}, the extremality problem is
compared with some classical problems from geometric functions theory. For
such problems there exists a broad range of tools and techniques, as for
instance, boundary and inner variational methods, methods of extremal length,
and techniques connected with quadratic differentials (cf. \cite{Pommerenke75}%
, \cite{Goluzin}, \cite{Kuzmina82}, \cite{Kuzmina97}). Some of these ideas
will play a role in our investigation. We shall use a solution of one of these
problems as building block in one of our proofs.\smallskip

Practically, no proofs are given in the Sections \ref{s2} - \ref{s5} and
\ref{s7}; they all are all postponed to the Sections \ref{s9} and \ref{s10}.
In Section \ref{s110}, several auxiliary results from potential theory and
geometric function theory are assembled, of which some have been modified
quite substantially in order to fit their purpose in the present
paper.\smallskip

\subsection{\label{s13}Some Special Aspects}

\qquad It is a typical feature of the approach chosen in the present article
that a general existence and uniqueness proof is put at the beginning of the
analysis. This strategy has the advantage of giving great methodological
liberty in later proofs of special properties. At these later stages, the
knowledge of unique existence offers a free choice between different methods
and techniques from the tool boxes of geometric function theory; and because
of the uniqueness it is always clear that one is dealing with the same well
defined object. The prize to be paid for this strategy is a rather abstract
and somewhat heavy machinery for the uniqueness proof. The main tools there
are potential-theoretic in nature.\smallskip

It has been mentioned, and hopefully also illustrated by the introductory
example (\ref{f1a}), that extremality with respect to the logarithmic capacity
arises in a very natural way in connection with diagonal Pad\'{e}
approximants. In rational approximation also other types of capacity are of
interest, as for instance, condenser capacity or capacities in external
fields, which become relevant in connection with rational interpolants (cf.
\cite{Stahl96a}) or with essentially non-diagonal Pad\'{e} approximants. The
specific form of tools and methods in the present analysis should be helpful
for such potential generalizations.\medskip

\section{\label{s2}Basic Definitions and Unique Existence}

\qquad In the present section we introduce basic definitions and formulate a
theorem about the unique existence of a solution of the extremality problem.

Throughout the whole paper, we assume that $f$ is a function meromorphic in a
neighborhood of infinity, and denote its meromorphic extensions by the same
symbol $f$. By $\operatorname*{cap}(\cdot)$ we denote the (logarithmic) capacity.

\subsection{\label{s21}The Definition of Problem $(f,\infty)$}

\begin{definition}
\label{d21a}A domain $D\subset\overline{\mathbb{C}}$ is called admissible for
Problem $(f,\infty)$ if

\begin{itemize}
\item[(i)] $\infty\in D$, and if

\item[(ii)] $f$ has a single-valued meromorphic continuation throughout $D$.
\end{itemize}

By $\mathcal{D}(f,\infty)$ we denote the set of all admissible domains $D$ for
Problem $(f,\infty)$. A compact set $K\subset\mathbb{C}$\ is called admissible
for Problem $(f,\infty)$ if it is the complement $\overline{\mathbb{C}%
}\setminus D$ of an admissible domain $D\in\mathcal{D}(f,\infty)$. By
$\mathcal{K}(f,\infty)$ we denote the set of all admissible compact sets $K$
for Problem $(f,\infty)$.\smallskip
\end{definition}

Instead of meromorphic continuations, one could also consider analytic
continuations in condition (ii) of Definition \ref{d21a} without essentially
changing the whole concept. This later option has been taken in
\cite{Stahl85a}, \cite{Stahl85b}, and \cite{Stahl85c}. Meromorphic
continuations have been chosen here because of their natural affiliation with
rational approximation.\smallskip

\begin{definition}
\label{d21b}A compact set $K_{0}=K_{0}(f,\infty)\subset\mathbb{C}$ is called
minimal (or more lengthy: a set of minimal capacity with respect to Problem
$(f,\infty)$) if the following three conditions are satisfied:

\begin{itemize}
\item[(i)] $K_{0}\in\mathcal{K}(f,\infty)$.

\item[(ii)] We have
\begin{equation}
\operatorname*{cap}(K_{0})=\inf_{K\in\mathcal{K}(f,\infty)}\operatorname*{cap}%
(K).\label{f21a}%
\end{equation}

\item[(iii)] We have $K_{0}\subset K_{1}$ for all $K_{1}\in\mathcal{K}%
(f,\infty)$\ that satisfy condition (ii) with $K_{0}$\ replaced by $K_{1}$.
\end{itemize}

The domain $D_{0}(f,\infty):=\overline{\mathbb{C}}\setminus K_{0}(f,\infty) $
is called extremal with respect to Problem $(f,\infty)$ (or short: extremal domain).

By $\mathcal{K}_{0}(f,\infty)$ we denote\ the set of all admissible compact
sets $K$ of minimal capacity, i.e., all sets $K\in\mathcal{K}(f,\infty)$\ that
satisfy condition (ii), but not necessarily condition (iii), and by
$\mathcal{D}_{0}(f,\infty)$ the set of all admissible domains $D\in
\mathcal{D}(f,\infty)$ such that $\overline{\mathbb{C}}\setminus
D\in\mathcal{K}_{0}(f,\infty)$.
\end{definition}

With the introduction of the set of admissible domains $\mathcal{D}(f,\infty)$
and the definition of the extremal domain $D_{0}(f,\infty)$ together with its
complementary minimal set $K_{0}(f,\infty)$, Problem $(f,\infty)$ is fully
defined. The problem depends solely on the function $f$ given in neighborhood
of infinity.\smallskip

The point infinity plays a very special role for the function $f$ and also in
the definition of the (logarithmic) capacity, which is reflected in condition
(i) of Definition \ref{d21a}. This special role is the reason why the symbol
$\infty$ has been used besides of $f$ for the designation of Problem
$(f,\infty)$.\smallskip

\subsection{\label{s22}Unique Existence}

\qquad One of the central results in the present paper is the following
existence and uniqueness theorem.\smallskip

\begin{theorem}
[Unique Existence Theorem]\label{t22a} For any function $f$, which is
mero\-morphic in a neighborhood of infinity, there uniquely exists a minimal
set $K_{0}(f,\infty)$ and correspondingly a unique extremal Domain
$D_{0}(f,\infty)$ with respect to Problem $(f,\infty)$.\smallskip
\end{theorem}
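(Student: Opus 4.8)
The plan is to prove existence and uniqueness separately, with the uniqueness part resting on the strict subadditivity behaviour of logarithmic capacity under intersections of admissible compacta. For \textbf{existence}, I would first observe that $\mathcal{K}(f,\infty)$ is nonempty (any sufficiently small circle around a singularity, or rather its complement's complement, supplies an admissible compact set, since $f$ is single-valued and meromorphic near infinity) and that the infimum $c_0:=\inf_{K\in\mathcal{K}(f,\infty)}\operatorname{cap}(K)$ is finite and nonnegative. Take a minimizing sequence $K_n\in\mathcal{K}(f,\infty)$ with $\operatorname{cap}(K_n)\to c_0$; after passing to a subsequence one obtains a limit $K_\infty$ in the Hausdorff metric (all $K_n$ may be assumed to lie in a fixed large disc, since continuations of $f$ through a neighborhood of infinity can always be arranged, and adding a large circle only decreases relevant quantities). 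The two points that need care are: (a) upper semicontinuity of capacity under Hausdorff limits is \emph{false} in general, so one must instead use lower semicontinuity of the energy / the fact that $\operatorname{cap}$ is upper semicontinuous only on decreasing sequences — here I would appeal to the potential-theoretic lemmas promised in Section~\ref{s110} (monotone/Hausdorff continuity of capacity for nested families, and a Gonchar–Rakhmanov-type principle of domination) to extract a limiting admissible set of capacity $\le c_0$; (b) admissibility of the limit, i.e. that $f$ still continues single-valuedly through $\overline{\mathbb{C}}\setminus K_\infty$ — this is the monodromy argument: a closed curve in the complement of $K_\infty$ is, by compactness, already contained in the complement of $K_n$ for large $n$, hence $f$ has trivial monodromy around it.

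For \textbf{uniqueness}, the heart of the matter is the following claim: if $K_1,K_2\in\mathcal{K}_0(f,\infty)$ (both minimizing), then $K_1\cap K_2\in\mathcal{K}(f,\infty)$ as well, and $\operatorname{cap}(K_1\cap K_2)\le c_0$, with equality forcing $K_1$ and $K_2$ to agree up to a set of capacity zero. The admissibility of the intersection is again a monodromy argument: $D_1\cup D_2$ (the union of the two extremal domains) need not be simply the right object, but a loop in $\overline{\mathbb{C}}\setminus(K_1\cap K_2)=D_1\cup D_2$ can be decomposed, via a partition of unity / Mayer–Vietoris-type argument on $\pi_1$, into loops each lying in $D_1$ or in $D_2$, around which $f$ has trivial monodromy; hence $f$ continues through $D_1\cup D_2$. (One must be slightly careful because $D_1\cup D_2$ need not be connected as drawn — take the connected component containing $\infty$; its complement is still admissible and contained in both $K_1$ and $K_2$.) Given admissibility of the intersection, minimality of $c_0$ gives $\operatorname{cap}(K_1\cap K_2)\ge c_0$, while the strong subadditivity inequality for capacity — $\operatorname{cap}(K_1\cap K_2)+\operatorname{cap}(K_1\cup K_2)\le \operatorname{cap}(K_1)+\operatorname{cap}(K_2)$, or the sharper energy version — combined with $\operatorname{cap}(K_1),\operatorname{cap}(K_2)=c_0$ forces $\operatorname{cap}(K_1\cap K_2)=c_0$ and pins down the equality case.

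From the claim, the set $K_0:=\bigcap\{K : K\in\mathcal{K}_0(f,\infty)\}$ (or more carefully, the intersection over a suitable countable cofinal subfamily, using that capacity is determined up to polar sets and a Lindelöf-type reduction) is itself admissible and of minimal capacity, hence lies in $\mathcal{K}_0(f,\infty)$ and is contained in every member of $\mathcal{K}_0(f,\infty)$ — which is exactly condition (iii) of Definition~\ref{d21b}. Thus $K_0=K_0(f,\infty)$ exists, and it is unique by construction, whence $D_0(f,\infty)=\overline{\mathbb{C}}\setminus K_0(f,\infty)$ is the unique extremal domain.

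The main obstacle, I expect, is making the two ``soft'' steps genuinely rigorous: first, the compactness/semicontinuity step in the existence proof — because $\operatorname{cap}(\cdot)$ is not Hausdorff-continuous, one cannot simply pass to the limit, and one needs a carefully chosen topology or an exhaustion/balayage argument (this is presumably where the ``rather abstract and somewhat heavy machinery'' alluded to in Section~\ref{s13} enters); and second, the monodromy-stability-under-intersection step, where the subtlety is that $\mathcal{K}(f,\infty)$ is closed under taking intersections of admissible sets \emph{only after passing to the component of $\infty$ in the complementary open set}, and one must verify that discarding the other components does not lose admissibility and can only decrease capacity. The strong subadditivity of logarithmic capacity (or the convexity of the energy functional on measures) is standard and I would cite it from Section~\ref{s110} rather than reprove it; likewise the equality case is a strict-convexity statement for the energy that identifies the two equilibrium measures, hence the two sets up to polar subsets.
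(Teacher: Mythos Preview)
Your existence outline is in the right spirit (minimizing sequence, potential-theoretic limit), and you correctly flag that Hausdorff convergence alone does not control capacity; the paper indeed works not with a Hausdorff limit of the $K_n$ but with the weak limit of their equilibrium measures, defining the candidate set as the zero locus of the limiting Green-type function and then invoking Carath\'eodory kernel convergence to verify admissibility. That part of your sketch is incomplete but not wrong-headed.

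The uniqueness argument, however, has a genuine gap. Your key step is that for two minimizing compacta $K_1,K_2$ the intersection $K_1\cap K_2$ is again admissible, i.e.\ that $f$ continues single-valuedly through $D_1\cup D_2$, and you justify this by a van Kampen / Mayer--Vietoris decomposition of loops. This fails because $D_1\cap D_2$ is in general \emph{not} connected: on components of $D_1\cap D_2$ not containing $\infty$, the continuations $f_1$ (into $D_1$) and $f_2$ (into $D_2$) may be \emph{different branches} of $f$, so the two single-valued extensions do not glue. A loop that runs from the $\infty$-component through $D_1$ into another component of $D_1\cap D_2$ and back through $D_2$ is exactly a loop that is not a product of loops lying in $D_1$ or in $D_2$, and along it the monodromy can be nontrivial. (Take $f(z)=\sqrt{1-1/z^2}$, $K_1$ and $K_2$ the upper and lower unit semicircles: $K_1\cap K_2=\{-1,1\}$ is not admissible.) Your appeal to strong subadditivity of logarithmic capacity is also problematic: $\operatorname{cap}$ does not satisfy $\operatorname{cap}(K_1\cap K_2)+\operatorname{cap}(K_1\cup K_2)\le\operatorname{cap}(K_1)+\operatorname{cap}(K_2)$, and no ``energy version'' gives this directly either.

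The paper's route is quite different and worth comparing. Given two minimizers $K_1,K_2$ differing on a set of positive capacity, it does \emph{not} take $K_1\cap K_2$; instead it builds a new competitor $K_0'$ by slicing $K_3:=\widehat{K_1\cup K_2}$ along the level set $S_0=\overline{\{g_1=g_2\}}$ of the two Green functions and retaining, roughly, $K_1$ where $g_1>g_2$ and $K_2$ where $g_2>g_1$ (Definition~\ref{d93a}). Admissibility of this hybrid is checked directly (Lemma~\ref{l93c}); the strict inequality $\operatorname{cap}(K_0')<c_0$ comes from a computation with the auxiliary functions $h_0=\tfrac12(g_1+g_2)$ on the outside and $\tfrac12|g_1-g_2|$ on the inside (Lemmas~\ref{l93d}--\ref{l93e}), ultimately using positive-definiteness of both the logarithmic and the Green kernels. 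Only \emph{after} this establishes that any two minimizers differ by a polar set does the paper show that arbitrary intersections of minimizers remain admissible (Lemma~\ref{l94a}); your order of argument is reversed, and the reversal is where it breaks.
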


Among the three conditions in Definition \ref{d21b}, condition (ii) is most
important, and condition (iii) plays only an auxiliary role. The situation
becomes evident by the next proposition.\smallskip

\begin{proposition}
\label{p22a} Elements of the set $\mathcal{K}_{0}(f,\infty)$\ differ at most
in a set of capacity zero, and we have
\begin{equation}
K_{0}(f,\infty)=\bigcap_{K\in\mathcal{K}_{0}(f,\infty)}K.\label{f22a}%
\end{equation}

\end{proposition}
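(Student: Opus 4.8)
The plan is to establish the two assertions — that any two minimal sets differ only in a set of capacity zero, and that $K_0(f,\infty)$ is the intersection of all of them — by exploiting the behaviour of logarithmic capacity under intersections together with the admissibility structure of Problem $(f,\infty)$. The crucial structural fact I would want is that $\mathcal{K}(f,\infty)$, and hence $\mathcal{K}_0(f,\infty)$, is closed under the operation $K_1,K_2\mapsto K_1\cap K_2$ up to a set of capacity zero; more precisely, if $K_1,K_2\in\mathcal{K}_0(f,\infty)$ then there is $K_3\in\mathcal{K}(f,\infty)$ with $K_3\subset K_1\cap K_2$ (and the reverse containment holding outside a polar set). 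Geometrically this is because the intersection of two extremal domains $D_1=\overline{\mathbb{C}}\setminus K_1$ and $D_2=\overline{\mathbb{C}}\setminus K_2$ is again a domain containing $\infty$ through which $f$ continues single-valuedly (agreement of the two continuations on the component of $\infty$ is automatic since both extend the germ of $f$ at $\infty$), so the component of $\infty$ in $D_1\cap D_2$ is admissible and its complement $K_3$ is an admissible compact set contained in $K_1\cap K_2$.

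Granting this, the first step is the strong subadditivity (or rather the submodularity) inequality for capacity: for compact sets one has, via the corresponding inequality for equilibrium potentials and energies,
\begin{equation}
\operatorname*{cap}(K_1\cup K_2)\,\operatorname*{cap}(K_1\cap K_2)\;\le\;\operatorname*{cap}(K_1)\,\operatorname*{cap}(K_2),\label{f22b}
\end{equation}
or the additive logarithmic-energy version thereof; I would invoke the appropriate form from the potential-theory results collected in Section \ref{s110}. Taking $K_1,K_2\in\mathcal{K}_0(f,\infty)$ so that both have capacity equal to the infimum $c:=\inf_{K\in\mathcal{K}(f,\infty)}\operatorname*{cap}(K)$, and noting $K_3\subset K_1\cap K_2\subset K_1\cup K_2$ with $K_3,K_1\cup K_2\in\mathcal{K}(f,\infty)$ so that $\operatorname*{cap}(K_3)\ge c$ and $\operatorname*{cap}(K_1\cup K_2)\ge c$, inequality \eqref{f22b} forces $\operatorname*{cap}(K_1\cap K_2)=\operatorname*{cap}(K_1\cup K_2)=c$. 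Since $\operatorname*{cap}(K_1\setminus K_2)\le\operatorname*{cap}(K_1\cup K_2)-\operatorname*{cap}(K_1\cap K_2)$ fails in general, I would instead argue directly: equality of capacities of the nested pair $K_1\cap K_2\subset K_1$ together with the strict monotonicity of capacity modulo polar sets (again from Section \ref{s110}) yields that $K_1\setminus(K_1\cap K_2)=K_1\setminus K_2$ is polar, and symmetrically $K_2\setminus K_1$ is polar. Hence any two elements of $\mathcal{K}_0(f,\infty)$ differ only in a set of capacity zero.

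For the identity \eqref{f22a}, set $K_\cap:=\bigcap_{K\in\mathcal{K}_0(f,\infty)}K$. By the Unique Existence Theorem \ref{t22a}, $K_0(f,\infty)$ exists and belongs to $\mathcal{K}_0(f,\infty)$, and by condition (iii) of Definition \ref{d21b} it is contained in every element of $\mathcal{K}_0(f,\infty)$, so $K_0(f,\infty)\subset K_\cap$. For the reverse inclusion I would show $K_\cap\in\mathcal{K}_0(f,\infty)$: first, by the previous paragraph each $K\in\mathcal{K}_0(f,\infty)$ satisfies $K\setminus K_0(f,\infty)$ polar, hence $K_\cap$ and $K_0(f,\infty)$ differ by a polar set, so $\operatorname*{cap}(K_\cap)=c$; second, one must check $K_\cap\in\mathcal{K}(f,\infty)$, i.e. that $\overline{\mathbb{C}}\setminus K_\cap$ is a domain through which $f$ continues single-valuedly. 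This is where I expect the main obstacle to lie, since $K_\cap$ is a (possibly uncountable) intersection and a priori its complement need not be connected nor even open-enough for the continuation to glue — one must use that removing the polar difference sets does not create new obstructions to single-valued continuation (a polar set is removable for bounded, and more to the point for single-valued meromorphic, continuation), so that $f$ continues through $\overline{\mathbb{C}}\setminus K_0(f,\infty)$ union the polar exceptional sets, which contains $\overline{\mathbb{C}}\setminus K_\cap$. Once $K_\cap\in\mathcal{K}_0(f,\infty)$ is established, condition (iii) applied with $K_1=K_\cap$ gives $K_0(f,\infty)\subset K_\cap$ — already known — and minimality of $K_\cap$ among sets satisfying (ii) combined with $K_\cap\subset K_0(f,\infty)$ (being an intersection over a family containing $K_0(f,\infty)$) yields $K_\cap=K_0(f,\infty)$, completing the proof.
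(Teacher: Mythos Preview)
Your approach has several genuine gaps.

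First, the set-theoretic claim underlying the whole strategy is wrong. You write that the component of $\infty$ in $D_1\cap D_2$ is admissible and that ``its complement $K_3$ is an admissible compact set contained in $K_1\cap K_2$.'' But the complement of $D_1\cap D_2$ is $K_1\cup K_2$, not $K_1\cap K_2$; the complement of the $\infty$-component of $D_1\cap D_2$ is the polynomial-convex hull $\widehat{K_1\cup K_2}$, which \emph{contains} $K_1\cup K_2$. What you would actually need is that $D_1\cup D_2$ (or something close to it) is admissible, i.e.\ that the two continuations $f_1$ on $D_1$ and $f_2$ on $D_2$ glue to a single-valued function. They do agree on the $\infty$-component of $D_1\cap D_2$, but $D_1\cap D_2$ may have other components on which $f_1$ and $f_2$ represent different branches; this is exactly the obstacle, and it does not disappear by invoking agreement of germs at $\infty$.

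Second, even if one had $\operatorname*{cap}(K_1\cap K_2)=\operatorname*{cap}(K_1)$ for the nested pair $K_1\cap K_2\subset K_1$, this does \emph{not} imply that $K_1\setminus K_2$ is polar. Logarithmic capacity only sees the outer boundary (Lemma~\ref{l111d}: $\operatorname*{cap}(K)=\operatorname*{cap}(\widehat{K})$), so for instance $\operatorname*{cap}(\partial\mathbb{D})=\operatorname*{cap}(\overline{\mathbb{D}})$ while the difference is the open disk. There is no ``strict monotonicity modulo polar sets'' in Section~\ref{s110} to invoke. Relatedly, the multiplicative submodularity inequality you write down is not among the potential-theoretic tools assembled in Section~\ref{s110} and is not a standard property of logarithmic capacity.

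The paper's route is quite different and substantially heavier. For the first assertion it argues by contradiction (Proposition~\ref{p93a}): assuming $K_1,K_2\in\mathcal{K}_0(f,\infty)$ differ on a set of positive capacity, it constructs via the Green functions $g_1,g_2$ a specific compact set $K_0$ (Definition~\ref{d93a}, a level-set ``convex combination'' of $K_1$ and $K_2$), proves by a delicate potential-theoretic analysis that its complement is admissible (Lemma~\ref{l93c}) and that $\operatorname*{cap}(K_0)<c_0$ (Lemma~\ref{l93e}), contradicting minimality. The identity~\eqref{f22a} then comes from Lemma~\ref{l94a}, which shows directly that the intersection is admissible via Proposition~\ref{p91a} and the $\Gamma_1$-curve criterion. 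The admissibility of such combined sets is precisely the hard step that your outline bypasses.
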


The concept of extremal domains is most interesting if the function $f$ has
branch points. In the absence of branch points, the concept becomes in a
certain sense trivial, as the next proposition shows.\smallskip

\begin{proposition}
\label{p22b} If the function $f$ of Problem $(f,\infty)$ possesses no branch
poi\-nts, then the extremal domain $D_{0}(f,\infty)$ coincides with the
Weierstrass\ domain $W_{f}\allowbreak\subset\overline{\mathbb{C}}$ for
meromorphic continuation of the function $f$ starting at $\infty$.\smallskip
\end{proposition}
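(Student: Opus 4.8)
The plan is to show that when $f$ has no branch points, its maximal single-valued meromorphic continuation from $\infty$ is already a domain of the type described in Definition~\ref{d21a}, and that nothing can be removed from its complement without violating admissibility, so that it satisfies all three conditions of Definition~\ref{d21b}. First I would recall that the Weierstrass domain $W_f$ is, by definition, the union of all domains containing $\infty$ through which a branch of $f$ extended from $\infty$ continues meromorphically and single-valuedly; when $f$ has no branch points, the monodromy theorem (in the form valid for meromorphic continuation) guarantees that the continuation along any two homotopic paths in $W_f$ agrees, and in fact that any loop in $\overline{\mathbb{C}}\setminus E$, where $E$ is the (at most countable, discrete in $W_f$) set of poles reached, produces the identity monodromy. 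Hence $f$ has a genuine single-valued meromorphic continuation throughout all of $W_f$, i.e.\ $W_f\in\mathcal{D}(f,\infty)$ and $\overline{\mathbb{C}}\setminus W_f\in\mathcal{K}(f,\infty)$.

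Next I would prove the reverse maximality: every admissible domain $D\in\mathcal{D}(f,\infty)$ satisfies $D\subseteq W_f$. Indeed, if $D$ is admissible then $\infty\in D$ and $f$ extends single-valuedly and meromorphically through $D$; restricting that extension to any point $z\in D$ and connecting $z$ to $\infty$ by a path in $D$ exhibits $z$ as a point reachable by single-valued meromorphic continuation from $\infty$, so $z\in W_f$ by definition of the Weierstrass domain. Therefore $W_f$ is the (unique) largest admissible domain, which immediately gives $\overline{\mathbb{C}}\setminus W_f\subseteq \overline{\mathbb{C}}\setminus D$ for every admissible $K=\overline{\mathbb{C}}\setminus D$. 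In particular $\operatorname*{cap}(\overline{\mathbb{C}}\setminus W_f)\le\operatorname*{cap}(K)$ for all $K\in\mathcal{K}(f,\infty)$, so $K:=\overline{\mathbb{C}}\setminus W_f$ realizes the infimum in \eqref{f21a}, verifying conditions (i) and (ii) of Definition~\ref{d21b}; and since this $K$ is contained in every admissible set, it is a fortiori contained in every $K_1\in\mathcal{K}_0(f,\infty)$ satisfying (ii), which is condition (iii). By the uniqueness part of Theorem~\ref{t22a} (or directly by Proposition~\ref{p22a}), $K_0(f,\infty)=\overline{\mathbb{C}}\setminus W_f$ and hence $D_0(f,\infty)=W_f$.

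The main obstacle is the first step: making precise that the \emph{absence of branch points} forces the Weierstrass domain to carry a globally single-valued extension, rather than just a multivalued germ with a large domain of continuation. One must be careful about what ``no branch points'' means for a function defined only near $\infty$ and continued as far as possible — the natural reading is that along no closed path does the analytic (meromorphic) continuation return to a different branch, equivalently that the Riemann surface $\mathcal{R}$ of Section~\ref{s3} is a (branched-point-free, hence unbranched) cover of $\overline{\mathbb{C}}$ that is single-sheeted over $W_f$. Once that identification is in place the argument is essentially the monodromy theorem together with the trivial maximality observation above; the potential subtlety of poles accumulating at $\partial W_f$ is harmless because condition (ii) of Definition~\ref{d21a} only asks for a meromorphic, not holomorphic, continuation.
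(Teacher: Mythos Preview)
Your proposal is correct and follows essentially the same logic as the paper's proof, though packaged differently. The paper argues via its machinery of Jordan curves from Definitions~\ref{d91a}--\ref{d91b} and Proposition~\ref{p91a}: the hypothesis ``no branch points'' is read as $\Gamma_{1}=\emptyset$, so condition~(ii) of Proposition~\ref{p91a} is vacuous, and admissibility reduces to condition~(i) alone, i.e., to meromorphic continuability; the largest such domain is then identified with $W_{f}$. You bypass that apparatus and work straight from Definition~\ref{d21a} and the monodromy theorem, showing directly that $W_{f}$ is admissible and maximal among admissible domains, hence its complement satisfies (i)--(iii) of Definition~\ref{d21b}. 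Both routes hinge on the same observation---that without branch points single-valuedness is automatic, so admissibility coincides with meromorphic extendability---and yours has the advantage of being self-contained, while the paper's version illustrates how the general $\Gamma$-framework specializes in the trivial case.
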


In Section \ref{s6} we shall discuss several concrete examples of functions
$f$ together with their extremal domains $D_{0}(f,\infty)$ and minimal sets
$K_{0}(f,\infty)$. These examples should give more substance to the formal
definitions in the present section.\smallskip

Several classical extremality problems from geometric function theory that are
defined by purely geometric constraints are reviewed in Section \ref{s8}.
There exist similarities with Problem $(f,\infty)$, but there are also
essential differences. The intention of the selection of examples in Section
\ref{s6} has been to illustrate these differences.\smallskip

\section{\label{s3}An Alternative Definition}

\qquad In the present section a definition of the extremality problem is given
that is equivalent to Problem $(f,\infty)$, but the role of the function $f$
is taken over by a Riemann surface $\mathcal{R}$. Of course,
single-valuedness, or better its absence, lies at the heart of the idea of a
Riemann surface, and so the alternative approach may shed light on the
geometric background of Problem $(f,\infty)$. Since in all later sections,
with the only exception of Subsection \ref{s42}, only Problem $(f,\infty)$
will be used as reference point, the alternative definition in the present
section can be skipped in a first reading.\smallskip

Let $\mathcal{R}$ be a Riemann surface over $\overline{\mathbb{C}}$, not
necessarily unbounded, and let $\pi:\mathcal{R}\longrightarrow\overline
{\mathbb{C}}$ be its canonical projection. We assume that $\infty\in
\pi(\mathcal{R})$.

\subsection{\label{s31}The Definition of Problem $(\mathcal{R},\infty^{(0)})
$}

\begin{definition}
\label{d31a} Let $\infty^{(0)}\ $be a point on the Riemann
surface$\ \mathcal{R}$ with $\pi(\infty^{(0)})=\infty$. Then a domain
$D\subset\mathcal{R}$ is called admissible for Problem $(\mathcal{R}%
,\infty^{(0)})$ if the following two conditions are satisfied:

\begin{itemize}
\item[(i)] $\infty^{(0)}\in D$.

\item[(ii)] The domain $D$ is planar (also called schlicht), i.e., $\pi
\mid_{D}$ is univalent, or in other words, we have $\operatorname*{card}%
\left(  (\pi^{-1}\circ\pi)(\{\zeta\})\cap D\right)  =1$ for all $\zeta\in D$.
\end{itemize}

By $\mathcal{D}(\mathcal{R},\infty^{(0)})$ we denote the set of all admissible
domains $D\subset\mathcal{R}$ for Problem $(\mathcal{R},\infty^{(0)}%
)$.\smallskip
\end{definition}

\begin{definition}
\label{d31b} A compact set $K\subset\mathbb{C}$ is admissible for Problem
$(\mathcal{R},\infty^{(0)})$ if it is of the form $K:=\overline{\mathbb{C}%
}\setminus\pi(D)$ with $D\in\mathcal{D}(\mathcal{R},\infty^{(0)})$.

By $\mathcal{K}(\mathcal{R},\infty^{(0)})$ we denote the set of all admissible
compact sets $K\subset\mathbb{C}$ for Problem $(\mathcal{R},\infty^{(0)}%
)$.\smallskip
\end{definition}

Notice that in contrast to admissible domains $D\in\mathcal{D}(f,\infty)$, now
admissible domains $D\in\mathcal{D}(\mathcal{R},\infty^{(0)})$ are subdomains
of the Riemann surface $\mathcal{R}$, while the admissible compact sets
$K\in\mathcal{K}(\mathcal{R},\infty^{(0)})$ remain to be subsets of
$\mathbb{C}$ like it has been the case in Definition \ref{d21a}.

Analogously to Definition \ref{d21b},\ we define the minimal set and the
extremal domain for Problem $(\mathcal{R},\infty^{(0)})$ as follows.\smallskip

\begin{definition}
\label{d31c} A compact set $K_{0}=K_{0}(\mathcal{R},\infty^{(0)}%
)\subset\overline{\mathbb{C}}$ is called minimal with respect to Problem
$(\mathcal{R},\infty^{(0)})$ if the following three conditions are satisfied:

\begin{itemize}
\item[(i)] $K_{0}\in\mathcal{K}(\mathcal{R},\infty^{(0)})$.

\item[(ii)] We have
\begin{align}
\operatorname*{cap}(K_{0})  & =\inf_{K\in\mathcal{K}(\mathcal{R},\infty
^{(0)})}\operatorname*{cap}(K)\label{f31a1}\\
& =\inf_{D\in\mathcal{D}(\mathcal{R},\infty^{(0)})}\operatorname*{cap}%
(\overline{\mathbb{C}}\setminus\pi(D)).\label{f31a2}%
\end{align}

\item[(iii)] We have $K_{0}\subset K_{1}$ for all $K_{1}\in\mathcal{K}%
(\mathcal{R},\infty^{(0)})$\ that satisfy assertion (ii) with $K_{0}%
$\ replaced by $K_{1}$.
\end{itemize}

A domain $D_{0}\in\mathcal{D}(\mathcal{R},\infty^{(0)})$ that satisfies
$\overline{\mathbb{C}}\setminus\pi(D_{0})=K_{0}(\mathcal{R},\infty^{(0)})$ is
called extremal with respect to Problem $(\mathcal{R},\allowbreak\infty
^{(0)})$, and it is denoted by $D_{0}(\mathcal{R},\infty^{(0)})$.

By $\mathcal{K}_{0}(\mathcal{R},\infty^{(0)})$ we denote\ the set of all
compact sets $K$ that satisfy the two conditions (i) and (ii), but not
necessarily condition (iii).\smallskip
\end{definition}

\subsection{\label{s32}Unique Existence and Equivalence}

\qquad For any Riemann surface $\mathcal{R}$ over $\overline{\mathbb{C}}$,
there exists a meromorphic function $f$ such that $\mathcal{R}=\mathcal{R}%
_{f}$ is the natural domain of definition of $f$. On the other hand, the
meromorphic continuation of a given function $f$, which is meromorphic in a
neighborhood of infinity, defines a Riemann surface $\mathcal{R}_{f}$ over
$\overline{\mathbb{C}}$ that contains a point $\infty^{(0)}\in\mathcal{R}_{f}$
with $\pi(\infty^{(0)})=\infty$, and this surface $\mathcal{R}_{f}$ is the
natural domain of definition for the function $f$.\ From these observations we
can conclude that the two Problems $(f,\infty)$ and $(\mathcal{R}_{f}%
,\infty^{(0)})$ are equivalent.

It is an immediate consequence of the equivalence of both problems that the
existence and uniqueness of a solution to Problem $(f,\infty)$ formulated in
Theorem \ref{t22a} carries over to Problem $(\mathcal{R},\infty^{(0)})$.
Details are formulated in the next theorem.\smallskip

\begin{theorem}
\label{t32a} (i) For any Riemann surface $\mathcal{R}$ over $\overline
{\mathbb{C}}$\ with $\infty^{(0)}\in\mathcal{R}$ and $\pi(\infty^{(0)}%
)=\infty$, there uniquely exists a minimal set $K_{0}=K_{0}(\mathcal{R}%
,\allowbreak\infty^{(0)})\subset\mathbb{C}$ for Problem $(\mathcal{R}%
,\infty^{(0)})$, and correspondingly, there also uniquely exists an extremal
domain $D_{0}=D_{0}(\mathcal{R},\allowbreak\infty^{(0)})\allowbreak
\subset\mathcal{R}$.

(ii) Let the Riemann surface $\mathcal{R}=\mathcal{R}_{f}$ be the natural
domain of definition for the function $f$, and let $f$\ be assumed to be
meromorphic in a neighborhood of infinity.\ Then the two extremal domains
$D_{0}(f,\infty)$\ and $D_{0}(\mathcal{R}_{f},\infty^{(0)})$\ of the
Definitions \ref{d21b} and \ref{d31c}, respectively, are identical up to the
canonical projection $\pi:\mathcal{R}_{f}\longrightarrow\overline{\mathbb{C}}%
$, i.e., we have
\begin{equation}
D_{0}(f,\infty)=\pi\left(  D_{0}(\mathcal{R}_{f},\infty^{(0)})\right)
.\label{f32a}%
\end{equation}
Further, we have
\begin{equation}
K_{0}(f,\infty)=K_{0}(\mathcal{R}_{f},\infty^{(0)}).\label{f32b}%
\end{equation}

\end{theorem}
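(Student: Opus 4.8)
The plan is to reduce everything to Theorem \ref{t22a} by means of a bijective correspondence between the admissible domains of the two problems. Fix a meromorphic germ $f$ at $\infty$, let $\mathcal{R}_f$ be its natural domain of definition, and let $\infty^{(0)}\in\mathcal{R}_f$ be the point carrying that germ (so $\infty^{(0)}$ is a regular point of $\pi$; exactly as in the function version, this non‑branching at the base point is understood also for a general $\mathcal{R}$). The first, and essentially only substantial, step is to establish the correspondence. Given $D\in\mathcal{D}(f,\infty)$, let $s_D$ send each $z\in D$ to the germ at $z$ of the single‑valued meromorphic continuation of $f$ throughout $D$; by the very definition of the natural domain of definition this is a section of $\pi$ over $D$ with $s_D(\infty)=\infty^{(0)}$, and I would check that $\widetilde D:=s_D(D)$ is a subdomain of $\mathcal{R}_f$ containing $\infty^{(0)}$ on which $\pi$ is univalent (every point of $s_D(D)$ is a regular point of $\pi$, and $\pi$, being locally injective there with local inverse $s_D$, is injective on all of $s_D(D)$), so that $\widetilde D\in\mathcal{D}(\mathcal{R}_f,\infty^{(0)})$ with $\pi(\widetilde D)=D$. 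Conversely, given $\widetilde D\in\mathcal{D}(\mathcal{R}_f,\infty^{(0)})$, the set $D:=\pi(\widetilde D)$ is a domain containing $\infty$, and composing the canonical meromorphic function of $\mathcal{R}_f$ with $(\pi\mid_{\widetilde D})^{-1}$ yields a single‑valued meromorphic continuation of $f$ throughout $D$ that extends the original germ (because $\infty^{(0)}\in\widetilde D$), so $D\in\mathcal{D}(f,\infty)$. That $D\mapsto s_D(D)$ and $\widetilde D\mapsto\pi(\widetilde D)$ are mutually inverse then follows by unwinding what the points of $\mathcal{R}_f$ are.

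Granting this, I would first note that under the correspondence $\overline{\mathbb{C}}\setminus D=\overline{\mathbb{C}}\setminus\pi(\widetilde D)$, so that $\mathcal{K}(f,\infty)=\mathcal{K}(\mathcal{R}_f,\infty^{(0)})$ as families of compact subsets of $\mathbb{C}$, and that by Definition \ref{d31b} the right‑hand sides of \eqref{f31a1}, \eqref{f31a2} and \eqref{f21a} all coincide. Hence Definitions \ref{d21b} and \ref{d31c} impose literally the same three conditions on the same family of sets, giving $\mathcal{K}_0(f,\infty)=\mathcal{K}_0(\mathcal{R}_f,\infty^{(0)})$, and whenever either minimal set exists the two are equal. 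Since Theorem \ref{t22a} guarantees that $K_0(f,\infty)$ exists and is unique, this yields simultaneously the unique existence of $K_0(\mathcal{R}_f,\infty^{(0)})$ and the identity \eqref{f32b}. For the extremal domain, Definition \ref{d31c} asks for a $\widetilde D_0\in\mathcal{D}(\mathcal{R}_f,\infty^{(0)})$ with $\pi(\widetilde D_0)=\overline{\mathbb{C}}\setminus K_0=D_0(f,\infty)$; by the bijection such $\widetilde D_0$ exists and is unique, namely $\widetilde D_0=s_{D_0}(D_0(f,\infty))$, and applying $\pi$ gives \eqref{f32a}. This proves part (ii), and part (i) in the special case $\mathcal{R}=\mathcal{R}_f$.

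For part (i) with an arbitrary Riemann surface $\mathcal{R}$ over $\overline{\mathbb{C}}$ carrying a point $\infty^{(0)}$ over $\infty$, I would invoke the observation already recorded in the text that $\mathcal{R}$ is the natural domain of definition $\mathcal{R}_g$ of some meromorphic function $g$, chosen so that $\infty^{(0)}$ is the base point over $\infty$; localizing $g$ through $\pi$ near $\infty^{(0)}$ makes it meromorphic in a neighborhood of $\infty$ in $\mathbb{C}$, as Theorem \ref{t22a} requires. Then Problem $(\mathcal{R},\infty^{(0)})$ is Problem $(\mathcal{R}_g,\infty^{(0)})$, and the previous paragraph applied to $g$ delivers the unique minimal set and the unique extremal domain.

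The one point that needs care is the correspondence of the first paragraph, namely that ``single‑valued meromorphic continuation of $f$ throughout $D$'' and ``planar subdomain of $\mathcal{R}_f$ lying over $D$ and containing $\infty^{(0)}$'' are two descriptions of the same object and that the matching is one‑to‑one. I expect this to be routine but slightly tedious: it amounts to tracking the distinguished point $\infty^{(0)}$ and recalling that the points of $\mathcal{R}_f$ are by construction the germs of the complete meromorphic function generated by $f$ at $\infty$, with meromorphy causing no trouble once $f$ is viewed as a holomorphic map into $\overline{\mathbb{C}}$. Everything else is a formal consequence of this correspondence together with Theorem \ref{t22a}.
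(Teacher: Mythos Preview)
Your proposal is correct and follows essentially the same approach as the paper: establish the correspondence $\mathcal{D}(f,\infty)\leftrightarrow\mathcal{D}(\mathcal{R}_f,\infty^{(0)})$ via $\pi$ (and its section), deduce $\mathcal{K}(f,\infty)=\mathcal{K}(\mathcal{R}_f,\infty^{(0)})$, apply Theorem~\ref{t22a}, and handle a general $\mathcal{R}$ by realizing it as $\mathcal{R}_g$ for some meromorphic $g$. The paper's proof is terser---it simply asserts the correspondence as immediate from Definitions~\ref{d21a} and~\ref{d31a}---whereas you spell out the section $s_D$ and the bijectivity explicitly, which in particular makes the uniqueness of $D_0(\mathcal{R}_f,\infty^{(0)})$ over $D_0(f,\infty)$ transparent.
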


\begin{proof}
We assume that the function $f$ has the Riemann surface $\mathcal{R}%
=\mathcal{R}_{f}$ as its natural domain of definition and that the function
element of $f$ at the point $\infty^{(0)}\in\mathcal{R}_{f}$, $\pi
(\infty^{(0)})=\infty$, is identical with the function $f$ at $\infty
\in\overline{\mathbb{C}}$.

It immediately follows from the two Definitions \ref{d21a} and \ref{d31a} that
for each domain $\widetilde{D}\in\mathcal{D}(\mathcal{R}_{f},\infty^{(0)})$ we
have $\pi(\widetilde{D})\in\mathcal{D}(f,\infty)$, and conversely, for each
domain $D\in\mathcal{D}(f,\infty)$ there exists an admissible domain
$\widetilde{D}\in\mathcal{D}(\mathcal{R}_{f},\infty^{(0)})$ with
$\pi(\widetilde{D})=D$.

After these preparations, the theorem is an immediate consequence of the
correspondence between the two sets $\mathcal{D}(\mathcal{R}_{f},\infty
^{(0)})$ and $\mathcal{D}(f,\infty)$ together with the two Definitions
\ref{d21b}, \ref{d31c}, and Theorem \ref{t22a}.\medskip
\end{proof}

The equivalence of the two Problems $(f,\infty)$ and $(\mathcal{R}_{f}%
,\infty^{(0)})$ allows us to opt freely for one of the two approaches. In the
present investigation we carry out the analysis in the framework of Problem
$(f,\infty)$. However, in applications it is sometimes favorable to start from
a Riemann surface $\mathcal{R}$. This approach will also give the intuitive
background for the discussion of concrete examples in Section \ref{s6}%
.\medskip

\section{\label{s4}Topological Properties}

\qquad Extremal problems in geometric function theory often lead to
topologically simply structured and smooth solutions. In the next two sections
it will be shown that a similar situation can be observed in our present investigations.

In Subsection \ref{s41} we address topological properties of the minimal set
$K_{0}(f,\allowbreak\infty)$, and corresponding results for the minimal set
$K_{0}(\mathcal{R},\allowbreak\infty^{(0)})$ associated with Problem
$(\mathcal{R},\infty^{(0)})$ are given in Subsection \ref{s42}.\smallskip

\subsection{\label{s41}Topological Properties of the Set $K_{0}(f,\infty)$}

\qquad The main result in the present section is a structure theorem for the
minimal set $K_{0}(f,\infty)$. As usual, the function $f$ is assumed to be
meromorphic in a neighborhood of infinity.\smallskip

\begin{theorem}
[Structure Theorem]\label{t41a} Let the function $f$ be meromorphic in a
neighborhood of infinity, and let $K_{0}=K_{0}(f,\infty)$\ be the minimal set
for Problem $(f,\infty)$. There exist two sets $E_{0},E_{1}\subset\mathbb{C}%
$\ and a family $\left\{  J_{j}\right\}  _{j\in I}$ of open and analytic
Jordan arcs such that
\begin{equation}
K_{0}(f,\infty)=E_{0}\cup E_{1}\cup\bigcup_{j\in I}J_{j},\label{f41a}%
\end{equation}
and the components in (\ref{f41a}) have the following properties:

\begin{itemize}
\item[(i)] We have $\partial E_{0}\subset\partial D_{0}(f,\infty)$, and at
each point $z\in\partial E_{0}$ the meromorphic continuation of the function
$f$\ has a non-polar singularity for at least one approach out of $D_{0}%
=D_{0}(f,\infty)$. The set $E_{0}\subset K_{0}$ is compact and
polynomial-convex, i.e., $\overline{\mathbb{C}}\setminus E_{0}$ is connected.

\item[(ii)] At each point $z\in E_{1}$ the function $f$\ has meromorphic
continuations\ out of $D_{0}$ from all possible sides, and these continuations
lead to more than $2$\ different function elements at the point $z$. The set
$E_{1}$ is discrete in $\overline{\mathbb{C}}\setminus E_{0}$.

\item[(iii)] All Jordan arcs $J_{j}$, $j\in I$, are contained in
$\overline{\mathbb{C}}\setminus(E_{0}\cup E_{1})$, they are pair-wise
disjoint, the function $f$ has meromorphic continuations to each point $z\in
J_{j}$, $j\in I$, from both sides of $J_{j}$ out of $D_{0}$, and these
continuations lead to $2$\ different function elements at each point $z\in
J_{j}$, $j\in I$.
\end{itemize}

The properties (i), (ii), and (iii) fully characterize all components on the
right-hand side of (\ref{f41a}).\smallskip
\end{theorem}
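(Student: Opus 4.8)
The plan is to build the decomposition $K_0 = E_0 \cup E_1 \cup \bigcup_j J_j$ from the local analytic behavior of the meromorphic continuations of $f$ across $\partial D_0$, and then to use the minimality of $\operatorname{cap}(K_0)$ (via the obstruction-to-shrinking it provides) to show that nothing else can occur. First I would classify every point $z \in K_0$ by the \emph{number of germs} of $f$ reachable by meromorphic continuation along paths in $D_0$ terminating at $z$: define $E_0$ to be the set of $z$ at which at least one such approach produces a non-polar (essential or branch-type) singularity; on $K_0 \setminus E_0$ the continuation is locally finitely-valued and meromorphic, so set $E_1$ to be the points of $K_0 \setminus E_0$ where three or more distinct germs meet, and let the remaining points fill out the arcs. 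The key structural input is that $K_0$ cannot be shrunk without leaving $\mathcal{K}(f,\infty)$ or raising capacity: by Proposition \ref{p22a} and Definition \ref{d21b}(iii), $K_0$ is the smallest set of minimal capacity, so any relatively open subset $U \subset K_0$ such that $f$ extends single-valuedly across $U$ from $D_0$ would allow us to enlarge $D_0$ to $D_0 \cup U$, and the resulting smaller compact set would still be admissible — contradicting minimality once we check $\operatorname{cap}$ does not increase, which is automatic since we only removed points. Hence at \emph{every} point of $K_0 \setminus E_0$ at least two distinct germs of $f$ must be present; this is what forces $E_1$ to be discrete in $\overline{\mathbb{C}} \setminus E_0$ and forces the two-germ locus to be (locally) a one-dimensional analytic set.

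The next block of steps establishes the regularity asserted in (i)–(iii). For (iii): near a point $z_0$ with exactly two germs $f_1 \neq f_2$ reaching it from $D_0$, the set where these two germs are "glued" is locally the zero set of the holomorphic function $f_1 - f_2$ (or of a difference of branches on the relevant sheets), so it is an analytic variety of dimension $\leq 1$; the minimality argument rules out isolated points and rules out two-dimensional pieces, leaving an analytic Jordan arc $J_j$, and the germ count is exactly $2$ by construction. Pairwise disjointness and the exclusion of $E_0 \cup E_1$ are immediate from the definitions. For (ii): $E_1$ is precisely the branch/triple points of this analytic structure together with crossing points of the arcs; in a neighborhood of such a point the $\geq 3$ germs define a nontrivial algebroid configuration, and a standard local argument (the "no removable arcs" principle again, applied sheet by sheet) shows $E_1$ has no accumulation point away from $E_0$, i.e. is discrete in $\overline{\mathbb{C}} \setminus E_0$. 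For (i): at $z \in \partial E_0$ the continuation has a non-polar singularity from some side, so by definition such points lie on $\partial D_0(f,\infty)$; polynomial convexity of $E_0$ (equivalently, connectedness of $\overline{\mathbb{C}} \setminus E_0$) follows because any bounded complementary component $V$ of $E_0$ would be a domain into which $f$ continues meromorphically and single-valuedly — one must show $V$ can be annexed to $D_0$ without increasing capacity, and filling in $V$ can only decrease capacity, again contradicting minimality. Here one uses that $\infty \in D_0$ so the unbounded component is the one containing $\infty$.

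The final clause — that (i)–(iii) \emph{characterize} the three pieces — is a matter of observing that the classification by germ-count is exhaustive and mutually exclusive: $z \in K_0$ lies in $E_0$ iff some approach yields a non-polar singularity; otherwise it lies in $E_1$ iff $\geq 3$ germs meet; otherwise (exactly $2$ germs, by minimality it cannot be fewer) it lies on some $J_j$. The main obstacle I expect is the rigorous form of the shrinking/minimality argument: one must verify carefully that when a relatively open analytic-free or $2$-dimensional piece of $K_0$ is removed and adjoined to $D_0$, the complement remains an \emph{admissible} compact set (single-valuedness of $f$ across the annexed set, and that the new domain is still a domain containing $\infty$) — this requires a monodromy/identity-theorem argument over the enlarged domain — and that the capacity genuinely does not increase, which needs the monotonicity $\operatorname{cap}(K') \leq \operatorname{cap}(K)$ for $K' \subset K$ plus the strict decrease when a nondegenerate piece is removed, so that condition (iii) (minimality of $K_0$ among sets of minimal capacity) is actually violated. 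The analytic-variety structure of the two-germ locus (reducing to zero sets of $f_i - f_j$ after passing to appropriate local sheets, including at algebraic branch points where $f$ has a Puiseux expansion) is technically delicate but routine; I would import whatever local normal forms are needed from the auxiliary results assembled in Section \ref{s110}.
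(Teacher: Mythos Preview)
Your proposal contains a fundamental misconception about what determines the arcs $J_j$. You write that near a two-germ point $z_0$ ``the set where these two germs are `glued' is locally the zero set of the holomorphic function $f_1 - f_2$.'' This is false: on the arcs $J_j$ the two continuations $f_1,f_2$ are \emph{different} (that is exactly the content of assertion (iii)), so $f_1 - f_2$ does not vanish there. In the simplest example $f(z)=\sqrt{z^2-1}$ the minimal set is $[-1,1]$, while the two branches $\pm\sqrt{z^2-1}$ agree only at $\pm 1$. The location of the cuts is \emph{not} dictated by any equation in $f$; it is a free choice subject only to the topological constraint that the cuts separate the branches, and among all such choices the capacity minimization singles out one. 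Consequently the analyticity of the $J_j$, and the discreteness of $E_1$, cannot come from zero-sets of branch differences or local algebroid normal forms; they come from the \emph{Green function} $g_{D_0}(\cdot,\infty)$ of the extremal domain, via the $S$-property (Theorem~\ref{t51a}) and the associated quadratic differential $q=(2\partial_z g_{D_0})^2$ (Theorem~\ref{t52a}). The set $E_1$ is discrete because it is the zero set of the analytic function $q$ in $\overline{\mathbb{C}}\setminus E_0$.

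The paper's proof proceeds along entirely different lines: it first establishes the Structure Theorem for \emph{algebraic} $f$ (Theorem~\ref{t101b}), where $E_0$ is finite and the connection to the hyperbolic Chebotarev problem (Theorem~\ref{t101a}) yields the rational quadratic differential directly. For general $f$, it approximates $E_0$ from outside by sets $E_n$ with smooth boundaries, builds auxiliary algebraic functions $f_{nm}$ with branch points dense in $\partial E_n$, and passes to the limit in the functions $q_{nm}=(2\partial_z g_{D_{nm}})^2$ using a uniform estimate (Proposition~\ref{p102d}, inequality (\ref{f102a7})) that bounds $|q_{nm}(z)|$ in terms of $\operatorname{dist}(z,E_n)$ and $\operatorname{cap}(K_{nm})$ alone. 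Montel's theorem then produces a limit $\widetilde q$ analytic off $E_0$, and a comparison argument (Proposition~\ref{p102c}) forces the limit set $\widetilde K$ to coincide with $K_0(f,\infty)$. Your ``no shrinking'' observation is correct and is indeed used (it is the elementary maximality of $D_0$), but it only tells you that at least two germs meet on $K_0\setminus E_0$; it gives no handle on the shape of that locus.
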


\begin{remark}
\label{r41a}The family of Jordan arcs $\left\{  J_{j}\right\}  _{j\in I}$ and
also the set $E_{1}$\ in (\ref{f41a})\ is empty if, and only if, all possible
meromorphic continuations of the function $f$ are single-valued, i.e., if the
function $f$\ has no branch points. This situation has already been addressed
in Proposition \ref{p22b}.\smallskip
\end{remark}

It follows from Theorem \ref{t41a} that the boundary $\partial D_{0}%
(f,\infty)$ is smooth everywhere on $\partial D_{0}(f,\infty)\setminus
(\partial E_{0}\cup E_{1})$. More information about this aspect is given in
the next theorem.\smallskip

\begin{theorem}
\label{t41b} The set $K_{0}(f,\infty)\setminus E_{0}$ is locally connected,
and only a finite number ($>2$) of arcs $J_{j}$, $j\in I$, meets at each point
of the set $E_{1}$.\smallskip
\end{theorem}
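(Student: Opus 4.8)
The plan is to analyze the local structure of $K_0 = K_0(f,\infty)$ at an arbitrary point $z_0$, using the decomposition from Theorem \ref{t41a}, and to show that the only possible local pictures are: (a) a point of $E_0$; (b) an interior point of a single arc $J_j$; or (c) a point of $E_1$ where finitely many arcs meet. Once (c) is established with finitely many arcs, local connectedness of $K_0 \setminus E_0$ follows routinely, since a point of $E_1$ together with the arc-ends entering it has arbitrarily small connected neighborhoods (a ``star'' of arc-pieces), and a point on the interior of an arc has small connected neighborhoods that are subarcs.

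First I would fix $z_0 \in K_0 \setminus E_0$ and consider a small disk $B = B(z_0,\varepsilon)$ disjoint from $E_0$ (possible since $E_0$ is compact). By Theorem \ref{t41a}(ii)-(iii), $K_0 \cap B$ consists of arcs $J_j$ and the discrete set $E_1$; if $\varepsilon$ is small, $E_1 \cap B \subset \{z_0\}$. The key structural input is that $D_0 = \overline{\mathbb C}\setminus K_0$ is an extremal domain, so its complement is minimal for capacity; this minimality must be used to prevent pathological accumulation of arcs. The crucial step is to show that only \emph{finitely} many of the arcs $J_j$ can meet $B$. I would argue by contradiction: if infinitely many arc-pieces accumulated in $B$, one could produce a competing admissible compact set by a local surgery (removing a thin sliver of the accumulating family and reconnecting along a shorter path, or capping off), strictly decreasing capacity --- contradicting minimality. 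The cleanest route is probably to invoke the $S$-property / symmetry property of the Green function $g_{D_0}(\cdot,\infty)$ on the arcs (mentioned in the abstract and presumably proved earlier in the full paper via quadratic differentials): on each $J_j$ the normal derivatives of $g_{D_0}$ from the two sides agree, so the arcs are trajectories of a quadratic differential $-\,(\text{something})^2\,(dz)^2$ holomorphic off $E_0 \cup E_1$. A quadratic differential has only finitely many critical trajectories emanating from each point, and in particular its trajectory structure is locally finite away from its zeros and poles; the points of $E_1$ are exactly the zeros, which are isolated with finite order, and at a zero of order $m$ exactly $m+2$ trajectory arcs emerge. This gives both the finiteness and the ``$>2$'' count at each point of $E_1$ in one stroke.

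The main obstacle I expect is making the local-surgery or quadratic-differential argument rigorous enough to rule out infinitely many arcs accumulating \emph{without} any limiting arc --- i.e.\ a genuinely wild Cantor-like accumulation of arc-pieces in $B$. Pointwise this is where one truly needs minimality of capacity: an admissible $K$ cannot contain a ``fat'' or wildly oscillating piece, because its equilibrium potential would be strictly larger than necessary. If the quadratic-differential characterization of the arcs is already available from the earlier sections, this obstacle dissolves, since the local trajectory structure of a meromorphic quadratic differential is classical (see the references to Jenkins, Kuzmina, Pommerenke in Section \ref{s8}): away from zeros and poles trajectories foliate smoothly (so $z_0 \notin E_1$ is an interior point of exactly one $J_j$), and at a zero of order $m \ge 1$ there are exactly $m+2 \ge 3$ trajectory rays --- which is precisely the set $E_1$ and the asserted finite count. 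Finally, local connectedness of $K_0 \setminus E_0$ is immediate: at an interior arc point take the subarc inside a small disk; at a point of $E_1$ take the (finite) union of the incoming arc-germs, which is a compact connected ``spider'' shrinking to the point. I would close by noting the count is $>2$ (not merely $\ge 1$) at $E_1$-points because by Theorem \ref{t41a}(ii) such points carry more than two distinct function elements, forcing at least three sheets to be separated there and hence at least three arcs of $K_0$ to meet.
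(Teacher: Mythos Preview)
Your proposal is correct, and for the finiteness of arcs meeting at each point of $E_1$ it matches the paper's proof exactly: both invoke the quadratic differential $q(z)\,dz^2$ (established inside the proof of Theorem~\ref{t41a} as $\widetilde{q}$, formula~\eqref{f103b8}), observe that $E_1$ consists precisely of the zeros of the analytic function $q$ in $\overline{\mathbb{C}}\setminus E_0$, and then read off the finite star of $m+2\geq 3$ arc-ends from the classical local trajectory structure at a zero of order $m$ (Lemma~\ref{l115a}). For local connectedness, however, you take a different route from the paper. You argue directly from the local trajectory picture---a single arc through a regular point of $q$, a finite spider at a zero---to produce arbitrarily small connected neighborhoods in $K_0\setminus E_0$. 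The paper instead notes that analyticity of $q$ in $\overline{\mathbb{C}}\setminus E_0$ forces the Green function $g_{D_0}(\cdot,\infty)$ to be continuous there, and then appeals to Carath\'eodory's theorem (Theorem~9.8 in \cite{Pommerenke75}) that continuity of the Green function up to the boundary is equivalent to local connectedness of that boundary. Your argument is more hands-on and self-contained; the paper's is a one-liner once Carath\'eodory is available, and has the side benefit of making the link to Green-function regularity explicit. One small slip: you write that $q$ is ``holomorphic off $E_0\cup E_1$,'' but in fact $q$ is holomorphic off $E_0$ alone---the points of $E_1$ are zeros, not singularities, as you correctly state in the next clause.
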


In the next section (cf. Remark \ref{r52b}), we shall see that the arcs
$J_{j}$ that meet at a point $z\in E_{1}$ form a regular star at $z$.\medskip

Before we close the present subsection, we will discuss the two influences
that determine the structure of the minimal set $K_{0}(f,\infty)$ in an
informal way.\smallskip

The principle of minimal capacity of the set $K_{0}(f,\infty)$ implies that
the extremal domain $D_{0}(f,\infty)$ is as large as possible, and
consequently it extends up to the natural boundary of the function $f$ (see
also Definition \ref{d71a0} in Subsection \ref{s71}, further below). On the
other hand, the requirement of single-valuedness of the function $f$ in
$D_{0}(f,\infty)$ can in general only be avoided by cuts in the complex plane
$\overline{\mathbb{C}}$; these cuts separate different branches of the
function $f$.

Both aspects, maximal extension and the principle of single-valuedness, find a
specific balance in the topological structure of the minimal set
$K_{0}(f,\infty)$. On one hand, there is the compact subset $E_{0}\subset
K_{0}(f,\infty)$, where on $\partial E_{0}$ meromorphic extensions of the
function $f$ find a natural boundary. On the other hand, there is the part
$K_{0}(f,\infty)\setminus E_{0}$ of $K_{0}(f,\infty)$, which essentially
consists of analytic Jordan arcs $J_{j}$, $j\in I$, which cut $\overline
{\mathbb{C}}\diagdown E_{0}$ in such a way that different branches of the
function $f$ are separated. They can be chosen with much liberty, and
therefore optimization is possible. This optimization is done according to the
principle of minimal capacity. We shall see in Section \ref{s5}, and more
specifically in Section \ref{s7}, how a balance between forces leads to a
state of equilibrium that determines the Jordan arcs $J_{j}$, $j\in
I$.\smallskip

\subsection{\label{s42}Topological Properties of the Set $K_{0}(\mathcal{R}%
,\infty^{(0)})$}

\qquad From Theorem \ref{t32a} we know that the two Problems $(f,\infty)$ and
$(\mathcal{R},\infty^{(0)})$ have equivalent solutions if there exists an
appropriate relationship between the Riemann surface $\mathcal{R}$\ and the
function $f$. As a consequence of this equivalence, there exists a description
of the topological properties of the set $K_{0}(\mathcal{R},\infty^{(0)})$
that corresponds to that given in Theorem \ref{t41a}. However, now the
function $f$ is no longer available, and its role has to be taken over by
properties of the Riemann surface $\mathcal{R}$.\smallskip

Let $\mathcal{R}$ be a Riemann surface over $\overline{\mathbb{C}}$. By
$\partial D$ and $\overline{D}$ we denote the boundary and the closure of a
domain $D\subset\mathcal{R}$ in $\mathcal{R}$. Further, we denote the set of
all branch points of $\mathcal{R}$ by $Br(\mathcal{R})\subset\mathcal{R}$, and
the relative boundary of the Riemann surface $\mathcal{R}$ over $\overline
{\mathbb{C}}$\ by $\partial\mathcal{R}$. We set $\widetilde{\mathcal{R}%
}:=\mathcal{R}\cup\partial\mathcal{R}$. If the Riemann surface $\mathcal{R}%
$\ is compact, then we have $\partial\mathcal{R=\emptyset}$.

The canonical projection $\pi:\mathcal{R}\longrightarrow\overline{\mathbb{C}}$
can be extended continuously to a projection $\widetilde{\pi}:\widetilde
{\mathcal{R}}\longrightarrow\overline{\mathbb{C}}$. We continue to denote the
boundary of a domain $D$ in$\ \widetilde{\mathcal{R}}$ by the same symbol
$\partial D$ as has been done in $\mathcal{R}$.

After this preparations, we are ready to formulate the analog of Theorem
\ref{t41a} for Problem $(\mathcal{R},\infty^{(0)})$.\smallskip

\begin{theorem}
\label{t42a} Let $D_{0}=D_{0}(\mathcal{R},\infty^{(0)})\subset\mathcal{R}$ and
$K_{0}=K_{0}(\mathcal{R},\infty^{(0)})\subset\overline{\mathbb{C}}$\ be the
uniquely existing extremal domain and minimal set, respectively, for Problem
$(\mathcal{R},\infty^{(0)})$. Like in Theorem \ref{t41a}, there exist two sets
$E_{0},E_{1}\subset\mathbb{C}$\ and a family $\left\{  J_{j}\right\}  _{j\in
I}$ of analytic, open Jordan arcs in $\mathbb{C}$ such that representation
(\ref{f41a}) holds true with $K_{0}(f,\infty)$ replaced by $K_{0}%
(\mathcal{R},\infty^{(0)})$, i.e., we have
\begin{equation}
K_{0}(\mathcal{R},\infty^{(0)})=E_{0}\cup E_{1}\cup\bigcup_{j\in I}%
J_{j}.\label{f42a0}%
\end{equation}

In the new situation, the components $E_{0},E_{1}$, and $\left\{
J_{j}\right\}  _{j\in I}$ in (\ref{f42a0})\ can be characterized by the
following properties:

\begin{itemize}
\item[(i)] The boundary $\partial E_{0}$\ of the compact set $E_{0}\subset
K_{0}$ is equal to
\begin{equation}
\widetilde{\pi}((\partial D_{0}\cap\partial\mathcal{R})\cup(Br(\mathcal{R}%
)\cap\overline{D_{0}})),\label{f42a}%
\end{equation}
and the set $E_{0}$\ is the polynomial-convex hull of $\partial E_{0}$. (For a
definition, see Definition \ref{d111b} in Subsection \ref{s1101}, further below).

\item[(ii)] The set $E_{1}\subset K_{0}$ is equal to
\begin{equation}
E_{1}:=\left\{  \text{ }z\in K_{0}\setminus E_{0}\text{ }\right|  \left.
\text{ }\operatorname*{card}(\pi^{-1}(\{z\})\cap\partial D_{0})>2\text{
}\right\} \label{f42b}%
\end{equation}
with $\pi$ being the canonical projection of $\mathcal{R}$ and not that of
$\widetilde{\mathcal{R}}$. The set $E_{1}\subset K_{0}$ is discrete in
$\overline{\mathbb{C}}\setminus E_{0}$.

\item[(iii)] If $I\neq\emptyset$, then $K_{0}\setminus(E_{0}\cup E_{1})$ is
the disjoint union of the analytic Jordan arcs $J_{j}$, $j\in I$. For each
point $z\in J_{j}$, $j\in I$, we have
\begin{equation}
\operatorname*{card}(\pi^{-1}(\{z\})\cap\partial D_{0})=2.\label{f42c}%
\end{equation}

\end{itemize}
\end{theorem}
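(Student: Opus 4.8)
The plan is to deduce Theorem~\ref{t42a} from the already-established Structure Theorem~\ref{t41a} via the equivalence of Problems $(f,\infty)$ and $(\mathcal{R},\infty^{(0)})$ recorded in Theorem~\ref{t32a}. By Theorem~\ref{t32a}(ii), if we choose a meromorphic function $f$ whose natural domain of definition is $\mathcal{R}_f = \mathcal{R}$ (and whose germ at $\infty^{(0)}$ is fixed), then $K_0(\mathcal{R},\infty^{(0)}) = K_0(f,\infty)$ and $\pi(D_0(\mathcal{R}_f,\infty^{(0)})) = D_0(f,\infty)$. Hence the decomposition \eqref{f42a0} is literally the decomposition \eqref{f41a} of Theorem~\ref{t41a}, with the very same sets $E_0,E_1$ and arcs $\{J_j\}_{j\in I}$. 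What remains is to re-express the characterizing properties (i)--(iii) of Theorem~\ref{t41a}, which were phrased in terms of the analytic continuation behaviour of $f$, purely in terms of the geometry of $\mathcal{R}$: non-polar singularities of $f$ must correspond to relative boundary points $\partial\mathcal{R}$ and to branch points $Br(\mathcal{R})$, and the count of distinct function elements of $f$ over a point $z$ must correspond to the cardinality of $\pi^{-1}(\{z\})\cap\partial D_0$.

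First I would make precise the dictionary between germs of $f$ and points of $\mathcal{R}_f$. The natural domain of definition $\mathcal{R}_f$ is by construction the set of all germs obtained by meromorphic continuation of $f$ from $\infty^{(0)}$, with $\pi$ sending a germ to its center. A point $z\in\mathbb{C}$ lying on $\partial D_0(f,\infty)$ is approached from within $D_0$; each such one-sided approach along which $f$ continues meromorphically determines, in the limit, a germ centered at $z$, i.e.\ a point of $\widetilde{\mathcal{R}}$ over $z$ in the closure $\overline{D_0(\mathcal{R}_f,\infty^{(0)})}$. A one-sided approach producing a \emph{polar} singularity still yields a germ (a pole is allowed in meromorphic continuation), hence a point of $\mathcal{R}_f$ itself lying in $\overline{D_0}$ but over a boundary point; whereas a \emph{non-polar} singularity corresponds either to reaching $\partial\mathcal{R}$ (the continuation genuinely stops) or to the limiting germ being a branch point of $\mathcal{R}_f$ (the continuation does not single-valuedly close up). This is exactly the content of \eqref{f42a}: a point of $\partial E_0$ is the projection of a point of $(\partial D_0\cap\partial\mathcal{R})\cup(Br(\mathcal{R})\cap\overline{D_0})$, and conversely. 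The polynomial-convexity of $E_0$, and the identity $E_1 = \{z\in K_0\setminus E_0 : \operatorname{card}(\pi^{-1}(\{z\})\cap\partial D_0)>2\}$ in \eqref{f42b}, together with the cardinality $2$ statement \eqref{f42c} on the arcs, then follow by translating ``$f$ has $k$ distinct function elements over $z$ reachable from $D_0$'' into ``$\pi^{-1}(\{z\})\cap\partial D_0$ has cardinality $k$'', which is immediate once one observes that distinct reachable germs at $z$ are exactly the distinct points of $\mathcal{R}_f$ over $z$ sitting on $\partial D_0(\mathcal{R}_f,\infty^{(0)})$, since $D_0$ is schlicht and $\pi|_{D_0}$ is univalent.

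Concretely the steps are: (1) invoke Theorem~\ref{t32a} to fix $f$ with $\mathcal{R}_f=\mathcal{R}$ and transport the decomposition \eqref{f41a} to \eqref{f42a0}; (2) establish the germ/point dictionary above, including the careful bookkeeping of what happens at poles versus non-polar singularities in the passage to the boundary closure $\widetilde{\mathcal{R}}$; (3) check that ``non-polar singularity of $f$ reachable from $D_0$ at $z$'' $\iff$ ``$z\in\widetilde\pi((\partial D_0\cap\partial\mathcal{R})\cup(Br(\mathcal{R})\cap\overline{D_0}))$'', which gives property (i) including polynomial-convexity (inherited verbatim from Theorem~\ref{t41a}(i)); (4) check that the number of function elements at $z$ equals $\operatorname{card}(\pi^{-1}(\{z\})\cap\partial D_0)$, which converts Theorem~\ref{t41a}(ii),(iii) into \eqref{f42b} and \eqref{f42c}; (5) note discreteness of $E_1$ and the disjoint-union statement are preserved under the transport. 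The final ``these properties characterize the components'' clause is again inherited from the corresponding clause in Theorem~\ref{t41a}.

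The main obstacle is step (2)--(3): the bookkeeping at the relative boundary $\partial\mathcal{R}$ and at branch points. One must be careful that a point $z\in\partial E_0$ can arise \emph{either} because the continuation of $f$ literally terminates (a point of $\partial\mathcal{R}$ over $z$ lies in $\partial D_0$) \emph{or} because the limiting germ is a branch point in $\overline{D_0}$ where single-valuedness fails even though $f$ itself continues past it on $\mathcal{R}$; the union in \eqref{f42a} encodes precisely this dichotomy, and one has to verify there is no third source and no overcounting (e.g.\ a branch point of $\mathcal{R}$ that happens to project to an \emph{interior} point of $D_0(f,\infty)$ is harmless because $D_0(\mathcal{R},\infty^{(0)})$ is schlicht and avoids it, so it contributes nothing). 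A secondary subtlety is that \eqref{f42b} and \eqref{f42c} use the canonical projection of $\mathcal{R}$, not of $\widetilde{\mathcal{R}}$ — so one counts only genuine germs of $f$, not terminal boundary germs — and one must confirm this is consistent with the ``more than $2$ function elements'' and ``exactly $2$ function elements'' statements of Theorem~\ref{t41a}, which count bona fide meromorphic continuations. Once this dictionary is nailed down, everything else is a direct restatement.
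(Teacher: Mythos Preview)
Your proposal is correct and follows essentially the same approach as the paper: invoke Theorem~\ref{t32a} to transport the decomposition from Theorem~\ref{t41a}, then translate ``non-polar singularity of $f$'' into ``branch point of $\mathcal{R}$ or point of $\partial\mathcal{R}$'' and ``number of distinct function elements at $z$'' into ``$\operatorname{card}(\pi^{-1}(\{z\})\cap\partial D_0)$''. The paper's own proof is considerably terser---it dispatches the whole thing in two short paragraphs, treating it as an immediate corollary---whereas you spell out the germ/point dictionary and flag the bookkeeping subtleties at $\partial\mathcal{R}$ and at branch points more carefully; but the substance is the same.
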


\section{\label{s5}Analytic Characterizations}

\qquad We now come to analytic characterizations of the Jordan arcs $J_{j}$,
$j\in I$, in the minimal set $K_{0}(f,\infty)$ for Problem $(f,\infty)$. One
method is based on quadratic differentials, and a related one involves the
$S-$property (symmetry-property) of the extremal domain $D_{0}(f,\infty) $. In
the last subsection we consider the special case that the set $E_{0}$ in
Theorem \ref{t41a} is finite, which leads to the interesting special case of
rational quadratic differentials.

All results in the present section are formulated in the framework of Problem
$(f,\infty)$. Their transfer to Problem $(\mathcal{R},\infty^{(0)}) $ is
easily possible with the tools presented in Section \ref{s3} and Subsection
\ref{s42}.\smallskip

\subsection{\label{s51}The $S-$Property}

\qquad A characteristic property of the extremal domain $D_{0}=D_{0}%
(f,\infty)$ for Problem $(f,\infty)$ is a specific behavior of the Green
function $g_{D_{0}}(\cdot,\infty)$ on the Jordan arcs $J_{j}$, $j\in I$, in
$K_{0}(f,\infty)$ that have been introduced in (\ref{f41a}) of Theorem
\ref{t41a}. For a definition of the Green function we refer to Subsection
\ref{s1103}, further below.\smallskip

\begin{theorem}
\label{t51a} Under the assumptions made in Theorem \ref{t41a}, we have
\begin{equation}
\frac{\partial}{\partial n_{+}}g_{D_{0}}(z,\infty)=\frac{\partial}{\partial
n_{-}}g_{D_{0}}(z,\infty)\text{ \ for all \ }z\in J_{j}\text{, }j\in
I\text{,}\label{f51a}%
\end{equation}
with $\partial/\partial n_{+}$ and $\partial/\partial n_{-}$ denoting the
normal derivatives to both sides of the arcs $J_{j}$, $j\in I$, that have been
introduced in (\ref{f41a}) of Theorem \ref{t41a}.\smallskip
\end{theorem}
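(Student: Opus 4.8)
The plan is to derive the $S$-property from the variational characterization of the minimal set. The heuristic is classical: perturbing an arc $J_j$ in the direction of its normal changes the capacity at first order by an amount proportional to the difference of the two normal derivatives of the Green function; since $K_0$ minimizes capacity among admissible sets, and since (by the Structure Theorem \ref{t41a}) a small normal perturbation of an interior arc point keeps the set admissible — pushing the cut slightly to one side still separates the same branches of $f$ — the first-order change must vanish, forcing \eqref{f51a}.

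First I would fix an arbitrary arc $J_{j_0}$ and a point $z_0$ in its (relative) interior, and choose a small closed subarc $\gamma\subset J_{j_0}$ around $z_0$ avoiding $E_0$, $E_1$, and the endpoints of $J_{j_0}$; by Theorem \ref{t41a}(iii) the function $f$ continues meromorphically to a neighborhood of $\gamma$ from both sides, giving exactly two function elements there. Next I would construct, for small $t>0$, an admissible compact set $K_t$ obtained from $K_0$ by replacing $\gamma$ with a nearby arc $\gamma_t$ displaced in the normal direction according to a smooth bump profile $t\,h(z)\,n(z)$ supported on $\gamma$, with $h\ge 0$ (and also the variant with $h\le 0$, or more cleanly a signed $h$). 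The key admissibility check: because the two branches of $f$ across $\gamma$ are distinct and single-valued in a two-sided neighborhood, the deformed cut $\gamma_t$ still blocks continuation of the multivalued germ in exactly the same way, so $K_t\in\mathcal{K}(f,\infty)$. Then one computes $\frac{d}{dt}\Big|_{t=0^+}\operatorname{cap}(K_t)$. The standard potential-theoretic computation (Hadamard-type variational formula; one may invoke the auxiliary results of Section \ref{s110}) expresses this derivative as
\begin{equation}
\frac{d}{dt}\Big|_{t=0^+}\log\frac{1}{\operatorname{cap}(K_t)}
= \frac{1}{2\pi}\int_{\gamma} h(z)\left(\frac{\partial}{\partial n_+}g_{D_0}(z,\infty)+\frac{\partial}{\partial n_-}g_{D_0}(z,\infty)\right)|dz|,\nonumber
\end{equation}
up to sign and normalization conventions, where the relevant quantity is actually the \emph{difference} of normal derivatives once one accounts for the orientation of $n_+$ versus $n_-$ (both chosen as outward normals from $D_0$, i.e. pointing into $K_0$ from the two sides). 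Since $K_0$ is a minimizer and the perturbation stays admissible for both signs of the displacement, this derivative must be $\ge 0$ for an inward push and $\le 0$ for an outward push, hence $=0$ for all admissible $h$ supported on $\gamma$; by the arbitrariness of $h$ and of $\gamma$, the integrand vanishes pointwise, which is precisely \eqref{f51a}.

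The main obstacle is making the variational formula rigorous in this setting: one must justify that $\operatorname{cap}(K_t)$ is differentiable in $t$ at $0^+$, control the behavior of $g_{D_0}(\cdot,\infty)$ near $\gamma$ (where, by analyticity of the arc from Theorem \ref{t41a}(iii), $g_{D_0}$ extends harmonically across each side so the one-sided normal derivatives exist and are continuous on $\gamma$), and handle the fact that $D_0$ may be non-connected or have $E_0$ with complicated boundary — though $\gamma$ is chosen far from $E_0$ and $E_1$, so locally the picture is that of a smooth slit. A secondary technical point is confirming that the perturbed set $K_t$ is genuinely admissible, i.e. that shrinking the arc slightly does not create a new path of continuation producing a third branch or reconnecting a branch point pair; this follows because the deformation is $C^1$-small and the branch structure, being discrete, is stable under such perturbations away from $E_0\cup E_1$. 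I would also note that the same argument, run on the minimal-set side rather than with a full proof of the variational identity, is exactly the kind of place where Section \ref{s110}'s modified potential-theoretic lemmas are meant to be invoked, so I would cite those rather than reprove the Hadamard formula from scratch.
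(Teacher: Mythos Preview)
Your variational approach is sound in principle and is in fact exactly the mechanism the paper develops in Subsection~\ref{s72} (Theorem~\ref{t72a}): once Theorem~\ref{t41a} guarantees that the $J_j$ are analytic arcs along which $f$ has two distinct meromorphic continuations, a local normal displacement keeps the perturbed set admissible, and vanishing of the first variation of capacity forces the symmetry~\eqref{f51a}. One correction: the Hadamard formula~\eqref{f72c1} involves the difference of the \emph{squares} $\bigl(\tfrac{\partial}{\partial n_+}g_{D_0}\bigr)^2-\bigl(\tfrac{\partial}{\partial n_-}g_{D_0}\bigr)^2$, not the linear expression you wrote; since both one-sided normal derivatives are nonnegative, this vanishes iff the derivatives agree, so your conclusion survives.

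The paper, however, does \emph{not} prove Theorem~\ref{t51a} this way. Its proof is a one-line corollary of the machinery built inside the proof of Theorem~\ref{t41a}: there the minimal set is obtained as a limit of minimal sets for algebraic approximants $f_{nm}$, for which the $S$-property is known via the hyperbolic Chebotarev problem (Theorem~\ref{t101a}/Proposition~\ref{p101b}); the uniform estimate of Proposition~\ref{p102d} lets one pass to the limit and obtain that $\widetilde q(z)=\bigl(2\tfrac{\partial}{\partial z}g_{D_0}(z,\infty)\bigr)^2$ is analytic in $\overline{\mathbb C}\setminus E_0$, and Lemma~\ref{l102a} converts analyticity of $\widetilde q$ across the arcs into~\eqref{f51a}. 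So the paper gets the $S$-property as a byproduct of the structure proof rather than from a separate variational argument. Your route is more direct and self-contained once Theorem~\ref{t41a} is in hand, but it leans on making the Hadamard formula rigorous; the paper's route avoids that analytic cost at the price of depending on the full approximation apparatus of Subsections~\ref{s101}--\ref{s102}. The paper even remarks (end of Subsection~\ref{s73}) that the Hadamard approach is ``not very helpful'' for Theorems~\ref{t22a} and~\ref{t41a} precisely because it presupposes the smoothness that those theorems establish---but for Theorem~\ref{t51a}, where that smoothness is already available, your argument is a legitimate alternative.
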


The symmetric boundary behavior (\ref{f51a}) of the Green function $g_{D_{0}%
}(\cdot,\infty)$ is called the $S-$property of the extremal domain
$D_{0}(f,\allowbreak\infty)$. In Section \ref{s7}, below, it will be shown
that the $S-$property can be interpreted as a local condition for the
minimality (\ref{f21a}) in Definition \ref{d21b}.

While in Theorem \ref{t51a} we get the $S-$property as a consequence of the
minimality (\ref{f21a}) in Definition \ref{d21b}, it will be proved in Theorem
\ref{t73a} in Subsection \ref{s73} that the $S-$property is even equivalent to
the minimality (\ref{f21a}). As a consequence of this further going result it
follows that the $S-$property can also be used as an alternative
characterization of the extremal domain $D_{0}(f,\infty)$.\smallskip

Notice that $I\neq\emptyset$ in (\ref{f51a}) implies $\operatorname*{cap}%
(K_{0})>0$, and consequently, in this case, the Green function $g_{D_{0}%
}(\cdot,\infty)$ in (\ref{f51a})\ exists in a proper sense (cf. Subsection
\ref{s1103}, further below).\ If on the other hand, we have $I=\emptyset$,
then relation (\ref{f51a}) is void.

From Theorem \ref{t41a} we now know that the arcs $J_{j}$, $j\in I$, are
analytic. Hence, the Green function $g_{D_{0}}(\cdot,\infty)$\ has harmonic
continuations across each arc $J_{j}$ from both sides (cf. Subsection
\ref{s1103}), and consequently the normal derivatives in (\ref{f51a}) exist
for each $z\in J_{j}$, $j\in I$.\smallskip

\subsection{\label{s52}Quadratic Differentials}

\qquad The $S-$property can be described in an equivalent way by quadratic
differentials. We say that a smooth arc $\gamma$ with parametrization
$z:[0,1]\longrightarrow\overline{\mathbb{C}}$ is a trajectory of the quadratic
differential $q(z)dz^{2}$ if we have
\begin{equation}
q(z(t))\overset{\bullet}{z}(t)^{2}<0\text{ \ \ \ for all \ \ }t\in
(0,1).\label{f52a}%
\end{equation}

We note that there exists an associated family of orthogonal trajectories,
which are defined by the same relation (\ref{f52a}), but with an inequality
showing in the other direction. As general reference to quadratic
differentials and their trajectories we use \cite{Strebel84} or
\cite{Jensen75}. Some of its local properties are assembled in Subsection
\ref{s1105}, further below.\medskip

\begin{theorem}
\label{t52a}Let $D_{0}=D_{0}(f,\infty)$, $E_{0},E_{1}\subset\overline
{\mathbb{C}}$,\ and $\left\{  J_{j}\right\}  _{j\in I}$ be the objects
introduced in Theorem \ref{t41a}, and let $g_{D_{0}}(\cdot,\infty)$ be the
Green function in $D_{0}$. Then the Jordan arcs $J_{j}$, $j\in I$, are
trajectories of the quadratic differential $q(z)dz^{2}$ with $q$ defined by
\begin{equation}
q(z):=\left(  2\frac{\partial}{\partial z}g_{D_{0}}(z,\infty)\right)
^{2},\label{f52b}%
\end{equation}
where $\partial/\partial z=\frac{1}{2}\left(  \partial/\partial x-i\,\partial
/\partial y\right)  $ is the usual complex differentiation. The function $q$
has a meromorphic (single-valued) continuation throughout the domain
$\overline{\mathbb{C}}\setminus E_{0}^{\prime}$ with $E_{0}^{\prime} $
denoting the sets of cluster points of $E_{0}$. Near infinity we have
\begin{equation}
q(z)=\frac{1}{z^{2}}+\text{O}(z^{-3})\text{ \ \ as \ \ }z\rightarrow
\infty.\label{f52c}%
\end{equation}
The function $q$ has at most simple poles in isolated points of $E_{0}$, and
it is analytic throughout $\overline{\mathbb{C}}\setminus E_{0}$.\medskip
\end{theorem}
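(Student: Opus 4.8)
The plan is to read $q$ off from the complex Green function and then use the $S$-property of Theorem~\ref{t51a} to turn it into a global meromorphic object. Write $g:=g_{D_0}(\cdot,\infty)$ and let $\Phi=g+i\,\widetilde g$ be a locally defined analytic completion of $g$. Although $\widetilde g$ has additive periods around the holes of $D_0$, these disappear under differentiation, so $\Phi'=2\,\partial_z g$ is a single-valued holomorphic function on $D_0$, and hence $q=(\Phi')^2$ is single-valued and holomorphic on $D_0$. From the expansion $g(z,\infty)=\log|z|-\log\operatorname*{cap}(K_0)+\mathrm{Re}(c_1/z+\cdots)$ near infinity one gets $\Phi'(z)=1/z+\text{O}(z^{-2})$, and squaring gives $q(z)=1/z^{2}+\text{O}(z^{-3})$, which is (\ref{f52c}); in particular $q$ is analytic at $\infty$ with $q(\infty)=0$.

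Next I would treat the arcs. Fix $j\in I$ and $z_0\in J_j$. Since $J_j$ is an open analytic Jordan arc lying on $\partial D_0$, every point of $J_j$ is a regular boundary point, so $g\equiv 0$ on $J_j$ and (Schwarz reflection across an analytic arc) $g$ extends harmonically across $J_j$ from each side. Parametrize $J_j$ by $z(t)$ with unit tangent $\tau=\dot z/|\dot z|$. Because $g$ vanishes identically on $J_j$ its tangential derivative along $J_j$ vanishes, so $2\,\partial_z g\cdot\tau$ is purely imaginary on $J_j$; expressing its imaginary part through the normal derivative yields, on the two sides of $J_j$,
\begin{equation}
q=\bigl(2\,\partial_z g\bigr)^{2}=-\Bigl(\tfrac{\partial g}{\partial n_{\pm}}\Bigr)^{2}\bar\tau^{2}\qquad\text{on }J_j.\label{f52pp}
\end{equation}
The $S$-property $\partial g/\partial n_{+}=\partial g/\partial n_{-}$ from Theorem~\ref{t51a} now shows that the two one-sided boundary values of $q$ on $J_j$ agree; since $q$ is holomorphic on each side and continuous across the analytic arc $J_j$, it continues holomorphically across $J_j$. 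Moreover, with $\dot z(t)=|\dot z(t)|\,\tau$, equation (\ref{f52pp}) gives $q(z(t))\,\dot z(t)^{2}=-(\partial g/\partial n_{+})^{2}\,|\dot z(t)|^{2}$, which is strictly negative, because $g>0$ in $D_0$ and $J_j$ satisfies an interior disk condition from both sides, so $\partial g/\partial n_{\pm}>0$ by Hopf's lemma. Hence each $J_j$ is a trajectory of $q(z)\,dz^{2}$, as claimed.

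It remains to control $q$ near $E_1$ and near the isolated points of $E_0$ and to assemble the pieces. Near $z_1\in E_1$ finitely many ($>2$) analytic arcs meet in a regular star (Theorem~\ref{t41b} and Remark~\ref{r52b}); there $D_0$ is locally the complement of the star, which is connected, so $q$ is already single-valued holomorphic on the punctured neighborhood and the continuations across the incident arcs from the previous step are mutually consistent. In each of the $m>2$ sectors, of opening $2\pi/m\le 2\pi/3$, the harmonic function $g$ (vanishing on both edges) satisfies $g(z)=\text{O}(|z-z_1|^{m/2})$, hence $2\,\partial_z g\to 0$ and $q$ extends analytically across $z_1$, with a zero there. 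The same sectorial estimate at an isolated point $z_0$ of $E_0$ — where only finitely many arcs can terminate, a single terminating arc producing opening $2\pi$ — gives $2\,\partial_z g(z)=\text{O}(|z-z_0|^{-1/2})$ in the worst case, so $q$ has at most a simple pole at $z_0$. Since $\overline{\mathbb{C}}\setminus E_0'=D_0\cup\bigcup_{j\in I}J_j\cup E_1\cup(E_0\setminus E_0')$ and $q$ is single-valued holomorphic on the connected set $D_0$, the extensions above glue to a single-valued function that is meromorphic on $\overline{\mathbb{C}}\setminus E_0'$, analytic on $\overline{\mathbb{C}}\setminus E_0$, and has at most simple poles at the isolated points of $E_0$.

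The \emph{crux} — and the place where the theorem's hypothesis is really used — is the boundary-value identity (\ref{f52pp}) together with the $S$-property, which is exactly what forces the one-sided limits of $q$ to match and thereby converts $q$ from a function defined only on $D_0$ into one global meromorphic object; the rest is bookkeeping, including checking that these local continuations fit together single-valuedly over all of $\overline{\mathbb{C}}\setminus E_0'$. The most delicate secondary point is the local singularity analysis at $E_1$ and at the isolated points of $E_0$: it relies on the fine topological information — analyticity and local finiteness of the arcs and the regular-star structure — furnished by Theorems~\ref{t41a}--\ref{t41b} and Remark~\ref{r52b}, and the bound "at most simple poles" is precisely the statement that no more than one analytic arc can terminate at an isolated point of $E_0$, any larger cluster giving an analytic point rather than a pole by the same sectorial estimate.
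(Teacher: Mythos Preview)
Your route differs from the paper's. The paper does not run the $S$-property argument at all; its proof of Theorem~\ref{t52a} simply quotes facts already established inside the proof of Theorem~\ref{t41a}: there $q=\widetilde q$ arises as a locally uniform limit (\ref{f103b3}) of the \emph{rational} quadratic differentials attached to algebraic approximations of $f$, so analyticity on $\overline{\mathbb C}\setminus E_0$ is automatic, the trajectory property of the $J_j$ is assertion~($\alpha$) there, and the finiteness of the number of arcs ending at an isolated point of $E_0$ comes from the approximating construction (``the history of the definition of $\widetilde K$''), after which the local expansion $q\sim q_0(z-z_0)^{k_0-2}$ from Lemma~\ref{l115a} yields ``at most a simple pole''. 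Your direct approach --- continuing $q$ across each $J_j$ by matching the one-sided boundary values via your formula (\ref{f52pp}) and Theorem~\ref{t51a} --- is precisely the computation the paper alludes to in the sentence right after Theorem~\ref{t52a} (``the meromorphy of $q$\ldots\ is equivalent to the $S$-property''), and that core step is correct.

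The gap is in the local analysis at the endpoints. Your appeal to Remark~\ref{r52b} is circular: that remark opens with ``Since we know from Theorem~\ref{t52a}\ldots'', so the regular-star structure is a \emph{consequence} of what you are proving, not an input. Without it Theorem~\ref{t41b} tells you only that finitely many ($m\ge 3$) arcs meet at $z_1\in E_1$, not that the sectors have equal opening $2\pi/m$ nor even that the arcs approach $z_1$ with definite tangent directions; hence your bound $g=\mathrm O(|z-z_1|^{m/2})$ is unjustified as stated. Likewise, the claim that only finitely many arcs terminate at an isolated point of $E_0$ is not contained in Theorems~\ref{t41a}--\ref{t41b} (the latter treats only $E_1$), and you give no independent reason; the paper obtains this from the approximation machinery and you would need a substitute. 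Both issues are repairable --- for instance the Dirichlet-integral bound of Proposition~\ref{p102d} gives $|q(z)|\le C\,\operatorname{dist}(z,E_0)^{-2}$, which already rules out essential singularities at isolated points of $E_0\cup E_1$, and then the local trajectory theory of Lemma~\ref{l115a} can be used to pin down the actual order --- but as written the sectorial step does not close.
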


It is not difficult to verify that the meromorphy of the function $q$ in
$\overline{\mathbb{C}}\setminus E_{0}^{\prime}$ is equivalent to the
$S-$property (\ref{f51a}).\smallskip

The local structure of the trajectories of quadratic differentials can rather
easily be understood and described (for more details see Subsection
\ref{s1105}, further below). Of special interest are neighborhoods of poles
and zeros of the function $q$ in (\ref{f52b}).\smallskip

\begin{remark}
\label{r52b} Since we know from Theorem \ref{t52a} that all Jordan arcs
$J_{j}$, $j\in I$, are trajectories of a quadratic differential $q(z)dz^{2}$
that is meromorphic in $\overline{\mathbb{C}}\setminus E_{0}^{\prime}$,\ it
follows from the local structure of the trajectories that all Jordan arcs
$J_{j}$, $j\in I$, that end at an isolated point $z$ of$\ E_{0}\cup E_{1}$
form a regular star at this point.\smallskip
\end{remark}

\subsection{\label{s53}Rational Quadratic Differentials}

\qquad The description of the Jordan arcs $J_{j}$, $j\in I$, as trajectories
of a quadratic differential $q(z)dz^{2}$ is especially constructive if the
function $q$ in (\ref{f52b}) is rational. This is the case if the set $E_{0}$
from Theorem \ref{t41a} is finite. Algebraic functions $f$ are prototypical
examples for this situation.\smallskip

For the formulation of the main result in this direction, we need the notion
of bifurcation points in $K_{0}(f,\infty)$, the associated bifurcation index,
and the notion of critical points of the Green function $g_{D_{0}}%
(\cdot,\infty)$.\smallskip

\begin{definition}
\label{d53a} Let the objects $K_{0}=K_{0}(f,\infty)$, $E_{0},E_{1}%
\subset\overline{\mathbb{C}}$,\ and $\left\{  J_{j}\right\}  _{j\in I}$ be and
ones as in the Theorems \ref{t41a} or \ref{t52a}. For each isolated point
$z\in E_{1}\cup E_{0}$, the bifurcating index $i(z)$ is\ the number of
different Jordan arcs $J_{j}$, $j\in I$, that end at this point $z$.\smallskip
\end{definition}

If $z$ is an isolated point of $K_{0}=K_{0}(f,\infty)$, then $z$ lies
necessarily in $E_{0}$, and by definition we have $i(z)=0$ since $z$ has no
contact to any arc in $K_{0}$. Such isolated points can exist; they are
generated by isolated, essential singularities of the function $f$ that are no
branch points.\smallskip

\begin{definition}
\label{d53b} Let $D_{0}=D_{0}(f,\infty)$ be the extremal domain, and assume
that $\operatorname*{cap}(K_{0}(f,\infty))>0$. By $E_{2}\subset D_{0}$ we
denote the set of all critical points of the Green function $g_{D_{0}%
}(z,\infty)$, and for each $z\in E_{2}$ we denote the order of the critical
point $z$ by $j(z),$ i.e., for $z\in E_{2}$, we have
\begin{equation}
\frac{\partial^{l}}{\partial z^{l}}g_{D_{0}}(z,\infty)\left\{
\begin{array}
[c]{lll}%
=0\smallskip & \text{ \ for \ } & l=1,\ldots,j(z)\\
\neq0 & \text{ \ for \ } & l=j(z)+1.
\end{array}
\right. \label{f53a}%
\end{equation}
If $\operatorname*{cap}(K_{0}(f,\infty))=0$, then we set $E_{2}=\emptyset
$.\smallskip
\end{definition}

The sets $E_{1}$ and $E_{2}$\ are always discrete in $\overline{\mathbb{C}%
}\setminus E_{0}$, while the set $E_{0}$ can be a mixture of isolated and
cluster points. Because of this later possibility, it was necessary to
distinguish the set $E_{0}^{\prime}$ of cluster points from the original set
$E_{0}$ in Theorem \ref{t52a}. The set $E_{1}\cup E_{0}\setminus E_{0}%
^{\prime}$\ contains all isolated points of $E_{1}\cup E_{0}$. We have
$E_{0}^{\prime}=\emptyset$ if and only if $E_{0}$ is finite.\smallskip

\begin{proposition}
\label{p53a} If $E_{0}$ is a finite set, then the sets $E_{1}$ and $E_{2}%
$\ are necessarily also finite.\smallskip\smallskip
\end{proposition}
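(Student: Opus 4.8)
The plan is to use the quadratic differential $q(z)\,dz^2$ from Theorem \ref{t52a} and count the zeros and poles of the rational function $q$ via the argument principle, then match these against the bifurcation points of $K_0$ and the critical points of the Green function. Since $E_0$ is finite we have $E_0' = \emptyset$, so by Theorem \ref{t52a} the function $q$ defined in (\ref{f52b}) extends meromorphically to all of $\overline{\mathbb{C}}$, with at most simple poles located in the finite set $E_0$, with $q$ analytic on $\overline{\mathbb{C}} \setminus E_0$, and with the normalization $q(z) = z^{-2} + \mathrm{O}(z^{-3})$ near infinity given in (\ref{f52c}). Hence $q$ is a rational function, and in particular it has only finitely many zeros in $\mathbb{C}$.

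The key step is to relate the local structure of the trajectories of $q(z)\,dz^2$ to the two sets $E_1$ and $E_2$. First I would handle $E_1$: by Theorem \ref{t41a}(ii) and Definition \ref{d53a}, each point of $E_1$ is an isolated point of $K_0 \setminus E_0$ at which $i(z) > 2$ arcs $J_j$ meet, and by Remark \ref{r52b} these arcs form a regular star, so $z$ is a zero of $q$ of order $i(z) - 2 \ge 1$ (the standard local normal form of a quadratic differential: at a zero of order $m$ exactly $m+2$ trajectory arcs emanate). Since $q$ is rational it has only finitely many zeros, and therefore $E_1$ is finite. Next I would handle $E_2$: the critical points of $g_{D_0}(\cdot,\infty)$ in $D_0$ of order $j(z)$ (Definition \ref{d53b}) are precisely the points where $\partial g_{D_0}/\partial z$ vanishes to order $j(z)$, hence by (\ref{f52b}) they are zeros of $q$ of order $2j(z) \ge 2$ lying in the open domain $D_0$. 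Again, finitely many zeros of a rational function forces $E_2$ to be finite. (Strictly speaking the proposition only asserts finiteness of $E_1$ and $E_2$, but the $E_2$ argument is the natural companion and is the same.)

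The main obstacle I anticipate is making the correspondence "point of $E_1$ $\Leftrightarrow$ zero of $q$" fully rigorous at points of $E_1$ that might also be limit points of infinitely many distinct arcs $J_j$, or where the star structure of Remark \ref{r52b} needs the local trajectory classification from Subsection \ref{s1105}; one must check that the regular-star description genuinely identifies $z$ as a zero of the meromorphic function $q$ and not merely as a point where the trajectory picture looks star-like for nonanalytic reasons. Once Theorem \ref{t41a}, Theorem \ref{t52a}, and Remark \ref{r52b} are invoked this is routine, since analyticity of $q$ off $E_0$ already rules out pathological accumulation: a zero of a nonzero rational function is isolated with finite order, and the order at $z \in E_1$ is $i(z) - 2$, at $z \in E_2$ is $2j(z)$. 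Summing, $\sum_{z \in E_1}(i(z)-2) + \sum_{z \in E_2} 2j(z)$ is at most the number of zeros of $q$ in $\mathbb{C}$, which is finite (bounded by $\deg q$, itself controlled by $\#E_0$ and the behavior (\ref{f52c}) at infinity). Hence both $E_1$ and $E_2$ are finite, which is the assertion.
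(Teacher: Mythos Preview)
Your proposal is correct and follows essentially the same route as the paper's proof: both use Theorem~\ref{t52a} to conclude that $q$ is rational when $E_0$ is finite, and then identify points of $E_1$ with zeros of $q$ of order $i(z)-2$ (via the local trajectory structure) and points of $E_2$ with zeros of $q$ of order $2j(z)$ (directly from (\ref{f52b})), so that finiteness of the zero set of a rational function gives the result. The paper additionally records the explicit degree bound $m+2=n\le\operatorname{card}(E_0)$ from (\ref{f52c}) and the simple-pole condition, but this is a refinement rather than a different argument.
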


After these preliminaries, we are ready to formulate the central results of
the present subsection.\smallskip

\begin{theorem}
\label{t53a} We use the same notations as in the Theorems \ref{t41a} and
\ref{t52a}, and assume that the set $E_{0}$ is finite. Then the function $q$
in (\ref{f52b}) is rational, and we have the explicit representation
\begin{equation}
q(z)=\prod_{v\in E_{0}\cup E_{1},\text{ }i(v)>0}(z-v)^{i(v)-2}\prod_{v\in
E_{2}}(z-v)^{2\,j(v)}.\smallskip\label{f53b}%
\end{equation}

\end{theorem}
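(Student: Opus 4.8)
The plan is to derive the rational representation \eqref{f53b} from the factorization of the function $q$ into its zeros and poles, using that $q$ is already known (by Theorem \ref{t52a}) to be analytic on $\overline{\mathbb{C}}\setminus E_{0}$ with at most simple poles at isolated points of $E_{0}$; since $E_{0}$ is finite by hypothesis (hence $E_{0}'=\emptyset$), $q$ is meromorphic on all of $\overline{\mathbb{C}}$ and therefore rational. By Proposition \ref{p53a} the sets $E_{1}$ and $E_{2}$ are also finite, so the right-hand side of \eqref{f53b} is a genuine finite product. The whole argument then reduces to a local computation: comparing the order of the zero or pole of $q$ at each point of $\overline{\mathbb{C}}$ with the exponent prescribed by \eqref{f53b}, and using \eqref{f52c} to fix the overall normalizing constant.

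\textbf{Local analysis at points of $E_{0}\cup E_{1}$.} First I would treat the isolated points $v\in E_{0}\cup E_{1}$ with $i(v)>0$, i.e.\ the bifurcation points where $i(v)$ analytic arcs $J_{j}$ meet. Near such a $v$, the Green function $g_{D_{0}}(\cdot,\infty)$ extends to a function whose behavior is governed by the local structure of the trajectory star (Remark \ref{r52b}); locally $2\,\partial g_{D_{0}}/\partial z$ behaves like $c\,(z-v)^{i(v)/2-1}(1+o(1))$, since a regular star of $i(v)$ trajectories of $q(z)dz^{2}$ at $v$ forces $q$ to have a zero (if $i(v)>2$) or a simple pole (if $i(v)=1$) or to be nonzero and finite (if $i(v)=2$) there — precisely the statement that $q$ vanishes to order $i(v)-2$ at $v$, with the convention that a simple pole is a zero of order $-1$. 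This is exactly the exponent $i(v)-2$ in \eqref{f53b}. I would extract this from the standard normal forms for trajectories near zeros and poles of a quadratic differential (Subsection \ref{s1105}), combined with the fact that $g_{D_{0}}(\cdot,\infty)$ is locally the real part of a conformal-type map whose differential squared is $q$; the symmetry relation \eqref{f51a} is what guarantees that the two one-sided harmonic continuations glue into the single-valued $q$, so no extra branching is introduced. At isolated points of $E_{0}$ with $i(v)=0$ (isolated singularities of $f$ that are not branch points, contributing isolated points of $K_{0}$), one checks directly that $q$ is analytic and nonvanishing, consistently with exponent $0-2=-2$ being disallowed — here I would note that such points carry no pole of $q$ because $q$ has at most simple poles at isolated points of $E_{0}$, forcing $i(v)\in\{1,2\}$ whenever $q$ is singular; hence the product is correctly restricted to $i(v)>0$.

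\textbf{Local analysis at critical points of the Green function, and at infinity.} At each $v\in E_{2}$, by Definition \ref{d53b} the derivative $\partial g_{D_{0}}/\partial z$ vanishes to order exactly $j(v)$, so $q=(2\,\partial g_{D_{0}}/\partial z)^{2}$ vanishes to order exactly $2\,j(v)$ there, giving the second product in \eqref{f53b}. Conversely, any zero of $q$ in $D_{0}$ is a critical point of $g_{D_{0}}(\cdot,\infty)$ of the corresponding order, so these account for all zeros of $q$ inside $D_{0}$; zeros on $K_{0}\setminus(E_{0}\cup E_{1})$ are impossible because along the analytic arcs $J_{j}$ the trajectory condition \eqref{f52a} forces $q\neq 0$. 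Finally, near infinity \eqref{f52c} gives $q(z)=z^{-2}+\mathrm{O}(z^{-3})$; comparing the total degree of the rational function on the right of \eqref{f53b} with this behavior at $\infty$ shows simultaneously that the leading coefficient is $1$ and that the degree balance
\[
\sum_{v\in E_{0}\cup E_{1},\,i(v)>0}\bigl(i(v)-2\bigr)\;+\;\sum_{v\in E_{2}}2\,j(v)\;=\;-2
\]
must hold, which also serves as a consistency check (it is the Riemann–Hurwitz-type count for the quadratic differential $q(z)dz^{2}$ on $\overline{\mathbb{C}}$ with a double pole at $\infty$). Putting these local orders together, the rational function $q$ and the product in \eqref{f53b} have the same divisor on $\overline{\mathbb{C}}$ and the same value $1$ at $\infty$, hence are equal.

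\textbf{Main obstacle.} The genuinely delicate step is the local claim at the bifurcation points: that $q$ has exactly a zero of order $i(v)-2$ (a simple pole when $i(v)=1$) at each isolated $v\in E_{0}\cup E_{1}$, with \emph{no} other zeros or poles of $q$ anywhere on $K_{0}$ or elsewhere off $D_{0}$. This requires knowing that $\partial D_{0}$ near such $v$ consists precisely of the $i(v)$ arcs of the regular star and nothing more — in particular that $g_{D_{0}}(\cdot,\infty)$ really does continue harmonically and single-valuedly to a punctured neighborhood of $v$ so that $q$ is meromorphic there — and that the local exponent of $g_{D_{0}}$ is tied to $i(v)$ exactly, not just up to inequality. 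I expect to get this by invoking the trajectory structure from Theorem \ref{t52a} together with the normal forms in Subsection \ref{s1105} and the finiteness of $E_{0}$, but reconciling the potential-theoretic description of $g_{D_{0}}$ near $E_{0}$ with the quadratic-differential picture is where the real work lies; the passage from "$q$ rational" to the explicit exponents is otherwise bookkeeping.
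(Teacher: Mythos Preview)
Your proposal is correct and follows essentially the same route as the paper. Both arguments establish rationality of $q$ from Theorem \ref{t52a} and the finiteness of $E_{0}$, then compute the divisor of $q$ via the local trajectory structure (Lemma \ref{l115a} in Subsection \ref{s1105}) at bifurcation points to get the exponents $i(v)-2$, via Definition \ref{d53b} at critical points of the Green function to get the exponents $2j(v)$, and fix the leading coefficient using the asymptotic \eqref{f52c} at infinity; the paper packages the bifurcation-point computation as formula (\ref{f104b2}) in the proof of Proposition \ref{p53a}, but the content is identical to your trajectory-star argument.
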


Notice that there always exist points $z\in E_{0}$ with $i(z)=1$, which
implies that $q$ always is a broken rational function. Actually, this
assertion follows already from (\ref{f52c}) in Theorem \ref{t52a}, and further
we deduce from (\ref{f52c}) that the denominator degree of $q$ is exactly $2$
degrees larger than its numerator degree.\smallskip

The explicit formula (\ref{f53b}) for $q$ can be very helpful for the
numerical calculation of the analytic Jordan arcs $J_{j}$, $j\in I$, in
$K_{0}(f,\infty)$. If the points of the sets $E_{0}$, $E_{1}$, and $E_{2}$
have been determined, then most of the work is done, and one can calculate the
Jordan arcs $J_{j}$, $j\in I$, by solving a differential equation that is
based on (\ref{f52a}), (\ref{f52b}), and (\ref{f53b}). This procedure has, for
instance, also been used for the calculation of the arcs in the minimal sets
$K_{0}(f_{j},\infty)$, $j=1,\ldots,5$, in the Examples \ref{e61} - \ref{e65}
that follow next. The critical part of the job is the calculation of the zeros
of the function $q$ in (\ref{f53b}). More information about this topic can be
found at the end of the discussion of Example $f_{3}$ in Subsection
\ref{e63}.\medskip

\section{\label{s6}Examples}

\qquad In the present section we consider five specially chosen algebraic
functions $f=f_{1},\ldots,f_{5}$, and discuss for each of them the solution of
Problem $(f,\infty)$. Typically, we calculate and plot the minimal set
$K_{0}(f,\infty)$, discuss particular features of its shape, and identify the
sets $E_{0}$, $E_{1}$, $E_{2}$, and the family of Jordan arcs $J_{j}$, $j\in
I$, that have been introduced in Theorem \ref{t41a} and in Definition
\ref{d53b}.\ Also the quadratic differential $q(z)dz^{2}$ from Theorem
\ref{t53a} is identified for each case.

Some of the examples depend on one or two parameters; and variations of these
parameters will be done in order to understand the mechanisms that lead to
special features of the minimal set $K_{0}(f,\infty)$. Of special interest are:

\begin{itemize}
\item[a)] The connectivity of the minimal set $K_{0}(f,\infty)$ together with
the question of how it changes under variations of the function $f$.

\item[b)] The identification of active versus inactive branch points of $f$.
It turns out that in general not all branch points of the function $f$ play an
active role in the determination of the minimal set $K_{0}(f,\infty)$, and for
the calculation of $K_{0}(f,\infty)$ it is important to know already in
advance which of them are active and which ones remain passive.
\end{itemize}

The presentation and discussion of the five examples demands comparatively
much space, and there has been some hesitation to include all the material.
But it is hoped that the expenses on space and efforts are counterbalanced by
an improved understanding of the definitions and results presented in the last
four sections.\smallskip

\subsection{\label{e61}Example $f_{1}$}

\qquad As a first, and in most aspects rather trivial example, we consider the
function
\begin{equation}
f_{1}(z):=\frac{1}{\sqrt{z^{2}-1}},\label{f61a}%
\end{equation}
which often appears in approximation theory, and has been included here as a
warm-up exercise.

Clearly, the function has branch points at $-1$\ and $1$. Therefore, the set
$\mathcal{D}(f_{1},\infty)$ of admissible domains for Problem $(f_{1},\infty)$
from Definition \ref{d21a}\ consists of all domains $D\subset\overline
{\mathbb{C}}$ such that $\infty\in D$ and that the two points $-1$ and $1$ are
connected in the complement $K=\overline{\mathbb{C}}\setminus D$. The uniquely
existing extremal domain of Theorem \ref{t22a} is given by
\begin{equation}
D_{0}(f_{1},\infty)=\overline{\mathbb{C}}\setminus\lbrack-1,1],\label{f61b}%
\end{equation}
and the minimal set by $K_{0}(f_{1},\infty)=[-1,1]$. As sets $E_{0}$, $E_{1}
$, $E_{2}$,\ and arcs $J_{j}$, $j\in I$, introduced in Theorem \ref{t41a} and
in Definition \ref{d53b}, we have $E_{0}=\{-1,1\}$, $E_{1}=E_{2}=\emptyset$,
$I=\{1\}$, and $J_{1}=(-1,1)$. Solution (\ref{f61b}) is a consequence of the
monotonicity of $\operatorname*{cap}(\cdot)$ under projections onto straight
lines (cf., Lemma \ref{l111c} in Subsection \ref{s1101}, further below). The
single arc $J_{1}=(-1,1)$ in $K_{0}(f_{1},\infty)$\ is a trajectory of the
quadratic differential
\begin{equation}
\frac{1}{z^{2}-1}dz^{2},\label{f61c}%
\end{equation}
i.e., it satisfies the relation
\begin{equation}
\frac{1}{z^{2}-1}dz^{2}<0,\label{f61d}%
\end{equation}
and (\ref{f61c}) corresponds to Theorem \ref{t53a}\smallskip.

\subsection{\label{e62}Example $f_{2}$}

\qquad Next, we consider a function $f_{2}$ that depends on a parameter
$\varphi$. For $\varphi\in\left(  0,\pi/2\right)  $, we define $\left(
\varphi_{j}\right)  _{j=1,\ldots,4}:=(\varphi,\pi-\varphi,\pi+\varphi
,2\pi-\varphi)$, $z_{j}:=\exp(i\,\varphi_{j})$, $j=1,\ldots,4$, $P_{4}%
(z):=\prod_{j=1}^{4}(1-z_{j}/z)$, and then we define the function $f_{2} $ as
\begin{equation}
f_{2}(z):=\sqrt[2]{P_{4}(z)}\label{f62a}%
\end{equation}
with a choice of the sign of the square root in (\ref{f62a}) so that
$f_{2}(\infty)=1$. The function $f_{2}$ has the four branch points
$z_{1},\ldots,z_{4}$, and it is symmetric with respect to the real and the
imaginary axis. The symmetries lead to corresponding symmetries of the minimal
set $K_{0}(f_{2},\infty)$ and the extremal domain $D_{0}(f_{2},\infty)$ for
each $\varphi\in\left(  0,\pi/2\right)  $.%
\begin{figure}
[ptb]
\begin{center}
\includegraphics[
trim=0.000000in 0.000000in 0.000000in -0.058759in,
height=4.1297cm,
width=12.0441cm
]%
{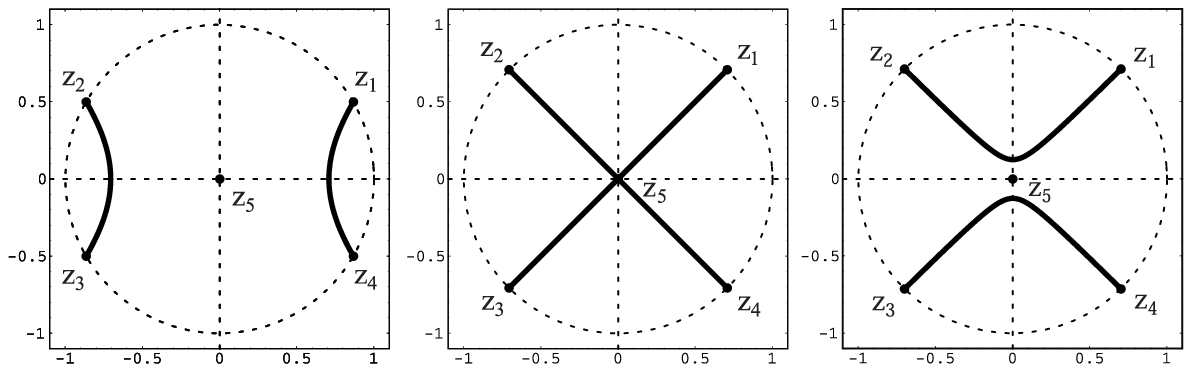}%
\caption{Three examples of minimal sets $K_{0}(f_{2},\infty)$ for Problem
$(f_{2},\infty)$ with $f_{2}$ defined in (\ref{f62a}). The three windows
corresponed to the three parameter values $\varphi=\pi/6$, $\varphi=\pi/4$,
and $\varphi=101\pi/400$, respectively.}%
\label{fig1}%
\end{center}
\end{figure}

For each $\varphi\in\left(  0,\pi/2\right)  $, the set $\mathcal{D}%
(f_{2},\infty)$ of admissible domains introduced in Definition \ref{d21a}
consists of all domains $D\subset\overline{\mathbb{C}}$ such that $\infty\in
D$ and that at least two disjoint pairs of the four branch points
$z_{1},\ldots,z_{4}$ are connected in $K=\overline{\mathbb{C}}\setminus D$. It
is not necessary that all four points $z_{1},\ldots,z_{4}$ are connected, nor
that a specific combination of pairs has to be connected in $K=\overline
{\mathbb{C}}\setminus D$.

From the uniqueness of the minimal set $K_{0}(f_{2},\infty),$ which has been
proved in Theorem \ref{t22a}, it follows that from the variety of
connectivities that are possible for the set $K\in\mathcal{K}(f_{2},\infty) $
and a given fixed parameter value $\varphi$, a specific one is selected as the
minimal set $K_{0}(f_{2},\infty)$.

The shape and the connectivity of the minimal set $K_{0}(f_{2},\infty)$
depends on the parameter $\varphi$, and we distinguish the three cases
$0<\varphi<\pi/4$, $\varphi=\pi/4$, and $\pi/4<\varphi<\pi/2$, which we will
label as cases $a$, $b$, and $c$, respectively. In the three windows of Figure
\ref{fig1}, the three cases are represented by the minimal sets $K_{0}%
(f_{2},\infty)$ for the parameter values $\varphi=\pi/6,$ $\varphi=\pi/4$, and
$\varphi=101\pi/400$, respectively. The value $\varphi=101\pi/400$ has been
chosen to be close to the critical value $\varphi=\pi/4$. The picture in the
third window gives an impression of the metamorphosis of the set $K_{0}%
(f_{2},\infty)$ when $\varphi$\ approaches and then crosses the critical value
$\varphi_{0}=\pi/4$.

In the two cases $a$\ and $c$, the minimal set $K_{0}(f_{2},\infty)$\ consists
of two components. We have $E_{0}=\{z_{1},\ldots,z_{4}\}$, $E_{1}=\emptyset$,
$E_{2}=\{0\}$, $I=\{1,2\}$, and the two analytic Jordan arcs $J_{1}$ and
$J_{2}$ in $K_{0}(f_{2},\infty)$\ which connect the two pairs of branch points
$\{z_{1},z_{4}\}$ and $\{z_{2},z_{3}\}$ in case $a$ and the two pairs
$\{z_{1},z_{2}\}$ and $\{z_{3},z_{4}\}$ in case $c$.

The case $b$ corresponds to the single parameter value $\varphi=\pi/4$. Here,
all four branch points $z_{1},\ldots,z_{4}$ are connected in $K_{0}%
(f_{2},\infty)$; the set is a continuum. We have $E_{0}=\{z_{1},\ldots
,z_{4}\}$, $E_{1}=\{0\}$, $E_{2}=\emptyset$, $I=\{1,\ldots,4\}$, and the four
Jordan arcs $J_{1},\ldots,J_{4}$ in $K_{0}(f_{2},\infty)$\ are the four
segments $(0,z_{j})$, $j=1,\ldots,4$.

The two Jordan arcs $J_{1}$ and $J_{2}$ in the two cases $a$ and $c$, and also
the $4$ Jordan arcs $J_{1},\ldots,\allowbreak J_{4}$ in case $b,$\ are
trajectories of the quadratic differential
\begin{equation}
\frac{z^{2}}{\prod_{j=1}^{4}(z-z_{j})}dz^{2}.\label{f62b}%
\end{equation}

Taking advantage of the symmetry of the function $f_{2}$, one can show that
for each $\varphi\in\left(  0,\pi/2\right)  \setminus\{\pi/4\}$ the two arcs
$J_{1}$ and $J_{2}$ are sections of an hyperbole. Indeed, it is not difficult
to verify that the mapping $z\mapsto z^{2}$ maps the two arcs $J_{1}$ and
$J_{2}$ onto one straight segment, which proves this last assertion.\smallskip

\subsection{\label{e63}Example $f_{3}$}

\qquad The third example is very similar to the second one, only that now the
forth root is taken instead of the square root in (\ref{f62a}). We use the
same definitions for $\varphi$,$\ \varphi_{j}$, $z_{j}$, $j=1,\ldots,4 $, and
$P_{4}$ as in Example \ref{e62}, and define function $f_{3}$\ as
\begin{equation}
f_{3}(z):=\sqrt[4]{P_{4}(z)}.\label{f63a}%
\end{equation}
The branch of the root $\sqrt[4]{\cdot}$ is chosen so that $f_{3}(\infty)=1 $.
Although the basic structure of the two functions $f_{3}$\ and $f_{2}$\ is
very similar, there exist decisive differences with respect to their
meromorphic continuability. For each parameter value $\varphi\in\left(
0,\pi/2\right)  $, the set $\mathcal{D}(f_{3},\infty)$ of admissible domains
for Problem $(f_{3},\infty)$ consists of all domains $D\subset\overline
{\mathbb{C}}$ such that $\infty\in D$ and all four branch points $z_{1}%
,\ldots,z_{4}$ are connected in the complementary set $K=\overline{\mathbb{C}%
}\setminus D$.%
\begin{figure}
[ptb]
\begin{center}
\includegraphics[
height=2.0903in,
width=4.5186in
]%
{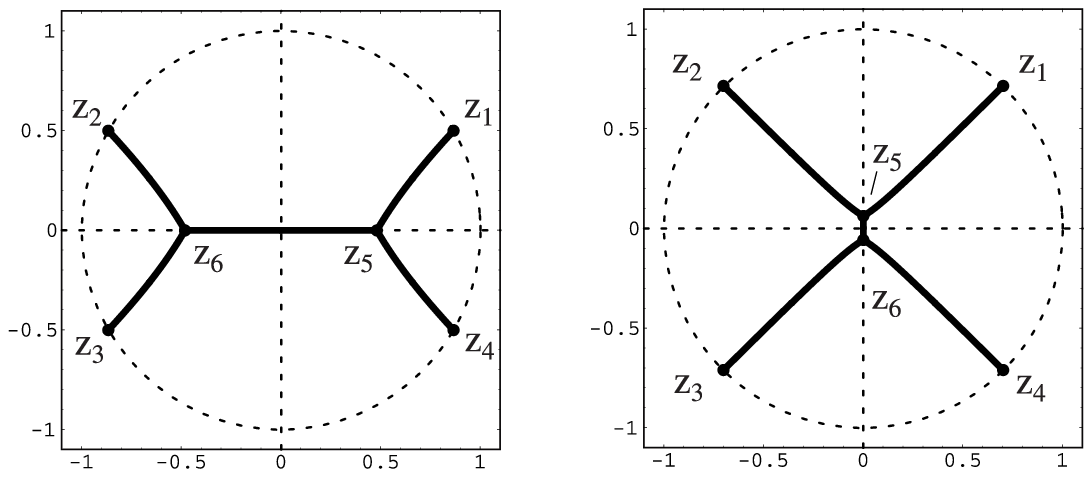}%
\caption{Two examples of minimal sets $K_{0}(f_{3},\infty)$ for Problem
$(f_{3},\infty)$ with $f_{3}$ defined in (\ref{f63a}). The two windows
correspond to the parameter values $\varphi=\pi/6$ and $\varphi=101\pi/400$,
respectively.}%
\label{fig2}%
\end{center}
\end{figure}

As in Example \ref{e62}, we distinguish three cases $a$, $b$, and $c$, which
are again defined by $0<\varphi<\pi/4$, $\varphi=\pi/4$, and $\pi
/4<\varphi<\pi/2$, respectively. In all three cases, the minimal set
$K_{0}(f_{3},\infty)$ is connected, and the extremal domain $D_{0}%
(f_{3},\infty)$ is simply connected. However, the minimal set $K_{0}%
(f_{3}\allowbreak\infty)$ is of a somewhat different structure in each of the
three cases.

In case $b,$ the two functions $f_{2}$ and $f_{3}$ have an identical extremal
domain $D_{0}(f_{3},\infty)$ and an identical minimal set $K_{0}(f_{3}%
,\infty)=K_{0}(f_{2},\infty)$. The minimal set has already been shown in the
middle window of Figure \ref{fig1}.

For the two other cases $a$ and $c$, two representatives of the minimal sets
$K_{0}(f_{3},\allowbreak\infty)$ are shown in Figure \ref{fig2}. The two cases
are represented by the same two parameter values $\varphi=\pi/6$ and
$\varphi=101\,\pi/400$ as already used before in Figure \ref{fig1}. A new
phenomenon now is the appearance of two bifurcation points in $K_{0}%
(f_{3},\infty)$, which are denoted by $z_{5}$ and $z_{6}$ in Figure \ref{fig2}.

In the two cases $a$\ and $c$, we have $E_{0}=\{z_{1},\ldots,z_{4}\}$,
$E_{1}=\{z_{5},z_{6}\}$, $E_{2}=\emptyset$, $I=\{1,\ldots,5\}$, and the five
open analytic Jordan arcs $J_{1},\ldots,J_{5}$ in $K_{0}(f_{3},\infty)
$\ connect the six points $z_{1},\ldots,z_{6}$ as shown in Figure \ref{fig2}.
These five Jordan arcs $J_{1},\ldots,\allowbreak J_{5}$, and also the four
arcs in case $b$, are trajectories of the quadratic differential
\begin{equation}
\frac{(z-z_{5})(z-z_{6})}{\prod_{j=1}^{4}(z-z_{j})}dz^{2}.\label{f63b}%
\end{equation}

Notice that in case $b$, we have $z_{5}=z_{6}=0$. In the two other cases, we
always have $z_{5}=-z_{6}\neq0$. From a practical point of view the
calculation of the two bifurcation points $z_{5}$ and $z_{6}$ is the main work
and causes the main difficulties for the calculation of the arcs $J_{1}%
,\ldots,\allowbreak J_{5}$. We want to take a closer look on this
problem.\smallskip

The form of the quadratic differential (\ref{f63b}) already suggests that
elliptic integrals should play a role in the analytic determination of the
bifurcation points $z_{5}$ and $z_{6}$. Indeed, with the machinery presented
in \cite{Lowien73}, \cite{Blerch82}, or \cite{Pirl69}, it is not too difficult
to formulate conditions that allow to determine the points $z_{5}$ and $z_{6}%
$. We reproduce the main elements of the procedure for case $a$, i.e., for the
case $\varphi\in\left(  0,\pi/4\right)  $, and define the function
\begin{equation}
g(a,x):=\left\vert \sqrt{\frac{x-a}{x\,(x^{2}-2x\cos(2\varphi)+1)}}\right\vert
\text{ \ for \ }a\in(0,1)\text{, \ }x\in\mathbb{R.}\label{f63c}%
\end{equation}
The improper elliptical integral
\begin{equation}
I(a):=\lim_{c\rightarrow+\infty}\left[  \int_{a}^{c}g(a,x)dx-\int_{-c}%
^{0}g(a,x)dx\right] \label{f63d}%
\end{equation}
is strictly monotonic for $a\in(0,1)$, and we have $I(0)>0$ and $I(1)<0$.
Consequently, there uniquely exists $a_{0}\in(0,1)$ with $I(a_{0})=0$. The two
bifurcation points $z_{5}$ and $z_{6}$ are then given by
\begin{equation}
z_{5}=z_{5}(\varphi)=+\sqrt{a_{0}},\text{ \ \ }z_{6}=-z_{5}.\label{f63e}%
\end{equation}
For the special parameter value $\varphi=\pi/6$, for which the corresponding
minimal set $K_{0}(f_{3},\infty)$ is shown in the first window of Figure
\ref{fig2}, we get%

\begin{equation}
a_{0}=0.231584\text{, \ \ }z_{5}=0.481232\text{, \ and \ }z_{6}%
=-0.481232.\label{f63f}%
\end{equation}

In a derivation of the expressions (\ref{f63c})\ and (\ref{f63d}), one has in
a first step to transform the minimal set $K_{0}(f_{3},\infty)$\ by the
mapping $z\mapsto z^{2}$ into a continuum that connects the three points $0$,
$e^{i2\varphi}$, and $e^{-i2\varphi}$.

After the reduction to a three-point problem, one can apply results that have
been proved in \cite{Kuzmina68} (see also \cite{Kuzmina82}, Theorem 1.5). In
\cite{Kuzmina82}, Theorem 1.5, the value $a_{0}$ is expressed as the solution
of a system of four equations that involve Jacobi elliptical functions and
theta functions. We have not investigated whether the approach is numerically
easier to handle than the equation $I(a_{0})\overset{!}{=}0$, which is based
on (\ref{f63d}). In any case, the level of difficulties that arise already in
this rather simply structured case of function $f_{3}$ gives an idea of the
type of difficulties that arise if one has to determine the points in the set
$E_{1}$ (and $E_{2}$) in a more general situation. In the next two examples
these points have been calculated by a numerical method that has been
developed by the author on an ad-hoc basis. It is based on a geometrical
approach. The method will be published in a separate paper. Further comments
about the numerical side of the problem will be made in Subsection \ref{s83},
further below.

\subsection{\label{e64}Example $f_{4}$}

\qquad In the fourth example, we consider a modification of the function
$f_{3}$, which itself has already been a modification of function $f_{2}$. We
use again the definitions $\varphi$,$\ \varphi_{j}$, $z_{j}$, $j=1,\ldots,4$,
and $P_{4}$ from Example \ref{e62}, and define the new function $f_{4}$\ as
\begin{equation}
f_{4}(z):=\sqrt[2]{\sqrt[2]{P_{4}(z)}-c}.\label{f64a}%
\end{equation}
In addition to the former parameter $\varphi$, there is now a second parameter
$c$, which may assume arbitrary complex values $c\in\mathbb{C}$, but we shall
consider only special situations. We discuss complex values of $c$ that lie
near the origin, and in addition real values of $c$ in the interval $(0,1)$.
The signs of the inner and outer square root in (\ref{f64a}) are assumed to be
chosen in such a way that both roots are positive for $z=\infty$ and $c=0$. In
case of $c=0$, the two functions $f_{4}$ and $f_{3}$\ are identical.\smallskip

The study of the function $f_{4}$ and its associated minimal set $K_{0}%
(f_{4},\infty)$ will be more complex and involved than that of the last two
examples, which in some sense have been preparations of the present example.
Our main interest will be concentrated on the following three questions:

\begin{itemize}
\item[1)] It is not difficult to see that for almost all parameter
constellations the function $f_{4}$ has $8$ branch points. But not all of them
will always play an active role in the determination of the minimal set
$K_{0}(f_{4},\infty)$, some of them are hidden away from $K_{0}(f_{4},\infty)$
somewhere on a 'lower' sheet of the Riemann surface $\mathcal{R}_{f_{4}}$ that
is defined by $f_{4}$. In the terminology of Section \ref{s3}, we can say that
these inactive branch points on $\mathcal{R}_{f_{4}}$ stay away from the
extremal domain $D_{0}(\mathcal{R}_{f_{4}},\infty^{(0)})\subset\mathcal{R}%
_{f_{4}}$. The first question in our discussion is therefore:\ Which of the
branch points of $f_{4}$ are 'active' and which ones are 'inactive' for a
given parameter constellation?

\item[2)] We have already seen in Example \ref{e62} that the connectivity of
the minimal set $K_{0}(f_{4},\infty)$ can change. Motivated by this
experience, the second question will be: What is the connectivity of the
minimal set $K_{0}(f_{4},\infty)$ for a given parameter constellation, and how
does it change with variations of the parameter values?

\item[3)] At the end of the last example we have discussed in some detail the
difficulties to find the points of the set $E_{1}$. In general these points
are bifurcation points of the minimal set, and these points are crucial for
the quadratic differential (\ref{f53b}) in Theorem \ref{t53a}. The third
question is therefore: How do the bifurcation points of the minimal set
$K_{0}(f_{4},\infty)$ depend on the parameter values, and at which parameter
constellations\ do these points merge or split up?
\end{itemize}

The function $f_{4}$ has in general eight branch points; four of them are
identical with those of the two functions $f_{2}$\ and $f_{3}$, and they will
be denoted again by $z_{1},\ldots,z_{4}$. These four branch points do not
depend on the parameter $c$.

For every parameter $\varphi\in\lbrack0,\pi/2)$ there exists a whole region of
parameter values $c$ such that only these four 'old' branch points
$z_{1},\ldots,z_{4}$ of $f_{4}$ appear in the minimal set $K_{0}(f_{4}%
,\infty)$, and in these cases they are the only branch points that play an
active role in the determination of $K_{0}(f_{4},\infty)$. All other branch
points will be called 'inactive'.\smallskip%
\begin{figure}
[ptb]
\begin{center}
\includegraphics[
height=2.0695in,
width=2.4491in
]%
{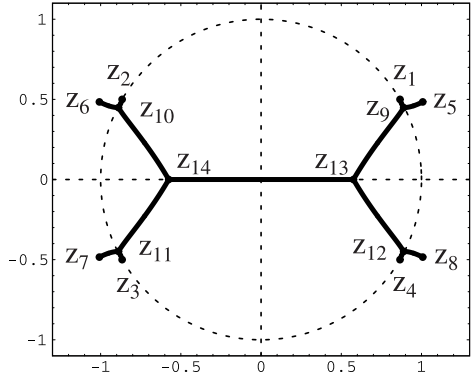}%
\caption{The minimal set $K_{0}(f_{4},\infty)$ for Problem $(f_{4},\infty)$
with $f_{4}$ defined in (\ref{f64a}) with parameter values $\varphi=\pi/6$ and
$c=\sqrt{0.4}$.}%
\label{fig3}%
\end{center}
\end{figure}

Throughout the discussion, we keep the parameter $\varphi=\pi/6$ fixed, which
implies that all minimal sets $K_{0}(f_{4},\infty)$ that will be considered
during our discussion should be compared with the set $K_{0}(f_{3},\infty)$ in
the first window of Figures \ref{fig2}.\smallskip

In a first step we choose
\begin{equation}
c=r\,e^{it}\text{ \ \ with \ \ }t\in\lbrack0,2\pi)\text{ \ \ and
\ \ }r>0\text{ \ small,}\label{f64b}%
\end{equation}
and see what happens. If $|c|>0$ is small, then the four new branch points
$z_{5},\ldots,z_{8}$ of the function $f_{4}$ lie close to the four old branch
points $z_{1},\ldots,z_{4}$. In Figure \ref{fig3} the situation is shown for
the parameter values $\varphi=\pi/6$ and $c=\sqrt{0.4}$. Of course,
$\sqrt{0.4}$ is not very small, however, smaller values of $|c|$\ lead to
configurations that are difficult to plot.\smallskip%
\begin{figure}
[t]
\begin{center}
\includegraphics[
height=4.0845in,
width=4.0378in
]%
{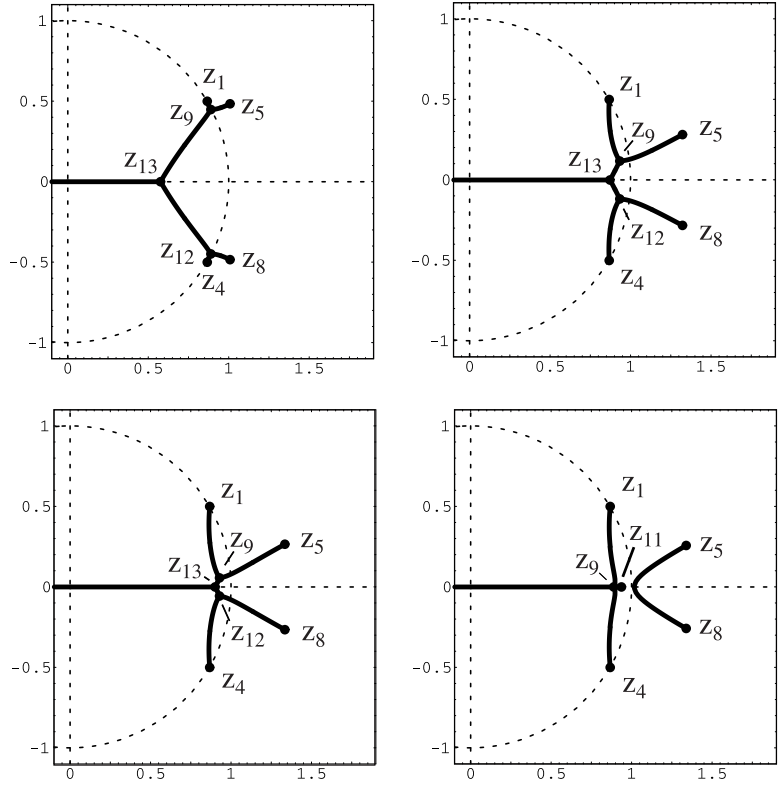}%
\caption{Four examples of minimal sets $K_{0}(f_{4},\infty)$ for Problem
$(f_{4},\infty)$ with $f_{4}$ defined in (\ref{f64a}) with the parameter
$\varphi=\pi/6$ fixed and parameter values $c=\sqrt{0.4},\sqrt{0.7}%
,\sqrt{0.705},$ and $\sqrt{0.715}$\ from top row left to bottom row right.}%
\label{fig4}%
\end{center}
\end{figure}

While in (\ref{f64b}) the parameter $t$ runs through $[0,2\pi)$, each one of
the four new branch points $z_{5},\ldots,z_{8}$ encircles two times the
corresponding old branch point $z_{1},\ldots,z_{4}$.

The interesting point is now that the four new branch points $z_{5}%
,\ldots,z_{8}$ are elements of the minimal set $K_{0}(f_{4},\infty)$ only on
one half of their twofold circular path. On the other half, they become
'inactive', i.e., they are hidden away on another sheet of the Riemann surface
$\mathcal{R}_{f_{4}}$. In this later case, the set $K_{0}(f_{4},\infty)$
contains only the four branch points $z_{1},\ldots,\allowbreak z_{4}$, and
consequently, it is identical with the minimal set $K_{0}(f_{3},\infty)$,
which has been shown in the first window of Figure \ref{fig2}.

It has already been said that in Figure \ref{fig3}, the minimal set
$K_{0}(f_{4},\infty)$ is shown for the parameter values $\varphi=\pi/6$ and
$c=\sqrt{0.4}$. This is a parameter constellation in which all eight branch
points $z_{1},\ldots,z_{8}$ are active. In contrast to this, the parameter
constellation $\varphi=\pi/6$ and $c=-\sqrt{0.4}$, which corresponds to
$t=\pi$\ in (\ref{f64b}), leads to a minimal set $K_{0}(f_{4},\infty)$ that
contains only the four old branch points $z_{1},\ldots,z_{4}$, and it is
therefore identical with the minimal set $K_{0}(f_{3},\infty)$ shown in the
first window of Figure \ref{fig2}.

Studying the minimal set $K_{0}(f_{4},\infty)$ for $|c|$ small, gives a good
illustration of the phenomenon of active and inactive branch points. Of
course, an extension of such a discussion to arbitrary values of
$c\in\mathbb{C}$ would be possible, but it become rather
complicated.\smallskip

Next, we consider Problem $(f_{4},\infty)$ for the six specially chosen real
parameter values $c=\sqrt{0.4},\allowbreak\sqrt{0.7},\allowbreak\sqrt
{0.705},\allowbreak\sqrt{0.715},\sqrt{0.74},\sqrt{0.76}$ and keep again
$\varphi=\pi/6$\ fixed. The selected values should be seen as representatives
for the general situation of $c\in(0,1)$. The discussion will show why the
specific selection is interesting.\smallskip

There exists numerical evidence (but no analytic proof, so far) that at the
critical parameter value $c_{0}=\sqrt[4]{1/2}$, the minimal set $K_{0}%
(f_{4},\infty)$ changes its connectivity. It is obvious that there exists
$c_{0}\in(0,1)$ which is equal to, or lies close to $\sqrt[4]{1/2}$ such that
for $0\leq c\leq c_{0}$ the set $K_{0}(f_{4},\infty)$ is connected, and for
$c_{0}<c<1$ it is disconnected. In the disconnected case, it consists of three
components. For $c\rightarrow c_{0}-0$,\ in each of the two half-planes
$\{\operatorname*{Re}(z)\lessgtr0\}$ three bifurcation points of $K_{0}%
(f_{4},\infty)$ merge and form a new bifurcation point of order five in each
of the two half-planes.

In Figure \ref{fig4}, the sequence of four minimal sets $K_{0}(f_{4},\infty) $
is shown for the parameter values we have $c=\sqrt{0.4},\allowbreak\sqrt
{0.7},\allowbreak\sqrt{0.705},\allowbreak\sqrt{0.715}$. The sequence shows the
metamorphosis of the set $K_{0}(f_{4},\infty)$ while the parameter $c$ crosses
the critical value $c_{0}=\sqrt[4]{1/2}=\sqrt{0.707...} $. In the four windows
the set $K_{0}(f_{4},\infty)$\ is shown only for the right half-plane.

In the first three windows of Figure \ref{fig4}, the minimal set $K_{0}%
(f_{4},\infty)$ is connected, and there are three bifurcation points
$z_{9},z_{12},$ and $z_{13}$, each of order $3$, which then merge to a single
bifurcation point when $c$ reaches the critical value $c_{0}$. At that moment,
the new bifurcation point is of order $5$.

When the critical value $c_{0}$ has been passed, then the minimal set
$K_{0}(f_{4},\infty)$ is disconnected, as shown in the fourth window of Figure
\ref{fig4}. There remains a bifurcation point $z_{9}$ of order $3$, and as a
new phenomenon, we have a critical point of the Green function $g_{D_{0}%
}(\cdot,\infty)$, $D_{0}=D_{0}(f_{4},\infty)$, at $z_{11}$.\smallskip%
\begin{figure}
[ptb]
\begin{center}
\includegraphics[
height=1.9597in,
width=3.9704in
]%
{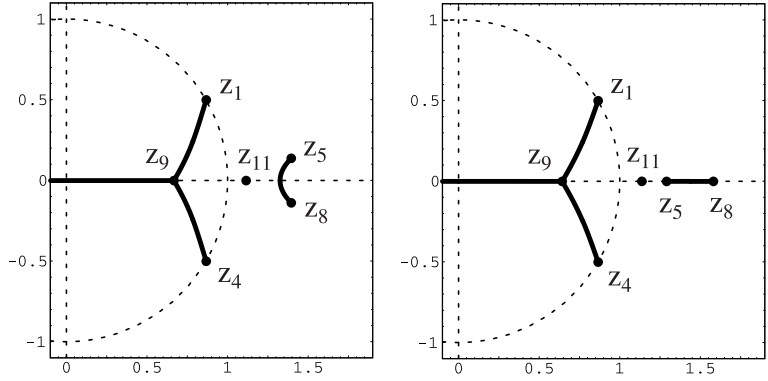}%
\caption{Two examples of minimal sets $K_{0}(f_{4},\infty)$ for Problem
$(f_{4},\infty)$ with $f_{4} $ defined in (\ref{f64a}) with parameter
$\varphi=\pi/6$ and $c=\sqrt{0.74}$ in the left window and $\varphi=\pi/6$ and
$c=\sqrt{0.76}$ in the right window.}%
\label{fig4b}%
\end{center}
\end{figure}

Another interesting parameter value is $c_{1}=\sqrt{3/4},$ since at the
parameter constellation $\varphi=\pi/6$ and $c_{1}=\sqrt{3/4}$ two pairs of
branch points of the function $f_{4}$ collapse to simple zeros of $f_{4}%
$.\ These two simple zeros are located at $\pm\sqrt{2}$.

In Figure \ref{fig4b}, the transition process at the critical value
$c_{1}=\sqrt{3/4}$ is represented by the two parameter values $c=\sqrt{0.74}$
and $c=\sqrt{0.76}$. One can see how the concerned components of $K_{0}%
(f_{4},\infty)$ change their shape from a type of vertical arcs to horizontal
slits.\smallskip

We conclude the discussion of Example \ref{e64} by assembling informations
about the sets $E_{0}$, $E_{1}$, $E_{2}$, and the arcs $J_{j}$, $j\in I$,
introduced in Theorem \ref{t41a} and in Definition \ref{d53b}. This is done
for the six parameter constellations of the two Figures \ref{fig4}\ and
\ref{fig4b}. In addition we also give the quadratic differential $q(z)dz^{2}$
from Theorem \ref{t53a}. This information corresponds to the whole set
$K_{0}(f_{4},\infty)$, while in the Figures \ref{fig4}\ and \ref{fig4b} only
restrictions to the right half-plane have been plotted.\smallskip

For the three parameter values $c=\sqrt{0.4},\allowbreak\sqrt{0.7}%
,\allowbreak\sqrt{0.705}$ the minimal set $K_{0}(f_{4},\allowbreak\infty)$ is
connected, and with respect to $E_{0}$, $E_{1}$, $E_{2}$, $J_{j}$, $j\in I $,
and the quadratic differential $q(z)dz^{2}$ we have identical structures.

We have $E_{0}=\{z_{1},\ldots,z_{8}\}$, $E_{1}=\{z_{9},\ldots,z_{14}\}$, and
$E_{2}=\emptyset$. All eight branch points $z_{1},\ldots,z_{8}$ of $f_{4}$ are
active, there are six bifurcation points $z_{9},\ldots,\allowbreak z_{14}$ and
$13$ Jordan arcs $J_{j}$, $j\in I=\{1,\ldots,13\}$. In accordance to Theorem
\ref{t53a}, all $13$ arcs $J_{j}$, $j\in I$, are trajectories of the quadratic
differential
\begin{equation}
\frac{\prod_{j=9}^{14}(z-z_{j})}{\prod_{j=1}^{8}(z-z_{j})}dz^{2}.\label{f64c}%
\end{equation}

For the three parameter values $c=\sqrt{0.715},\allowbreak\sqrt{0.74}%
,\allowbreak\sqrt{0.76}$, which correspond to the fourth window in Figure
\ref{fig4} and the two windows in Figure \ref{fig4b}, the minimal set
$K_{0}(f_{4},\infty)$ consists of three components. The sets $E_{0}$, $E_{1}$,
$E_{2}$, $J_{j}$, $j\in I$, and the quadratic differential $q(z)dz^{2}$ are of
the same structure in all three cases. There are two bifurcation points
$z_{9}$, $z_{10}$, and the Green function $g_{D_{0}}(\cdot,\infty),$
$D_{0}=D_{0}(f_{4},\infty),$\ has two critical points $z_{11}$ and $z_{12}$.

Thus, we have $E_{0}=\{z_{1},\ldots,z_{8}\}$, $E_{1}=\{z_{9},z_{10}\}$, and
$E_{2}=\{z_{11},z_{12}\}$. There are $7$ Jordan arcs $J_{j}$, $j\in
I=\{1,\ldots,7\}$, and these arcs are trajectories of the quadratic
differential
\begin{equation}
\frac{\prod_{j=9}^{10}(z-z_{j})\prod_{j=11}^{12}(z-z_{j})^{2}}{\prod_{j=1}%
^{8}(z-z_{j})}dz^{2}.\label{f64d}%
\end{equation}
\smallskip

\subsection{\label{e65}Example $f_{5}$}

\qquad As a last example, we come back to the algebraic function (\ref{f1a}),
which has already been used in the Introduction for a demonstration of the
connection between Pad\'{e} approximation and sets of minimal capacity. This
function is now denoted as $f_{5}$, and it has been defined in (\ref{f1a}) as
\begin{equation}
f_{5}(z):=\sqrt[4]{\prod\nolimits_{j=1}^{4}(1-z_{j}/z)}+\sqrt[3]%
{\prod\nolimits_{j=5}^{7}(1-z_{j}/z)}\label{f65a}%
\end{equation}
with the $7$ branch points that have been chosen as
\begin{align}
z_{1}  & =1+3\,i,\text{ \ \ \ }z_{2}=-4+2\,i,\text{ \ \ \ }z_{3}=-4+i,\text{
\ \ \ }z_{4}=0+2\,i,\nonumber\\
z_{5}  & =2+2\,i,\text{ \ \ \ }z_{6}=3+4\,i,\text{ \ \ \ \ \ }z_{7}%
=1+4\,i.\label{f65b}%
\end{align}

The choice of the branch points was in principle arbitrary, but it reflects
the intension to avoid symmetries in the minimal set $K_{0}(f_{5},\infty)$ of
a sort that has been very dominant in the $3$\ Examples \ref{e62} -
\ref{e64}.\smallskip

From the structure of function $f_{5}$, we conclude that the set
$\mathcal{D}(f_{5},\infty)$ of admissible domains for Problem $(f_{5},\infty)$
introduced in Definition \ref{d21a}\ consists of all domains $D\subset
\overline{\mathbb{C}}$ such that $\infty\in D$ and that the elements of each
of the two subsets of branch points $\{z_{1},\ldots,z_{4}\}$ and
$\{z_{5},z_{6},z_{7}\}$ are connected in the complement $K=\overline
{\mathbb{C}}\setminus D$.\smallskip%
\begin{figure}
[ptb]
\begin{center}
\includegraphics[
height=2.821in,
width=4.1096in
]%
{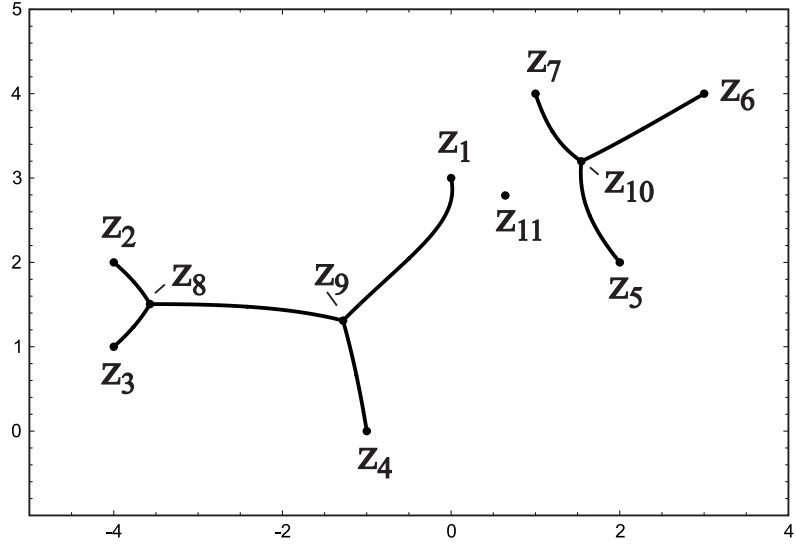}%
\caption{The minimal sets $K_{0}(f_{5},\infty)$ for Problem $(f_{5},\infty)$
with $f_{5} $ defined by (\ref{f65a}) and (\ref{f65b}).}%
\label{fig5}%
\end{center}
\end{figure}

It turns out that the minimal set $K_{0}(f_{5},\infty)$ consists of two
components, and that indeed each of them connects one of the two sets
$\{z_{1},\ldots,z_{4}\}$ and $\{z_{5},z_{6},z_{7}\}$. The set $K_{0}%
(f_{5},\infty)$ is shown in Figure \ref{fig5}. It has three bifurcation
points, which are denoted by $z_{8},z_{9},z_{10}$, and the Green function
$g_{D_{0}}(\cdot,\infty)$ in the extremal domain $D_{0}=D_{0}(f_{5},\infty)$
possesses exactly one critical point, which is denoted by $z_{11}$ in Figure
\ref{fig5}.

While the $7$ branch points $z_{1},\ldots,z_{7}$ in (\ref{f65b}) can be
considered as input to the problem, the location of the four other points
$z_{8},\ldots,z_{11}$ has to be determined by the criterion of minimality of
the set $K_{0}(f_{5},\infty)$. The calculation of these four points has been
done numerically, and their values are
\begin{align}
z_{8}  & =-3.57021+1.50570\,i,\nonumber\\
z_{9}  & =-1.28112+1.30991\,i,\nonumber\\
z_{10}  & =\;\text{\ \ }1.54341+3.19816\,i,\label{f65c}\\
z_{11}  & =\;\text{\ \ }0.64231+2.79311\,i.\nonumber
\end{align}

The $8$ Jordan arcs $J_{j}$, $j\in I=\{1,\ldots,8\}$, in $K_{0}(f_{5},\infty)$
are trajectories of the quadratic differential
\begin{equation}
\frac{(z-z_{11})^{2}\prod_{j=8}^{10}(z-z_{j})}{\prod_{j=1}^{7}(z-z_{j})}%
dz^{2}.\label{f65d}%
\end{equation}
The sets $E_{0}$, $E_{1}$, $E_{2}$ introduced in Theorem \ref{t41a} and in
Definition \ref{d53b} are now $E_{0}=\{z_{1},\ldots,z_{7}\}$, $E_{1}%
=\{z_{8},z_{9},z_{10}\}$, and $E_{2}=\{z_{11}\}$.\medskip

\subsection{\label{s66}Some General Remarks}

\qquad The main motivation for the selection and presentation of the $5$
Examples \ref{e61} - \ref{e65} was to illustrate the variety of topological
structures that are possible for the minimal set $K_{0}(f,\infty)$. Naturally,
such examples should be kept simple, but even for the comparatively simply
structured functions $f_{1},\ldots,f_{5}$\ in the Examples \ref{e61} -
\ref{e65}, the shape and the connectivity of the minimal set $K_{0}(f,\infty)$
has not always been clear at the outset of the analysis.

Naturally, the situation becomes more technical and much more difficult to
handle if the function $f$ becomes more complex, and especially, if it is no
longer algebraic. As a consequence, the set $E_{0}$ may no longer be finite.
For general functions $f$ it is very difficult to predict shape and
connectivity of the minimal set $K_{0}(f,\infty)$. One way to get some
information and a rough idea in this respect is to calculate poles of Pad\'{e}
approximants to the function $f$. This, by the way, has been done in the study
of the function (\ref{f1a}) in the Introduction, and the result in Figure
\ref{fig0} should be compared with Figure \ref{fig5}.

A critical task\ for the numerical calculation of the Jordan arcs $J_{j}$,
$j\in I$, in the minimal set $K_{0}(f,\infty)$ is the calculation of the zeros
in the quadratic differential (\ref{f53b}) in Theorem \ref{t53a}. For this
purpose we have developed a numerical procedure, which has been used in the
analysis of the Examples \ref{e63} - \ref{e65}. More details about this topic
will be given in Subsection \ref{s83}, further blow.\medskip

\section{\label{s7}A Local Criterion and Geometric Estimates}

\qquad The $S-$property (symmetry property), which has already been introduced
and considered in Subsections \ref{s51}, will again take central stage in the
first three subsections. We start with a definition of this property that
characterises the whole domain, and will then show that it is a local
condition for the minimality (\ref{f21a}) in Definition \ref{d21b}. As a
somewhat surprising result in Subsection \ref{s73}, we shall formulate a
theorem in which it is proved that the $S-$property is also sufficient for the
global minimality (\ref{f21a}) in Definition \ref{d21b}. In the fourth
subsections several inclusion relations for the minimal set $K_{0}(f,\infty) $
are presented that can be helpful in many practical situations.\smallskip

\subsection{\label{s71}A General Definition of the $S-$Property}

\qquad In Theorem \ref{t51a} of Subsection \ref{s51} the $S-$property
(\ref{f51a}) appears as an important characteristic of the extremal domain
$D_{0}(f,\infty)$ and its complementary minimal set $K_{0}(f,\infty)$. In the
present subsection we define the $S-$property for arbitrary admissible domains
$D\in\mathcal{D}(f,\infty)$. We start with an auxiliary definition.\smallskip

\begin{definition}
\label{d71a0}An admissible domain $D\in\mathcal{D}(f,\infty)$ for Problem
$(f,\infty)$ is called elementarily maximal if for every point $z\in\partial
D$ one of the following two assertions holds true.

\begin{itemize}
\item[(i)] There exists at least one meromorphic continuation of the function
$f$ out of the domain $D$ that has a non-polar singularity at $z$.

\item[(ii)] There exist at least two meromorphic continuations of the function
$f$ out of $D$ that lead to two non-identical function elements at
$z$.\smallskip
\end{itemize}
\end{definition}

It is immediate that if an admissible domain $D\in\mathcal{D}(f,\infty)$ is
not elementarily maximal, then the domain $D$ can be enlarged in a straight
forward way without leaving the class $\mathcal{D}(f,\infty)$ of admissible
domains. Hence, the elementarily maximal domains are the maximal elements in
$\mathcal{D}(f,\infty)$ with respect to ordering by inclusion. We formulate
this statement as a proposition.\smallskip

\begin{proposition}
\label{p71a}The elementarily maximal domains of Definition \ref{d71a0} are the
maximal elements in $\mathcal{D}(f,\infty)$ with respect to ordering by
inclusion.\smallskip
\end{proposition}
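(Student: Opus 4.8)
The claim has two halves: first, that every elementarily maximal domain is a maximal element of $\mathcal{D}(f,\infty)$ under inclusion, and second, that every maximal element is elementarily maximal. The plan is to prove the two implications separately, the second being essentially the contrapositive of the remark preceding the proposition, which I would make rigorous.

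For the first direction, suppose $D \in \mathcal{D}(f,\infty)$ is elementarily maximal but not a maximal element, so there is $D' \in \mathcal{D}(f,\infty)$ with $D \subsetneq D'$. Pick a point $z \in \partial D \cap D'$ (such a point exists: since $D \subsetneq D'$ and both are domains containing $\infty$, the set $D' \setminus D$ is nonempty and open, and $\partial D$ must meet $D'$, for otherwise $D$ would be relatively closed in the connected set $D'$ and hence equal to $D'$). By elementary maximality, at $z$ either (i) some meromorphic continuation of $f$ out of $D$ has a non-polar singularity, or (ii) two such continuations give non-identical function elements at $z$. But $z \in D'$ and $D'$ is admissible, so $f$ has a single-valued meromorphic continuation throughout $D'$; in particular $f$ is meromorphic at $z$ and, since $D \subset D'$, any continuation of $f$ out of $D$ toward $z$ along a path in $D'$ agrees with the $D'$-continuation near $z$. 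This rules out both (i) and (ii) — a contradiction. I would need to be a little careful here about what "meromorphic continuation of $f$ out of $D$" means precisely (presumably: analytic/meromorphic continuation of a germ of $f$ along paths leaving $D$), but the point is simply that admissibility of $D'$ forces local single-valued meromorphy at each point of $D'$, which contradicts the local obstruction supplied by elementary maximality.

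For the second direction, suppose $D \in \mathcal{D}(f,\infty)$ is a maximal element but is \emph{not} elementarily maximal; then there is a point $z_0 \in \partial D$ at which neither (i) nor (ii) holds. Failure of (i) means every meromorphic continuation of $f$ out of $D$ is meromorphic (i.e.\ has at worst a pole, or is analytic) at $z_0$; failure of (ii) means all such continuations produce the \emph{same} function element at $z_0$. Hence there is a single well-defined meromorphic germ $g$ of $f$ at $z_0$, consistent with $f$ on $D$ near $z_0$. Choose a small open disc $B$ about $z_0$ on which $g$ is meromorphic; then $f$ extends meromorphically and single-valuedly to $D \cup B$. I would then verify $D \cup B$ is connected (it is, since $B \ni z_0 \in \partial D$ meets $D$) and that the extension is genuinely single-valued on $D \cup B$ — here one checks that going around any loop in $D \cup B$ returns the same germ, using that inside $D$ single-valuedness is given, inside $B$ it is given, and on overlaps the germs match by construction. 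Thus $D \cup B \in \mathcal{D}(f,\infty)$ and $D \subsetneq D \cup B$ (strict because $z_0 \notin D$), contradicting maximality of $D$.

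The main obstacle is the bookkeeping in the second direction: one must argue that the local extension at \emph{each} bad point of $\partial D$ can be glued to $D$ without creating multivaluedness, which requires knowing that \emph{all} continuations of $f$ out of $D$ to $z_0$ agree — precisely the negation of (ii) — and then checking monodromy around loops in the enlarged domain. The negation of (i) is needed to ensure the glued piece is still merely \emph{meromorphic} (not worse, e.g.\ an essential singularity or a branch point), so that admissibility condition (ii) of Definition \ref{d21a} is preserved. Once these two points are handled carefully, the enlargement $D \cup B$ is admissible and strictly larger, giving the contradiction; and conversely elementary maximality precisely blocks any such enlargement at every boundary point, which is the content of the first direction. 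This establishes that the two notions coincide.
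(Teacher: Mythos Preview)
Your argument is correct and is exactly what the paper has in mind. The paper does not supply a formal proof here: it simply remarks, just before stating the proposition, that a non-elementarily-maximal admissible domain ``can be enlarged in a straight forward way without leaving the class $\mathcal{D}(f,\infty)$'', and then records the statement; your two-direction argument is precisely the careful fleshing-out of that remark.
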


From the Structure Theorem \ref{t41a} in Subsection \ref{s41}, we easily
deduce that the extremal domain $D_{0}(f,\infty)$ is elementarily maximal, but
of course, there exist many other maximal elements in $\mathcal{D}(f,\infty
)$.\ Often it is helpful, and in most situations also possible, to assume
without loss of generality that an arbitrarily chosen admissible domain
$D\in\mathcal{D}(f,\infty)$ is elementarily maximal.

After these preliminaries we come to the definition of the $S-$property of a
domain.\smallskip

\begin{definition}
\label{d71a}We say that an admissible domain $D\in\mathcal{D}(f,\infty)$
possesses the $S-$property (symmetry property) with respect to Problem
$(f,\infty)$ if its complement $K=\overline{\mathbb{C}}\setminus D$ is of the
form
\begin{equation}
K=E_{0}\cup E_{1}\cup\bigcup_{j\in I}J_{j}\label{f71a2}%
\end{equation}
and

\begin{itemize}
\item[(i)] assertion (i) of Definition \ref{d71a0}\ holds true for every
$z\in\partial E_{0}$,

\item[(ii)] assertion (ii) of Definition \ref{d71a0}\ holds true for every
$z\in K\setminus E_{0}$,

\item[(iii)] all $J_{j}$, $j\in I$, are open, analytic Jordan arcs,

\item[(iv)] the set $E_{1}\subset K\setminus E_{0}$ is discrete in
$\mathbb{C}\setminus E_{0}$, each point $z\in E_{1}$ is the end point of at
least three different arcs of $\{J_{j}\}_{j\in I}$, and

\item[(v)] if $I\neq\emptyset$, then we have
\begin{equation}
\frac{\partial}{\partial n_{+}}g_{D}(z,\infty)=\frac{\partial}{\partial n_{-}%
}g_{D}(z,\infty)\text{ \ for all \ }z\in J_{j},j\in I\label{f71a1}%
\end{equation}
with $\partial/\partial n_{+}$ and $\partial/\partial n_{-}$ denoting the
normal derivatives to both sides of the arcs $J_{j}$, $j\in I$. By
$g_{D}(\cdot,\infty)$\ we denote the Green function in $D$.\smallskip
\end{itemize}
\end{definition}

If $I\neq\emptyset$, then it is immediate that $\operatorname*{cap}\left(
\partial D\right)  >0$, and consequently the Green function $g_{D}(z,\infty)$
exists in this case in a proper way (see Subsection \ref{s1103}, further
below). From identity (\ref{f71a1}) one can deduce that the Jordan arcs
$J_{j}$, $j\in I$, are analytic. Hence, the analyticity assumed in assertion
(iii) of Definition \ref{d71a} is implicitly also contained in assertion (v).

Because of the two assertions (i) and (ii) in Definition \ref{d71a}, a domain
$D\in\mathcal{D}(f,\infty)$ with the $S-$property is also elementarily maximal
in the sense of Definition \ref{d71a0}.

With Definition \ref{d71a} and the Structure Theorem \ref{t41a}, we can
rephrase Theorem \ref{t51a} in Subsection \ref{s51} as follows: The extremal
domain $D_{0}(f,\infty)$ possesses the $S-$property. In Subsection \ref{s73},
below, we shall see that also the reversed conclusion holds true, i.e., if an
admissible domain $D\in\mathcal{D}(f,\infty)$ possesses the $S-$property, then
it is identical with the extremal domain $D_{0}(f,\infty)$ of Definition
\ref{d21b}.\smallskip

\subsection{\label{s72}A Local Extremality Condition}

\qquad\ In the present subsection we show that Hadarmard's boundary variation
formula for the Green function implies that the $S-$property of Definition
\ref{d71a} is a local condition for the minimality of $\operatorname*{cap}%
\left(  \overline{\mathbb{C}}\setminus D\right)  $, i.e., $\operatorname*{cap}%
\left(  \overline{\mathbb{C}}\setminus D\right)  $ assumes a (local) minimum
under local variations of the boundary of an admissible domain $D\in
\mathcal{D}(f,\infty)$ that $D$ possesses the $S-$property.

We start with the introduction of some notations that are needed for the setup
of the boundary variation for Hadamard's variation formula (for a very
readable introduction to this topic we recommend the appendix of
\cite{Courant50}). Let $D\subset\overline{\mathbb{C}}$ be a domain with
$\infty\in D$, assume that in $\partial D$ there exists a smooth, open Jordan
arc $\gamma\subset\partial D$, and assume further that the domain $D$ lies
only on one side of $\gamma$. By $n(v)\in\mathbb{T}$, we denote the normal
vector on $\gamma$ at the point $v\in\gamma$ that shows into $D$. Let
$v_{0}\in\gamma$ be fixed, $\varphi\geq0$ a smooth function defined on
$\gamma$ with compact support. We assume that the support of $\varphi$ is
small and that it contains the point $v_{0}\in\gamma$ in its interior, and
choose $\varepsilon\in\mathbb{R}$ with $|\varepsilon|>0$ small.

With these definitions we introduce a variation $\widetilde{D}$ of the domain
$D$ by moving each boundary point $v\in\gamma$ along the vector $\varepsilon
\varphi(v)\,n(v)$. If $|\varepsilon|>0$ is sufficiently small, then the new
domain $\widetilde{D}$ is well defined. Hadamard's variation formula for the
Green function $g_{D}(z,w)$ under this type of variation of the domain
$D$\ says that
\begin{align}
& g_{\widetilde{D}}(z,\infty)-g_{D}(z,\infty)=\label{f72a1}\\
& \text{ \ \ \ \ \ \ \ \ \ \ \ \ \ }\frac{\varepsilon}{2\pi}\int_{\gamma}%
\frac{\partial}{\partial n}g_{D}(v,\infty)\frac{\partial}{\partial n}%
g_{D}(v,z)\varphi(v)ds_{v}+\text{O}(\varepsilon^{2})\nonumber
\end{align}
for $|\varepsilon|\rightarrow0$, where $\partial/\partial n$ denotes the
normal derivative and $ds$ the line element on $\gamma$. The Landau symbol
O$(\cdot)$ holds uniformly for $z$\ varying on a compact subset of
$\widetilde{D}\cap D$.\smallskip

From (\ref{f72a1}) and the connection between the logarithmic capacity and the
Green function (cf. Lemma \ref{l113b} in Subsection \ref{s1103}, further
below), we then get
\begin{equation}
\frac{\operatorname*{cap}(\overline{\mathbb{C}}\setminus\widetilde{D}%
)}{\operatorname*{cap}(\overline{\mathbb{C}}\setminus D)}-1=\frac{\varepsilon
}{2\pi}\int_{\gamma}\left(  \frac{\partial}{\partial n}g_{D}(v,\infty)\right)
^{2}\varphi(v)ds_{v}+\text{O}(\varepsilon^{2})\label{f72b1}%
\end{equation}
for $|\varepsilon|\rightarrow0$, which shows that Hadamard's variation formula
(\ref{f72a1}) gives us an explicit expression for the first order variation of
$\operatorname*{cap}(\overline{\mathbb{C}}\setminus D)$ under local variations
of a smooth piece $\gamma$ of the boundary $\partial D$.\smallskip

Let us now assume that the domain $D\subset\overline{\mathbb{C}}$ contains a
smooth, open Jordan arc $J\subset\partial D$ with the property that on both
sides of $J$ there are only points of $D$, i.e., $J$ is a cut in some larger
domain. As before, by $n(v)\in\mathbb{T}$ we denote the normal vector to $J$
at a point $v\in J$, and assume that all normal vectors $n(v)\in\mathbb{T}$,
$v\in J$, show towards the same side of $J$. Again, by $\varphi\geq0$ we
denote a smooth function on $J$ with compact support, and assume that the
support is contained in a neighborhood of $v_{0}\in J$. The parameter
$\varepsilon\in\mathbb{R}$ with $|\varepsilon|>0$ plays the same role as before.

With the definitions just introduced, we first define a variation
$J_{\varepsilon}$\ of the arc $J$. The new arc $J_{\varepsilon}$ results from
moving each point $v\in J$ along the vector $\varepsilon\varphi(v)\,n(v)$. For
$|\varepsilon|>0$ sufficiently small, $J_{\varepsilon}$ is well defined and
again a smooth Jordan arc. The variation $D_{\varepsilon}$ of the domain $D$
is then defined by replacing the arc $J$ by $J_{\varepsilon}$. This type of
variation changes the boundary $\partial D$ only locally,\ but the changes
take place in two subregions of $D$. The two pieces of the boundary $\partial
D$ that correspond to the arc $J$\ are moved in opposite directions. Because
of this variation at two places in $D$ in opposite directions, we deduce from
(\ref{f72b1}) that
\begin{align}
& \frac{\operatorname*{cap}(\overline{\mathbb{C}}\setminus D_{\varepsilon}%
)}{\operatorname*{cap}(\overline{\mathbb{C}}\setminus D)}-1=\label{f72c1}\\
& \;\;\;\;\;\;\frac{\varepsilon}{2\pi}\int_{J}\left[  \left(  \frac{\partial
}{\partial n_{+}}g_{D}(v,\infty)\right)  ^{2}-\left(  \frac{\partial}{\partial
n_{-}}g_{D}(v,\infty)\right)  ^{2}\right]  \varphi(v)ds_{v}+\text{O}%
(\varepsilon^{2})\nonumber
\end{align}
for $|\varepsilon|\rightarrow0$, where $\partial/\partial n_{+}$ and
$\partial/\partial n_{-}$ denote the normal derivatives to both sides of $J$.
From (\ref{f72c1}) and the fact that the support of the function $\varphi
$\ can be chosen as small as we want, we can conclude rather immediately that
the symmetry (\ref{f71a1}) in Definition \ref{d71a} of the $S-$property is
equivalent to the vanishing of the first order variation of
$\operatorname*{cap}(\overline{\mathbb{C}}\setminus D)$. It follows
immediately from assertion (ii) in Definition \ref{d71a} and the local
character of the variation of the arc $J\subset\partial D$ that the resulting
variational domain $D_{\varepsilon}$ of the original domain $D$ belongs again
to $\mathcal{D}(f,\infty)$ if $|\varepsilon|>0$ is small. The conclusion of
our discussion is formulated in the next theorem.\smallskip

\begin{theorem}
\label{t72a} Let the complement $K=\overline{\mathbb{C}}\setminus D$ of an
admissible domain $D\in\mathcal{D}(f,\infty)$ be of the form (\ref{f71a2})
with two sets $E_{0}$, $E_{1}$, and the family of arcs $\{J_{j}\}$, $j\in I$,
that satisfy the assertions (i) - (iv) in Definition \ref{d71a}. Then the
symmetry condition (\ref{f71a1}) holds for every $z\in J_{j}$, $j\in I$, if,
and only if, the first order variations of $\operatorname*{cap}(\overline
{\mathbb{C}}\setminus D)$ vanish for all local variations of these arcs
$J_{j}$ done as just described, i.e., if we have
\begin{equation}
\lim_{|\varepsilon|\rightarrow\infty}\frac{1}{\varepsilon}(\operatorname*{cap}%
(\overline{\mathbb{C}}\setminus D_{\varepsilon})-\operatorname*{cap}%
(\overline{\mathbb{C}}\setminus D))=0\label{f72d1}%
\end{equation}
for all such variations.$\smallskip$
\end{theorem}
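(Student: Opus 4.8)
The plan is to deduce everything from the two‑sided Hadamard variation formula, which in the discussion preceding the theorem has already been recorded as (\ref{f72c1}). If $I=\emptyset$ there is nothing to vary and the symmetry condition (\ref{f71a1}) is vacuous, so the asserted equivalence is trivially true; hence assume $I\neq\emptyset$, in which case $\operatorname*{cap}(\overline{\mathbb{C}}\setminus D)=\operatorname*{cap}(\partial D)>0$ and the Green function $g_{D}(\cdot,\infty)$ exists in the proper sense. Dividing (\ref{f72c1}) by $\varepsilon$ and letting $\varepsilon\to0$, one obtains, for any admissible variation of a single arc $J=J_{j}$ by a nonnegative test function $\varphi$ of small support,
\begin{equation*}
\lim_{|\varepsilon|\rightarrow0}\tfrac{1}{\varepsilon}\bigl(\operatorname*{cap}(\overline{\mathbb{C}}\setminus D_{\varepsilon})-\operatorname*{cap}(\overline{\mathbb{C}}\setminus D)\bigr)=\frac{\operatorname*{cap}(\overline{\mathbb{C}}\setminus D)}{2\pi}\int_{J}\Bigl[\bigl(\tfrac{\partial}{\partial n_{+}}g_{D}(v,\infty)\bigr)^{2}-\bigl(\tfrac{\partial}{\partial n_{-}}g_{D}(v,\infty)\bigr)^{2}\Bigr]\varphi(v)\,ds_{v}.
\end{equation*}
Two facts must be invoked before this formula can be used: that $D_{\varepsilon}\in\mathcal{D}(f,\infty)$ for $|\varepsilon|$ small, which follows from assertion (ii) of Definition \ref{d71a} together with the purely local character of the variation (noted already above), and that $g_{D}(\cdot,\infty)$ continues harmonically across each analytic arc $J_{j}$ (Theorem \ref{t41a} and Subsection \ref{s1103}), so that the normal derivatives $\partial g_{D}/\partial n_{\pm}$ are defined and continuous along $J_{j}$. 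With this formula in hand the forward implication is immediate: if (\ref{f71a1}) holds then the bracketed integrand vanishes identically on every $J_{j}$, whence the right‑hand side is $0$ for all admissible $\varphi$, i.e. (\ref{f72d1}) holds for every such variation.

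For the converse I would use the fundamental lemma of the calculus of variations. Set $h(v):=\bigl(\tfrac{\partial}{\partial n_{+}}g_{D}(v,\infty)\bigr)^{2}-\bigl(\tfrac{\partial}{\partial n_{-}}g_{D}(v,\infty)\bigr)^{2}$, a continuous function on each $J_{j}$. If (\ref{f72d1}) holds for all the described variations, then, since $\operatorname*{cap}(\overline{\mathbb{C}}\setminus D)>0$, we get $\int_{J_{j}}h\,\varphi\,ds=0$ for every nonnegative smooth $\varphi$ supported in an arbitrarily small subarc of $J_{j}$. Were $h(v_{0})\neq0$ for some $v_{0}\in J_{j}$, continuity would give $h$ a fixed nonzero sign on a neighborhood of $v_{0}$; taking $\varphi\geq0$ supported there with $\varphi(v_{0})>0$ then forces $\int_{J_{j}}h\,\varphi\,ds\neq0$, a contradiction. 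Hence $h\equiv0$ on each $J_{j}$, that is $\bigl(\tfrac{\partial}{\partial n_{+}}g_{D}\bigr)^{2}=\bigl(\tfrac{\partial}{\partial n_{-}}g_{D}\bigr)^{2}$ there. Finally, since $g_{D}(\cdot,\infty)>0$ in $D$ while $g_{D}\equiv0$ on $J_{j}\subset\partial D$, and both unit normals $n_{+}$ and $n_{-}$ point into $D$, the derivatives $\partial g_{D}/\partial n_{\pm}$ are both $\geq0$; equality of their squares therefore yields $\partial g_{D}/\partial n_{+}=\partial g_{D}/\partial n_{-}$ on $J_{j}$, which is precisely (\ref{f71a1}).

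The one step that genuinely demands care — and which, fortunately, is already carried out in the paragraphs before the theorem — is the passage from the one‑sided Hadamard formula (\ref{f72a1}) to its two‑sided counterpart (\ref{f72c1}): displacing the slit $J$ along $\varepsilon\varphi\,n$ moves the two boundary fibres lying over $J$ in opposite directions, and this is exactly what produces the minus sign between the two squared normal derivatives. Together with the verification that the support of $\varphi$ may be localized around an arbitrary point of $J_{j}$ without leaving $\mathcal{D}(f,\infty)$, this is the substantive content; once it is in place, the remainder is the short variational argument sketched above, plus the positivity observation on $g_{D}$ needed to upgrade ``equal squares'' to ``equal derivatives''.
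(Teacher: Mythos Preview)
Your proof is correct and follows the same route as the paper: the theorem is stated at the end of Subsection~\ref{s72} as the summary of the discussion, and that discussion amounts precisely to dividing (\ref{f72c1}) by $\varepsilon$, passing to the limit, and then invoking the freedom to localize $\varphi$ to conclude that the bracketed integrand vanishes identically. You supply two details the paper leaves as ``rather immediate'': the explicit appeal to the fundamental lemma of the calculus of variations, and the sign argument ($g_{D}>0$ in $D$, $g_{D}=0$ on $J_{j}$, both normals pointing into $D$) needed to pass from equal squares to equal normal derivatives---both are welcome clarifications. One small point: the citation of Theorem~\ref{t41a} is slightly off, since that result concerns the extremal domain $D_{0}(f,\infty)$ rather than a general $D\in\mathcal{D}(f,\infty)$; the harmonic continuation you need follows directly from assertion (iii) of Definition~\ref{d71a} (the arcs are analytic by hypothesis) together with the reflection principle, as the paper itself notes just after Theorem~\ref{t51a}.
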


\subsection{\label{s73}$S-$Property and Uniqueness}

\qquad In the light of Theorem \ref{t72a}, the result of Theorem \ref{t51a}
can no longer surprise since we now know that the symmetry$\ $pro\-perty
(\ref{f51a}) in Theorem \ref{t51a} is a necessary condition for the minimality
(\ref{f21a}) in Definition \ref{d21b}. The interesting and perhaps somewhat
surprising result in the present section is the next theorem, in which the
last conclusion is reversed; it is shown that the $S-$property is also
sufficient for the minimality (\ref{f21a}) in Definition \ref{d21b}.\smallskip

\begin{theorem}
\label{t73a} If an admissible domain $D\in\mathcal{D}(f,\infty)$ possesses the
$S-$property in the sense of Definition \ref{d71a}, then $D$ is identical with
the extremal domain $D_{0}(f,\infty)$ of Definition \ref{d21b}.\smallskip
\end{theorem}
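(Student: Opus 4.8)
The plan is to show that a domain $D$ with the $S$-property is \emph{a fortiori} a global minimizer of $\operatorname*{cap}(\overline{\mathbb C}\setminus\cdot)$ in $\mathcal D(f,\infty)$, and then invoke the already-established unique existence (Theorem~\ref{t22a}) together with Proposition~\ref{p22a} to conclude $D=D_{0}(f,\infty)$. So the real content is: \emph{the $S$-property forces global minimality}. To get this I would compare $D$ against an arbitrary competitor $\widetilde D\in\mathcal D(f,\infty)$ by a potential-theoretic argument built on the Green function $g=g_{D}(\cdot,\infty)$ and its conjugate, much as in the classical theory of the $S$-property in the work of Gonchar--Rakhmanov and Stahl. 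The key object is the Green equilibrium measure $\omega$ of $D$, i.e. the measure on $K=\partial D$ whose logarithmic potential equals $\log(1/\operatorname*{cap}(K))$ minus $g(z,\infty)$; equivalently $\omega = -\frac{1}{2\pi}\frac{\partial g}{\partial n}\,ds$ on the two-sided arcs and the harmonic-measure/balayage contribution on $E_{0}$.

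First I would record the variational inequality for the equilibrium potential: $U^{\omega}(z)\le \log\frac{1}{\operatorname*{cap}(K)}$ everywhere, with equality q.e. on $K$. Next, the crucial step: on each arc $J_{j}$ the $S$-property (\ref{f71a1}) says the two normal derivatives of $g$ agree, which is exactly the statement that the \emph{tangential} derivative of $U^{\omega}$ along $J_{j}$ vanishes from neither side in a contradictory way — more precisely, it says that $\frac{\partial}{\partial n_{+}}U^{\omega} + \frac{\partial}{\partial n_{-}}U^{\omega}=0$ on $J_{j}$, so $U^{\omega}$ has no ``jump in normal flux'' there beyond what the measure itself carries, and consequently $U^{\omega}$ can be continued \emph{super}harmonically (indeed harmonically across $J_j$ up to the single-valued branch structure) in a way that respects the obstacle. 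This is the mechanism that promotes the local stationarity of Theorem~\ref{t72a} to a genuine global bound. Then for the competitor: let $\widetilde K=\overline{\mathbb C}\setminus\widetilde D$ and suppose $\operatorname*{cap}(\widetilde K)<\operatorname*{cap}(K)$ (or $\le$ with $\widetilde K\not\supseteq K$). Using the admissibility of $\widetilde D$ — i.e. that $f$ is single-valued on $\widetilde D$ — one shows $\widetilde K$ must still ``separate the branches'' in the same homological sense that $K$ does; this lets one restrict $U^{\omega}$ to $\widetilde D$ and compare it with $g_{\widetilde D}(\cdot,\infty)$ via the maximum principle. The $S$-property is what guarantees the comparison function is subharmonic where it needs to be (on the arcs $J_j$, which lie inside $\widetilde D$ unless $\widetilde K$ swallows them), yielding $\log\frac{1}{\operatorname*{cap}(K)} \le \log\frac{1}{\operatorname*{cap}(\widetilde K)}$, i.e. $\operatorname*{cap}(\widetilde K)\le\operatorname*{cap}(K)$ — a contradiction, and forcing $K\subseteq\widetilde K$ in the equality case.

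The main obstacle, I expect, is the bookkeeping on $E_{0}$ and the ``branch-separation'' topology. On $\partial E_{0}$ the function $f$ genuinely has non-polar singularities (Definition~\ref{d71a0}(i)), so there is no freedom to move that part of the boundary; one must argue that \emph{any} admissible $\widetilde D$ has $E_{0}\subset\widetilde K$ up to capacity zero — this should follow because a non-polar singularity of a single-valued continuation cannot be removed, so $\widetilde D$ cannot contain those points, except possibly a polar exceptional set, and polar sets do not affect capacity. Handling the interplay of this with the family $\{J_j\}$ — ensuring that if $\widetilde D$ ``uses'' a different system of cuts it still pays at least as much capacity — is the technical heart; I would phrase it via the homology/homotopy class of loops in $\overline{\mathbb C}\setminus E_0$ on which $f$ is non-trivially multivalued, noting that both $K$ and $\widetilde K$ must meet every such loop, hence by a Teichmüller-type or extremal-length lower bound (and ultimately by the equilibrium comparison above) $\widetilde K$ cannot be cheaper. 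A secondary nuisance is the degenerate case $I=\emptyset$ (no branch points), where $D$ with the $S$-property is just elementarily maximal and one falls back on Proposition~\ref{p22b}; this should be dispatched in a sentence. Once global minimality and the inclusion $K\subseteq\widetilde K$ (for all minimal-capacity competitors) are in hand, condition~(iii) of Definition~\ref{d21b} is verified and uniqueness from Theorem~\ref{t22a} closes the argument.
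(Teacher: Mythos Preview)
Your high-level plan (show the $S$-property forces minimality, then invoke Theorem~\ref{t22a}) matches the paper, but the comparison mechanism you sketch has a genuine gap, and you are also making the problem harder than necessary.

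\textbf{The gap.} You propose to restrict $U^{\omega}$ (equivalently $g_{D}(\cdot,\infty)$) to a competitor domain $\widetilde D$ and compare it with $g_{\widetilde D}(\cdot,\infty)$ ``via the maximum principle,'' asserting that the $S$-property makes the comparison function ``subharmonic where it needs to be.'' But $g_{D}(\cdot,\infty)$ is \emph{subharmonic} in all of $\mathbb{C}$, so on the arcs $J_{j}$ that lie inside $\widetilde D$ it is already subharmonic without any hypothesis; the $S$-property contributes nothing to that. And a naive maximum-principle comparison of $g_{D}$ with $g_{\widetilde D}$ goes the wrong way: you would need $g_{D}\leq 0$ q.e.\ on $\widetilde K$, which fails exactly when $\widetilde K\not\supset K$. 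The $S$-property is not a statement about subharmonicity of $g_{D}$; it is the statement that the \emph{reflection} $-g_{D}$ is the harmonic continuation of $g_{D}$ across each $J_{j}$. The paper exploits precisely this: the key object (Proposition~\ref{p102a}) is the piecewise function
\[
\widetilde g_{1}(z)=\begin{cases} g_{D}(z,\infty) & z\in D_{3}:=\overline{\mathbb C}\setminus\widehat{K\cup\widetilde K},\\ -\,g_{D}(z,\infty) & z\in \widehat{K\cup\widetilde K}\setminus\widetilde K,\end{cases}
\]
which the $S$-property makes harmonic across $K\cap\widetilde D$. One then represents $\widetilde g_{1}$ in $\widetilde D$ via the Poisson--Jensen formula as $\widetilde h_{1}+g_{\widetilde D}(\cdot,\infty)+g_{0}$ and computes $\log\bigl(\operatorname{cap}(\widetilde K)/\operatorname{cap}(K)\bigr)$ as a sum of Dirichlet integrals and Green energies (identity~(\ref{f102d1})), each term of which is nonnegative and at least one strictly positive when $D\neq\widetilde D$. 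Your sketch never produces this reflected function, and the homology/extremal-length apparatus you invoke is neither used nor needed.

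\textbf{The unnecessary difficulty.} You set out to beat an \emph{arbitrary} admissible competitor $\widetilde D$, which forces you into the ``branch-separation'' bookkeeping you flag as the main obstacle. The paper sidesteps this entirely: since Theorem~\ref{t22a} already gives the extremal domain $D_{0}(f,\infty)$, and the Structure Theorem~\ref{t41a} shows $D_{0}(f,\infty)$ is elementarily maximal, one only needs to compare $D$ with the \emph{single} domain $\widetilde D=D_{0}(f,\infty)$. Proposition~\ref{p102a} then gives $\operatorname{cap}(\partial D)<\operatorname{cap}(\partial D_{0}(f,\infty))$ unless $D=D_{0}(f,\infty)$; the strict inequality contradicts minimality of $D_{0}(f,\infty)$, so $D=D_{0}(f,\infty)$. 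No topology of loops, no extremal length, and no inclusion argument for condition~(iii) is required.
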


Since we know from Theorem \ref{t22a} that the extremal domain $D_{0}%
(f,\infty)$ is unique, we can deduce a uniqueness result for the extremal
domain $D_{0}(f,\infty)$ from the $S-$property as a corollary to Theorem
\ref{t73a}.\smallskip

\begin{corollary}
\label{c73a}The $S-$property of an admissible domain $D\in\mathcal{D}%
(f,\infty)$ determines uniquely the extremal domain $D_{0}(f,\infty)$ of
Problem $(f,\infty)$.\smallskip
\end{corollary}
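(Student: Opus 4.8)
The plan is to derive Corollary~\ref{c73a} directly from Theorem~\ref{t73a} together with the Unique Existence Theorem~\ref{t22a}. The two results we are allowed to invoke fit together cleanly: Theorem~\ref{t22a} guarantees that there is exactly one extremal domain $D_{0}(f,\infty)$, and Theorem~\ref{t73a} says that any admissible domain with the $S$-property must coincide with it. So the content of the corollary is essentially a re-packaging of these two facts, phrased as a characterization.

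First I would state the logical skeleton: suppose $D_{1},D_{2}\in\mathcal{D}(f,\infty)$ both possess the $S$-property in the sense of Definition~\ref{d71a}. Applying Theorem~\ref{t73a} to $D_{1}$ yields $D_{1}=D_{0}(f,\infty)$, and applying it to $D_{2}$ yields $D_{2}=D_{0}(f,\infty)$; hence $D_{1}=D_{2}$. Thus the $S$-property can be satisfied by at most one admissible domain. Combined with the fact (already noted in Subsection~\ref{s51}, as a consequence of Theorem~\ref{t51a} and the Structure Theorem~\ref{t41a}) that $D_{0}(f,\infty)$ \emph{does} possess the $S$-property, we conclude that the $S$-property holds for exactly one admissible domain, namely $D_{0}(f,\infty)$. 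This is precisely the assertion that the $S$-property determines $D_{0}(f,\infty)$ uniquely.

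Second, I would make explicit the point that Theorem~\ref{t73a} is what does all the real work here — the corollary adds no new mathematical difficulty beyond bundling existence (the $S$-property is attainable) with the uniqueness delivered by Theorem~\ref{t73a}. Since Theorem~\ref{t22a} already tells us $D_{0}(f,\infty)$ exists and is unique, one could even phrase the argument without re-invoking Theorem~\ref{t22a}: Theorem~\ref{t73a} alone forces any $S$-property domain to equal the object called $D_{0}(f,\infty)$ in Definition~\ref{d21b}, and Definition~\ref{d21b} already designates a single such object. I expect no genuine obstacle in this proof; the only thing to be careful about is to record that $D_{0}(f,\infty)$ itself has the $S$-property (so that the characterization is non-vacuous), which is the rephrasing of Theorem~\ref{t51a} noted at the end of Subsection~\ref{s71}. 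With that remark in hand the corollary is immediate.

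\begin{proof}
By the discussion following Theorem~\ref{t51a} in Subsection~\ref{s51}, combined with the Structure Theorem~\ref{t41a} and Definition~\ref{d71a}, the extremal domain $D_{0}(f,\infty)$ possesses the $S$-property. Conversely, if $D\in\mathcal{D}(f,\infty)$ is any admissible domain with the $S$-property, then Theorem~\ref{t73a} gives $D=D_{0}(f,\infty)$. Hence $D_{0}(f,\infty)$ is the one and only admissible domain with the $S$-property, so the $S$-property characterizes $D_{0}(f,\infty)$ uniquely.
\end{proof}
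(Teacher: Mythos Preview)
Your proposal is correct and matches the paper's approach: the paper presents Corollary~\ref{c73a} as an immediate consequence of Theorem~\ref{t73a} together with the unique existence from Theorem~\ref{t22a}, without giving a separate formal proof. Your argument spells out exactly this reasoning, additionally noting (as the paper does at the end of Subsection~\ref{s71}) that $D_0(f,\infty)$ itself possesses the $S$-property, which makes the characterization non-vacuous.
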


The interpretation of the $S-$property as a local condition for the minimality
(\ref{f21a}) in Definition \ref{d21b} is interesting in itself, but it is also
interesting for several applications in rational approximation. Hadarmard's
variation formula (\ref{f72b1}), on the other hand, is not very helpful as a
tool for proofs of the two important Theorems \ref{t22a} and \ref{t41a} since
it requires the knowledge of smoothness of the arcs $J_{j}$, $j\in I$, in the
boundary $\partial D$. However, this property is known only when most of the
groundwork for the proofs has already been done.\medskip

\subsection{\label{s74}Geometric Estimates}

\qquad The minimal set $K_{0}(f,\infty)$ of Problem $(f,\allowbreak\infty)
$\ is in general not convex. The rather trivial Example \ref{e61} is perhaps
the only case, where we have convexity. However, convexity can give rough, and
sometimes also quite helpful, geometric estimates for the minimal set
$K_{0}(f,\infty)$. Some results in this direction are contained in the next theorem.

\begin{theorem}
\label{t74a} Let $K_{0}(f,\infty)$ be the minimal set for Problem $(f,\infty
)$, and let further $E_{0}\subset K_{0}(f,\infty)$ be the compact set that has
been introduced in the Structure Theorem \ref{t41a}, i.e., $\partial E_{0}%
$\ contains all non-polar singularities of the function $f$\ that can be
reached by meromorphic continuations of the function $f$ out of the extremal
domain $D_{0}(f,\infty)$.

(i) \ \ For any convex compact set $K\subset\mathbb{C}$ with the property that
the function $f$ has a single-valued meromorphic continuation throughout
$\overline{\mathbb{C}}\setminus K$, we have
\begin{equation}
K_{0}(f,\infty)\subset K.\label{f74a1}%
\end{equation}

(ii) \ Let $\operatorname*{Co}(E_{0})$ denote the convex hull of $E_{0}$. Then
we have
\begin{equation}
K_{0}(f,\infty)\subset\operatorname*{Co}(E_{0}).\label{f74a2}%
\end{equation}

(iii) Let $K\subset\mathbb{C}$ be a convex compact set, $E\subset
\mathbb{C}\setminus K$ a set of capacity zero that is closed in $\mathbb{C}%
\setminus K$, and assume that the function $f$ has a single-valued meromorphic
continuation throughout $\overline{\mathbb{C}}\setminus(K\cup E)$ . Then we
have
\begin{equation}
K_{0}(f,\infty)\subset K\cup E.\label{f74a3}%
\end{equation}

(iv) \ There uniquely exist two sets $K_{\min}\subset\mathbb{C}$ and $E_{\min
}\subset\mathbb{C}\setminus K_{\min}$ with the same properties as assumed in
assertion (iii) for the pairs of sets $\{K,E\}$ such that these sets are
minimal with respect to inclusion among all pairs $\{K,E\}$\ that satisfy the
assumptions of assertion (iii), and we have
\begin{equation}
K_{0}(f,\infty)\subset K_{\min}\cup E_{\min}.\label{f74a4}%
\end{equation}

(v) \ Let $Ex(K_{\min})$ denote the set of extreme points of the convex set
$K_{\min}$ from assertion (iv). Then we have
\begin{equation}
Ex(K_{\min})\cup E_{\min}\subset E_{0}.\label{f74a5}%
\end{equation}
\smallskip
\end{theorem}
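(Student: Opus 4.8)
The plan is to prove the five assertions essentially in the order they are stated, since each one builds on the previous. For assertion (i), the key point is that projecting a compact set onto a line (or, more to the point, replacing it by its convex hull) does not increase logarithmic capacity, and that an admissible domain remains admissible under such a shrinking of its complement. Concretely, given a convex compact $K$ through whose complement $f$ continues single-valued, the intersection $K\cap K_0(f,\infty)$ need not be admissible, so instead I would argue as follows: the set $K_0(f,\infty)\cap\overline{\mathbb C\setminus K}$ together with an understanding of how branch cuts must cross $\partial K$ forces, by the minimality-plus-maximality characterization in Definition \ref{d21b}(iii) and the Structure Theorem \ref{t41a}, that any arc $J_j$ leaving $K$ could be ''folded back'' onto $\partial K$ without losing admissibility and without increasing capacity (here one uses that $\partial K$ is a convex curve and Lemma \ref{l111c}-type monotonicity under projection). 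Since $K_0$ is the smallest set of minimal capacity, this yields $K_0(f,\infty)\subset K$. I expect the honest version of this ''folding'' argument to be the first real obstacle: one must show that replacing the part of $K_0$ outside $K$ by its radial/nearest-point projection onto $\partial K$ produces an admissible compact set (the two subsets of branch points that had to be connected stay connected, because $K$ is convex and connected) whose capacity is no larger, and then invoke uniqueness.

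Assertion (ii) is then immediate: $E_0\subset K_0(f,\infty)$ by the Structure Theorem, and $f$ continues single-valued through $\overline{\mathbb C}\setminus\operatorname{Co}(E_0)$ because $\operatorname{Co}(E_0)$ is a convex compact set containing all the non-polar singularities of $f$ reachable from $D_0$ — so $\operatorname{Co}(E_0)$ is admissible in the sense of part (i), and (\ref{f74a2}) follows from (\ref{f74a1}). One small point to check is that $\operatorname{Co}(E_0)$ really is admissible: $E_0$ may be disconnected, but the branch points of $f$ all lie in $E_0$, hence in $\operatorname{Co}(E_0)$, and the required connectedness of each relevant group of branch points inside $\overline{\mathbb C}\setminus D$ is automatic since $\operatorname{Co}(E_0)$ is connected.

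For assertion (iii), I would repeat the folding argument of (i) but now allowing a closed polar exceptional set $E$: the projection onto $\partial K$ of the part of $K_0$ lying outside $K$ is performed away from $E$, which contributes nothing to capacity, so again one gets an admissible compact set of capacity $\le\operatorname{cap}(K_0)$ contained in $K\cup E$, and uniqueness gives $K_0(f,\infty)\subset K\cup E$. Assertion (iv) is a Zorn's-lemma / intersection argument: the family of pairs $\{K,E\}$ satisfying (iii) is closed under a suitable notion of intersection — for the convex part one takes the convex hull of $\bigcap K$ over a decreasing chain, and the exceptional parts are handled using that a countable union of polar sets is polar and that capacity is monotone — so a minimal pair $\{K_{\min},E_{\min}\}$ exists; its uniqueness follows because if $\{K,E\}$ and $\{K',E'\}$ are both minimal then $\{\operatorname{Co}(K\cap K'),\,(E\cup E')\setminus\operatorname{Co}(K\cap K')\}$ is again admissible and contained in both, forcing equality. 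I expect this to be the second main obstacle: making ''minimal with respect to inclusion'' precise and verifying that an intersection of admissible pairs is admissible (the single-valued continuation of $f$ through $\overline{\mathbb C}\setminus(K_{\min}\cup E_{\min})$ has to be reconstructed from the continuations through the larger complements, which requires a monodromy/patching argument on the increasing union of domains).

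Finally, for assertion (v): an extreme point $v$ of the convex set $K_{\min}$ cannot be ''removed'' from $K_{\min}$ without destroying admissibility (otherwise $K_{\min}$ would not be minimal), so at $v$ either a non-polar singularity of $f$ must sit or two distinct function elements of $f$ must meet — in either case the Structure Theorem forces $v\in E_0$ (extreme points are not interior, and the cut-arc points $J_j$ away from $E_0\cup E_1$ are not needed for admissibility near a convex extreme point). Similarly each point of $E_{\min}$ is, by minimality of the pair, a point where $f$ genuinely fails to extend, hence a non-polar singularity reachable from $D_0$, hence in $\partial E_0\subset E_0$. Thus $Ex(K_{\min})\cup E_{\min}\subset E_0$. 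The delicate point here is to argue that an extreme point of $K_{\min}$ is actually ''active'' — i.e. that shaving a small convex cap off $K_{\min}$ near a non-$E_0$ extreme point would still leave $f$ single-valued — which again reduces to the connectedness-of-branch-point-groups observation and the local description of $\partial E_0$ in Theorem \ref{t41a}(i).
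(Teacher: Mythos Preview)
Your overall strategy (monotonicity of capacity under projection, analyzing extreme points) is the right one and is close to the paper's, but there is a genuine logical gap in your deduction of (ii) from (i). To apply (i) to $K=\operatorname{Co}(E_{0})$ you need to know a priori that $f$ has a single-valued meromorphic continuation throughout $\overline{\mathbb{C}}\setminus\operatorname{Co}(E_{0})$. Your justification (``$\operatorname{Co}(E_{0})$ contains all the non-polar singularities of $f$ reachable from $D_{0}$ and is connected'') does not establish this: $E_{0}$ is defined relative to $D_{0}(f,\infty)$, and until you already know $K_{0}(f,\infty)\subset\operatorname{Co}(E_{0})$ you cannot rule out that continuation of $f$ into $\overline{\mathbb{C}}\setminus\operatorname{Co}(E_{0})$ runs across pieces of $K_{0}(f,\infty)\setminus E_{0}$ and becomes multi-valued there, or meets singularities not visible from $D_{0}$. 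So the implication (i) $\Rightarrow$ (ii) as you have written it is circular.

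The paper avoids this by reversing the order and working one supporting half-plane at a time. The key preliminary fact (proved via the orthogonal projection of Lemma \ref{l111c}, exactly your ``folding'' idea but onto a straight line rather than onto all of $\partial K$) is: if $E_{0}\subset L\cup H_{-}^{L}$ for a line $L$, then $K_{0}(f,\infty)\subset L\cup H_{-}^{L}$. This needs no global admissibility of $\operatorname{Co}(E_{0})$; one only needs that $f$ continues meromorphically across the arcs $J_{j}$ in $H_{+}^{L}$, which the Structure Theorem guarantees. Intersecting over all supporting half-planes of $\operatorname{Co}(E_{0})$ gives (ii) directly. From the same half-plane argument one gets $\operatorname{Ex}(K_{0}(f,\infty))\subset E_{0}$, and then (i) follows by a short contradiction: if $K_{0}(f,\infty)\setminus K\neq\emptyset$ with $K$ convex, some extreme point of $K_{0}(f,\infty)$ lies outside $K$, and at that point $f$ has a non-polar singularity reachable from within $\overline{\mathbb{C}}\setminus K$, contradicting the hypothesis. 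No projection onto $\partial K$ is needed. Your plan for (iii)--(v) is essentially right; for (iv), note that a straight countable-intersection argument (with $E_{\min}:=K_{0}(f,\infty)\setminus K_{\min}$ and Lemma \ref{l111b}) is simpler and sufficient, so Zorn's lemma is unnecessary.
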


\section{\label{s8}Geometrically Defined Extremality Problems}

\qquad Extremality problems are a classical topic in geometric function
theory, and among the different versions that are studied there we also find
the kind of problems that are concerned with sets of minimal capacity. In the
present section our interest concentrates on extremality problems that are
defined purely by geometrical conditions since these problems have strong
similarities with Problem $(f,\infty)$. But there also exist significant
differences, which, of course, are the interesting aspects for our discussion.

In order to make this discussion more concrete, and also for later use in
proofs, further below, we formulate two classical problems of the geometrical
type. The first one is presented in two versions.\smallskip

\subsection{\label{s81}Two Classical Problems}

\begin{problem}
\label{p81a}\textit{(Chebotarev's Problem) Let finitely many points }%
$a_{1},\ldots,\allowbreak a_{n}\in\mathbb{C}$\textit{\ be given. Find a
continuum }$K\subset\mathbb{C}$\textit{\ with} the property that
\begin{equation}
a_{j}\in K\text{ \ \ \ \ \textit{for} \ \ }j=1,\ldots,n,\label{f81a}%
\end{equation}
\textit{and further that the logarithmic capacity }$\operatorname*{cap}\left(
K\right)  $\textit{\ is minimal among all continua }$K\subset\mathbb{C}%
$\textit{\ that satisfy (\ref{f81a}).}\smallskip
\end{problem}

Problem \ref{p81a} can be refined in a way that brings it closer to situations
that could be observed in the Examples \ref{e63}, \ref{e64}, and \ref{e65} in
Section \ref{s6}.\smallskip

\begin{problem}
\label{p81b}\textit{Let }$m$\textit{\ sets }$A_{i}\subset\mathbb{C}$\textit{,
}$i=1,\ldots,m$\textit{, of finitely many points }$a_{ij}\in A_{i}$\textit{,
}$j=1,\ldots,n_{i}$\textit{, }$i=1,\ldots,m$,\textit{\ be given. Find }%
$m$\textit{\ continua }$K_{1},\ldots,K_{m}\subset\mathbb{C}$\textit{\ with}
the property that
\begin{equation}
a_{ij}\in K_{i}\text{ \ \ \ \ \textit{for} \ \ }i=1,\ldots,m,\text{\ }%
j=1,\ldots,n_{i},\label{f81b}%
\end{equation}
\textit{and further that the logarithmic capacity }$\operatorname*{cap}\left(
K_{1}\cup\ldots\cup K_{m}\right)  $\textit{\ is minimal among all continua
}$K_{1},\ldots,K_{m}\subset\mathbb{C}$\textit{\ that satisfy (\ref{f81b}%
).}\smallskip
\end{problem}

It is evident that Problem \ref{p81b} has many similarities to the Problems
$(f,\infty)$ in the Examples \ref{e61} - \ref{e65} in Section \ref{s6}.
However, these examples also illustrate some of the essential differences.
Especially, there is the question about 'active' versus 'inactive' branch
points and also the question about the connectivity of the minimal set
$K_{0}(f,\infty)$. Such questions don't exist for the classical problems,
since there they are part of the setup of the problem. In Problem $(f,\infty)$
it is in general not possible to have answers to such questions in advance;
the answers are part of the solution and not part of the definition as in
Problem \ref{p81a} and \ref{p81b}.

The functions $f$ in the Examples \ref{e61} - \ref{e65} are rather simple and
transparent representatives for the functions possible in Problem $(f,\infty
)$. In the case of a more complex analytic function $f$, the minimal set
$K_{0}(f,\infty)$ can be very complicated.

From a certain point of view, the two Problems \ref{p81a} and \ref{p81b} can
be seen as special cases of Problems $(f,\infty)$, one has only to choose the
function $f$ in an appropriate way. We exemplify this argument for Problem
\ref{p81b}. Let $f_{1}$ be defined as
\begin{equation}
f_{1}(z):=\sum_{i=1}^{m}\prod_{j=1}^{n_{i}}\left[  1-\frac{a_{ij}}{z}\right]
^{1/n_{i}},\label{f82a}%
\end{equation}
then it is immediate that the minimal set $K_{0}(f_{1},\infty)$ from Theorem
\ref{t22a} is the unique solution of Problem \ref{p81b}.\smallskip

As a second example for a purely geometrically defined extremality problem we
consider the following one:\smallskip

\begin{problem}
\label{p81c}\textit{Let two disjoint, finite sets of points }$a_{1}%
,\ldots,\allowbreak a_{n}\in\overline{\mathbb{C}}$\textit{\ and }$b_{1}%
,\ldots,\allowbreak b_{m}\in\overline{\mathbb{C}}$\textit{\ be given. Find two
continua }$K,V\subset\overline{\mathbb{C}}$\textit{\ with the property that}
\begin{equation}
a_{j}\in K\text{ \ \ \textit{for}\ \ }j=1,\ldots,n,\text{ \ \textit{\ \ \ }%
}b_{i}\in V\text{ \ \ \textit{for}\ \ }i=1,\ldots,m,\label{f81c}%
\end{equation}
\textit{and further that the condenser capacity }$\operatorname*{cap}\left(
K,V\right)  $\textit{\ is minimal among all pairs of continua }$K,V\subset
\overline{\mathbb{C}}$\textit{\ that satisfy (\ref{f81c}).}\smallskip
\end{problem}

For a definition of the condenser capacity we refer to \cite{SaTo} Chapter
II.5. or \cite{Bagby}. Problem \ref{p81c} has been included here because of
two reasons: its solution will be used as an important element in one of the
proofs further below, and secondly, it is perhaps the simplest example of its
kind with non-unique solutions. In this respect, it underlines the importance
and relevance of the uniqueness part in Theorem \ref{t22a}. More about this
second aspect follows in the next subsection.\smallskip

\subsection{\label{s82}Some Methodological and Historic Remarks}

\qquad Problem \ref{p81a} has apparently been mention for the first time in a letter by Chebotarev to G. P\'{o}lya (see
\cite{Polya29}). The existence and uniqueness of a solution for this problem has been proved already shortly
afterwards in 1930 by H. Gr\"{o}tzsch \cite{Groetzsch30} with his famous strip method. In \cite{Groetzsch30} one can also
find a description of the analytic arcs in the minimal set by quadratic differentials, although the presentation has
been done in a different language. In about the same time of \cite{Groetzsch30}, M.A. Lavrentiev has formulated and
studied Problem \ref{p81a} in \cite{Lavrentiev30} and \cite{Lavrentiev34} in an equivalent but somewhat different
setting.

A comprehensive review of methods and results relevant for the Problems
\ref{p81a}, \ref{p81b}, and \ref{p81c} can be found in the two long survey
papers \cite{Kuzmina82}, \cite{Kuzmina97}. We also mention in this respect the
textbooks \cite{Goluzin} and \cite{Pommerenke75}.\smallskip

In the introduction to the present section it has been mentioned that a wide
range of extremality problems has been studied in geometric function theory.
There exists a correspondingly broad variety of methods (different types of
variational methods, the methods of extremal length, quadratic differentials,
etc.) for the analysis of such problems. For our purpose the survey papers
\cite{Kuzmina82} and \cite{Kuzmina97} have provided a good coverage of the
relevant literature.

In our proofs we shall need only properties of the solution of a special case
of Problem \ref{p81c} (see Definition \ref{d101a} in Subsection \ref{s1011},
further below). In this problem the two sets $A=\{a_{1},\ldots,a_{n}\}$ and
$B=\{b_{1},\ldots,b_{n}\}$ consist of points which are reflections of each
other on the unit circle $\partial\mathbb{D}$, i.e., we assume that
$b_{j}=1/\overline{a}_{j}$ for $j=1_{1},\ldots,n$. Under this assumption,
Problem \ref{p81c} can be seen as a hyperbolic version of Problem \ref{p81a}.
Indeed, the set $A$ of the given $n$ points is assumed to be contained in
$\mathbb{D}$ and the logarithmic capacity $\operatorname{cap}(K)$ in Problem
\ref{p81a}\ is replaced by the hyperbolic capacity of $K\subset\mathbb{D}$
(see Subsection \ref{s1011}, further below).

Our last topic in the present subsection is concerned with the possibility of
non-unique solutions to Problem \ref{p81c}. We start with some remarks about
Teichm\"{u}ller's\ problem, which practically is the most special situation of
Problem \ref{p81c}. If in Problem \ref{p81c} both sets $A$ and $B$ consist of
only $2$ points, then with the help of a Moebius transformation one can show
that without loss of generality $3$\ of the $4$ points can be chosen in a
standardized way, which usually is done so that $A=\{-1,1\}$ and
$B=\{b,\infty\}$ with $b$ being an arbitrary point in $\mathbb{C}%
\setminus\{-1,1\}$. Under these assumptions, the minimal condenser capacity
$\operatorname*{cap}\left(  K,V\right)  $ of Problem \ref{p81c} depends only
on single complex variable $b$. The minimality problem in this special form
has been suggested by O. Teichm\"{u}ller in \cite{Teichmueller38}, and it
carries today his name. Its solution and the study of its properties has
attracted some research interest (cf., \cite{Kuzmina82}, Chapter 5.2, for a
survey); we mention here only the very recent publication \cite{Heikkala06},
where a numerical method for an efficient calculation of $\operatorname*{cap}%
\left(  K,V\right)  $ in dependence of $b\in\mathbb{C}\setminus\{-1,1\}$ has
been developed and studied.

For\ our\ discussion,\ the\ cases\ with\ $b\in\left(  -1,1\right)  $\ are\ of
special\ interest,\ since Teichm\"{u}ller's problem has non-unique solutions
exactly for the parameter values $b\in\left(  -1,1\right)  $. We consider the
symmetric case $b=0$.

If in Problem \ref{p81c}, we choose $n=m=2$, $\left\{  a_{1},a_{2}\right\}
=\left\{  -1,1\right\}  $, and $\left\{  b_{1},b_{2}\right\}  =\left\{
0,\infty\right\}  $, then it is not too difficult to verify by symmetry
considerations that there exist at least two different solutions $(K,V)$. The
first one is given by $K:=\{$ $e^{it}$ $|$ $\pi\leq t\leq2\pi$ $\}$ and
$V:=\{$ $z$ $|$ $0\leq\operatorname*{Im}(z)\leq\infty$, $\operatorname*{Re}%
(z)=0$ $\}$, while the second one is its symmetric counterpart $\widetilde
{K}:=\{$ $e^{it}$ $|$ $0\leq t\leq\pi$ $\}$ and $\widetilde{V}:=\{$ $z$ $|$
$-\infty\leq\operatorname*{Im}(z)\leq0$, $\operatorname*{Im}(z)=0$ $\}$. This
counterexample to uniqueness underlines that the uniqueness part of Theorems
\ref{t22a} cannot be trivial.

The proof of uniqueness of the solution to Problem $(f,\infty)$ is contained
in Subsection \ref{s93}, and it has demanded some new ideas and concepts. A
review of the uniqueness question for the general case of Problem \ref{p81c}
is contained in Chapter 5.4 of \cite{Kuzmina82}.\smallskip

\subsection{\label{s83}The Numerical Calculation of the Set $K_{0}(f,\infty)
$}

\qquad From Theorem \ref{t41a} we have a general knowledge of the structure of
the minimal set $K_{0}(f,\infty)$, and we know that there uniquely exist two
compact sets $E_{0}$, $E_{1}$, and a family of Jordan arcs $J_{j}$, $j\in I$,
which are trajectories of a certain quadratic differential, and the union of
these objects forms the set $K_{0}(f,\infty)$ in\ (\ref{f41a}) of Theorem
\ref{t41a}. In each concrete case of a function $f$ that is not as simple as
that in Example \ref{e61}, the determination of $E_{0}$, $E_{1},$ and $J_{j}$,
$j\in I$, is a difficult and tricky task, and there is no general method at
hand that can be applied in all situations.

The situation is different in the more special case of Theorem \ref{t53a},
where we have a rational quadratic differential $q(z)dz^{2}$, which can be
used for the calculation of the Jordan arcs $J_{j}$, $j\in I$. In this more
special situation, only two critical tasks have to be done: The first one
consists in finding the set of branch points of the function $f$ in Problem
$(f,\infty)$ that play an active role in the determination of the set
$K_{0}(f,\infty)$; part of this first task is also the determination of the
connectivity of the set $K_{0}(f,\infty)$. The second critical task is the
calculation of the zeros in the quadratic differential (\ref{f53b}) in Theorem
\ref{t53a}. This second task appears in a similar form if one wants to solve
Problem \ref{p81b}, and therefore it has found already earlier attention in
the literature. Some results in this direction have been reviewed in the
discussion at the end of Example \ref{e63}.\smallskip

In the analysis of the Examples \ref{e62} - \ref{e65} in Section \ref{s6}, the
second task has been solved with the help of a numerical procedure that has
been developed in an ad-hoc manner by the author. Details of the procedure
will be published elsewhere.\medskip

\section{\label{s9}Proofs I}

\qquad In the present section we prove Theorem \ref{t22a} together with the
accompanying Propositions \ref{p22a}, \ref{p22b}, and Theorem \ref{t32a}.
Thus, we are primarily dealing with a proof of the unique existence of a
solution to the Problems $(f,\infty)$. Like in Theorem \ref{t22a}, we assume
throughout the section \ that the function $f$ is meromorphic in the
neighborhood of infinity.

\subsection{\label{s91}Meromorphic Continuations Along Arcs}

\qquad The continuation of a function element along a given arc $\gamma$ is
basic for any technique of meromorphic continuations. In the present
subsection we introduce special sets of arcs and curves, and define on them a
homotopy relation that is adapted to our special needs in later proofs. Toward
the end of the subsection in Proposition \ref{p91a},\ we prove a
characterization of the domains in $\mathcal{D}(f,\infty)$ in terms of these
newly introduced tools, i.e., a characterization of admissible domains for
Problem $(f,\infty)$.

As a general notational convention, we denote the impression of a curve or an
arc by the same symbol as use for the curve or the arc itself.

\begin{definition}
\label{d91a} By $\Gamma=\Gamma(f,\infty)$ we denote the set of all Jordan
curves $\gamma$ with the following two properties:

\begin{itemize}
\item[(i)] We have $\infty\in\gamma$.

\item[(ii)] There exists a point $z\in\gamma\setminus\{\infty\}$, called
separation point of $\gamma$, such that the curve $\gamma$ is broken down into
the two closed partial arcs $\gamma^{-}$ and $\gamma^{+}$ connecting the two
points $z$\ and $\infty$. The function $f$ is assumed to possess meromorphic
continuations along each of the two arcs, and these two arcs are not
identical, i.e., we have $\gamma=\gamma^{+}-\gamma^{-}$ and $\gamma^{+}%
\cap\gamma^{-}=\{z,\infty\}$. ('Closed' means here the arc contains its end points).
\end{itemize}

\noindent We assume that each Jordan curve $\gamma\in\Gamma$\ has a
parametrization of the form
\begin{equation}
\gamma:\left[  -1,1\right]  \longrightarrow\overline{\mathbb{C}}\label{f91a}%
\end{equation}
with $\gamma(-1)=\gamma(1)=\infty$ and $\gamma(0)=z$.\smallskip
\end{definition}

From (\ref{f91a}), we have the parametrization
\begin{equation}
\gamma^{+}:\left[  1,0\right]  \longrightarrow\overline{\mathbb{C}%
}\text{,\ \ \ }\gamma^{-}:\left[  -1,0\right]  \longrightarrow\overline
{\mathbb{C}}\text{ }\label{f91b}%
\end{equation}
for the two partial arcs $\gamma^{-}$ and $\gamma^{+}$.\smallskip

Whether a Jordan curves $\gamma$ with $\infty\in\gamma$ belongs to $\Gamma$
depends on the function $f$. A necessary and sufficient condition can be
formulated as follows: We have $\gamma\in\Gamma$\ if, and only if, the two
meromorphic continuations of $f$ that start at $\infty$ and follow $\gamma$ in
the two different directions cover the whole curve $\gamma$. We emphasize that
the two continuations may hit non-polar singularities somewhere on the curve
$\gamma$, but this is only allowed to happen after the separation point has
already been passed.\smallskip

Throughout the present section we assume that the separation point
$z=z_{\gamma}\in\gamma\in\Gamma$ is chosen in an appropriate way, and we give
details only if necessary.\smallskip

In the next definition the set $\Gamma$ is divided into two subclasses.

\begin{definition}
\label{d91b} A Jordan curve $\gamma\in\Gamma=\Gamma(f,\infty)$ with partial
arcs $\gamma^{-}$ and $\gamma^{+}$ belongs to the subclass $\Gamma_{0}%
=\Gamma_{0}(f,\infty)\subset\Gamma$ if the meromorphic continuations of the
function $f$ along the two arcs $\gamma^{-}$ and $\gamma^{+}$ lead to the same
function element at the separation point $z$ of $\gamma$. If, on the other
hand, these continuations lead to two different function elements at $z$, then
the curve $\gamma$ belongs to the subclass $\Gamma_{1}=\Gamma_{1}%
(f,\infty)\subset\Gamma$.
\end{definition}

It is immediate that the two subsets $\Gamma_{0}$ and $\Gamma_{1}$ are
disjoint, and we have $\Gamma=\Gamma_{0}\cup\Gamma_{1}$.\smallskip

On the set $\Gamma$ we define a homotopy relation that fits our special needs.
Two elements $\gamma_{0},\gamma_{1}\in\Gamma$ are considered to be homotopic
if the two pairs $\{\gamma_{0}^{-},\gamma_{1}^{-}\}$ and $\{\gamma_{0}%
^{+},\gamma_{1}^{+}\}$ of partial arcs are homotopic in the usual sense, and
if in addition property (ii) in Definition \ref{d91a} is carried over from one
to the other Jordan curve $\gamma_{0}$ and $\gamma_{1}$ in a continuous
manner. More formally, we have the next definition.

\begin{definition}
\label{d91c} Two Jordan curves $\gamma_{0},\gamma_{1}\in\Gamma$ with partial
arcs $\gamma_{j}^{\pm}$, $j=0,1$, and separation points $z_{j}$, $j=0,1$, are
called homotopic (written $\sim$) if there exists a continuous function
$h:\left[  -1,1\right]  \times\left[  0,1\right]  \longrightarrow
\overline{\mathbb{C}}$ with the two following two properties:

\begin{itemize}
\item[(i)] For $j=0,1$, we have
\begin{equation}
\gamma_{j}(t)=h(t,j),\text{ \ \ \ \ \ }t\in\left[  -1,1\right]  .\label{f31c}%
\end{equation}

\item[(ii)] For each $s\in\left(  0,1\right)  $ a Jordan curve $\gamma_{s}$ is
defined by
\begin{equation}
\gamma_{s}:=h(\cdot,s):\left[  -1,1\right]  \longrightarrow\overline
{\mathbb{C}},\label{f31d}%
\end{equation}
and each $\gamma_{s}$ belongs to $\Gamma$ with separation point $\gamma
_{s}(0)$.
\end{itemize}

\noindent The equivalence class of $\gamma\in\Gamma$\ with respect to the
homotopy relation $\sim$ is denoted by $\left[  \gamma\right]  $.
\end{definition}

\begin{lemma}
\label{l91a} The splitting of the set $\Gamma$ into the two subclasses
$\Gamma_{0}$ and $\Gamma_{1}$ of Definition \ref{d91b} is compatible with the
homotopy relation of Definition \ref{d91c}.
\end{lemma}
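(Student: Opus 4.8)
The plan is to read ``compatible'' here as: if $\gamma_{0}\sim\gamma_{1}$ then $\gamma_{0}$ and $\gamma_{1}$ lie in the same subclass, and to prove this by showing that along any homotopy the subclass of the deformed curve is a locally constant function of the deformation parameter. So let $\gamma_{0},\gamma_{1}\in\Gamma$ with $\gamma_{0}\sim\gamma_{1}$, fix a homotopy $h:[-1,1]\times[0,1]\to\overline{\mathbb{C}}$ as in Definition \ref{d91c}, and write $\gamma_{s}:=h(\cdot,s)\in\Gamma$ with separation point $z_{s}:=\gamma_{s}(0)$ for every $s\in[0,1]$. For each $s$ let $f_{s}^{+}$ and $f_{s}^{-}$ be the meromorphic germs at $z_{s}$ reached by continuing $f$ from $\infty$ along $\gamma_{s}^{+}$, respectively along $\gamma_{s}^{-}$; these exist because $\gamma_{s}\in\Gamma$, and I regard them as points of the Riemann surface $\mathcal{R}_{f}$ of $f$ (the sheaf space of germs of meromorphic continuations of $f$), both lying over $z_{s}$ under the canonical projection $\pi:\mathcal{R}_{f}\to\overline{\mathbb{C}}$. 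By Definition \ref{d91b}, $\gamma_{s}\in\Gamma_{0}$ exactly when $f_{s}^{+}=f_{s}^{-}$ in $\mathcal{R}_{f}$, and $\gamma_{s}\in\Gamma_{1}$ otherwise. The statement therefore reduces to showing that the set $A:=\{\,s\in[0,1]\ :\ f_{s}^{+}=f_{s}^{-}\,\}$ is both open and closed in $[0,1]$.

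First I would establish that $s\mapsto f_{s}^{+}$ and $s\mapsto f_{s}^{-}$ are continuous maps from $[0,1]$ into $\mathcal{R}_{f}$. This is the standard fact that meromorphic continuation along a curve depends continuously, in the sheaf space, both on the curve and on its terminal point: one covers the fixed arc $\gamma_{s_{0}}^{+}$ by a finite chain of disks carrying the continuation, invokes uniform continuity of $h$ to see that every nearby arc $\gamma_{s}^{+}$ is covered, in order, by the same chain, and then $f_{s}^{+}$ is just the germ at the moving point $z_{s}$ of the last link of that chain (the endpoint $\infty$ and the germ of $f$ there being fixed throughout the homotopy). The ``$-$'' side is identical. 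Closedness of $A$ is then immediate: $\mathcal{R}_{f}$ is Hausdorff, so $A$ is the preimage of the diagonal under the continuous map $s\mapsto(f_{s}^{+},f_{s}^{-})$.

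The decisive step will be openness of $A$. Fix $s_{0}\in A$ and put $P:=f_{s_{0}}^{+}=f_{s_{0}}^{-}\in\mathcal{R}_{f}$. Since $f$ admits a meromorphic continuation along $\gamma_{s_{0}}^{\pm}$ all the way to $z_{s_{0}}$, the point $z_{s_{0}}$ is a regular (possibly polar, but unramified) point of the branch in question, so $\pi$ is a local homeomorphism near $P$; pick a neighbourhood $W\ni P$ in $\mathcal{R}_{f}$ on which $\pi$ is injective. By the continuity established above, $f_{s}^{+}\in W$ and $f_{s}^{-}\in W$ for all $s$ near $s_{0}$, and since $\pi(f_{s}^{+})=z_{s}=\pi(f_{s}^{-})$, injectivity of $\pi|_{W}$ forces $f_{s}^{+}=f_{s}^{-}$, i.e.\ $s\in A$. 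Thus $A$ is clopen in the connected interval $[0,1]$, hence $A=\emptyset$ or $A=[0,1]$: every $\gamma_{s}$ lies in the same subclass, in particular $\gamma_{0}$ and $\gamma_{1}$ do, which is the claim.

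The part I expect to require the most care is precisely the local-homeomorphism property of $\pi$ used in the openness step: it fails at branch points of $f$, so the argument needs $z_{s_{0}}$ to be unramified for every $s_{0}\in[0,1]$. The clean resolution is to read ``$f$ possesses a meromorphic continuation along $\gamma^{\pm}$'' in Definition \ref{d91a}(ii) as implying that the terminal separation point $z$ is a regular point of the continued branch (which is anyway the natural reading), so that every $z_{s}$ occurring along the homotopy is unramified and the above goes through verbatim. If one prefers not to build this into the definition, the openness step must be supplemented by a purely local monodromy argument in a small disk around any exceptional $z_{s_{0}}$, showing that the type cannot change there; this is the only genuinely delicate point in the proof.
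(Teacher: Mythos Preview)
Your argument is correct. The paper's own proof consists of the single sentence ``The conclusion of the lemma is immediate,'' so you have supplied precisely the details the author deemed obvious: the clopen argument on $[0,1]$ via continuity of the lifted germs $s\mapsto f_{s}^{\pm}$ in $\mathcal{R}_{f}$ is the natural and essentially only way to make this rigorous. Your caveat about branch points is well taken but, as you yourself note, is resolved by the very definition of $\Gamma$: condition (ii) of Definition~\ref{d91a} requires the meromorphic continuation to exist \emph{at} the separation point $z_{s}$, so each $z_{s}$ is automatically an unramified point of the relevant branch and $\pi$ is a local homeomorphism there. No supplementary monodromy argument is needed.
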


\begin{proof}
The conclusion of the lemma is immediate.\medskip
\end{proof}

The ring domain $R\subset\overline{\mathbb{C}}$ and the continuum
$V\subset\mathbb{C}$\ in the next lemma will be used at several places in the
sequel. We say that $R$\ is a ring domain in $\overline{\mathbb{C}}$\ if
$\overline{\mathbb{C}}\setminus R$\ consists of two components.

\begin{lemma}
\label{l91b} For any $\gamma_{0}\in\Gamma=\Gamma(f,\infty)$ there exists a
ring domain $R\subset\overline{\mathbb{C}}$ with $\gamma_{0}\subset R $, for
which the following five assertions hold true:

\begin{itemize}
\item[(i)] The Jordan curve $\gamma_{0}$ separates the two components $A_{1} $
and $A_{2}$ of $\overline{\mathbb{C}}\setminus R$.

\item[(ii)] Any Jordan curve $\gamma\subset R$ with $\infty\in\gamma$ that
separates the two components $A_{1}$ and $A_{2}$ of $\overline{\mathbb{C}%
}\setminus R$ belongs to $\Gamma$.

\item[(iii)] Any $\gamma\in\Gamma$ with $\gamma\subset R$\ belong to
$\gamma\in\lbrack\gamma_{0}]$, i.e., we have $\gamma\sim\gamma_{0}$ in the
sense of Definition \ref{d91c}.

\item[(iv)] If a Jordan curve $\gamma\subset R$ separates the two components
$A_{1}$ and $A_{2}$ of $\overline{\mathbb{C}}\setminus R$, then any Jordan
curve $\widetilde{\gamma}\subset R$ with $\infty\in\widetilde{\gamma}$, which
is homotopic to $\gamma$\ in $R$\ (in the usual sense), belongs to $\Gamma$.

\item[(v)] If $\gamma_{0}\in\Gamma_{1}=\Gamma_{1}(f,\infty)$, then every
admissible compact set $K\in\mathcal{K}(f,\infty)$ contains a continuum
$V\subset\mathbb{C}$ that cross-sects $R$, i.e., we have
\begin{equation}
V\cap A_{j}\neq\emptyset\text{ \ for\ \ }j=1,2\label{f91c1}%
\end{equation}
with $A_{1}$ and $A_{2}$ the two components of $\overline{\mathbb{C}}\setminus
R $. The set $\mathcal{K}(f,\infty)$ has been introduced in Definition
\ref{d21a}.
\end{itemize}
\end{lemma}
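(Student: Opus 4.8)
\emph{Plan of proof.} The guiding idea is to realise $R$ as a thin ring--domain neighbourhood of $\gamma_{0}$ on which the meromorphic continuation of $f$ is completely under control, so that assertions (i)--(v) reduce to elementary topology of an annulus together with the compatibility statement in Lemma \ref{l91a}.

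\emph{Construction of $R$.} Write $\gamma_{0}=\gamma_{0}^{+}\cup\gamma_{0}^{-}$ with separation point $z$, and fix an open disc $B_{\infty}\ni\infty$ with $z\notin B_{\infty}$ on which $f$ is single--valued. Since $f$ is continued meromorphically along each of the compact arcs $\gamma_{0}^{\pm}$, the standard stability of continuation along arcs (continuation is possible along every path sufficiently uniformly close to a given one with the same endpoints, and homotopic continuations coincide) yields $\delta>0$ and single--valued meromorphic branches $F^{\pm}$ of the continuation of $f$ on the $\delta$--neighbourhoods $R^{\pm}$ of $\gamma_{0}^{\pm}$, with $F^{\pm}=f$ near $\infty$. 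After shrinking $\delta$, the sets $R^{\pm}$ are simply connected tubes whose overlap $R^{+}\cap R^{-}$ has exactly two components, one near $z$ and one near $\infty$; set $R:=R^{+}\cup R^{-}\cup B_{\infty}$, trimmed to a genuine annular neighbourhood of $\gamma_{0}$. Its two complementary components $A_{1},A_{2}$ are then separated by $\gamma_{0}$, which is assertion (i); and along any path in $R^{+}$ (resp. $R^{-}$) running from $\infty$ to a point near $z$ the branch $F^{+}$ (resp. $F^{-}$) furnishes a meromorphic continuation of $f$ --- this is the one analytic fact the remaining assertions rest on.

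\emph{Assertions (ii), (iii), (iv).} Let $\gamma\subset R$ be a Jordan curve through $\infty$ separating $A_{1}$ and $A_{2}$, i.e. essential in the annulus $R$. Since $R^{+}$ and $R^{-}$ are simply connected and meet only in the two overlap components, the separation point of $\gamma$ can be chosen in the overlap near $z$ so that the partial arcs satisfy $\gamma^{+}\subset R^{+}$ and $\gamma^{-}\subset R^{-}$; then $f$ continues along each of them by $F^{\pm}$, and the two arcs are distinct, so $\gamma\in\Gamma$. This is (ii), and (iv) follows at once, a Jordan curve in $R$ through $\infty$ freely homotopic in $R$ to a curve separating $A_{1}$ and $A_{2}$ being itself essential. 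For (iii) one first checks that a $\gamma\in\Gamma$ contained in $R$ is essential in $R$, and then constructs an ambient isotopy of the annulus $R$, fixing $\infty$, that carries $\gamma$ to $\gamma_{0}$ through essential Jordan curves all passing through $\infty$; by (ii) every intermediate curve lies in $\Gamma$ and its separation point can be taken to vary continuously, so this is precisely a homotopy in the sense of Definition \ref{d91c}, whence $\gamma\in[\gamma_{0}]$.

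\emph{Assertion (v), and the main obstacle.} Let $\gamma_{0}\in\Gamma_{1}$ and $K\in\mathcal{K}(f,\infty)$ with $D=\overline{\mathbb{C}}\setminus K\in\mathcal{D}(f,\infty)$. If no continuum inside $K$ met both $A_{1}$ and $A_{2}$, then $K$ would not join the two complementary components of the ring domain $R$; by the classical structure of compact sets in ring domains there would then be a Jordan curve in $R\setminus K$ separating $A_{1}$ and $A_{2}$, and routing it through $\infty$ we would get an essential Jordan curve $\gamma^{\ast}\subset R\cap D$ with $\infty\in\gamma^{\ast}$. By (ii), $\gamma^{\ast}\in\Gamma$; and since $\gamma^{\ast}\subset D$, where $f$ is single--valued and meromorphic, the two continuations of $f$ along the partial arcs of $\gamma^{\ast}$ agree at its separation point, so $\gamma^{\ast}\in\Gamma_{0}$. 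On the other hand the isotopy argument of (iii) gives $\gamma^{\ast}\sim\gamma_{0}$, so by Lemma \ref{l91a} both lie in the same subclass, contradicting $\gamma_{0}\in\Gamma_{1}$; hence $K$ contains the asserted cross--sectioning continuum $V$. The genuinely delicate points, on which everything else depends, are the control of meromorphic continuation through the thin tube $R$ (so that the branches $F^{\pm}$ extend as claimed and the splitting of $\gamma$ in (ii) can actually be carried out) and, in (v), the ring--domain separation lemma together with the guarantee that the essential curve avoiding $K$ can be taken through $\infty$ (equivalently, that $\infty$ is not trapped in an inessential complementary component of $K$ inside $R$); the remainder is annulus topology plus Lemma \ref{l91a}.
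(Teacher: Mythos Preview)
Your approach is essentially the paper's: build $R$ as a thin annular neighbourhood of $\gamma_{0}$ from two simply connected tubes $R^{\pm}$ (the paper's $U^{\pm}$) around the partial arcs, with a controlled overlap near the separation point (the paper's disk $U_{0}$), reduce (i)--(iv) to annulus topology, and deduce (v) from the ring--domain separation statement, which the paper isolates and proves separately as Lemma~\ref{l91c}.

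One refinement worth noting in (ii): your claim that an arbitrary essential Jordan curve $\gamma\subset R$ can be split with $\gamma^{+}\subset R^{+}$ and $\gamma^{-}\subset R^{-}$ is not literally true in general, since $\gamma$ may weave between the two tubes. The paper's device avoids this: it arranges that $R\setminus\overline{U}_{0}$ is simply connected, so every essential $\gamma$ must meet $U_{0}$, and any separation point chosen in $\gamma\cap U_{0}$ works because the continuation of $f$ from $\infty$ through the simply connected region $R\setminus\overline{U}_{0}$ is unambiguous regardless of how $\gamma$ weaves. Your conclusion is correct, but the mechanism is the simple connectivity of the complement of the overlap rather than a clean geometric splitting of $\gamma$.
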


\begin{proof}
Let $U^{-}$ and $U^{+}$ be two open and simply connected neighborhoods of the
partial arcs $\gamma_{0}^{-}$ and $\gamma_{0}^{+}$ of $\gamma_{0}$, and let
the function $f$ possess meromorphic continuations throughout\ $U^{-} $ and
$U^{+}$. Let further $z_{0}=\gamma_{0}(0)$ denote the separation point of
$\gamma_{0}$. By using $\varepsilon-$neighborhoods of $\gamma_{0}^{-}$ and
$\gamma_{0}^{+}$, one can easily show that there exists a ring domain
$R\subset\overline{\mathbb{C}}$ and an open disk $U_{0} $ with $z_{0}$ as its
centre such that
\begin{align}
& \overline{U}_{0}\subset U^{-}\cap U^{+}\text{, \ \ }\gamma_{0}\subset
R\subset U^{-}\cup U^{+},\label{f91d1}\\
& R\cap\partial U_{0}\text{ \ has exactly two components, and}\label{f91d2}\\
& R\setminus\overline{U}_{0}\text{ \ is a simply connected domain.}%
\label{f91d3}%
\end{align}
Assertion (i) immediately follows from the construction of the ring domain $R
$ if the $\varepsilon-$neighborhoods of $\gamma_{0}^{-}$ and $\gamma_{0}^{+}$
are chosen sufficiently narrow.

Assertion (ii) follows from the following two facts: (a) any Jordan curve
$\gamma$ in $R$ that separates the two components $A_{1}$ and $A_{2}$\ is
homotopic to $\gamma_{0}$ in the usual sense, and (b) $\gamma$\ will intersect
with $U_{0}$\ because of (\ref{f91d2}) and (\ref{f91d3}). From the last
assertion, it follows that we can choose a separation point $z$ for $\gamma$
anywhere in $\gamma\cap U_{0}$.

The assertions (iii) and (iv) are obvious completions of assertion (ii), and
they follow rather immediately from the construction of $R$\ in (\ref{f91d1}),
(\ref{f91d2}), and (\ref{f91d3}).

For the proof of assertion (v) we assume that $K$ is an arbitrary element of
$\mathcal{K}(f,\infty)$, i.e., $\overline{\mathbb{C}}\setminus K$ is an
admissible domain for Problem $(f,\infty)$ as introduced in Definition
\ref{d21a}, and further we assume that $\gamma_{0}\in\Gamma_{1}$.

We considered the open set $R\setminus K$. From $\gamma_{0}\in\Gamma_{1}$ and
assertion (iv) it follows that
\begin{equation}
\gamma\cap K\neq\emptyset\label{f91d4}%
\end{equation}
for all Jordan curves $\gamma\subset R$ that separate $A_{1}$ from $A_{2}$.
Indeed, if (\ref{f91e1}) were false for some Jordan curve $\gamma$, then this
curve could be modified near infinity in $R\setminus K$ into a Jordan curve
$\widetilde{\gamma}\subset R\setminus K$ that is homotopic to $\gamma$ in $R$
and $\infty\in\widetilde{\gamma}$. From assertion (iv) we then know that
$\widetilde{\gamma}\in\Gamma$. Since $R\setminus K\subset\overline{\mathbb{C}%
}\setminus K\in\mathcal{D}(f,\infty)$, we know from Definition \ref{d21a} that
the function $f$ has a single-valued meromorphic continuation along the whole
curve $\widetilde{\gamma}$, which implies that $\widetilde{\gamma}\in
\Gamma_{0}$. On the other hand, from the assumption $\gamma_{0}\in\Gamma_{1}$
we deduce with assertion (iii) that also $\gamma_{1}\in\Gamma_{1}$. This last
contradiction proves (\ref{f91d4}).

Assertion (v) then follows from (\ref{f91d4}) and the next Lemma \ref{l91c}.
The lemma is of independent interest for several applications at other places
in the sequel.\medskip
\end{proof}

\begin{lemma}
\label{l91c} Let $R\subset\overline{\mathbb{C}}$ be a ring domain, $A_{1}$ and
$A_{2}$ the two components of $\overline{\mathbb{C}}\setminus R$, and let
$K\subset\mathbb{C}$ be a compact set. There exists a continuum $V\subset K$
with
\begin{equation}
V\cap A_{j}\neq\emptyset\text{ \ for\ \ }j=1,2\label{f91e1}%
\end{equation}
if and only if
\begin{equation}
\gamma\cap K\neq\emptyset\label{f91e2}%
\end{equation}
for every Jordan curve $\gamma\subset R$ that separates $A_{1}$ from $A_{2}$.
\end{lemma}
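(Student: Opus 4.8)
The proof splits into the two implications of the equivalence, and I would dispose of the forward one first as routine. If $\gamma\subset R$ is a Jordan curve separating $A_{1}$ from $A_{2}$, then $\gamma$ is disjoint from $A_{1}\cup A_{2}=\overline{\mathbb{C}}\setminus R$, so the two continua $A_{1}$ and $A_{2}$, being connected and missing $\gamma$, lie in the two different components $G_{1},G_{2}$ of $\overline{\mathbb{C}}\setminus\gamma$ (this is what ``separating'' means here). Any continuum $V$ satisfying (\ref{f91e1}) then meets both $G_{1}$ and $G_{2}$, hence meets $\gamma$; since $V\subset K$, this yields (\ref{f91e2}).

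The substance is the converse, which I would prove by contraposition. Assume there is \emph{no} continuum $V\subset K$ satisfying (\ref{f91e1}); the goal is a Jordan curve $\gamma\subset R\setminus K$ separating $A_{1}$ from $A_{2}$, which contradicts that (\ref{f91e2}) holds for all separating curves. Put $E:=K\cup A_{1}\cup A_{2}$, a compact set with $\overline{\mathbb{C}}\setminus E=R\setminus K$, and let $P_{1}$, $P_{2}$ be the connected components of $E$ containing $A_{1}$, $A_{2}$, respectively. The crux is the claim $P_{1}\neq P_{2}$. Granting it, $P_{1}$ and $P_{2}$ are two distinct components of the compact set $E$, and by a classical fact of plane topology two distinct components of a compact subset of the sphere are separated by a Jordan curve lying in the complement of the set: one isolates the two components inside disjoint compact subsets of $E$ (in a compact set, components coincide with quasi-components), thickens one of these subsets slightly into a connected open set whose closure misses the other, and takes an appropriate boundary Jordan curve of the complement of that thickening. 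The resulting curve $\gamma$ lies in $\overline{\mathbb{C}}\setminus E=R\setminus K$ and keeps $P_{1}$ and $P_{2}$, and therefore $A_{1}\subset P_{1}$ and $A_{2}\subset P_{2}$, on opposite sides; so $\gamma$ is a separating Jordan curve in $R$ with $\gamma\cap K=\emptyset$, the desired contradiction.

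There remains the claim $P_{1}\neq P_{2}$, and this is the step I expect to be the genuine obstacle, being a manifestation of the unicoherence of $\overline{\mathbb{C}}$. What must be shown is: if a single continuum $C\subset K\cup A_{1}\cup A_{2}$ meets both of the disjoint continua $A_{1}$ and $A_{2}$, then already some continuum contained in $K$ meets both. I would derive this from Janiszewski's theorem, or directly from the unicoherence of the sphere; alternatively, working inside $C$, one passes by Zorn's lemma to a subcontinuum of $C$ that is irreducible between $A_{1}$ and $A_{2}$ and checks that its portion outside $A_{1}\cup A_{2}$, which is contained in $K$, together with its limit points is still a continuum meeting both $A_{1}$ and $A_{2}$. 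The delicate feature of this last route is that $C$ may contain indecomposable subcontinua, which is precisely what defeats a naive boundary-bumping argument and what the appeal to unicoherence circumvents. All of this is purely topological, so the degenerate possibilities --- $R$ a punctured disc or punctured plane, or $A_{1}$ or $A_{2}$ a single point --- require no separate treatment.
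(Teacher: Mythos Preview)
Your argument is correct and takes a genuinely different route from the paper. The forward direction matches. For the converse, the paper works directly inside $B_{0}:=\overline{R}\cap K$: it enumerates those components of $B_{0}$ that fail to touch one of the $A_{j}$, removes them one at a time forming a nested sequence $B_{n}$, argues that each $B_{n}$ still meets every separating curve (a curve meeting only removed pieces can be slid off them into $R\setminus B_{0}$, contradicting the hypothesis), and concludes from $\bigcap_{n}B_{n}\neq\emptyset$ that some component of $B_{0}$ touches both $A_{j}$. You instead proceed by contraposition through $E=K\cup A_{1}\cup A_{2}$: the absence of a spanning continuum in $K$ forces $A_{1},A_{2}$ into distinct components $P_{1}\neq P_{2}$ of $E$, and then the planar separation theorem yields a Jordan curve in $\overline{\mathbb{C}}\setminus E=R\setminus K$ separating them. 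Your packaging is the more modular one, resting on two off-the-shelf facts (components equal quasi-components in compact metric spaces; distinct components of a compact planar set are separated by a Jordan curve in the complement), while the paper's iterative-removal argument is more hands-on and constructive.

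On your flagged crux, the claim $P_{1}\neq P_{2}$: the irreducible-subcontinuum route and the appeal to unicoherence or Janiszewski are workable but, as you anticipate, fiddly. The cleanest justification simply reuses the quasi-component principle one more time, now inside the compact set $X:=\overline{P_{1}\cap R}\subset K$: if no subcontinuum of $X$ meets both $X\cap A_{1}$ and $X\cap A_{2}$, there is a clopen split $X=X_{1}\sqcup X_{2}$ with $X\cap A_{j}\subset X_{j}$, and then $(P_{1}\cap A_{1})\cup X_{1}$ and $(P_{1}\cap A_{2})\cup X_{2}$ disconnect the continuum $P_{1}$. One small wrinkle in your separation step: the thickening of the compact piece containing $P_{1}$ need not be connected, so its boundary is a finite union of Jordan curves rather than a single one; but since $A_{1}$ and $A_{2}$ are themselves continua, one curve from that finite arrangement already separates them, which is what you need.
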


\begin{proof}
Let us first assume that there exists a Jordan curve $\gamma$ with the given
properties for which (\ref{f91e2}) is false, and let $O_{1}$ and $O_{2} $ be
the interior and the exterior domain of $\gamma$. Then for any continuum
$V\subset K$ satisfying (\ref{f91e1}) we would have the contradiction that
$V\subset O_{1}\cup O_{2}$ and $V\cap O_{j}\neq\emptyset$ for $j=1,2$.

Next, we assume that (\ref{f91e2}) holds true, and set $B_{0}:=\overline
{R}\cap K$. Let $C_{j}\subset B_{0}$, $j\in I$, be the family of all
components of $B_{0}$ that are disjoint from at least one of the two sets
$A_{1}$ or $A_{2}$. The set $I$ is denumerable, we assume $I\subset\mathbb{N}%
$, and define
\begin{equation}
B_{n}:=\overline{B_{0}\setminus\bigcup\nolimits_{j\in I,\text{ }j\leq n}C_{j}%
}\text{ \ \ for \ \ }n=1,2,\ldots\label{f91e3}%
\end{equation}
The assumption of (\ref{f91e2}) implies that also
\begin{equation}
\gamma\cap B_{n}\neq\emptyset\text{ \ \ for \ \ }n>0\label{f91e4}%
\end{equation}
and for every Jordan curve $\gamma\subset R$ that separates $A_{1}$ from
$A_{2}$. Indeed, if there would exist an exceptional Jordan curve $\gamma$,
then $\gamma$ could be modified into a Jordan curve $\widetilde{\gamma}\subset
R\setminus B_{0}$ that is homotopic to $\gamma$ in $R$, which then would
contradict (\ref{f91e2}).

From (\ref{f91e4}) we deduce that
\begin{equation}
B_{\infty}:=\bigcap\nolimits_{n\in\mathbb{N}}B_{n}\neq\emptyset.\label{f91e5}%
\end{equation}

The set $B_{\infty}$ contains only components that intersect simultaneously
both sets $A_{1}$ and $A_{2}$, which proves the existence of a continuum
$V\subset B_{\infty}\subset K$ satisfying (\ref{f91e1}).\medskip
\end{proof}

The following proposition has been the main reason and motivation for the
introduction of the sets $\Gamma$, $\Gamma_{0}$, and $\Gamma_{1}$ of Jordan
curves in the Definitions \ref{d91a} and \ref{d91b}.\smallskip

\begin{proposition}
\label{p91a} Let $\Gamma=\Gamma(f,\infty)$, $\Gamma_{0}$, $\Gamma_{1}%
\subset\Gamma$ be the sets of Jordan curves introduced in the two Definitions
\ref{d91a} and \ref{d91b}, and let $\mathcal{D}(f,\infty)$ be the set of
admissible domains for Problem $(f,\infty)$ introduced in Definition
\ref{d21a}.

A domain $D\subset\overline{\mathbb{C}}$ with $\infty\in D$ belongs to
$\mathcal{D}(f,\infty)$ if, and only if, the following two assertions hold true:

\begin{itemize}
\item[(i)] The function $f$ has a meromorphic continuation along each closed
Jordan arc $\gamma$ in $D$ that starts at $\infty$.

\item[(ii)] For each Jordan curve $\gamma\in\Gamma_{1}$ we have $\gamma
\cap(\overline{\mathbb{C}}\setminus D)\neq\emptyset$.
\end{itemize}
\end{proposition}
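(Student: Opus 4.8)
The plan is to prove the two implications separately. The forward implication is an immediate consequence of the definitions, while the converse is, in essence, the monodromy theorem rephrased in terms of the curve classes $\Gamma_0$ and $\Gamma_1$ introduced in Definition \ref{d91b}.

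For the forward direction, assume $D\in\mathcal{D}(f,\infty)$, so that $f$ possesses a single-valued meromorphic continuation throughout $D$. Assertion (i) is then trivial: along any closed Jordan arc $\gamma\subset D$ starting at $\infty$, the continuation exists because it is furnished by the globally defined meromorphic function on $D$. For assertion (ii), suppose some $\gamma\in\Gamma_{1}$ satisfied $\gamma\cap(\overline{\mathbb{C}}\setminus D)=\emptyset$, i.e.\ $\gamma\subset D$. Then both partial arcs $\gamma^{+},\gamma^{-}\subset D$, and the meromorphic continuations of $f$ along them both coincide, near the separation point $z\in D$, with the single-valued extension of $f$ in $D$; hence they lead to the same function element at $z$, so $\gamma\in\Gamma_{0}$, contradicting $\gamma\in\Gamma_{1}$, since the two subclasses are disjoint. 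This proves (ii).

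For the converse, assume (i) and (ii). By (i), $f$ can be continued meromorphically along every arc in $D$ emanating from $\infty$, so the only obstruction to $D\in\mathcal{D}(f,\infty)$ is single-valuedness. Suppose, for contradiction, that the continuation is not single-valued in $D$. Then there are two closed Jordan arcs $c^{+},c^{-}$ in $D$ joining $\infty$ to a common point $z\in D$ along which $f$ continues to distinct function elements at $z$. From the curve $c^{+}\cup c^{-}$ one extracts a Jordan curve $\gamma\subset D$ with $\infty\in\gamma$ around which the meromorphic continuation of $f$ is non-trivial. By (i), $f$ continues along each of the two partial arcs of $\gamma$, so $\gamma\in\Gamma$; and non-triviality of the monodromy means precisely that the two continuations produce different function elements at the separation point, i.e.\ $\gamma\in\Gamma_{1}$. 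But $\gamma\subset D$ means $\gamma\cap(\overline{\mathbb{C}}\setminus D)=\emptyset$, contradicting (ii). Hence the continuation is single-valued and $D\in\mathcal{D}(f,\infty)$.

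The main technical point is the last extraction step: producing, from a loop with non-trivial monodromy, a Jordan curve through $\infty$ that still carries non-trivial monodromy. This rests on the monodromy theorem together with the fact that the fundamental group of a plane domain is generated by simple loops; if the offending simple loop does not already pass through $\infty$, one joins it to $\infty$ by an arc in $D$ and performs the deformation inside a thin lollipop-shaped neighborhood of their union, replacing the loop by a Jordan curve through $\infty$ homotopic to it in $D$, which changes the monodromy only by conjugation and hence keeps it non-trivial. The compatibility of all such homotopy deformations with the splitting $\Gamma=\Gamma_{0}\cup\Gamma_{1}$ is exactly Lemma \ref{l91a}, and the ring-domain construction of Lemma \ref{l91b} packages precisely this kind of reasoning, so it may be invoked to make the argument rigorous.
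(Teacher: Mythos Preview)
Your proof is correct and follows the same logical skeleton as the paper's own proof, which is extremely terse: the paper simply asserts that (i) gives continuation to every point of $D$ and that (ii) ``guarantees that these continuations are single-valued,'' and handles the forward direction by the same one-line contradiction you give. What you add is an explicit justification of the step the paper glosses over, namely passing from a failure of single-valuedness to a Jordan curve $\gamma\in\Gamma_{1}$ contained in $D$; your monodromy/fundamental-group argument with the lollipop construction is the standard way to do this and is sound.

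One minor remark: your appeal to Lemma~\ref{l91b} at the end is not quite the right fit. That lemma constructs a ring-domain neighborhood of a \emph{given} curve $\gamma_0\in\Gamma$ and shows stability of the $\Gamma_0/\Gamma_1$ class under perturbations within that ring; it does not itself supply the global extraction of a simple loop with non-trivial monodromy from an arbitrary loop. The tools you actually need are exactly the ones you name earlier---the monodromy theorem and the fact that $\pi_1$ of a plane domain is generated by simple loops---together with Lemma~\ref{l91a} for homotopy invariance of the class. So your argument stands, just with Lemma~\ref{l91b} dropped from the citation.
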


\begin{proof}
Assertion (i) ensures that the function $f$ has a meromorphic continuation to
each point of the domain $D$, and assertion (ii) guarantees that these
continuations are single-valued. Hence, the two assertions (i) and (ii) imply
$D\in\mathcal{D}(f,\infty)$.

The other direction of the proof follows also rather immediately from the two
Definitions \ref{d21a} and \ref{d91b}. If $D\in\mathcal{D}(f,\infty)$, then
clearly assertions (i) holds true; and if there would exist $\gamma_{1}%
\in\Gamma_{1}$ with $\gamma_{1}\subset D$, then this would contradict the
assumption in Definitions \ref{d21a} that the meromorphic continuation of the
function $f$ in $D$ is single-valued.\medskip
\end{proof}

\subsection{\label{s92}The Existence of a Domain in $\mathcal{D}_{0}%
(f,\infty)$}

\qquad In Definition \ref{d21b}, the set of all admissible domains with a
complement of minimal capacity has been denoted by $\mathcal{D}_{0}(f,\infty
)$. In the present subsection we prove that $\mathcal{D}_{0}(f,\infty)$ is not
empty.\smallskip

\begin{proposition}
\label{p92a}We have $\mathcal{D}_{0}(f,\infty)\neq\emptyset$.\smallskip
\end{proposition}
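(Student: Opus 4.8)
The plan is to realize an element of $\mathcal{D}_0(f,\infty)$ as a limit of a minimizing sequence of admissible domains: Hausdorff/kernel compactness gives a limit object, Proposition \ref{p91a} keeps admissibility in the limit, and the link between capacity and the Green function keeps minimality. First I would dispose of the degenerate case: if $f$ has no branch points then $\Gamma_1(f,\infty)=\emptyset$ and, by (the proof of) Proposition \ref{p22b}, the Weierstrass domain $W_f$ already lies in $\mathcal{D}_0(f,\infty)$; so assume $f$ has branch points, whence every $K\in\mathcal{K}(f,\infty)$ contains a nondegenerate continuum (it must contain and connect the branch points) and $c_0:=\inf_{K\in\mathcal{K}(f,\infty)}\operatorname{cap}(K)>0$. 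Next I would normalize at infinity: $f$ is single-valued meromorphic in some $\{\,|z|>R_0\,\}$, so $\{\,|z|\le R_0\,\}\in\mathcal{K}(f,\infty)$ and $c_0\le R_0<\infty$, and since adjoining $\{\,|z|>R_0\,\}$ to an admissible domain keeps it admissible (all monodromy obstructions of $f$ sit inside $\{\,|z|\le R_0\,\}$) and does not enlarge the capacity of the complement, we may assume all admissible compact sets in play lie in the fixed disk $\overline{B}:=\{\,|z|\le R_0\,\}$. Finally, fix a minimizing sequence $K_n\in\mathcal{K}(f,\infty)$, $K_n\subset\overline{B}$, $\operatorname{cap}(K_n)\downarrow c_0$; set $D_n:=\overline{\mathbb{C}}\setminus K_n$ and, after passing to a subsequence, let $D_n$ converge to a domain $D^{\ast}\ni\infty$ in the sense of Carath\'{e}odory kernels with respect to $\infty$; put $K^{\ast}:=\overline{\mathbb{C}}\setminus D^{\ast}\subset\overline{B}$.

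The next step is to verify $D^{\ast}\in\mathcal{D}(f,\infty)$ by checking the two conditions of Proposition \ref{p91a}. Condition (i) is immediate: a closed Jordan arc $\gamma\subset D^{\ast}$ starting at $\infty$ is a compact subset of $D^{\ast}$, hence lies in $D_n$ for all large $n$, so $f$ continues meromorphically along it. For condition (ii), fix $\gamma_0\in\Gamma_1$; by Lemma \ref{l91b} there is a ring domain $R\supset\gamma_0$ with $\gamma_0$ separating the two components $A_1,A_2$ of $\overline{\mathbb{C}}\setminus R$, and by Lemma \ref{l91b}(v) each $K_n$ contains a continuum $V_n$ with $V_n\cap A_j\neq\emptyset$ for $j=1,2$. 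Passing to a Hausdorff-convergent subsequence, the $V_n$ tend to a continuum $V^{\ast}$ still meeting both $A_1$ and $A_2$, and $V^{\ast}\subset K^{\ast}$ (a point of $V^{\ast}$ lying in $D^{\ast}$ would have a neighbourhood eventually inside $D_n$, contradicting $V_n\to V^{\ast}$). By the ``only if'' direction of Lemma \ref{l91c} applied to $K^{\ast}$, the separating curve $\gamma_0$ meets $K^{\ast}$. Hence $K^{\ast}\in\mathcal{K}(f,\infty)$; in particular $K^{\ast}\supset V^{\ast}$ is nonpolar, so $D^{\ast}$ carries a Green function $g_{D^{\ast}}(\cdot,\infty)$.

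It remains to show $\operatorname{cap}(K^{\ast})=c_0$, which is the crux. One inequality, $\operatorname{cap}(K^{\ast})\ge c_0$, is automatic from admissibility. The reverse one cannot be obtained from set convergence, since logarithmic capacity is only upper --- not lower --- semicontinuous under Hausdorff and kernel convergence; instead one passes to the Green functions. Carath\'{e}odory kernel convergence $D_n\to D^{\ast}$, together with $\operatorname{cap}(K^{\ast})>0$, forces $g_{D_n}(\cdot,\infty)\to g_{D^{\ast}}(\cdot,\infty)$ locally uniformly on $D^{\ast}$; by the relation between the Green function and the capacity (Lemma \ref{l113b}) this yields $\operatorname{cap}(K_n)\to\operatorname{cap}(K^{\ast})$, so $\operatorname{cap}(K^{\ast})=\lim_n\operatorname{cap}(K_n)=c_0$. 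Combined with the preceding step, $D^{\ast}\in\mathcal{D}_0(f,\infty)$, proving the proposition. I expect the main obstacle to be precisely this last passage --- upgrading ``the limit is admissible'' to ``the limit has minimal capacity'' --- together with its potential-theoretic underpinning, the continuity of the Green function, hence of the capacity, under Carath\'{e}odory kernel convergence of the complementary domains; this is the kind of result presumably collected in Section \ref{s110}. The normalization at infinity and the checking of the nondegeneracy hypotheses behind the kernel convergence are smaller but genuine points to be settled along the way.
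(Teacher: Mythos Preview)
There is a genuine gap at the capacity step. Carath\'{e}odory kernel convergence $D_n\to D^{\ast}$ does \emph{not} force $g_{D_n}(\cdot,\infty)\to g_{D^{\ast}}(\cdot,\infty)$, and $\operatorname{cap}(K^{\ast})$ can be strictly larger than $c_0$. A clean counterexample in the setting of Example~\ref{e61} ($f=f_1$, $c_0=1/2$): let $\{q_k\}$ be dense in $\overline{D(3,1)}$ and set $K_n=[-1,1]\cup\{q_1,\dots,q_n\}$. Each $K_n\in\mathcal{K}(f_1,\infty)$ with $\operatorname{cap}(K_n)=1/2$ (the added points are polar, cf.\ Lemma~\ref{l111b}), so this is a minimizing sequence. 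The $D_n$ are decreasing, hence kernel-converge along every subsequence to $D^{\ast}=\overline{\mathbb{C}}\setminus([-1,1]\cup\overline{D(3,1)})$, so $\operatorname{cap}(K^{\ast})\ge 1>c_0$. Meanwhile $g_{D_n}(\cdot,\infty)\equiv g_{\overline{\mathbb{C}}\setminus[-1,1]}(\cdot,\infty)$ for every $n$ (polar sets are removable for the Green function), so the Green functions do converge locally uniformly --- but to $g_{\overline{\mathbb{C}}\setminus[-1,1]}(\cdot,\infty)$, not to $g_{D^{\ast}}(\cdot,\infty)$. Your checks of conditions (i) and (ii) of Proposition~\ref{p91a} are correct for $D^{\ast}$ (it is admissible), but the kernel is simply the wrong limit object for the minimality claim; no amount of subsequencing repairs this, and preprocessing the $K_n$ to remove such ``spurious'' pieces is neither in your argument nor obviously sufficient when the spurious pieces are small arcs rather than points.

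The paper's proof replaces the geometric kernel with a potential-theoretic limit. One passes to a subsequence along which the equilibrium measures $\omega_{K_n}\overset{\ast}{\to}\omega_0$, sets $g_0:=-p(\omega_0;\cdot)-\log c_0$ via the Lower Envelope Theorem, and takes $\widetilde{K}_0:=\overline{\{g_0=0\}}$ (Definition~\ref{d92a}). This set automatically satisfies $\operatorname{cap}(\widetilde{K}_0)\le c_0$ (Lemma~\ref{l92c}), and your Lemma~\ref{l91b}(v)/\ref{l91c} argument with the cross-secting continua $V_n$ reappears --- now combined with Lemma~\ref{l114b} --- to show $\widetilde{K}_0$ meets every $\gamma\in\Gamma_1$ (Lemma~\ref{l92d}). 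The price is that $\widetilde{D}_0=\overline{\mathbb{C}}\setminus\widetilde{K}_0$ may contain non-polar singularities of $f$ that the potential-theoretic limit cannot see, so condition~(i) of Proposition~\ref{p91a} may fail for $\widetilde{D}_0$ itself; one must introduce this exceptional set $\widetilde{E}_0$ (Definition~\ref{d92b}), prove $\operatorname{cap}(\widetilde{E}_0)=0$ (Lemmas~\ref{l92f}--\ref{l92g}), and take $D_0:=\widetilde{D}_0\setminus\widetilde{E}_0$. In the counterexample above this yields $\widetilde{K}_0=[-1,1]$, $\widetilde{E}_0=\emptyset$, $D_0=\overline{\mathbb{C}}\setminus[-1,1]$ --- the right answer --- whereas your kernel produces the much larger $K^{\ast}$.
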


The basic structure of the proof of Proposition \ref{p92a} is simple and
straightforward: A minimizing sequence of admissible compact sets $K_{n}%
\in\mathcal{K}(f,\infty)$, $n\in\mathbb{N}$, is chosen in such a way that in
the limit the minimality condition (\ref{f21a}) in Definition \ref{d21b} is
satisfied. The transition to the limit situation is done in the frame work of
potential theory. It is shown that after some plausible corrections the
resulting domain is admissible for Problem $(f,\infty)$. However, in the
practical realization a number of technical hurdles have to be overcome; the
whole proof is broken down in several consecutive steps, which are presented
as lemmas.\smallskip

In a first step, we deal with the very special situation that we have the
value zero in the minimality (\ref{f21a}) of Definition \ref{d21b}.\smallskip

\begin{lemma}
\label{l92a}If in (\ref{f21a}) of Definition \ref{d21b} we have
\begin{equation}
\inf_{K\in\mathcal{K}(f,\infty)}\operatorname*{cap}(K)=0,\label{f92a}%
\end{equation}
then the subclass $\Gamma_{1}(f,\infty)$ of Jordan curves introduced in
Definition \ref{d91b} is empty.
\end{lemma}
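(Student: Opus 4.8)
The plan is to argue by contradiction, so I would assume $\Gamma_{1}(f,\infty)\neq\emptyset$ and fix some $\gamma_{0}\in\Gamma_{1}$. The key move is to feed this $\gamma_{0}$ into Lemma \ref{l91b}: there exists a ring domain $R\subset\overline{\mathbb{C}}$ with $\gamma_{0}\subset R$, whose complement $\overline{\mathbb{C}}\setminus R$ splits into two components $A_{1}$ and $A_{2}$, and — this is assertion (v), which applies precisely because we assumed $\gamma_{0}\in\Gamma_{1}$ — every admissible compact set $K\in\mathcal{K}(f,\infty)$ contains a continuum $V\subset\mathbb{C}$ with $V\cap A_{j}\neq\emptyset$ for $j=1,2$. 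Thus the hypothesis that $\Gamma_{1}$ is nonempty pins down a single fixed ring domain that is cross-sected by a subcontinuum of every admissible compact set.

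Next I would extract from this a uniform positive lower bound for $\operatorname*{cap}(K)$. Since $\infty\in\gamma_{0}\subset R$, the point $\infty$ does not lie in $A_{1}\cup A_{2}$, so $A_{1}$ and $A_{2}$ are disjoint, nonempty, compact subsets of $\mathbb{C}$ and hence $\delta:=\operatorname{dist}(A_{1},A_{2})>0$ in the Euclidean metric. Any continuum $V\subset\mathbb{C}$ meeting both $A_{1}$ and $A_{2}$ therefore satisfies $\operatorname{diam}(V)\geq\delta$, and by the standard lower estimate for the logarithmic capacity of a continuum (capacity at least $\tfrac14$ of the diameter; see the auxiliary potential-theoretic results collected in Section \ref{s110}) we get $\operatorname*{cap}(V)\geq\delta/4$. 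Combining $V\subset K$ with the monotonicity of $\operatorname*{cap}(\cdot)$ yields $\operatorname*{cap}(K)\geq\delta/4$ for every $K\in\mathcal{K}(f,\infty)$, hence $\inf_{K\in\mathcal{K}(f,\infty)}\operatorname*{cap}(K)\geq\delta/4>0$. This contradicts the hypothesis (\ref{f92a}), and therefore $\Gamma_{1}(f,\infty)=\emptyset$.

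I do not expect any serious obstacle in this argument: essentially all of the work has already been done inside Lemma \ref{l91b}(v), which is what converts the mere existence of a curve in $\Gamma_{1}$ into the geometric statement that every admissible compact set must link the two plates $A_{1},A_{2}$ of a fixed ring domain. The only points that require a little care are that $\infty$ genuinely sits in $R$ (so that $\operatorname{dist}(A_{1},A_{2})$ is positive as a Euclidean distance, not merely a spherical one) and that the capacity lower bound invoked is a legitimate one for continua; both are routine.
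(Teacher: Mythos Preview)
Your proof is correct and essentially identical to the paper's own argument: both assume $\Gamma_{1}\neq\emptyset$, invoke assertion (v) of Lemma~\ref{l91b} to obtain a continuum $V\subset K$ cross-secting a fixed ring domain $R$, and then use the diameter lower bound $\operatorname*{cap}(V)\geq\operatorname*{diam}(V)/4\geq\operatorname*{dist}(A_{1},A_{2})/4$ from Lemma~\ref{l111a} together with monotonicity to contradict~(\ref{f92a}). Your remark that $\infty\in R$ ensures $A_{1},A_{2}\subset\mathbb{C}$ and hence $\operatorname*{dist}(A_{1},A_{2})>0$ is a small point the paper leaves implicit.
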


\begin{proof}
For an indirect proof we assume $\Gamma_{1}=\Gamma_{1}(f,\infty)\neq\emptyset
$. Let $\gamma_{0}$ be an element of $\Gamma_{1}$, and let further
$R\subset\overline{\mathbb{C}}$ be a ring domain with $\gamma_{0}\subset R$ as
introduced in Lemma \ref{l91b}. From assertion (iv) of Lemma \ref{l91b}\ it
follows that for every admissible compact set $K\in\mathcal{K}(f,\infty)$
there exists a continuum $V\subset K$ that intersects $R$, i.e., we have
\begin{equation}
V\cap A_{j}\neq\emptyset\text{ \ for\ \ }j=1,2\label{f92a1}%
\end{equation}
and $A_{1}$, $A_{2}$ the two components of $\overline{\mathbb{C}}\setminus R$.
From the lower estimate (\ref{f111b2}) for the capacity given in Lemma
\ref{l111a}, further below, we then conclude that
\begin{equation}
\operatorname*{cap}(K)\geq\operatorname*{diam}(V)/4\geq\operatorname*{dist}%
(A_{1},A_{2})/4\text{.}\label{f92a2}%
\end{equation}
Since the right-hand side of (\ref{f92a2}) is independent of $V$\ and the
choice of $K$,\ the estimate (\ref{f92a2}) contradicts (\ref{f92a}). Thus, we
have proved that $\Gamma_{1}=\emptyset$.\medskip
\end{proof}

In Lemma \ref{l92a} a special case of Proposition \ref{p22b} has been
addressed, and we have the following corollary.\medskip

\begin{corollary}
\label{c92a}If condition (\ref{f92a}) is satisfied, then all meromorphic
continuations of the function $f$ are single-valued, and consequently, the
extremal domain $D_{0}=D_{0}(f,\infty)$ of Definition \ref{d21b} is the
Weierstrass\ domain $W_{f}\subset\overline{\mathbb{C}}$ for meromorphic
continuation of the function $f$ starting at $\infty$.
\end{corollary}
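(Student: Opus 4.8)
The plan is to read this off from Lemma \ref{l92a} and Proposition \ref{p22b}, inserting one short observation that links the emptiness of $\Gamma_1$ to the absence of branch points. Assuming condition (\ref{f92a}), Lemma \ref{l92a} already gives $\Gamma_1 = \Gamma_1(f,\infty) = \emptyset$; this is the only step in which potential theory (the diameter lower bound used in the proof of that lemma) enters. It then remains to show (a) that $\Gamma_1 = \emptyset$ forces every meromorphic continuation of $f$ to be single-valued, and (b) that this, via Proposition \ref{p22b}, yields $D_0(f,\infty) = W_f$.

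For (a) I would argue contrapositively. If some continuation of $f$ were multivalued, there would exist a point $z_0 \in \mathbb{C}$ and two arcs $\beta^{+}$ and $\beta^{-}$ joining $\infty$ to $z_0$ along each of which $f$ continues meromorphically, but with distinct resulting function elements at $z_0$. Since a meromorphic continuation along an arc persists in an open tube around the arc and depends only on the homotopy class of the arc within that tube, one may first replace $\beta^{\pm}$ by simple arcs and then perturb them inside their tubes so that $\beta^{+}\cap\beta^{-} = \{z_0,\infty\}$. The resulting Jordan curve $\gamma := \beta^{+}-\beta^{-}$ passes through $\infty$, has $z_0$ as a separation point, and its two partial arcs carry meromorphic continuations of $f$ that disagree at $z_0$; hence $\gamma \in \Gamma_1(f,\infty)$, contradicting $\Gamma_1 = \emptyset$. (If one prefers, the same emptiness of $\Gamma_1$ makes the Weierstrass domain $W_f$ admissible via Proposition \ref{p91a}, which is an alternative road to the conclusion.) Part (b) is then immediate: having no branch points, $f$ falls under Proposition \ref{p22b}, which identifies the extremal domain with $W_f$, and correspondingly $K_0(f,\infty) = \overline{\mathbb{C}}\setminus W_f$.

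I expect the only genuine work to lie in part (a), and specifically in the normalization that turns a raw pair of arcs witnessing multivaluedness into an admissible Jordan curve of class $\Gamma_1$ in the precise sense of Definitions \ref{d91a} and \ref{d91b}: one must arrange simultaneously that the curve be simple, that it run through $\infty$, and that the splitting of the two continuations occur exactly at the chosen separation point. This is handled by first passing to an earliest point of divergence of the two continuations and then applying the tube-perturbation argument there. It is routine point-set topology combined with the stability of meromorphic continuation under small deformations, and no new ideas are needed beyond those already developed in Subsection \ref{s91}.
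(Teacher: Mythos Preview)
Your proposal is correct and follows the same line as the paper's own proof: both deduce $\Gamma_1=\emptyset$ from Lemma~\ref{l92a}, observe that this is equivalent to single-valuedness of all meromorphic continuations of $f$, and then conclude $D_0(f,\infty)=W_f$. The paper compresses your part~(a) into the single remark that the equivalence ``follows immediately from Definition~\ref{d91b}'', whereas you spell out the contrapositive construction of a $\Gamma_1$-curve; this is the same content, just with more detail supplied.

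One organizational remark: you close by invoking Proposition~\ref{p22b}, but in the paper's logical order that proposition is proved only later (Subsection~\ref{s94}), after Corollary~\ref{c92a}. There is no circularity---the proof of Proposition~\ref{p22b} does not rely on the corollary---but the paper avoids the forward reference by concluding $D_0(f,\infty)=W_f$ directly: once all continuations are single-valued, $W_f$ is admissible and every admissible domain is contained in it, so $\overline{\mathbb{C}}\setminus W_f$ satisfies all three conditions of Definition~\ref{d21b}. Your parenthetical alternative via Proposition~\ref{p91a} is exactly this direct route and would keep the argument self-contained at this point in the exposition.
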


\begin{proof}
It follows immediately from Definition \ref{d91b} that $\Gamma_{1}=\emptyset$
is equivalent to the single-valuedness of all meromorphic continuations of $f$
in $\overline{\mathbb{C}}$, and consequently we have $D_{0}(f,\infty)=W_{f}%
$.\medskip
\end{proof}

Thanks to Lemma \ref{l92a}, we can now assume without loss of generality for
the remainder of the present subsection that
\begin{equation}
\inf_{K\in\mathcal{K}(f,\infty)}\operatorname*{cap}(K)=:c_{0}>0.\label{f92b}%
\end{equation}

We select a sequence of admissible compact sets $K_{n}\in\mathcal{K}%
(f,\infty)$, $n\in\mathbb{N}$, such that
\begin{equation}
\lim_{n\rightarrow\infty}\operatorname*{cap}(K_{n})=c_{0}.\label{f92c}%
\end{equation}

\begin{lemma}
\label{l92b} There exists $r>0$ such that we can assume without loss of
generality that the sequence $\{K_{n}\}$\ in (\ref{f92c})\ satisfies
\begin{equation}
K_{n}\subset\left\{  \text{\thinspace}|z|\leq r\text{\thinspace}\right\}
\text{ \ \ \ for all \ \ }n\in\mathbb{N}\text{.}\label{f92d}%
\end{equation}

\end{lemma}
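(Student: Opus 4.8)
The plan is to replace each $K_n$ by a set $K_n'\in\mathcal{K}(f,\infty)$ with $\operatorname{cap}(K_n')\le\operatorname{cap}(K_n)$ that is contained in a disk whose radius does not depend on $n$; once this is done, replacing $K_n$ by $K_n'$ preserves (\ref{f92c}) and yields (\ref{f92d}). First I would dispose of the routine reductions. Discarding finitely many terms, we may assume $\operatorname{cap}(K_n)\le M:=c_0+1$ for all $n$. Let $\{|z|>R_0\}$ be a neighbourhood of $\infty$ on which $f$ is single-valued and meromorphic; then $\{|z|>R_0\}$ is admissible for Problem $(f,\infty)$, so $\overline{\mathbb{D}}_{R_0}\in\mathcal{K}(f,\infty)$ and $c_0\le\operatorname{cap}(\overline{\mathbb{D}}_{R_0})=R_0$. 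Since $f$ is meromorphic in every larger neighbourhood of $\infty$ as well, we may in addition enlarge $R_0$ so that $R_0>2M$. Finally, replacing each $K_n$ by its polynomial-convex hull changes neither its capacity nor its admissibility (the exterior of the hull is the component of $\overline{\mathbb{C}}\setminus K_n$ containing $\infty$, hence again a domain in $\mathcal{D}(f,\infty)$), so we may assume that $D_n:=\overline{\mathbb{C}}\setminus K_n$ is connected.

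The heart of the argument is to produce, for each $n$, a Jordan curve $\gamma_n\subset D_n$ lying in the annulus $\{R_0<|z|<r\}$, $r:=R_0+4M+1$, which separates $\overline{\mathbb{D}}_{R_0}$ from $\{|z|\ge r\}$, and to show that $\gamma_n$ may be chosen inside the connected component $D_n^{(0)}$ of $D_n\cap\{|z|>R_0\}$ that contains $\infty$. That $\gamma_n$ exists in $D_n$ follows from Lemmas \ref{l91c} and \ref{l111a}: if every Jordan curve in this annulus separating its two boundary continua met $K_n$, then $K_n$ would contain a continuum $V$ meeting both $\overline{\mathbb{D}}_{R_0}$ and $\{|z|\ge r\}$, whence $\operatorname{diam}(V)\ge r-R_0=4M+1$ and $\operatorname{cap}(K_n)\ge(4M+1)/4>M$, a contradiction. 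For the refinement that $\gamma_n$ can be taken in $D_n^{(0)}$: the continuum $\overline{\mathbb{D}}_{R_0}$ lies in a bounded complementary component of $D_n^{(0)}$, so $D_n^{(0)}$ does contain Jordan curves separating $\overline{\mathbb{D}}_{R_0}$ from $\infty$; were none of them placeable inside $\{R_0<|z|<r\}$, then $\overline{\mathbb{C}}\setminus D_n^{(0)}$ would separate $\overline{\mathbb{D}}_{R_0}$ from $\{|z|\ge r\}$ within the annulus, and one extracts from it a continuum inside $K_n$ that encloses $\overline{\mathbb{D}}_{R_0}$; such a continuum has diameter at least $2R_0$, hence capacity at least $R_0/2>M\ge\operatorname{cap}(K_n)$, again a contradiction. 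Since $f\!\restriction_{D_n^{(0)}}$ agrees with its given single-valued branch on $\{|z|>R_0\}$ (they coincide near $\infty$ and $D_n^{(0)}$ is connected), the continuation $\tilde f_n$ of $f$ along $D_n$ coincides on and near $\gamma_n$ with this original branch of $f$.

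With $\gamma_n$ fixed, let $O_n$ and $\Delta_n$ be its bounded and unbounded complementary components, so $\overline{\mathbb{D}}_{R_0}\subset O_n\subset\mathbb{D}_r$ and $\Delta_n\subset\{|z|>R_0\}$. Put $K_n':=K_n\cap\overline{O_n}$. Then $K_n'\subset\{|z|\le r\}$, and $K_n'\subset K_n$ gives $\operatorname{cap}(K_n')\le\operatorname{cap}(K_n)$. It remains to verify $K_n'\in\mathcal{K}(f,\infty)$, i.e. that $D_n':=\overline{\mathbb{C}}\setminus K_n'=D_n\cup\Delta_n$ admits a single-valued meromorphic extension of $f$: take it to be $\tilde f_n$ on $D_n$ and the original branch of $f$ on $\Delta_n\subset\{|z|>R_0\}$; by the previous paragraph these two definitions agree on an annular, hence connected, neighbourhood of $\gamma_n$, so they glue to a single-valued meromorphic function on $D_n'$. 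Thus $D_n'\in\mathcal{D}(f,\infty)$, and replacing $K_n$ by $K_n'$ gives the assertion with $r=R_0+4M+1$.

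The genuinely delicate step is the one flagged in the second paragraph: showing that the separating curve can be taken inside the $\infty$-component $D_n^{(0)}$ — equivalently, that $\tilde f_n$ is the original branch of $f$ along $\gamma_n$. Everything else is either the preliminary normalisations of the first paragraph or the standard facts packaged in Lemmas \ref{l91c} and \ref{l111a}. The difficulty is that $\overline{\mathbb{C}}\setminus D_n^{(0)}$ is in general strictly larger than $K_n$ — it also contains the parts of $\{|z|>R_0\}$ that $K_n$ screens off from $\infty$ — so the capacity lower bound cannot be transferred from $\overline{\mathbb{C}}\setminus D_n^{(0)}$ to $K_n$ directly; one must use the polynomial-convexity reduction together with the component/quasicomponent structure of the compact set $K_n$ to locate an enclosing continuum genuinely inside $K_n$, and it is precisely to make such a continuum too expensive (its diameter is forced to be $\ge 2R_0$) that one arranges $R_0>2M$ at the outset.
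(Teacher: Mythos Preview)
Your approach is genuinely different from the paper's, and considerably more elaborate. The paper's proof is essentially one line: choose $r>1$ so that $f$ is meromorphic in $\{|z|>r-1\}$, and replace each $K_n$ by its radial projection $\widetilde K_n:=\varphi_r(K_n)$ onto $\{|z|\le r\}$ (the map $\varphi_r$ of~(\ref{f111d1})). Then $\widetilde K_n\subset\{|z|\le r\}$, admissibility $\overline{\mathbb C}\setminus\widetilde K_n\in\mathcal D(f,\infty)$ is immediate (any curve in $\Gamma_1$ through the projected set lifts to one through $K_n$), and $\operatorname{cap}(\widetilde K_n)\le\operatorname{cap}(K_n)$ by Lemma~\ref{l111c}. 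No separating curves, no component analysis, no capacity lower bounds needed.

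Your route via a separating Jordan curve can be made to work, but the step you yourself flag as delicate is not actually carried out. You assert that if no separating curve can be placed inside $D_n^{(0)}$, then ``one extracts \dots\ a continuum inside $K_n$ that encloses $\overline{\mathbb D}_{R_0}$'', and derive a contradiction from its diameter. The extraction is the whole difficulty, and your sketch does not perform it: Lemma~\ref{l91c} only gives a continuum in $\overline{\mathbb C}\setminus D_n^{(0)}$, which, as you note, is strictly larger than $K_n$. A correct completion goes as follows: if $\gamma_n$ lies in a component $D_n^{(1)}\ne D_n^{(0)}$, let $C_\infty$ be the component of $\overline{\mathbb C}\setminus D_n^{(1)}$ containing~$\infty$; since $\gamma_n\subset D_n^{(1)}$ separates $0$ from~$\infty$, one checks $C_\infty\subset\Delta_n\subset\{|z|>R_0\}$. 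Let $W$ be the component of $\overline{\mathbb C}\setminus C_\infty$ containing~$\gamma_n$. One verifies $\partial W\subset K_n$ (points of other components $D_n^{(j)}$ cannot lie on $\partial W$ because such a $D_n^{(j)}$ would then be contained in the open interior of $C_\infty$). Now $W$ is bounded, open, meets $D_n$ (it contains $\gamma_n$), and $\partial W\subset K_n$, so $D_n$ splits as $(D_n\cap W)\sqcup(D_n\setminus\overline W)$ --- contradicting polynomial convexity directly. Note this uses only connectedness of~$D_n$; your condition $R_0>2M$ is not needed for this step at all. (If you insist on the capacity route, the outer boundary of $W$ is the enclosing continuum you want, but the topological contradiction is cleaner.)

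So: the strategy is sound, but the crucial step is left as an unproved assertion, and the bookkeeping you introduce ($R_0>2M$) for it is irrelevant. More to the point, the whole construction is bypassed by the paper's radial-projection trick, which you should know as the standard device here.
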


\begin{proof}
Let $r>1$ be such that $f$ is meromorphic in $\left\{  \text{\thinspace
}|z|>r-1\text{\thinspace}\right\}  $. For any admissible compact set
$K\in\mathcal{K}(f,\infty)$, we denote by $\widetilde{K}$ the radial
projection of $K$ onto the disk $\left\{  \text{\thinspace}|z|\leq
r\text{\thinspace}\right\}  $ as defined in (\ref{f111d1}) of Subsection
\ref{s1101}, further below. It is not difficult to verify that because of
$\overline{\mathbb{C}}\setminus K\in\mathcal{D}(f,\infty)$ we also
have$\ \widetilde{D}:=\overline{\mathbb{C}}\setminus\widetilde{K}%
\in\mathcal{D}(f,\infty)$. One has only to check the conditions in Definition
\ref{d21a}.

From Lemma \ref{l111c} in Subsection \ref{s1101}, it then follows that
$\operatorname*{cap}(\widetilde{K})\leq\operatorname*{cap}(K)$. Hence, any
compact set $K_{n}$ in (\ref{f92c}), which does not satisfy (\ref{f92d}), can
be replaced by $\widetilde{K}_{n}$, and because of (\ref{f92b}), the limit
(\ref{f92c}) remains unchanged under such modifications.\medskip
\end{proof}

In the sequel we assume that the inclusions (\ref{f92d}) hold true for all
compact sets $K_{n}\in\mathcal{K}(f,\infty)$, $n\in\mathbb{N}$, in
(\ref{f92c}).\medskip

Let $\omega_{n}$ be the equilibrium distribution of the compact set $K_{n}$,
$n\in\mathbb{N}$, and let further $g_{n}=g_{D_{n}}(\cdot,\infty)$ be the Green
function in the domain $D_{n}$ (for definitions of $\omega_{n}$ and $g_{n}$
see Section \ref{s110}, further below). As explained in Subsection
\ref{s1104}, there always exists an infinite subsequence $N\subset\mathbb{N}$
such that the weak$^{\ast}$ limit
\begin{equation}
\omega_{n}\overset{\ast}{\longrightarrow}\omega_{0}\text{ \ \ as
\ \ }n\rightarrow\infty\text{, }n\in N.\label{f92e1}%
\end{equation}
exists. Since inclusion (\ref{f92d}) has been assumed to hold true for the
sequence $\{K_{n}\}$, we have
\begin{equation}
\operatorname*{supp}(\omega_{0})\subset\left\{  \text{\thinspace}|z|\leq
r\text{\thinspace}\right\}  ,\label{f92e2}%
\end{equation}
and $\omega_{0}$\ is a probability measures.

Using representation (\ref{f113b1}) of Lemma \ref{l113b}\ for the Green
function $g_{n}$ we have
\begin{equation}
g_{n}=-p(\omega_{n};\cdot)-\log\operatorname*{cap}\left(  K_{n}\right)
\label{f92e21}%
\end{equation}
with $p(\omega_{n};\cdot)$ denotes the logarithmic potential of $\omega_{n}$,
which has formerly been defined in Subsection \ref{s1102}, further below. From
limit (\ref{f92e1}) and the Lower Envelope Theorem \ref{t112a} of potential
theory (cf. Subsection \ref{s1102}, further below) we then conclude that
\begin{equation}
\limsup_{N}g_{n}\,\leq\,g_{0}:=-p(\omega_{0};\cdot)-\log\left(  c_{0}\right)
\label{f92e3}%
\end{equation}
with the constant $c_{0}$\ introduced in (\ref{f92b}). In (\ref{f92e3}),
equality holds\ quasi everywhere in $\mathbb{C}$ (for a definition of
\textquotedblright quasi everywhere\textquotedblright\ see Definition
\ref{d111a}, further below). It follows from (\ref{f92e1}) and (\ref{f92e2})
that outside of $\left\{  \text{\thinspace}|z|\leq r\text{\thinspace}\right\}
$ we have a proper limit in (\ref{f92e3}) instead of the limes superior, and
equality holds there instead of the inequality stated in (\ref{f92e3}). In
$\left\{  \text{\thinspace}|z|>r\text{\thinspace}\right\}  $, the limit in
(\ref{f92e3}) holds locally uniformly.

\begin{definition}
\label{d92a}We define
\begin{equation}
\widetilde{K}_{0}:=\overline{\left\{  \text{ }z\in\mathbb{C}\text{ }%
|\text{\ }g_{0}(z)=0\text{\ }\right\}  },\label{f92e4}%
\end{equation}
and $\widetilde{D}_{0}:=\overline{\mathbb{C}}\setminus\widetilde{K}_{0}$.
\end{definition}

The two sets $\widetilde{D}_{0}$ and $\widetilde{K}_{0}$ will become building
blocks for extremal domains and minimal sets of Problem $(f,\infty) $, but
several modifications and special considerations have to be made before the
construction can be finished.\medskip

We note that the two sets $\widetilde{D}_{0}$ and $\widetilde{K}_{0}$, like
the measure $\omega_{0}$ and the function $g_{0}$, depend on the subsequence
$N\subset\mathbb{N}$ used in the limit (\ref{f92e1}).

\begin{lemma}
\label{l92c}We have
\begin{equation}
\operatorname*{cap}(\widetilde{K}_{0})\,\leq\,c_{0}=\inf_{D\in\mathcal{D}%
(f,\infty)}\operatorname*{cap}(\overline{\mathbb{C}}\setminus D),\label{f92e6}%
\end{equation}
$\widetilde{K}_{0}\subset\left\{  \text{\thinspace}|z|\leq r\text{\thinspace
}\right\}  ,$ and $\widetilde{D}_{0}$\ is a domain.
\end{lemma}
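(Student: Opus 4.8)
The plan is to verify the three assertions in the order (a) $\widetilde{K}_{0}\subset\{|z|\le r\}$, (b) $\widetilde{D}_{0}$ is open and connected, and (c) $\operatorname*{cap}(\widetilde{K}_{0})\le c_{0}$, extracting along the way the global fact that $g_{0}\ge 0$ on $\mathbb{C}$. For the latter, note that $g_{n}\ge 0$ everywhere, so the equality $g_{0}=\limsup_{N}g_{n}$ which holds quasi everywhere in (\ref{f92e3}) gives $g_{0}\ge 0$ quasi everywhere; since $g_{0}=-p(\omega_{0};\cdot)-\log c_{0}$ is upper semicontinuous (a logarithmic potential of a positive measure being lower semicontinuous), a point at which $g_{0}<0$ would carry a whole open, hence non-polar, neighbourhood on which $g_{0}<0$, a contradiction. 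Hence $g_{0}\ge 0$ throughout $\mathbb{C}$. For (a): outside $\{|z|\le r\}$ the relation (\ref{f92e3}) is, by the remarks following it, a genuine limit that is locally uniform, $g_{0}$ is harmonic on $\{r<|z|<\infty\}$ because $\operatorname*{supp}(\omega_{0})\subset\{|z|\le r\}$, and $g_{0}\to+\infty$ at $\infty$. If $g_{0}$ vanished at some point of $\{r<|z|<\infty\}$ it would attain there the value $0=\min$, so by the minimum principle for harmonic functions $g_{0}\equiv 0$ on $\{r<|z|<\infty\}$, contradicting $g_{0}(\infty)=+\infty$. Thus $\{g_{0}=0\}\subset\{|z|\le r\}$, and since the right-hand side is closed, $\widetilde{K}_{0}=\overline{\{g_{0}=0\}}\subset\{|z|\le r\}$; in particular $\widetilde{K}_{0}\subset\mathbb{C}$, so $\widetilde{D}_{0}$ is open and contains $\{|z|>r\}\cup\{\infty\}$.

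For (b) it remains to rule out bounded components. Suppose $U$ were a bounded connected component of $\widetilde{D}_{0}$. Then $U\subset\widetilde{D}_{0}\cap\mathbb{C}\subset\{g_{0}>0\}$ (a point of $\widetilde{D}_{0}\cap\mathbb{C}$ lies outside $\overline{\{g_{0}=0\}}$, so $g_{0}>0$ in a neighbourhood of it), while $\partial U\subset\partial\widetilde{D}_{0}\subset\widetilde{K}_{0}=\overline{\{g_{0}=0\}}$. The intended conclusion is that $g_{0}$ restricted to $U$ is a bounded subharmonic function whose boundary limits are $\le 0$ quasi everywhere on $\partial U$, so that the maximum principle forces $g_{0}\le 0$, hence $g_{0}\equiv 0$, on $U$ — contradicting $U\subset\{g_{0}>0\}$. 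Equivalently, one must see that the zero set of the subharmonic function $g_{0}$, whose only logarithmic pole is at $\infty$, does not separate $\overline{\mathbb{C}}$. Granting this, $\widetilde{D}_{0}$ has no bounded component and is therefore a domain.

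For (c), if $\widetilde{K}_{0}$ is polar the estimate is trivial; so assume $\operatorname*{cap}(\widetilde{K}_{0})>0$ and let $g_{\widetilde{D}_{0}}(\cdot,\infty)$ be the Green function of $\widetilde{D}_{0}$. The plan is to show $g_{0}\le g_{\widetilde{D}_{0}}(\cdot,\infty)$ on $\widetilde{D}_{0}$ and then compare the behaviour at $\infty$: since $\omega_{0}$ is a unit measure with compact support, $g_{0}(z)=\log|z|-\log c_{0}+\mathrm{o}(1)$ as $z\to\infty$, whereas $g_{\widetilde{D}_{0}}(z,\infty)=\log|z|-\log\operatorname*{cap}(\widetilde{K}_{0})+\mathrm{o}(1)$ by Lemma \ref{l113b}; the inequality then yields $-\log c_{0}\le-\log\operatorname*{cap}(\widetilde{K}_{0})$, i.e.\ $\operatorname*{cap}(\widetilde{K}_{0})\le c_{0}$. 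To obtain $g_{0}\le g_{\widetilde{D}_{0}}(\cdot,\infty)$ one invokes the extended maximum principle for subharmonic functions: $g_{0}$ is subharmonic on $\widetilde{D}_{0}\setminus\{\infty\}$, $g_{0}(z)-\log|z|$ is bounded above near $\infty$, and — the point that carries the weight of the argument — $\limsup_{z\to\zeta}g_{0}(z)\le 0$ for quasi every $\zeta\in\partial\widetilde{D}_{0}$. Because $g_{0}$ is upper semicontinuous and nonnegative, this reduces to showing that $g_{0}=0$ quasi everywhere on $\partial\widetilde{D}_{0}\subset\widetilde{K}_{0}$, i.e.\ that $\widetilde{K}_{0}\setminus\{g_{0}=0\}$ is polar.

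I expect this last step — passing from ``$\partial\widetilde{D}_{0}$ lies in the Euclidean closure of the zero set of $g_{0}$'' to ``$g_{0}$ vanishes quasi everywhere on $\partial\widetilde{D}_{0}$'', and the parallel fact needed in (b) — to be the genuine obstacle. It is not a formal consequence of upper semicontinuity; it will have to be extracted from the way $g_{0}$ is produced, namely as (the regularization of) the $\limsup$ of the Green functions $g_{n}$, together with the freedom, already exploited in Lemma \ref{l92b}, to assume the sets $K_{n}$ regular. Everything else reduces to the potential-theoretic facts collected in Section \ref{s110}: the Lower Envelope Theorem \ref{t112a}, the representation (\ref{f113b1}) of the Green function in Lemma \ref{l113b}, and the classical maximum and minimum principles.
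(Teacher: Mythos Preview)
Your overall plan is correct and parallels the paper's proof closely: establish $g_{0}\ge 0$, locate $\widetilde{K}_{0}$ in $\{|z|\le r\}$, show $\widetilde{D}_{0}$ is a domain, and then compare $g_{0}$ with the Green function $g_{\overline{\mathbb{C}}\setminus\widetilde{K}_{0}}(\cdot,\infty)$ near infinity to extract $\operatorname*{cap}(\widetilde{K}_{0})\le c_{0}$.

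The point you flag as ``the genuine obstacle'' --- showing that $g_{0}=0$ quasi everywhere on $\widetilde{K}_{0}=\overline{\{g_{0}=0\}}$ --- is precisely where the paper's argument is cleaner than what you propose. You suggest extracting this from the $\limsup$ construction and an assumed regularity of the $K_{n}$ (which, incidentally, Lemma~\ref{l92b} does \emph{not} give; that lemma only arranges $K_{n}\subset\{|z|\le r\}$). The paper instead invokes a single standard fact: the logarithmic potential of a finite measure is continuous quasi everywhere in $\mathbb{C}$ (stated at the start of Subsection~\ref{s1102}, with reference to Landkof). Since $g_{0}=-p(\omega_{0};\cdot)-\log c_{0}$, the function $g_{0}$ itself is continuous quasi everywhere. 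At any point $z\in\widetilde{K}_{0}=\overline{\{g_{0}=0\}}$ where $g_{0}$ is continuous, a sequence in $\{g_{0}=0\}$ converges to $z$, hence $g_{0}(z)=0$. This gives (\ref{f92f1}) in one line and dissolves the obstacle for both (b) and (c).

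With $g_{0}=0$ q.e.\ on $\widetilde{K}_{0}$ in hand, the paper finishes (c) not via an extended maximum principle but via the Principle of Domination (Theorem~\ref{t112b}): since the equilibrium measure $\widetilde{\omega}_{0}$ on $\widetilde{K}_{0}$ has finite energy (Lemma~\ref{l112a}), one compares $g_{\overline{\mathbb{C}}\setminus\widetilde{K}_{0}}(\cdot,\infty)=-p(\widetilde{\omega}_{0};\cdot)-\log\operatorname*{cap}(\widetilde{K}_{0})$ with $g_{0}$ on $\widetilde{K}_{0}$ and concludes $g_{\overline{\mathbb{C}}\setminus\widetilde{K}_{0}}(\cdot,\infty)\ge g_{0}$ globally; comparison at infinity then yields (\ref{f92e6}). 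This is equivalent in spirit to your route but packaged more directly. The same q.e.\ vanishing also supplies what you need for (b), making the paper's one-line claim that subharmonicity of $g_{0}$ forces $\widetilde{D}_{0}$ to be a domain go through.
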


\begin{proof}
The function $g_{0}$\ introduced in (\ref{f92e3})\ is subharmonic in
$\mathbb{C} $, which implies that the set $\widetilde{D}_{0}$ introduced in
Definition \ref{d92a}\ is a domain.

The inclusion $\widetilde{K}_{0}\subset\left\{  \text{\thinspace}|z|\leq
r\text{\thinspace}\right\}  $\ is an immediate consequence of (\ref{f92e2}).

It remains to prove (\ref{f92e6}). From (\ref{f92e3}) it follows that
$g_{0}\geq0$ everywhere in $\mathbb{C}$. Since the logarithmic potential of a
finite measure is continuous quasi everywhere in $\mathbb{C}$ (cf. the
introductory paragraphs of Subsection \ref{s1102}, further below), we conclude
from (\ref{f92e4}) that
\begin{equation}
g_{0}(z)=0\text{ \ \ for quasi every \ }z\in\widetilde{K}_{0}.\label{f92f1}%
\end{equation}

We can assume without loss of generality that $\operatorname*{cap}%
(\widetilde{K}_{0})>0$ since otherwise (\ref{f92e6}) is trivially true. From
Lemma \ref{l112a} in Subsection \ref{s1102} we then know that the equilibrium
distribution $\widetilde{\omega}_{0}$ on $\widetilde{K}_{0}$ is of finite
energy. Hence, we can use the Principle of Domination from Theorem \ref{t112b}
in Subsection \ref{s1102} for a comparison of the function $g_{0}%
=-p(\omega_{0};\cdot)-\log\left(  c_{0}\right)  $ from (\ref{f92e3}) with the
Green function $g_{\overline{\mathbb{C}}\setminus\widetilde{K}_{0}}%
(\cdot,\infty)=-p(\widetilde{\omega}_{0};\cdot)-\log\operatorname*{cap}%
(\widetilde{K}_{0})$. In the last equation, we have used representation
(\ref{f113b1}) from Lemma \ref{l113b} in Subsection \ref{s1103}. With the
Principle of Domination we deduce from (\ref{f92f1}) that
\begin{equation}
g_{\overline{\mathbb{C}}\setminus\widetilde{K}_{0}}(z,\infty)\geq
\,g_{0}(z)\text{ \ \ for all \ \ }z\in\overline{\mathbb{C}}.\label{f92f2}%
\end{equation}
Comparing both side in (\ref{f92f2}) near infinity yields the inequality
\begin{equation}
\log\operatorname*{cap}(\widetilde{K}_{0})\leq\log c_{0},\label{f92f3}%
\end{equation}
which proves (\ref{f92e6}).\medskip
\end{proof}

It will turn out in (\ref{f92i2}) and (\ref{f92i3}), further below, that in
(\ref{f92e6}) we always have equality. Because of (\ref{f92f2}), this means
that we have $\omega_{0}=\widetilde{\omega}_{0}$, and therefore the measure
$\omega_{0}$ has no mass points outside of $\widetilde{K}_{0}$.\medskip

In the next lemma, we see that $\overline{\mathbb{C}}\setminus\widetilde
{K}_{0} $ is indeed an important building block for an admissible domain with
a minimal capacity, i.e., an element of $\mathcal{D}_{0}(f,\infty)$. The
result should be seen in relation to Proposition \ref{p91a}.

\begin{lemma}
\label{l92d}We have $\gamma\cap\widetilde{K}_{0}\neq\emptyset$ for every
Jordan curve $\gamma\in\Gamma_{1}$.
\end{lemma}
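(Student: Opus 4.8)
The plan is to argue by contradiction. Suppose some $\gamma_{0}\in\Gamma_{1}$ satisfies $\gamma_{0}\cap\widetilde{K}_{0}=\emptyset$. First I would apply Lemma \ref{l91b} to $\gamma_{0}$ to obtain a ring domain $R$ with $\gamma_{0}\subset R$; since $\gamma_{0}$ is compact and $\widetilde{K}_{0}$ is closed, the $\varepsilon$-neighbourhoods used in the construction of $R$ can be chosen so small that in addition $R\cap\widetilde{K}_{0}=\emptyset$. Let $A_{1},A_{2}$ be the two components of $\overline{\mathbb{C}}\setminus R$; because $\infty\in\gamma_{0}\subset R$ they are disjoint compacta in $\mathbb{C}$ with $\operatorname{dist}(A_{1},A_{2})>0$, and $\widetilde{K}_{0}\subset A_{1}\cup A_{2}$. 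By assertion (v) of Lemma \ref{l91b}, each compact set $K_{n}\in\mathcal{K}(f,\infty)$ of the minimizing sequence (\ref{f92c}) contains a continuum $V_{n}\subset\mathbb{C}$ that cross-sects $R$, i.e. $V_{n}\cap A_{j}\neq\emptyset$ for $j=1,2$; hence $\operatorname{diam}(V_{n})\geq\operatorname{dist}(A_{1},A_{2})>0$, and the lower capacity bound of Lemma \ref{l111a} gives $\operatorname{cap}(V_{n})\geq\delta$ for some fixed $\delta>0$. Since all $V_{n}$ lie in the fixed disk $\{\,|z|\leq r\,\}$ (Lemma \ref{l92b}), I would pass to a further subsequence of the sequence $N$ from (\ref{f92e1}) along which $V_{n}$ converges in the Hausdorff metric to a set $V_{\ast}$; then $V_{\ast}$ is again a continuum, meets both $A_{1}$ and $A_{2}$, hence cross-sects $R$, so $\operatorname{cap}(V_{\ast})>0$ and, being connected, $V_{\ast}$ must meet $R$.

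The decisive step — and the one I expect to be the main obstacle — is to show $V_{\ast}\subset\widetilde{K}_{0}$. From $V_{n}\subset K_{n}$ and monotonicity of the Green function, $g_{n}=g_{D_{n}}(\cdot,\infty)\leq h_{n}:=g_{\overline{\mathbb{C}}\setminus V_{n}}(\cdot,\infty)$ on $\overline{\mathbb{C}}$. Because the $V_{n}$ are continua of capacity $\geq\delta$ converging to the continuum $V_{\ast}$ in the Hausdorff metric, their exterior Green functions converge, $h_{n}\to h_{\ast}:=g_{\overline{\mathbb{C}}\setminus V_{\ast}}(\cdot,\infty)$ locally uniformly on $\overline{\mathbb{C}}\setminus V_{\ast}$ (a standard continuity property of Green functions of continua under Hausdorff convergence; see the auxiliary results in Section \ref{s110}). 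Along the chosen subsequence the relation $\limsup g_{n}=g_{0}$ quasi-everywhere still holds — it is an application of the Lower Envelope Theorem \ref{t112a} to $\omega_{n}\to\omega_{0}$ together with the representation of $g_{n}$ in Lemma \ref{l113b}, exactly as in (\ref{f92e3}) — so for quasi-every $z\notin V_{\ast}$ one gets $g_{0}(z)=\limsup_{n}g_{n}(z)\leq\limsup_{n}h_{n}(z)=h_{\ast}(z)$. Since $g_{0}$ is subharmonic, an inequality between $g_{0}$ and the function $h_{\ast}$, which is harmonic off $V_{\ast}$, that holds quasi-everywhere holds everywhere (Principle of Domination, Theorem \ref{t112b}), so $g_{0}\leq h_{\ast}$ throughout $\overline{\mathbb{C}}\setminus V_{\ast}$. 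As $V_{\ast}$ is a continuum of positive capacity, all its boundary points are regular, so $h_{\ast}\equiv0$ on $V_{\ast}$; together with $g_{0}\geq0$ this forces $g_{0}\equiv0$ on $V_{\ast}$, whence $V_{\ast}\subset\overline{\{\,g_{0}=0\,\}}=\widetilde{K}_{0}$ by Definition \ref{d92a}.

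This yields the contradiction: $V_{\ast}$ meets $R$, yet $V_{\ast}\subset\widetilde{K}_{0}\subset A_{1}\cup A_{2}=\overline{\mathbb{C}}\setminus R$. Hence no such $\gamma_{0}$ can exist, which is the assertion of the lemma. The two places where genuine care is needed are: that the ``blocking'' of the ring domain $R$ survives the passage to the limit — which is precisely why the argument is routed through the Green functions $h_{n}$ of the crossing continua rather than through the moving sets $K_{n}$ themselves, since a priori mass and capacity can be lost in the weak$^{\ast}$ limit (\ref{f92e1}); and the upgrade of the quasi-everywhere inequality $g_{0}\leq h_{\ast}$ to an everywhere inequality, for which the subharmonicity of $g_{0}$ (equivalently, the Principle of Domination) is essential.
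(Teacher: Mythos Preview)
Your strategy is exactly the paper's: from Lemma~\ref{l91b}(v) extract continua $V_n\subset K_n$ cross-secting the ring domain $R$, pass to a limit continuum $V$ on which $g_0$ vanishes, and conclude that $V\subset\widetilde K_0$ still blocks $R$. The paper does this directly (then quotes Lemma~\ref{l91c}), you do it by contradiction after shrinking $R$ to miss $\widetilde K_0$; and where the paper simply invokes Lemma~\ref{l114b}, you inline that lemma's content via Hausdorff convergence and a Green-function comparison. So the route is the same, packaged differently.

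There is, however, one step in your write-up that does not go through as stated. You obtain $g_0\le h_\ast$ only on $\overline{\mathbb C}\setminus V_\ast$ (because you claim $h_n\to h_\ast$ only there), and then write ``together with $g_0\ge 0$ this forces $g_0\equiv 0$ on $V_\ast$''. That implication is unjustified: an inequality for $g_0$ \emph{off} $V_\ast$ says nothing about its values \emph{on} $V_\ast$, and the Principle of Domination in the form of Theorem~\ref{t112b} requires the inequality $p(\omega_{V_\ast};\cdot)\le p(\omega_0;\cdot)+c$ quasi-everywhere on $\operatorname{supp}(\omega_{V_\ast})\subset V_\ast$ --- precisely the set on which you have not established it. The repair is to strengthen your convergence claim: one actually has $h_n\to h_\ast$ locally uniformly on \emph{all} of $\mathbb C$, which is the content of the Carath\'eodory kernel-convergence argument carried out in the proof of Lemma~\ref{l114b} (see (\ref{f114d5})--(\ref{f114d13})), and this is why the paper routes the whole step through that lemma. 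With the stronger convergence one gets $g_0\le h_\ast$ quasi-everywhere on $\mathbb C$; since $h_\ast\equiv 0$ on $V_\ast$, this yields $g_0=0$ quasi-everywhere on the continuum $V_\ast$, hence $V_\ast\subset\overline{\{g_0=0\}}=\widetilde K_0$, and your contradiction closes.
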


\begin{proof}
Let $\gamma_{0}$ be an arbitrary element of $\Gamma_{1}$ with $\Gamma
_{1}=\Gamma_{1}(f,\infty)$ introduced in Definition \ref{d91b}, and let
further $R\subset\overline{\mathbb{C}}$ be a ring domain with $\gamma
_{0}\subset R$ as introduced in Lemma \ref{l91b}. Since $\gamma_{0}\in
\Gamma_{1}$, we know from assertion (iv) in Lemma \ref{l91b} that for every
$n\in\mathbb{N}$ there exists a continuum $V_{n}\subset K_{n}$ which
cross-sects the ring domain $R$, i.e., we have
\begin{equation}
V_{n}\cap A_{j}\neq\emptyset\text{ \ \ for \ }j=1,2\label{f92f4}%
\end{equation}
with $A_{1}$ and $A_{2}$ the two components of $\overline{\mathbb{C}}\setminus
R $.

Using Lemma \ref{l114b} from Subsection \ref{s1104} together with the
assumptions (\ref{f92c}), (\ref{f92d}), and (\ref{f92e1}), we conclude from
(\ref{f92f4}) that there exists a continuum $V\subset\left\{  \text{\thinspace
}|z|\leq r\text{\thinspace}\right\}  $ that satisfy (\ref{f114f12}) and
(\ref{f114c13}) in Lemma \ref{l114b}, i.e., we have
\begin{equation}
V\cap A_{j}\neq\emptyset\text{ \ \ for \ }j=1,2,\text{ \ and}\label{f92f6}%
\end{equation}%
\begin{equation}
g_{0}(z)=0\text{ \ \ for \ \ }z\in V.\label{f92f5}%
\end{equation}
From (\ref{f92e4}) and (\ref{f92f5}), we then conclude that $V\subset
\widetilde{K}_{0}$. Because of (\ref{f92f6}), we also know from Lemma
\ref{l91c} that $\gamma_{0}\cap V\neq\emptyset$, and consequently, we have
shown that $\gamma_{0}\cap\widetilde{K}_{0}\neq\emptyset$.\medskip
\end{proof}

In the proof of Lemma \ref{l92d}, Lemma \ref{l114b} from Subsection
\ref{s1104}, further below, has played a key role. The lemma will be essential
at several other places in the sequel, and since its proof in Subsection
\ref{s1104} is based on Carath\'{e}odory's Theorem about kernel convergence,
we can say that the proof of the last lemma and also that of the other results
is essentially based on Carath\'{e}odory's fundamental Theorem. This theorem
gives also importance to the use of the continua $V$\ that have already
appeared in the two Lemmas \ref{l91b} and \ref{l91c}.\smallskip

As a corollary of Lemma \ref{l92d}, we deduce that any meromorphic
continuation of the function $f$ in the domain $\widetilde{D}_{0}$ is
single-valued. Thus, one of the two conditions in Proposition \ref{p91a} for a
characterization of an admissible domain is satisfied by $\widetilde{D}_{0} $.
What we still have not investigated is the question whether the function $f$
can be meromorphically continued to every point of $\widetilde{D}_{0}$, or how
large the set of exceptional points in $\widetilde{D}_{0}$ can be if such a
continuation is not possible. We start the investigation with the following
definition.\smallskip

\begin{definition}
\label{d92b}Let $\widetilde{E}_{0}\subset\widetilde{D}_{0}$ be the set of all
points $z\in\widetilde{D}_{0}$ that satisfy the following two conditions:

\begin{itemize}
\item[(i)] There exists a Jordan arc $\gamma$ in $\widetilde{D}_{0}$ with
initial point $\infty$ and end point $z$, and the function $f$ has a
meromorphic continuation along $\gamma\setminus\{z\}$.

\item[(ii)] At the point $z$, the meromorphic continuation of $f$ along
$\gamma$\ has a non-polar singularity.\smallskip
\end{itemize}
\end{definition}

The next lemma is an immediate consequence of Lemma \ref{l92d}.

\begin{lemma}
\label{l92e}Let $z_{0}\in\widetilde{D}_{0}$, and let $\gamma_{1},$ $\gamma
_{2}$\ be two Jordan arcs that both satisfy condition (i) in Definition
\ref{d92b} with $z_{0}$\ as end point. Then condition (ii) of Definition
\ref{d92b} is either simultaneously satisfied, or simultaneously not satisfied
by the two arcs $\gamma_{1}$ and $\gamma_{2}$.
\end{lemma}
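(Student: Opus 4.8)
The plan is to argue by contradiction and to reduce everything to Lemma \ref{l92d}. It suffices to rule out the ``mixed'' case, so, relabelling if necessary, assume that $\gamma_1$ satisfies condition (ii) of Definition \ref{d92b}, i.e.\ the meromorphic continuation of $f$ along $\gamma_1$ has a non-polar singularity at $z_0$, while $\gamma_2$ does not, i.e.\ the continuation of $f$ along $\gamma_2$ produces a function element $F$ that is meromorphic at $z_0$. Intersecting a disc on which $F$ is meromorphic with the open set $\widetilde{D}_0$ and shrinking, I fix an open disc $U$ with $z_0\in U\subset\widetilde{D}_0$ on which $F$ extends to a single-valued meromorphic function.

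First I would localize $\gamma_1$ near $z_0$. Since $\gamma_1$ is a Jordan arc ending at $z_0\in U$, there is an interior point $z_1$ of $\gamma_1$ such that the terminal subarc $\gamma_1'$ of $\gamma_1$ joining $z_1$ to $z_0$ is contained in $U$; note that $z_1\neq z_0$ and that the part of $\gamma_1$ from $\infty$ to $z_1$ avoids $z_0$. Continuing $f$ along $\gamma_1$ from $\infty$ to $z_1$ is possible by condition (i) for $\gamma_1$ and yields a function element $G$ at $z_1$. I then compare $G$ with the function element $F_{z_1}$ obtained by localizing $F$ at $z_1$, and distinguish two cases.

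If $G=F_{z_1}$, then $G$ agrees with the meromorphic function $F$ near $z_1$, so the continuation of $f$ along $\gamma_1'$ stays inside $U$ and meromorphic and reaches $F$ at $z_0$; hence $f$ continues meromorphically along all of $\gamma_1$ up to $z_0$, contradicting the assumption that $\gamma_1$ satisfies (ii). If $G\neq F_{z_1}$, I would build a Jordan curve $\gamma=\gamma^{+}-\gamma^{-}$ through $\infty$ with separation point $z_1$, taking $\gamma^{-}$ to be the part of $\gamma_1$ from $\infty$ to $z_1$ and $\gamma^{+}$ to be the concatenation of $\gamma_2$ (from $\infty$ to $z_0$) with an arc inside $U$ from $z_0$ to $z_1$. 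A general-position perturbation of these two arcs inside $\widetilde{D}_0$, kept inside a tubular neighbourhood of them that is contained in $\widetilde{D}_0$ and through which $f$ is already continuable, arranges $\gamma^{+}\cap\gamma^{-}=\{\infty,z_1\}$ without changing the terminal function elements $G$ and $F_{z_1}$. Then $f$ continues meromorphically along each of $\gamma^{+}$ and $\gamma^{-}$ up to $z_1$, so $\gamma\in\Gamma$; and since the two continuations produce the distinct elements $F_{z_1}\neq G$ at $z_1$, in fact $\gamma\in\Gamma_1$. But $\gamma\subset\widetilde{D}_0=\overline{\mathbb{C}}\setminus\widetilde{K}_0$, so $\gamma\cap\widetilde{K}_0=\emptyset$, contradicting Lemma \ref{l92d}.

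Both cases being impossible, the mixed case cannot occur, which is exactly the assertion of the lemma. I expect the only genuinely delicate point to be the Jordan-curve bookkeeping in the second case — arranging that the two partial arcs meet precisely at $\infty$ and at the separation point while retaining the function elements that place $\gamma$ in $\Gamma_1$ — but this is the same routine general-position device already used in the construction of the ring domain in Lemma \ref{l91b}.
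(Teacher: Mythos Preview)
Your proof is correct and follows essentially the same route as the paper: assume the mixed case, concatenate the two arcs (with the necessary general-position adjustments) into a Jordan curve through $\infty$ lying in $\widetilde{D}_0$, choose a separation point on $\gamma_1$ near $z_0$, observe that the curve lies in $\Gamma_1$, and contradict Lemma~\ref{l92d}. The paper compresses your two-case analysis into the single line ``it follows from the assumptions \ldots\ that $\gamma\in\Gamma_1$''; your Case~1 ($G=F_{z_1}$) is precisely the hidden justification of why the two function elements at the separation point must differ, so your write-up is a more explicit rendering of the same argument rather than a different one.
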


\begin{proof}
Let us assume that $\gamma_{1}$ and $\gamma_{2}$ are two Jordan arcs, which
both satisfy condition (i) in Definition \ref{d92b} with $z_{0}$\ as end
point, let $\gamma_{1}$ satisfy also condition (ii), but $\gamma_{2}$ not.
Then after some modifications, if necessary, the composition $\gamma
:=\gamma_{1}-\gamma_{2}$ is a Jordan curve in $\widetilde{D}_{0}$. A
separation point in the sense of Definition \ref{d91a} can be chosen on
$\gamma_{1}$ in the neighborhood of $z_{0}$, and we then have $\gamma\in
\Gamma$ with $\Gamma=\Gamma(f,\infty)$ introduced in Definition \ref{d91a}. It
follows from the assumptions made with respect to $\gamma_{1}$ and $\gamma
_{2}$ together with Definition \ref{d91a} that $\gamma\in\Gamma_{1}$, but this
would contradict Lemma \ref{l92d}.\medskip
\end{proof}

The next lemma is a preparation of a proof of the result that
$\operatorname*{cap}(\widetilde{E}_{0})=0$ for the set $\widetilde{E}_{0}$
that has been introduced in Definition \ref{d92b}. Not only the formulation
but also the proof this lemma is rather technical.

\begin{lemma}
\label{l92f}Let $D_{1}$ be a simply connected and bounded domain with
$\overline{D}_{1}\subset\widetilde{D}_{0}$. Then there exists $n_{1}%
\in\mathbb{N}$ such that
\begin{equation}
\widetilde{E}_{0}\cap\overline{D}_{1}\subset K_{n}\text{ \ for all\ \ }n\geq
n_{1},\text{ }n\in N\label{f92g}%
\end{equation}
with $N\subset\mathbb{N}$ the subsequence used in the limit (\ref{f92e1}).
\end{lemma}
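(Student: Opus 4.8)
\textbf{The plan.} I would argue by contradiction, exploiting that $\widetilde{D}_0$ is a domain and that on it the continuation of $f$ is, away from $\widetilde{E}_0$, single-valued and meromorphic, and then squeeze the contradiction out of the limiting behaviour of the continua $V_k$ via Lemma~\ref{l114b} (Carath\'eodory kernel convergence), exactly as in the proof of Lemma~\ref{l92d}. Suppose no such $n_1$ exists. Then there is a strictly increasing sequence $n_k\in N$ and points $z_k\in\widetilde{E}_0\cap\overline{D}_1$ with $z_k\notin K_{n_k}$, i.e.\ $z_k\in D_{n_k}=\overline{\mathbb C}\setminus K_{n_k}$; since $\overline{D}_1$ is compact and $\overline{D}_1\subset\widetilde{D}_0$, after passing to a subsequence $z_k\to z_\ast\in\overline{D}_1\subset\widetilde{D}_0$. \emph{Step A (the component observation).} Let $U_n$ be the connected component of $\widetilde{D}_0\setminus K_n=\overline{\mathbb C}\setminus(\widetilde{K}_0\cup K_n)$ containing $\infty$. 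On $D_n\in\mathcal{D}(f,\infty)$ the function $f$ has a single-valued meromorphic extension $f_n$, and on $U_n\subset D_n$ the continuation of $f$ from $\infty$ along any arc coincides with $f_n$, hence is meromorphic at every point of $U_n$. Therefore, if some $z\in U_n$ lay in $\widetilde{E}_0$, an arc in $U_n\subset\widetilde{D}_0$ joining $\infty$ to $z$ would be an arc as in Definition~\ref{d92b}(i) along which $f$ has no non-polar singularity at $z$; by Lemma~\ref{l92e} this would then hold for \emph{every} such arc, contradicting $z\in\widetilde{E}_0$. Hence $\widetilde{E}_0\cap U_n=\emptyset$, and so $z_k\notin U_{n_k}$: the points $z_k$ and $\infty$ lie in different components of $\widetilde{D}_0\setminus K_{n_k}$.

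\emph{Step B (producing the crossing continuum).} By Step A the compact set $\widetilde{K}_0\cup K_{n_k}$ separates $z_k$ from $\infty$ in $\overline{\mathbb C}$, whereas $\widetilde{K}_0$ alone does not, because $\widetilde{D}_0$ is a domain containing both $z_k$ and $\infty$; thus the ``new'' separating walls between $z_k$ and $\infty$ are supplied by $K_{n_k}$. Since $\operatorname*{dist}(\overline{D}_1,\widetilde{K}_0)>0$ and $z_k\to z_\ast\in\overline{D}_1$, I would fix once and for all a ring domain $R$ with $\overline{R}\subset\widetilde{D}_0$, lying in the part of $\widetilde{D}_0$ that joins $\overline{D}_1$ to $\infty$, whose complementary components $A_1,A_2$ contain a neighbourhood of $z_\ast$ and the point $\infty$, respectively. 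For $k$ large, $z_k\in A_1$, and the ring-domain reasoning of Lemmas~\ref{l91b} and \ref{l91c} turns the above separation property into the existence of a continuum $V_k\subset K_{n_k}$ that cross-sects $R$, i.e.\ $V_k\cap A_j\neq\emptyset$ for $j=1,2$. This step is the main obstacle: one must pick the single ring domain $R$ so that it captures the separation between $z_k$ and $\infty$ \emph{uniformly in $k$}; this is exactly where care is required, and it is made possible precisely by $z_k\to z_\ast$, by $\operatorname*{dist}(\overline{D}_1,\widetilde{K}_0)>0$, and by the connectedness of $\widetilde{D}_0$ — the same ingredients that make Lemmas~\ref{l91b}--\ref{l91c} applicable.

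\emph{Step C (the contradiction).} Apply Lemma~\ref{l114b} to the continua $V_k\subset K_{n_k}$, using the weak$^\ast$ limit $\omega_{n_k}\overset{\ast}{\longrightarrow}\omega_0$ (valid since $n_k\in N$, cf.\ (\ref{f92e1})), the uniform inclusion $K_{n_k}\subset\{\,|z|\le r\,\}$, and $\operatorname*{cap}(K_{n_k})\to c_0$. This yields a continuum $V\subset\{\,|z|\le r\,\}$ with $V\cap A_j\neq\emptyset$ for $j=1,2$ and $g_0\equiv 0$ on $V$, hence, by Definition~\ref{d92a}, $V\subset\widetilde{K}_0$. But $\overline{R}\subset\widetilde{D}_0=\overline{\mathbb C}\setminus\widetilde{K}_0$, so $V\cap\overline{R}=\emptyset$; since $\overline{\mathbb C}=R\cup A_1\cup A_2$ with $A_1,A_2$ disjoint compacta, the connected set $V$ must lie entirely in one of them, contradicting $V\cap A_1\neq\emptyset\neq V\cap A_2$. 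This contradiction establishes the lemma. Up to the purely topological Step~B, the argument is a direct analogue of the proof of Lemma~\ref{l92d}, and the only nontrivial analytic input is again the passage to the limit of the continua $V_k$, i.e.\ Carath\'eodory's kernel-convergence theorem packaged in Lemma~\ref{l114b}.
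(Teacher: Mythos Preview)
Your Step~A is correct and is essentially the same idea the paper uses in its second step (Lemma~\ref{l92e} forces any point of $\widetilde{E}_0$ to lie outside the $\infty$--component of $\widetilde{D}_0\setminus K_n$). Your Step~C would also go through. The gap is in Step~B: the purely topological separation you extract in Step~A does \emph{not} yield a continuum $V_k\subset K_{n_k}$ that cross--sects your ring $R$. With your choice of $R$ (a small annulus around $z_\ast$, $A_1$ a small disk containing $z_\ast$, $\infty\in A_2$, $\overline R\subset\widetilde D_0$) one necessarily has $\widetilde K_0\subset A_2$. Nothing then prevents $K_{n_k}$ from being an arc lying entirely in $A_2$ whose endpoints touch $\widetilde K_0$ so that $K_{n_k}\cup\widetilde K_0$ contains a Jordan curve encircling all of $A_1\cup R$; in that configuration $z_k$ and $\infty$ are separated in $\widetilde D_0\setminus K_{n_k}$ (so Step~A holds), yet $K_{n_k}\cap R=\emptyset$, so by Lemma~\ref{l91c} there is no cross--secting continuum in $K_{n_k}$. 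Worse, the very kernel--convergence argument you invoke in Step~C, applied to your $R\subset\widetilde D_0$, actually \emph{proves} that for all large $n\in N$ the ring $R$ contains a separating Jordan curve disjoint from $K_n$ (this is exactly the paper's ``Case~a''); hence for large $k$ your $K_{n_k}$ \emph{cannot} cross--sect $R$, and the conclusion of Step~B is false, not merely unproved.

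The paper's proof runs in the opposite direction on this point. It fixes a ring $R\subset\widetilde D_0$ with $\overline D_1\subset A_1\subset\widetilde D_0$ and $\infty\in R$, and first shows (disproving Cases~a and~b via Lemma~\ref{l114b}) that for all large $n\in N$ there is a separating Jordan curve $\gamma_0\in\Gamma_0$ with $\gamma_0\subset R\setminus K_n$. Only then does it argue pointwise: given $z_0\in(\widetilde E_0\cap\overline D_1)\setminus K_{n_0}$, an arc in $D_{n_0}$ from $\infty$ to $z_0$ is rerouted through $\gamma_0$ to obtain an arc $\gamma_2\subset D_{n_0}\cap\widetilde D_0$ from $\infty$ to $z_0$ along which $f$ is meromorphic at $z_0$; this contradicts $z_0\in\widetilde E_0$ via Lemma~\ref{l92e}. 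So the paper uses Lemma~\ref{l114b} to \emph{clear} the ring of $K_n$, not to find continua inside $K_n$; the latter is what fails in your Step~B.
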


\begin{proof}
With the assumptions made with respect to the domain $D_{1}$ it is rather
immediate that there exists a ring domain $R\subset\widetilde{D}_{0}$ such
that for one of the two components $A_{1}$ and $A_{2}$ of $\overline
{\mathbb{C}}\setminus R$, say $A_{1}$, we have
\begin{equation}
\overline{D}_{1}\subset A_{1}\subset\widetilde{D}_{0}\text{ \ and \ \ }%
\infty\in R.\label{f92g1}%
\end{equation}
In addition to the domain $D_{1}$, we define the domain $D_{2}:=A_{1}\cup
R\subset\widetilde{D}_{0}$.

In a first step of the proof we show that there exists $n_{1}\in\mathbb{N}$
such that for each $n\geq n_{1}$, $n\in N$, there exists at least one Jordan
curve
\begin{equation}
\gamma\in\Gamma_{0}=\Gamma_{0}(f,\infty)\text{ \ \ with \ }\gamma\subset
R\setminus K_{n}.\label{f92g2}%
\end{equation}

The proof of (\ref{f92g2}) will be given indirectly. For a negation of
(\ref{f92g2}) we consider the following two cases a and b:

Case a: There exists an infinite subsequence $N_{1}\subset N$ such that for
each $n\in N_{1}$ and each Jordan curve $\gamma\subset R$ that separates
$A_{1}$ from $A_{2}$ we have
\begin{equation}
\gamma\cap K_{n}\neq\emptyset.\label{f92g3}%
\end{equation}

Case b: There exists an infinite subsequence $N_{2}\subset N$ such that for
each $n\in N_{2}$ there exists at least one Jordan curve
\begin{equation}
\gamma\in\Gamma_{1}=\Gamma_{1}(f,\infty)\text{ \ \ with \ }\gamma\subset
R\setminus K_{n}.\label{f92g4}%
\end{equation}
The sets $\Gamma_{0}$ and $\Gamma_{1}$ have been introduced in Definition
\ref{d91b}.

If we have disproved Case a, then it follows from assertion (ii) of Lemma
\ref{l91b} that there exists $\gamma\in\Gamma$ with $\gamma\subset R\setminus
K_{n}$ for every $n\in N$\ sufficiently large, and consequently, either
(\ref{f92g2}) or (\ref{f92g4}) holds true for the particular Jordan curve
$\gamma$. If also Case b is disproved, then it follows from assertion (iii) in
Lemma \ref{l91b} that for every $n\in N$ sufficiently large there exists a
Jordan curve $\gamma$ which satisfies (\ref{f92g2}), and we have accomplished
the first step of the proof.\smallskip

In order to disprove Case a, we observe that from Lemma \ref{l91c} together
with (\ref{f92g3}), it follows that for each $n\in N_{1}$ there exists a
continuum $V_{n}\subset K_{n}$ with
\begin{equation}
V_{n}\cap A_{j}\neq\emptyset\text{ \ \ for \ \ }j=1,2\text{.}\label{f92g5}%
\end{equation}
As in the relations (\ref{f92f6}) and (\ref{f92f5}) in the proof of Lemma
\ref{l92d}, we deduce from (\ref{f92g5}) with the help of Lemma \ref{l114b} in
Subsection \ref{s1104}, further below, that there exists a continuum
$V\subset\widetilde{K}_{0}$ with $V\cap R\neq\emptyset$, but the existence of
$V$ contradicts the assumption $R\subset\widetilde{D}_{0}$. Hence, Case a is
disproved.\smallskip

In order to disprove Case b, we observe that from the existence of the Jordan
curve $\gamma$\ in (\ref{f92g4}) and assertion (iv) in Lemma \ref{l91b} it
follows that for each $n\in N_{2}$ there exists a continuum $V_{n}\subset
K_{n}$ that satisfies (\ref{f92g5}). With the same arguments as just used
after (\ref{f92g5}), we come to the same conclusion that there exists
$V\subset\widetilde{K}_{0}$ with $V\cap R\neq\emptyset$, which again
contradicts $R\subset\widetilde{D}_{0}$, and thus, also Case b is
disproved.\smallskip

As already said before, with a disproof of the two Cases a and b, we have
shown that there exists $n_{1}\in\mathbb{N}$ such that for every $n\geq n_{1}%
$, $n\in N$ there exists a Jordan curve $\gamma$ which satisfies
(\ref{f92g2}).\smallskip

In a second step, we prove indirectly the relations (\ref{f92g}) for $n\geq
n_{1}$, $n\in N$. Let us assume that $(\widetilde{E}_{0}\cap\overline{D}%
_{1})\setminus K_{n_{0}}\neq\emptyset$ for a certain $n_{0}\geq n_{1}$,
$n_{0}\in N$. Then there exists a point
\begin{equation}
z_{0}\in(\widetilde{E}_{0}\cap\overline{D}_{1})\setminus K_{n_{0}%
}.\label{f92g6}%
\end{equation}

Let in accordance with Definition \ref{d92b} $\gamma_{1}\subset\widetilde
{D}_{0}$\ be a Jordan arc with initial point $\infty$ and end point $z_{0}$
such that the two conditions (i) and (ii) of Definition \ref{d92b} are satisfied.

Since the function $f$ is single-valued and meromorphic in $D_{n_{0}%
}=\overline{\mathbb{C}}\setminus K_{n_{0}}\in\mathcal{D}(f,\infty)$, there
exists a Jordan arc $\widetilde{\gamma}_{2}\subset D_{n_{0}}$ with initial
point $\infty$ and end point $z_{0}$ and $f$ is meromorphic along
$\widetilde{\gamma}_{2}$.

We know from (\ref{f92g2}) that in $R\setminus K_{n_{0}}=R\cap D_{n_{0}}$
there exists a Jordan curve $\gamma_{0}$ with $\infty\in\gamma_{0}$, and along
$\gamma_{0}$ the function $f$ has a single-valued meromorphic continuation.
Hence, we can modify the arc $\widetilde{\gamma}_{2}$ in such a way that the
modified Jordan arc $\gamma_{2}$ coincides with $\widetilde{\gamma}_{2}$ after
its last contact with $\gamma_{0}$, but the whole Jordan arc $\gamma_{2}$ is
contained in $D_{n_{0}}\cap D_{2}\subset D_{n_{0}}\cap\widetilde{D}_{0}$, and
it connects $\infty$\ with $z_{0}$. Clearly, the new Jordan arc $\gamma_{2}$
satisfies condition (i) of Definition \ref{d92b}, but it does not satisfy
condition (ii). Hence, the two Jordan arcs $\gamma_{1}$ and $\gamma_{2}$
contradict Lemma \ref{l92e}. This contradiction disproves the existence of
$z_{0}$ in (\ref{f92g6}), and completes the proof of lemma.\medskip
\end{proof}

The proof of Lemma \ref{l92f} has been very technical since in its background
logic we were confronted with the possibility that in each admissible compact
set $K_{n}$, $n\in N$, different non-polar singularities of the function $f$
may be 'active' or 'inactive'. Illustrations for this phenomenon have been
given in the examples of Section \ref{s6}. In the situation of Lemma
\ref{l92f}, it has turned out that with the selection of the subsequence
$N\subset\mathbb{N}$ in (\ref{f92e1}) all relevant choices in this respect
have been fixed by the set $\widetilde{K}_{0}$.\medskip

\begin{lemma}
\label{l92g}We have
\begin{equation}
\operatorname*{cap}(\widetilde{E}_{0})=0\label{f92h1}%
\end{equation}
for the set $\widetilde{E}_{0}$ introduced in Definition \ref{d92b}.
\end{lemma}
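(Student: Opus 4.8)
The plan is to localize the statement by a countable exhaustion of $\widetilde{D}_0$ and then, on each piece, to combine Lemma \ref{l92f} with the Lower Envelope Theorem.

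First I would observe that $\widetilde{E}_0\subset\widetilde{D}_0\cap\mathbb{C}$: by Definition \ref{d92b} each $z\in\widetilde{E}_0$ is a non-polar singularity of a meromorphic continuation of $f$ and therefore cannot be the point $\infty$, where $f$ is meromorphic. Since $\widetilde{D}_0\cap\mathbb{C}$ is open, I would pick a countable family of open disks $U_k$, $k\in\mathbb{N}$, with $\overline{U}_k\subset\widetilde{D}_0$ and $\bigcup_kU_k=\widetilde{D}_0\cap\mathbb{C}$. Each $U_k$ is a bounded, simply connected domain with $\overline{U}_k\subset\widetilde{D}_0$, so Lemma \ref{l92f} applies with $D_1=U_k$, and moreover $\widetilde{E}_0=\bigcup_k(\widetilde{E}_0\cap\overline{U}_k)$. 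Because a countable union of sets of capacity zero again has capacity zero, it suffices to prove $\operatorname{cap}(\widetilde{E}_0\cap\overline{U}_k)=0$ for each fixed $k$.

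So fix $k$ and set $S:=\widetilde{E}_0\cap\overline{U}_k$. By Lemma \ref{l92f} there is $n_1$ with $S\subset K_n$ for all $n\ge n_1$, $n\in N$, where $N$ is the subsequence used in (\ref{f92e1}). For each such $n$ the function $g_n$ is the Green function of $D_n=\overline{\mathbb{C}}\setminus K_n$, hence $g_n=0$ quasi-everywhere on $K_n$ (equivalently, via (\ref{f92e21}), the equilibrium potential $p(\omega_n;\cdot)$ attains its Robin constant quasi-everywhere on $K_n$). Gathering these exceptional sets over the countable index set $\{n\in N:n\ge n_1\}$ yields a set $P$ of capacity zero such that $g_n(z)=0$ for every $z\in S\setminus P$ and every $n\in N$ with $n\ge n_1$; in particular $\limsup_{n\in N}g_n(z)=0$ for all $z\in S\setminus P$. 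On the other hand, by the Lower Envelope Theorem \ref{t112a} — already invoked in (\ref{f92e3}) — we have $\limsup_{n\in N}g_n=g_0$ outside a set $Q$ of capacity zero. Consequently, for every $z\in S\setminus(P\cup Q)$ we obtain $g_0(z)=\limsup_{n\in N}g_n(z)=0$, so $z\in\{g_0=0\}\subset\widetilde{K}_0$. But $S\subset\widetilde{E}_0\subset\widetilde{D}_0=\overline{\mathbb{C}}\setminus\widetilde{K}_0$, a contradiction. Hence $S\subset P\cup Q$ and $\operatorname{cap}(S)=0$; combining this with the first reduction gives $\operatorname{cap}(\widetilde{E}_0)=0$.

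The point to be careful about is that the two families of polar exceptional sets — the irregular points of the $K_n$ and the exceptional set of the Lower Envelope Theorem — must be collected into the single set $P\cup Q$ before drawing the contradiction, since a fixed point of $S$ could a priori be irregular for infinitely many $K_n$. Apart from that, no genuinely new idea is needed beyond Lemma \ref{l92f}, which has already been established; the rest is a routine assembly of standard potential-theoretic facts, so I expect the main obstacle to have been dealt with in the preceding lemma rather than here.
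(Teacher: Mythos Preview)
Your proof is correct and follows essentially the same route as the paper: localize via Lemma~\ref{l92f} to a small simply connected piece $S=\widetilde{E}_0\cap\overline{U}_k\subset K_n$ for large $n\in N$, deduce $g_0=0$ quasi-everywhere on $S$, and then contradict $S\subset\widetilde{D}_0$. The only cosmetic differences are that the paper argues indirectly (assuming $\operatorname{cap}(\widetilde{E}_0)>0$ and picking a single piece of positive capacity) and invokes the prepackaged Lemma~\ref{l114a} for the step ``$S\subset K_n$ for all large $n$ implies $g_0=0$ q.e.\ on $S$'', whereas you run that step by hand by collecting the irregular-point sets $P_n$ and the exceptional set of the Lower Envelope Theorem; your remark about gathering $P$ and $Q$ before drawing the contradiction is exactly the care that is needed.
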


\begin{proof}
For an indirect proof we assume that
\begin{equation}
\operatorname*{cap}(\widetilde{E}_{0})>0.\label{f92h2}%
\end{equation}
Using an exhaustion of the domain $\widetilde{D}_{0}\cap\left\{
\text{\thinspace}|z|\leq r\text{\thinspace}\right\}  $ by overlapping closed
and simply connected domains, one can show because of (\ref{f92h2}) that there
exists a simply connected domain $D_{1}$\ with $\overline{D}_{1}%
\subset\widetilde{D}_{0}\cap\left\{  \text{\thinspace}|z|\leq
r\text{\thinspace}\right\}  $ such that
\begin{equation}
\operatorname*{cap}(\widetilde{E}_{0}\cap\overline{D}_{1})>0.\label{f92h3}%
\end{equation}
The constant $r$ is the same as that in Lemma \ref{l92b}. From (\ref{f92h3})
and Lemma \ref{l92f}, we know that there exists $n_{1}\in\mathbb{N}$ such that
(\ref{f92g1}) holds true. Using Lemma \ref{l114a} from Subsection \ref{s1104},
further below, we deduce from (\ref{f92g1}) together with the assumptions
(\ref{f92c}) and (\ref{f92e1}) that we have
\begin{equation}
g_{0}(z)=0\text{ \ \ for quasi every \ }z\in\widetilde{E}_{0}\cap\overline
{D}_{1}\label{f92h4}%
\end{equation}
with $g_{0}$\ the function introduced in (\ref{f92e3}). From (\ref{f92h3}),
(\ref{f92h4}), and (\ref{f92e4}) in Definition \ref{d92a}, it then follows
that $\widetilde{E}_{0}\cap\widetilde{K}_{0}\neq\emptyset$, but this
contradicts Definition \ref{d92b}, and thus, the lemma is proved.\medskip
\end{proof}

With the two Definitions \ref{d92a} and \ref{d92b}, the Lemmas \ref{l92c},
\ref{l92d}, \ref{l92g}, and Proposition \ref{p91a} we are prepared to prove
Proposition \ref{p92a} and close the present subsection.\medskip

\textbf{Proof of Proposition \ref{p92a}.} In a first step, we deal with the
special case that (\ref{f92a}) holds true. We then know from Lemma \ref{l92a}
and its Corollary \ref{c92a} that the extremal domain $D_{0}=D_{0}%
(f,\infty)\in\mathcal{D}_{0}(f,\infty)$ of Definition \ref{d21b} exists and is
identical with the Weierstrass\ domain $W_{f}\subset\overline{\mathbb{C}}$ for
meromorphic continuations of the function $f$ starting at $\infty$.\smallskip

In the second step, we assume that the inequality in (\ref{f92b}) is
satisfied, and define
\begin{equation}
D_{0}:=\widetilde{D}_{0}\setminus\widetilde{E}_{0},\label{f92i1}%
\end{equation}
and show that $D_{0}\in\mathcal{D}_{0}(f,\infty)$.

The set $\widetilde{E}_{0}$ is identical to its polynomial-convex hull
$\widehat{E}_{0}$. Indeed, from Lemma \ref{l92d} and from Lemma \ref{l111d} in
Subsection \ref{s1101}, further below, we deduce that
\begin{equation}
\operatorname*{cap}(\widehat{E}_{0})=\operatorname*{cap}(\widetilde{E}%
_{0})=0.\label{f92i4}%
\end{equation}
From (\ref{f92i4}) it follows that $\widehat{E}_{0}$ can have no inner points,
and consequently, we have $\widehat{E}_{0}=\widetilde{E}_{0}$. This identity
together with (\ref{f92i1}) and Lemma \ref{l92c} implies that $D_{0} $ is a
domain.\smallskip

From Lemma \ref{l92d} we know that condition (ii) in Proposition \ref{p91a} is
satisfied. From (\ref{f92i1}) and Definition \ref{d92b} it follows that also
condition (i) in Proposition \ref{p91a} is satisfied. It therefore follows
from Proposition \ref{p91a} that $D_{0}$ is an admissible domain for Problem
$(f,\infty)$, i.e., $D_{0}\in\mathcal{D}(f,\infty)$.\smallskip

Since the capacity of a capacitable set does not change its value if a set of
capacity zero is added or subtracted (cf. Lemma \ref{l111b} in Subsection
\ref{s1101}, further below), we deduce from the two Lemmas \ref{l92c} and
\ref{l92g} that
\begin{equation}
\operatorname*{cap}(\overline{\mathbb{C}}\setminus D_{0})=\operatorname*{cap}%
(\overline{\mathbb{C}}\setminus\widetilde{D}_{0})=\operatorname*{cap}%
(\widetilde{K}_{0})\leq c_{0}\text{ }\label{f92i2}%
\end{equation}
with the constant $c_{0}$\ introduced in (\ref{f92b}). From the minimality
(\ref{f92b}) and the fact that $D_{0}\in\mathcal{D}(f,\infty)$, we conclude
that in (\ref{f92i2}) a proper inequality is not possible. Hence, we have
proved that
\begin{equation}
\operatorname*{cap}(\overline{\mathbb{C}}\setminus D_{0})=\inf_{D\in
\mathcal{D}(f,\infty)}\operatorname*{cap}(\overline{\mathbb{C}}\setminus
D),\label{f92i3}%
\end{equation}
which implies that $D_{0}\in\mathcal{D}_{0}(f,\infty)$, and the proof of
Proposition \ref{p92a} completed. $\blacksquare$\medskip

\subsection{\label{s93}Uniqueness up to a Set of Capacity Zero}

\qquad In the present subsection we prove that all admissible domains in
$\mathcal{D}_{0}(f,\infty)$ differ only in a set of capacity zero. In Section
\ref{s2}, this result has already been stated as the first part of Proposition
\ref{p22a}, and there the sets $\mathcal{D}_{0}(f,\infty)$ and $\mathcal{K}%
_{0}(f,\infty)$\ have also been introduced in Definition \ref{d21b}. We
formulate the result here as a proposition, which then will be proved at the
end of the subsection after several auxiliary results have been formulated and
proved.\smallskip

\begin{proposition}
\label{p93a}Sets in $\mathcal{K}_{0}(f,\infty)$ differ at most in a subset of
capacity zero.\smallskip
\end{proposition}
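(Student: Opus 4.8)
The plan is to show that $K_{1}\cap K_{2}$ is again an admissible set of minimal capacity whenever $K_{1},K_{2}\in\mathcal{K}_{0}(f,\infty)$, and then to conclude by comparing equilibrium potentials. First dispose of the degenerate case: if $c_{0}:=\inf_{K\in\mathcal{K}(f,\infty)}\operatorname{cap}(K)=0$, then every member of $\mathcal{K}_{0}(f,\infty)$ has capacity zero, so $K_{1}\triangle K_{2}\subset K_{1}\cup K_{2}$ has capacity zero by subadditivity (Lemma \ref{l111b}); hence assume $c_{0}>0$ and fix $K_{1},K_{2}\in\mathcal{K}_{0}(f,\infty)$, $D_{i}:=\overline{\mathbb{C}}\setminus K_{i}$. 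Note that every admissible compact set is polynomially convex, since its complement is a connected domain; and $K_{1}\cap K_{2}$ is polynomially convex as well, because $\overline{\mathbb{C}}\setminus(K_{1}\cap K_{2})=D_{1}\cup D_{2}$ is connected (both $D_{i}$ contain $\infty$).

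The second ingredient is a potential-theoretic lemma: if $A\subset B$ are polynomially convex compacta with $\operatorname{cap}(A)=\operatorname{cap}(B)>0$, then $\operatorname{cap}(B\setminus A)=0$. Indeed, the equilibrium measure $\omega_{A}$ is a unit measure carried by $B$ whose energy equals the Robin constant $-\log\operatorname{cap}(B)$, so by uniqueness of the equilibrium measure (which rests on the strict convexity of the energy functional, cf. Subsection \ref{s1102}) we get $\omega_{A}=\omega_{B}$, hence $p(\omega_{A};\cdot)=-\log\operatorname{cap}(B)$ quasi-everywhere on $B$. On the other hand, by Lemma \ref{l113b}, $p(\omega_{A};z)=-\log\operatorname{cap}(A)-g_{\overline{\mathbb{C}}\setminus A}(z,\infty)$, which is strictly smaller than $-\log\operatorname{cap}(A)=-\log\operatorname{cap}(B)$ at every point $z\in\overline{\mathbb{C}}\setminus A$; therefore quasi-every point of $B$ lies in $A$.

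Granting the claim that $K_{1}\cap K_{2}\in\mathcal{K}(f,\infty)$, the proposition follows quickly. By minimality $\operatorname{cap}(K_{1}\cap K_{2})\ge c_{0}$, while $\operatorname{cap}(K_{1}\cap K_{2})\le\operatorname{cap}(K_{1})=c_{0}$ by monotonicity, so $\operatorname{cap}(K_{1}\cap K_{2})=c_{0}$ and $K_{1}\cap K_{2}\in\mathcal{K}_{0}(f,\infty)$. Applying the lemma to $K_{1}\cap K_{2}\subset K_{i}$ (all polynomially convex, all of capacity $c_{0}$) gives $\operatorname{cap}(K_{i}\setminus(K_{1}\cap K_{2}))=0$ for $i=1,2$, and since $K_{1}\triangle K_{2}\subset(K_{1}\setminus(K_{1}\cap K_{2}))\cup(K_{2}\setminus(K_{1}\cap K_{2}))$, subadditivity yields $\operatorname{cap}(K_{1}\triangle K_{2})=0$.

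It remains to prove the claim, and this is the main obstacle. By Proposition \ref{p91a} it suffices to verify (i) that $f$ continues meromorphically along every closed arc in $D_{1}\cup D_{2}$ starting at $\infty$, and (ii) that every $\gamma\in\Gamma_{1}(f,\infty)$ meets $K_{1}\cap K_{2}$. Assertion (i) is local and follows because $f$ is meromorphic throughout each $D_{i}$ and the two extensions agree on the component of $D_{1}\cap D_{2}$ containing $\infty$. Assertion (ii) is the heart of the matter: one must rule out a Jordan curve $\gamma\in\Gamma_{1}$ lying entirely in $D_{1}\cup D_{2}$. Such a $\gamma$ meets $K_{1}$ and meets $K_{2}$ (Proposition \ref{p91a} for the admissible sets $K_{1}$, $K_{2}$) but avoids $K_{1}\cap K_{2}$; via Lemma \ref{l91b} one then obtains a thin ring domain $R\ni\gamma$ avoiding $K_{1}\cap K_{2}$ and disjoint continua $V_{1}\subset K_{1}\setminus K_{2}$, $V_{2}\subset K_{2}\setminus K_{1}$ cross-secting $R$, so that $\operatorname{cap}(K_{1}\setminus K_{2})>0$ and $\operatorname{cap}(K_{2}\setminus K_{1})>0$, whence $\omega_{K_{1}}\ne\omega_{K_{2}}$ and the mid-point measure $\tfrac12(\omega_{K_{1}}+\omega_{K_{2}})$ has energy strictly below $-\log c_{0}$. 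The plan for the contradiction is to convert this strict energy gain, together with the presence of the two parallel crossing continua, into an admissible compact set of capacity strictly less than $c_{0}$, against the minimality (\ref{f21a}); I expect this to require a genuinely variational/geometric argument — a Hadamard-type pushing of the disagreeing pieces of $\partial D_{1}$ and $\partial D_{2}$ toward one another through $D_{1}\cap D_{2}$, kept admissible via Lemmas \ref{l91b} and \ref{l91c}, or else the use of the solution of the condenser-capacity problem (Problem \ref{p81c}) as a building block near a point of disagreement — and the technical difficulty, already visible in the proof of Proposition \ref{p92a}, is to carry this out in the possible presence of sets $E_{0}$ with interior.
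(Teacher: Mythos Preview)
Your reduction to the claim ``$K_{1}\cap K_{2}\in\mathcal{K}(f,\infty)$'' is exactly right in spirit, and your potential-theoretic lemma (equal capacity of nested polynomially convex compacta forces the difference to be polar) is correct and clean. But the proof of the claim itself is where the proposition lives, and that part of your write-up is not a proof.

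First, your argument for condition (i) of Proposition~\ref{p91a} is not valid as stated. Meromorphic continuation along an arc is not a local matter: the germ you carry along an arc $\gamma\subset D_{1}\cup D_{2}$ need not coincide with either $f_{1}$ or $f_{2}$ once $\gamma$ has crossed back and forth between $D_{1}\setminus D_{2}$ and $D_{2}\setminus D_{1}$. The extensions $f_{1},f_{2}$ agree only on the $\infty$-component of $D_{1}\cap D_{2}$; on other components they may differ, and the germ carried by $\gamma$ may land on yet another sheet of $\mathcal{R}_{f}$ and hit a branch point lying over a point of $D_{1}\cup D_{2}$. (This gap can in fact be repaired \emph{after} condition (ii) is established, by the argument used for Lemma~\ref{l92e}; but it does not follow from the one-line reason you give.)

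Second, and more seriously, you do not prove condition (ii). You correctly observe that a hypothetical $\gamma\in\Gamma_{1}$ missing $K_{1}\cap K_{2}$ forces $\operatorname{cap}(K_{1}\setminus K_{2})>0$ and $\operatorname{cap}(K_{2}\setminus K_{1})>0$, hence a strict midpoint-energy gain; but turning this into an \emph{admissible} competitor of capacity below $c_{0}$ is precisely the content of the proposition, and you leave it as a plan. The paper does not attempt to show $K_{1}\cap K_{2}$ admissible at this stage; instead it builds an explicit competitor $K_{0}$ from the level set $S_{0}=\overline{\{g_{1}=g_{2}\}}$ (Definition~\ref{d93a}), proves $\overline{\mathbb{C}}\setminus K_{0}\in\mathcal{D}(f,\infty)$ by writing down the extension piecewise (Lemma~\ref{l93c}, formula~(\ref{f93e1})), and then shows $\operatorname{cap}(K_{0})<c_{0}$ via a delicate energy computation (Lemmas~\ref{l93d}--\ref{l93e}). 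The admissibility of $K_{1}\cap K_{2}$ is established only afterwards (Lemma~\ref{l94a}), and its proof \emph{uses} Proposition~\ref{p93a}: once one knows $\operatorname{cap}(K_{1}\setminus K_{2})=0$, a $\gamma\in\Gamma_{1}$ avoiding $K_{1}\cap K_{2}$ can be pushed off $K_{1}\setminus K_{2}$ inside its ring domain, contradicting $K_{1}\in\mathcal{K}(f,\infty)$. So your intended shortcut is circular unless you supply an independent construction of the competitor --- which is exactly the hard step you defer.
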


A key role in the proof of Proposition \ref{p93a} is played by a number of
special sets that are introduced in Definition \ref{d93a} below. Especially
the construction of the compact set $K_{%
\acute{}%
0}$ in (\ref{f93b6}) can be seen as a type of convex combination, which will
become more clear in Subsection \ref{s95}, further below.\smallskip

We start with the formal set-up for an indirect proof of Proposition
\ref{p93a} and assume contrary to the assertion of the proposition that there
exist at least two minimal compact sets $K_{1},K_{2}\in\mathcal{K}%
_{0}(f,\infty)$ that differ in a set of positive capacity, i.e., we assume
\begin{equation}
\operatorname*{cap}\left(  \left(  K_{1}\setminus K_{2}\right)  \cup\left(
K_{2}\setminus K_{1}\right)  \right)  >0\text{ \ \ for \ \ }K_{1},K_{2}%
\in\mathcal{K}_{0}(f,\infty)\text{.}\label{f93a1}%
\end{equation}
The corresponding admissible domains are defined as
\begin{equation}
D_{j}:=\overline{\mathbb{C}}\setminus K_{j}\in\mathcal{D}_{0}(f,\infty),\text{
\ \ }j=1,2.\label{f93a2}%
\end{equation}
Since we have assumed $K_{1},K_{2}\in\mathcal{K}_{0}(f,\infty),$ we know that
the minimality (\ref{f21a}) in Definition \ref{d21b} holds for both sets,
i.e., we have
\begin{equation}
\operatorname*{cap}(K_{1})=\operatorname*{cap}(K_{2})=\inf_{K\in
\mathcal{K}(f,\infty)}\operatorname*{cap}(K)=c_{0}\label{f93a3}%
\end{equation}
with $c_{0}$ the same constant as that introduced in (\ref{f92b}). The two
Green functions $g_{D_{j}}(\cdot,\infty)$ in the two domains $D_{j}%
,$\ $j=1,2,$ are denoted by
\begin{equation}
g_{j}:=g_{D_{j}}(\cdot,\infty),\text{ \ \ }j=1,2.\label{f93a4}%
\end{equation}
From Lemma \ref{l113c} in Subsection \ref{s1103}, further below, we know that
assumption (\ref{f93a1}) is equivalent to the assertion that the two Green
functions $g_{1}$ and $g_{2}$ are not identical. From the harmonicity of
$g_{1}-g_{2}$ in $D_{1}\cap D_{2}$, it then follows that
\begin{equation}
g_{1}(z)\neq g_{2}(z)\text{ \ \ for almost all \ }z\in D_{1}\cap
D_{2},\label{f93a5}%
\end{equation}
and equality holds in $D_{1}\cap D_{2}$ on piece-wise analytic arcs. These
arcs are part of the set $S_{0}$ that is formally defined in (\ref{f93b1}) in
the next definition. All sets introduced in Definition \ref{d93a} will appear
in subsequent lemmas that lead to the proof of Proposition \ref{p93a}\ at the
end of the present subsection.

\begin{definition}
\label{d93a}Under the assumptions (\ref{f93a1}) and (\ref{f93a3}) and with the
same notations as introduced in (\ref{f93a2}) and (\ref{f93a4}), we define the
sets $S_{0},K_{3},\widetilde{K}_{0},K_{10},K_{20},\allowbreak K_{0}%
,\allowbreak D_{0}\subset\overline{\mathbb{C}}$ in the following way:
\begin{align}
& S_{0}:=\overline{\left\{  \,z\in\overline{\mathbb{C}}\,\right.  \left\vert
\,g_{1}(z)=g_{2}(z)\,\right\}  },\medskip\label{f93b1}\\
& K_{3}:=\widehat{K_{1}\cup K_{2}},\medskip\label{f93b2}\\
& \widetilde{K}_{0}:=\widehat{S_{0}\cap K_{3}},\medskip\label{f93b3}\\
& K_{10}:=\left\{  \,z\in K_{1}\setminus\widetilde{K}_{0}\,\right.  \left\vert
\,g_{1}(z)>g_{2}(z)\,\right\}  ,\medskip\label{f93b4}\\
& K_{20}:=\left\{  \,z\in K_{2}\setminus\widetilde{K}_{0}\,\right.  \left\vert
\,g_{2}(z)>g_{1}(z)\,\right\}  ,\medskip\label{f93b5}\\
& K_{0}:=\widetilde{K}_{0}\cup K_{10}\cup K_{20},\medskip\label{f93b6}\\
& D_{0}:=\overline{\mathbb{C}}\setminus K_{0}.\label{f93b7}%
\end{align}
The polynomial-convex hull of a bounded set $S\subset\mathbb{C}$ is
denoted\ by\ $\widehat{S}$ (cf. Definition \ref{d111b} in Subsection
\ref{s1101}, further below).
\end{definition}

For the proof of Proposition \ref{p93a} the following strategy will be
applied: First, it is proved that the set $D_{0}$ introduced in (\ref{f93b7})
is a domain. Then it is shown that $D_{0}$ is an admissible domain, i.e.,
$D_{0}\in\mathcal{D}(f,\infty)$. After that in the final step, it is proved
that the assumptions (\ref{f93a1}) and (\ref{f93a3}) imply that
$\operatorname*{cap}(K_{0})<c_{0}$ for the compact set introduced in
(\ref{f93b6}). But such an estimate contradicts the minimality assumed in
(\ref{f93a3}). From a methodological point of view the last step is the most
interesting and also the most challenging one.\smallskip

We start with two lemmas in which topological and some potential-theoretic
properties of sets from Definition \ref{d93a} are investigated. The first
lemma is of a more preparatory character.\smallskip

\begin{lemma}
\label{l93a}We set
\begin{equation}
d:=g_{1}-g_{2},\label{f93c1}%
\end{equation}
and define the two sets
\begin{align}
& B_{+}:=\{\,z\in\overline{\mathbb{C}}\setminus S_{0}\,|\,g_{1}(z)>g_{2}%
(z)\,\}=\{\,z\in\overline{\mathbb{C}}\setminus S_{0}\,|\,d(z)>0\,\},\medskip
\label{f93c2}\\
& B_{-}:=\{\,z\in\overline{\mathbb{C}}\setminus S_{0}\,|\,g_{1}(z)<g_{2}%
(z)\,\}=\{\,z\in\overline{\mathbb{C}}\setminus S_{0}%
\,|\,d(z)<0\,\}.\label{f93c3}%
\end{align}
Both sets are open. The function $d$ is superharmonic in $B_{+}$ and
subharmonic in $B_{-}$.
\end{lemma}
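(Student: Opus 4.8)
The plan is to pin down the location of $B_{+}$ and $B_{-}$ relative to the domains $D_{1}$ and $D_{2}$; once this is done, both the openness and the (super/sub)harmonicity fall out of elementary potential theory. First I would collect the properties of the Green functions $g_{j}=g_{D_{j}}(\cdot,\infty)$ that are needed: by $\operatorname*{cap}(K_{j})=c_{0}>0$ and the representation of Lemma~\ref{l113b}, $g_{j}=-p(\omega_{j};\cdot)-\log c_{0}$, so $g_{j}$ is subharmonic on $\mathbb{C}$, harmonic on $D_{j}\setminus\{\infty\}$, strictly positive on $D_{j}$, vanishes on $K_{j}$ (up to the polar set of irregular boundary points --- this is the one place where care is required), and carries at $\infty$ the same logarithmic singularity $\log|z|$ and the same additive constant $-\log c_{0}$. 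Consequently $d=g_{1}-g_{2}$ is harmonic near $\infty$ with $d(\infty)=0$, so $\infty\in\{g_{1}=g_{2}\}\subseteq S_{0}$ and $B_{\pm}\subseteq\mathbb{C}$. The key observation is $B_{+}\subseteq D_{1}$: on $B_{+}$ one has $g_{1}>g_{2}\geq0$, hence $g_{1}>0$, and since $\{g_{1}>0\}$ coincides with $D_{1}$ outside a polar set (the irregular points of $\partial D_{1}$), and that polar set does not meet $B_{+}$ (I would justify this from the boundary behaviour of $g_{1}$ recorded in Subsection~\ref{s1103}), we get $B_{+}\subseteq D_{1}$; symmetrically $B_{-}\subseteq D_{2}$.

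Granting these inclusions, openness is immediate. On the open set $D_{1}$ the function $g_{1}$ is harmonic, hence continuous, while $g_{2}$ is upper semicontinuous, so $d$ is lower semicontinuous on $D_{1}$, and therefore $\{z\in D_{1}:d(z)>0\}$ is open. I claim $B_{+}=\{z\in D_{1}:d(z)>0\}$: the inclusion ``$\subseteq$'' is clear from $B_{+}\subseteq D_{1}$, and conversely, if $z\in D_{1}$ with $d(z)>0$ then by lower semicontinuity $d>0$ on a whole neighbourhood of $z$ inside $D_{1}$, so $z$ is not a limit of the set $\{g_{1}=g_{2}\}$, i.e.\ $z\notin S_{0}$, whence $z\in B_{+}$. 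Thus $B_{+}$ is open. The argument for $B_{-}$ is the exact mirror image: on $D_{2}$ the function $g_{2}$ is harmonic and $g_{1}$ is upper semicontinuous, so $d$ is upper semicontinuous there, and $B_{-}=\{z\in D_{2}:d(z)<0\}$ is open.

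For the last assertion, since $B_{+}\subseteq D_{1}$ the function $g_{1}$ is harmonic on $B_{+}$, while $-g_{2}=p(\omega_{2};\cdot)+\log c_{0}$ is superharmonic on all of $\mathbb{C}$; a harmonic function plus a superharmonic one is superharmonic, so $d=g_{1}+(-g_{2})$ is superharmonic on $B_{+}$ (equivalently, $\Delta d=2\pi(\omega_{1}-\omega_{2})$ and $\omega_{1}$ puts no mass on $B_{+}\subseteq D_{1}$, so $\Delta d\leq0$ there). Symmetrically, on $B_{-}\subseteq D_{2}$ the function $g_{2}$ is harmonic, $g_{1}$ is subharmonic on $\mathbb{C}$, and $d=g_{1}+(-g_{2})$ is subharmonic on $B_{-}$. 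Note that $d$ need not be harmonic on $B_{+}$: $g_{2}$ may be genuinely subharmonic --- not harmonic --- at points of the support of $\omega_{2}$ that happen to lie in $D_{1}$, which is precisely why ``superharmonic'' is the sharp statement. The main obstacle in the whole argument is the small technical point flagged above, namely handling the capacity-zero set of irregular boundary points when one asserts $B_{+}\subseteq D_{1}$ (and $B_{-}\subseteq D_{2}$); it is dispatched with the refined properties of the Green function assembled in Subsection~\ref{s1103}, and the rest is routine manipulation with semicontinuity and the Laplacian.
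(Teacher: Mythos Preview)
Your argument hinges entirely on the inclusion $B_{+}\subseteq D_{1}$ (and symmetrically $B_{-}\subseteq D_{2}$), and this is precisely the point that does not go through as you suggest. From $g_{1}(z)>0$ you only get $z\in D_{1}\cup Ir(K_{1})$, and your proposed fix---that the polar set $Ir(K_{1})$ ``does not meet $B_{+}$'' by the material in Subsection~\ref{s1103}---is not supported by anything there. Indeed, take $z_{0}\in Ir_{II}(K_{1})$: by Lemma~\ref{l113a2}(iii) the function $g_{1}$ is harmonic near $z_{0}$ with $g_{1}(z_{0})>0$, so if in addition $g_{2}(z_{0})<g_{1}(z_{0})$ (for instance when $z_{0}\in Rg(K_{2})$) then lower semicontinuity of $d$ at $z_{0}$ forces $d>0$ on a full neighbourhood, whence $z_{0}\notin S_{0}$ and $z_{0}\in B_{+}$---yet $z_{0}\in K_{1}$, so $z_{0}\notin D_{1}$. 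The inclusion you need is therefore false in general, and even the weaker $B_{+}\cap\overline{Rg(K_{1})}=\emptyset$ (which would suffice for harmonicity of $g_{1}$ on $B_{+}$) runs into trouble at points of $Ir_{I}(K_{1})$, where $g_{1}$ is not continuous.

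The paper avoids this trap by taking a genuinely different route: rather than trying to locate $B_{+}$ inside $D_{1}$ a priori, it fixes a component $C$ of the open set $\overline{\mathbb{C}}\setminus S_{0}$ and argues as follows. The difference $d$ is continuous off a polar set $A$, and $C\setminus A$ is connected (Lemma~\ref{l111e}); since $d$ has no zeros on $C$ and is continuous on $C\setminus A$, one of $C\cap B_{+}$, $C\cap B_{-}$ must lie in $A$. Say $C\cap B_{-}\subset A$. A capacity argument (if $C$ met $\overline{Rg(K_{1})}$ then $\operatorname*{cap}(C\cap Rg(K_{1}))>0$, but on $Rg(K_{1})$ one has $d\le 0$, forcing a set of positive capacity into $A$) then yields $C\cap\overline{Rg(K_{1})}=\emptyset$. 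Only now does Lemma~\ref{l113a2}(iii) give harmonicity of $g_{1}$ on $C$, hence superharmonicity of $d$ there, and the minimum principle upgrades $d>0$ from $C\setminus A$ to all of $C$. So the paper deduces $C\subset B_{+}$ and the superharmonicity simultaneously, whereas you try to establish the location of $B_{+}$ first and the superharmonicity afterwards; the former ordering is what makes the irregular-point issue tractable.
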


\begin{proof}
Let $C\subset\overline{\mathbb{C}}$ be an arbitrary component of
$\overline{\mathbb{C}}\setminus S_{0}$. This component is broken down into the
two sets
\begin{equation}
C_{1}:=C\cap B_{+}\text{ \ and \ \ }C_{2}:=C\cap B_{-}\text{.}\label{f93c4}%
\end{equation}
Since the function $d$ is the difference of two Green functions, we know from
Lemma \ref{l113a1} in Subsection \ref{s1103}, further below, that $d$ is
continuous outside of a measurable set $A\subset\overline{\mathbb{C}}$ with
$\operatorname*{cap}(A)=0$. The definitions (\ref{f93c4}) together with the
continuity of $d$ then imply that
\begin{equation}
\partial C_{j}\cap C\subset A\text{ \ \ for \ }j=1,2\text{.}\label{f93c5}%
\end{equation}
Indeed, if we assume that $z\in\partial C_{1}\cap C$ and $z\notin A$, then it
follows from the continuity of $d$ in $C\setminus A$\ that $d(z)=0$, and
therefore $z\in S_{0}$, which contradicts the definition of the component $C$.
For $j=2$\ the same conclusion holds true.

Next, we show that we can have
\begin{equation}
C_{j}\setminus A\neq\emptyset\label{f93c6}%
\end{equation}
at most for one of the two possibilities $j=1,2$. Indeed, it follows from
$\operatorname*{cap}(A)=0$ and from Lemma \ref{l111e} in Subsection
\ref{s1101}, further below, that $C\setminus A$ is connected. If we assume
that (\ref{f93c6}) holds for both $j=1,2$, then it follows from the continuity
of $d$ in $C\setminus A$ that there exists $z\in C\setminus A$ with $d(z)=0$.
But this would imply that $z\in S_{0}$, which again contradicts the definition
of the component $C$. We assume without loss of generality that
\begin{equation}
C_{2}\subset A\text{ \ \ and \ }C_{1}\supset C\setminus A.\label{f93c7}%
\end{equation}
Let, as in Definition \ref{d113a2} of Subsection \ref{s1103}, further below,
$Rg(K_{1})$ denote the set $K_{1}$ minus all irregular points of $K_{1}$. From
(\ref{f93c7}) it follows that
\begin{equation}
C\cap\overline{Rg(K_{1})}=\emptyset.\label{f93c8}%
\end{equation}
Indeed, if there exists $z\in C\cap\overline{Rg(K_{1})}$, then we know from
part (iv) of Lemma \ref{l113a2} in Subsection \ref{s1103}, further below, that
$\operatorname*{cap}(C\cap K_{1})>0$, and further with Lemma \ref{l113a1} in
Subsection \ref{s1103} we conclude that also $\operatorname*{cap}(C\cap
Rg(K_{1}))>0$.

Since $g_{1}(z)=0$ for all $z\in Rg(K_{1})$, it follows that $d(z)\leq0$ for
$z\in Rg(K_{1})$ and therefore that $d(z)<0$ for $z\in C\cap Rg(K_{1})$. With
(\ref{f93c7}) this implies that $C\cap Rg(K_{1})\subset A$, and consequently,
we have $\operatorname*{cap}(C\cap Rg(K_{1}))=0$. Since the last conclusion
has led to a contradiction, (\ref{f93c8}) is proved.

From (\ref{f93c8}) and part (iii) of Lemma \ref{l113a2} in Subsection
\ref{s1103}, further below, we conclude that the function $d$ is superharmonic
in the component $C$. Indeed, from the Lemma \ref{l113a2} we know that $g_{1}
$ is harmonic in $C$, and on the other hand, $-g_{2}$ is superharmonic in $C$
(cf. Lemma \ref{l113e} in Subsection \ref{s1103}).

Since the minimum principle is valid for superharmonic functions, we conclude
from (\ref{f93c7}) and (\ref{f93c4}) that $d(z)>0$ for all $z\in C$, and
consequently, we have proved
\begin{equation}
C\subset B_{+}.\label{f93c9}%
\end{equation}

The conclusion (\ref{f93c9}) is conditional on the assumption made in
(\ref{f93c5}). The alternative choice in (\ref{f93c5}) would have led to a
reversed role for the two subsets $C_{+}$ and $C_{-}$, and we would have
proved that $C\subset B_{-}$, and further that the function $d$ is subharmonic
in $C$.

Putting both possibilities together, we have proved that each component of the
open set $\overline{\mathbb{C}}\setminus S_{0}$ is completely contained in one
of the two subsets $B_{+}$ or $B_{-}$. Hence, we have shown that these two
sets are open. Further, it has been shown that the function $d$ is
superharmonic (resp. subharmonic) in $B_{+}$ (resp. in $B_{-}$).\medskip
\end{proof}

\begin{lemma}
\label{l93b}(i) The two sets
\begin{equation}
B_{1}:=(K_{3}\setminus\widetilde{K}_{0})\cap B_{+}\text{ \ \ and \ \ }%
B_{2}:=(K_{3}\setminus\widetilde{K}_{0})\cap B_{-}\label{f93d1}%
\end{equation}
are disjoint, and they are open in $K_{3}$.

(ii) \ We have
\begin{align}
& B_{j}\setminus K_{j}=B_{j}\setminus K_{j0}\text{ \ \ for \ \ }j=1,2\text{,
\ and}\medskip\label{f93d2}\\
& \operatorname*{cap}(K_{10})=\operatorname*{cap}(K_{20})=0\text{.}%
\label{f93d3}%
\end{align}

(iii) Set $D_{3}:=\overline{\mathbb{C}}\setminus K_{3}$, then both sets
\begin{equation}
D_{3}\cup(B_{j}\setminus K_{j})\text{, \ \ }j=1,2\text{,}\label{f93d4}%
\end{equation}
are domains.

(vi) The set $D_{0}$\ from (\ref{f93b7}) is a domain, and we the
decomposition
\begin{equation}
D_{0}=D_{3}\cup(B_{1}\setminus K_{1})\cup(B_{2}\setminus K_{2}).\label{f93d5}%
\end{equation}

\end{lemma}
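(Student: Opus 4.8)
The plan is to deduce all four assertions from one disjoint decomposition of $\overline{\mathbb{C}}$ together with the polynomial convexity of the two hulls $K_3$ and $\widetilde{K}_0$ and Lemma \ref{l93a}. I would begin by recording that $\widetilde{K}_0=\widehat{S_0\cap K_3}\subset\widehat{K_3}=K_3$ (since $K_3$ is polynomial-convex) and that $K_j\setminus\widetilde{K}_0\subset K_3\setminus\widetilde{K}_0\subset K_3\setminus S_0$ is disjoint from $S_0$; hence, by Lemma \ref{l93a}, every point of $K_3\setminus\widetilde{K}_0$ lies in $B_+$ or in $B_-$, so $K_3\setminus\widetilde{K}_0=B_1\sqcup B_2$ and $\overline{\mathbb{C}}=D_3\sqcup\widetilde{K}_0\sqcup B_1\sqcup B_2$, where $D_3=\overline{\mathbb{C}}\setminus K_3$ is a domain containing $\infty$. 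Assertion (i) now follows at once: $B_1=K_3\cap(B_+\setminus\widetilde{K}_0)$ and $B_2=K_3\cap(B_-\setminus\widetilde{K}_0)$, and since $B_\pm$ are open (Lemma \ref{l93a}) while $\widetilde{K}_0$ is compact, the sets $B_\pm\setminus\widetilde{K}_0$ are open, so $B_1,B_2$ are open in $K_3$; they are disjoint because $B_+\cap B_-=\emptyset$.

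For assertion (ii), the key identity is $B_j\cap K_j=(K_j\setminus\widetilde{K}_0)\cap B_\pm=K_{j0}$, which holds because $K_j\setminus\widetilde{K}_0$ is disjoint from $S_0$, so the defining condition of $B_\pm$ reduces there to $g_j\gtrless g_{3-j}$; this gives $B_j\setminus K_j=B_j\setminus(B_j\cap K_j)=B_j\setminus K_{j0}$. The capacity statement is potential-theoretic: with the usual extension of the Green function by zero one has $g_j\equiv 0$ on $Rg(K_j)$ and $g_{3-j}\ge 0$ on $\overline{\mathbb{C}}$, so for $z\in K_{j0}\subset K_j$ the inequality $g_j(z)>g_{3-j}(z)\ge 0$ forces $z\notin Rg(K_j)$; hence $K_{j0}$ lies in the set of irregular points of $K_j$, which has capacity zero (cf.\ Lemma \ref{l113a2} and Definition \ref{d113a2}), and therefore $\operatorname{cap}(K_{10})=\operatorname{cap}(K_{20})=0$.

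Assertion (iii) I would reduce, using $K_{j0}=B_j\cap K_j\subset B_j\subset K_3$ and (ii), to the identity $D_3\cup(B_j\setminus K_j)=(D_3\cup B_j)\setminus K_{j0}$, then prove that $D_3\cup B_j$ is a domain and invoke Lemma \ref{l111e} (removing a set of capacity zero from a domain leaves it connected). Openness of $D_3\cup B_j$, and of $D_3\cup(B_j\setminus K_j)$, is a direct neighbourhood check: around a point of $B_j$ one picks $U$ with $U\cap\widetilde{K}_0=\emptyset$, $U\subset B_\pm$ (and, for the second set, also $U\cap K_j=\emptyset$), and verifies $U\cap K_3\subset B_j$ while $U\setminus K_3\subset D_3$. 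Connectedness of $D_3\cup B_1$ is the single point needing a real idea, and I expect it to be the main obstacle; I would argue by contradiction. If $D_3\cup B_1=U\sqcup V$ with $U,V$ nonempty and open, then $D_3$ (connected, containing $\infty$) lies in one part, say $U$, so $V$ is a nonempty bounded open set with $V\subset B_1$, $V\cap\widetilde{K}_0=\emptyset$, and $\partial V\subset\overline{\mathbb{C}}\setminus(D_3\cup B_1)=\widetilde{K}_0\cup B_2$. Since $\widetilde{K}_0$ is polynomial-convex, $\overline{\mathbb{C}}\setminus\widetilde{K}_0$ is connected, so $\partial V$ cannot be contained in $\widetilde{K}_0$ (otherwise $V$ would be a bounded component of $\overline{\mathbb{C}}\setminus\widetilde{K}_0$); hence some $p\in\partial V\cap B_2\subset B_-$. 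But $B_-$ is open, so a neighbourhood of $p$ lies in $B_-$ and yet meets $V\subset B_+$, which is impossible. Thus $D_3\cup B_1$, and symmetrically $D_3\cup B_2$, is a domain, and (iii) follows.

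Finally, for assertion (iv): substituting $K_{j0}=B_j\cap K_j$ into $K_0=\widetilde{K}_0\cup K_{10}\cup K_{20}$ and subtracting this from the partition $\overline{\mathbb{C}}=D_3\sqcup\widetilde{K}_0\sqcup B_1\sqcup B_2$ gives exactly $D_0=\overline{\mathbb{C}}\setminus K_0=D_3\cup(B_1\setminus K_1)\cup(B_2\setminus K_2)$, which is (\ref{f93d5}); rewriting this as $[D_3\cup(B_1\setminus K_1)]\cup[D_3\cup(B_2\setminus K_2)]$ exhibits $D_0$ as the union of two domains (by (iii)) sharing the nonempty subdomain $D_3\ni\infty$, hence $D_0$ is a domain. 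Apart from that, everything is careful set-bookkeeping; the reason the connectedness argument in (iii) goes through so cleanly is precisely that $K_3$ and $\widetilde{K}_0$ were built as polynomial-convex hulls in Definition \ref{d93a}.
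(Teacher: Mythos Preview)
Your proof is correct and follows essentially the same route as the paper: assertion (i) via Lemma~\ref{l93a}, assertion (ii) via the identification $K_{j0}=B_j\cap K_j\subset Ir(K_j)$, assertion (iii) by first showing $D_3\cup B_j$ is a domain and then removing the capacity-zero set $K_{j0}$ via Lemma~\ref{l111e}, and assertion (iv) by the set decomposition combined with (iii). Your contradiction argument for the connectedness of $D_3\cup B_j$ in (iii) is in fact more explicit than the paper's one-line claim (``which implies that $D_3\cup B_j$ is a domain''), but it unpacks exactly the same ingredients---polynomial convexity of $\widetilde K_0$ and openness of $B_\pm$---that the paper is relying on.
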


\begin{proof}
Assertion (i) is an immediate consequence of Lemma \ref{l93a}.

In order to prove assertion (ii), we observe that it follows from the
definition of $K_{10}$ in (\ref{f93b4}) together with (\ref{f93c2}) and
(\ref{f93d1}) that $K_{10}=B_{1}\cap K_{1}$, which implies (\ref{f93d2}) for
$j=1$.

From the introduction of $K_{10}$ in (\ref{f93b4}) and the definition of
irregular points in the Definitions \ref{d113a} and \ref{d113a2} in Subsection
\ref{s1103}, further below, it further follows that $K_{10}\subset Ir(K_{1})$,
and the first part of (\ref{f93d3}) therefore is a consequence of Lemma
\ref{l113a1} in Subsection \ref{s1103}. Assertion (ii) follows analogously for
$j=2$.

Since $\widetilde{K}_{0}$ is polynomial-convex by definition, each component
of $K_{3}\setminus\widetilde{K}_{0}$ is a subset of a component of
$\overline{\mathbb{C}}\setminus S_{0}$, which implies that $D_{3}\cup B_{j}$
is a domain for each $j=1,2$. Assertion (iii) then follows immediately from
the second part of Lemma \ref{l111e} in Subsection \ref{s1101}, further below,
together with (\ref{f93d2}) and (\ref{f93d3}).

For a proof of assertion (iv), we observe that
\begin{align}
D_{0}  & =\overline{\mathbb{C}}\setminus(\widetilde{K}_{0}\cup K_{10}\cup
K_{20})=(\overline{\mathbb{C}}\setminus K_{3})\cup(K_{3}\setminus\widetilde
{K}_{0})\setminus(K_{10}\cup K_{20})\nonumber\\
& =D_{3}\cup(B_{1}\setminus K_{1})\cup(B_{2}\setminus K_{2}).\label{f93d6}%
\end{align}
Indeed, the identities follow from the defining relations of the sets in
Definition \ref{d93a} together with (\ref{f93d2}). From (\ref{f93d6}) and
assertion (iii) of the present lemma, it follows that $D_{0}$ is a
domain.\medskip
\end{proof}

The main result in Lemma \ref{l93b} is the assertion that the set $D_{0}$ is a domain.

\begin{lemma}
\label{l93c}The domain $D_{0}$ introduced in (\ref{f93b7}) of Definition
\ref{d93a} is admissible for Problem $(f,\infty)$, i.e., we have $D_{0}%
\in\mathcal{D}(f,\infty)$ with $\mathcal{D}(f,\infty)$ introduced in
Definition \ref{d21a}.
\end{lemma}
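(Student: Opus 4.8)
The plan is to verify directly the two conditions of admissibility in Definition \ref{d21a} for the domain $D_{0}$: that $\infty\in D_{0}$ and that $f$ admits a single-valued meromorphic continuation throughout $D_{0}$. The first is immediate: $K_{3}=\widehat{K_{1}\cup K_{2}}$ is the polynomial-convex hull of the bounded set $K_{1}\cup K_{2}$, hence bounded, so $\infty\in D_{3}:=\overline{\mathbb{C}}\setminus K_{3}$, and from the decomposition $D_{0}=D_{3}\cup(B_{1}\setminus K_{1})\cup(B_{2}\setminus K_{2})$ established in the last assertion of Lemma \ref{l93b} we get $\infty\in D_{0}$. For the second condition I would work from the same decomposition. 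Put $\Omega_{j}:=D_{3}\cup(B_{j}\setminus K_{j})$ for $j=1,2$; by Lemma \ref{l93b} each $\Omega_{j}$ is a domain, it contains $\infty$, and $\Omega_{1}\cup\Omega_{2}=D_{0}$.

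Next I would record the inclusions $D_{3}\subset D_{1}\cap D_{2}$ (since $K_{1},K_{2}\subset K_{3}$) and $B_{j}\setminus K_{j}\subset D_{j}$, so that $\Omega_{j}\subset D_{j}$ for $j=1,2$. Because $D_{j}\in\mathcal{D}(f,\infty)$ by (\ref{f93a2}), the function $f$ possesses a single-valued meromorphic continuation $f_{j}$ throughout $D_{j}$, extending the prescribed germ of $f$ at $\infty$. The domain $D_{3}=\overline{\mathbb{C}}\setminus K_{3}$ is connected --- this is exactly where polynomial-convexity of $K_{3}$ enters --- and contains $\infty$, so $f_{1}$ and $f_{2}$ restricted to $D_{3}$ are single-valued meromorphic functions that coincide on a neighbourhood of $\infty$; by the identity theorem $f_{1}\equiv f_{2}$ on $D_{3}$. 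Since $B_{1}\cap B_{2}=\emptyset$ and $B_{j}\subset K_{3}$ (hence $B_{j}\cap D_{3}=\emptyset$), one checks $\Omega_{1}\cap\Omega_{2}=D_{3}$. Therefore the prescription $\widetilde{f}\mid_{\Omega_{1}}:=f_{1}$, $\widetilde{f}\mid_{\Omega_{2}}:=f_{2}$ is consistent, and it produces a single-valued meromorphic continuation of $f$ throughout $D_{0}=\Omega_{1}\cup\Omega_{2}$. By Definition \ref{d21a} this gives $D_{0}\in\mathcal{D}(f,\infty)$. (Equivalently, one could check conditions (i) and (ii) of Proposition \ref{p91a}: (i) is precisely the continuation just constructed, and (ii) then follows since a curve $\gamma\in\Gamma_{1}$ lying in $D_{0}$ would contradict single-valuedness of $\widetilde{f}$.)

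The argument is essentially bookkeeping once Lemma \ref{l93b} is available, so the genuine difficulty has been pushed into the preceding lemmas: the decomposition $D_{0}=D_{3}\cup(B_{1}\setminus K_{1})\cup(B_{2}\setminus K_{2})$ with each $\Omega_{j}$ a domain rests on Lemma \ref{l93a} (every component of $\overline{\mathbb{C}}\setminus S_{0}$ lies entirely in $B_{+}$ or in $B_{-}$), on $\operatorname*{cap}(K_{10})=\operatorname*{cap}(K_{20})=0$, and on $\widetilde{K}_{0}$ being a polynomial-convex hull. Within the present lemma the only point requiring real care is the consistency of the gluing: that the two "halves" $B_{1}\setminus K_{1}$ and $B_{2}\setminus K_{2}$ meet $D_{3}$ and one another only trivially, and that the two continuations $f_{1},f_{2}$ agree on the common part $D_{3}$. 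Both reduce, as indicated, to the connectedness of $D_{3}$ together with the disjointness of $B_{+}$ and $B_{-}$, and I expect no further obstacle.
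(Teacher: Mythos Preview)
Your proof is correct and follows essentially the same route as the paper: both use the decomposition $D_{0}=D_{3}\cup(B_{1}\setminus K_{1})\cup(B_{2}\setminus K_{2})$ from Lemma \ref{l93b}, glue the continuations $f_{1}$ and $f_{2}$ along $D_{3}$, and conclude admissibility. Your write-up is in fact more explicit than the paper's on two points --- the verification that $\Omega_{1}\cap\Omega_{2}=D_{3}$ and the identity-theorem argument for $f_{1}\equiv f_{2}$ on $D_{3}$ --- while the paper invokes Proposition \ref{p91a} and leaves the single-valuedness check (its condition (ii)) to a one-line reference to Lemma \ref{l93b}(i); your parenthetical remark covers exactly that.
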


\begin{proof}
The basis of the proof is Proposition \ref{p91a}.

In a first step we prove that assertion (i) of Proposition \ref{p91a} holds
true. Let $f_{j}$, $j=1,2$, be the two single-valued meromorphic continuations
of the function $f$ in Problem $(f,\infty)$ in the two admissible domains
$D_{j}$, $j=1,2$, in (\ref{f93a2}). In $D_{3}$ both functions are identical;
but beyond this domain, the situation is different since we have assumed that
the two domains $D_{1}$ and $D_{2}$ are not identical; the set $K_{3}$ has
been defined in (\ref{f93b2}). Using the two sets from (\ref{f93d1}), we
defined a new function $f_{0}$ as
\begin{equation}
f_{0}(z):=\left\{
\begin{array}
[c]{lcc}%
f_{1}(z)=f_{2}(z) & \text{ \ for \ }\smallskip & z\in D_{3},\\
f_{1}(z) & \text{ \ for \ }\smallskip & z\in B_{1}\setminus K_{1},\\
f_{2}(z) & \text{ \ for \ } & z\in B_{2}\setminus K_{2}.
\end{array}
\right. \label{f93e1}%
\end{equation}
It follows from the assertions (iii) and (iv) of Lemma \ref{l93b} that $f_{0}
$ is a meromorphic continuation of the function $f$ into $D_{0}$, which shows
that assertion (i) of Proposition \ref{p91a} holds true.

Assertion (ii) in Proposition \ref{p91a} is a direct consequence of assertions
(i) in Lemma \ref{l93b}. Hence, it follows from Proposition \ref{p91a} that
$D_{0}\in\mathcal{D}(f,\infty)$.\medskip
\end{proof}

We come now to the main part of the analysis in the present subsection, the
proof of the inequality $\operatorname*{cap}(K_{0})<c_{0}$ for the compact set
$K_{0}=\overline{\mathbb{C}}\setminus D_{0}$ introduced in (\ref{f93b6}) in
Definition \ref{d93a}. In this proof two auxiliary functions $h_{0}$ and
$h_{1}$ will play an important role; they are introduced in the next
definition.\smallskip

\begin{definition}
\label{d93b}With the sets introduced in Definition \ref{d93a} and the notation
$g_{1}$ and $g_{2}$ for the Green functions (\ref{f93a4}), we define two
functions $h_{0}$ and $h_{1}$ by
\begin{equation}
h_{0}(z):=\left\{
\begin{array}
[c]{lcl}%
\frac{1}{2}(g_{1}(z)+g_{2}(z))\smallskip & \text{ \ for \ }\smallskip &
z\in\overline{\mathbb{C}}\setminus K_{3},\\
\frac{1}{2}\left|  g_{1}(z)-g_{2}(z)\right|  \smallskip & \text{ \ for
\ }\smallskip & z\in K_{3}\setminus\operatorname*{Int}(\widetilde{K}_{0}),\\
0 & \text{ \ for \ } & z\in\operatorname*{Int}(\widetilde{K}_{0}),
\end{array}
\right. \label{f93f1a}%
\end{equation}
\begin{equation}
h_{1}(z):=\left\{
\begin{array}
[c]{lcl}%
\frac{1}{2}\left|  g_{1}(z)-g_{2}(z)\right|  \smallskip & \text{ \ for
\ }\smallskip & z\in\overline{\mathbb{C}}\setminus K_{3},\\
\frac{1}{2}(g_{1}(z)+g_{2}(z))\smallskip & \text{ \ for \ }\smallskip & z\in
K_{3}\setminus\operatorname*{Int}(\widetilde{K}_{0}),\\
\frac{1}{2}(\widehat{g_{1}(z)+g_{2}(z)}) & \text{ \ for \ } & z\in
\operatorname*{Int}(\widetilde{K}_{0}).
\end{array}
\right. \label{f93f1b}%
\end{equation}
In (\ref{f93f1b}), $\widehat{g_{1}+g_{2}}$ is the solution of the Dirichlet
problem in each component $C$ of the interior $\operatorname*{Int}%
(\widetilde{K}_{0})$ of $\widetilde{K}_{0}$ with $(g_{1}+g_{2})|_{\partial
\widetilde{K}_{0}}$ as boundary function.$\smallskip$
\end{definition}

In the next lemma a number of technical details are proved; they will be
needed in the subsequent lemma.$\smallskip$

\begin{lemma}
\label{l93d}If the assumptions (\ref{f93a1}) and (\ref{f93a3}) are satisfied,
then with the notations from the Definitions \ref{d93a}, \ref{d93b}, and Lemma
\ref{l93b}, the following assertions hold true:

(i) \ \ There exists a signed measure $\sigma_{0}$ of finite energy with
\begin{equation}
\operatorname*{supp}(\sigma_{0})\subset(K_{1}\cup K_{2})\setminus
\operatorname*{Int}(\widetilde{K}_{0})\label{f93f2a}%
\end{equation}
such that the function $h_{0}$ from (\ref{f93f1b}) has the representation
\begin{equation}
h_{0}=g_{0}(\cdot,\infty)+\int g_{0}(\cdot,v)d\sigma_{0}(v),\label{f93f2b}%
\end{equation}
where $g_{0}(\cdot,\cdot)$ is the Green function in the domain $D_{0}$. The
measure $\sigma_{0}$ is carried by the set
\begin{equation}
\Sigma_{0}:=(K_{1}\cup K_{2})\setminus\widetilde{K}_{0},\label{f93f2d}%
\end{equation}
and we have
\begin{equation}
\sigma_{0}\neq0.\label{f93f2c}%
\end{equation}

(ii) \ There exists a signed measure $\sigma_{1}$ of finite energy with
\begin{equation}
\operatorname*{supp}(\sigma_{1})\subset S_{0}\cup K_{3}\label{f93f3a}%
\end{equation}
such that the function $h_{1}$ from (\ref{f93f1b})\ has the representation
\begin{equation}
h_{1}=p(\sigma_{1};\cdot),\label{f93f3b}%
\end{equation}
where $p(\sigma_{1};\cdot)$ denotes the logarithmic potential of the measure
$\sigma_{1}$ as introduced in (\ref{f112a1}) in Subsection \ref{s1102},
further below. We have
\begin{equation}
\sigma_{1}(\mathbb{C})=0.\label{f93f3c}%
\end{equation}

(iii) With the notation (\ref{f93f2d}), we have
\begin{align}
& h_{0}(z)=h_{1}(z)\text{ \ \ for quasi every}\smallskip\text{ \ }z\in
\Sigma_{0},\label{f93f4a}\\
& h_{1}(z)=0\text{ \ \ \ \ \ \ \ for quasi every}\smallskip\text{ \ }z\in
S_{0}\setminus\operatorname*{Int}(K_{3}),\label{f93f4b}\\
& \sigma_{0}=-\sigma_{1}|_{K_{3}\setminus\widetilde{K}_{0}},\smallskip
\label{f93f4c}\\
& \sigma_{1}|_{\operatorname*{Int}(K_{3})}\leq0.\label{f93f4d}%
\end{align}

\end{lemma}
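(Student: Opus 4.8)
The plan is to read off all four assertions from Riesz‑type decompositions of $h_0$ and $h_1$, using that both functions are assembled from the Green functions $g_1,g_2$, their extensions by zero across $K_1,K_2$, and a Dirichlet solution inside $\widetilde K_0$. Throughout I would use the representation $g_j=g_{D_j}(\cdot,\infty)=-p(\omega_j;\cdot)-\log c_0$ of Lemma \ref{l113b}, with $\omega_j$ the equilibrium measure of $K_j$, which has finite energy since $c_0>0$ (Lemma \ref{l112a}); write $\bar g_j$ for the extension of $g_j$ by $0$ onto $K_j$, so that $\bar g_j$ is subharmonic on $\mathbb{C}$, vanishes quasi everywhere on $K_j$, and $-\tfrac1{2\pi}\Delta\bar g_j=-\omega_j$.

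\emph{Step 1 (interface matchings).} First I would verify that the piecewise formulas for $h_0$ and $h_1$ agree quasi everywhere across $\partial K_3$ and $\partial\widetilde K_0$. Since $\partial K_3\subset\partial(K_1\cup K_2)\subset K_1\cup K_2$, at quasi every point of $\partial K_3$ one of $g_1,g_2$ vanishes, whence $\tfrac12|g_1-g_2|=\tfrac12(\bar g_1+\bar g_2)$ there; and since $\partial\widetilde K_0\subset S_0\cap K_3$, at quasi every point of $\partial\widetilde K_0$ one has $g_1=g_2$, and being in $K_1\cup K_2$ (up to polar sets and the filled components, handled as below) both Green functions vanish, so the Dirichlet data $\tfrac12(g_1+g_2)|_{\partial\widetilde K_0}$ glue continuously. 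With this, $h_0$ is harmonic on $D_0\setminus(K_1\cup K_2)$ (it is $\tfrac12(g_1+g_2)$ on $D_3$ and $\pm\tfrac12(g_1-g_2)$ on the pieces $B_1\setminus K_1$, $B_2\setminus K_2$ of Lemma \ref{l93b}), and $h_1$ is obtained from the subharmonic functions $\tfrac12(\bar g_1+\bar g_2)$ and $\tfrac12|g_1-g_2|$ by restriction and by replacing the value on the open set $\operatorname*{Int}(\widetilde K_0)$ by the solution of the Dirichlet problem (a lifting, which preserves subharmonicity), so that both functions are differences of super‑ and sub‑harmonic functions off a polar set, with signed Riesz measures.

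\emph{Step 2 (parts (ii) and (i)).} For (ii): $g_1-g_2\to0$ at $\infty$ (both $g_j\sim\log|z|-\log c_0$), so $h_1$ has no logarithmic pole; its Riesz measure $\sigma_1:=-\tfrac1{2\pi}\Delta h_1$ is a signed finite measure of finite energy (a finite combination of $\omega_1,\omega_2$, their balayages onto $\partial K_3$, and the harmonic‑measure distribution on $\partial\widetilde K_0$), with $\sigma_1(\mathbb{C})=0$ and $h_1=p(\sigma_1;\cdot)$; since $h_1$ is harmonic on $\overline{\mathbb{C}}\setminus(S_0\cup K_3)$ and on $\operatorname*{Int}(\widetilde K_0)$, $\operatorname*{supp}(\sigma_1)\subset S_0\cup K_3$; and on $\operatorname*{Int}(K_3)$ the function $h_1$ equals $\tfrac12(\bar g_1+\bar g_2)$ (Riesz measure $-\tfrac12(\omega_1+\omega_2)\le0$) off $\operatorname*{Int}(\widetilde K_0)$, is harmonic inside, and the lifting only adds nonpositive mass on $\partial\widetilde K_0$, giving $\sigma_1|_{\operatorname*{Int}(K_3)}\le0$. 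For (i): near $\infty$, $h_0(z)=\log|z|-\log c_0+o(1)$, so $h_0-g_0(\cdot,\infty)$ is bounded there, and $h_0=0$ quasi everywhere on $\partial D_0\subset K_0$ (on $\partial\widetilde K_0\subset S_0$ one has $\tfrac12|g_1-g_2|=0$ q.e., and $K_{10}\cup K_{20}$ is polar by Lemma \ref{l93b}(ii)); applying the Riesz decomposition on $D_0$ (whose boundary is nonpolar, $\operatorname*{cap}(K_0)=c_0>0$) and letting $\sigma_0:=-\tfrac1{2\pi}\Delta h_0|_{D_0}$, the harmonic remainder is forced to vanish by its boundary values, giving (\ref{f93f2b}); $\sigma_0$ has finite energy as before, and since $h_0$ is harmonic on $D_0\setminus(K_1\cup K_2)$ while $D_0$ misses $\widetilde K_0$, $\operatorname*{supp}(\sigma_0)\subset(K_1\cup K_2)\setminus\operatorname*{Int}(\widetilde K_0)$, upgraded to ``carried by $\Sigma_0$'' because $h_0=0$ q.e.\ on $\partial\widetilde K_0$ (Theorem \ref{t112b}); finally $\sigma_0\ne0$, for $\sigma_0=0$ would give $h_0\equiv g_0(\cdot,\infty)$, and reconciling $h_0=\tfrac12|g_1-g_2|$ on $K_3\setminus\operatorname*{Int}(\widetilde K_0)$ with $g_0(\cdot,\infty)=0$ q.e.\ on $K_0$ and with (\ref{f93a5}) (valid under (\ref{f93a1}) by Lemma \ref{l113c}) forces $g_1\equiv g_2$, a contradiction.

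\emph{Step 3 (part (iii)).} The identities $h_0=h_1$ q.e.\ on $\Sigma_0$ and $h_1=0$ q.e.\ on $S_0\setminus\operatorname*{Int}(K_3)$ follow directly from the defining formulas together with $g_j=0$ q.e.\ on $K_j$ and $g_1=g_2$ q.e.\ on $S_0$, exactly as in Step 1. For $\sigma_0=-\sigma_1|_{K_3\setminus\widetilde K_0}$: on $(K_3\setminus\widetilde K_0)\cap D_0$ one has $h_0=\pm\tfrac12(g_1-g_2)$ and $h_1=\tfrac12(\bar g_1+\bar g_2)$, so $h_0+h_1$ equals $\bar g_j$ on the relevant piece (which lies off $K_j$) and is therefore harmonic there, whence $\sigma_0+\sigma_1=0$ on $(K_3\setminus\widetilde K_0)\cap D_0$; comparing with the supports established in (i) and (ii) (and discarding polar sets) yields the stated equality.

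The main obstacle is not any single assertion but the bookkeeping at the boundaries of the polynomial‑convex hulls $K_3$ and $\widetilde K_0$: one must show that the quasi‑everywhere matchings are strong enough to make $h_0,h_1$ genuinely (not merely almost everywhere) differences of super‑ and sub‑harmonic functions and to pin down the precise mass of $\sigma_0,\sigma_1$ on $\partial K_3$ and $\partial\widetilde K_0$, including the ``filled‑in'' components of $K_3$ and $\widetilde K_0$ on which neither $g_1$ nor $g_2$ need vanish. The remedy is that $\omega_1,\omega_2$ put no mass on such filled components, that the irregular points of $K_1,K_2$ are polar (Lemmas \ref{l113a1}, \ref{l113a2}), that modifications on polar sets leave potentials unchanged (Subsection \ref{s1101}), and that on each filled component of $\widetilde K_0$ lying in $D_1\cap D_2$ one has $g_1=g_2$ by the maximum principle; modulo this, everything reduces to the identity $|a-b|+(a+b)=2\max(a,b)$ for $a,b\ge0$ and to Green's formula.
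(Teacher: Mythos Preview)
Your overall strategy coincides with the paper's: represent $h_0$ and $h_1$ as (signed) logarithmic potentials by Riesz decomposition, handle the two interfaces $\partial K_3$ and $\partial\widetilde K_0$ by matching/balayage, and exploit the identity $h_0+h_1=\max(g_1,g_2)$ on $\overline{\mathbb C}\setminus\widetilde K_0$ to relate $\sigma_0$ and $\sigma_1$. The paper simply makes the machinery explicit (the auxiliary functions $h_2,\ldots,h_5$, balayage out of $\operatorname{Int}(\widetilde K_0)$, and the pasting Lemma~\ref{l112e}); your ``direct Riesz'' route is the same argument in different clothing.

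There is, however, a genuine gap in your proof of $\sigma_0\neq0$. From $\sigma_0=0$ you conclude $h_0\equiv g_0(\cdot,\infty)$ and then claim that comparing $h_0=\tfrac12|g_1-g_2|$ on $K_3\setminus\operatorname{Int}(\widetilde K_0)$ with $g_0(\cdot,\infty)=0$ q.e.\ on $K_0$ forces $g_1\equiv g_2$. But the overlap of these two regions is only $\partial\widetilde K_0\cup K_{10}\cup K_{20}$, i.e.\ a subset of $S_0$ together with a polar set, on which $|g_1-g_2|=0$ is already known; no new information and hence no contradiction follows. The paper's argument is different and uses the objects you have already built: from $\sigma_0=0$ and your Step~3 relation one gets $\sigma_1|_{\mathbb C\setminus S_0}=0$; combined with $\sigma_1|_{\operatorname{Int}(K_3)}\le0$ this makes $h_1=p(\sigma_1;\cdot)$ subharmonic on $(\mathbb C\setminus S_0)\cup\operatorname{Int}(K_3)$, and since $h_1=0$ q.e.\ on $S_0\setminus\operatorname{Int}(K_3)$ and $\sigma_1$ has finite energy, the maximum principle yields $h_1\le0$, hence $h_1\equiv0$, contradicting~(\ref{f93a5}). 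You should replace your last sentence in Step~2 by this argument.

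A smaller point: your upgrade from ``$\operatorname{supp}(\sigma_0)\subset(K_1\cup K_2)\setminus\operatorname{Int}(\widetilde K_0)$'' to ``carried by $\Sigma_0$'' via Theorem~\ref{t112b} is opaque. The clean way (as in the paper) is to first obtain a measure $\widetilde\sigma_0$ for $h_0$ and then \emph{define} $\sigma_0:=\widetilde\sigma_0|_{K_3\setminus\widetilde K_0}$; the Green potential in~(\ref{f93f2b}) is unaffected since $g_0(\cdot,v)\equiv0$ for $v\in\widetilde K_0$.
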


\begin{proof}
In the first part of the proof we show that the two functions $h_{0}$ and
$h_{1}$ introduced in (\ref{f93f1a}) and (\ref{f93f1b}) can be represented by
potentials with measures of finite energy. This is done by an investigation of
a sequence of auxiliary functions.

By $h_{2}$ we denote the function
\begin{equation}
h_{2}:=\frac{1}{2}(g_{1}+g_{2})=r_{2}+p(\sigma_{2};\cdot),\label{f93f5a}%
\end{equation}
where the last equality is a consequence of (\ref{f93a4}) together with
representation (\ref{f113b1}) for Green functions in Lemma \ref{l113b} in
Subsection \ref{s1103}, further below. It follows from (\ref{f93a3}) and
(\ref{f93a4}) together with Lemma \ref{l113b} that we have
\begin{equation}
r_{2}=-\log(c_{0})\text{ \ \ and \ \ }\sigma_{2}=-\frac{1}{2}(\omega
_{1}+\omega_{2})\leq0\label{f93f5b}%
\end{equation}
with $\omega_{j}$ the equilibrium distribution on $K_{j}$, $j=1,2$.

From Lemma \ref{l112d} together with Lemma \ref{l113b} in the two Subsections
\ref{s1102} and \ref{s1103}, further below, we know that the function
$\frac{1}{2}\left\vert g_{1}-g_{2}\right\vert $ can also be represented as a
potential. We denote this function by $h_{3}$; and we have
\begin{equation}
h_{3}:=\frac{1}{2}\left\vert g_{1}-g_{2}\right\vert =p(\sigma_{3}%
;\cdot)\label{f93f6a}%
\end{equation}
with $\sigma_{3}$ a signed measure of finite energy and
\begin{equation}
\operatorname*{supp}(\sigma_{3})\subset S_{0}\cup K_{1}\cup K_{2}%
.\label{f93f6b}%
\end{equation}
In (\ref{f93f6a}), there is no constant\ because of (\ref{f93a3}).

From Lemma \ref{l113a1} in Subsection \ref{s1103}, further below, we know that
the two Green functions $g_{1}$ and $g_{2}$, and consequently also the two
functions $h_{2}$ and $h_{3}$ are continuous quasi everywhere in $\mathbb{C}$.
Hence, it follows from (\ref{f93b1}) in Definition \ref{d93a}\ and
(\ref{f93f6a}) that
\begin{equation}
h_{3}(z)=0\text{ \ \ for quasi every \ \ }z\in S_{0}.\label{f93f6c}%
\end{equation}
Because of (\ref{f93b2}) in Definition \ref{d93a},\ the two Green functions
$g_{1}$ and $g_{2}$ are harmonic outside of $K_{3}$, and therefore we have
equality for every $z\in S_{0}\setminus K_{3}$ in (\ref{f93f6c}) without any exception.

Next, we use the balayage technique (cf. Definition \ref{d112a} in Section
\ref{s1102}, further below) for sweeping the masses of the two measures
$\sigma_{2}$ and $\sigma_{3}$ out of the open set $\operatorname*{Int}%
(\widetilde{K}_{0})$. The two resulting balayage measures are denoted by
$\sigma_{4}$ and $\sigma_{5}$, respectively. From part (i) of Definition
\ref{d112a} of the balayage applied to the measure $\sigma_{2}$, we get as
consequence that the new function $h_{4}$ is of the form
\begin{equation}
h_{4}:=r_{2}+p(\sigma_{4};\cdot)=\left\{
\begin{array}
[c]{lcl}%
\frac{1}{2}(\widehat{g_{1}+g_{2})} & \text{\ in }\smallskip &
\operatorname*{Int}(\widetilde{K}_{0}),\\
h_{2} & \text{\ quasi everywhere on }\smallskip & \partial\widetilde{K}_{0},\\
h_{2} & \ \text{in } & \overline{\mathbb{C}}\setminus\widetilde{K}_{0}.
\end{array}
\right. \label{f93f7a}%
\end{equation}
In (\ref{f93f7a}) we have used (\ref{f93f5a}). About the measure $\sigma_{4} $
we know that
\begin{align}
& \operatorname*{supp}(\sigma_{4})\subset\operatorname*{supp}(\sigma
_{2})\setminus\operatorname*{Int}(\widetilde{K}_{0})\text{,}\smallskip
\label{f93f7e}\\
& \sigma_{4}|_{\overline{\mathbb{C}}\setminus\widetilde{K}_{0}}=\sigma
_{2}|_{\overline{\mathbb{C}}\setminus\widetilde{K}_{0}}=-\frac{1}{2}%
(\omega_{1}+\omega_{2})|_{\overline{\mathbb{C}}\setminus\widetilde{K}_{0}%
}.\label{f93f7b}%
\end{align}
On the other hand,\ from the balayage of the measure $\sigma_{3}$, it follows
that the new function $h_{5}$ is of the form
\begin{equation}
h_{5}:=p(\sigma_{5};\cdot)=\left\{
\begin{array}
[c]{lcl}%
0\text{ \ \ }\smallskip & \text{\ in }\smallskip & \operatorname*{Int}%
(\widetilde{K}_{0}),\\
0\text{ \ \ }\smallskip & \text{\ quasi everywhere on }\smallskip &
\partial\widetilde{K}_{0},\\
h_{3} & \ \text{in } & \overline{\mathbb{C}}\setminus\widetilde{K}_{0}.
\end{array}
\right. \label{f93f7c}%
\end{equation}
For the derivation\ of (\ref{f93f7c}) identity (\ref{f93f6a}) has been used.
The balayage measure $\sigma_{5}$ satisfies
\begin{align}
& \operatorname*{supp}(\sigma_{5})\subset\operatorname*{supp}(\sigma
_{3})\setminus\operatorname*{Int}(\widetilde{K}_{0}),\smallskip\label{f93f7f}%
\\
& \sigma_{5}|_{\overline{\mathbb{C}}\setminus\widetilde{K}_{0}}=\sigma
_{3}|_{\overline{\mathbb{C}}\setminus\widetilde{K}_{0}}\text{.}\label{f93f7d}%
\end{align}

The function $\frac{1}{2}(\widehat{g_{1}+g_{2})}$ in the first line of
(\ref{f93f7a}) is the solution of the Dirichlet problem in each component of
$\operatorname*{Int}(\widetilde{K}_{0})$ with boundary function $h_{2}%
|_{\partial\widetilde{K}_{0}}$ (cf. (\ref{f112c2}) in Definition \ref{d112a}
in Section \ref{s1102}, further below). Analogously, in the first line of
(\ref{f93f7c}) the function $h_{5}=0$ is the solution of the Dirichlet problem
in each component of $\operatorname*{Int}(\widetilde{K}_{0})$ with boundary
function $h_{3}|_{\partial\widetilde{K}_{0}}$ since from (\ref{f93f6c}) we
know that $h_{3}(z)=0$ for quasi every $z\in\partial\widetilde{K}_{0}%
$.$\medskip$

The functions $h_{4}$ and $h_{5}$ and their associated measures $\sigma_{4}$
and $\sigma_{5}$\ are the building blocks for the presentations by potentials
for the two functions $h_{0}$ and $h_{1}$ introduced in Definition \ref{d93b}.
The proof of existence of such potentials is the main objectives in the
present analysis. In the next step this aim will be achieved by using a method
what pasting potentials together, which is described in Lemma \ref{l112e} of
Subsection \ref{s1102}, further below.$\smallskip$

Because of (\ref{f93b2}) in Definition \ref{d93a} we have
\begin{equation}
\partial K_{3}\subset K_{1}\cup K_{2}\subset K_{3},\label{f93f8a}%
\end{equation}
and consequently
\begin{equation}
h_{4}=h_{5}\text{ \ quasi everywhere on\ \ }\partial K_{3},\label{f93f8g}%
\end{equation}
which together with the representations (\ref{f93f7a}) and (\ref{f93f7c})
shows that the assumptions of Lemma \ref{l112e} in Subsection \ref{s1102} are
satisfied. The domain $D$ in Lemma \ref{l112e} is now $\operatorname*{Int}%
(K_{3})$.

From (\ref{f93f1a}), (\ref{f93f1b}), and the technique described in Lemma
\ref{l112e}, we deduce that there exists two signed measures $\widetilde
{\sigma}_{0}$ and $\sigma_{1}$ of finite energy such that
\begin{align}
& h_{0}(z)=r_{2}+p(\widetilde{\sigma}_{0};z)\text{ \ \ for quasi
every\smallskip\ \ }z\in\mathbb{C},\label{f93f8b}\\
& h_{1}(z)=p(\sigma_{1};z)\text{ \ \ \ \ \ \ \ for quasi every\smallskip
\ \ }z\in\mathbb{C}.\label{f93f8c}%
\end{align}
Thus, in (\ref{f93f8b}) and (\ref{f93f8c}) we have representations by
potentials for the piecewise defined functions $h_{0}$ and $h_{1}$, respectively.

Because of (\ref{f93f8a}) and the properties of the functions $h_{4}$ and
$h_{5}$ in the two sets $\operatorname*{Int}(K_{3})$ and $\overline
{\mathbb{C}}\setminus\operatorname*{Int}(K_{3})$, we have
\begin{align}
& \operatorname*{supp}(\widetilde{\sigma}_{0})\subset(K_{1}\cup K_{2}%
\cup\widetilde{K}_{0})\setminus\operatorname*{Int}(\widetilde{K}%
_{0}),\smallskip\label{f93f8d}\\
& \operatorname*{supp}(\sigma_{1})\subset(S_{0}\setminus\operatorname*{Int}%
(K_{3}))\cup(K_{1}\cup K_{2}\cup\widetilde{K}_{0})\setminus\operatorname*{Int}%
(\widetilde{K}_{0}).\label{f93f8e}%
\end{align}
From (\ref{f112h4}) in Lemma \ref{l112e} in Subsection \ref{s1102} together
with (\ref{f93f1b}), (\ref{f93f5a}), and (\ref{f93f7a}), it further follows
that
\begin{equation}
\sigma_{1}|_{\operatorname*{Int}(K_{3})}=\sigma_{4}|_{\operatorname*{Int}%
(K_{3})}\leq0.\label{f93f8f}%
\end{equation}
The inequality in (\ref{f93f8f}) is a consequence of (\ref{f93f7b}%
).$\smallskip$

In order to prove a relationship between the two measures $\widetilde{\sigma
}_{0}$ and $\sigma_{1}$, we observe that from (\ref{f93f1a}) and
(\ref{f93f1b}) in Definition \ref{d93b} it follows that
\begin{equation}
h_{0}(z)+h_{1}(z)=\max(g_{1}(z),g_{2}(z))\text{ \ \ for all \ \ }z\in
\overline{\mathbb{C}}\setminus\widetilde{K}_{0}.\label{f93f9a}%
\end{equation}
From (\ref{f93f9a}) and Lemma \ref{l93a} we deduce that $h_{0}+h_{1}$ is
harmonic in $\mathbb{C}\setminus S_{0}$. Indeed, on the set $B_{+}$ introduced
in Lemma \ref{l93a}, we have $h_{0}+h_{1}=g_{1}$. Since we know from Lemma
\ref{l93a} that the function $d=g_{1}-g_{2}$ is superharmonic in $B_{+}$, we
deduce that $g_{1}$ is harmonic in $B_{+}$. On the set $B_{-}$ in Lemma
\ref{l93a}, analogous considerations hold true.

From (\ref{f93f1a}) and (\ref{f93f1b}) in Definition \ref{d93b} together with
the two representations (\ref{f93f8b}), (\ref{f93f8c}), and the harmonicity of
$h_{0}+h_{1}$ in $\mathbb{C}\setminus S_{0}$, it follows that
\begin{equation}
\widetilde{\sigma}_{0}|_{\mathbb{C}\setminus S_{0}}=-\sigma_{1}|_{\mathbb{C}%
\setminus S_{0}},\label{f93f9b}%
\end{equation}
which is the relation between $\widetilde{\sigma}_{0}$ and $\sigma_{1}$ we
were looking for.$\smallskip$

From (\ref{f93b6}) in Definition \ref{d93a} and (\ref{f93d3}) in Lemma
\ref{l93b} we know that $\ K_{0}\ $\ and $\widetilde{K}_{0})$ differ only in a
set of capacity zero. Hence, from the defining property (\ref{f113a1}) for
Green functions, which has been stated at the beginning of Subsection
\ref{s1103}, we then conclude that
\begin{equation}
g_{0}(\cdot,v):=g_{\overline{\mathbb{C}}\setminus K_{0}}(\cdot,v)\equiv
g_{\overline{\mathbb{C}}\setminus\widetilde{K}_{0}}(\cdot,v)\text{ \ \ for all
\ }v\in D_{0}.\label{f93f10a}%
\end{equation}

In the next step, we investigate the relation between Green function
$g_{0}=g_{0}(\cdot,\infty)$ and the function $h_{0}$. From (\ref{f93f1a}) in
Definition \ref{d93b} together with (\ref{f93f7a}), (\ref{f93f6c}), and
(\ref{f93b3}), we conclude that
\begin{equation}
h_{0}(z)=0\text{ \ \ for quasi every \ \ }z\in\widetilde{K}_{0}%
.\label{f93f10b}%
\end{equation}

For the function $h_{0}$ we have representation (\ref{f93f8b}). We will now
show that if we sweep the measure $\widetilde{\sigma}_{0}$ out of the domain
$\overline{\mathbb{C}}\setminus\widetilde{K}_{0}$ by balayage, we arrive at
the Green function $g_{0}(\cdot,v)$. Let $\widehat{\sigma}_{0}$ be the
balayage measure on $\partial\widetilde{K}_{0}$ resulting from sweeping
$\widetilde{\sigma}_{0}$ out of $\overline{\mathbb{C}}\setminus\widetilde
{K}_{0}$, then it follows from Definition \ref{d112a}, part (ii), in
Subsection \ref{s1102} together with formula (\ref{f113e2}) in Lemma
\ref{l113e} in Subsection \ref{s1103} that
\begin{align}
& r_{2}+p(\widehat{\sigma}_{0};z)-\int_{\overline{\mathbb{C}}\setminus
\widetilde{K}_{0}}g_{0}(v,\infty)d\widetilde{\sigma}_{0}(v)\nonumber\\
& \text{ \ \ \ \ \ \ \ \ \ \ \ \ \ \ \ \ \ \ \ \ }=r_{2}+p(\widetilde{\sigma
}_{0};z)-\int_{\overline{\mathbb{C}}\setminus\widetilde{K}_{0}}g_{0}%
(z,v)d\widetilde{\sigma}_{0}(v)\label{f93f10c}\\
& \text{ \ \ \ \ \ \ \ \ \ \ \ \ \ \ \ \ \ \ \ \ }=h_{0}(z)-\int
_{\overline{\mathbb{C}}\setminus\widetilde{K}_{0}}g_{0}(z,v)d\widetilde
{\sigma}_{0}(v)=0\nonumber
\end{align}
for quasi every $z\in\widetilde{K}_{0}$. In (\ref{f93f10c}) the last equality
follows from (\ref{f93f10b} and the fact that $g_{0}(\cdot,v)=0$ quasi
everywhere on $\widetilde{K}_{0}$ for all $v\in\overline{\mathbb{C}}%
\setminus\widetilde{K}_{0}$.

From (\ref{f93f10c}) we deduce that
\begin{equation}
g_{0}(\cdot,\infty)=h_{0}-\int_{\overline{\mathbb{C}}\setminus\widetilde
{K}_{0}}g_{0}(\cdot,v)d\widetilde{\sigma}_{0}(v).\label{f93f10d}%
\end{equation}
Indeed, since $\operatorname*{supp}(\widehat{\sigma}_{0})\subset\widetilde
{K}_{0} $, the right-hand side of (\ref{f93f10d}) is harmonic in
$\mathbb{C}\setminus\widetilde{K}_{0}$, has an appropriate behavior at
infinity, and is equal to zero quasi everywhere on $\widetilde{K}_{0}$. Hence,
identity (\ref{f93f10d}) holds true since the right-hand side of
(\ref{f93f10d}) satisfies the defining property (\ref{f113a1}) in Subsection
\ref{s1103} for the Green function $g_{0}(\cdot,\infty)=g_{D_{0}}(\cdot
,\infty)$.$\smallskip$

After this somewhat lengthy preparations we are ready to verify the individual
statements of the lemma. We define
\begin{equation}
\sigma_{0}:=\widetilde{\sigma}_{0}|_{K_{3}\setminus\widetilde{K}_{0}%
}=\widetilde{\sigma}_{0}|_{\mathbb{C}\setminus S_{0}}=-\sigma_{1}%
|_{\mathbb{C}\setminus S_{0}}.\label{f93f10e}%
\end{equation}
The second equality in (\ref{f93f10e}) is a consequence of (\ref{f93f8d}), and
the last one is identical with (\ref{f93f9b}).$\smallskip$

Representation (\ref{f93f2b}) in the lemma follows directly from
(\ref{f93f10d}) and the introduction of the measure $\sigma_{0}$ in
(\ref{f93f10e}). That the set $\Sigma_{0}$ in (\ref{f93f2d}) is a carrier of
the measure $\sigma_{0}$ is a consequence of (\ref{f93f10e}) and
(\ref{f93f8d}).$\smallskip$

Assertion (ii) is proved by (\ref{f93f8c}) and (\ref{f93f8e}).$\smallskip$

Identity (\ref{f93f4a}) follows directly from (\ref{f93f1a}) and
(\ref{f93f1b}) in Definition \ref{d93b} and the fact that for the Green
functions we have $g_{j}=0$ quasi everywhere on $K_{j}$, $j=1,2$, (cf. the
defining property (\ref{f113a1}) in Subsection \ref{s1103}, further
below).$\smallskip$

Identity (\ref{f93f4b}) is a consequence of (\ref{f93f6c}) and the fact that
$h_{1}=h_{3}=\frac{1}{2}\left|  g_{1}-g_{2}\right|  $ quasi everywhere on
$\overline{\mathbb{C}}\setminus\operatorname*{Int}(K_{3})$.$\smallskip$

Identity (\ref{f93f4c}) follows from (\ref{f93f10e}) and (\ref{f93f8d}), and
at last identity (\ref{f93f4d}) is practically identical with (\ref{f93f8f}%
).$\smallskip$

Thus, only inequality (\ref{f93f2c}) remains to be verified, and this will be
done indirectly. Let us assume that $\sigma_{0}=0$. From (\ref{f93f10e}), we
then know that $\sigma_{1}|_{\mathbb{C}\setminus S_{0}}=0$. It then is a
consequence of (\ref{f93f8f}) that the potential $h_{1}=p(\sigma_{1};\cdot) $
is subharmonic in the domain $(\mathbb{C}\setminus S_{0})\cup
\operatorname*{Int}(K_{3})$. From (\ref{f93f1b}), (\ref{f93f6c}), and
(\ref{f93f7c}) we know that
\begin{equation}
p(\sigma_{1};z)=0\text{ \ \ \ for quasi every \ \ }z\in S_{0}\setminus
\operatorname*{Int}(K_{3}).\label{f93f11a}%
\end{equation}
Since $\sigma_{1}$ is of finite energy, it follows from (\ref{f93f11a}) and
the subharmonicity of $h_{1}=p(\sigma_{1};\cdot)$ in $(\mathbb{C}\setminus
S_{0})\cup\operatorname*{Int}(K_{3})$ that $h_{1}(z)\leq0$ for all
$z\in\mathbb{C}$. But the function $h_{1}$ is non-negative by definition, and
so we have shown that $h_{1}\equiv0$. But the last identity contradicts the
assumption (\ref{f93a5}), and therefore assertion (\ref{f93f2c}) is
proved.\medskip
\end{proof}

With the proof of Lemma \ref{l93d} all preparations are done for beginning the
last step in the indirect proof of Proposition \ref{p93a} which is the next
lemma.\smallskip

\begin{lemma}
\label{l93e}Under the assumptions (\ref{f93a1}) and (\ref{f93a3}), we have
\begin{equation}
\operatorname*{cap}(K_{0})<c_{0}\label{f93g}%
\end{equation}
with $K_{0}$ the compact set introduced in (\ref{f93b6}) of Definition
\ref{d93a}, and $c_{0}$\ the constant introduced in (\ref{f93a3}).
\end{lemma}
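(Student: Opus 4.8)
The plan is to compute, under the indirect hypotheses (\ref{f93a1})--(\ref{f93a3}), the Robin constant of $K_{0}$ and show that it strictly exceeds $-\log c_{0}$. If $\operatorname*{cap}(K_{0})=0$ there is nothing to prove since $c_{0}>0$ by (\ref{f92b}), so assume $\operatorname*{cap}(K_{0})>0$ and write $g_{0}(z,\infty)=\log|z|+\gamma_{0}+\mathrm{o}(1)$ as $z\rightarrow\infty$, so that $\gamma_{0}=-\log\operatorname*{cap}(K_{0})$. In the representation (\ref{f93f2b}) the signed measure $\sigma_{0}$ has compact support and finite energy (it is carried by $(K_{1}\cup K_{2})\setminus\operatorname*{Int}(\widetilde{K}_{0})$ by (\ref{f93f2a})), so letting $z\rightarrow\infty$ in (\ref{f93f2b}) and using dominated convergence together with the symmetry $g_{0}(z,v)=g_{0}(v,z)$ gives $\int g_{0}(z,v)\,d\sigma_{0}(v)\rightarrow I$, where $I:=\int g_{0}(v,\infty)\,d\sigma_{0}(v)$. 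On the other hand $h_{0}=\tfrac{1}{2}(g_{1}+g_{2})$ on the neighbourhood $\overline{\mathbb{C}}\setminus K_{3}$ of $\infty$ by (\ref{f93f1a}), and both $g_{1}$ and $g_{2}$ have Robin constant $-\log c_{0}$ by (\ref{f93a3}); hence (\ref{f93f2b}) forces $-\log c_{0}=\gamma_{0}+I$, i.e.\ $\operatorname*{cap}(K_{0})=c_{0}\,e^{I}$. Thus (\ref{f93g}) is equivalent to the single inequality $I<0$, and the whole proof reduces to establishing this.

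To prove $I<0$ I would integrate the rearranged identity $g_{0}(z,\infty)=h_{0}(z)-\int g_{0}(z,v)\,d\sigma_{0}(v)$ against $d\sigma_{0}(z)$, obtaining
\[
I=\int h_{0}\,d\sigma_{0}-\iint g_{0}(z,v)\,d\sigma_{0}(v)\,d\sigma_{0}(z).
\]
The double integral is the Green energy of $\sigma_{0}$ in $D_{0}$; since the Green kernel is positive definite on measures of finite energy and $\sigma_{0}\neq0$ by (\ref{f93f2c}), this term is strictly positive. So everything comes down to showing $\int h_{0}\,d\sigma_{0}\leq0$.

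For this I would use the relations of Lemma \ref{l93d}. Since $h_{0}=h_{1}$ quasi everywhere on $\operatorname*{supp}(\sigma_{0})\subset\Sigma_{0}$ by (\ref{f93f4a}) and $\sigma_{0}$, being of finite energy, does not charge the exceptional polar set, $\int h_{0}\,d\sigma_{0}=\int h_{1}\,d\sigma_{0}$. Writing $\sigma_{1}=\tau-\sigma_{0}$ with $\tau:=\sigma_{1}|_{S_{0}}$ (using that $\sigma_{1}|_{\mathbb{C}\setminus S_{0}}=-\sigma_{0}$, which follows from (\ref{f93f4c}) and the location of $\operatorname*{supp}(\sigma_{1})$) and using $h_{1}=p(\sigma_{1};\cdot)$ from (\ref{f93f3b}), the logarithmic energy of $\sigma_{1}$ splits as
\[
\iint\log\tfrac{1}{|z-w|}\,d\sigma_{1}(z)\,d\sigma_{1}(w)=\int h_{1}\,d\tau-\int h_{1}\,d\sigma_{0},
\]
and this energy is $\geq0$ because $\sigma_{1}$ has total mass zero by (\ref{f93f3c}) and finite energy; hence $\int h_{1}\,d\sigma_{0}\leq\int h_{1}\,d\tau$. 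It then suffices to check $\int h_{1}\,d\tau=\int_{S_{0}}h_{1}\,d\sigma_{1}\leq0$, which I would do by splitting $S_{0}$ according to membership in $\operatorname*{Int}(K_{3})$: on $S_{0}\setminus\operatorname*{Int}(K_{3})$ one has $h_{1}=0$ quasi everywhere by (\ref{f93f4b}) (and the relevant restriction of $\sigma_{1}$ is of finite energy, so this part contributes $0$), while on $S_{0}\cap\operatorname*{Int}(K_{3})$ one has $\sigma_{1}\leq0$ by (\ref{f93f4d}) and $h_{1}\geq0$ by its definition (\ref{f93f1b}), so this part contributes a non-positive amount. Hence $\int h_{0}\,d\sigma_{0}=\int h_{1}\,d\sigma_{0}\leq\int h_{1}\,d\tau\leq0$, and combining with the first displayed identity gives $I\leq-\iint g_{0}(z,v)\,d\sigma_{0}(v)\,d\sigma_{0}(z)<0$, which is (\ref{f93g}).

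I expect the main obstacle to be the careful book-keeping of the supports of $\sigma_{0}$, $\sigma_{1}$ and $\tau$ relative to $S_{0}$, $K_{3}$ and $\widetilde{K}_{0}$, and in particular making sure that whenever a measure meets a set on which some potential vanishes only quasi everywhere, that measure is of finite energy so that the vanishing is effective. A secondary technical point is the justification of passing to the limit $z\rightarrow\infty$ inside the integral in (\ref{p93a})'s identity (\ref{f93f2b}) and the invocation of the strict positive-definiteness -- yielding the strict sign because $\sigma_{0}\neq0$ -- of both the logarithmic kernel and the Green kernel.
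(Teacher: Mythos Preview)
Your argument is correct and follows essentially the same route as the paper: reduce to showing $I=\int g_{0}(v,\infty)\,d\sigma_{0}(v)<0$, split $I=\int h_{0}\,d\sigma_{0}-\iint g_{0}\,d\sigma_{0}\,d\sigma_{0}$, use strict positive definiteness of the Green kernel together with $\sigma_{0}\neq0$ for the second term, and handle the first via the identification $h_{0}=h_{1}$ on $\Sigma_{0}$, the relation $\sigma_{0}=-\sigma_{1}|_{\mathbb{C}\setminus S_{0}}$, the positive definiteness of the logarithmic energy of the mass-zero measure $\sigma_{1}$, and the sign condition (\ref{f93f4d}) combined with (\ref{f93f4b}). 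The only cosmetic difference is that the paper organises the decomposition of $\int h_{1}\,d\sigma_{1}$ through $\Sigma_{0}$ and $\widetilde{K}_{0}\cap\operatorname{Int}(K_{3})$ rather than through your $\tau=\sigma_{1}|_{S_{0}}$, and it records the slightly stronger statement $\int h_{0}\,d\sigma_{0}<0$ (via strict positivity of the logarithmic energy) in addition to the Green-energy strictness you use.
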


\begin{proof}
From (\ref{f93f1a}) in Definition \ref{d93b} and the assumption made in
(\ref{f93a3}) we know that
\begin{equation}
h_{0}(z)=-\log(c_{0})+\text{O}(z^{-1})\text{ \ \ as \ \ \ }z\rightarrow
\infty.\label{f93g1a}%
\end{equation}
As in Lemma \ref{l93d}, we abbreviate the Green function $g_{D_{0}}%
(\cdot,\cdot)$ by $g_{0}(\cdot,\cdot)$, and the special case $g_{0}%
(\cdot,\infty)$ by $g_{0}$. From the representation of Green functions in
Lemma \ref{l113b} in Subsection \ref{s1103}, further below, we know that
\begin{equation}
g_{0}(z)=-\log(\operatorname*{cap}(K_{0}))+\text{O}(z^{-1})\text{ \ \ as
\ \ \ }z\rightarrow\infty.\label{f93g1b}%
\end{equation}
Hence, we have
\begin{align}
\log\frac{\operatorname*{cap}(K_{0})}{c_{0}}  & =\left.  \left(
h_{0}(z)-g_{0}(z)\right)  \right\vert _{z=\infty}\nonumber\\
& =\int g_{0}(v,\infty)d\sigma_{0}(v),\label{f93g1c}%
\end{align}
where the last equation follows from (\ref{f93f2b}) in Lemma \ref{l93d}.

If we knew that $\sigma_{0}$ were a purely negative measure, then we could get
the desired estimate (\ref{f93g}) very easily from (\ref{f93g1c}). However, we
cannot exclude that the measure $\sigma_{0}$ contains a positive part.
Therefore, we have to go a more complicated way for getting an estimation for
the integral
\begin{equation}
I_{0}:=\int g_{0}(v,\infty)d\sigma_{0}(v).\label{f93g1d}%
\end{equation}
The technical results in Lemma \ref{l93d} will provides the basis for the analysis.

Using in (\ref{f93g1d}) representation (\ref{f93f2b}) from Lemma \ref{l93d}
leads us to the expression
\begin{equation}
I_{0}=\int h_{0}d\sigma_{0}-\int\int g_{0}(v,w)d\sigma_{0}(v)d\sigma
_{0}(w)=:I_{1}-I_{2}.\label{f93g2a}%
\end{equation}

From the positive definiteness of the Green function as a kernel in an energy
formula, which has been stated in Lemma \ref{l113d} in Subsection \ref{s1103},
further below, it follows together with (\ref{f93f2c}) in Lemma \ref{l93d}
that
\begin{equation}
I_{2}>0.\label{f93g2b}%
\end{equation}

In (\ref{f93g2a}), there only remains the integral $I_{1}=\int h_{0}%
d\sigma_{0}$ to be estimated. This will be done after some transformations.
First, we make the following general remark: From Lemma \ref{l93d} we know
that the two measures $\sigma_{0}$ and $\sigma_{1}$ are both of finite energy.
Because of Lemma \ref{l112a} in Subsection \ref{s1102}, further below, we have
$\sigma_{0}(S)=\sigma_{1}(S)=0$ for every measurable set $S\subset\mathbb{C}$
of capacity zero. Consequently, integrals with respect to the measure
$\sigma_{0}$ or $\sigma_{1}$ are equal if their integrands coincide quasi
everywhere on a carrier of $\sigma_{0}$ or $\sigma_{1}$, respectively.

As in (\ref{f93f2d}) in Lemma \ref{l93d}, we denote by $\Sigma_{0}$ the set
$(K_{1}\cup K_{2})\setminus\widetilde{K}_{0}$. Since $\Sigma_{0}$ is a carrier
of $\sigma_{0}$, we have
\begin{align}
I_{1}  & =\int h_{0}d\sigma_{0}\,=\,-\int_{\Sigma_{0}}h_{0}d\sigma
_{1}\label{f93g3a}\\
& =-\int_{\Sigma_{0}}h_{1}d\sigma_{1}\label{f93g3b}\\
& =-\int h_{1}d\sigma_{1}+\int_{\widetilde{K}_{0}\cap\operatorname*{Int}%
(K_{3})}h_{1}d\sigma_{1}.\label{f93g3c}%
\end{align}
Indeed, the second equality in (\ref{f93g3a}) follows from (\ref{f93f2d}) and
(\ref{f93f4c}) in Lemma \ref{l93d}, the equality in (\ref{f93g3b}) is a
consequence of (\ref{f93f4a}) in Lemma \ref{l93d}, and the equality in
(\ref{f93g3c}) follows from (\ref{f93f4b}) in Lemma \ref{l93d} and the fact
that $\Sigma_{0}\cup(\widetilde{K}_{0}\cap\operatorname*{Int}(K_{3}))$ is a
carrier of $\sigma_{1}|_{\operatorname*{Int}(K_{3})}$.

From Lemma \ref{l112b}, part (ii), in Subsection \ref{s1102}, together with
(\ref{f93f4c}) and (\ref{f93f2c}) from Lemma \ref{l93d}, we conclude that
\begin{equation}
\int h_{1}d\sigma_{1}=\int\int\log\frac{1}{|v-w|}d\sigma_{1}(v)d\sigma
_{1}(w)>0,\label{f93g4a}%
\end{equation}
i.e., we have used the positive definiteness of the logarithmic kernel.

Since $h_{1}\geq0$ by definition, it follows from (\ref{f93f4d}) in Lemma
\ref{l93d} that
\begin{equation}
\int_{\widetilde{K}_{0}\cap\operatorname*{Int}(K_{3})}h_{1}d\sigma_{1}%
\leq0.\label{f93g4b}%
\end{equation}

Putting (\ref{f93g1c}), (\ref{f93g2a}), (\ref{f93g2b}), (\ref{f93g3c}),
(\ref{f93g4a}), and (\ref{f93g4b}) together, we conclude that
\begin{equation}
\log\frac{\operatorname*{cap}(K_{0})}{c_{0}}<0,\label{f93g5a}%
\end{equation}
which proves (\ref{f93g}).\medskip
\end{proof}

With the proof of Lemma \ref{l93e}, the preparations of the proof of
Proposition \ref{p93a} are completed. Despite of the complexity of some of the
preparatory lemmas, the basic structure of the approach is straight forward.
It starts with assumption (\ref{f93a2}), i.e., the assumption that there exist
two essentially different admissible domains $D_{1}$\ and $D_{2}$ with
complements $K_{1}$\ and $K_{2}$ of minimal capacity. Based on this
assumption, a new admissible domain $D_{0}$ with a complement $K_{0}$ has been
constructed in Definition \ref{d93a}, and it has then been shown in the last
lemma that $\operatorname*{cap}(K_{0})$\ is smaller than possible.\smallskip

\textbf{Proof of Proposition \ref{p93a}.} The indirect proof of the
proposition has been prepared by assumption (\ref{f93a1}). The introduction of
the two sets $K_{0}$ and $D_{0}=\overline{\mathbb{C}}\setminus K_{0}$ in
(\ref{f93b6}) and (\ref{f93b7}) of Definition \ref{d93a} provide the basis for
the falsification of assumption (\ref{f93a1}).

Indeed, in Lemma \ref{l93c}, it has been shown that for the domain $D_{0}$ is
admissible for Problem $(f,\infty)$, i.e., $D_{0}\in\mathcal{D}(f,\infty)$,
and in Lemma \ref{l93e}, it then is proved that the newly constructed set
$K_{0}$ satisfies the inequality $\operatorname*{cap}(K_{0}%
)<\operatorname*{cap}(K)$ for all $K\in\mathcal{K}_{0}(f,\infty)$, which
contradicts the minimality (\ref{f21a}) in Definition \ref{d21b}. Hence,
assumption (\ref{f93a1}) is falsified, and Proposition \ref{p93a} is proved.
$\blacksquare$\bigskip

The construction of the two sets $K_{0}$ and $D_{0}$ in Definition \ref{d93a}
can be seen as a special case of a general type of set-theoretical convex
combination, which will be elaborated further in Definition \ref{d95a} in
Subsection \ref{s95}, below.\medskip

\subsection{\label{s94}The Unique Existence of an Extremal Domain}

\qquad In the present subsection we prove Theorem \ref{t22a} and the two
Propositions \ref{p22a} and \ref{p22b}, which are all three concerned with the
unique existence of an extremal domain for Problem $(f,\infty)$. With the two
Propositions \ref{p92a} and \ref{p93a} in the last two Subsections
\ref{s92}\ and \ref{s93}, the main work for these proofs has already been
done, we have only to put the different pieces together. We start with a
technical lemma.\smallskip

\begin{lemma}
\label{l94a}The two sets
\begin{align}
K_{0}  & :=\bigcap_{K\in\mathcal{K}_{0}(f,\infty)}K\text{ \ \ \ and}%
\label{f94a1}\\
D_{0}  & :=\bigcup_{D\in\mathcal{D}_{0}(f,\infty)}D\ \label{f94a2}%
\end{align}
are well defined, and we have
\begin{equation}
K_{0}\in\mathcal{K}_{0}(f,\infty)\text{ \ and \ \ }D_{0}\in\mathcal{D}%
_{0}(f,\infty)\label{f94a3}%
\end{equation}
with the two sets $\mathcal{K}_{0}(f,\infty)$ and $\mathcal{D}_{0}(f,\infty
)$\ introduced in Definition \ref{d21b}.
\end{lemma}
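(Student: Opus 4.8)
The plan is to reduce the whole lemma to the single assertion that $D_{0}:=\overline{\mathbb{C}}\setminus K_{0}$ is admissible for Problem $(f,\infty)$. First note that $\mathcal{D}_{0}(f,\infty)\neq\emptyset$ by Proposition \ref{p92a}, hence $\mathcal{K}_{0}(f,\infty)\neq\emptyset$ as well, so the intersection defining $K_{0}$ in (\ref{f94a1}) is a well-defined compact subset of $\overline{\mathbb{C}}$; and since $K\mapsto\overline{\mathbb{C}}\setminus K$ is a bijection of $\mathcal{K}_{0}(f,\infty)$ onto $\mathcal{D}_{0}(f,\infty)$, De Morgan's law gives
\begin{equation}
\bigcup_{D\in\mathcal{D}_{0}(f,\infty)}D=\overline{\mathbb{C}}\setminus\bigcap_{K\in\mathcal{K}_{0}(f,\infty)}K=D_{0},\label{f94b0}
\end{equation}
so the set in (\ref{f94a2}) equals $D_{0}$ and is likewise well defined. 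Because $K_{0}\subset K$ for every $K\in\mathcal{K}_{0}(f,\infty)$, monotonicity of $\operatorname*{cap}(\cdot)$ yields $\operatorname*{cap}(K_{0})\leq c_{0}$, with $c_{0}$ the infimum in (\ref{f92b}); hence once $K_{0}\in\mathcal{K}(f,\infty)$ is established, the definition of $c_{0}$ forces $\operatorname*{cap}(K_{0})=c_{0}$, so $K_{0}\in\mathcal{K}_{0}(f,\infty)$, and then $D_{0}\in\mathcal{D}_{0}(f,\infty)$ directly from the definition of $\mathcal{D}_{0}(f,\infty)$ in Definition \ref{d21b}. This will prove (\ref{f94a3}).

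The crucial step, and the one I expect to be the main obstacle, is a compatibility statement for continuations: \emph{for any two domains $D_{1},D_{2}\in\mathcal{D}_{0}(f,\infty)$, the single-valued meromorphic continuations $f_{D_{1}}$ and $f_{D_{2}}$ of $f$ agree throughout $D_{1}\cap D_{2}$.} To prove this I would set $K_{j}=\overline{\mathbb{C}}\setminus D_{j}$ and $N:=(K_{1}\setminus K_{2})\cup(K_{2}\setminus K_{1})$; Proposition \ref{p93a} gives $\operatorname*{cap}(N)=0$. A short set-theoretic computation shows $D_{1}\cap D_{2}=D_{1}\setminus N$, and $N\cap D_{1}=K_{2}\setminus K_{1}$ is relatively closed in the domain $D_{1}$; since $\operatorname*{cap}(N\cap D_{1})=0$, Lemma \ref{l111e} shows that $D_{1}\cap D_{2}$ is a domain, and it evidently contains $\infty$. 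Restricted to this connected set, $f_{D_{1}}$ and $f_{D_{2}}$ both coincide with $f$ on a neighborhood of $\infty$, hence coincide everywhere on $D_{1}\cap D_{2}$ by the identity principle. The point here is that without the a priori information of Proposition \ref{p93a} the intersection $D_{1}\cap D_{2}$ might have components not containing $\infty$ on which $f_{D_{1}}$ and $f_{D_{2}}$ differ; it is precisely the fact that $\operatorname*{cap}(N)=0$, together with the fact that sets of capacity zero do not disconnect a domain (Lemma \ref{l111e}), that rules this out.

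Granting the compatibility statement, the conclusion is routine. The set $D_{0}$ of (\ref{f94b0}) is open, and, being a union of domains all of which contain $\infty$, it is connected; so $D_{0}$ is a domain with $\infty\in D_{0}$. Define $f_{0}$ on $D_{0}$ by $f_{0}(z):=f_{D}(z)$ whenever $z\in D\in\mathcal{D}_{0}(f,\infty)$; the compatibility statement makes this definition unambiguous, $f_{0}$ is meromorphic near every point of $D_{0}$ because it locally agrees with some $f_{D}$, it is single-valued by construction, and it coincides with $f$ near $\infty$. Hence $D_{0}\in\mathcal{D}(f,\infty)$, i.e.\ $K_{0}\in\mathcal{K}(f,\infty)$, and by the first paragraph $K_{0}\in\mathcal{K}_{0}(f,\infty)$ and $D_{0}\in\mathcal{D}_{0}(f,\infty)$, which is (\ref{f94a3}). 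The remaining condition (iii) of Definition \ref{d21b} for $K_{0}$ is immediate from $K_{0}=\bigcap_{K\in\mathcal{K}_{0}(f,\infty)}K$, so with this lemma the proof of Theorem \ref{t22a} is essentially complete.
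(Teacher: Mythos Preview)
Your argument is correct, and it takes a genuinely different route from the paper's proof. The paper proves $D_{0}\in\mathcal{D}(f,\infty)$ by verifying the two conditions of Proposition~\ref{p91a}: it checks that every Jordan curve $\gamma\in\Gamma_{1}$ meets $K_{0}$, first for the intersection of two sets $K_{1}\cap K_{2}$ via a ring-domain argument (Lemma~\ref{l91b}) combined with $\operatorname*{cap}(K_{1}\setminus K_{2})=0$ from Proposition~\ref{p93a}, then for finite intersections, and finally passes to the full intersection by a compactness argument. You instead glue the continuations $f_{D}$ directly, using Proposition~\ref{p93a} together with Lemma~\ref{l111e} to guarantee that each pairwise intersection $D_{1}\cap D_{2}$ is a \emph{connected} domain containing $\infty$, so that the identity principle forces $f_{D_{1}}\equiv f_{D_{2}}$ there. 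Your approach is shorter and bypasses the Jordan-curve machinery of Subsection~\ref{s91} entirely; the paper's approach, on the other hand, stays within the $\Gamma_{0}/\Gamma_{1}$ framework that is used systematically elsewhere, and makes the role of condition~(ii) in Proposition~\ref{p91a} (and hence the obstruction coming from multivaluedness) more explicit. Both rely essentially on Proposition~\ref{p93a}; without the zero-capacity information on symmetric differences neither argument would go through.
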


\begin{proof}
From Proposition \ref{p92a} we know that $\mathcal{K}_{0}(f,\infty
)\neq\emptyset$, hence, the sets $K_{0}$ and $D_{0}$ in (\ref{f94a1}) and
(\ref{f94a2}) are well defined,\ and $D_{0}$ is a domain with $\infty\in
D_{0}$.

In order to prove (\ref{f94a3}), we have only to show that
\begin{equation}
D_{0}\in\mathcal{D}(f,\infty),\label{f94b1}%
\end{equation}
since if we know that $K_{0}\in\mathcal{K}(f,\infty)$, then the minimality
condition (\ref{f21a}) in Definition \ref{d21b} follows immediately from
(\ref{f94a1}) together with the monotonicity of the capacity (cf. Subsection
\ref{s1101}). Relation (\ref{f94b1}) will be proved with the help of
Proposition \ref{p91a} of Subsection \ref{s91}; for this purpose we have to
show that the two assertions (i) and (ii) in Proposition \ref{p91a} hold true
for the domain $D_{0}$ and the compact set $K_{0}$, respectively.

We start with assertion (i) in Proposition \ref{p91a}. For every $z\in D_{0}$
there exists an admissible domain $D_{1}\in\mathcal{D}_{0}(f,\infty
)\subset\mathcal{D}(f,\infty)$ with $z\in D_{1}$. Since assertion (i) holds
true for $D_{1},$\ it holds true also for the larger domain $D_{0}$.

Next, we prove that also assertion (ii) in Proposition \ref{p91a} holds true
for the set $K_{0}$. Let $\gamma_{0}$ be an arbitrary Jordan curve of
$\Gamma_{1}=\Gamma_{1}(f,\infty)$. From assertion (ii) in Proposition
\ref{p91a} we know that
\begin{equation}
\gamma_{0}\cap K\neq\emptyset\label{f94b2}%
\end{equation}
\ for all $K\in\mathcal{K}_{0}(f,\infty)$. In order to prove that
(\ref{f94b2}) holds true also for the set $K_{0}$, we show in a first step
that (\ref{f94b2}) holds true for the intersection $K_{12}:=K_{1}\cap K_{2}$
of any two sets $K_{1},K_{2}\in\mathcal{K}_{0}(f,\infty)$. Indeed, let us
assume that
\begin{equation}
\gamma_{0}\cap K_{12}=\emptyset.\label{f94b3}%
\end{equation}
Let further $R\subset\overline{\mathbb{C}}\setminus K_{12}$ be a ring domain
as introduced in Lemma \ref{l91b} with $\gamma_{0}\subset R$. From Proposition
\ref{p92a} we know that
\begin{equation}
\operatorname*{cap}(K_{1}\setminus K_{2})=0.\label{f94b4}%
\end{equation}
Hence, the set $K_{1}\setminus K_{2}$ cannot intersect the whole ring $R$, and
consequently there exists a Jordan curve $\gamma_{1}\in\Gamma$ with
\begin{equation}
\gamma_{1}\subset R\setminus(K_{1}\setminus K_{2})=R\setminus K_{1}%
\label{f94b5}%
\end{equation}
that separates the two components $A_{1}$ and $A_{2}$ of $\overline
{\mathbb{C}}\setminus R$. The equality in (\ref{f94b5}) is a consequence of
$K_{1}=K_{12}\cup K_{1}\setminus K_{2}$ and $R\cap K_{12}=\emptyset$.

From Lemma \ref{l91b}, part (ii), we know that $\gamma_{1}\thicksim\gamma_{0}%
$. Hence, from the assumption $\gamma_{0}\in\Gamma_{1}$ we conclude also
$\gamma_{1}\in\Gamma_{1}$. On the other hand, it follows from (\ref{f94b5})
that $\gamma_{1}\cap K_{1}=\emptyset$, which then contradicts assertion (ii)
in Proposition \ref{p91a}, and with this falsification of (\ref{f94b3}) we
have proved that (\ref{f94b2}) holds true for $K_{12}$.

With the same argumentation as that applied to the intersection $K_{12}$ of
two elements from $\mathcal{K}_{0}(f,\infty)$, one can prove that relation
(\ref{f94b2}) holds true also for an intersection of finitely many elements
from $\mathcal{K}_{0}(f,\infty)$, i.e., we can prove that
\begin{equation}
\gamma_{0}\cap(K_{1}\cap\ldots\cap K_{m})\neq\emptyset\label{f94b6}%
\end{equation}
for an arbitrary $m\in\mathbb{N}$ and arbitrarily chosen sets $K_{j}%
\in\mathcal{K}_{0}(f,\infty)$, $j=1,\ldots,m$.

Let us now assume that relation (\ref{f94b2}) does not hold true for the set
$K_{0}$ of (\ref{f94a1}), i.e., we assume
\begin{equation}
\gamma_{0}\cap\bigcap_{K\in\mathcal{K}_{0}(f,\infty)}K=\emptyset.\label{f94b7}%
\end{equation}
An infinite intersection of compact sets can be empty only if already a finite
intersection is empty. However, such a possibility has been excluded in
(\ref{f94b6}). Hence, we have proved that (\ref{f94b2}) holds true for the set
$K_{0}$, and as a consequence, we have shown that assertion (ii) in
Proposition \ref{p91a} holds true for the set $K_{0}$.

Having verified the two assertions (i) and (ii) in Proposition \ref{p91a} for
$D_{0}$ and $K_{0}$, it follows from the proposition that the domain $D_{0}$
is admissible, i.e., (\ref{f94b1}) is proved, and the proof of the lemma is
completed.\medskip
\end{proof}

We now come to the three proofs of the central results from Section
\ref{s2}.\smallskip

\textbf{Proof of Theorem \ref{t22a}.} As minimal set $K_{0}=K_{0}(f,\infty)$
and as extremal domain $D_{0}=D_{0}(f,\infty)$ we choose the two sets
introduced in (\ref{f94a1}) and (\ref{f94a2}). It follows immediately from
Lemma \ref{l94a} that $K_{0}$ satisfies the three conditions (i), (ii), and
(iii) in Definition \ref{d21b}. Hence, the existence side of Theorem
\ref{t22a} is established.

Uniqueness then follows immediately from (\ref{f94a1}) in Lemma \ref{l94a}.
$\blacksquare$\medskip

\textbf{Proof of Proposition \ref{p22a}.} The proof is a combination of
Proposition \ref{p93a} and Lemma \ref{l94a}. The first half-sentence in
Proposition \ref{p22a} has been proved in Proposition \ref{p93a}, and the
second one is identical with (\ref{f94a1}) in Lemma \ref{l94a}. $\blacksquare
$\medskip

\textbf{Proof of Proposition \ref{p22b}.} If the function $f$ has no branch
points, then we have $\Gamma_{0}=\Gamma$ and $\Gamma_{1}=\emptyset$ in
Definition \ref{d91b}. Hence, assertion (i) in Proposition \ref{p91a} is
trivially true, and therefore $D_{0}=D_{0}(f,\infty)$ is the largest domain to
which the function $f$ can be meromorphically extended. Such a domain can be
denoted as the Weierstrass domain for meromorphically continuation of $f$ if
it is well-defined.

The domain $D_{0}$ is identical with the Weierstrass domain $W_{f}$ for
analytic continuation of the function $f$ plus all polar singularities of $f$
that can be reached from within $W_{f}$, and which can be added without
destroying the property that the resulting set is a domain. This completed the
proof of Proposition \ref{p22b}. $\blacksquare$\medskip

\subsection{\label{s95}A Convexity Property}

\qquad With the proof of Proposition \ref{p93a} the main task of Subsection
\ref{s93} had been done. However, in the present subsection,\ we will add an
extension to Definition \ref{d93a}. It has already been mentioned after the
proof Proposition \ref{p93a} at the end of Subsection \ref{s93} that the
construction of the set $K_{0}$ in Definition \ref{d93a} can be seen as a
special case of a whole family of set-theoretic convex-combinations of the two
sets $K_{1}$ and $K_{2}$ in Definition \ref{d93a}. The extended construction
leads to a whole continuum of sets $K_{h}$ with $h\in\left[  0,1\right]  $,
and for the capacity of these sets $K_{h}$ we get an interesting inequality
that generalizes inequality (\ref{f93g}) in Lemma \ref{l93e}. These extended
results are certainly of independent interest, but they are also needed in
Subsection \ref{s101}, below. The main properties of the new sets $K_{h}$,
$h\in\left[  0,1\right]  $, are proved in Theorem \ref{t95a}.\smallskip

\begin{definition}
\label{d95a}For two arbitrarily chosen admissible domains $D_{0},D_{1}%
\in\mathcal{D}(f,\allowbreak\infty)$ with corresponding compact sets
$K_{j}=\overline{\mathbb{C}}\setminus D_{j}\in\mathcal{K}(f,\infty)$, $j=0,1$,
that satisfy
\begin{equation}
\operatorname*{cap}(K_{j})>0\text{,\ \ \ \ }j=0,1,\label{f95a}%
\end{equation}
we define a family of domains $D_{h}\subset\overline{\mathbb{C}}$, $0\leq
h\leq1$, (which will turn out to be admissible domains) together with a family
of corresponding compact sets $K_{h}=\overline{\mathbb{C}}\setminus D_{h} $,
$0\leq h\leq1$, in a way that generalizes Definition \ref{d93a}. For $0\leq
h\leq1$ we define:
\begin{align}
& S_{h}:=\overline{\left\{  \,z\in\overline{\mathbb{C}}\,\right.  \left|
\,(1-h)\,g_{0}(z)=h\,g_{1}(z)\,\right\}  },\medskip\label{f95b1}\\
& K_{3}:=\widehat{K_{0}\cup K_{1}},\medskip\label{f95b2}\\
& \widetilde{K}_{h}:=\widehat{S_{h}\cap K_{3}},\medskip\label{f95b3}\\
& K_{0,h}:=\left\{  \,z\in K_{0}\,\right.  \left|  \,(1-h)\,g_{0}%
(z)>h\,g_{1}(z)\,\right\}  ,\medskip\label{f95b4}\\
& K_{1,h}:=\left\{  \,z\in K_{1}\,\right.  \left|  \,(1-h)\,g_{0}%
(z)<h\,g_{1}(z)\,\right\}  ,\medskip\label{f95b5}\\
& K_{h}:=\widetilde{K}_{h}\cup K_{0,h}\cup K_{1,h},\medskip\label{f95b6}\\
& D_{h}:=\overline{\mathbb{C}}\setminus K_{h}\label{f95b7}%
\end{align}
with $g_{j}=g_{D_{j}}(\cdot,\infty)$ the Green function in the domain $D_{j}$,
$j=0,1$.
\end{definition}

It is immediate that Definition \ref{d95a} is a generalization of Definition
\ref{d93a}. The role of the two input sets $K_{1}$and $K_{2}$ in Definition
\ref{d93a} is now played by the two sets $K_{0}$ and $K_{1}$, respectively.
The two set $K_{0}$ and $D_{0}$ in (\ref{f93b6}) and (\ref{f93b7}) of
Definition \ref{d93a} now correspond to the two sets $K_{1/2}$ and $D_{1/2}$,
respectively, in the new terminology of Definition \ref{d95a}.

Another generalization in Definition \ref{d95a} concerns the assumptions made
with respect to the two compact input sets $K_{0}$ and $K_{1}$. While in
Definition \ref{d93a} the input sets have been assumed to be of minimal
capacity, this assumption has been dropped without replacement in the extended definition.

The change of notation with respect to the input sets $K_{1}$and $K_{2}$ in
Definition \ref{d93a} into the sets $K_{1}$and $K_{2}$ in Definition
\ref{d95a} was necessary, and has the advantage that in the family of the
newly defined sets $K_{h}$, $h\in\left[  0,1\right]  $, the two special sets
$K_{0}$ and $K_{1}$ coincide with the two input sets $K_{0}$ and $K_{1}$ in
Definition \ref{d95a}, which can easily be verified.

In the next theorem we prove that the newly defined domains $D_{h}$,
$h\in\left[  0,1\right]  $, are all admissible for Problem $(f,\infty)$, and
most importantly, we prove that the functional $\log\operatorname*{cap}%
(K_{h})$ depends on the index $h\in\left[  0,1\right]  $ in a strictly convex
manner.\smallskip

\begin{theorem}
\label{t95a}(i) \ Under the assumptions of Definition \ref{d95a} we have
\begin{equation}
D_{h}\in\mathcal{D}(f,\infty)\text{ \ \ and \ \ }K_{h}\in\mathcal{K}%
(f,\infty)\text{ \ \ for all \ \ }h\in\left[  0,1\right] \label{f95c1}%
\end{equation}
with $\mathcal{K}(f,\infty)$\ and $\mathcal{D}(f,\infty)$ the sets introduced
in Definition \ref{d21a}.

(ii) \ If in addition to the assumptions of Definition \ref{d95a} we assume
that
\begin{equation}
\operatorname*{cap}\left(  \left(  K_{1}\setminus K_{0}\right)  \cup\left(
K_{0}\setminus K_{1}\right)  \right)  >0,\label{f95c4}%
\end{equation}
then we have
\begin{equation}
\log\operatorname*{cap}(K_{h})<(1-h)\,\log\operatorname*{cap}(K_{0}%
)+h\,\log\operatorname*{cap}(K_{1})\text{ \ \ for \ \ }0<h<1.\label{f95c2}%
\end{equation}
(iii) Under the assumptions of Definition \ref{d95a} we have the following
continuity property: For any $h_{0}\in\left[  0,1\right]  $ and for any open
set $U\subset\overline{\mathbb{C}}$\ with $K_{h_{0}}\subset U$, there exists a
neighborhood $V_{0}\subset\mathbb{R}$\ of $h_{0}$ such that
\begin{equation}
K_{h}\subset U\text{ \ \ \ for all \ \ }h\in V_{0}\cap\left[  0,1\right]
.\label{f95c3}%
\end{equation}

\end{theorem}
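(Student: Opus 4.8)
The plan is to obtain parts (i) and (ii) as weighted versions of the construction and estimates of Subsection \ref{s93} (Lemmas \ref{l93a}--\ref{l93e}), with the pair $(g_{1},g_{2})$ there replaced by $((1-h)\,g_{0},\,h\,g_{1})$ and the symmetric combination $\tfrac12(g_{1}+g_{2})$ replaced by $(1-h)\,g_{0}+h\,g_{1}$, and to prove part (iii) by a compactness argument for the family $\{K_{h}\}_{h\in[0,1]}$, all of whose members lie in the single fixed compact set $K_{3}$.

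For part (i) I would set $d_{h}:=(1-h)\,g_{0}-h\,g_{1}$ and $B_{\pm}:=\{\,z\in\overline{\mathbb{C}}\setminus S_{h}\,|\,\pm d_{h}(z)>0\,\}$. As in Lemma \ref{l93a}, each component $C$ of $\overline{\mathbb{C}}\setminus S_{h}$ lies entirely in $B_{+}$ or in $B_{-}$: on such a $C$ one of $g_{0},g_{1}$ is harmonic (Lemma \ref{l113a2}) and the other makes $d_{h}$ super- respectively subharmonic, and since $d_{h}$ has one sign on a dense subset of $C$ it has that sign throughout $C$ by the minimum principle. Then, exactly as in Lemma \ref{l93b}, the sets $B_{1}:=(K_{3}\setminus\widetilde{K}_{h})\cap B_{+}$ and $B_{2}:=(K_{3}\setminus\widetilde{K}_{h})\cap B_{-}$ are disjoint and relatively open in $K_{3}$, $D_{h}=D_{3}\cup(B_{1}\setminus K_{0})\cup(B_{2}\setminus K_{1})$ with $D_{3}:=\overline{\mathbb{C}}\setminus K_{3}$ is a domain, and $\operatorname*{cap}(K_{0,h})=\operatorname*{cap}(K_{1,h})=0$, because a point $z\in K_{0,h}$ lies in $K_{0}$ yet satisfies $(1-h)\,g_{0}(z)>h\,g_{1}(z)\ge 0$, hence $g_{0}(z)>0$, so $z\in Ir(K_{0})$ (capacity zero by Lemma \ref{l113a1}), and symmetrically for $K_{1,h}$. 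Since $B_{1}\cap B_{2}=\emptyset$, the function equal to the continuation $f_{0}$ of $f$ on $D_{3}\cup(B_{1}\setminus K_{0})$ and to the continuation $f_{1}$ of $f$ on $B_{2}\setminus K_{1}$ is an unambiguous single-valued meromorphic continuation of $f$ throughout $D_{h}$; combined with the separation argument of Lemma \ref{l92d}/Lemma \ref{l93c} (every $\gamma\in\Gamma_{1}$ meets both $K_{0}$ and $K_{1}$ and could not miss $K_{h}$ without contradicting single-valuedness on $D_{h}$), Proposition \ref{p91a} gives $D_{h}\in\mathcal{D}(f,\infty)$, i.e.\ $K_{h}\in\mathcal{K}(f,\infty)$.

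For part (ii) I would introduce the weighted analogues $h_{0},h_{1}$ of the functions of Definition \ref{d93b} --- $h_{0}$ equal to $(1-h)\,g_{0}+h\,g_{1}$ on $\overline{\mathbb{C}}\setminus K_{3}$, to $|(1-h)\,g_{0}-h\,g_{1}|$ on $K_{3}\setminus\operatorname*{Int}(\widetilde{K}_{h})$ and to $0$ on $\operatorname*{Int}(\widetilde{K}_{h})$, and $h_{1}$ defined dually --- and run the balayage-and-pasting machinery of Lemma \ref{l93d} (Lemmas \ref{l112d}, \ref{l112e}) to obtain representations $h_{0}=g_{h}(\cdot,\infty)+\int g_{h}(\cdot,v)\,d\sigma_{0}(v)$ and $h_{1}=p(\sigma_{1};\cdot)$ by signed measures of finite energy, with $\sigma_{0}$ carried by $(K_{0}\cup K_{1})\setminus\widetilde{K}_{h}$, $\sigma_{0}=-\sigma_{1}|_{K_{3}\setminus\widetilde{K}_{h}}$ and $\sigma_{1}|_{\operatorname*{Int}(K_{3})}\le 0$. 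The weights are chosen precisely so that $(1-h)\,g_{0}+h\,g_{1}=\log|z|-\bigl((1-h)\log\operatorname*{cap}(K_{0})+h\log\operatorname*{cap}(K_{1})\bigr)+\text{O}(z^{-1})$ near $\infty$, so evaluating the representation of $h_{0}$ at $\infty$ yields $\log\operatorname*{cap}(K_{h})-(1-h)\log\operatorname*{cap}(K_{0})-h\log\operatorname*{cap}(K_{1})=\int g_{h}(v,\infty)\,d\sigma_{0}(v)=I_{1}-I_{2}$, where $I_{2}=\iint g_{h}(v,w)\,d\sigma_{0}(v)\,d\sigma_{0}(w)>0$ by positive definiteness of the Green kernel (Lemma \ref{l113d}) once $\sigma_{0}\ne 0$, and $\sigma_{0}\ne 0$ follows from (\ref{f95c4}) via Lemma \ref{l113c} (for $h\ne\tfrac12$ the functions $(1-h)\,g_{0}$ and $h\,g_{1}$ cannot even have the same growth at $\infty$, and for $h=\tfrac12$ this is exactly (\ref{f95c4})). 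The term $I_{1}=\int h_{0}\,d\sigma_{0}$ is reduced, by the chain of identities (\ref{f93g3a})--(\ref{f93g4b}), to $-\iint\log\tfrac1{|v-w|}\,d\sigma_{1}(v)\,d\sigma_{1}(w)$ plus a non-positive contribution. Here lies the genuinely new point: since $K_{0},K_{1}$ need not be of minimal capacity, $(1-h)\,g_{0}-h\,g_{1}$ does not decay at $\infty$ for $h\ne\tfrac12$, so $h_{1}$ acquires a logarithmic pole and $\sigma_{1}$ fails to have total mass zero, which blocks the positive-definiteness step; I would repair this by renormalising --- subtracting from $h_{1}$ the suitable scalar multiple of $p(\nu;\cdot)$, $\nu$ the equilibrium measure of a fixed compact contained in every $\widetilde{K}_{h}$ (for instance $K_{0}\cap K_{1}$, which satisfies $K_{0}\cap K_{1}\subset\widetilde{K}_{h}$ for all $h$), so that the corrected measure has mass zero while the correction term stays harmless because the associated Green function vanishes quasi everywhere on the carrier of $\sigma_{1}$. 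With this, $I_{1}\le 0$ and $I_{2}>0$ give (\ref{f95c2}). I expect this renormalisation to be the main obstacle.

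For part (iii), since every $K_{h}$ is a compact subset of the fixed compact $K_{3}$, it suffices to show that $h\mapsto K_{h}$ is upper semicontinuous, i.e.\ that if $z_{n}\in K_{h_{n}}$, $h_{n}\to h_{0}$ in $[0,1]$ and $z_{n}\to z^{\ast}$, then $z^{\ast}\in K_{h_{0}}$; the decomposition $K_{h}=\widetilde{K}_{h}\cup K_{0,h}\cup K_{1,h}$ reduces this to cases. If $z^{\ast}\in K_{0}\cap K_{1}$ then $z^{\ast}\in\widetilde{K}_{h_{0}}\subset K_{h_{0}}$. If infinitely many $z_{n}$ lie in $\operatorname*{Int}(K_{3})\setminus(K_{0}\cup K_{1})$, then there both $g_{0}$ and $g_{1}$ are harmonic, so $(h,z)\mapsto(1-h)\,g_{0}(z)-h\,g_{1}(z)$ is jointly continuous, the defining (in)equalities for membership in $S_{h_{n}}$, $K_{0,h_{n}}$ or $K_{1,h_{n}}$ pass to the limit, and --- using upper semicontinuity of the polynomial-convex hull on compact subsets of $K_{3}$ to pass from $S_{h_{n}}\cap K_{3}$ to $\widetilde{K}_{h_{n}}$ --- $z^{\ast}$ lands in the corresponding member of $\{\widetilde{K}_{h_{0}},K_{0,h_{0}},K_{1,h_{0}}\}$. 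The remaining case $z^{\ast}\in(K_{0}\cup K_{1})\setminus(K_{0}\cap K_{1})$, where $g_{0},g_{1}$ are only quasi-continuous, is handled by a more delicate analysis near $\partial K_{0}\cup\partial K_{1}$: the points $z_{n}$ falling in this region lie in $K_{0,h}\cup K_{1,h}$ or in $S_{h}\cap\bigl((K_{0}\cup K_{1})\setminus(K_{0}\cap K_{1})\bigr)\subset Ir(K_{0})\cup Ir(K_{1})$, a capacity-zero set independent of $h$, and each such exceptional point belongs to $S_{h}$ for at most one value of $h$, which together with the quasi-continuity of $g_{0},g_{1}$ keeps the limit point inside $\widetilde{K}_{h_{0}}$. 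Finally, for $h_{0}\in\{0,1\}$ one uses that the $h=0$ and $h=1$ members of the family coincide with the input sets $K_{0}$ and $K_{1}$, so the same reasoning applies at the endpoints.
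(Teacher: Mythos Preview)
Your approach to parts (i) and (ii) is exactly the paper's: it reruns the machinery of Lemmas \ref{l93a}--\ref{l93e} with $d:=(1-h)g_{0}-hg_{1}$ in place of $g_{1}-g_{2}$ and with the weighted functions (\ref{f95d3a})--(\ref{f95d3b}) in place of (\ref{f93f1a})--(\ref{f93f1b}), and then simply asserts that ``everything can be done in practically the same way as before,'' skipping the details. In particular the paper claims without comment that the conclusions of Lemma \ref{l93d}, including $\sigma_{1}(\mathbb{C})=0$, carry over verbatim.

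You are right that this last point is problematic: for $h\neq\tfrac12$ the function $h_{1}=|(1-h)g_{0}-hg_{1}|$ grows like $|1-2h|\log|z|$ at infinity, so $\sigma_{1}(\mathbb{C})=-|1-2h|\neq0$ and Lemma \ref{l112b}(ii) does not apply directly. Your diagnosis is slightly off, however: the growth comes purely from the unequal weights $1-h$ and $h$, not from whether $\operatorname*{cap}(K_{0}),\operatorname*{cap}(K_{1})$ are minimal --- even when $\operatorname*{cap}(K_{0})=\operatorname*{cap}(K_{1})$ the $(1-2h)\log|z|$ term persists. Your renormalisation by the equilibrium measure of $K_{0}\cap K_{1}\subset\widetilde{K}_{h}$ is a reasonable first attempt, but the cross-terms it introduces still need to be shown to have the right sign; this is genuinely the spot where the paper's write-up leaves a gap that you have correctly located but not yet closed.

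For part (iii) the paper also argues indirectly with the same three cases on the location of $z^{\ast}=\lim z_{n}$. Its treatment of the hard case $z^{\ast}\in K_{0}\setminus K_{1}$ is more direct than yours: since $g_{1}$ is harmonic near $z^{\ast}$, the function $d_{0}:=(1-h_{0})g_{0}-h_{0}g_{1}$ is \emph{subharmonic} there; from $z_{n}\in K_{h_{n}}\setminus K_{1}$ one extracts $(1-h_{n})g_{0}(z_{n})\ge h_{n}g_{1}(z_{n})$, whence $\liminf_{n} d_{0}(z_{n})\ge 0$ (using that $g_{0}-g_{1}$ is bounded near $z^{\ast}$), and then a trichotomy on the sign of $d_{0}(z^{\ast})$ places $z^{\ast}$ in $K_{0,h_{0}}$ or in $\widetilde{K}_{h_{0}}$. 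Your quasi-continuity / ``at most one $h$'' argument points in the same direction but is vaguer; in particular your inclusion $S_{h}\cap\bigl((K_{0}\cup K_{1})\setminus(K_{0}\cap K_{1})\bigr)\subset Ir(K_{0})\cup Ir(K_{1})$ forgets that $S_{h}$ is a \emph{closure} and $\widetilde{K}_{h}$ a polynomial-convex \emph{hull}, so membership does not pointwise force $(1-h)g_{0}=hg_{1}$. The subharmonicity step is the clean substitute you are missing.
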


\begin{remark}
Assertion (iii) in Theorem \ref{t95a} means that in the Hausdorff metric the
compact sets $K_{h}$ depend continuously on the parameter $h\in\left[
0,1\right]  $.
\end{remark}

\begin{proof}
Definition \ref{d93a} has been the backbone of the proof of the essential
uniqueness of minimal sets in Proposition \ref{p93a} in Subsection \ref{s93}.
The important Lemma \ref{l93e} in Subsection \ref{s93}\ can be seen as a
special case of the convexity relation (\ref{f95c2}). It turns out that the
proof of Theorem \ref{t95a} is based on almost the same argumentations and
techniques as those applied in the proof of Proposition \ref{p93a}, therefore
we will now very closely follow the different stages of argumentations used in
Subsection \ref{s93}. As a consequence, we can shorten the proof of Theorem
\ref{t95a} considerably.

As a general policy, we will reformulate the content of lemmas and definitions
from Subsection \ref{s93} in such a way that it satisfies the needs of our new
situation, but we will use shortcuts and will not repeat all details. Often it
is only necessary to replace the difference $g_{1}-g_{2}$ of the two Green
functions $g_{1}$ and $g_{2}$ from Definition \ref{d93a} by the convex
combination $(1-h)\,g_{0}+h\,g_{1}$ of the two Green functions $g_{0}$ and
$g_{1}$ from Definition \ref{d95a}. This change is evidently suggested by
(\ref{f95b1}) in Definition \ref{d95a}.

Thus, for instance, the difference $d:=g_{1}-g_{2}$ in (\ref{f93c1}) will now
be replaced by
\begin{equation}
d(z):=((1-h)\,g_{0}+h\,g)(z)\text{ \ \ for \ \ }h\in\left[  0,1\right]  \text{
\ and \ }z\in\overline{\mathbb{C}}\text{.}\label{f95d1}%
\end{equation}

By using the same replacement repeatedly, one can transform all elements of
Definition \ref{d93a} into those of Definition \ref{d95a}. In the same way the
auxiliary definitions in the two Lemmas \ref{l93a} and \ref{l93b} can be
adapted to the new situation, and like in Lemma \ref{l93c}, one can prove
that
\begin{equation}
K_{h}\in\mathcal{K}(f,\infty)\text{ \ \ and \ \ }D_{h}\in\mathcal{D}%
(f,\infty)\text{ \ \ for all \ \ }h\in\left[  0,1\right] \label{f95d2}%
\end{equation}
with $K_{h}$ and $D_{h}$, $h\in\left[  0,1\right]  $, the sets introduced in
(\ref{f95b6}) and (\ref{f95b7}), respectively. The last conclusion proves
assertion (i) of Theorem \ref{t95a}.$\smallskip$

Analogously to (\ref{f93f1a}) and (\ref{f93f1b}) in Definition \ref{d93b}, we
now introduce the two auxiliary functions $h_{0}$ and $h_{1}$ by defining
\begin{equation}
h_{0}(z):=\left\{
\begin{array}
[c]{lcl}%
((1-h)\,g_{0}+h\,g_{1})(z)\smallskip & \text{ \ for \ }\smallskip &
z\in\overline{\mathbb{C}}\setminus K_{3},\\
\left|  (1-h)\,g_{0}-h\,g_{1}\right|  (z)\smallskip & \text{ \ for
\ }\smallskip & z\in K_{3}\setminus\operatorname*{Int}(\widetilde{K}_{h}),\\
0 & \text{ \ for \ } & z\in\operatorname*{Int}(\widetilde{K}_{h}),
\end{array}
\right. \label{f95d3a}%
\end{equation}
\begin{equation}
h_{1}(z):=\left\{
\begin{array}
[c]{lcl}%
\left|  (1-h)\,g_{0}-h\,g_{1}\right|  (z)\smallskip & \text{ \ for
\ }\smallskip & z\in\overline{\mathbb{C}}\setminus K_{3},\\
((1-h)\,g_{0}+h\,g_{1})(z)\smallskip & \text{ \ for \ }\smallskip & z\in
K_{3}\setminus\operatorname*{Int}(\widetilde{K}_{h}),\\
(\widehat{(1-h)\,g_{0}+h\,g_{1}})(z) & \text{ \ for \ } & z\in
\operatorname*{Int}(\widetilde{K}_{h})
\end{array}
\right. \label{f95d3b}%
\end{equation}
for $h\in\left[  0,1\right]  $\ with sets $K_{3}$\ and $\widetilde{K}_{h}%
$\ that have been defined in (\ref{f95b2}) and (\ref{f95b3}). In
(\ref{f95d3b}), the expression $\widehat{(1-h)\,g_{0}+h\,g_{1}}$ denotes the
solution of the Dirichlet problem in each component $C$ of the interior
$\operatorname*{Int}(\widetilde{K}_{h})$ of $\widetilde{K}_{h}$ with
$((1-h)\,g_{0}+h\,g_{1}))|_{\partial\widetilde{K}_{0}}$ as boundary function.
Both functions $h_{0}$ and $h_{1}$ depend on the parameter $h$.

Representations for the functions $h_{0}$ and $h_{1}$ can be proved in the
same way as has been done in Lemma \ref{l93d}. Like in (\ref{f93f2b}), we have
a representation for $h_{0}$ that now takes the form
\begin{equation}
h_{0}=g_{h}(\cdot,\infty)+\int g_{h}(\cdot,v)d\sigma_{0}(v)\label{f95d4}%
\end{equation}
with $g_{h}(\cdot,\cdot)$ the Green function of the domain $D_{h}$,
$h\in\left[  0,1\right]  $. For the measure $\sigma_{0}$ in (\ref{f95d4}) we
have
\begin{align}
& \text{ \ \ \ }\operatorname*{supp}(\sigma_{0})\subset(K_{0}\cup
K_{1})\setminus\operatorname*{Int}(\widetilde{K}_{h}),\label{f95d5}\\
& \sigma_{0}(\overline{\mathbb{C}}\setminus\Sigma_{0})=0\text{ \ \ for
\ \ }\Sigma_{0}:=(K_{1}\cup K_{2})\setminus\widetilde{K}_{0},\label{f95d6}%
\end{align}
and if $0<h<1$, then we have
\begin{equation}
\sigma_{0}\neq0\label{f95d7}%
\end{equation}
as a consequence of assumption (\ref{f95c4}). These conclusions can be proved
in exactly the same way as the corresponding assertions have been proved in
the proof of Lemma \ref{l93d}.

The assertions (ii) and (iii) of Lemma \ref{l93d} hold true also in the new
situation if one substitutes the sets $S_{0}$, $K_{1}$, $K_{2}$,
$\widetilde{K}_{0}$ by the sets $S_{h}$, $K_{0}$, $K_{1}$, $\widetilde{K}_{h}%
$. The sets $S_{h}$ and $\widetilde{K}_{h}$ have been introduced in Definition
\ref{d95a}. The proof of Lemma \ref{l93d} is quite long and involved, and the
same is true in the new situation if all details are taken into consideration.
Since everything can be done in practically the same way as before, we will
skip all details here.

In the new situation of Definition \ref{d95a} the convexity relation
(\ref{f95c2}) is the analog of the inequality (\ref{f93g}) in Lemma
\ref{l93e}, and its proof can be done in quite the same way as that of Lemma
\ref{l93d}. Like in (\ref{f93g1b}) and (\ref{f93g1c}), from (\ref{f95d4}) and
(\ref{f95d3a}) together with representation (\ref{f113b1}) for Green functions
in Lemma \ref{l113b} of Subsection \ref{s1103}, further below, it follows that
we have
\begin{align}
& \log\operatorname{cap}(K_{h})-\left[  (1-h)\,\log\operatorname{cap}%
(K_{0})+h\,\log\operatorname{cap}(K_{1})\right]  \medskip=\nonumber\\
& \text{ \ \ \ \ \ \ \ \ \ \ \ \ \ \ \ \ \ \ \ \ \ \ \ \ \ \ \ }=\left(
h_{0}(z)-g_{h}(z,\infty)\right)  |_{z=\infty}\label{f95d8}\\
& \text{ \ \ \ \ \ \ \ \ \ \ \ \ \ \ \ \ \ \ \ \ \ \ \ \ \ \ \ }=\int
g_{h}(v,\infty)d\sigma_{0}(v).\nonumber
\end{align}
In the verification of (\ref{f95d8}), representation (\ref{f113b1}) has
been\ applied to the Green functions $g_{0}$, $g_{1}$, and $g_{h}$. By
$g_{h}=g_{h}(\cdot,\cdot)$ we denote the Green function of the domain $D_{h} $
for $h\in\left[  0,1\right]  $.

In the same way as has been done in the proof of Lemma \ref{l93d}, it is then
shown that
\begin{equation}
\int g_{h}(v,\infty)d\sigma_{0}(v)<0\label{f95d9}%
\end{equation}
if, and only if, $0<h<1$. Together with (\ref{f95d8}), the last conclusion
proves assertion (ii) of Theorem \ref{t95a}.

It remains to prove assertion (iii), which will be done indirectly. We assume
that there exist $h_{0}\in\left[  0,1\right]  $ and an open set $U\subset
\mathbb{C}$ with $K_{h_{0}}\subset U$ such that there exist $h_{n}\in\left[
0,1\right]  $, $n\in\mathbb{N}$, with
\begin{equation}
h_{n}\rightarrow h_{0}\text{ \ \ as \ \ }n\rightarrow\infty\text{ \ \ and
\ \ }K_{h_{n}}\setminus U\neq\emptyset\text{ \ \ for all \ }n\in
\mathbb{N}\text{.}\label{f95e1}%
\end{equation}
Without loss of generality, we can then select $x_{n}\in K_{h_{n}}\setminus U
$, $n\in\mathbb{N}$, such that
\begin{equation}
x_{n}\rightarrow x_{0}\text{ \ \ as \ \ }n\rightarrow\infty\text{.}%
\label{f95e2}%
\end{equation}
From (\ref{f95b3}) - (\ref{f95b6}) it follows that
\begin{equation}
x_{0}\in K_{3}\setminus U.\label{f95e3}%
\end{equation}
We shall prove that assertion (\ref{f95e3}) is contradictory, which then
implies that assertion (iii) of the theorem holds true. For disproving
assertion (\ref{f95e3}) we distinguish three different cases.

\textbf{Case 1:} We assume that $x_{0}\notin K_{0}\cup K_{1}$. Then both Green
functions $g_{0}$ and $g_{1}$ are harmonic and continuous in a neighborhood of
$x_{0}$. As a consequence, it follows from $x_{n}\in K_{h_{n}}$ for
$n\in\mathbb{N}$ that also $x_{0}\in K_{h_{0}}$, which then disproves
assertion (\ref{f95e3}) since $K_{h_{0}}\subset U$.

We will give some more details of these last conclusions. From (\ref{f95b2}) -
(\ref{f95b6}) together with $x_{n}\in K_{h_{n}}$ and $x_{0}\notin K_{0}\cup
K_{1}$, we deduce that $x_{n}\in\widetilde{K}_{h_{n}}$ for $n\in\mathbb{N}$
sufficiently large. From (\ref{f95b1}) and (\ref{f95b3}) we then know that
\begin{equation}
(1-h_{n})\,g_{0}(x_{n})=h_{n}\,g_{1}(x_{n})\text{ \ \ for \ }n\in
\mathbb{N}\text{\ \ sufficiently large.}\label{f95e4}%
\end{equation}
From the continuity of $g_{0}$ and $g_{1}$ together with the limits in
(\ref{f95e1}) and (\ref{f95e2}) we deduce from (\ref{f95e4}) that
\begin{equation}
(1-h_{0})\,g_{0}(x_{0})=h_{0}\,g_{1}(x_{0}),\label{f95e5}%
\end{equation}
which implies that $x_{0}\in K_{h_{0}}$, as stated above.

\textbf{Case 2:} Let us now assume that $x_{0}\in K_{0}\cap K_{1}$. From
(\ref{f95b1}) - (\ref{f95b6}) it follows that $K_{0}\cap K_{1}\subset K_{h}$
for all $h\in\left[  0,1\right]  $. Consequently, we have $x_{0}\in K_{h_{0}}%
$, and because of $K_{h_{0}}\subset U$ this disproves (\ref{f95e3}).

\textbf{Case 3:} We assume that $x_{0}\in\left(  K_{0}\cup K_{1}\right)
\setminus\left(  K_{0}\cap K_{1}\right)  $. Because of symmetry, we can assume
without loss of generality that
\begin{equation}
x_{0}\in K_{0}\setminus K_{1}.\label{f95e6}%
\end{equation}
From (\ref{f95e6}) it follows that the Green function $g_{1}$ is positive and
harmonic in a neighborhood of $x_{0}$. Further, we can assume without loss of
generality that
\begin{equation}
x_{n}\notin K_{1}\text{ \ \ \ for all \ \ \ }n\in\mathbb{N}\text{.}%
\label{f95e7}%
\end{equation}
Because of (\ref{f95b1}) - (\ref{f95b6}), we can conclude from (\ref{f95e7})
and $x_{n}\in K_{h_{n}}$ that
\begin{equation}
(1-h_{n})\,g_{0}(x_{n})\geq h_{n}\,g_{1}(x_{n}),\label{f95e8}%
\end{equation}
which implies that
\begin{equation}
\liminf_{n\rightarrow\infty}d_{0}(x_{n})\geq0\label{f95e9}%
\end{equation}
for $d_{0}:=(1-h_{0})\,g_{0}-h_{0}\,g_{1}$. In the last conclusion we have
used the fact that the difference $g_{0}-g_{1}$ is bounded in a neighborhood
of $x_{0}$.

The function $d_{0}$ is subharmonic and non-constant in the neighborhood of
$x_{0}$.

If $d_{0}(x_{0})>0$, then it follows from (\ref{f95e6}) and (\ref{f95b4}) that
$x_{0}\in K_{0,h_{0}}\subset K_{h_{0}}$. If, on the other hand, $d_{0}%
(x_{0})=0$, then it follows from (\ref{f95b1}) and (\ref{f95b3}) that
$x_{0}\in\widetilde{K}_{h_{0}}\subset K_{h_{0}}$. If $d_{0}(x_{0})<0$, then it
follows from (\ref{f95e9}) that in each neighborhood of $x_{0}$ there exists a
point $\widetilde{x}_{0}$ with $d_{0}(\widetilde{x}_{0})=0$, which, because of
(\ref{f95b1}) and (\ref{f95b3}), again implies that $x_{0}\in\widetilde
{K}_{h_{0}}\subset K_{h_{0}}$. Hence, we have proved that $x_{0}\in K_{h_{0}}%
$, which disproves assertion (\ref{f95e3}) because of $K_{h_{0}}\subset U$.

Since (\ref{f95e3}) has been disproved for all three cases, the proof of
assertion (iii) of Theorem \ref{t95a} is completed.\medskip
\end{proof}

\section{\label{s10}Proofs II}

\qquad In the present section we prove all results that have been stated in
the Sections \ref{s4}, \ref{s5}, and \ref{s7}. In the first subsection we deal
with the special case of an algebraic function $f$. The results proved there
are of independent interest, and this is especially true for Theorem
\ref{t101b}\ towards the end of the subsection. But besides of that they are
also an essential preparation for the proofs of the main results from Sections
\ref{s4} and \ref{s5}, which will be given in Subsection \ref{s102}. Results
from Section \ref{s7} are proved in the last two subsections.\medskip

\subsection{\label{s101}Algebraic Functions}

\qquad The particularity of algebraic functions $f$ with respect to our
investigation is the fact that they possess only finitely many branch points
and no other types of non-polar singularities. As a consequence, the structure
of the minimal set $K_{0}(f,\infty)$ is in many respects special and also much
simpler to describe than this is the case in general. All functions $f$\ in
the Examples \ref{e61} - \ref{e65} of Section \ref{s6} are algebraic, and
these examples are illustrations of what we can expect on special results.
Since there exist only finitely many branch points, we have a direct
connection between Problem $(f,\infty)$ and a certain type of Problem
\ref{p81c}, which has already been discussed in Subsection \ref{s81}. Details
of the connection will be a major topic in the present subsection; another one
will be the role of rational quadratic differentials, which are in some sense
typical for Problem $(f,\infty)$ with $f$ being an algebraic function.\medskip

\subsubsection{\label{s1011}Sets of Minimal Hyperbolic Capacity}

\qquad In the present subsection we investigate a special case of Problem
\ref{p81c} from Subsection \ref{s81}.\smallskip

\begin{definition}
\label{d101a}\textit{Let }$A=\{a_{1},\ldots,\allowbreak a_{n}\}\subset
\mathbb{D}$\textit{\ be a set }of $n\geq2$ distinct points\textit{. The task
to find a continuum }$K\subset\mathbb{D}$\textit{\ with} the property that
\begin{equation}
a_{j}\in K\text{ \ \ \ \ \textit{for} \ \ }j=1,\ldots,n,\label{f101a1}%
\end{equation}
\textit{and that the condenser capacity }$\operatorname*{cap}\left(
K,\partial\mathbb{D}\right)  $\textit{\ is minimal among all continua
}$K\subset\mathbb{D}$\textit{\ that satisfy (\ref{f101a1}) is called Problem
}$(A,\mathbb{D})$\textit{. Its solution is denoted by }$K_{0}=K_{0}%
(A,\mathbb{D})$\textit{.}\smallskip
\end{definition}

For a definition of the condenser capacity $\operatorname*{cap}\left(
K,V\right)  $ with arbitrary compact sets $K,V\subset\overline{\mathbb{C}}$ we
refer to\ Chapter II.5 in \cite{SaTo} or to \cite{Bagby}. In the special with
$K\subset\mathbb{D}$ and $V:=\partial\mathbb{D}$, $\operatorname*{cap}\left(
K,\partial\mathbb{D}\right)  $ is also known as the hyperbolic capacity of $K
$ in $\mathbb{D}$. For more details see \cite{Tsuji59}. Because of this
terminology, the solution $K_{0}(A,\mathbb{D})$ of Problem $(A,\mathbb{D})$ is
also called set of minimal hyperbolic capacity, and Problem $(A,\mathbb{D})$
can be seen as the hyperbolic analogue of Chebotarev's Problem, which has been
discussed in Section \ref{s8} as Problem \ref{p81a}.

Problem $(A,\mathbb{D})$ can also be seen as a special case of Problem
\ref{p81c} from Section \ref{s81}. This connection is established in the next
proposition.\smallskip

\begin{proposition}
\label{p101a}A continuum $K_{0}\subset\mathbb{D}$ is a solution of Problem
$(A,\mathbb{D})$ if, and only if, the pair of sets $(K_{0}$, $K_{0}^{-1})$ is
a solution of Problem \ref{p81c} formulated with the two sets of points
$A=\{a_{1},\ldots,\allowbreak a_{n}\}\subset\mathbb{D}$ and $B=\{b_{1}%
,\ldots,\allowbreak b_{n}\}:=A^{-1}\subset\overline{\mathbb{C}}\setminus
\overline{\mathbb{D}}$, i.e., $b_{j}:=1/\overline{a}_{j}$ for $j=1,\ldots,n$.
By $S^{-1}$ we denote the reflection of a set $S$ on the unit circle
$\partial\mathbb{D}$.\smallskip
\end{proposition}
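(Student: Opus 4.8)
The plan is to transfer between Problem $(A,\mathbb{D})$ and Problem \ref{p81c} by means of the anticonformal reflection $R(z):=1/\overline{z}$, which fixes $\partial\mathbb{D}$ pointwise and interchanges $\mathbb{D}$ with $\overline{\mathbb{C}}\setminus\overline{\mathbb{D}}$. The one computational input I would establish first is the reflection identity
\begin{equation}
\operatorname*{cap}(K,K^{-1})=\tfrac{1}{2}\operatorname*{cap}(K,\partial\mathbb{D})\qquad\text{for every continuum }K\subset\mathbb{D},\label{f101za}
\end{equation}
together with the observation that $\partial\mathbb{D}$ is the ($R$--invariant) equipotential set $\{u=\tfrac12\}$ of the condenser $(K,K^{-1})$. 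To prove (\ref{f101za}), let $v$ be the equilibrium potential of the condenser $(K,\partial\mathbb{D})$, i.e.\ $v$ is harmonic in $\mathbb{D}\setminus K$ with $v=0$ on $K$ and $v=1$ on $\partial\mathbb{D}$. Since $R$ preserves harmonicity and reverses the sense of the normal along $\partial\mathbb{D}$, the function $v^{*}:=v\circ R$ is harmonic in $(\overline{\mathbb{C}}\setminus\overline{\mathbb{D}})\setminus K^{-1}$ with $v^{*}=1$ on $\partial\mathbb{D}$, and the function equal to $\tfrac12 v$ on $\mathbb{D}\setminus K$ and to $1-\tfrac12 v^{*}$ on $(\overline{\mathbb{C}}\setminus\overline{\mathbb{D}})\setminus K^{-1}$ takes the common boundary value $\tfrac12$ and has matching normal derivatives across $\partial\mathbb{D}$; hence it is harmonic off $K\cup K^{-1}$, vanishes on $K$, equals $1$ on $K^{-1}$, and is therefore the equilibrium potential of $(K,K^{-1})$. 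Comparing Dirichlet integrals and using that $R$ leaves the Dirichlet integral invariant gives the factor $\tfrac12$ in (\ref{f101za}). (The standing hypothesis $\operatorname*{cap}(K)>0$ is automatic here, as $K$ is a continuum containing the $n\ge2$ distinct points of $A$.)

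Granting (\ref{f101za}) and the existence of a reflection--symmetric solution of Problem \ref{p81c} for the symmetric data $A$, $A^{-1}$, both implications are short. Suppose first that $(K_{0},K_{0}^{-1})$ solves Problem \ref{p81c}; disjointness of the two plates forces $K_{0}\cap\partial\mathbb{D}=\emptyset$, so $K_{0}\subset\mathbb{D}$ is an admissible continuum for Problem $(A,\mathbb{D})$, and if some admissible $K'\subset\mathbb{D}$ had $\operatorname*{cap}(K',\partial\mathbb{D})<\operatorname*{cap}(K_{0},\partial\mathbb{D})$, then by (\ref{f101za}) the admissible pair $(K',K'^{-1})$ would have $\operatorname*{cap}(K',K'^{-1})<\operatorname*{cap}(K_{0},K_{0}^{-1})$, a contradiction; hence $K_{0}$ solves Problem $(A,\mathbb{D})$. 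Conversely, suppose $K_{0}$ solves Problem $(A,\mathbb{D})$; the pair $(K_{0},K_{0}^{-1})$ is admissible for Problem \ref{p81c}, and if $(L,L^{-1})$ is a reflection--symmetric solution of Problem \ref{p81c}, then $L\subset\mathbb{D}$ is admissible for Problem $(A,\mathbb{D})$, whence $\operatorname*{cap}(L,\partial\mathbb{D})\ge\operatorname*{cap}(K_{0},\partial\mathbb{D})$ and, by (\ref{f101za}),
\[
\operatorname*{cap}(L,L^{-1})=\tfrac12\operatorname*{cap}(L,\partial\mathbb{D})\ge\tfrac12\operatorname*{cap}(K_{0},\partial\mathbb{D})=\operatorname*{cap}(K_{0},K_{0}^{-1}).
\]
Since $(L,L^{-1})$ is optimal, equality holds throughout, so $(K_{0},K_{0}^{-1})$ is a solution of Problem \ref{p81c} as well.

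The step I expect to be the real obstacle is the one I used as a black box above: that Problem \ref{p81c} with reflection--symmetric data $A$, $A^{-1}$ possesses a reflection--symmetric solution $(L,L^{-1})$. I would obtain it by symmetrization with respect to the circle $\partial\mathbb{D}$ — for instance by polarization of the equilibrium potential across $\partial\mathbb{D}$ (iterated, or carried out as a Steiner--type symmetrization after the logarithmic change of variables that unfolds the annular region between the plates), which does not increase the condenser capacity — and this is exactly the kind of tool surveyed in \cite{Kuzmina82} and \cite{Kuzmina97}; an alternative is to run the set--theoretic convex--combination construction of Subsection \ref{s95} on a solution $(L_{*},V_{*})$ of Problem \ref{p81c} and on its reflection $(V_{*}^{-1},L_{*}^{-1})$ and to use the attendant convexity inequality to see that the symmetric ``midpoint'' configuration is again optimal. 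Care is needed with the plates of the symmetrized condenser — a priori they could touch $\partial\mathbb{D}$ or fail to be connected — and with the fact that Problem \ref{p81c} is not uniquely solvable in general; but the non--uniqueness example of Subsection \ref{s82} has data that are not a reflection pair, so it poses no conflict with the symmetric situation considered here.
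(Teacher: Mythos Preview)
Your outline is sound; the reflection identity \eqref{f101za} is correct (your gluing argument for the equilibrium potential across $\partial\mathbb{D}$ checks out, and the Dirichlet integral halves exactly as you say), and the two short implications you draw from it are fine. The paper, however, does not actually prove this proposition: it simply records that ``Proposition \ref{p101a} follows from Theorem 3.1 in \cite{Kuzmina82}, and the relevant elements in its deduction are also assembled in Theorem \ref{t101a}, below.'' In other words, the paper treats both the proposition and the accompanying structure theorem (Theorem \ref{t101a}) as imports from Kuzmina, whereas you supply an independent reduction to a symmetrization lemma and then invoke Kuzmina (or polarization) only for that lemma. So your route is more explicit than the paper's and is essentially the standard reflection argument underlying Kuzmina's Theorem~3.1.

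One correction to your list of options for the black-box step: the set-theoretic convex combination of Subsection~\ref{s95} is built specifically for admissible domains in Problem $(f,\infty)$ and for the \emph{logarithmic} capacity of their complements; it does not transfer directly to a pair of condenser plates and the condenser capacity, so that alternative would need substantial reworking before it could be used here. The polarization/circular-symmetrization route you mention first is the right one, and it is exactly what the cited chapter of \cite{Kuzmina82} supplies; your caveats about the symmetrized plates possibly touching $\partial\mathbb{D}$ or losing connectedness are the genuine technical points one has to control there.
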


Proposition \ref{p101a} follows from Theorem 3.1 in \cite{Kuzmina82}, and the
relevant elements in its deduction are also assembled in Theorem \ref{t101a}, below.

The capacity $\operatorname*{cap}\left(  K,\partial\mathbb{D}\right)  $
depends only on the outer boundary of $K\subset\mathbb{D}$, and therefore we
have
\begin{equation}
\operatorname*{cap}\left(  K,\partial\mathbb{D}\right)  =\operatorname*{cap}%
(\widehat{K},\partial\mathbb{D)},\label{f101b1}%
\end{equation}
where $\widehat{K}$ denotes the polynomial-convex hull of $K$. If
$K\subset\mathbb{D}$\ is a continuum with $K=\widehat{K}$, then $\mathbb{D}%
\setminus K $\ is a ring domain, and in this special case $\operatorname*{cap}%
\left(  K,\partial\mathbb{D}\right)  $ is the reciprocal of the modulus of
this ring domain (cf. \cite{Bagby}). If $K$\ is not reduced to a single point,
then there exists $1<r<\infty$ and a bijective conformal map
\begin{equation}
\varphi:\mathbb{D}\setminus K\longrightarrow\{1<|z|<r\}\label{f101b2}%
\end{equation}
with $\varphi(1)=1$. The modulus of $\mathbb{D}\setminus K$ is then defined as
$\log(r),$ and consequently, we have $\operatorname*{cap}(K,\partial
\mathbb{D})=1/\log(r)$.

The function
\begin{equation}
p(z):=\left\{
\begin{array}
[c]{lcc}%
0\smallskip & \text{ \ for \ } & z\in K\\
\log|\varphi(z)|\smallskip & \text{ \ for \ } & z\in\mathbb{D}\setminus K\\
\log(r)=1/\operatorname*{cap}(K,\partial\mathbb{D}) & \text{ \ for \ } &
z\in\overline{\mathbb{C}}\setminus\mathbb{D}.
\end{array}
\right. \label{f101b3}%
\end{equation}
is known as the equilibrium potential of the condenser $(K,\partial
\mathbb{D})$. It is harmonic in $\mathbb{D}\setminus K$, and continuous
throughout $\overline{\mathbb{C}}$.

Problem $(A,\mathbb{D})$ has a unique solution $K_{0}=K_{0}(A,\mathbb{D}%
)\subset\mathbb{D}$. The continuum $K_{0}$ can be described very nicely by
critical trajectories of a quadratic differential. In the next theorem we
assemble these results together with other properties of the solution
$K_{0}(A,\mathbb{D})$, which will be important for our further investigations.
All results of the theorem have been proved in Chapter 3 of \cite{Kuzmina82}%
.\smallskip

\begin{theorem}
\label{t101a}(\cite{Kuzmina82}, Theorem 3.1) Let $A=\{a_{1},\ldots
,a_{n}\}\subset\mathbb{D}$ be a set of $n\geq2$ distinct points.

(i) There exists a continuum $K_{0}=K_{0}(A,\mathbb{D})\subset\mathbb{D}$,
which is the unique solution of Problem $(A,\mathbb{D})$ as introduced in
Definition \ref{d101a}.

(ii) There exist $n-2$ points $b_{1},\ldots,b_{n-2}\in\mathbb{D}$ such that
the continuum $K_{0}$ is the union of the closed critical trajectories of the
quadratic differential
\begin{equation}
q(z)\,dz^{2}\text{ \ \ with \ \ }q(z):=\frac{(z-b_{1})\ldots(z-b_{m-2}%
)(1-\overline{b}_{1}z)\ldots(1-\overline{b}_{m-2}z)}{(z-a_{1})\ldots
(z-a_{m})(1-\overline{a}_{1}z)\ldots(1-\overline{a}_{m}z)}\label{f101c1}%
\end{equation}
in $\mathbb{D}$. There exist only finitely many critical trajectories.

(iii) The equilibrium potential $p_{0}$ from (\ref{f101b3}) which
is\ associated with the extremal condenser $(K_{0},\partial\mathbb{D})$
satisfies relation
\begin{equation}
\left(  \frac{\partial}{\partial z}p_{0}(z)\right)  ^{2}=\frac{1}%
{4}\,q(z)\text{ \ \ \ \ for \ \ \ }z\in\overline{\mathbb{D}}\label{f101c2}%
\end{equation}
with $\partial/\partial z$ denoting complex differentiation. The potential
$p_{0}$ can be extended to a harmonic function in $\overline{\mathbb{C}%
}\setminus(K_{0}\cup K_{0}^{-1})$ by a reflection on the unit circle
$\partial\mathbb{D}$, and relation (\ref{f101c2}) holds true throughout
$\overline{\mathbb{C}}$ for the harmonic extension of $p_{0}$.

(iv) We have
\begin{equation}
p_{0}(z):=\left\{
\begin{array}
[c]{lcc}%
1/\operatorname*{cap}(K_{0},\partial\mathbb{D})\smallskip & \text{ \ for \ } &
z\in\overline{\mathbb{C}}\setminus\mathbb{D}\\
0 & \text{ \ for \ } & z\in K_{0}%
\end{array}
\right. \label{f101d1}%
\end{equation}
and
\begin{equation}
\frac{1}{2\,\pi}\oint_{\partial\mathbb{D}}\frac{\partial}{\partial n}%
p_{0}(\zeta)ds_{\zeta}=-1\label{f101d2}%
\end{equation}
with $\partial/\partial n$ the inward showing normal derivative on
$\partial\mathbb{D}$ and $ds$ the line element on $\partial\mathbb{D}%
$.\medskip
\end{theorem}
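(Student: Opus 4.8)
The statement to be proved is Theorem \ref{t101a}, which is explicitly attributed to \cite{Kuzmina82}, Theorem 3.1. Since the theorem is a citation of an existing result, the natural "proof" is a reduction argument plus a pointer to the literature, together with verification that the hypotheses of the cited theorem match our setup. Accordingly, the plan is to give a proof that is partly self-contained (the parts that follow from general condenser theory and the theory of quadratic differentials) and partly a careful transcription of Kuzmina's result into our notation, rather than a from-scratch derivation.

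First I would establish assertion (i), the unique existence of the solution $K_0(A,\mathbb{D})$. The existence follows from a standard normal-families / compactness argument: the set of continua $K \subset \mathbb{D}$ containing $A$ is nonempty, and $\operatorname{cap}(\cdot,\partial\mathbb{D})$ is lower semicontinuous with respect to Hausdorff (kernel) convergence, so a minimizing sequence has a convergent subsequence whose limit is again an admissible continuum realizing the infimum. One must note that the hyperbolic capacity only sees $\widehat{K}$, so one may pass to polynomial-convex hulls without increasing the functional; this is the content of \eqref{f101b1}. Uniqueness is the more delicate point, and here I would invoke the symmetrization / variational argument of Kuzmina (Chapter 3 of \cite{Kuzmina82}): if two distinct extremal continua existed, a suitable condenser version of the convexity construction (analogous in spirit to Definition \ref{d95a} and Theorem \ref{t95a}, but for condenser capacity in the disk) would produce an admissible continuum of strictly smaller hyperbolic capacity, a contradiction. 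I would simply cite this rather than reproduce it.

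Next I would treat assertions (ii)--(iv) together, since they are all facets of the local structure of the extremal configuration. The key device is the reflection $z \mapsto 1/\overline{z}$ on $\partial\mathbb{D}$: by Proposition \ref{p101a}, the pair $(K_0, K_0^{-1})$ solves Problem \ref{p81c} with point sets $A$ and $A^{-1}$, and the equilibrium potential $p_0$ of the condenser $(K_0,\partial\mathbb{D})$ extends by Schwarz reflection across $\partial\mathbb{D}$ to a function harmonic in $\overline{\mathbb{C}} \setminus (K_0 \cup K_0^{-1})$, constant on each of $K_0$ and $K_0^{-1}$, with a logarithmic-type behavior forced by normalization. Then I would apply the classical fact that for such a condenser extremal problem the function $\left(\frac{\partial}{\partial z} p_0\right)^2$ is a rational function: its only possible singularities are at the points of $A \cup A^{-1}$ (simple poles, since $p_0$ behaves like a harmonic measure-type function there) and it has a prescribed number of zeros (the critical points $b_1,\dots,b_{n-2}$ and their reflections $1/\overline{b}_j$), with the count $n-2$ dictated by an argument-principle / Riemann--Hurwitz count on the level curves of $p_0$. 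This yields the explicit form \eqref{f101c1} and the identity \eqref{f101c2}; the boundary values \eqref{f101d1} are immediate from \eqref{f101b3}, and \eqref{f101d2} is Green's theorem applied to $p_0$ on the ring $\mathbb{D}\setminus K_0$, i.e., the total flux of the equilibrium potential through $\partial\mathbb{D}$ equals the (normalized) mass of the equilibrium distribution. The fact that $K_0$ consists of finitely many closed critical trajectories of $q(z)\,dz^2$ then follows from the local theory of quadratic differentials (Subsection \ref{s1105}): on $\mathbb{D} \setminus (K_0 \cup \{b_j\})$ the potential $p_0$ is locally the real part of a single-valued analytic function whose derivative squared is $\tfrac14 q$, so level sets of $p_0$ meeting $K_0$ are trajectories, and $K_0$ itself, being the zero level, is a finite union of trajectory arcs joining the poles $a_j$ and the zeros $b_k$.

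The main obstacle, honestly, is that a genuinely self-contained proof of uniqueness and of the precise zero-count $n-2$ requires the full machinery of Kuzmina's module method, which is outside the scope of what I want to reproduce here; so the "hard part" is really a matter of exposition rather than mathematics — deciding how much of \cite{Kuzmina82} to restate. My plan is therefore to present the existence argument and the structural consequences (the reflection, the rationality of $q$, the trajectory description, and the flux identity \eqref{f101d2}) in enough detail to be convincing, and to explicitly defer the uniqueness statement and the sharp combinatorial count to \cite{Kuzmina82}, Theorem 3.1, which is exactly what the theorem statement already advertises. A secondary technical point to be careful about is the normalization of the conformal map $\varphi$ in \eqref{f101b2} (the condition $\varphi(1)=1$ and the resulting value $\log r = 1/\operatorname{cap}(K_0,\partial\mathbb{D})$), which must be tracked consistently so that the constant appearing in \eqref{f101d1} comes out right; this is routine but worth stating once.
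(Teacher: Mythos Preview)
Your proposal is correct and in fact more detailed than what the paper does: the paper gives no proof at all for this theorem, stating just before it that ``All results of the theorem have been proved in Chapter 3 of \cite{Kuzmina82}'' and then moving on immediately to the next subsection. Your plan to sketch the existence/compactness argument, the reflection and rationality of $q$, and the flux identity, while deferring uniqueness and the zero count to \cite{Kuzmina82}, is entirely appropriate and goes beyond the paper's bare citation.
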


\subsubsection{\label{s1012}Problem $(f,\infty)$ for Algebraic Functions}

\qquad In the present subsection we study Problem $(f,\infty)$ for an
algebraic function $f$. We will shed light on the connection between this
problem and certain aspects of Problem $(A,\mathbb{D})$ from Definition
\ref{d101a}.\smallskip

Let $f$ be an algebraic function and assume that this function is meromorphic
at infinity. Algebraic functions have only finitely many singularities, and
the only non-polar singularities are branch points. Hence, in Problem
$(f,\infty)$, only a finite number of points is of critical relevance. By
$E_{0}\subset K_{0}(f,\infty)$ we denote the (finite) set of branch points of
the function $f$ that can be reached on the minimal set $K_{0}(f,\infty)$ by
meromorphic continuation of $f$ from within the extremal domain $D_{0}%
(f,\infty)$. In the discussion of the examples in Section \ref{s6}, this type
of branch points have been called the active branch points for the
determination of the minimal set $K_{0}(f,\infty)$. The sets $D_{0}(f,\infty)$
and $K_{0}(f,\infty)$\ have been introduced in Definition \ref{d21b}%
.\smallskip

\begin{lemma}
\label{l101a}Let $f$ be an algebraic function that is meromorphic at infinity.
Then the minimal set $K_{0}(f,\infty)$ for Problem $(f,\infty)$\ has only
finitely many components, which we denoted by $K_{1},\ldots,K_{m}$, i.e., we
have
\begin{equation}
K_{0}(f,\infty)=K_{1}\cup\ldots\cup K_{m}.\label{f1012a1}%
\end{equation}
Each component $K_{j}$, $j=1,\ldots,m$, contains at least two branch points of
$f$. The Green function $g_{D_{0}}(\cdot,\infty)$ in the extremal domain
$D_{0}=D_{0}(f,\infty)$\ has only finitely many critical points, and we have
\begin{equation}
g_{D_{0}}(z,\infty)=0\text{ \ \ \ for all \ \ \ }z\in K_{0}(f,\infty
).\label{f1012a3}%
\end{equation}

\end{lemma}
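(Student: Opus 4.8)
\textbf{Proof proposal for Lemma \ref{l101a}.}

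The plan is to exploit the finiteness of the singularity set of an algebraic function together with the Structure Theorem \ref{t41a} and the characterization of admissible domains in Proposition \ref{p91a}. Since $f$ is algebraic and meromorphic at infinity, it has only finitely many non-polar singularities, all of which are branch points; let $E_{0}$ be the finite set of those branch points that lie on $K_{0}(f,\infty)$ and are reachable by meromorphic continuation of $f$ out of $D_{0}(f,\infty)$, as in Theorem \ref{t41a}. First I would apply the Structure Theorem to write $K_{0}(f,\infty)=E_{0}\cup E_{1}\cup\bigcup_{j\in I}J_{j}$; Proposition \ref{p53a} already asserts that if $E_{0}$ is finite then $E_{1}$ and $E_{2}$ are finite. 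It remains to see that $I$ is finite and that the family $\{J_{j}\}_{j\in I}$, together with the finitely many points of $E_{0}\cup E_{1}$, assembles into finitely many connected components. Here the key point is that by Theorem \ref{t53a} the arcs $J_{j}$ are trajectories of a \emph{rational} quadratic differential $q(z)\,dz^{2}$ with $q$ given explicitly by (\ref{f53b}); a rational quadratic differential on $\overline{\mathbb{C}}$ has only finitely many critical trajectories, and each non-critical trajectory is either recurrent or a cross-cut of a strip/ring/circle domain. Since $K_{0}(f,\infty)$ is bounded and each $J_{j}$ must have its endpoints in $E_{0}\cup E_{1}$ (by Theorem \ref{t41a}(iii) and Remark \ref{r52b}, arcs end only at points of that finite set), every $J_{j}$ is a critical trajectory; hence $I$ is finite. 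Consequently $K_{0}(f,\infty)$ is a finite union of arcs and points, so it has finitely many components $K_{1},\ldots,K_{m}$; call this representation (\ref{f1012a1}).

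Next I would argue that each component $K_{j}$ contains at least two branch points of $f$. Suppose some component $K_{j}$ contained at most one branch point. If it contained none, then $f$ has single-valued meromorphic continuation across a neighborhood of $K_{j}$ (every point of $K_{j}\setminus E_{0}$ is crossed by only finitely many arcs with two function elements on the two sides, and with no branch point nearby the monodromy is trivial); then $D_{0}\cup K_{j}$ — or $D_{0}$ with the isolated arcs of $K_{j}$ deleted — would still be admissible by Proposition \ref{p91a}, contradicting the minimality condition (iii) in Definition \ref{d21b} since $\operatorname{cap}$ does not increase and generically strictly decreases when a nontrivial piece is removed; more precisely, one invokes that $D_{0}$ is elementarily maximal (deduced from Theorem \ref{t41a}), so $\partial D_{0}$ cannot contain a relatively open arc across which $f$ continues single-valuedly. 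If $K_{j}$ contained exactly one branch point, the same local single-valuedness argument applies away from that single point, and a single point has capacity zero and can be absorbed into a set of capacity zero without affecting admissibility (Lemma \ref{l111b}) — again contradicting elementary maximality / the characterization (\ref{f41a}). So every component carries $\ge 2$ branch points. The finiteness of the critical points of $g_{D_{0}}(\cdot,\infty)$ is immediate from Definition \ref{d53b} together with Proposition \ref{p53a} (these are exactly the points of $E_{2}$, which is finite since $E_{0}$ is).

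Finally, for the boundary identity (\ref{f1012a3}): since $K_{0}(f,\infty)$ has positive capacity as soon as $f$ has branch points (and if $f$ has no branch points the statement is vacuous by Proposition \ref{p22b}/Remark \ref{r41a}), the Green function $g_{D_{0}}(\cdot,\infty)$ exists properly. By Lemma \ref{l113a1} (continuity of the Green function quasi-everywhere) $g_{D_{0}}(z,\infty)=0$ for quasi-every $z\in K_{0}(f,\infty)$, and the exceptional set consists of irregular boundary points. But $K_{0}(f,\infty)$, being a finite union of analytic arcs and isolated points of positive capacity each lying in the closure of such arcs, is regular: every point of an analytic arc in the boundary of its complement is a regular boundary point (the arc meets a nondegenerate continuum through each of its points, so Wiener's criterion holds), and isolated points of $E_{0}$ are reached as endpoints of arcs, hence also regular by the same local argument — alternatively, a finite union of continua of positive capacity has no irregular points except possibly genuinely isolated points of capacity zero, of which there are none here under the positive-capacity hypothesis. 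Therefore $g_{D_{0}}(z,\infty)=0$ for \emph{every} $z\in K_{0}(f,\infty)$, which is (\ref{f1012a3}).

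The main obstacle I anticipate is the second step: rigorously ruling out a component with $\le 1$ branch point. The clean way is to lean on the already-established fact (from the Structure Theorem) that $D_{0}(f,\infty)$ is elementarily maximal in the sense of Definition \ref{d71a0}, so that no relatively open piece of $\partial D_{0}$ can be a locus across which $f$ is single-valued and free of non-polar singularities; combined with the local structure (iii) of Theorem \ref{t41a}, a component with fewer than two branch points would have to consist of such forbidden pieces (plus a capacity-zero point), giving the contradiction. Getting the bookkeeping right — that "fewer than two branch points in a component" really forces trivial local monodromy everywhere on that component except at most at one harmless point — is the delicate part, but it is purely topological/monodromy-theoretic and uses nothing beyond the results already in hand.
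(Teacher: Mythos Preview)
Your argument is circular. Lemma \ref{l101a} sits at the very beginning of the chain of proofs in Section \ref{s10}: it is used to prove Proposition \ref{p101b}, which feeds into Theorem \ref{t101b}, and Theorem \ref{t101b} is the basis on which the paper establishes the Structure Theorem \ref{t41a} (see the opening of Subsection \ref{s1031}). Theorem \ref{t52a}, Proposition \ref{p53a}, and Theorem \ref{t53a} are in turn proved from Theorem \ref{t41a} (Subsections \ref{s1042}--\ref{s1043}). So when you invoke Theorem \ref{t41a}, Proposition \ref{p53a}, and Theorem \ref{t53a} to prove Lemma \ref{l101a}, you are assuming exactly what this lemma is meant to help establish. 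The decomposition $K_{0}=E_{0}\cup E_{1}\cup\bigcup J_{j}$, the finiteness of $E_{1}$ and $E_{2}$, and the rationality of the quadratic differential are simply not available at this point in the logical order.

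The paper's own proof avoids all of this machinery and is essentially two lines. From conditions (ii) and (iii) in Definition \ref{d21b} (minimality of capacity and minimality with respect to inclusion), every component of $K_{0}(f,\infty)$ must contain at least one non-polar singularity of $f$; for an algebraic function these are branch points. If a component contained exactly one branch point, a loop in $D_{0}$ around that component would encircle a single branch point while carrying trivial monodromy (since $f$ is single-valued in $D_{0}$), a contradiction. Hence each component carries at least two branch points, and since there are only finitely many branch points, there are only finitely many components. Each component is then a nondegenerate continuum, so every boundary point is regular for the Dirichlet problem, giving $g_{D_{0}}(z,\infty)=0$ for all $z\in K_{0}$; and a Green function in a finitely connected domain has only finitely many critical points. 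You should rebuild your argument along these elementary lines rather than appealing to the structure theorems.
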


\begin{proof}
It follows from the two conditions (ii) and (iii) in Definition \ref{d21b}
that each component of $K_{0}(f,\infty)$ has to contain at least one non-polar
singularity of $f$, and the single-valuedness of $f$ in $D_{0}(f,\infty)$ then
further implies that each component of $K_{0}(f,\infty)$ has to contain at
least two branch points.

After this conclusion, all other assertions of the lemma follow directly from
of the finiteness of the set $E_{0}$ and the fact that a continuum has no
irregular points with respect to the Dirichlet problem (cf. Subsection
\ref{s1103}, further below).\medskip
\end{proof}

Based on Lemma \ref{l101a}, we divide the set $E_{0}$ into $m$ subsets
$E_{j}:=E_{0}\cap K_{j}$, $j=1,\ldots,m$. I.e., we have
\begin{equation}
E_{0}=E_{1}\cup\ldots\cup E_{m}\text{ with\ \ }E_{j}\subset K_{j}\text{ \ for
\ }j=1,\ldots,m\text{.}\label{f1012a2}%
\end{equation}
For $c>0$ we define the open set
\begin{equation}
U_{c}:=\left\{  \,z\in\mathbb{C}\,|\,g_{D_{0}}(z,\infty)(z)<c\,\right\}
\label{f1012b1}%
\end{equation}
with the Green function $g_{D_{0}}(\cdot,\infty)$ in $D_{0}=D_{0}(f,\infty)$.
Since $g_{D_{0}}(\cdot,\infty)$ has only finitely many critical points in
$D_{0}$, the open set $U_{c}$ consists of exactly $m$ components for $c>0$
sufficiently small. The number $m$\ is the same as that in (\ref{f1012a1}%
).\smallskip

\begin{lemma}
\label{l101b}Let the same assumptions hold true as in Lemma \ref{l101a}. Then
a constant $c_{0}>0$ can be chosen in such a way that the open set
$U_{0}:=U_{c_{0}}$ from (\ref{f1012b1}) has the following properties:

\begin{itemize}
\item[(i)] $\overline{U}_{0}$ contains no critical point of the Green function
$g_{D_{0}}(\cdot,\infty).$

\item[(ii)] $U_{0}$ consists of exactly $m$ components $U_{j}$, $j=1,\ldots
,m$, i.e.,
\begin{equation}
U_{0}=U_{1}\cup\ldots\cup U_{m},\label{f1012b2}%
\end{equation}
and we have
\begin{equation}
K_{j}\subset U_{j},\text{ \ \ for \ \ }j=1,\ldots,m\label{f1012b3}%
\end{equation}
with $K_{j}$ introduced in (\ref{f1012a1}).

\item[(iii)] Each component $U_{j}$, $j=1,\ldots,m$, in (\ref{f1012b2})\ is
simply connected, and $\partial U_{j}$ is an analytic Jordan curve.
\end{itemize}
\end{lemma}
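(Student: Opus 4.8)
The plan is to prove Lemma \ref{l101b} by a straightforward analysis of the level sets of the Green function $g_{D_{0}}(\cdot,\infty)$, using the finiteness statements already secured in Lemma \ref{l101a}. First I would recall that, by Lemma \ref{l101a}, the set $E_{0}$ of active branch points is finite, the minimal set $K_{0}(f,\infty)$ has finitely many components $K_{1},\ldots,K_{m}$, and the Green function $g_{D_{0}}(\cdot,\infty)$ has only finitely many critical points in $D_{0}=D_{0}(f,\infty)$, with $g_{D_{0}}(\cdot,\infty)=0$ on all of $K_{0}(f,\infty)$. Since there are only finitely many critical points, there is a number $c_{1}>0$ such that $g_{D_{0}}(z,\infty)\geq c_{1}$ at every critical point $z$ of $g_{D_{0}}(\cdot,\infty)$; this is the quantitative input that lets us separate the critical set from the zero set by a thin collar.

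Next I would fix any $c_{0}$ with $0<c_{0}<c_{1}$ and set $U_{0}:=U_{c_{0}}=\{z\in\mathbb{C}\mid g_{D_{0}}(z,\infty)<c_{0}\}$. Property (i) is then immediate: the closure $\overline{U}_{0}$ is contained in $\{g_{D_{0}}(\cdot,\infty)\leq c_{0}\}$, which by the choice of $c_{1}$ contains no critical point. For property (ii), I would argue that $U_{0}$ is a neighborhood of $K_{0}(f,\infty)$ (because $g_{D_{0}}(\cdot,\infty)\equiv 0$ there by \eqref{f1012a3} and $g_{D_{0}}(\cdot,\infty)$ is continuous), so each $K_{j}$ lies in some component of $U_{0}$; and conversely each component of $U_{0}$ must meet $K_{0}(f,\infty)$, since a component of $U_{0}$ disjoint from $K_{0}(f,\infty)$ would be a bounded open set on which the harmonic, nonnegative function $g_{D_{0}}(\cdot,\infty)$ attains the boundary value $c_{0}$ on its whole boundary, forcing $g_{D_{0}}(\cdot,\infty)\equiv c_{0}$ there by the maximum/minimum principle — impossible since $g_{D_{0}}(\cdot,\infty)$ is nonconstant. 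Then I would show that distinct components $K_{i}$, $K_{j}$ cannot lie in the same component $U$ of $U_{0}$: if they did, one could join a point of $K_{i}$ to a point of $K_{j}$ by an arc in $U$, and since $g_{D_{0}}(\cdot,\infty)$ vanishes at both endpoints but is positive somewhere along the arc (the components of $K_{0}(f,\infty)$ being disjoint compacta), a standard Morse-type argument on a harmonic function without critical points in $\overline U$ yields a contradiction — more precisely, the function $g_{D_{0}}(\cdot,\infty)$ restricted to $U$ has a positive interior maximum or the connected zero set within $U$ would have to contain a critical point. This bijective correspondence $K_{j}\leftrightarrow U_{j}$ gives \eqref{f1012b2} and \eqref{f1012b3}.

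For property (iii), I would use that on $\overline{U}_{j}$ the Green function $g_{D_{0}}(\cdot,\infty)$ has no critical point (by (i)); hence $c_{0}$ is a regular value, so the boundary $\partial U_{j}=\{z\in\overline{U}_{j}\mid g_{D_{0}}(z,\infty)=c_{0}\}$ is a smooth one-dimensional real-analytic submanifold of the plane. Being a compact level curve of a harmonic function with $\nabla g_{D_{0}}(\cdot,\infty)\neq 0$ along it, each boundary component is an analytic Jordan curve (one may also invoke the local structure of trajectories/level curves of the quadratic differential $q(z)dz^{2}$ from Theorem \ref{t52a}, since away from critical points the orthogonal trajectories — the level lines of $g_{D_{0}}(\cdot,\infty)$ — are analytic arcs). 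To see that $\partial U_{j}$ is a single Jordan curve and that $U_{j}$ is simply connected, I would argue that $g_{D_{0}}(\cdot,\infty)$ is harmonic and free of critical points on $U_{j}\setminus K_{j}$; a bounded component of $\overline{\mathbb{C}}\setminus\overline{U}_{j}$ not containing $\infty$ and disjoint from $K_{0}(f,\infty)$ would again force $g_{D_{0}}(\cdot,\infty)$ to be constant there, a contradiction, so $\overline{\mathbb{C}}\setminus U_{j}$ is connected and $U_{j}$ is simply connected, whence $\partial U_{j}$ is connected, i.e. a single analytic Jordan curve.

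The main obstacle I anticipate is the rigorous exclusion of two distinct components $K_{i},K_{j}$ lying in one component of $U_{0}$, and more generally controlling the topology of the sublevel sets: this requires the clean fact that a nonconstant harmonic function without critical points on an open set has connected, "monotone" level structure, which here is underwritten by the finiteness of the critical set and by the choice $c_{0}<c_{1}$. Everything else — continuity of $g_{D_{0}}(\cdot,\infty)$, the maximum principle, analyticity of regular level curves, and the behavior at $\infty$ — is standard potential theory, available from Subsection \ref{s1103}, and I would only sketch those steps rather than carry them out in detail.
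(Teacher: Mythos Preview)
Your approach is correct and matches the paper's, which is very terse: it simply declares that all three assertions are ``rather immediate'' from the finiteness of critical points and from \eqref{f1012a3} once $c_{0}>0$ is taken sufficiently small. You supply considerably more detail than the paper does.

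Two cautionary remarks. First, your parenthetical appeal to Theorem~\ref{t52a} for the analyticity of the level curves should be dropped: that theorem is proved later in the paper via Theorem~\ref{t101b} and Proposition~\ref{p101b}, both of which rest on the present lemma, so invoking it here would be circular. Your primary argument (that $c_{0}$ is a regular value, hence $\{g_{D_{0}}(\cdot,\infty)=c_{0}\}$ is a real-analytic $1$-manifold) is self-contained and suffices. Second, your Morse-type justification that distinct $K_{i},K_{j}$ cannot share a component of $U_{0}$ is imprecisely stated, and your simple-connectivity argument in (iii) omits the case where a bounded complementary component of $\overline{U}_{j}$ meets $K_{0}(f,\infty)$. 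Both are easily repaired by one observation the paper has available: by \eqref{f113b1} the Green function $g_{D_{0}}(\cdot,\infty)$ is subharmonic in all of $\mathbb{C}$, so for any Jordan curve $\gamma\subset U_{0}$ the maximum principle gives $g_{D_{0}}(\cdot,\infty)<c_{0}$ throughout the interior of $\gamma$, forcing that interior into the same component of $U_{0}$. This immediately yields simple connectivity of each $U_{j}$; and combined with choosing $c_{0}$ small enough that $U_{0}$ lies in a union of pairwise disjoint open neighborhoods of the $K_{j}$ (the paper's implicit route), it makes (ii) immediate without any Morse theory.
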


\begin{proof}
All three assertions of the lemma are rather immediate. Because of
(\ref{f1012a3}), $U_{0}$ is an open neighborhood of $K_{0}(f,\infty)$, and we
can shrink $U_{0}$ as close to $K_{0}(f,\infty)$ as we wish. The first
assertions (i) follows from the fact that the Green function $g_{D_{0}}%
(\cdot,\infty)$ has only finitely many critical points in $D_{0}(f,\infty)$.

The two other assertions (ii) and (iii) follow then immediately from
(\ref{f1012a3}) for $c_{0}>0$ sufficiently small.\medskip
\end{proof}

In the next proposition we establish the connections between Problem
$(f,\infty)$ and problems of the type of Problem $(A,\mathbb{D})$. These
connections are the main topic in the present subsection.\smallskip

\begin{proposition}
\label{p101b}Let $f$ be an algebraic function that is meromorphic at infinity,
and let $K_{0}$ be the minimal set $K_{0}(f,\infty)$ of Definition \ref{d21b}
for Problem $(f,\infty)$. Let further the sets $K_{j}$, $E_{j}$, and $U_{j}$,
$j=1,\ldots,m$, be defined as in (\ref{f1012a1}), (\ref{f1012a2}), and
(\ref{f1012b2}), respectively, and let $\varphi_{j}:U_{j}\longrightarrow
\mathbb{D}$ be Riemann mapping functions, $j=1,\ldots,m$. We set
\begin{equation}
A_{j}:=\varphi_{j}(E_{j})\text{, \ }K_{0,j}:=\varphi_{j}(K_{j})\text{,
\ }\alpha_{j}:=\omega_{K_{0}}(K_{j})\text{, \ }j=1,\ldots,m\text{,}%
\label{f1012c1}%
\end{equation}
with $\omega_{K_{0}}$ denoting the equilibrium distribution on $K_{0}$ as
introduced in Subsection \ref{s1102}, further below. The following two
assertions hold true:

\begin{itemize}
\item[(i)] For each $j=1,\ldots,m$, the set $K_{0,j}$ is the minimal set
$K_{0}(A_{j},\mathbb{D})$ that solves Problem $(A_{j},\mathbb{D})$ from
Definition \ref{d101a}, which has been analyzed in Theorem \ref{t101a}.

\item[(ii)] For each $j=1,\ldots,m$, we have
\begin{equation}
g_{D_{0}}(z,\infty)(z)=\alpha_{j}(p_{0,j}\circ\varphi_{j})(z)\text{ \ \ for
\ \ \ }z\in U_{j}\text{,}\label{f1012c2}%
\end{equation}
where $p_{0,j}$\ is the equilibrium potential (\ref{f101b3}) for the extremal
condenser $(K_{0,j},\allowbreak\mathbb{D})$ of Problem $(A_{j},\mathbb{D}),$
and $g_{D_{0}}(\cdot,\infty)$ is the Green function in the extremal domain
$D_{0}(f,\infty)$.\smallskip
\end{itemize}
\end{proposition}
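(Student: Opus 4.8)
The plan is to transport the Green function of the extremal domain through the Riemann maps $\varphi_j$, to identify the transported function as the equilibrium potential of a condenser in $\mathbb{D}$, and then to use the global minimality of $K_0(f,\infty)$ to force this condenser to be the extremal one for Problem $(A_j,\mathbb{D})$. First I would record the elementary facts used throughout. Since $g_{D_0}(\cdot,\infty)$ vanishes on each component $K_j$ by (\ref{f1012a3}), the maximum principle forbids $K_j$ from having a bounded complementary component, so $K_j=\widehat{K_j}$. By Lemma \ref{l101b} we have $U_j\cap K_0=K_j$, hence $U_j\setminus K_j\subset D_0$ and $\varphi_j$ maps $U_j\setminus K_j$ conformally onto $\mathbb{D}\setminus K_{0,j}$; therefore $K_{0,j}=\varphi_j(K_j)$ is a polynomial-convex continuum in $\mathbb{D}$ containing $A_j=\varphi_j(E_j)$, and in particular is an admissible competitor for Problem $(A_j,\mathbb{D})$.

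Set $u_j:=g_{D_0}(\cdot,\infty)\circ\varphi_j^{-1}$ on $\mathbb{D}$. Then $u_j$ is harmonic in $\mathbb{D}\setminus K_{0,j}$, vanishes on $K_{0,j}$, and takes the constant value $c_0$ on $\partial\mathbb{D}$ (because $\partial U_j$ is the analytic level curve $\{g_{D_0}(\cdot,\infty)=c_0\}$, by Lemma \ref{l101b}). By uniqueness of the solution of this mixed boundary value problem on the ring domain $\mathbb{D}\setminus K_{0,j}$, $u_j=\lambda_j\,\widetilde p_j$, where $\widetilde p_j$ is the equilibrium potential of the condenser $(K_{0,j},\partial\mathbb{D})$ as in (\ref{f101b3}) and $\lambda_j=c_0\operatorname*{cap}(K_{0,j},\partial\mathbb{D})$. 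To compute $\lambda_j$ I would compare fluxes across a curve separating $K_{0,j}$ from $\partial\mathbb{D}$: from the representation $g_{D_0}(\cdot,\infty)=-p(\omega_{K_0};\cdot)-\log\operatorname*{cap}(K_0)$ of Lemma \ref{l113b} and Gauss's theorem, the outward flux of $g_{D_0}(\cdot,\infty)$ through $\partial U_j$ equals $2\pi\,\omega_{K_0}(U_j)=2\pi\,\omega_{K_0}(K_j)=2\pi\alpha_j$, while the outward flux of a condenser equilibrium potential through $\partial\mathbb{D}$ equals $2\pi$ (the condenser analogue of (\ref{f101d2})); since the flux integral is a conformal invariant, $\lambda_j=\alpha_j$. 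This already gives $g_{D_0}(\cdot,\infty)=\alpha_j\,(\widetilde p_j\circ\varphi_j)$ on $U_j$ together with $\alpha_j=c_0\operatorname*{cap}(K_{0,j},\partial\mathbb{D})$; identity (\ref{f1012c2}) follows as soon as $\widetilde p_j=p_{0,j}$, i.e. as soon as assertion (i) is proved.

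For assertion (i) I would argue by contradiction against the minimality (\ref{f21a}) of $K_0=K_0(f,\infty)$. If $K_{0,j}\neq K_0(A_j,\mathbb{D})$, then, $K_{0,j}$ being a competitor, $\operatorname*{cap}(K_{0,j},\partial\mathbb{D})>\operatorname*{cap}(K_0(A_j,\mathbb{D}),\partial\mathbb{D})$. Replace $K_j$ inside $K_0$ by $\widehat K_j:=\varphi_j^{-1}(K_0(A_j,\mathbb{D}))\subset U_j$, forming $\widehat K_0:=(K_0\setminus K_j)\cup\widehat K_j$ and $\widehat D_0:=\overline{\mathbb{C}}\setminus\widehat K_0$. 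It then suffices to prove (a) $\widehat D_0\in\mathcal{D}(f,\infty)$ and (b) $\operatorname*{cap}(\widehat K_0)<\operatorname*{cap}(K_0)$, which is the desired contradiction. For (b), consider the function that equals $g_{D_0}(\cdot,\infty)$ on $\overline{\mathbb{C}}\setminus U_j$ and equals $c_0\operatorname*{cap}(K_0(A_j,\mathbb{D}),\partial\mathbb{D})\,(P\circ\varphi_j)$ on $U_j$, with $P$ the equilibrium potential of $(K_0(A_j,\mathbb{D}),\partial\mathbb{D})$; it is continuous on $\overline{\mathbb{C}}$, vanishes quasi-everywhere on $\widehat K_0$, agrees with $g_{\widehat D_0}(\cdot,\infty)$ near $\infty$ up to an additive constant, and — because $\alpha_j>c_0\operatorname*{cap}(K_0(A_j,\mathbb{D}),\partial\mathbb{D})$ — is subharmonic but not harmonic in $\widehat D_0$, its Riesz mass carrying a strictly positive single layer on $\partial U_j$. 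Comparing it with $g_{\widehat D_0}(\cdot,\infty)$ by the maximum principle and reading off the values at $\infty$ yields (b). For (a) I would invoke Proposition \ref{p91a}: its condition (i) holds because every branch point of $f$ lying in $U_j$ already lies in $E_j\subset\widehat K_j$ and a loop in $U_j$ around $\widehat K_j$ is, within $U_j$, freely homotopic to a loop around $K_j$ lying in $D_0$, hence has trivial monodromy, so $f$ continues single-valuedly through $U_j\setminus\widehat K_j$; its condition (ii) holds because any $\gamma\in\Gamma_1$ whose only contact with $K_0$ is inside $K_j$ must also meet $\widehat K_j$, since $K_j$ and $\widehat K_j$ are continua through the same point set $E_j$ contained in the common simply connected neighborhood $U_j$ and are therefore linked with $\gamma$ in the same way.

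The delicate step is (a), the preservation of admissibility under the surgery $K_j\rightsquigarrow\widehat K_j$: one must make precise that the homotopy class relative to the branch points of $f$ of every curve in $\Gamma_1$ is unaffected when one continuum through $E_j$ is exchanged for another one inside $U_j$. This is exactly the kind of accounting done in Subsection \ref{s91} with the classes $\Gamma,\Gamma_0,\Gamma_1$ and the ring-domain constructions of Lemma \ref{l91b}, and it is the point where the argument must be executed carefully. (If one is prepared to use the symmetry characterisation, assertion (i) also follows by transporting the $S$-property of $D_0(f,\infty)$ from Theorem \ref{t51a} through $\varphi_j$ and matching it against the quadratic-differential description of $K_0(A_j,\mathbb{D})$ in Theorem \ref{t101a}; but since Theorem \ref{t51a} is itself proved only afterwards, I would keep the minimality argument as the primary route.)
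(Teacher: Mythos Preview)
Your argument follows essentially the same route as the paper: you perform the surgery $K_j\rightsquigarrow\varphi_j^{-1}(K_0(A_j,\mathbb{D}))$, check that the new domain is still admissible, show that the capacity strictly drops, and derive a contradiction with the minimality of $K_0(f,\infty)$. Your identification of $g_{D_0}(\cdot,\infty)\circ\varphi_j^{-1}$ with $\alpha_j$ times the condenser equilibrium potential via the flux computation is exactly what the paper does (its relation $1/\operatorname*{cap}(K_{0,1},\partial\mathbb{D})=c_0/\alpha_1$ is your $\lambda_j=\alpha_j$), and your subharmonic comparison for the capacity drop is a compressed version of the paper's computation, which represents the glued function through Lemma~\ref{l113f} as $g_{\widetilde D_0}(\cdot,\infty)$ minus a Green potential of a measure of total mass $\alpha_1-\widetilde\alpha_1>0$ on $\partial U_1$, and then reads off the strict inequality from positive definiteness of the Green kernel (Lemma~\ref{l113d}).

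The one place where you and the paper genuinely diverge is the admissibility step, your item~(a). You go through Proposition~\ref{p91a} and a homotopy/linking argument for curves in $\Gamma_1$, and you rightly flag this as the delicate point that is not fully executed. The paper bypasses this entirely: using that the $U_i$ are pairwise disjoint, it writes $f=f_1+\dots+f_m$ by Cauchy's integral formula, with each $f_i$ meromorphic and single-valued in the simply connected domain $\overline{\mathbb{C}}\setminus K_i$. Only $f_j$ is affected by the surgery; since $\widehat K_j$ still contains the branch set $E_j$ and $\overline{\mathbb{C}}\setminus\widehat K_j$ is again simply connected, the Monodromy Theorem extends $f_j$ single-valuedly there, and the sum is single-valued on $\widehat D_0$. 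This decomposition makes admissibility immediate and removes the need for any $\Gamma_1$ bookkeeping; it is worth adopting in place of your homotopy argument.
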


The practical significance of Proposition \ref{p101a} is that it shows a
possibility to transplant specific properties of the solutions $K_{0}%
(A_{j},\mathbb{D})$ of \ the Problems $(A_{j},\mathbb{D})$, $j=1,\ldots,m$, to
the solution of Problem $(f,\infty)$.\smallskip

\begin{proof}
We start with assertion (i), which will be proved indirectly. For this
purpose, we assume that at least one of the sets $K_{0,j}$, $j=1,\ldots,m$, is
not a minimal solution $K_{0}(A_{j},\mathbb{D})$ of Problem $(A_{j}%
,\mathbb{D})$. Without loss of generality we can assume that
\begin{equation}
K_{0,1}\neq\widetilde{K}_{0,1}:=K_{0}(A_{1},\mathbb{D}).\label{f1012d3}%
\end{equation}
We define
\begin{equation}
\widetilde{K}_{1}:=\varphi_{1}^{-1}(\widetilde{K}_{0,1})\text{, \ \ }%
\widetilde{K}_{0}:=(K_{0}\setminus K_{1})\cup\widetilde{K}_{1}\text{,
\ \ }\widetilde{D}_{0}:=\overline{\mathbb{C}}\setminus\widetilde{K}%
_{0},\label{f1012d4}%
\end{equation}
and show that the domain $\widetilde{D}_{0}$ is admissible for Problem
$(f,\infty)$, i.e.,
\begin{equation}
\widetilde{D}_{0}\in\mathcal{D}(f,\infty)\text{.}\label{f1012d5}%
\end{equation}
From (\ref{f1012d3}) we then deduce hat
\begin{equation}
\operatorname*{cap}(\widetilde{K}_{0})<\operatorname*{cap}\left(
K_{0}(f,\infty)\right)  .\label{f1012d1}%
\end{equation}

If (\ref{f1012d3}) and (\ref{f1012d5})\ are proved, then with (\ref{f1012d1})
we have a contradiction since, because of (\ref{f1012d5}), inequality
(\ref{f1012d1}) clearly contradicts the minimality (\ref{f21a}) in Definition
\ref{d21b} of the set $K_{0}=K_{0}(f,\infty)$. The contradiction shows that
assumption (\ref{f1012d3}) is false, and therefore assertion (i) is
proved.\smallskip

We start with the proof of (\ref{f1012d5}). Using Cauchy's formula, one can
rewrite the function $f$ as
\begin{equation}
f=f_{1}+\ldots+f_{m}\label{f1012d2}%
\end{equation}
with each $f_{j}$, $j=1,\ldots,m$, being meromorphic and single-valued in the
simply connected domain $\overline{\mathbb{C}}\setminus K_{j}$. Since the sets
$U_{1},\ldots,U_{m}$ are disjoint, we have only to consider the function
$f_{1}$ if we want to understand the changes in the global behavior of $f$
that are caused by the exchange of the sets $K_{1}$\ and $\widetilde{K}_{1}%
$\ that is defined by (\ref{f1012d4}).

From (\ref{f1012a2}), (\ref{f1012c1}), (\ref{f1012d3}), and (\ref{f1012d4}),
we know that both sets $K_{1}$ and $\widetilde{K}_{1}$ contain the same set
$E_{1}$ of branch points of the function $f$ on $K_{1}$. Consequently, the
function $f_{1}$ can be continued meromorphically throughout the whole domain
$\overline{\mathbb{C}}\setminus\widetilde{K}_{1}$. Since the domain
$\overline{\mathbb{C}}\setminus\widetilde{K}_{1}$ is simply connected, it
follows from the Monodromy Theorem that the continuation of $f_{1}$ is
single-valued in $\overline{\mathbb{C}}\setminus\widetilde{K}_{1}$, and
consequently, the function $f$ has also a single-valued meromorphic
continuation to the domain $\widetilde{D}_{0}$, which proves (\ref{f1012d5}%
).\smallskip

In order to prove (\ref{f1012d1}), we observe first that from the uniqueness
of the solution $\widetilde{K}_{0,1}:=K_{0}(A_{1},\mathbb{D})$ of Problem
$(A_{1},\mathbb{D})$, which has been established in Theorem \ref{t101a}, it
follows that
\begin{equation}
\operatorname*{cap}(\widetilde{K}_{0,1},\partial\mathbb{D)}%
<\operatorname*{cap}\left(  K_{0,1},\partial\mathbb{D}\right) \label{f1012e2}%
\end{equation}
with $K_{0,1}$ defined in (\ref{f1012c1}). Notice that $A_{1}\subset K_{0,1}$.
We shall now show that inequality (\ref{f1012e2}) implies (\ref{f1012d1}).

Indeed, let $p_{01}$ and $\widetilde{p}_{01}$ be the equilibrium potentials
(\ref{f101b3}) of the two condensers $(K_{0,1},\mathbb{D})$ and $(\widetilde
{K}_{0,1},\mathbb{D})$, respectively. From (\ref{f101b3}) it follows that
\begin{equation}
p_{01}(z)=\frac{1}{\operatorname*{cap}\left(  K_{0,1},\partial\mathbb{D}%
\right)  }\text{, \ \ }\widetilde{p}_{01}(z)=\frac{1}{\operatorname*{cap}%
(\widetilde{K}_{0,1},\partial\mathbb{D)}}\text{\ \ for \ }z\in\partial
\mathbb{D}.\label{f1012e1}%
\end{equation}
From the definition of the open set $U_{0}$ in Lemma \ref{l101b} together with
the properties of the Green function $g_{D_{0}}(\cdot,\infty)$, the definition
of the mapping $\varphi_{1}:U_{1}\longrightarrow\mathbb{D}$, the set $K_{0,1}%
$, and the number $\alpha_{1}$, which has been introduced in (\ref{f1012c1}),
we then deduce that the function
\begin{equation}
\overset{\vee}{p}_{01}:=\frac{1}{\alpha_{1}}g_{D_{0}}(\cdot,\infty
)\circ\varphi_{1}^{-1}\label{f1012e3}%
\end{equation}
has the following four properties: (i) The function $\overset{\vee}{p}_{01} $
is harmonic in $\mathbb{D}\setminus K_{0,1}$. (ii) We have $\overset{\vee}%
{p}_{01}(z)=0$ for all $z\in K_{0,1}$. (iii) We have
\begin{equation}
\overset{\vee}{p}_{01}(z)=\frac{c_{0}}{\alpha_{1}}\text{ \ \ for all \ \ }%
z\in\partial\mathbb{D}\label{f1012e4}%
\end{equation}
with the constant $c_{0}$ introduced in Lemma \ref{l101b}. (iv) We have
\begin{equation}
\frac{1}{2\pi}\int_{\partial\mathbb{D}}\frac{\partial}{\partial n}%
\overset{\vee}{p}_{01}ds=-1\label{f1012e4a}%
\end{equation}
because of the definition of $\alpha_{1}$ in (\ref{f1012c1}). The normal
derivative $\partial/\partial n$ on $\partial\mathbb{D}$ in (\ref{f1012e4a})
is assumed to be inwardly oriented.

From these four properties together with (\ref{f101b3}), it follows that in
$\mathbb{D}$ the function $\overset{\vee}{p}_{01}$ is identical with the
equilibrium potential $p_{01}$ of the condenser $(K_{0,1},\mathbb{D})$. From
the first equality in (\ref{f1012e1}) together with (\ref{f1012e4}), it then
follows that
\begin{equation}
\frac{1}{\operatorname*{cap}\left(  K_{0,1},\partial\mathbb{D}\right)  }%
=\frac{c_{0}}{\alpha_{1}}.\label{f1012e5}%
\end{equation}
Motivated by (\ref{f1012e1}) and (\ref{f1012e5}), we define
\begin{equation}
\widetilde{\alpha}_{1}:=\frac{\operatorname*{cap}(\widetilde{K}_{0,1}%
,\partial\mathbb{D)}}{\operatorname*{cap}\left(  K_{0,1},\partial
\mathbb{D}\right)  }\alpha_{1}<\alpha_{1},\label{f1012e6}%
\end{equation}
where the inequality is a consequence of (\ref{f1012e2}).

Next, we study the function
\begin{equation}
\overset{\vee}{g}_{0}(z):=\left\{
\begin{array}
[c]{ccc}%
\widetilde{\alpha}_{1}(\widetilde{p}_{01}\circ\varphi_{1})(z)\smallskip &
\text{ \ for \ } & z\in\overline{U}_{1}\\
g_{D_{0}}(z,\infty) & \text{ \ for \ } & z\in\overline{\mathbb{C}}%
\setminus\overline{U}_{1},
\end{array}
\right. \label{f1012e7}%
\end{equation}
which is basically a modification of the Green function $g_{D_{0}}%
(z\cdot,\infty)$ in the neighborhood $U_{1}$ of $K_{1}$. The function is
continuous in $\mathbb{C}$ since both partial functions in (\ref{f1012e7}) are
equal to $c_{0}$ on $\partial U_{1}$. Indeed, it follows from (\ref{f1012e6}),
(\ref{f1012e5}), and the second equation in (\ref{f1012e1}) that
\begin{equation}
g_{D_{0}}(z,\infty)=\,\overset{\vee}{g}_{0}(z)=c_{0}\text{ \ \ \ for all
\ \ }z\in\partial U_{1}.\label{f1012e8}%
\end{equation}
The function $\overset{\vee}{g}_{0}$ has the following four properties: (i) It
is harmonic in $\overline{\mathbb{C}}\setminus(\widetilde{K}_{0}\cup\partial
U_{1})$ because of the definitions made in (\ref{f1012d4}). (ii) It has smooth
normal derivatives from both sides of $\partial U_{1}$. (iii) We have
$\overset{\vee}{g}_{0}(z)=0$ for all $z\in\widetilde{K}_{0}$ because of the
first line in (\ref{f101b3}). (iv) Near infinity we have
\begin{equation}
\overset{\vee}{g}_{0}(z)=\log|z|+\log\frac{1}{\operatorname*{cap}\left(
K_{0}\right)  }+\text{o}(1)\text{ \ \ as \ \ }z\rightarrow\infty
,\label{f1012e9}%
\end{equation}
which follows from (\ref{f1012e7}) and Lemma \ref{l113b} in Subsection
\ref{s1103}, further below.

From the definition of $\overset{\vee}{g}_{0}$ in (\ref{f1012e7}) together
with the four properties of $\overset{\vee}{g}_{0}$ that have just been listed
and with the use of Lemma \ref{l113f} in Subsection \ref{s1103}, we deduce
that
\begin{equation}
g_{\widetilde{D}_{0}}(z,\infty)=\,\overset{\vee}{g}_{0}(z)+\int g_{\widetilde
{D}_{0}}(z,x)d\sigma(x),\text{ \ \ }z\in\overline{\mathbb{C}},\label{f1012e10}%
\end{equation}
where $\sigma$ is a signed measure with $\operatorname*{supp}(\sigma
)\subset\partial U_{1}$.\ From (\ref{f113d2}) in Lemma \ref{l113f}, we know
that the measure $\sigma$ is defined as the difference of the flux in
$\overset{\vee}{g}_{0}$ that comes to the Jordan curve $\partial U_{1}$ from
the both sites. Indeed, the total flux flowing into the set $\overline{U}_{1}$
from outside is equal to $\alpha_{1}$ because of the definition of $\alpha
_{1}$ in (\ref{f1012c1}). On the other hand, the flux coming from within
$U_{1}$ is equal to $\widetilde{\alpha}_{1}$ because of (\ref{f101d2}) and
(\ref{f1012e7}). Hence, from (\ref{f1012e6}) we conclude that
\begin{equation}
\sigma(\partial U_{1})=\alpha_{1}-\widetilde{\alpha}_{1}>0.\label{f1012e11}%
\end{equation}
Putting all partial results of the last paragraphs together, we arrive at the
following estimate:
\begin{align}
\log\operatorname*{cap}(K_{0})-\log\operatorname*{cap}(\widetilde{K}_{0})  &
=(g_{\widetilde{D}_{0}}(z,\infty)-\overset{\vee}{g}_{0}(z))|_{z=\infty
}\nonumber\\
& =\int g_{\widetilde{D}_{0}}(x,\infty)d\sigma(x)\label{f1012e12}\\
& =\int\overset{\vee}{g}_{0}(x)d\sigma(x)+\int g_{\widetilde{D}_{0}%
}(x,y)d\sigma(x)d\sigma(x)\nonumber\\
& >c_{0}(\alpha_{1}-\widetilde{\alpha}_{1})>0.\nonumber
\end{align}
Indeed, the first equality in (\ref{f1012e12}) follows from (\ref{f1012e9})
and representation (\ref{f113b1}) in Lemma \ref{l113b} in Subsection
\ref{s1103}, further below. The second one is a consequence of (\ref{f1012e10}%
) and the symmetry of the Green function with respect to both of its
arguments. The third one follows again from (\ref{f1012e10}). The first
inequality in (\ref{f1012e12}) is a consequence of (\ref{f1012e8}) and
(\ref{f1012e11}) together with\ the positive definiteness of the Green kernel
(cf. Lemma \ref{l113d} in Subsection \ref{s1103}, further below). The last
inequality follows again from (\ref{f1012e11}).

With the inequalities in (\ref{f1012e12}) we have proved (\ref{f1012d1}). It
has already been mentioned after (\ref{f1012d1}) that the proof of assertion
(i) is complete as soon as we have completed the deduction of (\ref{f1012d5})
and (\ref{f1012d1}).

The considerations made in (\ref{f1012e7}) with respect to the function
$\overset{\vee}{g}_{0}$ show that if each set $K_{0,j}$, $j=1,\ldots,m$, is
the unique solution of Problem $(A_{j},\mathbb{D})$, therefore, identity
(\ref{f1012c2}) holds true for each $j=1,\ldots,m$. Hence, assertion\ (ii) is
a consequence of assertion (i), and the proof of the whole Proposition
\ref{p101a} is complete.\medskip
\end{proof}

\subsubsection{\label{s1013}The Minimal Set for Algebraic Functions}

\qquad With Proposition \ref{p101a} and Theorem \ref{t101a}\ we are prepared
to prove a detailed description of the minimal set $K_{0}(f,\infty)$ for
Problem $(f,\infty)$ with an algebraic function $f$.

The next theorem covers most of the content in the main theorems in Section
\ref{s4} and \ref{s5}. Since $f$ is assumed to be an algebraic function, we
deal here only with a special version of Problem $(f,\infty)$, however, we
remark that at the present point the results of Section \ref{s4} and \ref{s5}
are still not proved, and more than that, the results in the next theorem will
later be used as intermediate steps in the general proofs.\medskip

\begin{theorem}
\label{t101b}Let $f$ be algebraic function that is not rational. We assume
that the function is meromorphic at infinity. Let further $K_{0}(f,\infty)$ be
the minimal set for Problem $(f,\infty)$.\smallskip

(a) \ The interior of $K_{0}(f,\infty)$ is empty, and there exist two finite
sets $E_{0}$, $E_{1}$, and a finite family of open and analytic Jordan arcs
$J_{j}$, $j\in I$, such that
\begin{equation}
K_{0}(f,\infty)=E_{0}\cup E_{1}\cup\bigcup_{j\in I}J_{j}.\label{f1013a1}%
\end{equation}
The components in (\ref{f1013a1}) correspond to those in Theorem \ref{t41a} of
Section \ref{s4}, but under the additional assumption that $f$\ is algebraic
we can give a more specific characterization:

\begin{itemize}
\item[(i)] The set $E_{0}$ is finite, and it consists of all branch points of
$f$ in $K_{0}(f,\infty)$ that can be reached by meromorphic continuation of
$f$ out of the extremal domain $D_{0}(f,\infty)$.

\item[(ii)] The set $E_{1}$ is finite, and it consists of all bifurcation
points of $K_{0}(f,\infty)$ that do not belong to $E_{0}$.

\item[(iii)] The family $\left\{  J_{j}\right\}  _{j\in I}$ of analytic Jordan
arcs is finite. All arcs $J_{j}$, $j\in I$, are pair-wise disjoint. The
function $f$ has meromorphic continuations across each arc $J_{j}$, $j\in I$,
from both sides. Each arc $J_{j}$, $j\in I$, is a trajectory of the quadratic
differential (\ref{f1013a2}) having end points that belong to $E_{0}\cup
E_{1}$, and all open trajectories of (\ref{f1013a2}) starting and ending at a
point of $E_{0}\cup E_{1}$ belong to the family $\left\{  J_{j}\right\}
_{j\in I}$.\smallskip
\end{itemize}

(b) \ The set $E_{0}$ contains at least $2$ points; we denote the points in
$E_{0}$ by $a_{1},\ldots,a_{n}$. There exist $n-2$ points $b_{1}%
,\ldots,b_{n-2}\in\mathbb{C}$ such that the Jordan arcs $J_{j}$, $j\in I$, are
trajectories of the quadratic differential
\begin{equation}
q(z)\,dz^{2}\text{ \ \ with \ \ }q(z):=\frac{(z-b_{1})\ldots(z-b_{n-2}%
)}{(z-a_{1})\ldots(z-a_{n})}\label{f1013a2}%
\end{equation}
Not all points of the set $B=\{b_{1},\ldots,b_{n-2}\}$ are necessarily
different, and not all of them are necessarily contained in $K_{0}(f,\infty)
$.\smallskip

(c) \ The minimal set $K_{0}(f,\infty)$ consists of finitely many components;
we denote their number by $m$. Each of these components contains at least two
elements of $E_{0}$. We have $E_{1}\subset B$. If $m>1$, then the Green
function $g_{D_{0}}(\cdot,\infty)$, $D_{0}=D_{0}(f,\infty)$, possesses
critical points of total order $m-1$, and each of these critical points
appears in the set $B$ with a frequency of twice its order.\medskip
\end{theorem}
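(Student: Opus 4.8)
\textbf{Proof proposal for Theorem \ref{t101b}.}

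The plan is to reduce everything to the already-established properties of Problem $(A_j,\mathbb{D})$ via Proposition \ref{p101b} and Theorem \ref{t101a}, and then patch the local pictures together across the finitely many components. First I would establish part (a)(i): by Lemma \ref{l101a} the minimal set $K_0=K_0(f,\infty)$ has finitely many components $K_1,\dots,K_m$, each containing at least two branch points, and since $f$ is algebraic and non-rational the set $E_0$ of active branch points is finite and nonempty. That the interior of $K_0$ is empty follows because $K_0$ has minimal capacity among admissible compacta (condition (iii) of Definition \ref{d21b}): any interior point could be removed without destroying admissibility while strictly decreasing capacity, contradicting minimality — here one uses that removing an open disc from an admissible set leaves an admissible set (the complement still carries all required continuations). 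Next, invoke Lemma \ref{l101b} to get disjoint simply connected analytic neighborhoods $U_j\supset K_j$ on which $g_{D_0}(\cdot,\infty)$ is sub-level, and Riemann maps $\varphi_j:U_j\to\mathbb{D}$. By Proposition \ref{p101b}(i), $K_{0,j}=\varphi_j(K_j)$ solves $(A_j,\mathbb{D})$ with $A_j=\varphi_j(E_j)$, so by Theorem \ref{t101a}(ii) each $K_{0,j}$ is a finite union of closed critical trajectories of a rational quadratic differential; pulling back by $\varphi_j^{-1}$, each $K_j$ is a finite union of open analytic Jordan arcs together with their endpoints. Taking the union over $j$ gives the decomposition (\ref{f1013a1}) with $E_1$ the set of bifurcation points (endpoints where $\ge 3$ arcs meet, i.e. the pulled-back $b$-zeros of local order $>1$, plus genuine multi-arc junctions) not lying in $E_0$, which is finite; this proves (a)(ii) and (a)(iii) once the global quadratic differential is identified in part (b).

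For part (b), the key computational step is to convert the relation (\ref{f1012c2}), $g_{D_0}(z,\infty)=\alpha_j\,(p_{0,j}\circ\varphi_j)(z)$ on $U_j$, together with Theorem \ref{t101a}(iii), $(\partial p_{0,j}/\partial z)^2=\tfrac14 q_j(z)$, into a statement about $q(z):=\bigl(2\,\partial g_{D_0}(z,\infty)/\partial z\bigr)^2$, the quadratic differential of Theorem \ref{t52a}. A chain-rule computation gives, on $U_j$,
\begin{equation}
q(z)=\bigl(2\alpha_j\bigr)^2\,(p_{0,j}'\circ\varphi_j)^2\,\varphi_j'(z)^2=\alpha_j^2\,(q_j\circ\varphi_j)(z)\,\varphi_j'(z)^2,
\label{eq:chainrule}
\end{equation}
so $q(z)\,dz^2$ is, up to the nonzero conformal factor, the pullback of $q_j(\zeta)\,d\zeta^2$; hence the trajectory structures agree and the $J_j$ are trajectories of $q(z)\,dz^2$. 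It remains to show that $q$ is the global rational function (\ref{f1013a2}). Since $E_0$ is finite and $f$ is algebraic, $E_0'=\emptyset$ in the language of Theorem \ref{t52a}, so $q$ extends meromorphically and single-valuedly to all of $\overline{\mathbb{C}}\setminus E_0$ and has at most simple poles at points of $E_0$; from (\ref{eq:chainrule}) and the fact that $p_{0,j}$ has a simple critical structure, $q$ has a simple pole exactly at each $a_k\in E_0$ (an active branch point is always an endpoint of exactly one arc, so $i(a_k)=1$ and $q$ has a pole of order $2-i(a_k)=1$ there) and the zeros of $q$ occur at the pulled-back $b$-points. By (\ref{f52c}) of Theorem \ref{t52a}, $q(z)=z^{-2}+\mathrm{O}(z^{-3})$ at infinity, which forces $q$ to be rational with denominator degree exactly two more than the numerator degree; writing the poles as $a_1,\dots,a_n$ and counting degrees gives exactly $n-2$ zeros $b_1,\dots,b_{n-2}$, i.e. (\ref{f1013a2}). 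The normalization at infinity pins down the leading coefficient to be $1$. Here $E_1\subset B$ because a bifurcation point of $K_0$ is a zero of $q$ of order $i(z)-2\ge1$.

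For part (c), the component count $m$ is the number of components of $K_0$ from Lemma \ref{l101a}. To see that $g_{D_0}(\cdot,\infty)$ has critical points of total order $m-1$: the function $g_{D_0}(\cdot,\infty)$ is harmonic and positive on $D_0=\overline{\mathbb{C}}\setminus K_0$, vanishing on each of the $m$ boundary components; a standard argument with the argument principle for the meromorphic function $\partial g_{D_0}(\cdot,\infty)/\partial z$ (which has a simple zero at infinity from the $\log|z|$ behavior and whose zeros in $D_0$ are exactly the critical points, counted with order) shows that a domain whose boundary has $m$ components contributes $m-1$ to the total order of interior critical points — equivalently, this is the Euler-characteristic/Riemann–Hurwitz count for the level-curve foliation. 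Each such critical point $v$ of order $j(v)$ contributes a zero of $q=\bigl(2\,\partial g/\partial z\bigr)^2$ of order $2j(v)$, hence appears in $B$ with multiplicity $2j(v)$, consistent with the explicit formula (\ref{f53b}); combined with $E_1\subset B$ and the poles being precisely $E_0$, the degree bookkeeping closes.

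The main obstacle I expect is the global-to-local patching in part (b): Proposition \ref{p101b} only gives the identity (\ref{f1012c2}) on each $U_j$ separately, so one must argue carefully that the locally defined quadratic differentials glue to a single rational $q$ on $\overline{\mathbb{C}}$. The cleanest route is not to glue the $q_j$'s directly but to start from the intrinsically global object $q(z)=\bigl(2\,\partial g_{D_0}(z,\infty)/\partial z\bigr)^2$ of Theorem \ref{t52a} — whose meromorphy on $\overline{\mathbb{C}}\setminus E_0'=\overline{\mathbb{C}}\setminus E_0$ and behavior (\ref{f52c}) are already known — and use (\ref{eq:chainrule}) merely to read off, component by component, the local orders of its zeros and poles (simple poles at the $a_k$, zeros of order $2j(v)$ at critical points, zeros of order $i(z)-2$ at multiple points of $E_1$). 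Rationality and the exact degree count then follow from Liouville-type reasoning at infinity, and the locations of $b_1,\dots,b_{n-2}$ are forced. A secondary technical point is justifying that every open trajectory of (\ref{f1013a2}) with endpoints in $E_0\cup E_1$ actually belongs to $K_0$ (not just that $K_0$ is made of such trajectories): this uses the $S$-property (Theorem \ref{t51a}) together with the characterization in Theorem \ref{t73a} that the $S$-property determines $D_0$ uniquely, plus the fact that $E_0\cup E_1$ is exactly the set of endpoints, so no trajectory can be "missing" without violating either admissibility or the $S$-symmetry.
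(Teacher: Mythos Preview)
Your overall strategy---define $q(z)=(2\,\partial g_{D_0}/\partial z)^2$ globally, use Lemma \ref{l101a}, Lemma \ref{l101b}, Proposition \ref{p101b} and Theorem \ref{t101a} to understand it locally on each component via the chain rule (\ref{eq:chainrule}), and then read off rationality from the behavior at infinity---is exactly the paper's approach. The degree bookkeeping for part (c) is also the same.

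There is, however, a genuine circularity in your argument. In the paper's logical order, Theorem \ref{t101b} is proved \emph{first} (Subsection \ref{s1013}) and is then used as the essential building block for the proofs of Theorems \ref{t41a}, \ref{t51a}, \ref{t52a}, and \ref{t73a} (Subsections \ref{s1031}, \ref{s1041}, \ref{s1042}, \ref{s1051}). So you cannot invoke Theorem \ref{t52a} to get the meromorphy of $q$ on $\overline{\mathbb{C}}\setminus E_0$, nor Theorems \ref{t51a} and \ref{t73a} to argue that no critical trajectory is ``missing'' from $K_0$; those results are downstream of what you are trying to prove.

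The fix is already implicit in what you wrote, and it is what the paper does: you do not need Theorem \ref{t52a} at all. The function $q$ is \emph{a priori} analytic on $D_0(f,\infty)\setminus\{\infty\}$ because it is the square of the complex derivative of a harmonic function there; the only issue is what happens on $K_0$. But your chain-rule identity (\ref{eq:chainrule}), combined with the fact from Theorem \ref{t101a}(ii)--(iii) that each $q_j$ is rational on $\mathbb{D}$, shows directly that $q$ is meromorphic on each $U_j\supset K_j$. Since the $U_j$ cover $K_0$ and $q$ is already analytic on $D_0$, $q$ is meromorphic on all of $\overline{\mathbb{C}}$, hence rational, and the pole/zero orders and the asymptotics $q(z)=z^{-2}+\mathrm{O}(z^{-3})$ force the form (\ref{f1013a2}). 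Similarly, the statement that every critical trajectory with endpoints in $E_0\cup E_1$ lies in $K_0$ follows directly from the local picture: $\varphi_j$ carries $K_j$ onto the minimal continuum $K_{0,j}=K_0(A_j,\mathbb{D})$, and Theorem \ref{t101a}(ii) says $K_{0,j}$ is the union of \emph{all} closed critical trajectories in $\mathbb{D}$, so nothing is missing after pullback. No appeal to the $S$-property or to Theorem \ref{t73a} is needed.
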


\begin{remark}
\label{r101a}The Examples \ref{e61} - \ref{e64} in Section \ref{s6} belong to
the class of problems covered by Theorem \ref{t101b}. In the discussion of
these examples one finds concrete and explicit examples for the sets $E_{0}$,
$E_{1}$, for the families of Jordan arcs $\left\{  J_{j}\right\}  _{j\in I}$,
and also for the quadratic differentials (\ref{f1013a2}).

There exists strong similarities between the two Theorems \ref{t101b} and
\ref{t53a}, but we note that the later one has a somewhat different
orientation; it is focused only on the finiteness of the set $E_{0}$.\medskip
\end{remark}

\begin{proof}
We define
\begin{equation}
q(z):=\left(  \frac{\partial}{\partial z}g_{D_{0}}(z,\infty)\right)
^{2}\text{ \ \ for \ \ }z\in D_{0}=D_{0}(f,\infty)\label{f1013b1}%
\end{equation}
with $D_{0}(f,\infty)$ the extremal domain for Problem $(f,\infty)$. It is
immediate that $q$ is analytic in $D_{0}(f,\infty)$. From (\ref{f1013b1}) and
representation (\ref{f113b1}) for the Green function in Lemma \ref{l113b} of
Subsection \ref{s1103}, further below, we deduce that at infinity the function
$q$\ has the development
\begin{equation}
q(z)=z^{-2}+\text{O}(z^{-3})\text{ \ \ \ as \ \ \ }z\rightarrow\infty
.\label{f1013b2}%
\end{equation}

The function $q$\ is different from zero everywhere in $D_{0}(f,\infty
)\cap\mathbb{C}$ except at the critical points of the Green function
$g_{D_{0}}(\cdot,\infty)$, where it has zeros. It follows from (\ref{f1013b1})
that the order of each of these zeros is twice the order of the critical
point. Critical points and their order have been introduced in Definition
\ref{d53b} in Subsection \ref{s53}.

From Lemma \ref{l101a} we know that $K_{0}(f,\infty)$\ has only a finite
number of components, which we denote again by $K_{j}$, $j=1,\ldots,m$. A
combination of Proposition \ref{p101a} and Theorem \ref{t101a} shows that
$q$\ is meromorphic in a neighborhood of each component $K_{j}$,
$j=1,\ldots,m$. Hence, the function is meromorphic throughout $\overline
{\mathbb{C}}$, and consequently it is a rational function with all its poles
contained in $K_{0}(f,\infty)$.

For the deduction of more specific assertions we can without loss of
generality restrict our attention to individual components $K_{j}$ and open
neighborhoods $U_{j}$, $j=1,\ldots,m$, of these sets. Without loss of
generality we will choose $j=1$ in the sequel.

It follows from (\ref{f1012c2}) and (\ref{f1012c1}) in Proposition \ref{p101a}
together with assertion (ii) and (iii) of Theorem \ref{t101a} that all poles
of $q$ on $K_{1}$ are simple, and they have to belong to the set $E_{1}$ from
(\ref{f1012a2}), i.e., they have to be branch points of $f$ on $K_{1}$.

Further it follows especially from assertion (ii) of Theorem \ref{t101a} that
on $K_{1}$ the function $q$ has exactly two zeros less than it has poles on
$K_{1}$, where multiplicities of zeros have to be taken into account. The
zeros in question constitute the set $E_{1}\cap K_{1}$.

In the application of assertion (ii) of Theorem \ref{t101a} there may appear
cancellations of numerator and denominator factors in the function
(\ref{f101c1}). If none of such cancellations occurs, which can be seen as the
generic case, then every branch point of the function $f$ on $K_{1}$
corresponds to a simple pole of the function $q$ on $K_{1}$.

From what has been proved so far together with the definitions made in
(\ref{f1012c1}) and the identity (\ref{f1012c2}) of Proposition \ref{p101a},
we deduce from assertion (ii) of Theorem \ref{t101a} that all Jordan arcs
$J_{j} $, $j\in I$, that belong to $K_{1}$ are transformed by the conformal
map $\varphi_{1}$ of (\ref{f1012c1}) into a critical trajectory $\varphi
_{1}(J_{j})$ of the quadratic differential (\ref{f101c1}) of Theorem
\ref{t101a}, and the reverse conclusion holds also true. With these last
conclusions we have proved assertion (iii) of part (a) in the theorem for the
component $K_{1}$.

All conclusions that have been proved so far for the component $K_{1}$ hold
true in the same way on the other $m-1$ components $K_{j}$, $j=2,\ldots,m$, of
the minimal set $K_{0}(f,\infty)$, which proves practically all assertions of
the theorem.

We add that the number of zeros of the rational function $q$ on all $m$
component $K_{1},\ldots,K_{m}$ together with the $2(m-1)$ zeros at the
critical points of the Green function $g_{D_{0}}(\cdot,\infty)$ add up to
exactly two zeros less than the number of poles that $q$ has in $\mathbb{C}$.
This account reaffirms exactly the behavior of the function $q$ at infinity,
which is shown in development (\ref{f1013b2}).\medskip
\end{proof}

\subsection{\label{s102}Some Technical Results}

\qquad In the present subsection we prove some technical results which then
are needed in the remainder of the section in proofs of results from the
Sections \ref{s4}, \ref{s5}, and \ref{s7}. Most important are here the proofs
of the two Theorems \ref{t41a} and \ref{t73a}. In a first part of the
subsection, the results will be formulated together with related definitions;
proofs will then follow afterwards. Some of the results depend on rather
subtle topological assumptions.\smallskip

The first proposition is especially important for the proof of Theorem
\ref{t73a}.\smallskip

\begin{proposition}
\label{p102a}Let $D_{1},D_{2}\in\mathcal{D}(f,\infty)$ be two admissible
domains for Problem $(f,\infty)$. If we assume that $D_{1}$ possesses the
$S-$property as introduced in Definition \ref{d71a}, and if we assume further
that $D_{2}$ is elementarily maximal in the sense of Definition \ref{d71a0},
then we have either $D_{1}=D_{2}$ or
\begin{equation}
\operatorname{cap}(\partial D_{1})<\operatorname{cap}(\partial D_{2}%
).\label{f102a1}%
\end{equation}

\end{proposition}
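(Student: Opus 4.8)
The plan is to link $D_{1}$ and $D_{2}$ by the one--parameter family of admissible domains constructed in Definition~\ref{d95a}, to use that the $S$--property makes $D_{1}$ stationary with nonnegative first variation for the capacity functional, and then to invoke the strict convexity of Theorem~\ref{t95a}. Write $K_{j}:=\overline{\mathbb{C}}\setminus D_{j}$; since each $D_{j}$ is a domain, $K_{j}$ is polynomial--convex and $\operatorname*{cap}(\partial D_{j})=\operatorname*{cap}(K_{j})$ (Subsection~\ref{s1101}), so the assertion (\ref{f102a1}) to be proved reads $\operatorname*{cap}(K_{1})<\operatorname*{cap}(K_{2})$.

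First I would dispose of the degenerate cases. If $\operatorname*{cap}(K_{1})=0$ or $\operatorname*{cap}(K_{2})=0$, then by Lemma~\ref{l92a} and Corollary~\ref{c92a} the function $f$ has no branch points, and then every admissible domain is contained in the Weierstrass domain $W_{f}\in\mathcal{D}(f,\infty)$ (cf.\ Proposition~\ref{p22b} and its proof); since $D_{1}$ (having the $S$--property, cf.\ the remark after Definition~\ref{d71a}) and $D_{2}$ are both elementarily maximal, $D_{1}=W_{f}=D_{2}$. Assume henceforth $\operatorname*{cap}(K_{j})>0$, $j=1,2$, and $D_{1}\ne D_{2}$; then $f$ has branch points (otherwise again $D_{1}=W_{f}=D_{2}$), and consequently every admissible compact set has capacity bounded below by a fixed positive constant (cf.\ the proof of Lemma~\ref{l92a} together with Lemma~\ref{l111a}). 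If the symmetric difference $(K_{1}\setminus K_{2})\cup(K_{2}\setminus K_{1})$ has capacity zero, then $g_{D_{1}}(\cdot,\infty)=g_{D_{2}}(\cdot,\infty)$ by Lemma~\ref{l113c}, and one verifies from the $S$--structure $K_{1}=E_{0}\cup E_{1}\cup\bigcup_{j\in I}J_{j}$, the characterization of admissible domains in Proposition~\ref{p91a}, and the elementary maximality of $D_{2}$ that $K_{1}=K_{2}$, contradicting $D_{1}\ne D_{2}$. Hence we may also assume that this symmetric difference has positive capacity.

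Now apply Definition~\ref{d95a} to the pair $(D_{1},D_{2})$, parametrized so that the resulting family $D_{h}\in\mathcal{D}(f,\infty)$, $h\in[0,1]$ (admissible by Theorem~\ref{t95a}(i)), satisfies $D_{h}|_{h=0}=D_{1}$ and $D_{h}|_{h=1}=D_{2}$; put $K_{h}:=\overline{\mathbb{C}}\setminus D_{h}$. Because $f$ has branch points, $\operatorname*{cap}(K_{h})$ is bounded below by a fixed positive constant, so
\[
\phi(h):=\log\operatorname*{cap}(K_{h})
\]
is a finite function on $[0,1]$ with $\phi(0)=\log\operatorname*{cap}(K_{1})$ and $\phi(1)=\log\operatorname*{cap}(K_{2})$; by Theorem~\ref{t95a}(ii), since the symmetric difference of $K_{1}$ and $K_{2}$ has positive capacity, $\phi$ is strictly convex on $[0,1]$. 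A strictly convex finite function whose right derivative at the left endpoint is nonnegative is strictly increasing; therefore it suffices to prove that $\phi'(0^{+})\ge0$.

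This last step is the heart of the argument, and it is where the $S$--property of $D_{1}$ is used. One shows, from the defining relations (\ref{f95b1})--(\ref{f95b7}) and using that $g_{D_{1}}(\cdot,\infty)$ and $g_{D_{2}}(\cdot,\infty)$ both vanish on $E_{0}\cup E_{1}$, that for $h$ near $0$ the compact set $K_{h}$ differs from $K_{1}$ only by an $O(h)$--modification localized near the analytic arcs $J_{j}$, and that to first order this modification is a boundary variation of $D_{1}$ of the kind analysed in Subsection~\ref{s72}. Hadamard's variation formula in the forms (\ref{f72b1})--(\ref{f72c1}), combined with the symmetry (\ref{f71a1}) that is part of the $S$--property, then identifies the corresponding first order change of $\operatorname*{cap}(\overline{\mathbb{C}}\setminus D_{1})$ with $\tfrac{1}{2\pi}$ times an integral of $(\partial g_{D_{1}}/\partial n)^{2}$ against a nonnegative density; hence $\tfrac{1}{h}\bigl(\phi(h)-\phi(0)\bigr)$ converges to a limit $\ge0$ as $h\to0^{+}$, i.e.\ $\phi'(0^{+})\ge0$. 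Combined with the strict convexity of $\phi$ this yields $\operatorname*{cap}(K_{1})<\operatorname*{cap}(K_{2})$, which is (\ref{f102a1}). The genuinely delicate point, which I expect to be the main obstacle, is precisely this identification near $h=0$ of the set--theoretic convex combination of Definition~\ref{d95a} with a single Hadamard--type variation, together with the topological bookkeeping that separates the analytic arcs $J_{j}$ from the possibly two--dimensional compact set $E_{0}$ and rules out a net outward displacement of $\partial D_{1}$ across a non--polar singularity of $f$; in the algebraic case this bookkeeping is governed by Theorem~\ref{t101b}.
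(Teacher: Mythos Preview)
Your plan has two independent gaps, and either one is fatal for the strict inequality (\ref{f102a1}).

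First, Theorem~\ref{t95a}(ii) does not say that $\phi(h)=\log\operatorname{cap}(K_h)$ is a convex function of $h$; it only asserts the single chord inequality $\phi(h)<(1-h)\phi(0)+h\phi(1)$ for the particular family built from the two endpoints. The construction of Definition~\ref{d95a} is not known to be compatible under iteration (the family joining $K_{h_1}$ to $K_{h_2}$ is defined through \emph{their} Green functions, not a reparametrization of the original family), so you cannot bootstrap to midpoint convexity. But then your deduction collapses: from the chord inequality one has $(\phi(h)-\phi(0))/h<\phi(1)-\phi(0)$ for every $h\in(0,1)$, hence $\phi'(0^+)\le\phi(1)-\phi(0)$, and combining with $\phi'(0^+)\ge0$ yields only $\phi(0)\le\phi(1)$. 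Strict inequality is lost.

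Second, the step you flag as ``the heart of the argument'' is indeed a genuine obstacle and is not carried out. The level-set family $K_h$ is not a smooth normal displacement of the arcs $J_j$, so Hadamard's formula (\ref{f72b1})--(\ref{f72c1}) does not apply directly; and even formally, the two-sided variation (\ref{f72c1}) together with the $S$--symmetry (\ref{f71a1}) gives first-order change equal to \emph{zero}, not a positive integral of $(\partial g_{D_1}/\partial n)^2$. Any strictly positive contribution would have to come from one-sided boundary motion near $E_0$, but there you have no control over the sign of the displacement.

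The paper takes a completely different route that sidesteps both issues. It introduces the reflected function $\widetilde g_1$ equal to $g_{D_1}(\cdot,\infty)$ on the outer domain $D_3=\overline{\mathbb C}\setminus\widehat{K_1\cup K_2}$ and to $-g_{D_1}(\cdot,\infty)$ on $K_3\setminus K_2$; the $S$--property is used exactly once, to show that these two pieces are harmonic continuations of each other across the arcs of $K_1\cap\partial K_3$. A Poisson--Jensen representation of $\widetilde g_1$ in $D_2$ then yields, via Dirichlet integrals (Lemma~\ref{l113g} and its corollary), an explicit identity expressing $\log\bigl(\operatorname{cap}(K_2)/\operatorname{cap}(K_1)\bigr)$ as a sum of nonnegative terms, one of which (the Dirichlet integral $D_{D_2}(\widetilde h_1)$ of the harmonic boundary-value part) is strictly positive because $K_1$ and $K_2$ differ on a set of positive capacity. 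This gives the strict inequality directly, with no variational or convexity argument.
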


Besides of Proposition \ref{p102a} we need a very similar result, which is not
related to admissible domains $D\in\mathcal{D}(f,\infty)$; instead it is based
on purely topological assumptions, which however comes to the same thing. We
prepare the formulation of the result by some definitions.\medskip

\begin{definition}
\label{d102a}Let $K,E\subset\mathbb{C}$ be two polynomials-convex and compact
sets with $\operatorname{cap}(K)>0$ and $E\subset K$. We say that the set
$K$\ possesses the $S-$property on the subset $K\setminus E$ if the following
two assertions are satisfied:

\begin{itemize}
\item[(i)] The set $K\setminus E$ is of the form
\begin{equation}
K\setminus E=E_{1}\cup\bigcup_{j\in I}J_{j}\label{f102a2}%
\end{equation}
with $E_{1}$ a discrete set in $K\setminus E$ and $\{J_{j}\}_{j\in I}$ a
family of smooth, open, and disjoint Jordan arcs $J_{j}$. Each point $z\in
E_{1}$ is an end point of at least three different arcs from $\{J_{j}\}_{j\in
I}$.

\item[(ii)] The Green function $g_{D}(\cdot,\infty)$ with $D:=\overline
{\mathbb{C}}\setminus K$ satisfies the symmetry relation
\begin{equation}
\frac{\partial}{\partial n_{+}}g_{D}(z,\infty)=\frac{\partial}{\partial n_{-}%
}g_{D}(z,\infty)\text{ \ for all \ }z\in J_{j},j\in I\label{f102a3}%
\end{equation}
with $\partial/\partial n_{+}$ and $\partial/\partial n_{-}$ denoting the
normal derivatives to both sides of the arcs $J_{j}$, $j\in I$.\smallskip
\end{itemize}
\end{definition}

It is obvious that there exist many parallels between the Definitions
\ref{d102a} and Definitions \ref{d71a} of Section \ref{s7}; the special aspect
of the new definition is the independence from Problem $(f,\infty)$ and the
admissible domains $D\in\mathcal{D}(f,\infty)$. On the other hand, it is
immediate that any admissible domain $D\in\mathcal{D}(f,\infty)$ with
$K:=\overline{\mathbb{C}}\setminus D$ that possesses the $S-$property in the
sense of Definition \ref{d71a} possesses also the $S-$property in the sense of
Definition \ref{d102a} on the subset $K\setminus E_{0}$, where $E_{0}$ is the
compact set from (\ref{f71a2}) and assertion (i) in Definition \ref{d71a}%
.\smallskip

Let $K_{1},K_{2},E\subset\mathbb{C}$ be three compact sets with $E\subset
K_{1}\cap K_{2}$. Two components $E_{1},E_{2}\subset E$ of $E$\ are said to
the connected in $K_{1}$ if both components are contained in the same
component of $K_{1}$. In this sense, $K_{1}$ defines a connectivity relation
on the components of $E$. We say (in the usual sense) that the connectivity of
$E$ in $K_{1}$ is coarser than the connectivity in $K_{2}$ if the
connectedness of two components $E_{1},E_{2}\subset E$ in $K_{2}$ implies
their connectedness in $K_{1}$.\medskip

\begin{definition}
\label{d102b}Let $K_{1},K_{2},E\subset\mathbb{C}$ be three polynomial-convex
and compact sets with $\operatorname{cap}(K_{1})>0$\ and $E\subset K_{1}\cap
K_{2}$, and let us assume that $K_{1}$ possesses the $S-$property in the sense
of Definition \ref{d102a} on $K_{1}\setminus E$ with $\{J_{j}\}_{j\in I}$
denoting the family of Jordan arcs introduced in (\ref{f102a2}). We say that
the connectivity of $E$ in $K_{1}$ is minimally coarser than the connectivity
of $E$ in $K_{2}$ if the following to assertions hold true:

\begin{itemize}
\item[(i)] The connectivity of $E$ in $K_{1}$ is coarser than that in $K_{2} $.

\item[(ii)] If $\widetilde{K}_{1}$ is the compact set that results from
dropping one of the arcs $J_{j}$, $j\in I$, from $K_{1}$, then assertion (i)
holds no longer true with $\widetilde{K}_{1}$ replacing $K_{1}$.\medskip
\end{itemize}
\end{definition}

\begin{remark}
\label{r102a}Since the arcs $J_{j}$, $j\in I$, in (\ref{f102a2})\ are assumed
to be open, one can drop any arc $J_{j_{0}}$, $j_{0}\in I$, from $K$, and the
remaining set $\widetilde{K}=K\setminus J_{j_{0}}$ is still compact and
polynomial-convex, but of course, the connectivity defined by $\widetilde{K}$
is finer than that defined by $K$.\medskip
\end{remark}

\begin{proposition}
\label{p102b}Let $K_{1},K_{2},E\subset\mathbb{C}$ be three polynomial-convex
and compact sets with $\operatorname{cap}(K_{j})>0$, $j=1,2$,\ and $E\subset
K_{1}\cap K_{2}$. Let us assume further that $K_{1}$ possesses the
$S-$property in the sense of Definition \ref{d102a} on $K_{1}\setminus E$ and
that the connectivity of $E$ in $K_{1}$ is minimally coarser than the
connectivity in $K_{2}$. Then at least one of the two assertions
\begin{equation}
K_{1}\subset K_{2}\text{ \ \ \ or \ \ }\operatorname{cap}(K_{1}%
)<\operatorname{cap}(K_{2})\label{f102a4}%
\end{equation}
holds true.\medskip
\end{proposition}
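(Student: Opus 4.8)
The plan is an indirect argument modeled on the convexity machinery of Subsection~\ref{s95}. Suppose, contrary to \eqref{f102a4}, that $K_{1}\not\subset K_{2}$ and $\operatorname{cap}(K_{1})\geq\operatorname{cap}(K_{2})$. Since $K_{1}$ possesses the $S$-property on $K_{1}\setminus E$, it decomposes as $K_{1}=E\cup E_{1}\cup\bigcup_{j\in I}J_{j}$ with $E_{1}$ discrete, the $J_{j}$ open, disjoint and smooth, and — because the arcs are pairwise disjoint and each point of $E_{1}$ is an endpoint of at least three of them — with every endpoint of every $J_{j}$ lying in $E\cup E_{1}$, so that $\overline{J_{j}}\subset K_{1}$. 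From $E\subset K_{1}\cap K_{2}$ we get $K_{1}\setminus K_{2}\subset E_{1}\cup\bigcup_{j}J_{j}$, and $K_{1}\not\subset K_{2}$ then forces a relatively open sub-arc of some $J_{j}$ to lie in the complement of the closed set $K_{2}$; hence $\operatorname{cap}(K_{1}\setminus K_{2})>0$ and, in particular, $\operatorname{cap}\bigl((K_{1}\setminus K_{2})\cup(K_{2}\setminus K_{1})\bigr)>0$. I record for later the converse, which is elementary: if $\operatorname{cap}(K_{1}\setminus K_{2})=0$ then, since each $\overline{J_{j}}\subset K_{1}$ is a Jordan arc and $\overline{J_{j}}\cap K_{2}$ is closed and relatively dense in $\overline{J_{j}}$, one gets $\overline{J_{j}}\subset K_{2}$ for every $j$, and likewise $E_{1}\subset K_{2}$, so $K_{1}\subset K_{2}$. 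Thus it suffices to derive a contradiction from $\operatorname{cap}(K_{1}\setminus K_{2})>0$.

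I would then apply the construction of Definition~\ref{d95a} to the pair $(K_{1},K_{2})$, with $K_{1}$ in the role of the set ``$K_{0}$'' and $K_{2}$ in the role of ``$K_{1}$'' there; this yields a family $\{K_{h}\}_{h\in[0,1]}$ with $K_{h}|_{h=0}=K_{1}$ and $K_{h}|_{h=1}=K_{2}$. Since Definition~\ref{d95a} refers only to the Green functions $g_{D_{1}},g_{D_{2}}$ and never to Problem~$(f,\infty)$, the construction is available verbatim here, and the arguments behind Lemma~\ref{l93b} (as reused in the proof of Theorem~\ref{t95a}(i)) show that each $D_{h}=\overline{\mathbb{C}}\setminus K_{h}$ is a domain. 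Because $\operatorname{cap}(K_{j})>0$ for $j=1,2$ and the two sets differ in a set of positive capacity, the strict inequality of Theorem~\ref{t95a}(ii) gives
\[
\log\operatorname{cap}(K_{h})<(1-h)\log\operatorname{cap}(K_{1})+h\log\operatorname{cap}(K_{2})\leq\log\operatorname{cap}(K_{1})\qquad(0<h<1).
\]
Writing $\phi(h):=\log\operatorname{cap}(K_{h})$, we thus have $\phi(0)=\log\operatorname{cap}(K_{1})$, $\phi(1)\leq\phi(0)$, and $\phi(h)<\phi(0)$ for $0<h<1$, while $\phi$ is convex on $[0,1]$ (indeed smooth on sub-intervals free of topological transitions, via Hadamard's formula).

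The hypothesis that the connectivity of $E$ in $K_{1}$ is \emph{minimally} coarser than that of $E$ in $K_{2}$ is what I would use to show that, near $h=0$, the deformation $h\mapsto K_{h}$ is a genuine small perturbation of $K_{1}$: no arc $J_{j}$ can be discarded from $K_{1}$ without destroying coarseness over $K_{2}$, and this forces $K_{h}\to K_{1}$ in the Hausdorff metric as $h\to0^{+}$ and exhibits $K_{h}$ for small $h$ as $K_{1}$ with its arcs $J_{j}$ pushed along their normals — on the part of $\partial D_{1}$ where $g_{D_{2}}$ does not vanish, the locus $\{(1-h)g_{D_{1}}=h\,g_{D_{2}}\}$ is the level set $\{g_{D_{1}}=\tfrac{h}{1-h}g_{D_{2}}\}$, lying at normal distance of order $h$ from $J_{j}$ — together with a modification near $E\cup E_{1}$ localized at a neighborhood of the capacity-zero set $E_{1}$, which does not affect the first-order capacity balance. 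With this in hand, Theorem~\ref{t72a} — or rather the Hadamard variation formula \eqref{f72c1} on which it rests, whose derivation uses only that each $J_{j}$ is a two-sided analytic cut and that $\operatorname{cap}(\partial D_{1})>0$ — together with the symmetry relation \eqref{f102a3} of the $S$-property, implies that the first-order variation of $\operatorname{cap}(\overline{\mathbb{C}}\setminus D_{1})$ along this family vanishes, i.e.\ $\phi'(0^{+})=0$. This contradicts convexity: if $\phi(1)<\phi(0)$ then $\phi(h)-\phi(0)<h\,(\phi(1)-\phi(0))$ forces $\phi'(0^{+})<0$; and if $\phi(1)=\phi(0)$, then $\phi'(0^{+})=0$ together with convexity would make $\phi$ nondecreasing, hence constant on $[0,1]$, contradicting $\phi(h)<\phi(0)$. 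Hence $\operatorname{cap}(K_{1}\setminus K_{2})=0$ after all, and then $K_{1}\subset K_{2}$ by the first paragraph, establishing \eqref{f102a4}. The main obstacle is exactly the no-collapse/admissible-variation step just sketched: one must track how $K_{1}$ deforms into $K_{h}$ in a neighborhood of the bifurcation set $E\cup E_{1}$ and verify that, because of minimal coarseness, this deformation remains within the class of variations over which $K_{1}$ is stationary — an argument parallel to, but technically lighter than, the corresponding part of the proof of Proposition~\ref{p102a}.
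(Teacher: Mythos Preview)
Your route is genuinely different from the paper's, and the gap you yourself flag as ``the main obstacle'' is real and not easily closed. The paper does \emph{not} use the convex-combination family $K_h$ of Definition~\ref{d95a} here; instead it reruns the Dirichlet-integral machinery of Proposition~\ref{p102a}. Concretely, one sets $K_3:=\widehat{K_1\cup K_2}$, $V:=\overline{\operatorname{Int}(K_3)\cap(K_1\setminus K_2)}$, $\widetilde D_2:=D_2\setminus V$, and defines the folded function $\widetilde g_1$ equal to $g_{D_1}(\cdot,\infty)$ on $D_3$ and to $-g_{D_1}(\cdot,\infty)$ on $K_3\setminus K_2$. Minimal coarseness is used for a single topological fact: it forces $K_1\setminus(K_2\cup V)\subset\partial(\operatorname{Int}(K_3)\setminus K_2)$, hence $K_1\cap\widetilde D_2\subset\bigcup_j J_j\cap\partial\operatorname{Int}(K_3)$; this is exactly what lets the $S$-symmetry \eqref{f102a3} glue the two pieces of $\widetilde g_1$ harmonically across those arcs. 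From there the identity \eqref{f102d1} gives $\log(\operatorname{cap}(K_2)/\operatorname{cap}(K_1))$ as a sum of nonnegative terms with $D_{D_2}(\widetilde h_1)>0$, yielding the strict inequality directly. So there is no ``parallel'' variational step in the paper for you to lighten.

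The specific problems with your argument are these. First, the family $K_h$ is \emph{not} a Hadamard-type variation of $K_1$: near an arc $J_j$ lying outside $K_2$, the set $S_h=\overline{\{(1-h)g_{D_1}=h\,g_{D_2}\}}$ consists of two curves (one on each side of $J_j$), and $\widetilde K_h=\widehat{S_h\cap K_3}$ depends on how these curves sit relative to $K_3$; this is not the ``replace $J$ by a nearby arc $J_\varepsilon$'' move underlying \eqref{f72c1}, so Theorem~\ref{t72a} does not apply and the conclusion $\phi'(0^+)=0$ is unjustified. Second, Theorem~\ref{t95a}(ii) gives only the single chord inequality $\phi(h)<(1-h)\phi(0)+h\phi(1)$, not convexity of $\phi$; your treatment of the case $\phi(0)=\phi(1)$ relies on convexity, which you assert but do not prove (applying the construction of Definition~\ref{d95a} to the pair $(K_{h_1},K_{h_2})$ does not reproduce the original family). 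Third, your use of minimal coarseness is vague: Hausdorff continuity $K_h\to K_1$ is already Theorem~\ref{t95a}(iii) and needs no such hypothesis, and you never pin down what ``admissible variation'' means here or why minimal coarseness supplies it. The paper's use of minimal coarseness is the concrete inclusion above, and that is what makes the $S$-property do work.
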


The proof of Proposition \ref{p102b} will follow in the footsteps of
Proposition \ref{p102a} only that at its beginning there are differences
because of the different type of assumptions in Proposition \ref{p102a}%
.\medskip

In the next proposition a bridge is built between the set-up of Proposition
\ref{p102b} and the world of Problem $(f,\infty)$ with its admissible domains
$D\in\mathcal{D}(f,\infty)$. The assumptions of the proposition are rather
technical, but they are constructed in such a way that they fit well to the
situation in the proof of Theorem \ref{t41a}, further below, where Proposition
\ref{p102b} is needed in a crucial way.\medskip

\begin{proposition}
\label{p102c}Let the admissible domain $D\in\mathcal{D}(f,\infty)$ be
elementarily maximal in the sense of Definition \ref{d71a0} with
$\operatorname{cap}(\partial D)>0$. Set $K:=\overline{\mathbb{C}}\setminus D$,
and let $E_{0}\subset K$ denote the minimal compact and polynomial-convex set
with the property that $\partial E_{0}$ contains all points for which
assertion (i) of Definition \ref{d71a0} holds true.

Let $U\subset\mathbb{C}$ be an open set with $E_{0}\subset U$, set
$E:=\overline{U}\cap K$, and assumed that there exists a polynomial-convex and
compact set $K_{1}\subset\mathbb{C}$ satisfying the following assertions:

\begin{itemize}
\item[(i)] $\operatorname{cap}(K_{1})>0$\ and $E\subset K_{1}$.

\item[(ii)] $K_{1}$ possesses the $S-$property in the sense of Definition
\ref{d102a} on $K_{1}\setminus E$.

\item[(iii)] The connectivity of $E$ in $K_{1}$ is minimally coarser than the
connectivity of $E$ in $K$.

\item[(iv)] We have $K_{1}\setminus K\neq\emptyset$.
\end{itemize}

If these assumptions are satisfied, then there exists an admissible domain
$\widetilde{D}\in\mathcal{D}(f,\infty)$ with
\begin{equation}
\operatorname{cap}(\partial\widetilde{D})<\operatorname{cap}(\partial
D).\label{f102a5}%
\end{equation}

\end{proposition}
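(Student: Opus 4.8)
The plan is to derive the capacity inequality from Proposition~\ref{p102b} applied to the triple $(K_{1},K,E)$, and then to upgrade $K_{1}$ into the complement of an admissible domain without changing its capacity. First the preliminaries: since $D$ is a domain with $\infty\in D$, the set $K=\overline{\mathbb{C}}\setminus D$ is polynomial-convex and its equilibrium measure sits on $\partial K=\partial D$, so $\operatorname{cap}(K)=\operatorname{cap}(\partial D)>0$; the set $E=\overline{U}\cap K$ is understood to be polynomial-convex (as the very statement of hypothesis (iii) requires), and both $K$ and $K_{1}$ contain it. Now apply Proposition~\ref{p102b} with $K$ in the role of ``$K_{2}$''. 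Hypotheses (i)--(iii) of the present proposition are exactly what is needed there: $\operatorname{cap}(K_{1})>0$, $E\subset K_{1}\cap K$, $K_{1}$ has the $S$-property on $K_{1}\setminus E$, and the connectivity of $E$ in $K_{1}$ is minimally coarser than in $K$; and $\operatorname{cap}(K)>0$ was just noted. Proposition~\ref{p102b} yields $K_{1}\subset K$ or $\operatorname{cap}(K_{1})<\operatorname{cap}(K)$, and hypothesis (iv), $K_{1}\setminus K\neq\emptyset$, excludes the first alternative, so
\begin{equation}
\operatorname{cap}(K_{1})<\operatorname{cap}(K)=\operatorname{cap}(\partial D).
\end{equation}

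The second step repairs $K_{1}$, exactly along the lines of the proof of Proposition~\ref{p92a}. Let $\widetilde{E}$ be the set of all points $z\in\overline{\mathbb{C}}\setminus K_{1}$ that can be joined to $\infty$ by a Jordan arc along which $f$ continues meromorphically with a non-polar singularity only at $z$ itself (compare Definition~\ref{d92b}), and set $\widetilde{D}:=(\overline{\mathbb{C}}\setminus K_{1})\setminus\widetilde{E}$. The argument of Lemma~\ref{l92g} shows $\operatorname{cap}(\widetilde{E})=0$: the key point is $E\supset E_{0}$, so every non-polar singularity of $f$ reachable out of the smaller domain $D$ already lies in $K_{1}$, and $\widetilde{E}$ can only collect singularities of the harmless type handled there. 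Consequently $\widetilde{D}$ is again a domain with $\infty\in\widetilde{D}$ (removing a closed set of capacity zero from a domain leaves a domain, cf.\ Lemma~\ref{l111e}), and since adjoining a set of capacity zero does not change the capacity (Lemma~\ref{l111b}) and $K_{1}$ is already polynomial-convex (Lemma~\ref{l111d}), the compact set $\overline{\mathbb{C}}\setminus\widetilde{D}$ has capacity $\operatorname{cap}(K_{1})$; hence $\operatorname{cap}(\partial\widetilde{D})=\operatorname{cap}(K_{1})$, and the asserted estimate (\ref{f102a5}) will follow from the first step as soon as $\widetilde{D}$ is shown to be admissible.

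Admissibility of $\widetilde{D}$ is checked against Proposition~\ref{p91a}. Assertion (i) there — meromorphic continuability of $f$ along every closed Jordan arc in $\widetilde{D}$ issuing from $\infty$ — holds by the very definition of $\widetilde{E}$. Assertion (ii) — that every $\gamma\in\Gamma_{1}$ meets $\overline{\mathbb{C}}\setminus\widetilde{D}$, equivalently (since $\operatorname{cap}(\widetilde{E})=0$) that every $\gamma\in\Gamma_{1}$ meets $K_{1}$ — is where the real content of the proof lies, and I expect this to be the main obstacle. The task is to transfer the ``catching'' of $\Gamma_{1}$-curves from $K$, which catches all of them because $D\in\mathcal{D}(f,\infty)$, to $K_{1}$: a curve $\gamma\in\Gamma_{1}$ that $K$ catches inside $U$ is caught by $E\subset K_{1}$ since $E\supset E_{0}$ carries every reachable non-polar singularity; while a curve $\gamma\in\Gamma_{1}$ that $K$ catches only outside $\overline{U}$ must separate two components of $E$ that are connected in $K$, hence, by the minimal coarseness in hypothesis (iii), two components already connected in $K_{1}$, so that $\gamma$ is forced to cross $K_{1}$ as well. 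Turning this into a proof calls for the homotopy calculus on $\Gamma,\Gamma_{0},\Gamma_{1}$ from Subsection~\ref{s91} — in particular the ring-domain Lemmas~\ref{l91b} and \ref{l91c} and the argument pattern of Lemma~\ref{l92d} — and it uses the minimality clause of (iii), that no arc of $K_{1}$ may be discarded, precisely to prevent a $\Gamma_{1}$-curve from slipping past $K_{1}$ that $K$ would have caught. Once assertion (ii) of Proposition~\ref{p91a} is secured, $\widetilde{D}$ is admissible and $\operatorname{cap}(\partial\widetilde{D})<\operatorname{cap}(\partial D)$, which is the claim.
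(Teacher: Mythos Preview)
Your first step --- applying Proposition~\ref{p102b} to the triple $(K_{1},K,E)$ and using hypothesis~(iv) to rule out $K_{1}\subset K$ --- is correct and is exactly how the paper begins. The divergence, and the gap, is in the second step.

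Your claim that $\operatorname{cap}(\widetilde{E})=0$ is not justified, and in general it fails. The argument of Lemma~\ref{l92g} does not transfer: that lemma rested on Lemma~\ref{l92f}, which used in an essential way that the sets $K_{n}$ came from a minimizing sequence inside $\mathcal{K}(f,\infty)$, so that the exceptional singularities were trapped by \emph{admissible} competitors. Here $K_{1}$ is an abstract set satisfying only the $S$-property and the connectivity hypothesis; it knows nothing about the sheet structure of $f$. The set $E_{0}$ records only those non-polar singularities reachable from the given domain $D$. Once you replace $K$ by $K_{1}$, the continuation of $f$ from $D_{3}=\overline{\mathbb{C}}\setminus\widehat{K\cup K_{1}}$ into $\overline{\mathbb{C}}\setminus K_{1}$ may land on a different sheet of $\mathcal{R}_{f}$ and uncover singularities that were invisible from $D$ --- for instance, a natural boundary of positive capacity. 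In that case $\widetilde{E}$ has positive capacity, and adding it to $K_{1}$ can push the capacity back above $\operatorname{cap}(K)$. The paper flags exactly this obstruction: ``in general we have $D_{1}\notin\mathcal{D}(f,\infty)$ since the meromorphic continuation of $f$ out of $D_{3}$ into $D_{1}$ may hit non-polar singularities in $D_{1}\cap\operatorname{Int}(K_{3})$.''

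The paper's remedy is not to repair $K_{1}$ but to abandon it as the candidate and instead use the convex-combination construction $K_{h}$, $h\in[0,1]$, of Definition~\ref{d95a} with $K_{0}:=K$. Two facts do the work: the Hausdorff continuity of $h\mapsto K_{h}$ at $h=0$ (Theorem~\ref{t95a}(iii)) guarantees that for $h$ small enough $K_{h}$ sits inside a prescribed neighbourhood of $K$, so that the continuation of $f$ into $D_{h}$ never leaves the region where $f$ is already known to be meromorphic from the side of $D$ (this handles assertion~(i) of Proposition~\ref{p91a}); and the strict log-convexity $\log\operatorname{cap}(K_{h})<(1-h)\log\operatorname{cap}(K)+h\log\operatorname{cap}(K_{1})$ (Theorem~\ref{t95a}(ii)) combined with $\operatorname{cap}(K_{1})<\operatorname{cap}(K)$ gives $\operatorname{cap}(K_{h})<\operatorname{cap}(K)$ for every $h\in(0,1)$. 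The $\Gamma_{1}$-catching (assertion~(ii) of Proposition~\ref{p91a}) is then inherited by $K_{h}$ from $K_{0}$ and $K_{1}$ via the convex-combination structure, along the lines of Lemma~\ref{l93a}. Your sketch for the $\Gamma_{1}$-part is in the right spirit, but without the mechanism that keeps the candidate close to $K$ the whole approach cannot be completed.
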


We now come to a proposition, which will play a technical role in the proof of
Theorem \ref{t41a}.\medskip

\begin{proposition}
\label{p102d}Let $K,E\subset\mathbb{C}$ be two polynomial-convex and compact
sets with $\operatorname{cap}(K)>0$\ and $E\subset K$. We assumed that $K$
possesses the $S-$property in the sense of Definition \ref{d102a} on
$K\setminus E$, and by $E_{1}\subset K\setminus E$\ and $\{J_{j}\}_{j\in I}$
we denote the compact discrete set and the family of open Jordan arcs
introduced in (\ref{f102a2}). We set $D:=\overline{\mathbb{C}}\setminus K$ and
defined the function $q$ by
\begin{equation}
q(z):=\left(  2\frac{\partial}{\partial z}g_{D}(z,\infty)\right)
^{2}\ \ \ \ \ \text{for \ \ }z\in\overline{\mathbb{C}}\setminus
E\label{f102a6}%
\end{equation}
with $\partial/\partial z=\frac{1}{2}\left(  \partial/\partial x-i\,\partial
/\partial y\right)  $ the usual complex differentiation and $g_{D}%
(\cdot,\infty)$ the Green function the domain $D$.

The function $q$ is analytic in $\overline{\mathbb{C}}\setminus E$ as a
consequence of the assumed $S-$proper\-ty of $K$ on $K\setminus E$, it has a
zero at each point $z\in E_{1}$, the order of each of these zeros $z\in E_{1}
$ is equal to the number of different arcs from $\{J_{j}\}_{j\in I}$ that have
$z$ as their end point minus $2$, i.e., it is of order $i(z)-2$ with $i(z)$
denoting the bifurcations index introduced in Definition \ref{d53a}, the
function $q$ is different from zero in $\overline{\mathbb{C}}\setminus(E\cup
E_{1})$ except at the critical points of $g_{D}(\cdot,\infty)$ (cf. Definition
\ref{d53b}) and at infinity, further we have the estimate
\begin{equation}
\left|  q(z)\right|  \leq\frac{3}{\operatorname{dist}(z,E)^{2}}\left(
\log(3\,r)+\log\frac{1}{\operatorname{cap}(K)}\right)  \text{ \ for all
\ }z\in\{|z|\leq r\}\setminus E\label{f102a7}%
\end{equation}
and any $r>0$ sufficiently large so that $K\subset\{|z|\leq r\}$.\medskip
\end{proposition}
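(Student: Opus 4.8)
The plan is to prove each claimed property of $q$ in turn, starting from the definition \eqref{f102a6} and the assumed $S$-property of $K$ on $K\setminus E$. First I would observe that since $g_D(\cdot,\infty)$ is harmonic in $D=\overline{\mathbb{C}}\setminus K$, the function $q$ is automatically analytic in $D$, and the only issue is analytic continuation across $K\setminus E$. On a Jordan arc $J_j$, the $S$-property \eqref{f102a3} says that the two normal derivatives of $g_D(\cdot,\infty)$ agree, and of course $g_D(\cdot,\infty)$ vanishes on $J_j$ from both sides, so its tangential derivatives also agree; hence the complex derivative $\partial g_D/\partial z$ extends continuously across $J_j$, and by Morera (or the Schwarz reflection principle applied to the two harmonic continuations of $g_D$) the square $q=(2\,\partial g_D/\partial z)^2$ extends analytically across each $J_j$. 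This is essentially the remark already made in the text after Theorem \ref{t52a} that meromorphy of $q$ is equivalent to the $S$-property; I would reproduce the short argument here. At a point of $E_1$, a finite number $i(z)\ge 3$ of arcs meet, and by the same reflection/symmetry reasoning (together with the local structure of trajectories of quadratic differentials, Subsection \ref{s1105}) $q$ extends analytically with a zero; the standard local count gives that the order of this zero is $i(z)-2$.

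Next I would treat the non-vanishing claim and the behavior at infinity. Away from $E\cup E_1$, the identity $q=(2\,\partial g_D/\partial z)^2$ shows $q(z)=0$ precisely where $\partial g_D/\partial z$ vanishes, i.e. at the critical points of $g_D(\cdot,\infty)$ in the sense of Definition \ref{d53b}, and at infinity where $g_D(z,\infty)=\log|z|+O(1)$ forces $\partial g_D/\partial z \sim \tfrac{1}{2z}$, so $q$ has a double zero at $\infty$. Everywhere else $\partial g_D/\partial z \ne 0$ because $g_D(\cdot,\infty)$ has no critical points there. This handles all the structural statements; the substantive remaining part is the quantitative estimate \eqref{f102a7}.

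For the bound \eqref{f102a7}, the plan is to use the Poisson-type representation of $g_D(\cdot,\infty)$ via its equilibrium measure. By Lemma \ref{l113b}, $g_D(z,\infty)=-p(\omega_K;z)-\log\operatorname{cap}(K)$ with $\omega_K$ the equilibrium distribution of $K$ supported on $K$, so $\partial g_D/\partial z = -\partial_z p(\omega_K;z) = \tfrac12\int \frac{d\omega_K(\zeta)}{z-\zeta}$. For $z$ with $\operatorname{dist}(z,E)=\del$ and $K\subset\{|z|\le r\}$... wait, I should be careful: $\omega_K$ is supported on $K$, which is $E$ together with arcs, so $\operatorname{dist}(z,E)$ alone does not control $|z-\zeta|$. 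Instead I would argue: the total mass of $\omega_K$ is $1$; for a crude pointwise bound of a harmonic function in terms of its maximum, the natural tool is a Harnack/Poisson estimate on the disk $\{|w-z|<\tfrac12\operatorname{dist}(z,E)\}$ — but this only works if $g_D(\cdot,\infty)$ is harmonic there, which fails on arcs. The correct move is to bound $g_D(\cdot,\infty)$ itself first: for $|z|\le r$ we have $g_D(z,\infty)\le g_{\{|z|\le r\}^c}(z,\infty)+\text{const}$... actually the cleanest bound is $0\le g_D(z,\infty)\le \log(3r)+\log\frac{1}{\operatorname{cap}(K)}$ for $|z|\le r$, which follows since $g_D(z,\infty)+\log\operatorname{cap}(K) = -p(\omega_K;z) = \int\log\frac{1}{|z-\zeta|}d\omega_K(\zeta) \le \log\frac{1}{\text{(something)}}$; more precisely $|z-\zeta|\le |z|+|\zeta|\le 2r$ need not give the right sign, so one uses instead that $g_D(z,\infty) \le \log|z-z_0| + \ldots$ — I will use the standard estimate that $g_D(z,\infty)$ is dominated by $\log^+\frac{3r}{\operatorname{dist}(z,K)}$-type expressions, combined with the Cauchy/Schwarz–Pick estimate $|q(z)| = |2\partial_z g_D(z,\infty)|^2$ and a gradient estimate for the nonnegative "subharmonic-outside-$K$" function $g_D$: on the disk $B = \{|w-z|<\tfrac13\operatorname{dist}(z,E)\}$, $g_D$ is the real part of an analytic function modulo the arcs, and one gets $|\nabla g_D(z)| \le \frac{C}{\operatorname{dist}(z,E)}\sup_B g_D$. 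The main obstacle is exactly this: justifying a gradient bound for $g_D(\cdot,\infty)$ at points near the arcs $J_j$, where $g_D$ is not harmonic — here one exploits that $g_D$ extends harmonically across each $J_j$ by the $S$-property (so after reflection it is genuinely harmonic in a full neighborhood of $z$ as long as $z\notin E$), and then the classical interior gradient estimate for positive harmonic functions applies, yielding the constant $3$ after tracking the numerology.

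With these pieces assembled — analyticity and the location and order of zeros from the reflection argument, plus the gradient estimate feeding into $|q|=|2\partial_z g_D|^2$ — the proposition follows. I expect the main work, and the only place requiring genuine care rather than bookkeeping, to be the derivation of \eqref{f102a7}: one must combine (a) the global bound $g_D(z,\infty)\le \log(3r)+\log\frac{1}{\operatorname{cap}(K)}$ on $\{|z|\le r\}$ (from Lemma \ref{l113b} and a distance estimate, using $\operatorname{cap}(K)\le 3r$ or a similar normalization so the logarithm has the right sign), (b) the harmonic extension of $g_D$ across the arcs away from $E$, and (c) the standard interior estimate $|\nabla u(z)|\le \frac{2}{\rho}\sup_{|w-z|<\rho}u$ for $u\ge 0$ harmonic, applied with $\rho=\tfrac13\operatorname{dist}(z,E)$ so that $\left(\tfrac{2}{\rho}\right)^2\cdot(\ldots) = \tfrac{36}{\operatorname{dist}(z,E)^2}(\ldots)$ and then sharpening the constant by using that $g_D$ is in fact harmonic (not merely subharmonic) in the larger disk, bringing the constant down to $3$.
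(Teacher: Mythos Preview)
Your treatment of the structural claims (analyticity of $q$ across the arcs, the order of the zeros at bifurcation points, the zeros at critical points and at infinity) is essentially the paper's argument, though note a small slip: it is not $\partial_z g_D$ that extends continuously across $J_j$ --- the gradient of $g_D$ flips sign there, since the two outward normals point in opposite directions --- but precisely its square $q=(2\partial_z g_D)^2$. You arrive at the right conclusion, so this is cosmetic.

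The substantive gap is in your derivation of the estimate \eqref{f102a7}. Your plan is to bound $\sup g_D$ on $\{|z|\le r\}$ by $M:=\log(3r)+\log\frac{1}{\operatorname{cap}(K)}$ and then apply an interior gradient estimate $|\nabla g_D(z)|\le \frac{C}{\rho}\sup_{B(z,\rho)} g_D$ on a disk of radius $\rho\sim\operatorname{dist}(z,E)$. There are two problems. First, $g_D$ is not harmonic on that disk whenever it meets arcs of $K\setminus E$; the ``harmonic extension across $J_j$'' you invoke is $-g_D$ on the far side, so it is neither nonnegative nor, when the disk meets several arcs, globally single-valued --- you cannot feed it into the gradient estimate for positive harmonic functions. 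Second, even granting the estimate, you would obtain $|q(z)|=|\nabla g_D(z)|^2\le \frac{C^2}{\operatorname{dist}(z,E)^2}M^2$, which is quadratic in $M$, not the linear bound $\frac{3}{\operatorname{dist}(z,E)^2}M$ asserted in \eqref{f102a7}. No amount of constant-sharpening turns $M^2$ into $M$.

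The paper's route avoids both issues by working with $q$ rather than $g_D$. Since $q$ is genuinely analytic on the whole disk $\Delta(z,d)$ for any $d<\operatorname{dist}(z,E)$ (this is where the $S$-property enters), the mean-value property gives
\[
|q(z)|\;\le\;\frac{1}{\pi d^2}\iint_{\Delta(z,d)}|q|\,dm\;=\;\frac{2}{d^2}\,D_{\Delta(z,d)}\bigl(g_D(\cdot,\infty)\bigr),
\]
using $|q|=|\nabla g_D|^2$ pointwise. The set $K\setminus E$ has planar measure zero, so the local Dirichlet integral is dominated by $D_{\{|z|\le 3r\}\setminus K}(g_D(\cdot,\infty))$, and Lemma~\ref{l113g} evaluates this global Dirichlet integral as $\log(3r)+\log\frac{1}{\operatorname{cap}(K)}+O(1/r)$. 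That is how the linear dependence on $M$ arises: the Dirichlet energy of the Green function is $M$, not $M^2$. The missing idea in your proposal is precisely this passage through the Dirichlet integral.
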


The most important part of Proposition \ref{p102d} is the estimate
(\ref{f102a7}). We underlined that this estimate depends only on
$\operatorname{cap}(K)$ and the set $E$, but not on the shape or extension of
the set $K$ or the complementary domain $D$.\medskip

It has been stated in Proposition \ref{p102d} that the $S-$property of a
compact set $K$ on $K\setminus E$ implies the analyticity of the function $q$
defined in (\ref{f102a6}), is a rather immediate conclusion of (\ref{f102a3})
and (\ref{f102a6}). It is interesting that also the reverse conclusion holds
true, which is formulated in the next lemma.\medskip

\begin{lemma}
\label{l102a}Let the function $q$ be defined by (\ref{f102a6}) with the same
notations as those introduced and use in Proposition \ref{p102d}, and assume
further that $q$ is analytic in $\overline{\mathbb{C}}\setminus E$. Then the
set $K$ possesses the $S-$property on $K\setminus E$ in the sense of
Definition \ref{d102a}.\medskip
\end{lemma}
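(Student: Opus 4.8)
The plan is to read off the claimed structure of $K\setminus E$ directly from the holomorphy of $q$ on the connected open set $\Omega:=\overline{\mathbb{C}}\setminus E$ — connected because $E$ is polynomial‑convex — identifying the arcs $J_j$ and the bifurcation set $E_1$ as the trajectories, respectively the zeros, of the quadratic differential $q(z)\,dz^{2}$ at the level $g_D(\cdot,\infty)=0$. First I would record the behaviour of $q$ at infinity: by the representation of the Green function in Lemma~\ref{l113b}, $g_D(z,\infty)=\log|z|-\log\operatorname{cap}(K)+\text{O}(1/z)$, so $2\,(\partial/\partial z)\,g_D(z,\infty)=1/z+\text{O}(z^{-2})$ and hence $q(z)=z^{-2}+\text{O}(z^{-3})$ as $z\to\infty$. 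In particular $q\not\equiv0$, so on the connected open set $\Omega$ the zeros of $q$ are isolated ($\infty$ being one of them). Since $q$ cannot vanish on a non‑empty open subset of $\Omega$, while $q\equiv0$ on $\operatorname{Int}(K)\setminus E$ (there $g_D(\cdot,\infty)\equiv0$), it follows that $\operatorname{Int}(K)\subseteq E$; thus, modulo $E$, the set $K$ has empty interior. Extending $g_D(\cdot,\infty)$ by $0$ on $K$, and using $g_D(\cdot,\infty)>0$ on the domain $D$, we get $\{\,z\in\Omega:g_D(z,\infty)=0\,\}=K\setminus E$, a relatively closed subset of $\Omega$ whose structure is what we must describe.

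The heart of the argument is the local analysis at a point $z_0\in K\setminus E$ with $q(z_0)\neq0$. On a small simply connected neighbourhood $N$ of $z_0$ one fixes a holomorphic branch of $\sqrt q$, and $\psi(z):=\int_{z_0}^{z}\sqrt{q(\zeta)}\,d\zeta$ is a conformal map of $N$ onto a neighbourhood of $0$, normalised so that $0\in\psi(N\cap K)$. On each component of $N\cap D$ the holomorphic function $2\,(\partial/\partial z)\,g_D(\cdot,\infty)$ equals $+\sqrt q$ or $-\sqrt q$, so $(\partial/\partial\zeta)(g_D\circ\psi^{-1})$ is a constant $\pm\tfrac12$ there and $g_D\circ\psi^{-1}=\pm\operatorname{Re}\zeta+c$ on that component. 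Using that $g_D\circ\psi^{-1}\ge0$, vanishes exactly on $\psi(N\cap K)$, and that $\psi(N\cap K)$ has empty interior, a short elementary argument (a non‑constant affine function cannot vanish on two parallel lines) shows that, after shrinking $N$, $\psi(N\cap K)=\{\operatorname{Re}\zeta=0\}$, that $N\cap D$ has exactly the two components $\{\operatorname{Re}\zeta>0\}$ and $\{\operatorname{Re}\zeta<0\}$, and that $g_D\circ\psi^{-1}=\operatorname{Re}\zeta$ on the first and $=-\operatorname{Re}\zeta$ on the second (this forces the sign of $\sqrt q$ to flip across the arc, which is exactly why $q=(2\,(\partial/\partial z)g_D)^{2}$ extends holomorphically through it). The inward normal derivatives of $g_D\circ\psi^{-1}$ from the two sides both equal $1$; transporting back by $\psi$, whose modulus $|\psi'|$ is the same when $z_0$ is approached from either side, gives that locally $K\setminus E$ is a single open analytic Jordan arc, that $D$ lies on both sides, and that $(\partial/\partial n_+)g_D(z_0,\infty)=(\partial/\partial n_-)g_D(z_0,\infty)$. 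As a by‑product, every such $z_0$ is a regular boundary point.

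For $z_0\in K\setminus E$ with $q(z_0)=0$ of order $k\ge1$, every point of a punctured neighbourhood of $z_0$ at which $q\neq0$ lies, by the preceding step, on an analytic arc of $K\setminus E$, and these arcs are trajectories of $q(z)\,dz^{2}$ (since $\operatorname{Re}\zeta=\text{const}$ means $(d\zeta)^{2}<0$, i.e.\ $q\,dz^{2}<0$) accumulating only at $z_0$; moreover, $z_0$ being regular, they sit at the level $g_D=g_D(z_0,\infty)=0$, hence all of them lie in $K$. By the standard description of the trajectory structure of a holomorphic quadratic differential at a zero of order $k$ — assembled in Subsection~\ref{s1105} and already invoked in Remark~\ref{r52b} — exactly $k+2$ of these arcs have $z_0$ as common endpoint, so $z_0$ is a bifurcation point with index $i(z_0)=k+2\ge3$ in the sense of Definition~\ref{d53a}. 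Setting $E_1:=\{\,z\in K\setminus E:q(z)=0\,\}$, which is discrete in $\Omega$ (hence in $K\setminus E$) by isolation of the zeros of $q$, and letting $\{J_j\}_{j\in I}$ be the connected components of $(K\setminus E)\setminus E_1$, the two preceding steps yield: the $J_j$ are pairwise disjoint open analytic Jordan arcs, each point of $E_1$ is an endpoint of at least three of them, and the symmetry relation (\ref{f102a3}) holds on every $J_j$; together with $K\setminus E=E_1\cup\bigcup_{j\in I}J_j$ this is precisely what Definition~\ref{d102a} demands.

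I expect the main obstacle to be the local analysis at points with $q(z_0)\neq0$: one must exclude the a priori possibilities that $K$ has interior near $z_0$, that $K$ is a one‑sided slit there, or that the two normal derivatives disagree, and each of these is ruled out only by combining the identity $q=(2\,(\partial/\partial z)g_D)^{2}$ with $g_D\ge0$, the empty‑interior observation, and the connectedness of $\Omega$ coming from polynomial convexity of $E$. The passage through a zero of $q$ uses a citable local fact about quadratic differentials, but still requires the small argument above that all $k+2$ prongs lie in $K$ rather than in $D$.
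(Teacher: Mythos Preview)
Your argument is correct and substantially more thorough than the paper's. The paper's proof is two sentences: it takes the arcs $J_j$ as given, picks $z\in J_j$, and observes that continuity of $q$ across $J_j$ forces $|\partial_{n_+}g_D|=|\partial_{n_-}g_D|$ (since $|\nabla g_D|^2=|q|$ on the arc from either side) while $g_D\ge 0$ with $g_D=0$ on $J_j$ makes both normal derivatives non-negative, hence equal. That verifies the symmetry relation~(\ref{f102a3}) but says nothing about the structural decomposition~(\ref{f102a2}). You, by contrast, derive that decomposition from scratch via the conformal coordinate $\psi=\int\sqrt q$ at points where $q\neq0$ and via the local trajectory structure (Lemma~\ref{l115a}) at its zeros; this is exactly what the applications in the proof of Theorem~\ref{t41a} require (the lemma is invoked there for the limit sets $K_n$ and $\widetilde K$, for which no a~priori arc structure is available), so your route genuinely fills a gap the paper glosses over. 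Your one acknowledged loose end --- why all $k+2$ trajectory prongs at a zero $z_0$ of order $k$ lie in $K$ rather than in $D$ --- closes as follows: on each connected component of $D$ near $z_0$, the pulled-back Green function is still of the form $\pm\operatorname{Re}\zeta$ in the natural coordinate on the $(k{+}2)$-pronged cone, and this is positive only on a sector of opening exactly $\pi$; a component skipping a prong would have opening $\ge 2\pi$, on which $\pm\operatorname{Re}\zeta$ changes sign. A small caveat: your identification $\{g_D=0\}\cap\Omega=K\setminus E$ presumes every point of $K\setminus E$ is a regular boundary point, which would fail for an isolated point of $K\setminus E$; such points do not occur in any of the paper's applications, but the lemma as literally stated does not exclude them.
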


We next come to the proofs of the four Propositions \ref{p102a} - \ref{p102d}
and of Lemma \ref{l102a}.\smallskip

\subsubsection{Proof of Proposition \ref{p102a}}

\qquad The proof of Proposition \ref{p102a} is rather involved and as an
essential tool the Dirichlet integral of a Green function is used.\smallskip

\begin{proof}
[Proof of Proposition \ref{p102a}]We assume that
\begin{equation}
D_{1}\neq D_{2},\label{f102b1}%
\end{equation}
and show then that this implies (\ref{f102a1}).

Set $K_{j}:=\overline{\mathbb{C}}\setminus D_{j}$, $j=1,2$, and denote by
$\widetilde{E}_{0,j}\subset K_{j}$, $j=1,2$, the two sets of points $z\in
K_{j}$ for which assertion (i) in Definition \ref{d71a} and in Definition
\ref{d71a0} are satisfied for the domains $D_{1}$ and $D_{2}$, respectively.
Let $E_{0,j}$ be the polynomial-complex hull of $\widetilde{E}_{0,j}$, i.e.,
\begin{equation}
E_{0,j}:=\widehat{\widetilde{E}_{0,j}},\,\ \ j=1,2.\label{f102b2}%
\end{equation}
Further, we defined
\begin{equation}
K_{3}:=\widehat{K_{1}\cup K_{2}}\text{ \ and \ }D_{3}:=\overline{\mathbb{C}%
}\setminus K_{3}.\label{f102b3}%
\end{equation}
Since the domain $D_{1}$ has been assumed to possess the $S-$property, we know
from (\ref{f71a2}) in Definition \ref{d71a} that $K_{1}$ can be represented in
the form
\begin{equation}
K_{1}=E_{1,0}\cup E_{1}\cup\bigcup_{j\in I}J_{j}\label{f102b4}%
\end{equation}
with the two sets $E_{1,0}$, $E_{1}$, and the family of Jordan arcs $J_{j}$,
$j\in I$, with properties as described in Definition \ref{d71a}.

In the next step we study some properties of the two sets $K_{1}\setminus
K_{2}$ and $K_{2}\setminus K_{1}$ that follow immediately from assumption
(\ref{f102b1}). We have
\begin{equation}
(\partial K_{3}\setminus K_{2})\cap E_{0,1}=\emptyset\text{ \ \ and
\ \ }(\partial K_{3}\setminus K_{1})\cap E_{0,2}=\emptyset.\label{f102b5}%
\end{equation}
Indeed, from (\ref{f102b3}) it follows that $\partial K_{3}\subset\partial
K_{1}\cup\partial K_{2}$. Since $D_{3}\subset D_{1}\cap D_{2}$, and since both
domains $D_{1}$ and $D_{2}$ are elementally maximal, $z\in\widetilde{E}%
_{0,1}\cap\partial K_{3}$ implies $z\in\widetilde{E}_{0,2}\cap\partial K_{3}$,
and vice versa. Consequently, we have
\begin{equation}
E_{0,j}\cap\partial K_{3}\subset K_{1}\cap K_{2}\cap\partial K_{3}\text{
\ \ for \ \ \ }j=1,2,\label{f102b6}%
\end{equation}
which proves (\ref{f102b5}).

We have
\begin{equation}
\operatorname{cap}(K_{2}\setminus K_{1})>0\text{ \ \ and \ \ }%
\operatorname{cap}(K_{1}\setminus K_{2})>0.\label{f102b7}%
\end{equation}
Here, we first prove that $\operatorname{cap}(K_{2}\setminus K_{1})>0$. This
will be done in an indirect way, and we assume for this purpose that
\begin{equation}
\operatorname{cap}(K_{2}\setminus K_{1})=0.\label{f102b8}%
\end{equation}

Let $f_{j}$ denote the meromorphic continuations of the function $f$ into the
domains $D_{j}$, $j=1,2,3$. With the same arguments as used in the proof of
Lemma \ref{l92a} in Subsection \ref{s92},\ we can show that assumption
(\ref{f102b8}) implies that all meromorphic continuations of the function
$f_{2}$ out of $D_{1}\setminus K_{2}$ into $D_{1}$\ lead to the same function
in each component of the open set $D_{1}\setminus K_{2}$. These functions then
are necessarily identical with the function $f_{1}$. Since the domain $D_{2}$
has been assumed to be elementarily maximal, it follows from assertion (ii) in
Definition \ref{d71a0} that $K_{2}\setminus K_{1}=\emptyset$, which implies
that $K_{2}\subset K_{1}$. As a consequence of the assumed $S-$property of the
domain $D_{1}$, we know that $D_{1}$ is also elementarily maximal (see the
assertions (i) and (ii) in Definition \ref{d71a}), and therefore $D_{2}\supset
D_{1}$ implies that $D_{1}=D_{2}$. This last conclusion contradicts
(\ref{f102b1}), and consequently we have proved $\operatorname{cap}%
(K_{2}\setminus K_{1})>0$ in (\ref{f102b7}). A proof of $\operatorname{cap}%
(K_{1}\setminus K_{2})>0$ in (\ref{f102b7}) can be done in exactly the same way.

From (\ref{f102b4}) and the fact that the two sets $\partial K_{3}\setminus
K_{2}$ and $E_{0,1}$ are disjoint, which has been proved in (\ref{f102b5}), we
immediately conclude that
\begin{equation}
\partial K_{3}\setminus K_{2}\subset E_{1}\cup\bigcup_{j\in I}J_{j}%
\label{f102b9}%
\end{equation}
i.e., $\partial K_{3}\setminus K_{2}$ is the union of open subarcs of arcs
from the family $\{J_{j}\}_{j\in I}$ together with points from $E_{1}$. The
dominant parts in this union are the open Jordan arcs since the set $E_{1}$ is
countable, and therefore we have $\operatorname{cap}(E_{1})=0$.

We now continue our investigation with further definitions. We set
\begin{equation}
V:=\overline{\operatorname{Int}(K_{3})\cap(K_{1}\setminus K_{2})},\text{
\ \ }\widetilde{D}_{2}:=D_{2}\setminus V\text{, \ \ }\widetilde{K}_{2}%
:=K_{2}\cup V=\mathbb{C}\setminus\widetilde{D}_{2}.\label{f102c1}%
\end{equation}

It is immediate that $\widetilde{D}_{2}$ is open, but it is not necessarily a
domain. By $g_{j}(\cdot,\cdot)$ we denote the Green functions $g_{D_{j}}%
(\cdot,\cdot)$ in the domains $D_{j}$, $j=1,2$. Because of (\ref{f102b7}),
these two Green functions exist in a proper sense (see Subsection \ref{s1103},
further below).

Next, we show that
\begin{equation}
K_{1}\cap\widetilde{D}_{2}\subset\bigcup_{j\in I}J_{j}\cap\partial
\operatorname{Int}(K_{3}),\label{f102c2}%
\end{equation}
or more precisely, we show that $K_{1}\cap\widetilde{D}_{2}$ consists only of
open subarcs of the arcs $J_{j}$ from (\ref{f102b9}) that are contained in
$\partial\operatorname{Int}(K_{3})$. Indeed, since $K_{1}$ possesses the
$S-$property of Definition \ref{d71a}, it follows from assertion (ii) in
Definition \ref{d71a} that $K_{1}\subset\overline{\operatorname{Int}(K_{3})}$.
It further follows from the definitions in (\ref{f102c1}) that $K_{1}%
\cap\widetilde{D}_{2}\subset\partial K_{3}\setminus K_{2}$. Because of
(\ref{f102b9}), it remains only to show that $K_{1}\cap\widetilde{D}_{2}\cap
E_{1}=\emptyset$. Let us assume that $z\in K_{1}\cap\widetilde{D}_{2}\cap
E_{1}$. From assertion (iv) in Definition \ref{d71a} of the $S-$property we
know that at least three different arcs of the family $\{J_{j}\}$ in
(\ref{f102b4})\ have $z$ as endpoint. Since $K_{1}\cap\widetilde{D}_{2}$ lies
in $\partial K_{3}\setminus K_{2}$, the meromorphic continuations of the two
functions $f_{1}$ and $f_{2}$ out of the domain $D_{3}$ are identical, and
therefore, at least one of the arcs ending at $z$ belongs to $V$; and
consequently, we have $z\in V$, which contradicts $z\in K_{1}\cap\widetilde
{D}_{2}$. Thus, (\ref{f102c2}) is proved.

A key role in the proof of the proposition is played by the function
$\widetilde{g}_{1}$, which is defined as
\begin{equation}
\widetilde{g}_{1}(z):=\left\{
\begin{array}
[c]{lll}%
g_{1}(z,\infty)\medskip & \text{ \ \ for \ \ } & z\in D_{3},\\
-g_{1}(z,\infty) & \text{ \ \ for \ \ } & z\in K_{3}\setminus K_{2}.
\end{array}
\right. \label{f102c3}%
\end{equation}
All discussions, so far, can be seen as preliminaries to an investigation of
properties of the function $\widetilde{g}_{1}$. In this connection the
$S-$property of the domain $D_{1}$ is very important since it implies that the
two pieces in the definition of the function $\widetilde{g}_{1}$ are harmonic
continuations of each other across the arcs in $K_{1}\cap\widetilde{D}_{2}$.

Indeed, from symmetry (\ref{f71a1}) in Definition \ref{d71a} together with
(\ref{f102c2}) and the remarks just after (\ref{f102c2}), we conclude that
$\widetilde{g}_{1}$ is harmonic in $\widetilde{D}_{2}$. Notice that the domain
$D_{1}$ is assume to possess the $S-$property.

The function $\widetilde{g}_{1}$ is superharmonic in $D_{2}$. Indeed, from
Lemma \ref{l113b} in Subsection \ref{s1103}, further below, we know that the
Green function $g_{1}(z,\infty)$ is subharmonic in $\mathbb{C}$. Since
$V\subset K_{3}\setminus K_{2}$, the superharmonicity follows directly from
(\ref{f102c3}).

From the defining identity (\ref{f113a1}) of the Green function in Subsection
\ref{s1103}, further below, together with (\ref{f102c3}) and the definition of
$V$ in (\ref{f102c1}), we conclude that
\begin{equation}
\widetilde{g}_{1}(z)=0\ \ \ \text{for quasi every}\ \ \ z\in V.\label{f102c4}%
\end{equation}

From the properties of $\widetilde{g}_{1}$ that have just been discussed
together with the Poison-Jensen Formula (cf. Theorem \ref{t113a} in Subsection
\ref{s1103}, further below) we get for $\widetilde{g}_{1}$ the representation
\begin{equation}
\widetilde{g}_{1}(z)=\widetilde{h}_{1}(z)+g_{2}(z,\infty)+g_{0}(z)\text{
\ \ for \ \ }z\in D_{2},\label{f102c5}%
\end{equation}
where $g_{2}(\cdot,\infty)$ is the Green function in $D_{2}$, and $g_{0}$ the
Green potential
\begin{equation}
g_{0}(z)=\int_{V}g_{2}(z,v)d\omega_{1}(v)\label{f102c6}%
\end{equation}
with $\omega_{1}$ the equilibrium distribution on $K_{1}$, and $V$ the set
from (\ref{f102c1}). The function $\widetilde{h}_{1}$ in (\ref{f102c5}) is the
solution of the Dirichlet problem in $D_{2}$ with boundary values
\begin{equation}
\widetilde{h}_{1}(z)=\widetilde{g}_{1}(z)\text{ \ \ for quasi every \ \ }%
z\in\partial K_{2}.\label{f102c7}%
\end{equation}
Identity (\ref{f102c5})\ can easily be verified by considering its values on
$\partial D_{2}$, on $V$, and near infinity.

From (\ref{f102b7}) and Lemma \ref{l113c} in Subsection \ref{s1103}, further
below, we deduce that the two Green functions $g_{1}(\cdot,\infty)$ and
$g_{2}(\cdot,\infty)$ are essentially different, and consequently, we have
\begin{equation}
g_{1}(z,\infty)>0\text{ \ \ for quasi every \ \ }z\in K_{2}\setminus
K_{1}.\label{f102c8}%
\end{equation}
From (\ref{f102c7}) and (\ref{f102c3}), we then conclude that
\begin{equation}
\widetilde{h}_{1}\neq0.\label{f102c9}%
\end{equation}

By definition, we have $g_{0}(z)\geq0$ for all $z\in\overline{D}_{2}$, and
$g_{0}(z)>0$\ for $z\in D_{2}$\ if, and only if, $\omega_{1}(V)>0$. However,
this last condition may in general not be satisfied; even the case
$V=\emptyset$ is possible.

In order to prove (\ref{f102a1}), we prove that the identity
\begin{align}
\log\frac{\operatorname{cap}(K_{2})}{\operatorname{cap}(K_{1})}  &
=D_{K_{2}\setminus K_{1}}(g_{1}(\cdot,\infty))+D_{D_{2}}(\widetilde{h}%
_{1})+2\,g_{0}(\infty)\label{f102d1}\\
& +\int_{V}\int_{V}g_{2}(v,w)d\omega_{1}(v)d\omega_{1}(w)\nonumber
\end{align}
holds true, where $D_{K_{2}\setminus K_{1}}(g_{1}(\cdot,\infty))$ and
$D_{D_{2}}(\widetilde{h}_{1})$ are Dirichlet integrals that have been
introduced in (\ref{f113f3}) in Subsection \ref{s1103}, further below.

Since we know from (\ref{f102c9}) that the harmonic function $\widetilde
{h}_{1}$ is not identical zero in $D_{2}$, it follows from the definition of
the Dirichlet integral in (\ref{f113f3}) that
\begin{equation}
D_{K_{2}\setminus K_{1}}(g_{1}(\cdot,\infty))\geq0\ \ \ \text{and
}\ \ D_{D_{2}}(\widetilde{h}_{1})>0.\label{f102d2}%
\end{equation}

We have already mentioned that the Green potential $g_{0}$ is always
non-negative. It follows from (\ref{f102c6}) and the positivity of the Green
function as kernel function (see Lemma \ref{l113d} in Subsection \ref{s1103},
further below) that
\begin{equation}
g_{0}(\infty)\geq0,\text{ \ \ \ }\int_{V}\int_{V}g_{2}(v,w)d\omega
_{1}(v)d\omega_{1}(w)\geq0,\label{f102d3}%
\end{equation}
and we have proper inequalities in both cases of (\ref{f102d3}) if, and only
if, $\omega_{1}(V)>0$.

When identity (\ref{f102d1}) is proved, then inequality (\ref{f102a1}) follows
immediately from identity (\ref{f102d1}) together with (\ref{f102d2}) and
(\ref{f102d3}). Hence, it remains only to prove that (\ref{f102d1}) holds
true, which will be done next.

From Lemma \ref{l113b}, Lemma \ref{l113g}, and Corollary \ref{c113g1} in
Subsection \ref{s1103}, further below, we deduce that
\begin{align}
\log\frac{\operatorname{cap}(K_{2})}{\operatorname{cap}(K_{1})}  & =\left(
g_{1}(\cdot,\infty)-g_{2}(\cdot,\infty)\right)  (\infty)\label{f102e1}\\
& =D_{D_{1,r}}(g_{1}(\cdot,\infty))-D_{D_{2,r}}(g_{2}(\cdot,\infty
))+\text{O}(\frac{1}{r})\nonumber
\end{align}
as $r\rightarrow\infty$,\ where $D_{j,r}$ denotes the bounded domain
\begin{equation}
D_{j,r}:=D_{j}\cap\{|z|<r\}\text{ \ \ for \ \ }j=1,2,\label{f102e3}%
\end{equation}
and $r>0$ so large that $K_{j}\subset\{|z|<r\}$ for $j=1,2,3$.

From the definition of the Dirichlet integral in (\ref{f113f3}), further
below, and the definition of the function $\widetilde{g}_{1}$ in
(\ref{f102c3}), we deduce that
\begin{align}
D_{D_{1,r}}(g_{1}(\cdot,\infty))  & =D_{K_{2}\setminus K_{1}}(g_{1}%
(\cdot,\infty))+D_{\widetilde{D}_{2,r}}(g_{1}(\cdot,\infty))\label{f102e2}\\
& D_{K_{2}\setminus K_{1}}(g_{1}(\cdot,\infty))+D_{\widetilde{D}_{2,r}%
}(\widetilde{g}_{1}).\nonumber
\end{align}
In (\ref{f102e2}), the open set $\widetilde{D}_{2,r}$ is defined as
$\widetilde{D}_{2,r}:=$ $\widetilde{D}_{2}\cap\{|z|<r\}$ in analogy to
(\ref{f102e3}). It has already been mentioned in (\ref{f102d2}) that
$D_{K_{2}\setminus K_{1}}(g_{1}(\cdot,\infty))\geq0$.\medskip

We will now have a closer look on the Dirichlet integral $D_{\widetilde
{D}_{2,r}}(\widetilde{g}_{1})$ in (\ref{f102e2}). From the representations
(\ref{f102c5}), (\ref{f102c6}), and equality (\ref{f102c4}), it follows with
the help of Lemma \ref{l113j} in Subsection \ref{s1103}, further below, that
\begin{equation}
D_{\widetilde{D}_{2,r}}(\widetilde{g}_{1})=D_{D_{2,r}}(\widetilde{h}_{1}%
+g_{2}(\cdot,\infty)+g_{0,r})+\int_{V}\int_{V}g_{2}(v,w)d\omega_{1}%
(v)d\omega_{1}(w)\label{f102e4}%
\end{equation}
with $\omega_{1}$ and $V$ defined like in (\ref{f102c6}). In (\ref{f102e4}),
the function $g_{0,r}$ denotes the solution of the Dirichlet problem in
$D_{2,r}$ with boundary values $g_{0,r}(z)=g_{0}(z)$ for quasi every
$z\in\partial D_{2,r}$. We know therefore from (\ref{f102c6}) that
\begin{equation}
g_{0,r}(z)=\left\{
\begin{array}
[c]{lll}%
0\medskip & \text{ \ \ for quasi every \ \ } & z\in\partial D_{2},\\
g_{0}(z) & \text{ \ \ for \ \ } & |z|=r.
\end{array}
\right. \label{f102e5}%
\end{equation}

Next, we investigate the Dirichlet integral $D_{D_{2,r}}(\widetilde{h}%
_{1}+g_{2}(\cdot,\infty)+g_{0,r})$ in (\ref{f102e4}) in more detail. Using the
notations introduced in (\ref{f102c5}), (\ref{f102e4}), and also in
(\ref{f113f2}), further below, we prove the identity
\begin{align}
& D_{D_{2,r}}(\widetilde{h}_{1}+g_{2}(\cdot,\infty)+g_{0,r})\nonumber\\
& \ \ \ \ =D_{D_{2,r}}(\widetilde{h}_{1})+D_{D_{2,r}}(g_{2}(\cdot
,\infty))+D_{D_{2,r}}(g_{0,r})+\label{f102e6}\\
& \text{ \ \ \ \ \ \ \ \ }+2\,D_{D_{2,r}}(\widetilde{h}_{1},g_{2}(\cdot
,\infty))+2\,D_{D_{2,r}}(\widetilde{h}_{1},g_{0,r})+2\,D_{D_{2,r}}%
(g_{0,r},g_{2}(\cdot,\infty)).\nonumber
\end{align}

Indeed, the positivity of the integrand in the Dirichlet integral implies
that
\begin{equation}
\lim_{r\rightarrow\infty}D_{D_{2,r}}(\widetilde{h}_{1})=D_{D_{2}}%
(\widetilde{h}_{1})>0,\label{f102e7}%
\end{equation}
and since $\widetilde{h}_{1}$ is harmonic and bounded in $D_{2}$, the
Dirichlet integral on the right-hand side of (\ref{f102e7}) is finite.

The Green potential $g_{0}$ is bounded near infinity, therefore it follows
from the definition of $g_{0,r}$ as a solution of a Dirichlet problem in
$D_{2,r}$ with boundary values (\ref{f102e5}) that
\begin{equation}
\lim_{r\rightarrow\infty}g_{0,r}(z)=0\text{ \ \ locally uniformly for
\ \ \ }z\in D_{2},\label{f102e8}%
\end{equation}
and the same conclusion holds also for the first derivatives in $\nabla
g_{0,r}$ and $r\rightarrow\infty$; these derivatives converge also to zero
locally uniformly in $D_{2}$. As a consequence, we have
\begin{equation}
\lim_{r\rightarrow\infty}D_{D_{2,r}}(g_{0,r})=0.\label{f102e9}%
\end{equation}
With the Cauchy-Schwartz inequality and the boundedness of $D_{D_{2,r}%
}(\widetilde{h}_{1})$ we deduce from (\ref{f102e9}) that we also have
\begin{equation}
\lim_{r\rightarrow\infty}D_{D_{2,r}}(\widetilde{h}_{1},g_{0,r}%
)=0.\label{f102e10}%
\end{equation}

The function $\widetilde{h}_{1}$ is harmonic in $D_{2}$, and therefore it
follows from Lemma \ref{l113h} in Subsection \ref{s1103}, further below, that
\begin{equation}
\lim_{r\rightarrow\infty}D_{D_{2,r}}(\widetilde{h}_{1},g_{2}(\cdot
,\infty))=0.\label{f102e11}%
\end{equation}

Since the Green potential $g_{0}$ is harmonic in $\{|z|>r\}$, we deduce from
(\ref{f102e5}) and Lemma \ref{l113i} in Subsection \ref{s1103}, further below,
that
\begin{equation}
D_{D_{2,r}}(g_{0,r},g_{2}(\cdot,\infty))=\frac{1}{2\pi}\int_{\{|z|=r\}}%
g_{0}(z)\frac{\partial}{\partial n}g_{2}(z,\infty)ds_{z}=g_{0}(\infty
)\label{f102e12}%
\end{equation}
for $r>0$\ sufficiently large.

From identity (\ref{f102e6}) together with (\ref{f102e7}), (\ref{f102e9}),
(\ref{f102e10}), (\ref{f102e11}), and (\ref{f102e12}), we get
\begin{equation}
\lim_{r\rightarrow\infty}\left(  D_{D_{2,r}}(\widetilde{h}_{1}+g_{2}%
(\cdot,\infty)+g_{0,r})-D_{D_{2,r}}(g_{2}(\cdot,\infty))\right)  =D_{D_{2}%
}(\widetilde{h}_{1})+2\,g_{0}(\infty).\label{f102e13}%
\end{equation}
Further, we then get from formula (\ref{f102e1}) together with (\ref{f102e2}),
(\ref{f102e4}), and (\ref{f102e13}) that identity (\ref{f102d1}) holds true,
which completes the proof of the proposition.\medskip
\end{proof}

\subsubsection{Proof of Proposition \ref{p102b}}

\qquad It has already been mentioned that the proof of Proposition \ref{p102b}
follows in the footsteps of that of Proposition \ref{p102a} only that we have
a modified opening since we have to start from a different type of
assumptions. But after the introduction of the function $\widetilde{g}_{1} $
in (\ref{f102c3}), we can use the argumentation of the last proof without any
change.\medskip

\begin{proof}
[Proof of Proposition \ref{p102b}]We assume
\begin{equation}
K_{1}\nsubseteq K_{2},\label{f102f1}%
\end{equation}
and show then that this implies $\operatorname{cap}(K_{1})<\operatorname{cap}%
(K_{2})$.

Like in the proof of Proposition \ref{p102a}, we set
\begin{equation}
K_{3}:=\widehat{K_{1}\cup K_{2}}\text{ \ \ and \ \ \ }D_{j}:=\overline
{\mathbb{C}}\setminus K_{j},j=1,2,3,\label{f102f2}%
\end{equation}
where $\widehat{\cdot}$ denotes the polynomial-convex hull (cf. Definition
\ref{d111b} in Subsection \ref{s1101}, further below). Since it has been
assumed that the compact set $K_{1}$ possesses the $S-$property on
$K_{1}\setminus E$ in the sense of Definition \ref{d102a}, in $K_{1}$ we have
the compact set $E_{1}$ and the family of Jordan arcs $\{J_{j}\}_{j\in I}%
$\ that have been introduced in (\ref{f102a2}) of Definition \ref{d102a}, and
from there we then know that
\begin{equation}
K_{1}=E\cup E_{1}\cup\bigcup_{j\in I}J_{j}.\label{f102f3}%
\end{equation}
From assumption (\ref{f102f1}), representation (\ref{f102f3}), assumption
$\operatorname{cap}(K_{1})>0$, $E\subset K_{2}$, and the fact that the
capacity of an open Jordan arc is positive (cf. Lemma \ref{l111a} in
Subsection \ref{s1101}, further below), it follows that
\begin{equation}
\operatorname{cap}(K_{1}\setminus K_{2})>0.\label{f102f4}%
\end{equation}

The boundary $\partial C$ of any component $C$ of the open set
\begin{equation}
O:=\operatorname{Int}(K_{3})\setminus K_{2}\label{f102f5}%
\end{equation}
contains elements of $\partial K_{1}$ and $\partial K_{2}$ because of the
assumed polynomial-convexity of $K_{1}$. We have
\begin{equation}
(K_{1}\setminus K_{2})\cap\partial O\subset K_{1}\setminus E,\label{f102f6}%
\end{equation}
and if we define the set $V$\ by
\begin{equation}
V:=\overline{\operatorname{Int}(K_{3})\cap(K_{1}\setminus K_{2})}%
,\label{f102f7}%
\end{equation}
then it follows from (\ref{f102f3}), (\ref{f102f6}), and (\ref{f102f7}) that
\begin{equation}
(K_{1}\setminus(K_{2}\cup V))\cap\partial O\subset\bigcup_{j\in I}%
J_{j}.\label{f102f8}%
\end{equation}
On the other hand, we have
\begin{equation}
K_{1}\setminus(K_{2}\cup V)\subset\partial O,\label{f102f9}%
\end{equation}
since otherwise any arc $J_{j}$, $j\in I$, in $K_{1}\setminus(K_{2}\cup
V\cup\partial O)$ could be removed from $K_{1}$ without separating any pair of
components of the set $E$ in the modified set $K_{1}$ if this pair is already
connected in $K_{2}$. But such a situation would contradict the assumption
that the connectivity of the set $E$ in $K_{1}$ is minimal coarser than the
connectivity in $K_{2}$ (cf. Definition \ref{d102b}).

Like in (\ref{f102c1}) in the proof of Proposition \ref{p102a}, we define
\begin{equation}
\widetilde{D}_{2}:=D_{2}\setminus V\text{ \ and\ \ }\widetilde{K}_{2}%
:=K_{2}\cup V=\mathbb{C}\setminus\widetilde{D}_{2}\label{f102f10}%
\end{equation}
with the compact set $V$ introduced in (\ref{f102f7}). It follows from
(\ref{f102f8}) and (\ref{f102f9}) that
\begin{equation}
K_{1}\cap\widetilde{D}_{2}\subset\bigcup_{j\in I}J_{j}\cap\partial
\operatorname{Int}(K_{3}).\label{f102f11}%
\end{equation}
Indeed, because of (\ref{f102f9}) we have $K_{1}\cap\widetilde{D}_{2}%
=K_{1}\setminus(K_{2}\cup V)\subset\partial O\setminus K_{2}\subset
\partial\operatorname{Int}(K_{3})$, and because of (\ref{f102f8}) together
with (\ref{f102f9}) we have $K_{1}\cap\widetilde{D}_{2}=K_{1}\setminus
(K_{2}\cup V)\subset\bigcup_{j\in I}J_{j}$.

After these preparations we can now follow the argumentation in the proof of
Proposition \ref{p102a} word-for-word. Exactly, like in (\ref{f102c3}) we
define the function $\widetilde{g}_{1}$ in the domain $D_{2}$. With the same
arguments as those used in the proof of Proposition \ref{p102a} after
(\ref{f102c3}) we then arrive after many intermediate steps at the conclusion
that
\begin{equation}
\operatorname{cap}(K_{1})<\operatorname{cap}(K_{2}),\label{f102f12}%
\end{equation}
which proves Proposition \ref{p102b}.\medskip
\end{proof}

\subsubsection{Proof of Proposition \ref{p102c}}

\qquad The proof of Proposition \ref{p102c} relies strongly on Definition
\ref{d95a} and the subsequent Theorem \ref{t95a} together with Proposition
\ref{p102b}.\medskip

\begin{proof}
[Proof of Proposition \ref{p102c}]We set
\begin{equation}
K_{0}:=K,\text{ \ \ \ \ }K_{3}:=\widehat{K_{0}\cup K_{1}},\text{ \ \ and
\ \ \ }D_{j}:=\overline{\mathbb{C}}\setminus K_{j},j=0,1,3.\label{f102g1}%
\end{equation}
In (\ref{f102g1}), $\widehat{\cdot}$ denotes the polynomial-convex hull (cf.
Definition \ref{d111b} in Subsection \ref{s1101}, further below). It is
immediate that the $D_{j}$ are domains.

From Proposition \ref{p102b} together with the assumptions (i) - (iv) of the
proposition, it follows that
\begin{equation}
\operatorname{cap}(K_{1})<\operatorname{cap}(K_{0}).\label{f102g2}%
\end{equation}

If we would know that $D_{1}=\overline{\mathbb{C}}\setminus K_{1}%
\in\mathcal{D}(f,\infty)$, then the proof of the proposition would be
completed with (\ref{f102g2}). However, in general we have $D_{1}%
\notin\mathcal{D}(f,\infty)$ since the meromorphic continuation of the
function $f $ out of the domain $D_{3}$ into $D_{1}$ may hit non-polar
similarities in $D_{1}\cap\operatorname{Int}(K_{3})$.

In order to overcome these difficulties we make use of a construction
introduced in Definition \ref{d95a}. We consider the whole family of compact
sets $K_{h}$, $h\in\left[  0,1\right]  $, with complementary domains
$D_{h}=\overline{\mathbb{C}}\setminus K_{h}$ that are defined by the relations
(\ref{f95b1}) through (\ref{f95b7}) of Definition \ref{d95a} starting from the
two domains $D_{0}$ and $D_{1}$ defined in (\ref{f102g1}), and with elements
of the proof of Theorem \ref{t95a} we then prove that there exist $h_{0}%
\in\left(  0,1\right)  $ such that
\begin{equation}
D_{h}\in\mathcal{D}(f,\infty)\text{ \ \ for all \ }0<h\leq h_{0}%
,\label{f102g3}%
\end{equation}
and further that
\begin{equation}
\log\operatorname*{cap}(K_{h})<(1-h)\,\log\operatorname*{cap}(K_{0}%
)+h\,\log\operatorname*{cap}(K_{1})\text{ \ \ for \ \ }0<h<1.\label{f102g4}%
\end{equation}

From (\ref{f102g4}) we deduce that
\begin{equation}
\log\operatorname*{cap}(K_{h_{0}})<\log\operatorname*{cap}(K_{0})-h_{0}%
\log\frac{\operatorname*{cap}(K_{0})}{\operatorname*{cap}(K_{1})}%
,\label{f102g5}%
\end{equation}
which together with (\ref{f102g2}) proves that
\begin{equation}
\operatorname{cap}(K_{h_{0}})<\operatorname{cap}(K_{0}).\label{f102g6}%
\end{equation}
If we set $\widetilde{D}:=D_{h_{0}}$, then the proposition follows from
(\ref{f102g6}) together with (\ref{f102g3}). Hence, it remains only to prove
that the two assertions (\ref{f102g3}) and (\ref{f102g4}) hold true.

The two assertions (\ref{f102g3}) and (\ref{f102g4}) have already been proved
as assertion (i) and (ii) in Theorem \ref{t95a}, but under partly different
assumptions. The difference is the following: In Theorem \ref{t95a}, it has
been assumed that both domains $D_{0}$ and $D_{1}$ belong to $\mathcal{D}%
(f,\infty)$, while in the present situation, we do not know whether $D_{1}%
\in\mathcal{D}(f,\infty)$. Instead, we now have the assumptions (i) - (iv), of
which (ii) and (iii) are the two most important ones.

In the a step we prove that (\ref{f102g3}) holds true. As in the proof of
Lemma \ref{l93c}, where an analogous result has been proved for Proposition
\ref{p91a}, the main tool will again be Proposition \ref{p91a}.

It follows from assumption (iii) and the assumption that $D_{0}\in
\mathcal{D}(f,\infty)$ together with Proposition \ref{p91a} that for every
Jordan curve $\gamma\in\Gamma_{1}$, with $\Gamma_{1}$ introduced in Definition
\ref{d91b}, and $\gamma\subset\overline{\mathbb{C}}\setminus E_{0}$, we have
\begin{equation}
\gamma\cap K_{0}\neq\emptyset\text{ \ \ and \ \ }\gamma\cap K_{1}\neq
\emptyset.\label{f102g7}%
\end{equation}
From the defining relations (\ref{f95b1}) - (\ref{f95b7}) in Definition
\ref{d95a} for the sets $K_{h}$, we further conclude that besides of
(\ref{f102g7}) we also have
\begin{equation}
\gamma\cap K_{h}\neq\emptyset\text{ \ \ for all \ \ \ }h\in\left(  0,1\right)
.\label{f102g8}%
\end{equation}
Details of the argumentation are the same as those in the proof of Lemma
\ref{l93a}.

From Proposition \ref{p91a} and (\ref{f102g8}) it is clear that for a proof of
(\ref{f102g3}) it remains only to show that assertion (i) in Proposition
\ref{p91a} holds true for $0\leq h\leq h_{0}$, i.e., we have to show that
there exists $h_{0}\in\left(  0,1\right)  $ such that the function $f$ has a
meromorphic continuation to each point of $D_{h}$ for $0\leq h\leq h_{0}$.

We defined the two sets $B_{jh}$, $j=0,1$, $h\in\left(  0,1\right)  $, by
\begin{align}
B_{0,h}  & :=\{\,z\in D_{h}\cap K_{3}\,|\,(1-h)g_{0}(z)>hg_{1}%
(z)\,\},\label{f102g9}\\
B_{1,h}  & :=\{\,z\in D_{h}\cap K_{3}\,|\,(1-h)g_{0}(z)<hg_{1}%
(z)\,\}.\label{f102g10}%
\end{align}
where $g_{j}$ denotes the Green function $g_{D_{j}}(\cdot,\infty)$, $j=0,1$,
in the same way as in Definition \ref{d95a}. In the same way as in the proof
of Lemma \ref{l93b}, it follows from (\ref{f95b4}), (\ref{f95b5}),
(\ref{f95b6}), and (\ref{f95b7}) that the two sets $D_{3}\cap B_{jn}$,
$j=0,1$, are domains, and we have
\begin{equation}
B_{jh}\subset D_{j}\text{ \ \ \ for \ \ }j=0,1\text{, }h\in\left(  0,1\right)
.\label{f102g11}%
\end{equation}
Since it has been assumed that $D_{0}\in\mathcal{D}(f,\infty)$, the function
$f$ processes a meromorphic continuation into the domain $D_{0}$, which we
denote by $f_{0}$. From (\ref{f102g11}) it follows that the function $f_{0}$
is defined throughout $D_{3}\cup B_{0,h}$.

An analogous argumentation is unfortunately not possible for the domain
$D_{1}$. Here, we have to follow a different path of argumentation. From the
assumed properties of the set $E_{0}\subset\overset{\circ}{E}$ it follows that
there exists a domain $D_{2}$ with the property that
\begin{equation}
D_{2}\supset D_{3},\text{ \ \ }K_{0}\setminus E_{0}\subset D_{2}%
,\label{f102g12}%
\end{equation}
and the function $f$ can meromorphically be continued to the domain $D_{2}$.
We denote this continuation by $f_{2}$.

In the next step we show that there exists $h_{0}\in\left(  0,1\right)  $ such
that
\begin{equation}
K_{h}\setminus\operatorname{Int}(E)\subset D_{2}\text{ \ \ for all
\ \ \ }0\leq h\leq h_{0}.\label{f102g13}%
\end{equation}
Let $\widetilde{U}_{0}\subset D_{2}$ be an open set consisting only of simply
connected components and assume that $K_{0}\setminus U\subset\widetilde{U}%
_{0}$, where $U$ is the open set introduced in the formulation of the
proposition. There exists an open set $U_{0}\subset\mathbb{C}$ consisting only
of simply connected components such that
\begin{equation}
K_{0}\subset U_{0}\text{, \ \ \ }U_{0}\cap D_{0}\subset\widetilde{U}%
_{0}\text{, \ and \ }U_{0}\cap\partial U\subset\widetilde{U}_{0}%
.\label{f102g14}%
\end{equation}
With exactly the same arguments as those applied in the proof of assertion
(iii) of Theorem \ref{t95a} we then show that there exists $h_{0}\in\left(
0,1\right)  $ such that
\begin{equation}
K_{h}\subset U_{0}\text{ \ \ for all \ \ \ }0\leq h\leq h_{0}.\label{f102g15}%
\end{equation}
Notice that in the proof of assertion (iii) of Theorem \ref{t95a} only
topological assumptions about the two sets $K_{0}$, $K_{1}$, and their
complementary domains $D_{0}$ and $D_{1}$ have been used. The inclusion
(\ref{f102g13}) follows directly from (\ref{f102g15}).

Since $U_{0}$ consists only of simply connected components, it follows from
(\ref{f102g13}), (\ref{f102g15}), and the definition of the set $B_{1,h}$ in
(\ref{f102g10}) that
\begin{equation}
B_{1,h}\subset D_{2}\text{ \ \ for \ \ \ }0\leq h\leq h_{0},\label{f102g16}%
\end{equation}
and consequently the functions $f_{2}$ is defined throughout the domain
$D_{3}\cup B_{1,h}$ for all $0\leq h\leq h_{0}$.

Since $D_{h}=D_{3}\cup B_{0,h}\cup B_{0,h}$, we have shown that the function
$f$ possesses are meromorphic continuation to each point $z\in D_{h}$ for
$0\leq h\leq h_{0}$. Hence, assumption (i) of Proposition \ref{p91a} holds
true for each $0\leq h\leq h_{0}$, and (\ref{f102g3}) then follows from
Proposition \ref{p91a}.\medskip

After the verification of (\ref{f102g3}), it remains only to prove that the
inequality (\ref{f102g4}) holds true. Here, we copy the corresponding proof of
(\ref{f95c2}) from the proof of Theorem \ref{t95a} word for word. The
condition (\ref{f95c4}) in Theorem \ref{t95a} follows from the two assumptions
(i) and (iv) in Proposition \ref{p102c}. A detailed argumentation for this
last conclusion has been given after (\ref{f102b7}). With the proof of
(\ref{f102g4}), the whole proof of Proposition \ref{p102c} is
completed.\bigskip
\end{proof}

\begin{proof}
[Proof of Proposition \ref{p102d}]It is rather immediate that the $S-$property
of $K$ on $K\setminus E$ implies that the function $q$ is analytic in
$\overline{\mathbb{C}}\setminus E$.

From the definition of $q$ in (\ref{f102a6}) it follows that level-lines of
the Green function $g_{D}(\cdot,\infty)$ are trajectories of the quadratic
differential $q(z)dz^{2}$ (for a definition of trajectories see (\ref{f52a})
in Section \ref{s52}). The arcs $J_{j}$, $j\in I$, in $K\setminus E$ are
critical trajectories of $q(z)dz^{2}$, and, of course, they are also
level-lines of $g_{D}(\cdot,\infty)$ corresponding to the value $0$. From the
local structure of trajectories, it follows that at each bifurcation point
$z\in\overline{\mathbb{C}}\setminus E$ of trajectories, we have a zero of
order $i(z)-2$, where $i(z)$ is the bifurcation index of Definition \ref{d53a}
(cf. \cite{Jensen75}, Chapter 8.2, or \cite{Strebel84}).

That the only zeros of $q$ in $\overline{\mathbb{C}}\setminus(E\cup E_{1})$
are critical points of the Green function $g_{D}(\cdot,\infty)$ in the sense
of Definition \ref{d53b} is an immediate consequence of (\ref{f102a6}), and
the same is also true for the double zero of $q$ at infinity.

We now come to the proof of the inequality (\ref{f102a7}). From (\ref{f102a6})
and we deduce that
\begin{align}
|q(z)|  & =4\frac{\partial}{\partial z}g_{D}(z,\infty)\overline{\frac
{\partial}{\partial z}g_{D}(z,\infty)}\nonumber\\
& =(\frac{\partial}{\partial x}-i\,\frac{\partial}{\partial y})g_{D}%
(z,\infty)(\frac{\partial}{\partial x}+i\,\frac{\partial}{\partial y}%
)g_{D}(z,\infty)\label{f102h1}\\
& =(\frac{\partial}{\partial x})g_{D}(z,\infty))^{2}+(\frac{\partial}{\partial
y})g_{D}(z,\infty))^{2}\nonumber
\end{align}
for $z\in\overline{\mathbb{C}}\setminus E$, and consequently we have the
estimate
\begin{align}
|q(z)|  & =\frac{1}{\pi d^{2}}\left\vert \iint_{\Delta(z,d)}q(\zeta)dm_{\zeta
}\right\vert \leq\frac{1}{\pi d^{2}}\iint_{\Delta(z,d)}|q(\zeta)|dm_{\zeta
}\nonumber\\
& =\frac{2}{d^{2}}D_{\Delta(z,d)}(g_{D}(\cdot,\infty))\label{f102h2}%
\end{align}
for every $z\in\mathbb{C}\setminus E$ with $0<d<\operatorname{dist}(z,E)$,
$\Delta(z,d):=\{\,\zeta\,|$ $|\zeta-z|\leq d\,\}$, $dm_{\zeta}$ the area
element at the point $\zeta\in\mathbb{C}$, and $D_{\ldots}(\cdot)$\ denotes
the Dirichlet integral introduced in (\ref{f113f3}) in Subsection \ref{s1103},
further below.

Let now $r>0$ be such that $K\subset\{|z|\leq r\}$, and let further
$z\in\{|z|\leq r\}\setminus E$. Then we have $\operatorname{dist}(z,E)<2r$,
and consequently $\Delta(z,d)\subset\{|z|\leq3r\}\setminus E$ for all
$0<d<\operatorname{dist}(z,E)$. From (\ref{f102h2}) and identity
(\ref{f113g1}) in Lemma \ref{l113g} in Subsection \ref{s1103}, further below,
we deduce that
\begin{align}
|q(z)|  & \leq\frac{2}{d^{2}}D_{\{|z|\leq3r\}\setminus E}(g_{D}(\cdot
,\infty))=\frac{2}{d^{2}}D_{\{|z|\leq3r\}\setminus K}(g_{D}(\cdot
,\infty))\nonumber\\
& =\frac{2}{d^{2}}\left(  \log(3\,r)+\log\frac{1}{\operatorname{cap}%
(K)}+\text{O}(\frac{1}{r})\right)  \text{ \ \ as \ \ r}\rightarrow
\infty.\label{f102h3}%
\end{align}
The equality in the first line of (\ref{f102h3}) is a consequence of the fact
that the set $K\setminus E$ is of planar Lebesgue measure zero, which follows
from the assumed $S-$property of $K$ on $K\setminus E$. The second equality in
(\ref{f102h3}) follows from (\ref{f113g1}) in Lemma \ref{l113g}. Since
$d<\operatorname{dist}(z,E)$ can be chosen arbitrarily, the inequality
(\ref{f102a7}) follows directly from (\ref{f102h3}).\bigskip
\end{proof}

\begin{proof}
[Proof of Lemma \ref{l102a}]Let $z\in J_{j}$, $j\in I$, the an arbitrary point
on the Jordan arc $J_{j}$. We have $g_{D}(z,\infty)=0$, and from the
continuity in $\overline{\mathbb{C}}\setminus E$ of the function $q$
introduced in (\ref{f102a6}), it follows that the two normal derivatives
$\partial/\partial n_{+}$ and $\partial/\partial n_{+}$ of $g_{D}(\cdot
,\infty)$ to both sides of the Jordan arc $J_{j}$ at $z$ are equal in modulus.

Since $g_{D}(\cdot,\infty)\geq0$, and $g_{D}(z,\infty)=0$ for all $z\in J_{j}%
$, it follows also that the signs of the two normal derivatives are equal.
Putting both conclusions together proves equality (\ref{f102a3}) for all $z\in
J_{j}$, $j\in I$, and consequently the $S-$property on $K\setminus E$ in the
sense of Definition \ref{d102a} is proved.\bigskip
\end{proof}

\subsection{\label{s103}Proofs of Results from Section \ref{s4}}

\qquad The central result of Section \ref{s4} is Theorem \ref{t41a}, the
Structure Theorem. As a further result, we have Theorem \ref{t41b}, which
addresses a special topological property of the minimal set $K_{0}(f,\infty)
$, and Theorem \ref{t42a}, which is the analog of Theorem \ref{t41b} for
Problem $(\mathcal{R},\infty^{(0)})$.\medskip

\subsubsection{\label{s1031}Proof of Theorem \ref{t41a}}

\qquad In the proof of Theorem \ref{t41a} we start from Theorem \ref{t101b},
which covers the special case that the function $f$ in Problem $(f,\infty)$ is
algebraic, and which therefore implies that the set $E_{0}$ of non-polar
singularities is finite. The proof of Theorem \ref{t41a} can then be seen as a
lifting of the special result in Theorem \ref{t101b} to the general situation.
In this process the Propositions \ref{p102b}, \ref{p102c}, and \ref{p102d}
from the last subsection will play a decisive role.\medskip

\begin{proof}
[Proof of Theorem \ref{t41a}]It is immediate that the extremal domain
$D_{0}(f,\infty)=\overline{\mathbb{C}}\setminus K_{0}(f,\infty)$ is
elementarily maximal in the sense of Definition \ref{d71a0}. By $E_{00}\subset
K_{0}(f,\infty)$ we denote the subset of all $z\in\partial K_{0}(f,\infty)$
that satisfy condition (i) in Definition \ref{d71a0}, i.e., for each $z\in
E_{00},$\ there exists at least one meromorphic continuation of the function
$f$ out of the domain $D_{0}(f,\infty)$ that has a non-polar singularity at
$z$. By $E_{0}$ we then denote the polynomial-convex hull of $E_{00}$, i.e.,
\begin{equation}
E_{0}:=\widehat{E_{00}}.\label{f103a1}%
\end{equation}
(for a definition of the polynomial-convex hull see Definition \ref{d111b} in
Subsection \ref{s1101}, further below.) Since $K_{0}(f,\infty)$ is
polynomial-convex, we have $E_{0}\subset K_{0}(f,\infty)$.

In the sequel we assume that $E_{0}\cap K_{0}(f,\infty)\neq\emptyset$; for
otherwise Theorem \ref{t41a} is trivial.

Let $U_{n}\subset\mathbb{C}$, $n=1,2,\ldots$, be a sequence of open set with
simply connected components and smooth boundaries $\partial U_{n}$ such that
\begin{equation}
E_{0}\subset U_{n+1}\subset\overline{U}_{n+1}\subset U_{n}\,\ \text{and
\ \ }E_{0}=\bigcap_{n=1}^{\infty}U_{n}.\label{f103a2}%
\end{equation}
We define $E_{n}:=\overline{U}_{n}$ for $n\in\mathbb{N}$. It is immediate that
all sets $E_{n}$ are polynomial-convex, and each of these sets consists only
of finitely many components. The minimal set $K_{0}=K_{0}(f,\infty)$ defines a
connectivity relation on the components of each set $E_{n}$; we say that two
components of $E_{n}$ are connected in $K_{0}$, if they are connected in
$K_{0}\cup E_{n}$. Let $E_{jn}$, $j=1,\ldots,j_{n}$, be the components of
$E_{n}$ with respect to the connectivity in $K_{0}$, i.e.,
\begin{equation}
E_{n}=E_{1n}\cup\ldots\cup E_{j_{n},n},\label{f103a3}%
\end{equation}
and each $E_{jn}$ is connected in $K_{0}$.

In each component $E_{jn}$, $j=1,\ldots,j_{n}$, we select points $z_{jl}$ in
the following manner: For a given $n\in\mathbb{N}$ and any $m\in\mathbb{N}$ we
choose $j_{n}$ families of points
\begin{equation}
Z_{jnm}=\{z_{jl}\}_{l=1}^{m_{j}},\text{ }j=1,\ldots,j_{n}\text{,
\ with\ \ }Z_{nm}:=\bigcup_{j=1}^{j_{n}}Z_{jnm}\text{, \ \ }m=\sum
_{j=1}^{j_{n}}m_{j}\text{,}\label{f103a4}%
\end{equation}
such that
\begin{equation}
Z_{nm}\subset Z_{n,m+1}\text{ \ \ and \ \ }\bigcap_{M=1}^{\infty}%
\overline{\bigcup_{m\geq M}Z_{nm}}=\partial E_{n},\label{f103a5}%
\end{equation}
i.e., the sets $Z_{jnm}$, $j=1,\ldots,j_{n}$, are asymptotically
($m\rightarrow\infty$) dense in each $\partial E_{jn}$, $j=1,\ldots,j_{n}$.
Associated with each point set $Z_{nm}$, we define the algebraic function
\begin{equation}
f_{nm}(z):=\sum_{j=1}^{j_{n}}\left[  \prod_{l=1}^{m_{j}}(1-\frac{z_{jl}}%
{z})\right]  ^{1/m_{j}}\text{ \ for \ }n,m=1,2,\ldots\label{f103a6}%
\end{equation}

Let now $K_{nm}:=K_{0}(f_{nm},\infty)\subset\mathbb{C}$ be the minimal set for
Problem $(f_{nm},\infty)$, $D_{nm}:=\overline{\mathbb{C}}\setminus K_{nm}$ the
corresponding extremal domain $D_{0}(f_{nm},\infty)$, $g_{nm}$ the Green
function $g_{D_{nm}}(\cdot,\infty)$ in $D_{nm}$, and let $q_{nm}$ be the
function
\begin{equation}
q_{nm}(z)=\left(  2\frac{\partial}{\partial z}g_{nm}(z,\infty)\right)
^{2}\label{f103a7}%
\end{equation}
that is defined analogously to (\ref{f102a6}), and this definition appears
also already earlier in (\ref{f52b}). Since $f_{nm}$ is an algebraic function,
we know from Theorem \ref{t101b} that $q_{nm}$ is a rational function. With
Lemma \ref{l102a}, it follows from (\ref{f103a7}) that $K_{nm} $ possesses the
$S-$property on $K_{nm}\setminus Z_{nm}$ in the sense of Definition
\ref{d102a}.

For $m\in\mathbb{N}$ sufficiently large, each component $E_{jn}$ in
(\ref{f103a3}) contains elements of the set $Z_{nm}$. The definition of the
function $f_{nm}$ together with the definition of the components $E_{jn}$ in
(\ref{f103a3}) then imply that components of $E_{n}$ are connected in $K_{nm}
$ for $m$ sufficiently large if they are also connected in $K_{0}%
=K_{0}(f,\infty)$, i.e., the connectivity defined by $K_{nm}$ is coarser than
that defined by $K_{0}$. Further, it follows from the minimality of
$\operatorname{cap}(K_{nm})=\operatorname{cap}(K_{0}(f_{nm},\infty))$ that the
connectivity defined by $K_{nm}$ is also only minimally coarser in the sense
of Definition \ref{d102b} than that defined by $K_{0}$. Notice that the
connectivity relation defined by $K_{0}$ on $E_{n}$\ stands in the background
of the definition of the function $f_{nm}$.

We will now investigated in a first step what happens with the functions
$f_{nm}$, $g_{nm}$, and $q_{nm}$ if $m\rightarrow\infty$. In a second step we
then will also consider the limits for $n\rightarrow\infty$.

From Proposition \ref{p102d} we deduce the upper estimates
\begin{equation}
\left|  q_{nm}(z)\right|  \leq\frac{2}{\operatorname{dist}(z,E_{n})^{2}%
}\left(  \log(3\,r)+\log\frac{1}{\operatorname{cap}(K_{nm})}\right)
\label{f103a8}%
\end{equation}
for the functions $q_{nm}$; they hold for all $z\in\{|z|\leq r\}\setminus
E_{n}$, all $m\geq m_{0}$, and $r>0$ sufficiently large. Notice that the
assumption $E_{0}\cap K_{0}(f,\infty)\neq\emptyset$ at the beginning of this
proof implies that $E_{0}$ contains at least two different components that are
connected in $K_{0}(f,\infty)$, which then further implies that
$\operatorname{cap}(K_{nm})\geq c_{0}>0$ for all $m\geq m_{0}$.

From (\ref{f103a8}) together with Montel's Theorem and the fact that all
$q_{nm}$ are analytic outside of $E_{n}$ (cf. Theorem \ref{t101b}, part (b)),
we deduce that there exists an infinite sequence $N_{n}\subset\mathbb{N}$ such
that
\begin{equation}
\lim_{m\rightarrow\infty,\text{ }m\in N_{n}}q_{nm}(z)=:q_{n}(z)\label{f103a9}%
\end{equation}
holds locally uniformly in $\overline{\mathbb{C}}\setminus E_{n}$.

For the Green functions $g_{nm}$ and the sets $K_{nm}$ we now prove the
existence of limits that correspond to limit (\ref{f103a9}). Using the same
techniques as applied in the proof of Lemma \ref{l92b} after (\ref{f92e1}) and
combining this with (\ref{f103a9}) and (\ref{f103a7}), we deduce that the
limit
\begin{equation}
\lim_{m\rightarrow\infty,\text{ }m\in N_{n}}g_{nm}(z)=:g_{n}(z)\label{f103a10}%
\end{equation}
exists locally uniformly in $\mathbb{C}\setminus E_{n}$. From the two
relations in (\ref{f103a5}) together with the fact that all points in each set
$Z_{jnm}\subset E_{jn}$, $j=1,\ldots,j_{n}$, are connected by a subcontinuum
in $K_{nm}$ and that these continua contain only regular boundary points, it
follows that
\begin{equation}
g_{n}(z)=0\text{ \ \ for all \ \ }z\in E_{n}.\label{f103a11}%
\end{equation}
With the two definitions
\begin{equation}
K_{n}:=\bigcap_{M=1}^{\infty}\widehat{\bigcup_{m\geq M,\text{ }m\in N_{n}%
}K_{nm}}\text{ \ \ and \ \ }D_{n}:=\overline{\mathbb{C}}\setminus
K_{n},\label{f103a12}%
\end{equation}
it further follows from Lemma \ref{l114b} from Subsection \ref{s1104}, further
below, that the function $g_{n}$ introduced in (\ref{f103a10})\ is the Green
function of the domain $D_{n}$, i.e., we have
\begin{equation}
g_{n}=g_{D_{n}}(\cdot,\infty).\label{f103a13}%
\end{equation}

The two limits (\ref{f103a9}) and (\ref{f103a10}) together imply that
analogously to \linebreak(\ref{f103a7}) we also have the relation
\begin{equation}
q_{n}(z)=\left(  2\frac{\partial}{\partial z}g_{n}(z,\infty)\right)
^{2}\text{ \ \ for \ \ }z\in\overline{\mathbb{C}}\setminus E_{n}%
,\label{f103a14}%
\end{equation}
from which we deduce again with Lemma \ref{l102a} that the set $K_{n}$
possesses the $S-$pro\-perty on $K_{n}\setminus E_{n}$ in the sense of
Definition \ref{d102a}.

Like the sets $K_{nm}$, so also the set $K_{n}$ possesses the property that
the connectivity relation defined on the components of $E_{n}$ by $K_{n}$ is
minimally coarser in the sense of Definition \ref{d102b} than that defined by
$K_{0}=K_{0}(f,\infty)$. Indeed, both\ aspects of the property ''minimally
coarser'' carry over from $K_{nm}$ to $K_{n}$. With exactly the same
techniques as those used in the proof of Lemma \ref{l92d}, we show that all
connectivities defined by the sets $K_{nm}$ lead to identical connectivities
defined by the set $K_{n}$. Consequently, also the new connectivity relation
is coarser than that defined by the set $K_{0}$. On the other hand, as a
consequence of convergence $\lim_{m}\operatorname{cap}(K_{nm}%
)=\operatorname{cap}(K_{n})$, which follows from (\ref{f103a10}), it then
further follows that the connectivity defined on $E_{n}$ by $K_{n}$ is also
only minimally coarser in the sense of Definition \ref{d102b} than that
defined by $K_{0}$.

In a second step we investigate limits for $n\rightarrow\infty$, where we can
largely apply the same techniques as those just used after (\ref{f103a8}) for
the investigation of limits with $m\rightarrow\infty$, only that now the
boundary behavior of the Green function can be complicated by irregular points
on $\partial E_{0}$. Notice that the sets $E_{n}$, have been constructed with
nice boundaries. We overcome these new difficulties by using Lemma \ref{l114a}
from Subsection \ref{s1104}, further below.

From the definition of the sets $E_{n}$ after (\ref{f103a2}) we know that
\begin{equation}
E_{0}=\bigcap_{n}E_{n}.\label{f103b1}%
\end{equation}
Analogously to (\ref{f103a8}), we deduce from Proposition \ref{p102d} that
\begin{equation}
\left|  q_{n}(z)\right|  \leq\frac{2}{\operatorname{dist}(z,E_{n})^{2}}\left(
\log(3\,r)+\log\frac{1}{\operatorname{cap}(K_{n})}\right) \label{f103b2}%
\end{equation}
for all $z\in\{|z|\leq r\}\setminus E_{n}$, $n\in\mathbb{N}$, and $r>0$
sufficiently large, for the functions $q_{n}$ from (\ref{f103a9}), which
satisfy (\ref{f103a14}). With the same argumentation as used after
(\ref{f103a8}), we conclude that $\operatorname{cap}(K_{n})\geq c_{0}>0$ for
all $n\in\mathbb{N}$, which shows that the right-hand side of (\ref{f103b2})
can be made independent of $n$. From (\ref{f103b2}) it then follows as before
in (\ref{f103a9}) that there exists an infinite sequence $N\subset\mathbb{N}$
such that the limit
\begin{equation}
\lim_{n\rightarrow\infty,\text{ }n\in N}q_{n}(z)=:\widetilde{q}%
(z)\label{f103b3}%
\end{equation}
exists locally uniformly for $z\in\overline{\mathbb{C}}\setminus E_{0}$, and
the function $\widetilde{q}$ is analytic in $\overline{\mathbb{C}}\setminus
E_{0}$. With the same argumentation as used for the deduction of
(\ref{f103a10}), we now deduce that the limit
\begin{equation}
\lim_{n\rightarrow\infty,\text{ }n\in N}g_{n}(z)=:\widetilde{g}%
(z)\label{f103b4}%
\end{equation}
exists locally uniformly for $z\in\mathbb{C}\setminus E_{0}$.

From (\ref{f103b4}) together with Lemma \ref{l114a} from Subsection
\ref{s1104} and (\ref{f103b1}), we deduce that
\begin{equation}
\widetilde{g}(z)=0\text{ \ \ for quasi every \ \ }z\in E_{0}.\label{f103b5}%
\end{equation}
Using now the definitions
\begin{equation}
\widetilde{K}:=\bigcap_{m=1}^{\infty}\widehat{\bigcup_{n\geq m,\text{ }m\in
N}K_{n}}\text{ \ \ and \ \ }\widetilde{D}:=\overline{\mathbb{C}}%
\setminus\widetilde{K},\label{f103b6}%
\end{equation}
it follows in the same way as after (\ref{f103a12}) with the help of Lemma
\ref{l114b} from Subsection \ref{s1104} that the function $\widetilde{g}$
introduced in (\ref{f103b4}) is the Green functions of the domain
$\widetilde{D}$, i.e., we have
\begin{equation}
\widetilde{g}=g_{\widetilde{D}}(\cdot,\infty).\label{f103b7}%
\end{equation}
Notice that the domain $\widetilde{D}$ may not be regular with respect to
Dirichlet problems, there may exist irregular points (cf. Definition
\ref{d113a} in Subsection \ref{s1103}, further below) on $\partial
\widetilde{D}\cap E_{0}$, and consequently, the equality in (\ref{f103b5})
holds only quasi everywhere on $E_{0}$.

The two limits (\ref{f103b3}) and (\ref{f103b4}) together with relation
(\ref{f103a14}) imply that
\begin{equation}
\widetilde{q}(z)=\left(  2\frac{\partial}{\partial z}g_{\widetilde{D}%
}(z,\infty)\right)  ^{2}\text{ \ \ for \ \ }z\in\overline{\mathbb{C}}\setminus
E_{0}.\label{f103b8}%
\end{equation}

With Lemma \ref{l102a}, we deduce from (\ref{f103b8}) that the set
$\widetilde{K}$ possesses the $S-$pro\-perty on $\widetilde{K}\setminus E_{0}$
in the sense of Definition \ref{d102a}. In the same way, as it has been shown
before in the cases of the sets $K_{nm}$ and the set $K_{n}$, we also now show
that the minimal coarseness of a connectivity relation in the sense of
Definition \ref{d102b} can be carried over from the connectivity relations
defined by the sets $K_{n}$ to the relation defined by the set $\widetilde{K}%
$, i.e., we deduce that the connectivity defined by $K_{n}$ on $E_{0}$ is
minimally coarser in the sense of Definition \ref{d102b} than that defined by
$K_{0}(f,\infty)$.

Since we have assumed at the beginning of the present proof that $E_{0}\cap
K_{0}(f,\infty)\allowbreak\neq\emptyset$, it follows that some components of
$E_{0}$ are connected in $K_{0}(f,\infty)$, and consequently, those components
are also connected in $\widetilde{K}$, which implies that
\begin{equation}
\operatorname{cap}(\widetilde{K})>0.\label{f103b9}%
\end{equation}

Let us summarize what we have proved so far. Starting from the algebraic
functions $f_{nm}$ introduced in (\ref{f103a6}) and using the results proved
in Theorem \ref{t101b} for algebraic functions, we have shown that there
exists a compact set $\widetilde{K}\subset\mathbb{C}$ of positive capacity
with the following properties:

\begin{itemize}
\item[(a)] \noindent$\widetilde{K}$ possesses the $S-$property in the sense of
Definition \ref{d102a} on $\widetilde{K}\setminus E_{0}$.

\item[(b)] \noindent The connectivity relation defined on $E_{0}$ by
$\widetilde{K}$ is minimally coarser in the sense of Definition \ref{d102b}
than that defined by $K_{0}(f,\infty)$.

\item[(c)] \noindent The set $E_{0}\subset\widetilde{K}$ is compact and
polynomial-convex, and its boundary $\partial E_{0}$ contains all non-polar
singularities of the function $f$\ that can be reached by meromorphic
continuations of $f$ from within the extremal domain $D_{0}(f,\infty)$.
\end{itemize}

In the concluding part of the proof, Proposition \ref{p102c} will play a
crucial role. If in Proposition \ref{p102c} we take $K_{0}(f,\infty)$ as $K$
and $\widetilde{K}$ from (\ref{f103b6}) as $K_{1}$, then it is immediate from
(\ref{f103b9}) and the assertions (a), (b), and (c) that all assumptions of
Proposition \ref{p102c} are satisfied, except the assumption (iv).

Since the set $K_{0}(f,\infty)$ is of minimal capacity in the sense of
(\ref{f21a}) in Definition \ref{d21b}, it follows that conclusion
(\ref{f102a5}) of Proposition \ref{p102c} cannot be true. Consequently,
assumption (iv) of Proposition \ref{p102c} has to be false, which proves with
our choice of $K$ and $K_{1}$ that we have
\begin{equation}
K_{0}(f,\infty)=\widetilde{K}.\label{f103b10}%
\end{equation}

Indeed, from the negation of assumption (iv) in Proposition \ref{p102c} we
conclude that $\widetilde{K}\subset K_{0}(f,\infty)$. Identity (\ref{f103b10})
then follows from a combination of the fact that the connectivity relation
defined by $\widetilde{K}$ on $E_{0}$ is coarser than that defined by
$K_{0}(f,\infty)$ (cf. assertion (b)), and the fact that the extremal domain
$D_{0}(f,\infty)$ is elementarily maximal in the sense of Definition
\ref{d71a0} (cf. Proposition \ref{p71a}).

With identity (\ref{f103b10}), all properties proved for the set
$\widetilde{K}$ are now also valid for the minimal set $K_{0}(f,\infty)$.
Hence, we have proved the following four assertions:

\begin{itemize}
\item[$(\alpha)$] The set $K_{0}(f,\infty)\setminus E_{0}$ consists of
critical trajectories of the quadratic differential $\widetilde{q}(z)dz^{2}$
with $\widetilde{q}$ defined in (\ref{f103b8}). In (\ref{f103b8}),
$\widetilde{D}$ is equal to the extremal domain $D_{0}(f,\infty)$ because of
(\ref{f103b10}).
\end{itemize}

Indeed, assertion ($\alpha$) follows immediately from (\ref{f103b8}) and the
definition of trajectories of quadratic differentials in (\ref{f52a}) in
Subsection \ref{s52}.

\begin{itemize}
\item[$(\beta)$] At each $z\in\partial E_{0}$ at least one meromorphic
continuation of the function $f$ out of the extremal domain $D_{0}(f,\infty)$
hits a non-polar singularity.
\end{itemize}

Indeed, assertion ($\beta$) is an immediate consequence of the definition of
the set $E_{0}$ in and before (\ref{f103a1}).

\begin{itemize}
\item[$(\gamma)$] Let $E_{1}$ be set of all zeros of the function
$\widetilde{q}$ from (\ref{f103b8}) on $K_{0}(f,\infty)\setminus E_{0}$. The
set $E_{1}$ is discrete in $K_{0}(f,\infty)\setminus E_{0}$ since the function
$\widetilde{q}$ is analytic in $\overline{\mathbb{C}}\setminus E_{0} $. The
set $K_{0}(f,\infty)\setminus(E_{0}\cup E_{1})$ consists of open, analytic
Jordan arcs, which are trajectories of the quadratic differential
$\widetilde{q}(z)dz^{2}$.
\end{itemize}

Indeed, the first part of assertion ($\gamma$) follows directly from
(\ref{f103b8}). Since $\widetilde{q}(z_{0})\neq0$ for any $z_{0}\in
K_{0}(f,\infty)\setminus(E_{0}\cup E_{1})$, we also have $\frac{\partial
}{\partial z}g_{D_{0}(f,\infty)}(z_{0},\infty)\allowbreak\neq0$, and
consequently, the equation $g_{D_{0}(f,\infty)}(z,\infty)=0$ defines an
analytic Jordan arc in a neighborhood of any point $z_{0}\in K_{0}%
(f,\infty)\setminus(E_{0}\cup E_{1})$, which proves the second part of
assertion ($\gamma$).

\begin{itemize}
\item[$(\delta)$] Let $o(z)$, $z\in E_{1}$, be the order of the zero of
$\widetilde{q}$ at $z$, then the point $z$ is endpoint of $o(z)+2$ different
analytic Jordan arcs in $K_{0}(f,\infty)\setminus(E_{0}\cup E_{1})$.
\end{itemize}

Indeed, assertion ($\delta$) is an immediate consequence of the typical local
structure of trajectories of quadratic differentials in a neighborhood of a
zero of the differential (cf. \cite{Jensen75}, Chapter 8.2, or
\cite{Strebel84}).

\begin{itemize}
\item[$(\varepsilon)$] The function $f$ has meromorphic continuations to each
point of $z\in K_{0}(f,\infty)\setminus E_{0}$ from all sides out of the
extremal domain $D_{0}(f,\infty)$. These continuations lead to exactly two
different function elements at each point $z\in K_{0}(f,\infty)\setminus
(E_{0}\cup E_{1})$, and it leads to more than two different function elements
at each point $\in E_{1}$.
\end{itemize}

Indeed, the first part of assertion ($\varepsilon$) is a consequence of the
definition of the set $E_{0}$ in (\ref{f103a1}). The second part is a
consequence of the minimality (\ref{f21a}) in Definition \ref{d21b} of the set
$K_{0}(f,\infty)$.\medskip

With the four assertions ($\alpha$) - ($\varepsilon$) we have proved more than
is needed for the proof of Theorem \ref{t41a}. The additional results will be
needed in subsequent proofs, most importantly in the proofs of the two
Theorems \ref{t51a} and \ref{t52a}.

Identity (\ref{f41a}) in Theorem \ref{t41a} follows directly from the three
assertions ($\alpha$), ($\beta$), and ($\gamma$). The description of the set
$E_{0}$ in assertions (i) of Theorem \ref{t41a} is practically identical with
assertion ($\beta$). The two remaining assertions (ii) and (iii) in Theorem
\ref{t41a} follow from the two assertions ($\gamma$) and ($\varepsilon$). With
the last sentences the proof of Theorem \ref{t41a} is completed.\bigskip
\end{proof}

\subsubsection{\label{s1032}Proof of Theorem \ref{t41b}}

\qquad The catchword in Theorem \ref{t41b} is local connectedness. Among other
things it will be shown in Theorem \ref{t41b} that the most interesting part
$K_{0}(f,\infty)\setminus E_{0}$ of the minimal set $K_{0}(f,\infty)$ is
locally connected. Local connectedness is an important property in geometric
function theory.\medskip

\begin{proof}
[Proof of Theorem \ref{t41b}]Since we know that the function $\widetilde{q}$
of (\ref{f103b8}) is analytic in $\overline{\mathbb{C}}\setminus E_{0}$, it
follows from $\overline{\mathbb{C}}\setminus E_{0}$ that the Green function
$g_{D_{0}(f,\infty)}(\cdot,\infty)$ is continuous throughout $\overline
{\mathbb{C}}\setminus E_{0}$. From Carath\'{e}odory's theory about the
boundary behavior of Riemann mapping functions (cf. Chapter 9 in
\cite{Pommerenke75}, and there especially Theorem 9.8) we know that the
continuity of the Green function $g_{D_{0}(f,\infty)}(\cdot,\infty)$ is
equivalent to the local connectedness of the set $\partial D_{0}%
(f,\infty)\setminus E_{0}$. From Theorem \ref{t41a} it follows that the set
$\partial D_{0}(f,\infty)\setminus E_{0}$ is equal to $K_{0}(f,\infty
)\setminus E_{0} $, which proves the first half of Theorem \ref{t41b}.

From the local behavior of trajectories of quadratic differentials, as it has
been stated in Lemma \ref{l115a} in Subsection \ref{s1105}, further below,
together with assertion ($\gamma$) at the end of the proof of Theorem
\ref{t41a}, we know that the bifurcation points $z\in E_{1}$ of $K_{0}%
(f,\infty)\setminus E_{0}$ are zeros of the analytic function $\widetilde{q}$
from (\ref{f103b8}), and as such they are of finite order. From the connection
between the zeros of quadratic differentials and the local structure of its
trajectories (see again Lemma \ref{l115a} in Subsection \ref{s1105}), we then
conclude that the finiteness of the order of the zeros of $\widetilde{q}$
implies that each $z\in E_{1}$ can be the endpoint of only finitely many
Jordan arcs $J_{j}$, $j\in I$.\bigskip
\end{proof}

\subsubsection{\label{s1033}Proof of Theorem \ref{t42a}}

\qquad In Theorem \ref{t32a} of Section \ref{s3} it has been shown that the
two Problems $(f,\infty)$ and $(\mathcal{R},\infty^{(0)})$ are equivalent if
the Riemann surface $\mathcal{R}$ is the natural domain of definition for the
function $f$. Because of this equivalence, Theorem \ref{t42a} is practically a
corollary to Theorem \ref{t41a}.\medskip

\begin{proof}
[Proof of Theorem \ref{t42a}]In order to prove the characterization of the
sets $E_{0}$, $E_{1}$, and the family of Jordan arcs $\left\{  J_{j}\right\}
_{j\in I}$ in (\ref{f42a0}) of Theorem \ref{t42a}, we have only to keep in
mind that a non-polar singularity of the function $f$ at a point
$z\in\overline{\mathbb{C}}$ is either a branch point or a transcendental,
essential singularity. In the first case, the point $z$ is an element of
$Br(\mathcal{R})$, and in the second case, it is an element of the relative
boundary $\partial\mathcal{R}$ of the Riemann surface $\mathcal{R}$ over
$\overline{\mathbb{C}}$. With these observations we immediately verify
identity (\ref{f42a}) in Theorem \ref{t42a}.

The two assertions (ii) and (iii) in Theorem \ref{t42a} follow then from the
observation that meromorphic continuations of the function $f$ lead to
different function elements at a point $z\in\overline{\mathbb{C}}$ if, and
only if, the corresponding points $\zeta$ on the Riemann surface $\mathcal{R}$
lie on different sheets of this surface.\bigskip
\end{proof}

\subsection{\label{s104}Proofs of Results from Section \ref{s5}}

\qquad In Section \ref{s5} the Jordan arcs $J_{j}$, $j\in I$, in the minimal
set $K_{0}(f,\infty)$ have been characterized with the help of the
$S-$property and with the help of quadratic differentials. Both concepts are
very similar. These results are especially interesting if the function $f$ has
only finitely many non-polar singularities; Proposition \ref{p53a} and Theorem
\ref{t53a} deal with this situation.\medskip

\subsubsection{\label{s1041}Proof of Theorem \ref{t51a}}

\qquad Most of the work for a proof of Theorem \ref{t51a} has already been
done in the proof of Theorem \ref{t41a}.\medskip

\begin{proof}
[Proof of Theorem \ref{t51a}]In assertion ($\gamma$) at the end of the proof
of Theorem \ref{t41a}, it has been shown that each Jordan arc $J_{j}$, $j\in
I$, is a trajectory of the quadratic differential $\widetilde{q}(z)dz^{2}$
with the function $\widetilde{q}$ defined by (\ref{f103b8}). Because of
(\ref{f103b10}), the domain $\widetilde{D}$ in (\ref{f103b8}) is equal to the
extremal domain $D_{0}(f,\infty)$. With Lemma \ref{l102a}, we then conclude
from (\ref{f103b8}) together with (\ref{f103b10}) that $K_{0}(f,\infty)$
possesses the $S-$property in this sense of Definition \ref{d102a} on
$K_{0}(f,\infty)\setminus E_{0}$, which proves identity (\ref{f51a}) because
of (\ref{f102a3}).\bigskip
\end{proof}

\subsubsection{\label{s1042}Proof of Theorem \ref{t52a}}

\qquad In Theorem \ref{t52a} the Jordan arcs $J_{j}$, $j\in I$, of the minimal
set $K_{0}(f,\infty)$ are characterized by quadratic differentials.\medskip

\begin{proof}
[Proof of Theorem \ref{t52a}]From (\ref{f103b10}) in the proof of Theorem
\ref{t41a} we know that the function $\widetilde{q}$ introduced in
(\ref{f103b8}) and the function $q$ introduced in (\ref{f52b}) are identical.
In assertion ($\alpha$) at the end of the proof of Theorem \ref{t41a}, it has
been proved that the arcs $J_{j}$, $j\in I$, are trajectories of the quadratic
differential $q(z)dz^{2}$.

Identity (\ref{f52c}) in Theorem \ref{t52a} follows from (\ref{f103b8}) for
$z\in\mathbb{C}\setminus E_{0}$, and it follows from the logarithmic pole of
the Green functions $g_{D_{0}(f,\infty)}(\cdot,\infty)$ at infinity. From
(\ref{f103b3}), it further follows that the function $\widetilde{q}=q$ is
analytic in $\overline{\mathbb{C}}\setminus E_{0}$. Thus, it only remains to
prove that the function $\widetilde{q}$ from (\ref{f103b8}) is meromorphic at
isolated points of $E_{0}$. Actually, we shall prove slightly more and show
that $\widetilde{q}$ has at most of simple pole at an isolated point of
$E_{0}$.

If $z\in E_{0}$ is simultaneously an isolated point of $E_{0}$ and of the
minimal set $K_{0}(f,\infty)$, then the Green function $g_{D_{0}(f,\infty
)}(\cdot,\infty)$ is harmonic in a neighborhood of $z$, and it follows from
(\ref{f103b8}) that $\widetilde{q}$ is analytic at $z$.

Let us now assume that $z_{0}\in E_{0}$ is an isolated point of $E_{0},$ but
not of the minimal set $K_{0}(f,\infty)$. From assertion ($\alpha$) at the end
of the proof of Theorem \ref{t41a}, we know that in a neighborhood of $z_{0}$
the set $K_{0}(f,\infty)\setminus\{z_{0}\}$ consists only of trajectories of
the quadratic differential $\widetilde{q}(z)dz^{2}$. From the history of the
definition of the set $\widetilde{K}$ before (\ref{f103b6}), it then follows
that only a finite number, let say $k_{0}\in\mathbb{N}$, of these Jordan arcs
$J_{j}$, $j\in I$, in $K_{0}(f,\infty)\setminus E_{0}$ have $z_{0}$ as their
endpoint. From this observation and the local structure of quadratic
differentials, which has been reviewed in Lemma \ref{l115a} in Subsection
\ref{s1105}, further below, we conclude that near the point $z_{0}$ the
function $\widetilde{q}$ of (\ref{f103b8}) has the local development
\begin{equation}
\widetilde{q}(z)=q_{0}(z-z_{0})^{k_{0}-2}+\ldots,\text{ \ \ }q_{0}%
\neq0,\label{f104a}%
\end{equation}
which shows that the function $\widetilde{q}=q$ is meromorphic at each
isolated point of $E_{0}$, and poles are at most of order $1$.\bigskip
\end{proof}

\subsubsection{\label{s1043}Proof of Theorem \ref{t53a}}

\qquad In Theorem \ref{t53a} the special case has been considered that the set
$E_{0}$ of Theorem \ref{t41a} is finite, which leads to a rational quadratic
differential $q(z)dz^{2}$ for the determination of the Jordan arcs $J_{j}$,
$j\in I$, in $K_{0}(f,\infty)\setminus E_{0}$\ in the sense of Theorem
\ref{t52a}. Algebraic functions provide typical examples for this situation.

In Proposition \ref{p53a} it has been stated that with the set $E_{0}$ also
the two sets $E_{1}$ and $E_{2}$ in Theorem \ref{t41a} and in Definition
\ref{d53b}, respectively, are finite.\medskip

\begin{proof}
[Proof of Proposition \ref{p53a}]If $E_{0}$ is finite, then all points of
$E_{0}$ are isolated, and we know from Theorem \ref{t52a} that the function
$q$ from (\ref{f52b}) is meromorphic throughout $\overline{\mathbb{C}}$, which
implies that $q$ is a rational function. Together with (\ref{f52c}) we then
further know that its numerator degree is by 2 degrees smaller than its
denominator degree. Let $m$ and $n$ denote the numerator and denominator
degrees, respectively. Since it has been shown in Theorem \ref{t52a} that $q$
has a most simple poles, which are all contained in $E_{0}$, it follows that
\begin{equation}
m+2=n\leq\operatorname*{card}(E_{0}).\label{f104b1}%
\end{equation}
From the local structure of the trajectories of quadratic differentials, which
has been reviewed in Lemma \ref{l115a} in Subsection \ref{s1105}, further
below, we know that at each bifurcation point $z$ of$\ K_{0}(f,\infty
)\setminus E_{0}$, the function $q$ has a zero of order
\begin{equation}
\operatorname*{ord}(z)=\operatorname*{i}(z)-2\label{f104b2}%
\end{equation}
with $\operatorname*{ord}(z)$ denoting the order of the zero at $z$ and
$\operatorname*{i}(z)$ denoting the bifurcation index of Definition
\ref{d53a}. Since $E_{1}$ is the set of all bifurcation points of
$K_{0}(f,\infty)$, it follows from (\ref{f104b1}) and (\ref{f104b2}) that the
set $E_{1}$ is finite.

From the definition of critical points of a Green function in Definition
\ref{d53b}, it follows rather immediately that at such points several level
lines of the Green function intersect. Consequently, the function $q$ has a
zero at each critical point $z$ of the Green function $g_{D_{0}(f,\infty
)}(\cdot,\infty)$, but this also follows directly from (\ref{f52b}), and more
precisely, we have
\begin{equation}
\operatorname*{ord}(z)=2\,\operatorname*{j}(z),\label{f104b3}%
\end{equation}
where $\operatorname*{ord}(z)$ denotes again the order of the zero at $z$, and
$j(z)$ is the degree of the critical point introduced in Definition
\ref{d53b}. From (\ref{f104b1}) and (\ref{f104b3}), it then follows that the
set $E_{2} $ is finite.\bigskip
\end{proof}

\begin{proof}
[Proof of Theorem \ref{t53a}]Most work for the proof of Theorem \ref{t53a} has
already been done in the proof of Proposition \ref{p53a}. From there we know
that $q$ is a rational function. All zeros and poles of the function $q$ on
the minimal set $K_{0}(f,\infty)$ are contained in $E_{0}\cup E_{1}$. The
poles are at most of order $1$, and they appear at a point $z\in E_{0}$ if $z$
is endpoint of only one Jordan arc in $K_{0}(f,\infty)\setminus(E_{0}\cup
E_{1})$. For such points $z$, we have the bifurcation index $\operatorname*{i}%
(z)=1$. After putting the information from (\ref{f104b1}) and (\ref{f104b2})
together, we get the first product in (\ref{f53b}).

The second product in (\ref{f53b}) follows from (\ref{f104b3}) and the
observation that in the domain $D_{0}(f,\infty)$ the function $q$ is analytic
and has zeros only at the critical points of the Green function $g_{D_{0}%
(f,\infty)}(\cdot,\infty)$ and at $\infty$. The order of these zeros is given
by (\ref{f104b3}) and (\ref{f52c}).\medskip
\end{proof}

\subsection{\label{s105}Proofs of Results from Section \ref{s7}}

\qquad The main results of Section \ref{s7} are contained in Theorem
\ref{t73a}, where a characterization of the extremal domain $D_{0}(f,\infty)$
has been given with the help of the $S-$property, and in Theorem \ref{t74a},
where several geometric estimates have been formulated with the help of
convexity.\medskip

\subsubsection{\label{s1051}Proof of Theorem \ref{t73a}}

\qquad Most of the work for the proof of Theorem \ref{t73a} has already been
done by Proposition \ref{p102a}.\medskip

\begin{proof}
[Proof of Theorem \ref{t73a}]Let $D\in\mathcal{D}(f,\infty)$ be an admissible
domain that possesses the $S-$property in the sense of Definition \ref{d71a}.
We assume that $D$ is different from the extremal domain $D_{0}(f,\infty)$,
i.e.,
\begin{equation}
D\neq D_{0}(f,\infty).\label{f105a1}%
\end{equation}

It is an immediate consequence of Theorem \ref{t41a} that the extremal domain
$D_{0}(f,\allowbreak\infty)$ is elementarily maximal in the sense of
Definition \ref{d71a0}. From Proposition \ref{p102a} together with assumption
(\ref{f105a1}), we then conclude that
\begin{equation}
\operatorname{cap}(\partial D)<\operatorname{cap}(\partial D_{0}%
(f,\infty)).\label{f105a2}%
\end{equation}
But the last inequality contradicts the minimality (\ref{f21a}) in Definition
\ref{d21b}, which proves Theorem \ref{t73a}.\bigskip
\end{proof}

\subsubsection{\label{s1052}Proof of Theorem \ref{t74a}}

\qquad All results in Theorem \ref{t74a} are basically a consequence of the
fact that the capacity is a monotonically decreasing functions under
orthogonal projections, a result which has been reviewed in Lemma \ref{l111c}
in Subsection \ref{s1101}, further below.\medskip

\begin{proof}
[Proof of Theorem \ref{t74a}]Let
\begin{equation}
L=L_{z_{0},v}:=\{\text{\thinspace}z_{0}+v\,t\text{\ }|\text{\ }t\in
\mathbb{R}\text{\thinspace}\}\text{, \ \ }z_{0},v\in\mathbb{C}\text{,
\ }|v|=1\text{,}\label{f105b1}%
\end{equation}
be an arbitrary line in $\mathbb{C}$, and denote by $H_{\pm}=H_{\pm}^{L}$ the
two complementary half-planes of $L$, i.e., $\mathbb{C}\setminus L=H_{+}\cup
H_{-}=H_{+}^{L}\cup H_{-}^{L}$, and
\begin{equation}
H_{\pm}^{L}:=\{\text{ }z\in\mathbb{C}\text{\ }|\text{\ }\pm\operatorname{Im}%
(\overline{v}(z-z_{0}))>0\text{\ }\}\label{f105b2}%
\end{equation}
with $z_{0}$ and $v$ the same parameters as those used in (\ref{f105b1}).

By $\varphi_{L}:\mathbb{C}\longrightarrow\mathbb{C}$ we denote the orthogonal
projection (\ref{f111d3}) on $L$ out of $H_{+}$. On $L\cup H_{-}$,
$\varphi_{L}$ is the identity.

Before we come to the individual proofs of the five assertions of Theorem
\ref{t74a}, we assemble and prove several preparatory assertions, in which
$E_{0}$ is the compact set introduced in Theorem \ref{t41a}.

\begin{itemize}
\item[(a)] Let $D$ be an admissible domain for Problem $(f,\infty)$, i.e.,
$D\in\mathcal{D}(f,\infty)$, and set $K:=\overline{\mathbb{C}}\setminus D$. If
the line $L$ is such that $K\subset L\cup H_{-}^{L}$, $K\cap L\neq\emptyset$,
and that the function $f$ has a meromorphic continuation out of $H_{+}^{L}$
into a neighborhood of each $z\in K\cap L$, then for every line $\widetilde
{L}$, which is parallel to $L$, and for which $f$ has a meromorphic
continuation throughout $H_{+}^{\widetilde{L}}$, the compact set
$\widetilde{K}:=\varphi_{\widetilde{L}}(K)$ and the domain $\widetilde
{D}:=\overline{\mathbb{C}}\setminus\widetilde{K}$ are admissible for Problem
$(f,\infty)$, i.e., we have
\begin{equation}
\widetilde{K}=\varphi_{\widetilde{L}}(K)\in\mathcal{K}(f,\infty)\text{ \ \ and
\ \ }\widetilde{D}=\overline{\mathbb{C}}\setminus\widetilde{K}\in
\mathcal{D}(f,\infty).\label{f105b3}%
\end{equation}
By $\varphi_{\widetilde{L}}$ we have denoted the orthogonal projection
associated with $\widetilde{L}$ in accordance to (\ref{f111d3}).
\end{itemize}

Indeed, assertion (a) and especially (\ref{f105b3}) follows directly from
Proposition \ref{p91a} together with the specific properties of the orthogonal
projection (\ref{f111d3}) and the assumptions made in assertion (a).

\begin{itemize}
\item[(b)] If the situation of assertion (a) is given, and if we have
$H_{+}^{L}\subsetneq H_{+}^{\widetilde{L}}$, then it follows that
\begin{equation}
\operatorname{cap}(\widetilde{K})\leq\operatorname{cap}(K).\label{f105b4}%
\end{equation}
and a strict inequality holds in (\ref{f105b4}) if, and only if,
\begin{equation}
\operatorname{cap}(K\cap H_{+}^{\widetilde{L}})>0.\label{f105b5}%
\end{equation}

\end{itemize}

Indeed, assertion (b) follows directly from Lemma \ref{l111c}. For the
necessity of condition (\ref{f105b5}) one also needs Lemma \ref{l111b}.

\begin{itemize}
\item[(c)] If the line $L$ is such that
\begin{equation}
E_{0}\subset L\cup H_{-}^{L},\label{f105c1}%
\end{equation}
then we have
\begin{equation}
K_{0}(f,\infty)\subset L\cup H_{-}^{L}.\label{f105c2}%
\end{equation}

\end{itemize}

Indeed, if (\ref{f105c1}) holds true, but (\ref{f105c2}) is false, then we
concluded from Theorem \ref{t41a} that $K_{0}(f,\infty)\cap H_{+}^{L}%
$\ contains at least an open piece of on of the Jordan arcs $J_{j}$, $j\in I$,
from (\ref{f41a}) in Theorem \ref{t41a}. With Lemma \ref{l111a} in Subsection
\ref{s1101}, further below, we then deduce that
\begin{equation}
\operatorname{cap}(K_{0}(f,\infty)\cap H_{+}^{L})>0,\label{f105c3}%
\end{equation}
which together with the two assertions (a), (b), and assumption (\ref{f105c1})
further implies that there exists a line $\widetilde{L}$, like that in
assertion (a), such that
\begin{equation}
\varphi_{\widetilde{L}}(K_{0}(f,\infty))\in\mathcal{K}(f,\infty
),\label{f105c5}%
\end{equation}
and, as in assertion (b), we further have
\begin{equation}
\operatorname{cap}(\varphi_{\widetilde{L}}(K_{0}(f,\infty
)))<\operatorname{cap}(K_{0}(f,\infty)).\label{f105c6}%
\end{equation}
Inequality (\ref{f105c6}) together with (\ref{f105c5}) contradicts the
minimality (\ref{f21a}) of the set $K_{0}(f,\infty)$ in Definition \ref{d21b},
which completes the proof of assertion (c).

\begin{itemize}
\item[(d)] Let $\operatorname{Ex}(K)\subset K$ denote the set of extreme
points of a compact set $K\subset\mathbb{C}$. We have
\begin{equation}
\operatorname{Ex}(K_{0}(f,\infty))\subset E_{0}.\label{f105d1}%
\end{equation}

\end{itemize}

Indeed, for each $z\in\operatorname{Ex}(K_{0}(f,\infty))$ there exists a
straight line $L$ such that $L\cap K_{0}(f,\infty)=\{z\}$ and $K_{0}%
(f,\infty)\subset L\cup H_{-}^{L}$. From assertion (a) and condition (iii) in
Definition \ref{d21b}, we then conclude that any meromorphic continuation of
the function $f$ out of $H_{+}^{L}$ has a non-polar singularity at $z$. From
the last conclusion we deduce (\ref{f105d1}), but also the slightly stronger
assertion (e), which is formulated next.

\begin{itemize}
\item[(e)] Let $L$ be a straight line with the property that $K_{0}%
(f,\infty)\subset L\cup H_{-}^{L}$. If $K_{0}(f,\infty)\cap L\neq\emptyset$,
then we also have $\operatorname{Ex}(K_{0}(f,\infty))\cap L\neq\emptyset$, and
at each point $z\in\operatorname{Ex}(K_{0}(f,\infty))\cap L$ the restriction
of the function $f$ to $H_{+}^{L}$ has a non-polar singularity.
\end{itemize}

For the proof of the assertions (iv) and (v) in Theorem \ref{t74a} we need a
refinement of assertion (e).

\begin{itemize}
\item[(f)] Let $L$ be a straight line with the property that
$\operatorname{cap}(K_{0}(f,\infty)\cap H_{+}^{L})=0$. Then the function $f$
is single-valued in $H_{+}^{L}\setminus K_{0}(f,\infty)$. If further
$K_{0}(f,\infty)\cap L\neq\emptyset$, then we also have $\operatorname{Ex}%
(K_{0}(f,\infty)\setminus H_{+}^{L})\cap L\neq\emptyset$, and the restriction
of the function $f$ to $H_{+}^{L}\setminus K_{0}(f,\infty)$ has a non-polar
singularity at each point $z\in\operatorname{Ex}(K_{0}(f,\infty)\setminus
H_{+}^{L})\cap L$.
\end{itemize}

Indeed, the first part of assertion (f) follows directly from the fact that
$H_{+}^{L}\setminus K_{0}(f,\infty)\subset D_{0}(f,\infty)$.

From the assumption that $\operatorname{cap}(K_{0}(f,\infty)\cap H_{+}^{L})=0$
together with Lemma \ref{l111a} from Subsection \ref{s1101}, further below, we
conclude that the set $K_{0}(f,\infty)\cap H_{+}^{L}$ contains no continuum,
it is totally disconnected, and consequently, we have $K_{0}(f,\infty)\cap
H_{+}^{L}\subset E_{0}$ as a consequence of condition (iii) in Definition
\ref{d21b}.

If $K_{0}(f,\infty)\cap L\neq\emptyset$, then it is immediate that we also
have $\operatorname{Ex}(K_{0}(f,\infty)\setminus H_{+}^{L})\cap L\neq
\emptyset$. With the first part of assertion (f), we then conclude in the same
way as in the proof of assertion (e) that the meromorphic continuation of the
function $f$ out of the domain $H_{+}^{L}\setminus K_{0}(f,\infty)$ has a
non-polar singularity at every $z\in\operatorname{Ex}(K_{0}(f,\infty)\setminus
H_{+}^{L})\cap L$.\medskip

We now come to the individual proofs of the five assertions in Theorem
\ref{t74a}, where we will the assertions (a) - (f).\medskip

(ii) Assertion (ii) is an immediate consequence of assertion (c).\medskip

(i) \ We prove assertion (i) indirectly. Let us assume that there exists a
convex and compact set $K\subset\mathbb{C}$ such that the function $f$ has a
single-valued meromorphic continuation throughout $\overline{\mathbb{C}%
}\setminus K$ and that further
\begin{equation}
K_{0}(f,\infty)\setminus K\neq\emptyset.\label{f105d2}%
\end{equation}
From (\ref{f105d2}) and the convexity of $K$ it follows that also
\begin{equation}
\operatorname{Ex}(K_{0}(f,\infty))\setminus K\neq\emptyset.\label{f105d3}%
\end{equation}
With assertion (e) we then conclude that the meromorphic continuation of the
function $f$ out of $\overline{\mathbb{C}}\setminus(K_{0}(f,\infty)\cup
K)\subset D_{0}(f,\infty)$ has a non-polar singularity at each $z\in
\operatorname{Ex}(K_{0}(f,\infty))\setminus K$, which contradicts the
assumption that the function $f$ is meromorphic throughout $\overline
{\mathbb{C}}\setminus K$. Hence, assertion (i) is proved.\medskip

(iii) Assertion (iii) can be proved like assertion (i), only that now we have
to use assertion (f) instead of assertion (e). We give more details since some
of the conclusions will also be used in the proof of assertion (v), further below.

Assertion (iii) will be proved indirectly. We assume that $K\subset\mathbb{C}$
is a convex and compact set, $E\subset\mathbb{C}\setminus K$ a set that is
relatively compact in $\mathbb{C}\setminus K$, $\operatorname{cap}(E)=0$, the
function $f$ has a meromorphic and single-valued continuation throughout
$\overline{\mathbb{C}}\setminus(K\cup E)$, and
\begin{equation}
(K_{0}(f,\infty)\setminus E)\setminus K=K_{0}(f,\infty)\setminus(K\cup
E)\neq\emptyset.\label{f105d4}%
\end{equation}
From (\ref{f105d4}) it follows that also
\begin{equation}
\operatorname{Ex}(K_{0}(f,\infty)\setminus(K\cup E))\setminus(K\cup
E)\neq\emptyset.\label{f105d5}%
\end{equation}
Since $f$ is single-valued and meromorphic throughout $D_{0}(f,\infty
)=\overline{\mathbb{C}}\setminus K_{0}(f,\infty)$, it follows from assertion
(f) that $f$ has to have a non-polar singularity at each $z\in
\operatorname{Ex}(K_{0}(f,\infty)\setminus(K\cup E))\setminus(K\cup E)$, which
contradicts the assumption that $f$ is meromorphic throughout $\overline
{\mathbb{C}}\setminus(K\cup E)$.\medskip

(iv) Let $K_{min}$ be the intersection of all compact sets $K\subset
\mathbb{C}$ that satisfy the assumptions made in assertion (iii). With the
usual tools of planar topology one can show that $K_{min}$ can also be
represented as a denumerable intersection of such sets $K$. Like these sets
$K$, so the set $K_{min}$ is also convex and compact, and further it follows
from the assumptions made in assertion (iii) together with Lemma \ref{l111b}
in Subsection \ref{s1101} that
\begin{equation}
\operatorname{cap}(K_{0}(f,\infty)\setminus K_{min})=0.\label{f105d6}%
\end{equation}
We define $E_{min}:=K_{0}(f,\infty)\setminus K_{min}$, and with this
definition, assertion (iv) is proved.\medskip

(v) \ Notice that in assertion (iii) we can choose $K=K_{min}$. With the same
argumentation as used after (\ref{f105d5}), we show that the meromorphic
continuation of the function $f$ out of $D_{0}(f,\infty)\setminus K_{min}$ has
a non-polar singularity at each $z\in\operatorname{Ex}(K_{0}(f,\infty)\cap
K_{min})$. Form the definition of $K_{min}$ as the intersection of all compact
sets $K\subset\mathbb{C}$ that satisfy the assumptions made in assertion
(iii), it follows that $K_{0}(f,\infty)\subset K_{min}$. Hence, we have proved
that $\operatorname{Ex}(K_{min})\subset E_{0}$.

From $\operatorname{cap}(E_{min})=0$ together with Lemma \ref{l111a} from
Subsection \ref{s1101}, it follows that the set $E_{min}$ is totally
disconnected, and consequently, it follows from the Structure Theorem
\ref{t41a} that $E_{min}\subset E_{0}$, which completes the proof of assertion
(v).\bigskip
\end{proof}

\section{\label{s110}Some Lemmas from Potential Theory and Geometric Function
Theory}

\qquad In the present section we assemble definitions and lemmas concerning
basic properties and tools from potential theory and from geometric function
theory. These tools have been used at several places in the sections above. It
is hoped that by its concentration in a separate section the flow of
argumentation at earlier places has not been interrupted by argumentations of
a rather different flavor, or by references to the literature together with
the often necessary reformulations and adaptations of results. A separate
compilation is also more convenient and economic with respect to a unified
terminology, which unfortunately is not typical for the whole spectrum of the
literature in this area. As general references to potential-theoretic results
we have used \cite{SaTo}, \cite{Ransford}, and sometimes also \cite{Landkof}.
Towards the end of the present section results become more specific, and some
of them require rather technical proofs, which could not be found in the
literature with the required specific orientation.

We start with topics related to the (logarithmic) capacity, continue then with
logarithmic potentials, Green functions, some special results related to
sequences of compact sets, and at last with some remarks on trajectories of
quadratic differentials. In the penultimate subsection, Carath\'{e}odory's
Theorem about kernel convergence will be an important piece.

\subsection{\label{s1101}Notations and Basic Properties of Capacity}

\qquad The (logarithmic) capacity $\operatorname*{cap}\left(  \cdot\right)  $
is a set function defined on capacitable subsets of $\mathbb{C}$, which
include Borel sets (cf. \cite{Ransford}, Chapter 5 or \cite{SaTo}, Chapter
I.1). For a compact set $K\subset\mathbb{C}$ a definition with a strong
intuitive flavor can be based on the principle of minimal energy: Let $\mu$ be
a probability measure in $\mathbb{C}$; its energy is defined as
\begin{equation}
I(\mu):=\int\int\log\frac{1}{|z-v|}d\mu(z)d\mu(v).\label{f111a1}%
\end{equation}
The capacity of the compact set $K\subset\mathbb{C}$\ can then be defined as
\begin{equation}
\operatorname*{cap}\left(  K\right)  :=\exp\left(  -\inf_{\mu}I(\mu)\right)
,\label{f111a2}%
\end{equation}
where the infimum extends over all probability measures $\mu$ with
$\operatorname*{supp}\left(  \mu\right)  \subset K$.\medskip

A special role is played by sets $E\subset\mathbb{C}$ of capacity zero, which
are also known as polar sets. The property of being a set of capacity zero is
invariant under M\"{o}bius transforms, and thanks to this property, the notion
of 'capacity zero' can be extended to the whole Riemann sphere $\overline
{\mathbb{C}}$.

\begin{definition}
\label{d111a}A property is said to hold quasi everywhere (written in short as
'qu.e.') on a set $S\subset\overline{\mathbb{C}}$ if it holds for every $z\in
S\setminus E$ with $E$ a set of (outer) capacity zero (cf. \cite{SaTo},
Chapter I.1).
\end{definition}

The capacity is monoton with respect to an ordering by inclusions (cf.
\cite{Ransford}, Theorem 5.1.2). In our investigations we have needed some
upper and lower estimates, which are formulated in the next lemma (cf.
\cite{Ransford}, Theorems 5.3.2, 5.3.4, and 5.3.5).

\begin{lemma}
\label{l111a}(i) Let $m(\cdot)$ denote the planar Lebesgue measure in
$\mathbb{C}$. For any compact set $K\subset\mathbb{C}$ we have
\begin{equation}
\sqrt{m(K)/\pi}\leq\operatorname*{cap}\left(  K\right)  \leq
\operatorname*{diam}(K)/2.\label{f111b1}%
\end{equation}
(ii) For a continuum $V\subset\mathbb{C}$ we have
\begin{equation}
\operatorname*{diam}(K)/4\leq\operatorname*{cap}\left(  V\right)
\leq\operatorname*{diam}(K)/2.\label{f111b2}%
\end{equation}

\end{lemma}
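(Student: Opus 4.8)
\textbf{Proof plan for Lemma \ref{l111a}.} These are classical estimates, and the plan is simply to recall the standard arguments and cite the relevant literature (e.g. \cite{Ransford}, Chapter 5, or \cite{SaTo}, Chapter I). I will treat the four inequalities separately, as two pairs.

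For part (i), the upper bound $\operatorname*{cap}(K)\leq\operatorname*{diam}(K)/2$ is immediate: enclosing $K$ in a closed disk $\overline{\Delta}$ of radius $\operatorname*{diam}(K)/2$ (for instance a disk centred at any point of $K$), monotonicity of the capacity under inclusion together with the elementary fact $\operatorname*{cap}(\overline{\Delta})=\operatorname*{diam}(K)/2$ gives the claim. For the lower bound $\sqrt{m(K)/\pi}\leq\operatorname*{cap}(K)$, I would invoke the isoperimetric-type inequality for logarithmic capacity: among all compact sets of a given planar Lebesgue measure, the disk has the smallest capacity. Concretely, if $\Delta$ is the disk centred at $0$ with $m(\Delta)=m(K)$, then its radius is $\sqrt{m(K)/\pi}$ and $\operatorname*{cap}(\Delta)=\sqrt{m(K)/\pi}$; the inequality $\operatorname*{cap}(K)\geq\operatorname*{cap}(\Delta)$ is the content of the symmetrization result. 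The cleanest way to present this is to cite \cite{Ransford}, Theorem 5.3.4 (or the corresponding statement in \cite{SaTo}), since a self-contained proof would require setting up circular symmetrization, which is outside the scope here.

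For part (ii), the upper bound is again just monotonicity plus the disk computation, exactly as in (i), so there is nothing new. The lower bound $\operatorname*{cap}(V)\geq\operatorname*{diam}(V)/4$ for a continuum is the one genuinely useful inequality in the applications (it is exactly what powers the estimate (\ref{f92a2}) in the proof of Lemma \ref{l92a}). The standard argument: pick two points $a,b\in V$ with $|a-b|=\operatorname*{diam}(V)$; since $V$ is a continuum joining $a$ and $b$, the projection of $V$ onto the line through $a$ and $b$ covers the whole segment $[a,b]$, and projection onto a line does not increase capacity (this is Lemma \ref{l111c} below, the monotonicity of capacity under orthogonal projection onto a line). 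Hence $\operatorname*{cap}(V)\geq\operatorname*{cap}([a,b])=|a-b|/4=\operatorname*{diam}(V)/4$, using $\operatorname*{cap}$ of a segment equals a quarter of its length. I would present it in this order and cite \cite{Ransford}, Theorem 5.3.5 for the finished statement.

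The main (and only real) obstacle is that a fully self-contained proof of the lower bound in (i) needs the circular symmetrization inequality for capacity, which is a substantial auxiliary tool; for the present paper it is appropriate to quote it rather than reprove it. Everything else reduces to monotonicity under inclusion, the explicit capacities of disks and segments, and the projection lemma, all of which are either elementary or already recorded in this section.
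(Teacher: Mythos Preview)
Your proposal is correct and matches the paper's approach: the paper does not prove this lemma at all but simply cites \cite{Ransford}, Theorems 5.3.2, 5.3.4, and 5.3.5, exactly the results you invoke. Your sketch of the underlying arguments is accurate and in fact provides more detail than the paper itself, which treats the lemma as a pure citation.
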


As a set function, the capacity is not additive, and does also not possess one
of the usual subadditivity properties as a weaker substitute for the failing
additivity. However, sets of capacity zero are an exception in this respect
(cf. \cite{Ransford}, Theorem 5.1.4).

\begin{lemma}
\label{l111b}Let $K,E\subset\mathbb{C}$ be capacitable sets, and assume that
$\operatorname*{cap}\left(  E\right)  =0$. Then we have
\begin{equation}
\operatorname*{cap}\left(  K\setminus E\right)  =\operatorname*{cap}\left(
K\cup E\right)  =\operatorname*{cap}\left(  K\right)  .\label{f111c1}%
\end{equation}
The denumerable union of capacitable sets of capacity zero is again a set of
capacity zero.\medskip
\end{lemma}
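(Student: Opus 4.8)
We prove the two assertions of Lemma \ref{l111b} in turn.

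For the first assertion, recall that capacity is monotone under inclusion (cf. \cite{Ransford}, Theorem 5.1.2), so from $K\setminus E\subset K\subset K\cup E$ we obtain
\begin{equation}
\operatorname*{cap}\left(  K\setminus E\right)  \leq\operatorname*{cap}\left(
K\right)  \leq\operatorname*{cap}\left(  K\cup E\right)  .\label{f111c2}
\end{equation}
Thus it suffices to prove the reverse inequality $\operatorname*{cap}\left(
K\cup E\right)  \leq\operatorname*{cap}\left(  K\setminus E\right)  $. If
$\operatorname*{cap}\left(  K\right)  =0$ there is nothing to prove since then
all three quantities in (\ref{f111c2}) vanish (the outer one being dominated by
$\operatorname*{cap}\left(  K\right)  +\operatorname*{cap}\left(  E\right)
=0$ once we know countable subadditivity for null sets, which is the content of
the second assertion and will be proved independently below); so assume
$\operatorname*{cap}\left(  K\right)  >0$. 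Let $\omega$ be the equilibrium
measure of $K\setminus E$, which exists and has finite energy because
$\operatorname*{cap}\left(  K\setminus E\right)  =\operatorname*{cap}\left(
K\right)  >0$ follows once (\ref{f111c1}) is established, so instead we argue
as follows: take the equilibrium measure $\nu$ of $K\cup E$, which is a
probability measure with $\operatorname*{supp}(\nu)\subset K\cup E$ and energy
$I(\nu)=-\log\operatorname*{cap}\left(  K\cup E\right)  <\infty$. Since $\nu$
has finite energy, $\nu$ assigns zero mass to every set of capacity zero (cf.
\cite{Ransford}, Theorem 3.2.3, or the reasoning behind Lemma \ref{l112a}),
hence $\nu(E)=0$, so $\nu$ is in fact supported on $K\setminus E$ up to a
$\nu$-null set. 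Therefore $\nu$ is an admissible competitor in the variational
problem (\ref{f111a1})--(\ref{f111a2}) for $K\setminus E$, which gives
$\inf_{\mu}I(\mu)\leq I(\nu)$ over measures on $K\setminus E$ and hence
$\operatorname*{cap}\left(  K\setminus E\right)  \geq\operatorname*{cap}\left(
K\cup E\right)  $. Combined with (\ref{f111c2}), this proves (\ref{f111c1}).

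For the second assertion, let $E_{1},E_{2},\ldots\subset\mathbb{C}$ be
capacitable sets with $\operatorname*{cap}\left(  E_{n}\right)  =0$ for all $n$,
and set $E:=\bigcup_{n}E_{n}$. The cleanest route is to invoke the standard fact
that a countable union of Borel polar sets is polar (cf. \cite{Ransford},
Theorem 5.1.4, or \cite{SaTo}, Chapter I); for the reader's convenience I would
sketch the argument. For each $n$ fix a probability measure $\mu_{n}$ with
$\operatorname*{supp}(\mu_{n})$ contained in a compact subset of $E_{n}$ (after
an exhaustion) and of finite energy as large as we please; more efficiently,
one uses that $E_{n}$ polar means there exists a subharmonic function $u_{n}$,
not identically $-\infty$, with $u_{n}\equiv-\infty$ on $E_{n}$. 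Choosing a
convergent weighted sum $u:=\sum_{n}c_{n}u_{n}$ with suitably small positive
coefficients $c_{n}$, one checks that $u$ is subharmonic, not identically
$-\infty$, and equals $-\infty$ on every $E_{n}$, hence on $E$; this exhibits
$E$ as polar, i.e. $\operatorname*{cap}\left(  E\right)  =0$. The only point
requiring care is the convergence and subharmonicity of the series $\sum_{n}
c_{n}u_{n}$ away from the null set, which is handled by normalizing each $u_{n}$
to be bounded above on a fixed large disk and picking $c_{n}$ summable.

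The main obstacle in the above is purely bookkeeping: one must be careful that
the equilibrium measure of $K\cup E$ genuinely charges no polar set, and that
in the union argument the weighted sum of subharmonic functions stays
subharmonic and finite somewhere. Both are standard, but since the paper wishes
to keep its potential-theoretic toolbox self-contained I would state them with
precise references to \cite{Ransford} (Theorems 3.2.3 and 5.1.4) and \cite{SaTo}
(Chapter I.1), and present the equilibrium-measure argument for the first
assertion in full, as it is short and illuminates why sets of capacity zero are
"invisible" to the capacity functional. $\blacksquare$
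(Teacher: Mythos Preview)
The paper does not give its own proof of this lemma: it is stated as a standard fact, with the citation to \cite{Ransford}, Theorem~5.1.4, placed in the sentence immediately preceding the lemma. So there is nothing to compare against; you are supplying an argument where the paper deliberately defers to the literature.

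Your treatment of the second assertion (the weighted sum of subharmonic functions equal to $-\infty$ on each $E_n$) is the standard proof and is fine as a sketch.

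For the first assertion your idea is the right one, but there are two genuine gaps. First, the lemma is stated for \emph{capacitable} sets, not compact ones, and the equilibrium measure in the sense of (\ref{f111a1})--(\ref{f111a2}) exists only for compact sets of positive capacity; so ``the equilibrium measure $\nu$ of $K\cup E$'' is undefined in general. You need to pass through the inner-capacity characterization: take compact $F\subset K\cup E$ with $\operatorname*{cap}(F)$ close to $\operatorname*{cap}(K\cup E)$, run your argument on $\omega_F$, then take the supremum over $F$. Second, even for compact $F$, from $\omega_F(E)=0$ you only conclude that $\omega_F$ is \emph{concentrated} on $F\setminus E$, not that $\operatorname*{supp}(\omega_F)\subset F\setminus E$; the support is closed and $F\setminus E$ need not be. To make $\omega_F$ an admissible competitor for $K\setminus E$ you must invoke inner regularity of $\omega_F$ to find compact $G\subset F\setminus E\subset K\setminus E$ carrying almost all the mass, and check that the normalized restriction has energy close to $I(\omega_F)$. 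These repairs are routine, but without them the argument as written is incomplete. The apparent circularity you flag in the case $\operatorname*{cap}(K)=0$ is harmless once the second assertion is proved independently, as you note.
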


Another property, which has been relevant in our investigations, concerns
radial projections $\varphi_{r}:\mathbb{C}\longrightarrow\mathbb{C}$ onto a
given disk $D:=\left\{  \text{ }|z|\leq r\text{ }\right\}  $, $r>0$, and
orthogonal projections on a line $L:=\left\{  \,z_{0}+v\,t\,|\,z_{0}%
,v\in\mathbb{C},\text{ }|v|=1,\text{ }t\in\mathbb{R}\,\right\}  $ from one
side. Let the radial projection $\varphi_{r}$ be defined by
\begin{equation}
z\mapsto\varphi_{r}(z):=\min(r,|z|)\frac{z}{|z|},\label{f111d1}%
\end{equation}
and the orthogonal projection $\varphi_{L}$ be defined by
\begin{equation}
z\mapsto\varphi_{L}(z):=\left\{
\begin{array}
[c]{lll}%
z_{0}+\operatorname{Re}((z-z_{0})\overline{v})v\smallskip & \text{ \ \ \ if
\ \ \ } & \operatorname{Im}((z-z_{0})\overline{v})>0,\\
z & \text{ \ \ else. \ } &
\end{array}
\right. \label{f111d3}%
\end{equation}

\begin{lemma}
\label{l111c}(cf. \cite{Pommerenke92}, Chapter 9.3, formula (11)) For any
capacitable set $K\subset\mathbb{C}$ and radial projection $\varphi_{r}$
defined in (\ref{f111d1}), we have
\begin{equation}
\operatorname*{cap}\left(  \varphi_{r}(K)\right)  \leq\operatorname*{cap}%
\left(  K\right)  ,\label{f111d2}%
\end{equation}
and for the orthogonal projection $\varphi_{L}$ defined in (\ref{f111d3}), we
also have
\begin{equation}
\operatorname*{cap}\left(  \varphi_{L}(K)\right)  \leq\operatorname*{cap}%
\left(  K\right)  .\label{f111d4}%
\end{equation}
We have a strict inequality in (\ref{f111d2}) or (\ref{f111d4})\ if
$\operatorname*{cap}(K\setminus\allowbreak\varphi_{r}\allowbreak(K))>0$ or
$\operatorname*{cap}(K\setminus\allowbreak\varphi_{L}(K))>0$,
respectively.\medskip
\end{lemma}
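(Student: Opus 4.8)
The plan is to reduce both inequalities (\ref{f111d2}) and (\ref{f111d4}) to a single statement about $1$-Lipschitz self-maps of the plane. First I would record that the two maps in question are contractions: for the radial projection $\varphi_{r}$ this is the elementary and well-known estimate $|\varphi_{r}(z)-\varphi_{r}(w)|\le|z-w|$, and for $\varphi_{L}$ I would check it by separating cases. If both $z,w$ lie in $H_{+}^{L}$ then $|\varphi_{L}(z)-\varphi_{L}(w)|$ is the distance of the orthogonal projections onto $L$, which is at most $|z-w|$; and if $z\in H_{+}^{L}$ while $w\in L\cup H_{-}^{L}$, then, after normalizing $L$ to the real axis and writing $z=a+bi$ with $b>0$, $w=c+di$ with $d\le 0$, one has $|\varphi_{L}(z)-\varphi_{L}(w)|^{2}=(a-c)^{2}+d^{2}\le(a-c)^{2}+(b-d)^{2}=|z-w|^{2}$ because $b(b-2d)\ge 0$. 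Thus it suffices to prove: if $\varphi:\mathbb{C}\rightarrow\mathbb{C}$ is $1$-Lipschitz and $K$ is capacitable, then $\operatorname*{cap}(\varphi(K))\le\operatorname*{cap}(K)$.

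For a compact $K$ with $\operatorname*{cap}(K)>0$ (the polar case being trivial) I would argue by comparing logarithmic potentials. Let $\omega$ be the equilibrium measure of $K$ and $\gamma:=-\log\operatorname*{cap}(K)$ its Robin constant, so that by Frostman's theorem (standard potential theory, cf.\ Subsection \ref{s1102}, further below) the potential $p(\omega;\cdot)$ equals $\gamma$ quasi everywhere on $K$ and satisfies $p(\omega;\cdot)\le\gamma$ on $\mathbb{C}$. Put $K':=\varphi(K)$ and $\mu:=\varphi_{\ast}\omega$, a probability measure carried by $K'$. For $z=\varphi(v)$ with $v\in K$, the contraction property gives $\log\frac{1}{|\varphi(v)-\varphi(u)|}\ge\log\frac{1}{|v-u|}$ and hence $p(\mu;\varphi(v))\ge p(\omega;v)$; since $p(\omega;v)=\gamma$ off a polar set and Lipschitz maps do not destroy polarity, we get $p(\mu;\cdot)\ge\gamma$ quasi everywhere on $K'$. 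Integrating this inequality against the equilibrium measure $\omega'$ of $K'$ (which ignores polar sets), applying Fubini, and using $p(\omega';\cdot)\le-\log\operatorname*{cap}(K')$ everywhere, one obtains
\begin{equation*}
\gamma\le\int p(\mu;\cdot)\,d\omega'=\int p(\omega';\cdot)\,d\mu\le-\log\operatorname*{cap}(K'),
\end{equation*}
which is exactly $\operatorname*{cap}(K')\le\operatorname*{cap}(K)$. The extension to an arbitrary capacitable $K$ is then routine: $\varphi(K)$ is an analytic set, hence capacitable, and the inequality passes to the limit through an exhaustion of $K$ by compact subsets using inner regularity of the capacity (Subsection \ref{s1101}); alternatively one may bypass compactness by lifting, via a measurable selection, any competitor measure on $\varphi(K)$ to a measure on $K$ of no larger energy.

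Finally, the strict part. If $\operatorname*{cap}(\varphi(K))=\operatorname*{cap}(K)$, then every inequality in the chain above collapses to an equality; chasing this back through the potential comparison forces $|\varphi(u)-\varphi(v)|=|u-v|$ for $(\omega\times\omega)$-almost every pair $(u,v)$. I would then use that $\varphi_{r}$ is \emph{strictly} contractive between any two distinct points of $\{\,|z|>r\,\}$ (distinct points on a common ray are identified, and points at a common radius $>r$ have their distance scaled down by the factor $r/|z|<1$), and that $\varphi_{L}$ is strictly contractive between any two distinct points of $H_{+}^{L}$, to conclude that $\omega$ must be carried, up to a polar set, by $\{\,|z|\le r\,\}$ (respectively by $L\cup H_{-}^{L}$); a concluding potential-theoretic step — a positive-capacity subset of $K$ lying outside the region on which $\varphi$ is the identity must carry positive equilibrium mass — then yields $\operatorname*{cap}(K\setminus\varphi_{r}(K))=\operatorname*{cap}(K\cap\{\,|z|>r\,\})=0$, and similarly for $\varphi_{L}$. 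I expect this last item to be the main obstacle: converting the almost-everywhere isometry statement into a clean statement about where the \emph{capacity} of $K$ is located requires care with the fine structure of equilibrium measures — atoms, irregular points, and especially the mixed pairs in which one point lies inside and the other outside the fixed region of $\varphi$ — and it is precisely this refinement that is worked out in \cite{Pommerenke92}.
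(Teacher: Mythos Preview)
The paper does not actually prove Lemma~\ref{l111c}; it is stated with the parenthetical citation to \cite{Pommerenke92}, Chapter~9.3, and no proof environment follows. So there is no argument in the paper to compare your proposal against --- the author simply imports the result.

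Your proof of the non-strict inequalities is essentially correct, but one step deserves a warning. You write that ``Lipschitz maps do not destroy polarity'' in order to pass from $p(\mu;\varphi(v))\ge\gamma$ for quasi every $v\in K$ to $p(\mu;\cdot)\ge\gamma$ quasi everywhere on $K'=\varphi(K)$. That statement is true, but the standard way to prove it is precisely the inequality $\operatorname*{cap}(\varphi(E))\le\operatorname*{cap}(E)$ for $1$-Lipschitz $\varphi$ --- so as written the argument is circular. The clean fix is to bypass potentials altogether and use the Fekete--transfinite-diameter description
\[
\operatorname*{cap}(K)=\lim_{n\to\infty}\max_{z_{1},\ldots,z_{n}\in K}\;\prod_{i<j}|z_{i}-z_{j}|^{2/n(n-1)},
\]
from which $\operatorname*{cap}(\varphi(K))\le\operatorname*{cap}(K)$ for any $1$-Lipschitz $\varphi$ (and hence also polarity preservation) is immediate. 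Once that is in hand, your Frostman chain
\[
\gamma\le\int p(\mu;\cdot)\,d\omega'=\int p(\omega';\cdot)\,d\mu\le-\log\operatorname*{cap}(K')
\]
goes through without further trouble, and your Lipschitz verifications for $\varphi_{r}$ and $\varphi_{L}$ are fine.

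For the strict part, your outline is in the right direction and you are honest about the remaining work: from equality one does get $|\varphi(u)-\varphi(v)|=|u-v|$ for $\omega\times\omega$-almost every pair, and the specific geometry of $\varphi_{r}$ and $\varphi_{L}$ then forces $\omega$ to live (up to a polar set) on the fixed set of the map. The last step --- that a subset of $K$ of positive capacity outside the fixed region must carry positive $\omega$-mass --- is exactly the kind of refinement one finds in the cited chapter of \cite{Pommerenke92}, so deferring to that reference here is consistent with what the paper itself does.
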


The capacity of a set depends only on the outer boundary of this set, which
will become clear from the next definition and the follow-on lemma.\smallskip

\begin{definition}
\label{d111b}For a bounded set $S\subset\mathbb{C}$ the polynomial-convex hull
$\widehat{S}$ (also denoted by $\operatorname*{Pc}(S)$) is defined as the
union of $\overline{S}$ with all bounded components of $\overline{\mathbb{C}%
}\setminus\overline{S}$. The set $\partial\widehat{S}$ is call the outer
boundary, and $\Omega_{S}=\overline{\mathbb{C}}\setminus\widehat{S}$ the outer
domain of $S$. A compact set $K\subset\mathbb{C}$ is call polynomial-convex if
$K=\widehat{K}$.\smallskip
\end{definition}

The notion 'polynomial-convex hull' hints to the possibility to define this
hull by polynomial inequalities. We have
\begin{equation}
\widehat{S}=\{\text{ }z\in\mathbb{C}\text{\ }|\text{ }|p(z)|\leq
||p||_{S}\text{\ for all }p\in\mathcal{P}\text{\ }\}\label{f111e1}%
\end{equation}
where $\mathcal{P}$ denotes the set of all polynomials and $||\cdot||_{S}$ the
uniform norm on $S$.\smallskip

\begin{lemma}
\label{l111d}(cf. \cite{Ransford}, Theorem 5.1.2) For all compact sets
$K\subset\mathbb{C}$ we have
\begin{equation}
\operatorname*{cap}(K)=\operatorname*{cap}(\widehat{K}).\label{f111e2}%
\end{equation}

\end{lemma}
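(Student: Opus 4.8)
The statement to prove is Lemma \ref{l111d}: $\operatorname*{cap}(K) = \operatorname*{cap}(\widehat{K})$ for all compact $K \subset \mathbb{C}$.

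The plan is to reduce the identity to the well-known monotonicity of capacity together with the characterization of the polynomial-convex hull by polynomial inequalities recorded in \eqref{f111e1}. First I would note that since $K \subset \widehat{K}$, monotonicity of the capacity (cited just before Lemma \ref{l111a}) immediately gives $\operatorname*{cap}(K) \leq \operatorname*{cap}(\widehat{K})$, so only the reverse inequality $\operatorname*{cap}(\widehat{K}) \leq \operatorname*{cap}(K)$ needs work.

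For the reverse inequality the cleanest route uses the equilibrium potential. Assume $\operatorname*{cap}(K) > 0$ (the case $\operatorname*{cap}(K) = 0$ is handled separately: then $K$ is polar, and since $\overline{\mathbb{C}} \setminus K$ is connected for a polar compact set — a polar set cannot disconnect the plane — one has $\widehat{K} = K$ and there is nothing to prove; alternatively invoke Lemma \ref{l111b}). Let $\omega_K$ be the equilibrium distribution of $K$ and $p(\omega_K; \cdot)$ its logarithmic potential; set $V(K) := -\log\operatorname*{cap}(K)$ for the Robin constant, so that $p(\omega_K; z) \leq V(K)$ quasi-everywhere on $\mathbb{C}$ by the Frostman-type properties (cf. the discussion in Subsection \ref{s1102}), with equality q.e. on $K$ and $p(\omega_K; z) < V(K)$ on the unbounded component of $\overline{\mathbb{C}} \setminus K$. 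The key observation is that $p(\omega_K; \cdot)$ is harmonic and bounded above by $V(K)$ on each bounded component $U$ of $\overline{\mathbb{C}} \setminus K$; since $\partial U \subset K$ and $p(\omega_K; \cdot) = V(K)$ q.e.\ on $\partial U$, the maximum principle for the superharmonic function $V(K) - p(\omega_K; \cdot)$ forces $p(\omega_K; z) \leq V(K)$ throughout $U$, hence throughout $\widehat{K}$. Now one applies the Principle of Domination / the characterization of the equilibrium potential: the measure $\omega_K$ is supported on $K \subset \widehat{K}$, is a probability measure, and its potential is bounded by $V(K)$ everywhere on $\widehat{K}$; comparing with the equilibrium potential of $\widehat{K}$ near infinity (where $p(\omega_K;z) = -\log|z| + V(K) + o(1)$) yields $V(\widehat{K}) \geq V(K)$, i.e.\ $\operatorname*{cap}(\widehat{K}) \leq \operatorname*{cap}(K)$. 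Combined with the easy direction this gives \eqref{f111e2}.

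An even shorter alternative I would mention (and which is essentially how \cite{Ransford} presents it) is to use \eqref{f111e1} directly: for any polynomial $p$, the maximum-modulus principle gives $\|p\|_{\widehat{K}} = \|p\|_K$ because every bounded component of $\overline{\mathbb{C}} \setminus K$ has its boundary in $K$; hence $K$ and $\widehat{K}$ have the same polynomial hull and, by the Fekete--Szeg\H{o} theorem identifying $\operatorname*{cap}$ with the transfinite diameter and with the Chebyshev constant $\lim_n \|T_n\|^{1/n}$ (the $n$-th Chebyshev polynomial norm), the equality $\|p\|_{\widehat{K}} = \|p\|_K$ for all $p$ forces equality of capacities. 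The main obstacle in writing this up rigorously is the careful handling of the q.e.\ exceptional set in the maximum-principle step — one must be sure that the set where $p(\omega_K; \cdot) = V(K)$ fails on $\partial U$ is polar and therefore negligible for the boundary maximum principle applied to bounded superharmonic functions — but since both \cite{Ransford} and \cite{SaTo} state the result in exactly this form, I would simply cite Theorem 5.1.2 of \cite{Ransford} and sketch the potential-theoretic argument above for completeness.
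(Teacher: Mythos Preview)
Your proposal is correct; the paper itself gives no proof of this lemma and simply defers to \cite{Ransford}, Theorem 5.1.2, which is exactly what you suggest doing at the end. Your second route via \eqref{f111e1} and the Fekete--Szeg\H{o} identification of capacity with the Chebyshev constant is clean and complete; in the first (potential-theoretic) route, be a little careful with the direction of the Frostman inequalities---the tidiest phrasing is that the equilibrium measure of any compact set is supported on its outer boundary $\partial\widehat{K}\subset K$, so $\omega_{\widehat K}$ is already a competitor for $K$ and the reverse inequality follows immediately.
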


A special property of polynomial-convex sets is the fact that their complement
is always a domain. The next lemma addresses a similar topic, but under
different circumstances.\smallskip

\begin{lemma}
\label{l111e}Let $S\subset\mathbb{C}$ be a set of capacity zero and
$D\subset\overline{\mathbb{C}}$ a domain, then $D\setminus S$ is connected. If
in addition $S$ is assumed to be closed in $D$, then $D\setminus S$ is a domain.
\end{lemma}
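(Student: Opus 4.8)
\textbf{Proof plan for Lemma \ref{l111e}.}

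The plan is to prove the two assertions separately, both by contradiction and both relying on the fact that a set of capacity zero is very ``thin'' — in particular totally disconnected, of planar Lebesgue measure zero, and removable in a strong sense. First I would establish the connectedness of $D\setminus S$. Suppose, for contradiction, that $D\setminus S$ is disconnected. Since $D$ is open, $D\setminus S$ need not be open (when $S$ is not closed in $D$), so one must be slightly careful; the cleanest route is to produce two nonempty relatively open subsets of $D\setminus S$ partitioning it, pick points $z_{0},z_{1}$ one in each piece, join them by an arc $\gamma\subset D$ (possible since $D$ is a domain, hence arcwise connected), and then argue that $\gamma$ cannot be covered by a set of capacity zero in a way that disconnects the two points. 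Concretely: a set of capacity zero has planar measure zero (Lemma \ref{l111a}(i)) and contains no continuum (Lemma \ref{l111a}(ii) forces $\operatorname*{diam}=0$ for any subcontinuum of $S$, so $S$ is totally disconnected). The key classical fact I would invoke is that a totally disconnected compact set in the plane cannot separate $\mathbb{C}$ — more precisely, if $E$ is a compact totally disconnected set, then $\mathbb{C}\setminus E$ is connected. Applying this after replacing $S$ by a suitable compact piece $S\cap\overline{U}$ lying inside a small disk $U\subset D$ that meets both alleged components of $D\setminus S$, one gets that $U\setminus S$ is connected, and by pushing this local connectivity along a chain of overlapping disks covering $\gamma$ one connects $z_{0}$ to $z_{1}$ within $D\setminus S$, contradicting the separation. (One should also use Lemma \ref{l111b} to reduce to the compact case, since capacity zero is preserved under taking closures relative to $D$ only when $S$ is closed in $D$; in general one exhausts $D$ by compact subsets and uses that a countable union of capacity-zero sets has capacity zero.)

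For the second assertion, assume in addition that $S$ is closed in $D$. Then $D\setminus S$ is open, and it is connected by the first part, hence it is a domain. This step is essentially immediate once the first part is in hand: an open connected subset of $\overline{\mathbb{C}}$ is by definition a domain, so there is nothing further to prove beyond observing that $S$ closed in $D$ makes $D\setminus S$ open.

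The main obstacle is the topological separation fact: justifying that a compact set of capacity zero (equivalently, totally disconnected compact planar set) does not disconnect the plane, and then transferring this from the plane to the domain $D$ without the hypothesis that $S$ is closed in $D$. The cleanest self-contained argument for the planar separation statement uses that a capacity-zero compact set $E$ is removable for bounded harmonic (indeed for continuous-and-bounded) functions, so any locally constant function on $U\setminus E$ extends continuously, hence is constant on all of $U$; I would phrase it this way to stay within potential theory rather than appealing to dimension-theoretic results about totally disconnected sets. The bookkeeping for the non-closed case — replacing $S$ by $\bigcup_{n}(S\cap K_{n})$ for an exhaustion $K_{n}\nearrow D$ by compacta and invoking countable subadditivity of capacity zero (Lemma \ref{l111b}) — is routine but needs to be written carefully so that the arc $\gamma$ joining $z_{0}$ and $z_{1}$, being compact, lies in some $K_{n}$ and meets $S$ only in the compact piece $S\cap K_{n}$.
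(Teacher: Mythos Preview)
Your approach is correct in outline and genuinely different from the paper's. The paper also argues by contradiction and also joins points $z_{1}\in O_{1}$, $z_{2}\in O_{2}$ by an arc $\gamma_{0}\subset D$, but instead of invoking removability or the non-separation property of totally disconnected compact sets, it extends $\gamma_{0}$ to a Jordan curve $\gamma_{1}$ and a surrounding ring domain $R$, and then applies its own Lemma~\ref{l91c}: every Jordan curve in $R$ separating the two complementary components must hit the separating set, so Lemma~\ref{l91c} produces a non-degenerate continuum $V$ inside that set, whence $\operatorname*{cap}(S)\geq\operatorname*{cap}(V)>0$ by Lemma~\ref{l111a}(ii). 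Your route via removability of polar sets for bounded harmonic functions (applied to the characteristic function of a component) is more classical and arguably cleaner if one is willing to cite that removability fact; the paper's route stays entirely within tools it has already built for other purposes.

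One point where your bookkeeping is imprecise: in the non-closed case you propose to work with $S\cap\overline{U}$ or $S\cap K_{n}$, but these need not be compact when $S$ is not closed in $D$, so the removability theorem does not apply directly to them. The paper sidesteps this by passing immediately from $S$ to the \emph{separating set} $\widetilde{K}:=D\setminus(O_{1}\cup O_{2})$, which is automatically closed in $D$ and still contained in $S$ (hence still of capacity zero); then $\widetilde{K}\cap\overline{U}$ is genuinely compact. You should do the same: once you have the partition $D\setminus S=A\cup B$ into relatively clopen pieces, work with $\overline{A}\cap\overline{B}$ (closures in $D$), which is closed in $D$, contained in $S$, and is what actually separates. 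With that adjustment your argument goes through.
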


\begin{proof}
The lemma has a certain degree of immediate evidence since sets of capacity
zero are totally disconnected. However, a formal proof as to take care of the
topological difficulties in one or the other way. We will use the tools
provided by Lemma \ref{l91c} in Subsection \ref{s91}, further above.

The assertion that $D\setminus S$ is connected will be proved indirectly, and
for this purpose we assume that the opposite holds true. Then there exist two
disjoint open sets $O_{1},O_{2}\subset\overline{\mathbb{C}}$ with $D\setminus
S\subset O_{1}\cup O_{2}$ and $O_{j}\cap(D\setminus S)\neq\emptyset$ for
$j=1,2$. The set $\widetilde{K}:=D\setminus(O_{1}\cup O_{2}) $ is closed in
$D$ and we have $\widetilde{K}\subset S$.

Let $z_{j}\in O_{j}\cap D$, $j=1,2$, be two points, and $\gamma_{0}$ a Jordan
arc connecting $z_{1}$ with $z_{2}$ in $D$, and let further $U\subset D$ be a
small, open, simply-connected neighborhood of $\gamma_{0}$ with $\overline
{U}\subset D$. The arc $\gamma_{0}$ can be extended to a closed Jordan curve
$\gamma_{1}$ in $\mathbb{C}$, and correspondingly $U$ can be extended to a
ring domain $R\subset\mathbb{C}$ with $\gamma_{1}\subset R$ separating the two
components $A_{1}$ and $A_{2}$ of $\overline{\mathbb{C}}\setminus R$. This
extension can be done in such a way that $R\cap\widetilde{K}=U\cap
\widetilde{K}$.

It is immediate that each Jordan curve $\gamma\subset R$ that separates
$A_{1}$ from $A_{2}$ has to intersect the compact set $K:=R\cap\widetilde{K}$,
for otherwise the two sets $O_{1}\cap D$ and $O_{2}\cap D$ would be connected.

After these preparations, we apply the tools offered in Lemma \ref{l91c},
which then shows that there exists a continuum $V\subset K$ which is not
reduced to a single point, and consequently we have
\begin{equation}
\operatorname{cap}(S)\geq\operatorname{cap}(\widetilde{K})\geq
\operatorname{cap}(V)>0\text{,}\label{f111f1}%
\end{equation}
which contradicts the assumption that $\operatorname{cap}(S)$.

If $S$ is closed in $D$, then $D\setminus S$ is open, and consequently it is a
domain.\medskip
\end{proof}

\subsection{\label{s1102}Logarithmic Potentials}

\qquad Let $\mu$ be a (Borel) measure with compact $\operatorname*{supp}%
\left(  \mu\right)  \subset\mathbb{C}$. The (logarithmic) potential of the
measure $\mu$ is defined as
\begin{equation}
p(\mu;z):=\int\log\frac{1}{|z-x|}d\mu(x).\label{f112a1}%
\end{equation}
It is a superharmonic function in $\mathbb{C}$, and it is continuous quasi
everywhere in $\mathbb{C}$ for every measure $\mu$ (cf. \cite{Landkof},
Chapter III, Theorem 3.6). In the fine topology it is even continuous
throughout $\mathbb{C}$, but in our investigations, the concept of fine
topology has not been used. We shall address subtle questions about continuity
only in connection with the Green function further below in Subsection
\ref{s1103}.\medskip

Let $\{\mu_{n}\}_{n\in\mathbb{N}}$ be a weakly convergent\ sequence of
measures with limit measure $\mu_{0}$; this is written as
\begin{equation}
\mu_{n}\overset{\ast}{\longrightarrow}\mu_{0}\text{ \ \ as \ \ }%
n\rightarrow\infty.\label{f112b1}%
\end{equation}
With the convergence (\ref{f112b1}) corresponds a specific asymptotic behavior
of the potentials $p(\mu_{n};\cdot)$, $n\in\mathbb{N}$, (cf. \cite{SaTo},
Chapter I.6.9), which is known as the Lower Envelope Theorem.

\begin{theorem}
[Lower Envelope Theorem]\label{t112a}If $\sup(\mu_{n})\subset K$ for all
$n\in\mathbb{N}$ with $K\subset\mathbb{C}$ compact, then from (\ref{f112b1})
it follows that
\begin{equation}
\liminf_{n\rightarrow\infty}p(\mu_{n};z)\geq p(\mu_{0};z)\label{f112b2}%
\end{equation}
for all $z\in\mathbb{C}$, and equality holds in (\ref{f112b2}) quasi
everywhere in $\mathbb{C}$.
\end{theorem}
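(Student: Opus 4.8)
The target statement is the classical Lower Envelope Theorem for logarithmic potentials: weak$^{\ast}$ convergence $\mu_{n}\overset{\ast}{\longrightarrow}\mu_{0}$ of measures supported in a fixed compact $K$ forces $\liminf_{n}p(\mu_{n};z)\geq p(\mu_{0};z)$ everywhere, with equality quasi everywhere. Since this is a known result (it is exactly Theorem I.6.9 of \cite{SaTo}), the natural plan is to cite it, but a self-contained proof runs as follows. The pointwise inequality (\ref{f112b2}) is the soft half: the kernel $(z,x)\mapsto\log\frac{1}{|z-x|}$ is lower semicontinuous on $\mathbb{C}\times K$ and bounded below there (because $K$ is compact, so $|z-x|$ is bounded above on $\{z\}\times K$), hence it can be written as an increasing limit of bounded continuous functions $\varphi_{m}(z,\cdot)$; for each fixed $m$, weak$^{\ast}$ convergence gives $\int\varphi_{m}(z,\cdot)\,d\mu_{n}\to\int\varphi_{m}(z,\cdot)\,d\mu_{0}$, so $\liminf_{n}p(\mu_{n};z)\geq\int\varphi_{m}(z,\cdot)\,d\mu_{0}$, and letting $m\to\infty$ with monotone convergence yields $\liminf_{n}p(\mu_{n};z)\geq p(\mu_{0};z)$.

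For the equality-quasi-everywhere half I would argue as follows. Set $u(z):=\liminf_{n}p(\mu_{n};z)$ and let $\widehat{u}$ be its lower-semicontinuous regularization (the \emph{lower envelope}, whence the name). One shows first that $\widehat{u}$ is superharmonic: each $p(\mu_{n};\cdot)$ is superharmonic, a standard Fatou-type argument on spherical averages shows the liminf of superharmonic functions has a superharmonic regularization, and the difference $u-\widehat{u}$ is supported on a polar set. Next one integrates: for any probability measure $\nu$ of finite energy with compact support, Fubini and the pointwise inequality already proved give, via the energy/mutual-energy pairing,
\begin{equation}
\int p(\mu_{0};z)\,d\nu(z)\leq\int\widehat{u}(z)\,d\nu(z)\leq\liminf_{n}\int p(\mu_{n};z)\,d\nu(z)=\liminf_{n}\int p(\nu;x)\,d\mu_{n}(x),
\end{equation}
and since $p(\nu;\cdot)$ is (quasi-)continuous and bounded on $K$ one passes the weak$^{\ast}$ limit through to get $\liminf_{n}\int p(\nu;x)\,d\mu_{n}(x)\leq\int p(\nu;x)\,d\mu_{0}(x)=\int p(\mu_{0};z)\,d\nu(z)$. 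Hence all the inequalities are equalities, so $\int(\widehat{u}-p(\mu_{0};\cdot))\,d\nu=0$ for every such $\nu$; as $\widehat{u}-p(\mu_{0};\cdot)\geq 0$ and this holds for a sufficiently rich family of test measures (e.g.\ normalized restrictions of planar Lebesgue measure, or equilibrium measures of arbitrary compacta), one concludes $\widehat{u}=p(\mu_{0};\cdot)$ quasi everywhere, and therefore $u=p(\mu_{0};\cdot)$ quasi everywhere.

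The main obstacle is the equality part, specifically the careful handling of polar exceptional sets: one must know that two potentials (or a potential and the superharmonic regularization $\widehat{u}$) which agree outside a polar set and satisfy the right domination in the energy pairing must agree quasi everywhere, which rests on the fact that a measure of finite energy puts no mass on polar sets (this is Lemma \ref{l112a} in the paper's own toolkit) together with the uniqueness/domination principle for logarithmic potentials (the Principle of Domination, available here as Theorem \ref{t112b}). The soft pointwise inequality is essentially routine lower-semicontinuity bookkeeping and should be dispatched quickly. In the write-up I would state the pointwise bound with the lsc-approximation argument in one short paragraph, then invoke finite-energy test measures and the domination principle to upgrade to equality q.e., and finally note that for the purposes of this paper one may simply reference \cite{SaTo}, Chapter I.6.9.
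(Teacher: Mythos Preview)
The paper does not prove this theorem at all: it is stated as a known result with the parenthetical reference ``(cf.\ \cite{SaTo}, Chapter I.6.9)'' immediately before the statement, and no proof environment follows. Your proposal already anticipates this --- you note at both the beginning and the end that citing \cite{SaTo} suffices --- so in that sense you and the paper agree exactly on the approach.

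Your self-contained sketch goes well beyond what the paper does and is broadly along the standard lines. One step to watch if you ever write it out in full: the passage ``since $p(\nu;\cdot)$ is (quasi-)continuous and bounded on $K$ one passes the weak$^{\ast}$ limit through to get $\liminf_{n}\int p(\nu;x)\,d\mu_{n}(x)\leq\int p(\nu;x)\,d\mu_{0}(x)$'' is delicate, because for a merely lower-semicontinuous integrand weak$^{\ast}$ convergence gives the \emph{opposite} inequality, and quasi-continuity of $p(\nu;\cdot)$ only helps if the $\mu_{n}$ themselves put no mass on the exceptional polar set, which is not assumed. The usual route (as in \cite{SaTo}) handles this via an approximation of $\nu$ by measures with continuous potentials, or via the Principle of Descent; your sketch would need one such device made explicit. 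For the purposes of this paper, however, the citation is all that is required.
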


On a compact set $K\subset\mathbb{C}$ of positive capacity, there uniquely
exists an equilibrium measure $\omega_{K}$ (cf. \cite{Ransford}, Chapter 3.3),
which is the probability measure on $K$ that minimizes the energy
(\ref{f111a2}). Its potential has a typical behavior on $K$ (cf.
\cite{Ransford}, Theorem 3.3.4), we have
\begin{equation}
p(\omega_{K};z)\left\{
\begin{array}
[c]{lll}%
=-\log\operatorname*{cap}\left(  K\right)  \smallskip & \text{ \ for quasi
every} & z\in\widehat{K}\\
>-\log\operatorname*{cap}\left(  K\right)  & \text{ \ for all \ } & z\in
\Omega_{K},
\end{array}
\right. \label{f112c1}%
\end{equation}
where $\Omega_{K}=\overline{\mathbb{C}}\setminus\widehat{K}$ is the outer
domain and $\widehat{K}$ its polynomial-convex hull of $K$. Both objects have
been introduced in Definition \ref{d111b}.$\smallskip$

In potential theory a special role is played by measures of finite energy,
i.e., measures $\mu$ with $I(\mu)<\infty$ and $I(\cdot)$ defined by
(\ref{f111a1}). For instance, we have the following result (\cite{Ransford},
Theorem 3.2.3).

\begin{lemma}
\label{l112a}For any measure $\mu$ of finite energy and any bounded measurable
set $S\subset\mathbb{C}$ with $\operatorname*{cap}\left(  S\right)  =0$, we
have $\mu\left(  S\right)  =0$.

The equilibrium measure $\omega_{K}$ of a compact set $K\subset\mathbb{C}$
with $\operatorname*{cap}\left(  K\right)  >0$ is of finite energy.
\end{lemma}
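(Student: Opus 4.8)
The plan is to read both assertions off the variational definition of capacity in (\ref{f111a2}). Since $\operatorname*{cap}(K)=\exp(-\inf_\mu I(\mu))$, with the infimum over probability measures carried by $K$, one has the dictionary: $\operatorname*{cap}(K)=0$ exactly when every probability measure supported in $K$ has infinite energy $I(\cdot)$, while $\operatorname*{cap}(K)>0$ exactly when some probability measure on $K$ has finite energy (and then the energy-minimizer $\omega_K$ exists and satisfies $I(\omega_K)=-\log\operatorname*{cap}(K)$, cf. \cite{Ransford}, Chapter 3.3). Granting this dictionary, the second assertion is immediate: if $\operatorname*{cap}(K)>0$ then $I(\omega_K)=-\log\operatorname*{cap}(K)<\infty$, and since $K$ is bounded, say $K\subset\{|z|\le R\}$, the energy kernel obeys $\log\frac{1}{|z-v|}\ge-\log(2R)$ on $K\times K$, so $I(\omega_K)\ge-\log(2R)>-\infty$ as well; thus $\omega_K$ has finite energy.

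For the first assertion I would argue by contradiction: assume $\mu$ has finite energy, that $S$ is bounded and measurable with $\operatorname*{cap}(S)=0$, but $\mu(S)>0$. First I would reduce to a compact set: $\mu$ is a Radon measure (it has compact support), hence inner regular, so there is a compact $K\subset S$ with $\mu(K)>0$, and monotonicity of outer capacity gives $\operatorname*{cap}(K)=0$. Then I would normalize the restriction: put $\nu:=\mu|_K$ and $\widehat\nu:=\nu/\nu(K)$, a probability measure on $K$. The point to verify is that $\widehat\nu$ still has finite energy. Writing the kernel as $k=k^+-k^-$ with $k^-=\max(-\log\frac{1}{|z-v|},0)$ bounded on the (bounded) support of $\mu$, finiteness of $I(\mu)$ forces $\iint k^+\,d\mu\,d\mu<\infty$; since $0\le\nu\le\mu$, also $\iint k^+\,d\nu\,d\nu<\infty$, and the $k^-$-contribution is finite by boundedness, so $I(\nu)<\infty$ and hence $I(\widehat\nu)=I(\nu)/\nu(K)^2<\infty$. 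This contradicts $\operatorname*{cap}(K)=0$ via the dictionary, so $\mu(S)=0$.

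The only places requiring a little care are exactly these routine points: inner regularity of $\mu$ to descend to a compact subset, and the splitting of the energy kernel into a bounded negative part and a positive part that behaves monotonically under $\nu\le\mu$, so that finite energy of $\mu$ transfers to $\nu$. There is no genuine obstacle — the result is classical, being \cite{Ransford}, Theorem 3.2.3 — and if one wished to bypass the kernel splitting altogether one could instead cite that every polar set lies in a $G_\delta$ polar set and that finite-energy measures do not charge such sets; but the self-contained argument above needs nothing beyond (\ref{f111a2}).
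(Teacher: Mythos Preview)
Your argument is correct. Note, however, that the paper does not actually give a proof of this lemma: it is stated with a bare citation to \cite{Ransford}, Theorem~3.2.3, and treated as a standard background result from potential theory. What you have written is a clean, self-contained unpacking of precisely that classical argument --- the contradiction via inner regularity, restriction, and normalization for the first assertion, and the identity $I(\omega_K)=-\log\operatorname*{cap}(K)$ for the second --- so there is nothing to compare against beyond observing that your proof is the standard one the paper is implicitly invoking.
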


In potential theory, a number of basic properties are known as principles; a
first one has already been stated in Theorem \ref{t112a}. In our
investigations we have also needed the next one.

\begin{theorem}
[Principle of Domination]\label{t112b}Let $\mu_{1}$ and $\mu_{2}$ be two
(positive) measures with compact support in $\mathbb{C}$, let $\mu_{1}$ be of
finite energy, and let $c\in\mathbb{R}$ be a constant. If the inequality
\begin{equation}
p(\mu_{1};z)\leq p(\mu_{2};z)+c\label{f112g1}%
\end{equation}
holds true for $\mu_{1}$-almost every $z\in\mathbb{C}$, or if it holds true
for quasi every $z\in\operatorname*{supp}(\mu_{1})$, then inequality
(\ref{f112g1}) holds true for all $z\in\mathbb{C}$.
\end{theorem}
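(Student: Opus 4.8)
The final statement in the excerpt is the Principle of Domination (Theorem \ref{t112b}), a classical result of potential theory. My plan is to give the standard proof, which proceeds by a balayage (sweeping) argument combined with the maximum principle for superharmonic functions, and to handle the two hypotheses (the inequality holding $\mu_1$-a.e., versus holding q.e. on $\operatorname{supp}(\mu_1)$) in a unified way by reducing the first case to the second.

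First I would observe that, since $\mu_1$ is of finite energy, Lemma \ref{l112a} gives $\mu_1(S)=0$ for every Borel set $S$ of capacity zero. Hence if (\ref{f112g1}) holds quasi everywhere on $\operatorname{supp}(\mu_1)$, it in fact holds $\mu_1$-almost everywhere; so it suffices to treat the hypothesis ``(\ref{f112g1}) holds for $\mu_1$-almost every $z$''. The core of the argument is then: the function $u(z) := p(\mu_2;z) + c - p(\mu_1;z)$ is the difference of two superharmonic potentials, hence it is superharmonic on $\overline{\mathbb{C}}\setminus\operatorname{supp}(\mu_1)$ (where $p(\mu_1;\cdot)$ is harmonic) and, more importantly, $-p(\mu_1;\cdot)$ is subharmonic, so $u$ admits the usual domination/maximum estimates. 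The plan is to compare $p(\mu_1;\cdot)$ with the balayage (swept-out measure) $\widehat{\mu}_1$ of $\mu_1$ onto $\operatorname{supp}(\mu_1)$ — but since $\mu_1$ already sits on its support, the genuine step is to pass to the unbounded component or, more precisely, to invoke the maximum principle in the form: a function that is subharmonic off a compact set $K$, bounded above near infinity appropriately, and $\le 0$ quasi everywhere on $K$, is $\le 0$ everywhere.

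Concretely, the key steps in order: (1) reduce to the $\mu_1$-a.e. hypothesis as above; (2) set $v(z) := p(\mu_1;z) - p(\mu_2;z) - c$ and note $v$ is $\delta$-subharmonic, harmonic near $\infty$ up to a logarithmic term that cancels if the total masses match and otherwise has the right sign, and is defined q.e.; (3) by the a.e.\ hypothesis and Lemma \ref{l112a}, $v \le 0$ $\mu_1$-a.e.; (4) use the representation of $p(\mu_1;\cdot)$ via its own balayage onto $\operatorname{supp}(\mu_1)$ together with Fubini to write $\int v \, d\mu_1 = \iint (\ldots) \le 0$, or more cleanly, apply the classical ``maximum principle for potentials'' (Maria–Frostman): since $p(\mu_1;\cdot) \le \sup_{\operatorname{supp}(\mu_1)} p(\mu_1;\cdot)$ everywhere, and here the relevant bound $p(\mu_1;z)\le p(\mu_2;z)+c$ holds on a set carrying $\mu_1$ up to capacity zero, the inequality propagates to all of $\mathbb{C}$ by the fact that $-p(\mu_1;\cdot)$ is subharmonic and its boundary values on $\partial(\mathbb{C}\setminus\operatorname{supp}\mu_1)$ dominate it; (5) conclude (\ref{f112g1}) holds for all $z$.

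The main obstacle — and the point requiring genuine care rather than routine verification — is step (4): promoting an inequality that holds only quasi everywhere (not literally everywhere) on $\operatorname{supp}(\mu_1)$ to an inequality valid at every point of $\mathbb{C}$. This is exactly where the finite-energy hypothesis on $\mu_1$ is indispensable: it guarantees, via Lemma \ref{l112a}, that the exceptional polar set is $\mu_1$-null, and it guarantees that $p(\mu_1;\cdot)$ is finite $\mu_1$-a.e., so that the energy integral $\int v\,d\mu_1$ makes sense and the Fubini manipulation is legitimate. Without finite energy the swept measure can fail to ``see'' the potential correctly on the polar exceptional set and the conclusion breaks down. I would therefore structure the write-up so that the finite-energy hypothesis is invoked precisely twice — once to null out the polar set, once to justify integrability in the balayage/Fubini step — and cite \cite{Ransford} or \cite{SaTo} for the underlying maximum principle for potentials that does the final propagation.
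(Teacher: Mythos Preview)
Your reduction in step (1) --- using Lemma \ref{l112a} and the finite-energy hypothesis on $\mu_1$ to pass from ``quasi everywhere on $\operatorname{supp}(\mu_1)$'' to ``$\mu_1$-almost everywhere'' --- is exactly what the paper does. Beyond that, the paper does not argue further: it simply cites \cite{SaTo}, Theorem~II.3.2, for the $\mu_1$-a.e.\ case. So on the one step the paper actually carries out, you match it; the remainder of your proposal is an attempt to reprove the cited result rather than a comparison point.

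That sketch, however, has a genuine gap at step (4). You offer two alternatives without committing to either, and the Maria--Frostman maximum principle as usually stated (bounding $p(\mu_1;\cdot)$ by its supremum over $\operatorname{supp}(\mu_1)$, a \emph{constant}) does not directly yield domination by the \emph{varying} function $p(\mu_2;\cdot)+c$. The clean argument --- and the one actually in \cite{SaTo} --- is to set $u=p(\mu_2;\cdot)+c-p(\mu_1;\cdot)$, observe that $u$ is superharmonic on $\mathbb{C}\setminus\operatorname{supp}(\mu_1)$, check $\liminf u\ge 0$ quasi everywhere on the boundary and at $\infty$, and invoke the generalized minimum principle for superharmonic functions. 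One more caution: your parenthetical in step (2), that the logarithmic term at infinity ``otherwise has the right sign'', is not correct in general. The sign is right only when $\|\mu_2\|\le\|\mu_1\|$; without that the conclusion can fail near $\infty$ (take $\mu_1$ and $\mu_2$ both uniform on $\partial\mathbb{D}$ with masses $1$ and $2$ and $c=0$). Theorem II.3.2 in \cite{SaTo} carries precisely this mass hypothesis; the paper's statement omits it, but in its applications the measures are equilibrium measures of equal mass, so the issue does not arise there.
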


\begin{proof}
The theorem has been proved in \cite{SaTo}, Theorem II.3.2, under the
assumption that (\ref{f112g1}) is satisfied $\mu_{1}$-almost everywhere.

If (\ref{f112g1}) holds true for quasi every $z\in\operatorname*{supp}(\mu
_{1})$, then it follows from Lemma \ref{l112b} and from the assumption that
$\mu_{1}$ is of finite energy that inequality (\ref{f112g1}) holds true also
$\mu_{1}$-almost everywhere.\medskip
\end{proof}

The minimum of two potentials can again be represented by a logarithmic potential.

\begin{lemma}
\label{l112c}Let $\mu_{1}$ and $\mu_{2}$ be two (positive) measures, then
there exists a (positive) measure $\mu_{0}$ and a constant $r_{0}\in\mathbb{R}
$ such that
\begin{equation}
\min\left(  p(\mu_{1};\cdot),p(\mu_{2};\cdot)\right)  =r_{0}+p(\mu_{0}%
;\cdot)\label{f112e1}%
\end{equation}
with $\left\|  \mu_{0}\right\|  =\max(\left\|  \mu_{1}\right\|  ,\left\|
\mu_{2}\right\|  )$. If the two measures $\mu_{1}$ and $\mu_{2}$ are of finite
energy, then the same is true for $\mu_{0}$.
\end{lemma}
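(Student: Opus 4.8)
\textbf{Proof proposal for Lemma \ref{l112c}.}

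The plan is to build $\mu_0$ by a balayage (sweeping) argument localized to the region where $p(\mu_1;\cdot) < p(\mu_2;\cdot)$, and conversely, so that on each piece the minimum is already a potential and the two pieces glue together along the coincidence set. Concretely, first I would introduce the open set $O_1 := \{\, z \in \mathbb{C} \mid p(\mu_1;z) < p(\mu_2;z) \,\}$ and its counterpart $O_2 := \{\, z \in \mathbb{C} \mid p(\mu_2;z) < p(\mu_1;z) \,\}$; these are open because both potentials are continuous quasi-everywhere and, more to the point, lower semicontinuous everywhere, so the difference is upper semicontinuous on $O_1$ and lower semicontinuous on $O_2$ — one has to be a little careful here, but the standard fact is that $\{p(\mu_1;\cdot) < p(\mu_2;\cdot)\}$ differs from an open set by a polar set, and since we only need the final identity quasi-everywhere this is harmless. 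The coincidence set $S_0 := \overline{\{\, z \mid p(\mu_1;z) = p(\mu_2;z)\,\}}$ carries the gluing. On $O_1$ we have $\min(p(\mu_1;\cdot),p(\mu_2;\cdot)) = p(\mu_1;\cdot)$, and on $O_2$ it equals $p(\mu_2;\cdot)$.

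Next I would sweep $\mu_1$ out of $O_2$ and sweep $\mu_2$ out of $O_1$, using the balayage machinery (Definition \ref{d112a} and the pasting-of-potentials technique of Lemma \ref{l112e}, both in this subsection). Write $\widehat{\mu}_1$ for the balayage of $\mu_1$ onto $\overline{\mathbb{C}} \setminus O_2$ and $\widehat{\mu}_2$ for the balayage of $\mu_2$ onto $\overline{\mathbb{C}} \setminus O_1$; balayage preserves total mass and does not increase the potential, with equality quasi-everywhere off the swept-out region. One then checks that the function defined piecewise as $p(\mu_1;\cdot)$ on the closure of $O_1$, as $p(\mu_2;\cdot)$ on the closure of $O_2$, and as their common value on $S_0$, agrees quasi-everywhere with a single logarithmic potential $r_0 + p(\mu_0;\cdot)$ — this is exactly the situation handled by Lemma \ref{l112e}, since the two candidate potentials coincide quasi-everywhere on the common boundary $\partial O_1 \cap \partial O_2 \subset S_0$. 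The total mass bookkeeping gives $\|\mu_0\| = \max(\|\mu_1\|,\|\mu_2\|)$: indeed, whichever of $O_1,O_2$ contains a neighborhood of infinity determines the leading $\log|z|^{-1}$ coefficient of the minimum at infinity, which is minus the larger of the two masses, and the constant $r_0$ absorbs the discrepancy. For the finite-energy claim, if both $\mu_1,\mu_2$ are of finite energy, then so are their balayage measures (balayage does not increase energy; see the cited balayage results), and $\mu_0$ is obtained by restricting and combining these, hence also of finite energy by Lemma \ref{l112a} and additivity of the energy bound.

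The main obstacle I anticipate is the careful justification that the piecewise-defined function really is (quasi-everywhere) a genuine logarithmic potential of a positive measure, rather than just a difference of subharmonic functions: one needs to know that the Riesz mass of $\min(p(\mu_1;\cdot),p(\mu_2;\cdot))$ — which is a superharmonic function, being the minimum of two superharmonic functions, hence has a Riesz decomposition — is a finite positive measure with compact support and that the harmonic part is a constant. Finiteness and positivity of the mass are automatic from the Riesz representation theorem for superharmonic functions; compact support follows because outside a large disk both potentials equal $\|\mu_i\|\log|z|^{-1} + o(1)$ and the minimum is harmonic there once one subtracts the dominant logarithmic term; and the harmonic part being constant follows from the behavior at infinity together with Liouville. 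So in fact the cleanest route may be to bypass balayage entirely and invoke the Riesz decomposition of the superharmonic function $u := \min(p(\mu_1;\cdot),p(\mu_2;\cdot))$ directly: $u = c + p(\mu_0;\cdot)$ with $\mu_0 := -\frac{1}{2\pi}\Delta u \ge 0$, then read off $\|\mu_0\|$ from the expansion of $u$ at infinity, and finally note that the local mass $\mu_0$ near any point equals $\mu_1$ or $\mu_2$ there (away from $S_0$), which yields the finite-energy statement. I would present the Riesz-decomposition argument as the primary proof and mention the balayage viewpoint only as motivation, since the former is shorter and avoids the measure-zero subtleties about the sets $O_1,O_2$.
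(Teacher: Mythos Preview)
Your final recommended approach---observe that $\min(p(\mu_1;\cdot),p(\mu_2;\cdot))$ is superharmonic as the minimum of two superharmonic functions, apply the Riesz/Poisson--Jensen representation to write it as $r_0+p(\mu_0;\cdot)$, and read off $\|\mu_0\|=\max(\|\mu_1\|,\|\mu_2\|)$ from the behavior near infinity---is exactly the paper's (very short) proof. The balayage detour in your first two paragraphs is unnecessary, as you yourself conclude; note also that invoking Lemma~\ref{l112e} there would be out of logical order in the paper's presentation, and the sets $O_1,O_2$ need not be open without further work, so your instinct to discard that route is sound.
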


\begin{proof}
It is rather immediate that the minimum of two superharmonic functions is
again superharmonic. One has only to check the definition of superharmonicity.
The lemma then follows from the Poisson-Jensen Formula ( \cite{Ransford},
Theorem 4.5.1). The determination of $\left\Vert \mu_{0}\right\Vert $ follows
from a consideration of $\min(p(\mu_{1};\cdot),\,p(\mu_{2};\allowbreak\cdot))$
near infinity.\medskip
\end{proof}

A broad variety of manipulations is possible in the class of logarithmic
potentials if one allows signed measures $\sigma$ in (\ref{f112a1}).

A signed measure $\sigma$ is of finite energy, i.e., $I(\sigma)<\infty$, if
and only if each of its two components $\sigma_{+}$ and $\sigma_{-}$
($\sigma=\sigma_{+}-\sigma_{-}$, $\sigma_{+},\sigma_{-}\geq0$) is of finite energy.

In order to keep our notations simple, we speak of logarithmic potentials also
if there is an additive constant, as for instance, is the case on the
right-hand side of (\ref{f112e1}).\smallskip

\begin{lemma}
\label{l112d}Let the two potentials $p_{j}$, $j=1,2$, be given by
\begin{equation}
p_{j}=r_{j}+p(\sigma_{j};\cdot),\text{ \ \ \ }j=1,2,\label{f112f1}%
\end{equation}
with $r_{j}\in\mathbb{R}$ and $\sigma_{j}$, $j=1,2$, signed measures in
$\mathbb{C}$. The functions $p_{3}:=|p_{1}|$, $p_{4}:=\max(p_{1},0)$,
$p_{5}:=\min(p_{1},0)$, $p_{6}:=\max(p_{1},p_{2})$, and $p_{7}:=\min
(p_{1},p_{2})$ then have representations of the same form as in (\ref{f112f1})
with modified constants $r_{j}\in\mathbb{R}$ and signed measures $\sigma_{j}$,
$j=3,\ldots,7$. If the two measures $\sigma_{1}$ and $\sigma_{2}$ are of
finite energy, then the same is true for the five measures $\sigma_{3}%
,\ldots,\sigma_{7}$.
\end{lemma}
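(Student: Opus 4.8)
The plan is to reduce everything to two basic facts: first, that the minimum (equivalently the absolute value and the positive/negative parts) of superharmonic functions is superharmonic, and second, the Poisson--Jensen/Riesz decomposition, which says that a function which is the difference of two superharmonic functions of finite energy is again a (signed) logarithmic potential plus an additive constant. These are exactly the ingredients already used in the proof of Lemma \ref{l112c}, so the present lemma is really a systematic bookkeeping of sign manipulations applied to the representation (\ref{f112f1}).

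First I would handle $p_3:=|p_1|$. Write $|p_1|=\max(p_1,-p_1)=2\max(p_1,0)-p_1$, so it suffices to treat $p_4:=\max(p_1,0)$; then $p_5:=\min(p_1,0)=p_1-p_4$ and $p_3=p_4-p_5$ follow by linearity of the map $\sigma\mapsto p(\sigma;\cdot)$ on signed measures. For $p_4$, note that $p_1$ is the difference of the two superharmonic functions $r_1+p(\sigma_{1,+};\cdot)$ and $p(\sigma_{1,-};\cdot)$, hence is finely continuous and locally the difference of superharmonic functions (a $\delta$-subharmonic function); the constant function $0$ is harmonic; and the maximum of a $\delta$-subharmonic function with a harmonic function is again $\delta$-subharmonic, because near any point one can write it as $\max$ of two superharmonic functions minus a common superharmonic function. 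Being $\delta$-subharmonic with locally integrable Laplacian (in the distributional sense, controlled by the Riesz measures of $\sigma_1$ and by $0$), $p_4$ admits a Riesz representation $p_4=r_4+p(\sigma_4;\cdot)$ with $\sigma_4$ a signed measure of compact support; the constant $r_4$ is fixed by comparing behaviour at infinity, as in the last sentence of the proof of Lemma \ref{l112c}. The same argument, verbatim but with the harmonic comparison function $0$ replaced by the $\delta$-subharmonic $p_2$, gives $p_6:=\max(p_1,p_2)=r_6+p(\sigma_6;\cdot)$; then $p_7:=\min(p_1,p_2)=p_1+p_2-p_6$ is immediate. Alternatively one can cite Lemma \ref{l112c} directly for $\min(p(\mu_1;\cdot),p(\mu_2;\cdot))$ with positive measures and reduce the signed case to it after splitting into positive and negative parts; I would mention this as the shortest route and then note that it suffices to check it once for $p_4$ and $p_6$.

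For the finite-energy claim, I would argue that the Riesz measure of $\max(u,v)$ for two $\delta$-subharmonic functions is supported, up to the already present masses of the Riesz measures of $u$ and $v$, on the coincidence set $\{u=v\}$, and on that set the added (positive) mass of $\sigma_6$ is dominated by the sum of the negative parts of the Riesz measures of $u$ and $v$; since a measure dominated by a measure of finite energy is again of finite energy (a measure of finite energy charges no polar set and the energy integral is monotone under domination, cf. Lemma \ref{l112a} and the definiteness of the logarithmic kernel), each of $\sigma_3,\dots,\sigma_7$ is of finite energy once $\sigma_1,\sigma_2$ are. The combinations $\sigma_3=\sigma_4-\sigma_5$, $\sigma_5=\sigma_1-\sigma_4$, $\sigma_7=\sigma_1+\sigma_2-\sigma_6$ are finite sums of finite-energy measures, hence finite energy.

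The main obstacle is purely expository rather than mathematical: making the Riesz/Poisson--Jensen representation step rigorous for $\max$ of $\delta$-subharmonic functions without dragging in the full machinery of fine potential theory. The cleanest way around this, which I would adopt in the write-up, is to avoid $\delta$-subharmonic functions entirely and instead split each $\sigma_j$ into its positive and negative parts, reducing every operation to $\min$ and $\max$ of genuine (super)harmonic logarithmic potentials of positive measures, where Lemma \ref{l112c} (and its harmonic-function analogue, proved the same way via Poisson--Jensen) applies directly; linearity of $\sigma\mapsto p(\sigma;\cdot)$ then reassembles the signed case. In that form the proof is only a few lines and uses nothing beyond what has already been established.
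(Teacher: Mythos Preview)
Your proposal is correct, and the route you ultimately recommend---splitting $\sigma_j$ into positive and negative parts and reducing everything to Lemma~\ref{l112c}---is exactly the paper's approach. The only organizational difference is that the paper treats $p_3=|p_1|$ first via the clean identity
\[
|p_1|=p_{1+}+p_{1-}-2\min(p_{1+},p_{1-}),
\]
which feeds $\min$ of two potentials of \emph{positive} measures directly into Lemma~\ref{l112c}; $p_4,p_5$ are then read off from $p_4=\tfrac12(p_1+|p_1|)$, $p_5=\tfrac12(p_1-|p_1|)$, and $p_6,p_7$ from $p_6=p_1+\max(p_2-p_1,0)$, $p_7=p_1+\min(p_2-p_1,0)$. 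The finite-energy claim is likewise dispatched by the corresponding clause in Lemma~\ref{l112c}, without the domination argument you sketch.
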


\begin{proof}
For the positive and negative components of the two measures $\sigma_{j}$,
$j=1,2$, we write $\sigma_{j+}$ and $\sigma_{j-}$, respectively, i.e., we have
$\sigma_{j}=\sigma_{j+}-\sigma_{j-}$.We consider the potentials $p_{j+}%
=r_{j}+p(\sigma_{j+};\cdot)$, $p_{j-}=p(\sigma_{j-};\cdot)$, $j=1,2 $. Since
we have
\begin{equation}
|p_{1}|=p_{1+}+p_{1-}-2\,\min(p_{1+},p_{1-}),\label{f112f2}%
\end{equation}
representation (\ref{f112f1}) for $p_{3}$ follows directly from Lemma
\ref{l112c}. The representations for $p_{4},\ldots,p_{7}$ follow then as
further consequences since we have $p_{4}=\frac{1}{2}(p_{1}+|p_{1}|)$,
$p_{5}=\frac{1}{2}(p_{1}-|p_{1}|)$, $p_{6}=p_{1}+\max(p_{2}-p_{1},0)$, and
$p_{7}=p_{1}+\min(p_{2}-p_{1},0)$. The conclusion about the finite energy of
the measures $\sigma_{3},\ldots,\sigma_{7}$ follows from the corresponding
conclusion in Lemma \ref{l112c}.\medskip
\end{proof}

An important tool for the work with logarithmic potentials is the balayage
technique (sweeping out of a measure) (cf. \cite{SaTo}, Chapter II.4). In case
of logarithmic potentials, the balayage out of an unbounded domain requires
special attention.

\begin{definition}
\label{d112a}Let $\mu$ be a measure in $\mathbb{C}$.

\begin{itemize}
\item[(i)] For a bounded domain $D\subset\mathbb{C}$ with $\operatorname*{cap}%
\left(  \partial D\right)  >0,$ by $\widehat{\mu}$\ we denote the balayage
measure resulting from sweeping the measure $\mu$ out of the domain $D$;
it\ has its support on $\partial D\cup\operatorname*{supp}(\mu)\setminus D$,
and it is defined by the relation
\begin{equation}
p(\widehat{\mu};z)=p(\mu;z)\label{f112c2}%
\end{equation}
for every $z\in\overline{\mathbb{C}}\setminus\overline{D}$ and for quasi every
$z\in\partial D$. The balayage measure $\widehat{\mu}$ is uniquely determined
by (\ref{f112c2}) if we assume in addition to (\ref{f112c2}) that
$\widehat{\mu}(Ir(\partial D))=0$, where $Ir(\partial D)$ is the set of
critical points of $\partial D$ that will be introduced in Definition
\ref{d113a} in the next subsection.

\item[(ii)] For an unbounded domain $D\subset\overline{\mathbb{C}}$ with
$\infty\in D$ and $\operatorname*{cap}\left(  \partial D\right)  >0,$ the
concept of balayage is the same as in (i) only that relation (\ref{f112c2})
now has the modified form
\begin{equation}
p(\widehat{\mu};z)=p(\mu;z)+c_{1}\label{f112c3}%
\end{equation}
with a constant $c_{1}>0$ given by
\begin{equation}
c_{1}=\int g_{D}(x,\infty)d\mu(x)\label{f112c4}%
\end{equation}
where $g_{D}$ is the Green function in $D$, which will be introduced in the
next subsection.\smallskip
\end{itemize}
\end{definition}

With the help of the balayage technique, we can introduce an additional method
for manipulating logarithmic potentials which in some sense complements the
methods considered in Lemma \ref{l112d}, and which has also been used further
above.\smallskip

\begin{lemma}
\label{l112e}Let the two logarithmic potentials $p_{j}$, $j=1,2$, be given in
the form (\ref{f112f1}) with signed measures $\sigma_{1}$ and $\sigma_{2}$
that are of finite energy, and let further $D\subset\overline{\mathbb{C}} $ be
a (possibly unbounded) open set with connected complement. We define a new
function $p_{0}$ in a piecewise manner by
\begin{equation}
p_{0}(z):=\left\{
\begin{array}
[c]{lcl}%
p_{1}(z) & \text{ \ for \ }\smallskip & z\in D,\\
p_{2}(z) & \text{ \ for \ } & z\in\overline{\mathbb{C}}\setminus D.
\end{array}
\right. \label{f112h1}%
\end{equation}
If we have
\begin{equation}
p_{1}(z)=p_{2}(z)\text{ \ \ \ for quasi every \ \ }z\in\partial
D,\label{f112h2}%
\end{equation}
then there exists a signed measure $\sigma_{0}$ in $\mathbb{C}$ and a constant
$r_{0}\in\mathbb{R}$ such that
\begin{equation}
p_{0}(z)=r_{0}+p(\sigma_{0};z)\text{ \ \ for quasi every \ \ }z\in
\mathbb{C}.\label{f112h3}%
\end{equation}
The measure $\sigma_{0}$ is of finite energy, and in (\ref{f112h3}) we have
equality everywhere in $\overline{\mathbb{C}}\setminus\partial D$. Further we
have
\begin{equation}
\sigma_{0}|_{D}=\sigma_{1}|_{D}\text{ \ \ and \ \ }\sigma_{0}|_{\mathbb{C}%
\setminus\overline{D}}=\sigma_{2}|_{\mathbb{C}\setminus\overline{D}%
}.\label{f112h4}%
\end{equation}

\end{lemma}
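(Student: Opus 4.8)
\textbf{Proof plan for Lemma \ref{l112e}.} The plan is to construct the measure $\sigma_0$ by the balayage technique of Definition \ref{d112a} and to verify the claimed properties directly. First I would dispose of the trivial reduction: by subtracting an affine harmonic term near infinity (or equivalently by the Poisson--Jensen formula) one may assume the additive constants $r_1,r_2$ are handled at the end, so the core content is the piecewise definition of $p_0$ across $\partial D$. The natural candidate for $\sigma_0$ is obtained as follows. In $D$ we want $p_0 = p_1$, so we must keep the part $\sigma_1|_D$ of $\sigma_1$ lying inside $D$; in $\overline{\mathbb{C}}\setminus\overline D$ we want $p_0=p_2$, so we keep $\sigma_2|_{\mathbb{C}\setminus\overline D}$. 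The remaining mass, namely $\sigma_1|_{\mathbb{C}\setminus D}$ (which contributes to $p_1$ but is not wanted in $D$ in its original location) and $\sigma_2|_{\overline D}$, must be replaced by a measure supported on $\partial D$ that reproduces the correct boundary behavior. Concretely, I would set
\begin{equation}
\sigma_0 := \sigma_1|_{D} + \sigma_2|_{\mathbb{C}\setminus\overline D} + \tau,
\end{equation}
where $\tau$ is the signed measure on $\partial D$ defined as the difference of two balayage measures: sweep $\sigma_1|_{\mathbb{C}\setminus D}$ onto $\partial D$ (using balayage out of the open set $\mathbb{C}\setminus\overline D$, with the unbounded-component correction of part (ii) of Definition \ref{d112a} if $\infty\notin D$) and sweep $\sigma_2|_{\overline D}$ onto $\partial D$ (balayage out of $D$), and take the signed difference of the two results.

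The key steps, in order, are: (1) verify $\sigma_0$ is well-defined and of finite energy — this follows because $\sigma_1,\sigma_2$ are of finite energy, the restriction of a finite-energy measure to a Borel set is again of finite energy, and balayage preserves finite energy (the balayage measure of a finite-energy measure has finite energy, as its potential is dominated; cf. Lemma \ref{l112a} and the domination principle Theorem \ref{t112b}); (2) compute $p(\sigma_0;\cdot)$ inside $D$: the balayage identity (\ref{f112c2}) (or (\ref{f112c3})) gives that $p(\tau;z)$ equals $p(\sigma_1|_{\mathbb{C}\setminus D};z) - p(\sigma_2|_{\overline D};z)$ (up to constants) for $z\in D$, so $p(\sigma_0;z) = p(\sigma_1;z) - (\text{constant}) = p_1(z) + (\text{constant})$ there; absorb the constant into $r_0$; (3) do the symmetric computation in $\mathbb{C}\setminus\overline D$ to get $p(\sigma_0;z) = p_2(z) + r_0'$; (4) reconcile the two constants: this is where hypothesis (\ref{f112h2}) enters — since $p_1 = p_2$ quasi everywhere on $\partial D$ and the potentials of finite-energy measures are continuous quasi-everywhere and agree on a set of positive capacity on $\partial D$, the two constants must coincide, giving a single $r_0$; (5) note that away from $\partial D$ the relevant potentials are harmonic, hence the "quasi everywhere" in (\ref{f112h3}) improves to equality everywhere on $\overline{\mathbb{C}}\setminus\partial D$; (6) read off (\ref{f112h4}) from the construction, since the parts of $\sigma_0$ in the open sets $D$ and $\mathbb{C}\setminus\overline D$ are literally $\sigma_1|_D$ and $\sigma_2|_{\mathbb{C}\setminus\overline D}$, the balayage part $\tau$ being carried by $\partial D$.

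The main obstacle I anticipate is step (4), the matching of the additive constants, together with the care needed about irregular boundary points. Balayage identities hold only quasi-everywhere on $\partial D$, and the unbounded-domain correction term (\ref{f112c4}) involves the Green function and must be tracked on both sides; one has to check that the total "leftover constant" from sweeping $\sigma_2|_{\overline D}$ out of $D$ and the one from sweeping $\sigma_1|_{\mathbb{C}\setminus D}$ out of $\mathbb{C}\setminus\overline D$ combine so that the single definition (\ref{f112h1}) is consistent. The clean way to handle this is to observe that $p_0$ as defined by (\ref{f112h1}) is subharmonic or superharmonic in a neighborhood of almost every boundary point precisely because of (\ref{f112h2}) (the two harmonic pieces glue continuously q.e. across $\partial D$), so $p_0$ is itself, up to an additive constant, the potential of a (signed) measure by the Poisson--Jensen / Riesz decomposition applied on a large disk; this gives existence of $\sigma_0$ abstractly, and then the explicit balayage formula above identifies $\sigma_0$ and yields (\ref{f112h4}). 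A minor secondary point is confirming that $D$ having connected complement is exactly what guarantees the balayage measures and the resulting $\sigma_0$ behave correctly at infinity and that no spurious component of mass escapes; this is routine given the hypotheses.
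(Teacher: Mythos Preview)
Your overall instinct to use balayage is right and matches the paper, but the explicit construction of $\tau$ has a gap that breaks step (2). You define $\tau$ as the difference of two balayage measures: the balayage of $\sigma_1|_{\mathbb{C}\setminus D}$ out of $\mathbb{C}\setminus\overline{D}$, minus the balayage of $\sigma_2|_{\overline{D}}$ out of $D$. You then claim that for $z\in D$ one has $p(\tau;z)=p(\sigma_1|_{\mathbb{C}\setminus D};z)-p(\sigma_2|_{\overline{D}};z)$ up to constants. The first term is fine, but the second is not: the balayage of $\sigma_2|_{\overline{D}}$ was performed \emph{out of} $D$, so Definition~\ref{d112a} gives the potential identity only in $\mathbb{C}\setminus\overline{D}$, not in $D$. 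Inside $D$ the potential of that swept measure is merely some harmonic function unrelated to $p(\sigma_2|_{\overline{D}};\cdot)$. Consequently $p(\sigma_0;\cdot)$ does not reduce to $p(\sigma_1;\cdot)+\text{const}$ in $D$, and the symmetric computation in step (3) fails for the same reason. Your fallback via Riesz/Poisson--Jensen also does not go through as stated, since $p_0$ is in general neither sub- nor superharmonic across $\partial D$ (the boundary part of $\sigma_0$ is genuinely signed).

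The paper avoids this by working with the \emph{difference} rather than with the two measures separately. Set $d:=p_1-p_2=r_1-r_2+p(\sigma_1-\sigma_2;\cdot)$ and perform a \emph{single} balayage, sweeping $\sigma_1-\sigma_2$ out of $\overline{\mathbb{C}}\setminus\overline{D}$ to obtain $\widehat{\sigma}_{12}$ supported in $\overline{D}$ and $\widehat{d}:=r_{12}+p(\widehat{\sigma}_{12};\cdot)$ with $\widehat{d}=d$ in $D$ and q.e.\ on $\partial D$. Hypothesis (\ref{f112h2}) gives $d=0$ q.e.\ on $\partial D$, hence $\widehat{d}=0$ there; since $\widehat{\sigma}_{12}$ lives in $\overline{D}$, harmonicity forces $\widehat{d}\equiv 0$ in $\overline{\mathbb{C}}\setminus\overline{D}$. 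Then $p_0=p_2+\widehat{d}$ everywhere off $\partial D$ and q.e.\ on it, so $\sigma_0:=\sigma_2+\widehat{\sigma}_{12}$ works, and (\ref{f112h4}) drops out because $\widehat{\sigma}_{12}|_D=(\sigma_1-\sigma_2)|_D$ and $\widehat{\sigma}_{12}|_{\mathbb{C}\setminus\overline{D}}=0$. The single-balayage-of-the-difference trick is exactly what makes the constants match automatically and sidesteps the two-sided compatibility problem you flagged in step (4).
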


\begin{proof}
The function
\begin{equation}
d:=p_{1}-p_{2}=r_{1}-r_{2}+p(\sigma_{1}-\sigma_{2};\cdot)\label{f112h5}%
\end{equation}
has the form (\ref{f112f1}) with defining measure $\sigma_{1}-\sigma_{2}$. If
we apply the balayage technique to the measure $\sigma_{1}-\sigma_{2}$ and
sweep this measure out of the domain $\overline{\mathbb{C}}\setminus
\overline{D}$, than this leads to a balayage measure in $\overline{D}$ which
we denote by $\widehat{\sigma}_{12}$. With an appropriately chosen constant
$r_{12}\in\mathbb{R}$, we have
\begin{align}
& \operatorname*{supp}(\widehat{\sigma}_{12})\subset\overline{D}\text{,
\ \ \ }\widehat{\sigma}_{12}|_{D}=(\sigma_{1}-\sigma_{2})|_{D}\text{,}%
\smallskip\label{f112h6a}\\
& \text{ \ \ }d(z)=r_{12}+p(\widehat{\sigma}_{12};z)\text{ \ \ for quasi every
}\ z\in\partial D\text{,}\label{f112h6b}%
\end{align}
and the inequality in (\ref{f112h6b}) holds also for all\ $z\in D$. The two
statements in (\ref{f112h6a}) and (\ref{f112h6b}) are a consequence of part
(ii) in Definition \ref{d112a}. We define
\begin{equation}
\widehat{d}:=r_{12}+p(\widehat{\sigma}_{12};\cdot).\label{f112h7}%
\end{equation}

Since logarithmic potentials are continuous quasi everywhere in $\mathbb{C}$
(cf. \cite{Landkof}, Chapter III, Theorem 3.6), it follows from (\ref{f112h2})
and (\ref{f112h5}) that $d(z)=0$ for quasi every $z\in\partial D$, and hence,
we deduce from (\ref{f112h6b}) that
\begin{equation}
\widehat{d}(z)=0\text{ \ \ \ for quasi every }\ z\in\partial D,\label{f112h8}%
\end{equation}
and because of (\ref{f112h6a}) further that
\begin{equation}
\widehat{d}(z)=0\text{ \ \ \ for all }\ z\in\overline{\mathbb{C}}%
\setminus\overline{D}.\label{f112h9}%
\end{equation}
From (\ref{f112h1}), (\ref{f112h6b}), (\ref{f112h7}), (\ref{f112h8}), and
(\ref{f112h9}),\ it then follows that
\begin{equation}
p_{0}(z)=p_{2}(z)+\widehat{d}(z)=r_{2}+r_{12}+p(\sigma_{2}+\widehat{\sigma
}_{12};z)\label{f112h10}%
\end{equation}
for all \ $z\in\overline{\mathbb{C}}\setminus\partial D$\ and for quasi
every\ $z\in\partial D$, which proves (\ref{f112h3}) if we set
\begin{equation}
\sigma_{0}:=\sigma_{2}+\widehat{\sigma}_{12}.\label{f112h11}%
\end{equation}
Since $\operatorname*{cap}(\overline{D})>0$ and since $\sigma_{1}-\sigma_{2}$
is of finite energy, it follows from (\ref{f112c3}) and (\ref{f112c4}) that
the measure $\widehat{\sigma}_{12}$\ is of finite energy, and consequently the
same is true for $\sigma_{2}+\widehat{\sigma}_{12}$. The identities in
(\ref{f112h4}) follow from (\ref{f112h6a}).\smallskip
\end{proof}

We close the present subsection with some estimates of the logarithmic energy
(\ref{f111a1}) associated with signed measures. It is important here that the
logarithmic kernel in (\ref{f111a1}) is positive definite for signed measures
$\sigma$ with $\operatorname*{supp}(\sigma)\subset K\subset\mathbb{C}$ if $K$
is a compact set with $\operatorname*{cap}(K)\leq1$. In the next lemma
estimates have been put together that are relevant in this connection.

\begin{lemma}
\label{l112b}(i) Let $K\subset\mathbb{C}$ be a compact set of positive
capacity. For all signed measures $\sigma$ with $\operatorname*{supp}%
(\sigma)\subset K$ we have
\begin{equation}
I(\sigma)\geq\sigma(K)^{2}\log\frac{1}{\operatorname*{cap}(K)}.\label{f112d1}%
\end{equation}

(ii) \ Let $\sigma$ be a signed measure in $\mathbb{C}$ with
\begin{equation}
\sigma(\mathbb{C})=0,\label{f112d2}%
\end{equation}
and let either $\operatorname*{supp}(\sigma)$ be a compact set or let $\sigma$
be a signed measure of finite energy, then we have
\begin{equation}
I(\sigma)\geq0,\label{f112d3}%
\end{equation}
and equality holds in (\ref{f112d3}) if, and only if, $\sigma=0$.
\end{lemma}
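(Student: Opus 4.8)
The statement to prove is Lemma \ref{l112b}, parts (i) and (ii), which are classical energy estimates for the logarithmic kernel. Let me sketch a plan.

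For part (i), the plan is to reduce to the equilibrium measure. Write $t:=\sigma(K)$; if $t=0$ the right-hand side is $0$ and we instead invoke part (ii) (or argue directly), so assume $t\neq 0$. Let $\omega_K$ be the equilibrium measure of $K$ and consider the signed measure $\tau:=\sigma-t\,\omega_K$, which satisfies $\tau(K)=0$. Expanding the energy bilinearly,
\begin{equation}
I(\sigma)=I(\tau)+2t\int p(\omega_K;z)\,d\tau(z)+t^2 I(\omega_K).\label{f112i1}
\end{equation}
Now $I(\omega_K)=\log(1/\operatorname{cap}(K))$ by definition of capacity, and since $p(\omega_K;\cdot)=-\log\operatorname{cap}(K)$ quasi everywhere on $\widehat K\supset\operatorname{supp}(\sigma)$ while $\sigma$ and $\omega_K$ are of finite energy (Lemma \ref{l112a}, which also guarantees $\sigma$ puts no mass on the polar exceptional set, using that a finite signed measure of finite energy has both components of finite energy), the cross term evaluates to $\int p(\omega_K;z)\,d\tau(z)=-\log\operatorname{cap}(K)\cdot\tau(K)=0$. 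Hence $I(\sigma)=I(\tau)+t^2\log(1/\operatorname{cap}(K))$, and it remains to show $I(\tau)\geq 0$ for the zero-total-mass signed measure $\tau$; but that is exactly part (ii). So part (i) follows from part (ii) plus the decomposition \eqref{f112i1}.

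For part (ii), the plan is the standard Fourier/superposition argument for positive definiteness of the logarithmic kernel on zero-charge measures. First reduce to the finite-energy case: if $\operatorname{supp}(\sigma)$ is compact, split $\sigma=\sigma_+-\sigma_-$; if either component has infinite energy one checks the statement is vacuous or handled by a truncation/approximation argument, so assume finite energy throughout. Then use the representation of $\log\frac{1}{|z-w|}$ as a superposition of potentials of the form $-\log\max(|z|,s)$, or more directly the identity
\begin{equation}
\log\frac{1}{|z-w|}=\int_0^\infty\left(\mathbf 1_{\{|z-w|<r\}}\frac{1}{r}-\mathbf 1_{\{r>1\}}\frac{1}{r}\right)dr\label{f112i2}
\end{equation}
wait—I should be careful here; the cleanest route is to use the known fact (e.g.\ \cite{Landkof}, or \cite{SaTo} Chapter I) that for a finite signed measure $\sigma$ with $\sigma(\mathbb C)=0$ and finite energy one has $I(\sigma)=\frac{1}{2\pi}\int_{\mathbb R^2}|\widehat{\sigma}(\xi)|^2\,|\xi|^{-2}\,d\xi\geq 0$, with equality iff $\widehat\sigma\equiv 0$ iff $\sigma=0$. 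Since the paper already cites \cite{Landkof} and \cite{SaTo} for potential theory, I would state this as the known positive-definiteness of the logarithmic kernel restricted to the subspace of signed measures of zero total charge, give the short derivation via the spectral/Fourier representation, and note that the zero-charge condition is precisely what removes the logarithmic singularity of $|\xi|^{-2}$ at $\xi=0$, making the integral meaningful. The equality case is then immediate.

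The main obstacle is the strictness in the equality case of part (ii) and the handling of measures that are merely compactly supported but a priori of infinite energy. For the former, the Fourier representation makes it clean: $I(\sigma)=0$ forces $\widehat\sigma=0$ a.e., hence $\sigma=0$; I would make sure to note that $\widehat\sigma$ is continuous (finite measure) so "a.e.\ zero" gives "identically zero". For the latter, I would observe that if $\operatorname{supp}(\sigma)$ is compact and $\sigma(\mathbb C)=0$ but $I(\sigma)=+\infty$, then \eqref{f112d3} holds trivially; the only substantive case is finite energy, where the Fourier argument applies. I would keep the write-up short by citing \cite{Landkof} and \cite{SaTo} for the positive-definiteness and focusing the original work on the reduction \eqref{f112i1} and the bookkeeping with the polar exceptional sets via Lemma \ref{l112a}.
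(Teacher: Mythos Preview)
Your approach to part (i) is the same as the paper's: decompose $\sigma=\sigma(K)\,\omega_K+\tau$ with $\tau(K)=0$, expand the energy bilinearly, show the cross term vanishes, and reduce to part (ii). For part (ii) the paper simply cites \cite{Landkof}, Theorem 1.6; your Fourier sketch is one standard proof of that result.

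There is one genuine difference in how the cross term $\int p(\omega_K;z)\,d\tau(z)$ is handled. You argue that $p(\omega_K;\cdot)=-\log\operatorname{cap}(K)$ quasi everywhere on $\widehat K$ and then invoke Lemma~\ref{l112a} to conclude that $\sigma$ puts no mass on the polar exceptional set. But the hypothesis of part (i) does not assume $\sigma$ has finite energy, so Lemma~\ref{l112a} is not directly available; and ``$I(\sigma)<\infty$'' does not by itself force both $\sigma_{+}$ and $\sigma_{-}$ to have finite energy. The paper avoids this issue by first treating the case where $K$ is regular: then $g_{\Omega_K}(z,\infty)=0$ for \emph{every} $z\in K$, there is no exceptional set, and the cross term vanishes for an arbitrary signed $\sigma$ supported on $K$. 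The general case is then obtained by approximating $K$ from the outside by regular compact sets $\widetilde K\supset K$ with $\operatorname{cap}(\widetilde K)\leq\operatorname{cap}(K)+\varepsilon$ and letting $\varepsilon\to 0$. Your argument can be patched either by this same approximation or by a more careful reduction to the finite-energy case, but as written that step has a small gap.
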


\begin{proof}
Part (ii) of the lemma has been proved in \cite{Landkof}, Theorem 1.6.

In a first step of the proof of part (i), we assume the set $K$ is regular
(cf. Definition \ref{d113a}, below). We set $a:=\sigma(K)$, and define
\begin{equation}
\sigma_{0}:=\sigma-a\,\omega_{K}\label{f112d4}%
\end{equation}
with $\omega_{K}$ the equilibrium measure on $K$. Consequently, we have
$\sigma_{0}(\mathbb{C})=0$. From (\ref{f111a1}) it follows that
\begin{equation}
I(\sigma)=I(\sigma_{0})+a^{2}I(\omega_{K})+2a\,I(\sigma_{0},\omega
_{K}),\label{f112d5}%
\end{equation}
where
\begin{equation}
I(\sigma_{0},\omega_{K})=\int\int\log\frac{1}{|z-v|}d\sigma_{0}(z)d\omega
_{K}(v)\label{f112d6}%
\end{equation}
is the mutual energy of the two measures $\sigma_{0}$ and $\omega_{K}$, which
in case of the equilibrium distribution $\omega_{K}$ can be expressed as
\begin{equation}
I(\sigma_{0},\omega_{K})=\int\left[  -g_{\Omega}(z,\infty)-\log
\operatorname*{cap}\left(  K\right)  \right]  d\sigma_{0}(z)\label{f112d7}%
\end{equation}
with the help of Lemma \ref{l113b}, below. In (\ref{f112d7}), $\Omega$ is the
outer domain of $K$.

From the assumption that $K$ is regular, it follows that $g_{\Omega}%
(z,\infty)=0$\ for all $z\in K$ (cf. the properties stated in (\ref{f113a1}),
further below). From $\sigma_{0}(\mathbb{C})=0$ and (\ref{f112d7}), it then
follows that $I(\sigma_{0},\omega_{K})=0$. From part (ii) we know that
$I(\sigma_{0})\geq0$, which together with (\ref{f112d5}) and (\ref{f111a2})
proves (\ref{f112d1}).

If the compact set $K\subset\mathbb{C}$ is not regular, then it can be
approximated from the outside by open sets (cf. \cite{Ransford}, Theorem
5.1.3). This implies that for any $\varepsilon>0$ there exists a compact set
$\widetilde{K}\subset\mathbb{C}$ with $\partial\widetilde{K}$ consisting of
piece-wise analytic arcs, $K\subset\operatorname*{Int}(\widetilde{K})$, and
$\operatorname*{cap}(\widetilde{K})\leq\operatorname*{cap}(K)+\varepsilon$.
Since $\varepsilon>0$ is arbitrary, (\ref{f112d1}) holds also true in the
non-regular case.$\medskip$
\end{proof}

\subsection{\label{s1103}The Green Function}

\qquad By $g_{D}(\cdot,w)$ we denote the Green function in a domain
$D\subset\overline{\mathbb{C}}$ with logarithmic singularity at $w\in D$ (for
a definition see \cite{Ransford}, Chapter 4.4, or \cite{SaTo}, Chapter I.4).
Somewhat different from the usual definitions, we assume that the Green
function $g_{D}(\cdot,w)$ is defined throughout $\overline{\mathbb{C}} $ and
also for domains $D\subset\overline{\mathbb{C}}$ with $\operatorname*{cap}%
\left(  \partial D\right)  =0$. If the domain $D$ has a boundary $\partial D$
of positive capacity, then for $w\in D$ we have
\begin{equation}
g_{D}(z,w)\left\{
\begin{array}
[c]{lll}%
=0\smallskip & \text{ \ for quasi every} & z\in\partial D\\
>0\smallskip & \text{ \ for all \ } & z\in D\\
=0 & \text{ \ for all \ } & z\in\overline{\mathbb{C}}\setminus\overline{D}.
\end{array}
\right. \label{f113a1}%
\end{equation}
If $\operatorname*{cap}\left(  \partial D\right)  =0$ and $w\in D$, then we
define $g_{D}(\cdot,w)\equiv\infty$.$\medskip$

Irregular points of $\partial D$ with respect to solutions of Dirichlet
problems in the domain $D\subset\overline{\mathbb{C}}$ have required special
attention at several places in our investigations. Irregular points are indeed
an interesting topic in potential theory. This type of points can be defined
in many different ways; one of the possibilities is based on the behavior of
the Green function $g_{D}(\cdot,w)$ on $\partial D$ (cf. \cite{Ransford},
Chapter 4.2). We use this approach in the next definition.\smallskip

\begin{definition}
\label{d113a}A point $z\in\partial D$ is irregular with respect to Dirichlet
problems in the domain $D$ (or short: it is an irregular point of $\partial
D$) if $g_{D}(z,w)>0$ for some $w\in D$. The set of all irregular points of
$\partial D$ is denoted by $Ir(\partial D)$.\smallskip
\end{definition}

It follows from the existence of the Riemann mapping function (see also
\cite{Ransford} Theorem 4.2.1) that if $D\subset\overline{\mathbb{C}}$ is a
simply connected domain and $\partial D$ is not reduced to a single point,
then $Ir(\partial D)=\emptyset$.\smallskip

Often we have had to deal with the outer domains $\Omega_{K}$ of a compact set
$K\subset\mathbb{C}$; the irregular points of $\partial\Omega_{K}$ are
elements of $K$. In the next definition we repeat certain aspects of
Definition \ref{d113a}, but with a refined and a partially new orientation.

\begin{definition}
\label{d113a2}Let $K\subset\mathbb{C}$ be a polynomial-convex set of positive
capacity with outer domain $\Omega_{K}$. By $Ir(K)\subset K$ we denote the set
$Ir(\partial\Omega_{K})$ of critical points. This set is broken down into the
two subsets
\begin{equation}
Ir_{I}(K):=Ir(K)\cap\overline{K\setminus Ir(K)}\text{ \ \ and \ }%
Ir_{II}(K):=Ir(K)\setminus(\overline{K\setminus Ir(K)}).\label{f113a2}%
\end{equation}
We further define the set of regular points of $K$ as $Rg(K):=K\setminus
Ir(K)$.

If $\operatorname*{cap}(K)=0$, then we defined $Ir_{II}(K):=Ir(K):=K$ and
$Ir_{I}(K):=Rg(K):=\emptyset$.\smallskip
\end{definition}

We note that the set $Rg(K)$ introduced in Definition \ref{d113a2} is more
comprehensive then the set $Rg(K)\cap\partial\Omega_{K}$ of regular points
with respect to solutions of Dirichlet problems in $\Omega_{K}$. An important
result in potential theory is Kellog's Theorem, which we state here in a
somewhat specialized and at the same time also extended version (cf.
\cite{Ransford}, Theorem 4.2.5 together with Theorem 4.4.9).\smallskip

\begin{lemma}
\label{l113a1}For a polynomial-convex set $K\subset\mathbb{C}$ we have
$\operatorname*{cap}(Ir(K))=0$, and the Green function $g_{\Omega}(\cdot
,w)$\ is continuous in $\mathbb{C}\setminus Ir_{I}(K)$ for every $w\in
\Omega=\Omega_{K}$.
\end{lemma}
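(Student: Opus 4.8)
\textbf{Plan of proof for Lemma \ref{l113a1} (Kellogg's Theorem, extended version).}

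The strategy is to deduce the two assertions from the classical form of Kellogg's theorem, which is cited in the statement as \cite{Ransford}, Theorem 4.2.5, together with the continuity statement for Green functions in \cite{Ransford}, Theorem 4.4.9. The first claim, $\operatorname*{cap}(Ir(K))=0$, is essentially the classical theorem itself once one identifies $Ir(K)$ with the set of irregular boundary points of $\Omega=\Omega_K$: by Definition \ref{d113a2}, $Ir(K)=Ir(\partial\Omega_K)$, and the classical Kellogg theorem asserts precisely that the set of irregular points of the boundary of a domain has capacity zero. So the only thing to check here is that the definitions match up and that $\Omega_K$ is a genuine domain, which it is because $K$ is assumed polynomial-convex (so $\overline{\mathbb{C}}\setminus K$ is connected; cf. the remark after Definition \ref{d111b}).

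The more delicate assertion is the continuity of $g_\Omega(\cdot,w)$ on $\mathbb{C}\setminus Ir_I(K)$. First I would recall that $g_\Omega(\cdot,w)$ is harmonic in $\Omega$, identically $0$ on $\mathbb{C}\setminus\overline{\Omega}=\operatorname*{Int}(K)$ (there is no such interior unless $K$ has positive area, but harmonicity there is trivial), and by Theorem 4.4.9 of \cite{Ransford} it is continuous at every regular point of $\partial\Omega_K$. Hence the only points where continuity could fail are the irregular points, i.e.\ points of $Ir(K)$. Now I would split $Ir(K)=Ir_I(K)\cup Ir_{II}(K)$ as in (\ref{f113a2}) and treat the two parts differently. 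For a point $z_0\in Ir_{II}(K)=Ir(K)\setminus\overline{K\setminus Ir(K)}$, there is an open neighborhood $U$ of $z_0$ with $U\cap K\subset Ir(K)$; since $\operatorname*{cap}(Ir(K))=0$ by the first part, $U\cap K$ has capacity zero, so by Lemma \ref{l111e} the set $U\setminus K$ is connected, and moreover $g_\Omega(\cdot,w)$, being harmonic and bounded on $U\setminus\{z_0,w\}$ off a polar set, extends across the polar set $U\cap K$ to a harmonic function on $U$ (using the removability of polar sets for bounded harmonic functions, \cite{Ransford} Theorem 3.6.1 or its analogue); in particular it is continuous at $z_0$. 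This is why $Ir_{II}(K)$ points do not obstruct continuity, and it is exactly why the lemma only excludes $Ir_I(K)$.

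The main obstacle, and the step I would spend the most care on, is the passage from ``continuous at regular boundary points'' (which is what the literature gives directly) to ``continuous on all of $\mathbb{C}\setminus Ir_I(K)$'', because one must be sure that the removable-singularity argument of the previous paragraph is legitimate: one needs that $g_\Omega(\cdot,w)$ is bounded near a point of $Ir_{II}(K)$, which follows because $g_\Omega(\cdot,w)\le g_{\overline{\mathbb C}\setminus\{z\}}(\cdot,w)$ locally is not quite right, so instead I would bound it by the Green function of a small disk complement or simply invoke that $g_\Omega(\cdot,w)-\log\frac{1}{|z-w|}$-type normalizations keep it bounded away from $w$, together with the maximum principle comparing $\Omega$ with a slightly larger domain. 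Once boundedness is in hand, the extension across the polar set $U\cap K$ is routine. I would then assemble: continuity holds at every point of $\Omega$ (harmonicity), at every point of $\mathbb{C}\setminus\overline{\Omega}$ (the function is locally $0$ there, using that such points are interior to $K$ and the value on $K$ minus irregular points is $0$), at every regular point of $\partial\Omega$ (Theorem 4.4.9), and at every point of $Ir_{II}(K)$ (removable singularity argument); the union of these four sets is exactly $\mathbb{C}\setminus Ir_I(K)$, completing the proof.
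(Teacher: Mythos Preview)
The paper does not actually supply a proof of Lemma~\ref{l113a1}; it is stated as a known result with the parenthetical citation ``(cf.\ \cite{Ransford}, Theorem 4.2.5 together with Theorem 4.4.9)'' and nothing more. Your proposal therefore goes well beyond what the paper does: you correctly identify that the first assertion is Kellogg's theorem verbatim (once $Ir(K)$ is recognised as $Ir(\partial\Omega_K)$), and you give a clean case decomposition for the continuity claim, handling points of $\Omega$, interior points of $K$, regular boundary points, and points of $Ir_{II}(K)$ separately. The removable-singularity argument for $Ir_{II}(K)$ is the right idea and is in fact what underlies assertion (iii) of the subsequent Lemma~\ref{l113a2}; boundedness of $g_\Omega(\cdot,w)$ away from $w$ is immediate from the potential representation in Lemma~\ref{l113b} (for $w=\infty$) or its analogue, so your hesitation there is unwarranted. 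In short, your argument is correct and is exactly the kind of unpacking of the two Ransford citations that the paper leaves to the reader.
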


As a consequence of the Lemmas \ref{l111a}, \ref{l112a}, and \ref{l113a1}, we
have the following results about irregular points and Green functions.

\begin{lemma}
\label{l113a2}Let $K\subset\mathbb{C}$ be a polynomial-convex set with outer
domain $\Omega=\Omega_{K}$.

\noindent(i) The set $Ir_{II}(K)$ is totally disconnected.

\noindent(ii) We have $\omega_{K}(Ir(K))=0$ for the equilibrium distribution
$\omega_{K}$ on $K$.

\noindent(iii) The Green function $g_{\Omega}(\cdot,\infty)$ is harmonic in
$(\Omega\setminus\{\infty\})\cup Ir_{II}(K)=\mathbb{C}\setminus\allowbreak
\overline{\operatorname*{Rg}(K)}$.

\noindent(iv) We have $\operatorname*{cap}(U_{z}\cap K)>0$ for every open
neighborhood $U_{z}\subset\mathbb{C}$ of a point $z\in\overline
{\operatorname*{Rg}(K)}$, and $\operatorname*{cap}(U_{z}\cap K)=0$ for every
open neighborhood $U_{z}\subset\mathbb{C}\setminus\overline{\operatorname*{Rg}%
(K)}$ of a point $z\in Ir_{II}(K)$.
\end{lemma}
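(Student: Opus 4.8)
\textbf{Proof plan for Lemma \ref{l113a2}.}
The plan is to derive each of the four assertions directly from the three preceding lemmas (Lemmas \ref{l111a}, \ref{l112a}, \ref{l113a1}) together with the definitions of $Ir(K)$, $Ir_{I}(K)$, $Ir_{II}(K)$, and $Rg(K)$ in Definition \ref{d113a2}. All four are local or measure-theoretic statements and none requires a long computation; the work consists mostly of unwinding definitions carefully.

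First I would prove (ii). Since $\omega_{K}$ is of finite energy by Lemma \ref{l112a}, and $Ir(K)$ is a (Borel, in fact even $G_{\delta}$ by the definition via $g_{\Omega}(\cdot,w)>0$) set with $\operatorname{cap}(Ir(K))=0$ by Lemma \ref{l113a1}, the first part of Lemma \ref{l112a} applied to $\mu=\omega_{K}$ and $S=Ir(K)$ gives $\omega_{K}(Ir(K))=0$ at once. Next, for (i), the claim $\operatorname{cap}(Ir(K))=0$ from Lemma \ref{l113a1} combined with part (ii) of Lemma \ref{l111a} (a continuum $V$ has $\operatorname{cap}(V)\ge\operatorname{diam}(V)/4>0$ unless $V$ is a single point) shows that $Ir(K)$, and in particular its subset $Ir_{II}(K)$, can contain no nondegenerate continuum, i.e.\ is totally disconnected. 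For (iii): by definition of $Ir(\partial\Omega_{K})$ in Definition \ref{d113a} and Kellog's Theorem as stated in Lemma \ref{l113a1}, $g_{\Omega}(\cdot,\infty)$ extends continuously across every point of $\partial\Omega_{K}\setminus Ir_{I}(K)$, and by Definition \ref{d113a} it vanishes identically on a neighborhood (in $\overline{\mathbb{C}}$) of each point of $\mathbb{C}\setminus\overline{Rg(K)}$ that lies outside $Ir_{I}(K)$; I would then check that a bounded harmonic (indeed, locally zero on one side, locally $g_\Omega$ on the other, matching continuously with vanishing boundary values on the polar set $Ir_{I}(K)$ which is removable for bounded harmonic functions) function is in fact harmonic throughout the open set $(\Omega\setminus\{\infty\})\cup Ir_{II}(K)$. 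The identification of this set with $\mathbb{C}\setminus\overline{Rg(K)}$ is a set-theoretic manipulation using $Rg(K)=K\setminus Ir(K)$, the decomposition $Ir(K)=Ir_{I}(K)\cup Ir_{II}(K)$, and the fact that $Ir_{I}(K)\subset\overline{Rg(K)}$ by its very definition in \eqref{f113a2}.

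Finally, (iv) follows from a contrapositive argument using Lemma \ref{l113a1} again. If $z\in\overline{Rg(K)}$ and some open neighborhood $U_z$ had $\operatorname{cap}(U_z\cap K)=0$, then $U_z\cap K\subset Ir_{II}(K)$ would follow (a point near which $K$ is polar cannot be a regular point, so it and the nearby part of $K$ would be irregular, and being not in the closure of $Rg(K)$ it would lie in $Ir_{II}(K)$), contradicting $z\in\overline{Rg(K)}$; the case $z\in Ir_{II}(K)$ is immediate from $Ir_{II}(K)=Ir(K)\setminus\overline{Rg(K)}$ together with $\operatorname{cap}(Ir(K))=0$, which forces $\operatorname{cap}(U_z\cap K)=0$ whenever $U_z\cap\overline{Rg(K)}=\emptyset$ since then $U_z\cap K\subset Ir(K)$. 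The main obstacle, such as it is, will be assertion (iii): one has to be careful about the removable-singularity step, namely that the polar set $Ir_{I}(K)$ (on which the extended function has boundary value $0$ and across which, a priori, only continuity is known) does not actually carry any harmonic mass — this is where one invokes that a bounded function harmonic off a polar set and continuous across it is harmonic there, which is standard but must be cited correctly; all the other steps are bookkeeping with the definitions.
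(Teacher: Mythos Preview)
Your approach is essentially that of the paper, which disposes of (i)--(iii) as ``rather immediate'' consequences of Lemmas \ref{l111a}, \ref{l112a}, \ref{l113a1} and cites Ransford's Theorems 4.2.3 and 4.2.4 for (iv). Your arguments for (i) and (ii) are clean and correct.

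Two points deserve attention. First, your sketch for (iii) contains a misstatement: the Green function does \emph{not} vanish on a neighborhood of a point of $Ir_{II}(K)$ --- by the very definition of irregular points it is strictly positive there. The cleanest route is to bypass the removable-singularity discussion entirely: once you have (ii), the representation $g_{\Omega}(\cdot,\infty)=-p(\omega_{K};\cdot)-\log\operatorname{cap}(K)$ from Lemma \ref{l113b} shows harmonicity on any open set where $\omega_{K}$ has no mass; since $\omega_{K}(Ir(K))=0$ and every point of $Ir_{II}(K)$ has a neighborhood $U$ with $U\cap K\subset Ir(K)$, harmonicity on $\mathbb{C}\setminus\overline{Rg(K)}$ follows at once. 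Second, in the first direction of (iv) you invoke ``a point near which $K$ is polar cannot be a regular point.'' This is true, but it is not a consequence of the three lemmas you list --- it is precisely the content of the Ransford theorems the paper cites (local thinness criteria for irregularity). Your contrapositive works, but you should acknowledge that this step imports an external characterization rather than following from Lemmas \ref{l111a}--\ref{l113a1} alone. The second direction of (iv) is fine as you wrote it.
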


\begin{proof}
The first three assertions are rather immediate. Assertion (iv) follows from
\cite{Ransford}, Theorem 4.2.3 and Theorem 4.2.4.\medskip
\end{proof}

A connection between logarithmic potentials and Green functions is given by
the representation formula in the next lemma (cf. \cite{Ransford}, Theorem
4.4.7 together with Theorem 5.2.1).

\begin{lemma}
\label{l113b}Let $K\subset\mathbb{C}$ be a compact set with
$\operatorname*{cap}\left(  K\right)  >0$, $\Omega=\Omega_{K}$ its outer
domain, and $\omega_{K} $ the equilibrium distribution on $K$. Then for the
Green function $g_{\Omega}(\cdot,\infty)$, we have the representation
\begin{equation}
g_{\Omega}(\cdot,\infty)=-p(\omega_{K};\cdot)+\log\frac{1}{\operatorname*{cap}%
\left(  K\right)  },\label{f113b1}%
\end{equation}
and near infinity we have
\begin{equation}
g_{\Omega}(z,\infty)=\log|z|+\log\frac{1}{\operatorname*{cap}\left(  K\right)
}+\text{O}(\frac{1}{|z|})\text{ \ \ as \ \ }z\rightarrow\infty.\label{f113b2}%
\end{equation}

\end{lemma}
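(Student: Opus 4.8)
The plan is to check that the right-hand side of (\ref{f113b1}) possesses all the defining properties of the Green function $g_{\Omega}(\cdot,\infty)$ in the sense of (\ref{f113a1}), and then to appeal to uniqueness; this is just Frostman's identity between the equilibrium potential and the Green function, so most of the work is bookkeeping. Write $V:=\log\frac{1}{\operatorname*{cap}(K)}$ for the Robin constant and $\omega=\omega_{K}$ for the equilibrium distribution, and set $h:=V-p(\omega;\cdot)$. Since $\operatorname*{supp}(\omega)\subset K$ and $\widehat{K}$ is polynomial-convex, the logarithmic potential $p(\omega;\cdot)$ is harmonic on $\Omega=\overline{\mathbb{C}}\setminus\widehat{K}$ away from $\infty$, hence so is $h$. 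Near infinity, because $\omega$ is a unit mass with compact support, $\log\frac{1}{|z-x|}=-\log|z|+\mathrm{O}(1/|z|)$ uniformly for $x\in K$, so $p(\omega;z)=-\log|z|+\mathrm{O}(1/|z|)$ and therefore $h(z)=\log|z|+V+\mathrm{O}(1/|z|)$ as $z\rightarrow\infty$. This simultaneously exhibits the correct logarithmic singularity of $h$ at $\infty$ and, once (\ref{f113b1}) is established, is precisely the expansion (\ref{f113b2}).

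For the boundary values I would invoke Frostman's theorem in the form already recorded in (\ref{f112c1}): $p(\omega;z)=-\log\operatorname*{cap}(K)=V$ for quasi every $z\in\widehat{K}$, while $p(\omega;z)>-\log\operatorname*{cap}(K)$ strictly for every $z\in\Omega$. Equivalently, $h\equiv 0$ quasi everywhere on $\widehat{K}$ and $h>0$ throughout $\Omega$. Next, a superharmonic function that equals a constant quasi everywhere on an open set in fact equals it everywhere there: the super-mean-value inequality gives $u(z)\ge c$ (polar sets are Lebesgue-null, so the spherical averages see only the value $c$), and lower semicontinuity together with density of the complement of the polar exceptional set gives the reverse inequality $u(z)\le c$. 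Applying this to $p(\omega;\cdot)$ on $\operatorname{Int}(\widehat{K})=\overline{\mathbb{C}}\setminus\overline{\Omega}$ yields $h\equiv 0$ there, matching the last line of (\ref{f113a1}). Since $\operatorname*{cap}(\partial\Omega)=\operatorname*{cap}(\widehat{K})=\operatorname*{cap}(K)>0$ by Lemma \ref{l111d}, the Green function $g_{\Omega}(\cdot,\infty)$ exists in the proper sense and is the unique function harmonic on $\Omega\setminus\{\infty\}$ with a logarithmic pole at $\infty$, boundary value $0$ quasi everywhere on $\partial\Omega$, and value $0$ on $\overline{\mathbb{C}}\setminus\overline{\Omega}$; hence $h=g_{\Omega}(\cdot,\infty)$, i.e.\ (\ref{f113b1}).

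The one delicate point — and the step I expect to need the most care — is the behaviour on $\partial\Omega$ itself: a priori both $h$ and $g_{\Omega}(\cdot,\infty)$ vanish only quasi everywhere there, so they could conceivably disagree on the polar set $Ir(K)$ of irregular points. The cleanest resolution is to note that the equilibrium potential and the Green function are each the canonical (finely continuous) representative of their class, so they coincide at every point of $\overline{\mathbb{C}}$; this is exactly \cite{Ransford}, Theorem 4.4.7 combined with Theorem 5.2.1, and I would cite it rather than reprove it. With that in hand, (\ref{f113b1}) holds pointwise on all of $\overline{\mathbb{C}}$, and substituting the near-infinity expansion of $p(\omega;\cdot)$ obtained in the first step gives (\ref{f113b2}).
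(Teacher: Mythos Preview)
Your argument is correct and is in fact more detailed than what the paper does: the paper does not give its own proof of this lemma at all, but simply records it with the parenthetical citation ``(cf.\ \cite{Ransford}, Theorem 4.4.7 together with Theorem 5.2.1)'' --- exactly the references you invoke at the end for the pointwise agreement on $Ir(K)$. So your approach is not merely consistent with the paper's; it is an explicit unpacking of the cited result via Frostman's theorem (\ref{f112c1}) and the defining property (\ref{f113a1}), which is the standard route.
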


The next result is related to Lemma \ref{l113b}.

\begin{lemma}
\label{l113c}Let $K_{1},K_{2}\subset\mathbb{C}$ be polynomial-convex sets of
positive capacity. Then we have
\begin{equation}
g_{\overline{\mathbb{C}}\setminus K_{1}}(\cdot,\infty)\equiv g_{\overline
{\mathbb{C}}\setminus K_{2}}(\cdot,\infty)\label{f113c2}%
\end{equation}
if, and only if,
\begin{equation}
\operatorname*{cap}\left(  \left(  K_{1}\setminus K_{2}\right)  \cup\left(
K_{2}\setminus K_{1}\right)  \right)  =0,\label{f113c1}%
\end{equation}
i.e., if, and only if, the two sets $K_{1}$ and $K_{2}$ differ only in a set
of capacity zero.
\end{lemma}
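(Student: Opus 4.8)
The plan is to prove Lemma \ref{l113c} by exploiting the representation formula for Green functions in Lemma \ref{l113b} together with the additivity-on-null-sets properties of capacity collected in Lemma \ref{l111b} and the elementary facts about polar sets from Lemma \ref{l112a} and Lemma \ref{l113a1}. The two implications will be handled separately.

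First I would prove that (\ref{f113c1}) implies (\ref{f113c2}). Assume that the symmetric difference $E:=(K_1\setminus K_2)\cup(K_2\setminus K_1)$ has capacity zero. By Lemma \ref{l111b}, the capacity of a set is unchanged by removing or adjoining a polar set, so $\operatorname*{cap}(K_1)=\operatorname*{cap}(K_1\cap K_2)=\operatorname*{cap}(K_2)$; moreover $K_1\cap K_2$ is polynomial-convex whenever $K_1,K_2$ are (its complement is the union of the two complements, hence connected), so all three capacities are positive. Let $\omega_1,\omega_2$ be the equilibrium distributions on $K_1$ and $K_2$. Since both $\omega_j$ are of finite energy by Lemma \ref{l112a}, and $E$ is polar, we have $\omega_j(E)=0$ for $j=1,2$; hence each $\omega_j$ is in fact carried by $K_1\cap K_2$, and the defining extremality property of the equilibrium measure (it minimizes energy among probability measures supported on the compact set) gives $\omega_1=\omega_2=\omega_{K_1\cap K_2}$. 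Feeding this into representation (\ref{f113b1}) of Lemma \ref{l113b}, both Green functions equal $-p(\omega_{K_1\cap K_2};\cdot)+\log(1/\operatorname*{cap}(K_1\cap K_2))$, which establishes (\ref{f113c2}).

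Next I would prove the converse: (\ref{f113c2}) implies (\ref{f113c1}). Suppose $g_1:=g_{\overline{\mathbb C}\setminus K_1}(\cdot,\infty)\equiv g_2:=g_{\overline{\mathbb C}\setminus K_2}(\cdot,\infty)$. Comparing the expansions (\ref{f113b2}) at infinity forces $\operatorname*{cap}(K_1)=\operatorname*{cap}(K_2)$, and then (\ref{f113b1}) forces $p(\omega_1;\cdot)\equiv p(\omega_2;\cdot)$ on all of $\mathbb C$; since a measure is determined by its logarithmic potential up to additive constants in the potential (and here there is no constant), we get $\omega_1=\omega_2=:\omega$. Now I use the boundary behaviour (\ref{f113a1}): $g_j(z)=0$ for quasi every $z\in K_j$ and $g_j(z)>0$ for all $z\in\Omega_{K_j}=\overline{\mathbb C}\setminus K_j$. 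If $\operatorname*{cap}(K_1\setminus K_2)>0$, pick a point $z_0\in K_1\setminus K_2$ lying in $\overline{\operatorname*{Rg}(K_1)}$ (possible because the irregular set of $K_1$ is polar by Lemma \ref{l113a1}, so $K_1\setminus K_2$ cannot consist only of irregular points). Using Lemma \ref{l113a2}(iv), every neighbourhood of $z_0$ meets $K_1$ in a set of positive capacity, so $\omega=\omega_1$ has mass arbitrarily close to $z_0$, i.e. $z_0\in\operatorname*{supp}(\omega)$. But $\operatorname*{supp}(\omega)=\operatorname*{supp}(\omega_2)\subset K_2$, contradicting $z_0\notin K_2$. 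Hence $\operatorname*{cap}(K_1\setminus K_2)=0$, and by symmetry $\operatorname*{cap}(K_2\setminus K_1)=0$; Lemma \ref{l111b} then yields (\ref{f113c1}).

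The main obstacle I anticipate is the careful handling of irregular points in the converse direction: the naive argument "$g_1\equiv g_2$ and $g_1=0$ on $K_1$ but $g_2>0$ off $K_2$, so $K_1\subset K_2$" fails because $g_j$ vanishes on $\partial\Omega_{K_j}$ only quasi everywhere, and isolated polar pieces of $K_j$ are invisible to the Green function. The clean way around this is precisely the support argument above, leaning on Lemma \ref{l113a2}(iv) to convert "positive capacity of $K_1\setminus K_2$" into "equilibrium mass sitting outside $K_2$," which then contradicts $\operatorname*{supp}(\omega_2)\subset K_2$. A secondary technical point is verifying that $\widehat{K_1\cap K_2}=K_1\cap K_2$, i.e. that the intersection of two polynomial-convex compacta is polynomial-convex; this follows because $\overline{\mathbb C}\setminus(K_1\cap K_2)=(\overline{\mathbb C}\setminus K_1)\cup(\overline{\mathbb C}\setminus K_2)$ is a union of two connected open sets both containing $\infty$, hence connected — a one-line observation but worth stating explicitly since the equilibrium-measure comparison uses it.
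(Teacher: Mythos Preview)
Your forward implication $(\ref{f113c1})\Rightarrow(\ref{f113c2})$ is correct and in fact more explicit than the paper's one-line appeal to the defining property~(\ref{f113a1}) and uniqueness of the Green function: you show directly that both equilibrium measures coincide with $\omega_{K_1\cap K_2}$ and then invoke~(\ref{f113b1}).

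The converse, however, contains a genuine gap. The inference ``every neighbourhood of $z_0$ meets $K_1$ in a set of positive capacity, so $\omega_1$ has mass arbitrarily close to $z_0$'' is not valid: Lemma~\ref{l113a2}(iv) guarantees positive capacity, not positive $\omega_1$-mass. The standard counterexample is $K_1$ a closed disc and $z_0$ its centre --- every neighbourhood has positive capacity intersection with $K_1$, yet $\omega_1$ lives entirely on the boundary circle and gives zero mass near $z_0$. So Lemma~\ref{l113a2}(iv) is the wrong tool here, and the support argument as written does not close.

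The irony is that the ``naive argument'' you set aside is exactly what works once you have chosen $z_0\in\operatorname{Rg}(K_1)\setminus K_2$ (which you do): regularity of $z_0$ for $K_1$ means $g_1(z_0)=0$ by the very definition of a regular boundary point, while $z_0\notin K_2$ gives $g_2(z_0)>0$ by~(\ref{f113a1}); this contradicts $g_1\equiv g_2$ immediately. Hence $K_1\setminus K_2\subset\operatorname{Ir}(K_1)$, which is polar by Lemma~\ref{l113a1}. This is precisely the paper's route: it observes that $(\partial K_1\setminus\partial K_2)\cup(\partial K_2\setminus\partial K_1)\subset\operatorname{Ir}(K_1)\cup\operatorname{Ir}(K_2)$ directly from~(\ref{f113a1}) and Definition~\ref{d113a}, and then applies Lemmas~\ref{l113a1} and~\ref{l111b}. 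The detour through $\omega_1=\omega_2$ and supports is unnecessary for this direction.
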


\begin{proof}
We assume that identity (\ref{f113c2}) holds true. From (\ref{f113c2})
together with the defining identity (\ref{f113a1}) for the Green function and
the definition of irregular points in Definition \ref{d113a} it follows that
\begin{equation}
\left(  \partial K_{1}\setminus\partial K_{2}\right)  \cup\left(  \partial
K_{2}\setminus\partial K_{1}\right)  \subset\operatorname*{Ir}(K_{1}%
)\cup\operatorname*{Ir}(K_{2}),\label{f113c3}%
\end{equation}
which with Lemma \ref{l113a1} and Lemma \ref{l111b} implies (\ref{f113c1}).

If, on the other hand, (\ref{f113c1}) holds true, then identity (\ref{f113c2})
is an immediate consequence of the defining identity (\ref{f113a1}) for the
Green function and the uniqueness of the Green function.$\medskip$
\end{proof}

The balayage technique of Definition \ref{d112a} can be made more concrete
with the help of the Green function (cf. \cite{SaTo}, Chapter II.4).

\begin{lemma}
\label{l113e}Under the assumptions of Definition \ref{d112a}, we have
\begin{equation}
p(\widehat{\mu};\cdot)=p(\mu;\cdot)-\int g_{D}(\cdot,x)d\mu(x)\label{f113e1}%
\end{equation}
if the domain $D$ is bounded, and otherwise we have
\begin{equation}
p(\widehat{\mu};\cdot)=p(\mu;\cdot)-\int\left[  g_{D}(\cdot,x)-g_{D}%
(x,\infty)\right]  d\mu(x).\label{f113e2}%
\end{equation}

\end{lemma}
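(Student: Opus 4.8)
The plan is to establish the identity first for a single unit point mass $\mu=\delta_{x}$ with $x\in D\cap\mathbb{C}$ and then recover the general case by superposition; this is essentially the identification of balayage with a Green potential as in \cite{SaTo}, Chapter II.4, adapted to the present normalisation of $\widehat{\mu}$. Fix such an $x$ and consider the function $\Phi(z):=p(\delta_{x};z)-g_{D}(z,x)$ in the bounded case, and $\Phi(z):=p(\delta_{x};z)-g_{D}(z,x)+g_{D}(x,\infty)$ in the unbounded case. In either case $\Phi$ is harmonic in $D$ away from $\infty$: near $x$ the logarithmic singularities of $p(\delta_{x};\cdot)$ and of $g_{D}(\cdot,x)$ cancel (cf. the defining behaviour (\ref{f113a1}) of the Green function together with Lemma \ref{l113b}), so $\Phi$ extends harmonically across $x$. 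On $\overline{\mathbb{C}}\setminus\overline{D}$ and quasi everywhere on $\partial D$ the Green function $g_{D}(\cdot,x)$ vanishes, so there $\Phi$ equals $p(\delta_{x};z)$ in the bounded case and $p(\delta_{x};z)+g_{D}(x,\infty)$ in the unbounded case; by Definition \ref{d112a} (with $c_{1}=g_{D}(x,\infty)$) these are precisely the values of $p(\widehat{\delta}_{x};\cdot)$. Hence $\Phi$ and $p(\widehat{\delta}_{x};\cdot)$ agree quasi everywhere on $\partial D$ and everywhere off $\overline{D}$.

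It then remains to see that $\Phi=p(\widehat{\delta}_{x};\cdot)$ throughout $D$. Both functions are harmonic in $D$ (in the unbounded case both carry the same logarithmic singularity at $\infty$, since $\widehat{\delta}_{x}$ is a probability measure on the bounded set $\partial D$ and $g_{D}(z,x)\to g_{D}(x,\infty)$ as $z\to\infty$; so their difference is harmonic at $\infty$ as well), and that difference is bounded and vanishes quasi everywhere on $\partial D$. By the maximum principle, applied after discarding the polar set of irregular boundary points as permitted by Lemma \ref{l113a1}, the difference is identically $0$. This proves, for $\mu=\delta_{x}$,
\begin{equation*}
p(\widehat{\delta}_{x};z)=p(\delta_{x};z)-g_{D}(z,x)\quad(D\text{ bounded}),\qquad p(\widehat{\delta}_{x};z)=p(\delta_{x};z)-\bigl(g_{D}(z,x)-g_{D}(x,\infty)\bigr)\quad(\infty\in D).
\end{equation*}

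For general $\mu$, write $\mu=\mu|_{D}+(\mu-\mu|_{D})$. The second part is left fixed by the sweeping and, being supported off $D$, satisfies $g_{D}(\cdot,x)\equiv0\equiv g_{D}(x,\infty)$ on its support, so it contributes nothing to the correction terms; the polar-set conventions of Definition \ref{d112a} (again via Lemma \ref{l113a1}) take care of any mass near irregular boundary points. For $\mu|_{D}$, the characterisation of balayage in Definition \ref{d112a} together with its uniqueness gives $\widehat{\mu|_{D}}=\int\widehat{\delta}_{x}\,d\mu(x)$, whence by Fubini--Tonelli (legitimate since $g_{D}\geq0$ and the potentials are bounded below on compact sets)
\begin{equation*}
p(\widehat{\mu};z)=\int p(\widehat{\delta}_{x};z)\,d\mu(x)=p(\mu;z)-\int g_{D}(z,x)\,d\mu(x)
\end{equation*}
in the bounded case, and with the extra summand $+\int g_{D}(x,\infty)\,d\mu(x)$ in the unbounded case; this last integral is exactly the constant $c_{1}$ of Definition \ref{d112a}, as consistency with $p(\widehat{\mu};z)=p(\mu;z)+c_{1}$ off $\overline{D}$ demands. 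The step I expect to require the most care is the unbounded case: one must keep track of the logarithmic behaviour at $\infty$ and verify that the constant produced by the Green-function argument is literally $\int g_{D}(x,\infty)\,d\mu(x)$; everything else is routine.
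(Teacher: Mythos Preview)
The paper does not give its own proof of this lemma; it simply introduces it with the parenthetical reference ``(cf.\ \cite{SaTo}, Chapter II.4)'' and states the result. Your argument---identify balayage of a point mass with the Green function via cancellation of the logarithmic pole, invoke uniqueness of the bounded Dirichlet solution to match inside $D$, then superpose---is precisely the standard derivation one finds in that reference, so there is nothing to compare against beyond noting that you have supplied the details the paper omits.

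A small remark on presentation: in the unbounded case your sentence about $\Phi$ and $p(\widehat{\delta}_{x};\cdot)$ ``both carrying the same logarithmic singularity at $\infty$'' is slightly loose. Neither function is harmonic at $\infty$, but each behaves like $-\log|z|+O(1)$ there (for $\Phi$ because $g_{D}(z,x)\to g_{D}(x,\infty)$ as $z\to\infty$, for $p(\widehat{\delta}_{x};\cdot)$ because $\widehat{\delta}_{x}$ is a probability measure with compact support), so their difference extends to a bounded harmonic function on all of $D$ including $\infty$; that is what you need, and you do say it, but the phrasing could be tightened. Otherwise the argument is sound.
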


Related to Lemma \ref{l113e} is the Riesz Decomposition Theorem (cf., Theorem
3.1 of \cite{SaTo}, Chapter II), and more definite the Poison-Jensen Formula,
which has been used at several places in our investigations.

\begin{theorem}
\label{t113a}(Poison-Jensen Formula) Let $D\subset\overline{\mathbb{C}}$ be a
domain with $\operatorname*{cap}\left(  \partial D\right)  \allowbreak>0$. We
assume that the real-valued function $u$ is subharmonic on $\overline{D}$, not
identical $-\infty$, and possesses an harmonic majorant in $\overline{D} $.
Then there exists a nonnegative measure $\mu$ in $D$ of finite mass such that
\begin{equation}
u(z)=-\iint_{D}g_{D}(z,v)d\mu(z)+\int_{\partial D}u(v)d\widehat{\delta_{z}%
}(v)\text{ \ \ for \ \ }z\in D\label{f113d3}%
\end{equation}
with $\widehat{\delta_{z}}$ denoting the balayage measure on $\partial D$
resulting from sweeping out the Dirac measure $\delta_{z}$ out of $D$.
($\widehat{\delta_{z}}$ is also known as the harmonic measure on $\partial D$
of the point $z\in D$.)
\end{theorem}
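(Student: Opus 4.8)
The statement to prove is the Poisson--Jensen Formula, Theorem \ref{t113a}, which represents a subharmonic function $u$ on $\overline{D}$ that admits a harmonic majorant as the difference of a Green potential and a harmonic (balayage) term. The plan is to derive this from the classical Riesz Decomposition Theorem together with the solution of the Dirichlet problem for $D$, both of which are standard references (\cite{SaTo}, Chapter II, Theorem 3.1, and \cite{Ransford}, Chapters 4--5). First I would invoke the Riesz Decomposition Theorem: since $u$ is subharmonic on $\overline{D}$, not identically $-\infty$, it has an associated Riesz mass $\mu := \frac{1}{2\pi}\Delta u$ (in the distributional sense), which is a nonnegative measure; the hypothesis that $u$ possesses a harmonic majorant on $\overline{D}$ forces $\mu$ to have finite total mass and moreover controls the growth of $u$ so that the Green potential $\iint_D g_D(z,v)\,d\mu(v)$ is well defined and finite for $z\in D$.

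The key computational step is to show that the difference
\begin{equation}
w(z) := u(z) + \iint_{D}g_{D}(z,v)\,d\mu(v)
\end{equation}
is harmonic in $D$. This follows because the Green potential has Riesz mass exactly $-\mu$ (the defining property $\Delta_z g_D(z,v) = -2\pi\delta_v$ integrated against $\mu$), so $w$ has zero Riesz mass and hence is harmonic; one also checks $w$ inherits a harmonic majorant, so it is the harmonic function in $D$ with the ``same boundary values'' as $u$ in the appropriate sense. Then I would identify $w$ with the generalized Dirichlet solution: $w(z) = \int_{\partial D} u(v)\,d\widehat{\delta_z}(v)$, where $\widehat{\delta_z}$ is the balayage of the Dirac mass $\delta_z$ out of $D$ onto $\partial D$ (the harmonic measure of $z$), using that for a domain with $\operatorname{cap}(\partial D)>0$ the harmonic measure is a genuine probability measure on $\partial D$ and reproduces bounded harmonic functions quasi-everywhere on the boundary. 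Rearranging gives \eqref{f113d3}.

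The main obstacle — and the only place requiring care — is the boundary behavior: $D$ need not be regular, so $u$ need not be continuous up to $\partial D$ and the identity $w(z) = \int_{\partial D} u\, d\widehat{\delta_z}$ must be justified only quasi-everywhere on $\partial D$, handling the exceptional polar set of irregular points (Definition \ref{d113a}) via Kellogg's theorem in the form of Lemma \ref{l113a1}. The subtlety is to verify that the boundary integral is taken against the correct boundary function — namely the fine boundary values of $u$, which agree with $u$ quasi-everywhere on $\partial D$ — and that the harmonic majorant hypothesis guarantees the integral converges. Once the potential-theoretic bookkeeping of the polar exceptional set is dispatched (exactly as in the cited theorems of \cite{SaTo} and \cite{Ransford}), the representation \eqref{f113d3} follows, with $\mu$ the Riesz measure of $u$, which is automatically of finite mass by the harmonic majorant assumption.
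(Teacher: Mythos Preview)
Your proposal is correct and follows the standard route (Riesz decomposition plus identification of the harmonic remainder with the Perron--Wiener solution via harmonic measure), which is exactly what the paper invokes: its proof consists solely of a citation to Theorems 4.5.1 and 4.5.4 in \cite{Ransford}, and your outline is precisely the content of those results combined. The only difference is that you spell out the mechanism, whereas the paper defers entirely to the reference.
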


\begin{proof}
The theorem has been proved in \cite{Ransford} as Theorem 4.5.1 under stronger
assumptions about the domain $D$ and the function $u$. The more general form
of the theorem given here is the consequence of a combination of the two
Theorems 4.5.1 and 4.5.4 in \cite{Ransford}.
\end{proof}

Analogously to the energy $I(\cdot)$ that has been defined in (\ref{f111a1})
with a logarithmic kernel, one can also define an energy formula with a Green
kernel, i.e., a formula like (\ref{f111a1}) with a Green function as kernel.
The new formula is called Green energy. A systematic investigation of Green
energy and Green capacity together with the associated Green potentials can be
found in \cite{SaTo}, Chapter II.5.

In our investigations, we have needed the property of positive definiteness of
the Green kernel. The result is contain in the next lemma, and it can be seen
as a completion of the material in Lemma \ref{l112b}.

\begin{lemma}
\label{l113d}Let $D\subset\overline{\mathbb{C}}$ be a domain with
$\operatorname*{cap}\left(  \partial D\right)  >0$. For a signed measure
$\sigma$ of finite energy we have
\begin{equation}
\int\int g_{D}(z,v)d\sigma(z)d\sigma(v)\geq0\label{f113d1}%
\end{equation}
and equality holds in (\ref{f113d1}) if, and only if, $\sigma|_{D\cup
Ir(\partial D)}=0$.
\end{lemma}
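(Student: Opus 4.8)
\textbf{Plan of proof for Lemma \ref{l113d}.}
The plan is to reduce the asserted positive definiteness of the Green kernel to the positive definiteness of the logarithmic kernel, which is available from Lemma \ref{l112b}(ii), by expressing the Green potential as a logarithmic potential after a balayage. First I would note that it suffices to treat the bounded-domain case and then the unbounded case separately; after a M\"obius transformation we may assume $\infty\notin D$, so $D$ is bounded and $K:=\overline{\mathbb{C}}\setminus D$ is a compact set with $\operatorname*{cap}(K)>0$. Given the signed measure $\sigma$ of finite energy carried by $\overline{D}$ (outside this set the Green kernel vanishes, so we lose nothing by restricting $\sigma$ there), let $\widehat{\sigma}$ denote the balayage of $\sigma$ out of $D$ onto $\partial D$ as in Definition \ref{d112a}; since $\sigma$ is of finite energy, so is $\widehat{\sigma}$ (this was used already in the proof of Lemma \ref{l112e}). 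From the balayage identity in Lemma \ref{l113e}, formula (\ref{f113e1}) (bounded case), we have
\begin{equation}
\int g_{D}(\cdot,x)\,d\sigma(x)=p(\sigma;\cdot)-p(\widehat{\sigma};\cdot).
\end{equation}

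Next I would integrate this identity against $d\sigma$. Using Fubini (legitimate since all measures involved have finite energy) one gets
\begin{equation}
\int\!\!\int g_{D}(z,v)\,d\sigma(z)\,d\sigma(v)=I(\sigma)-2\,I(\sigma,\widehat{\sigma})+I(\widehat{\sigma}),
\end{equation}
where $I(\cdot,\cdot)$ denotes mutual logarithmic energy. The point of the balayage is that $p(\widehat{\sigma};z)=p(\sigma;z)$ for quasi every $z\in\partial D$, hence (since $\widehat{\sigma}$ is of finite energy and therefore vanishes on sets of capacity zero by Lemma \ref{l112a}) $I(\sigma,\widehat{\sigma})=\int p(\sigma;\cdot)\,d\widehat{\sigma}=\int p(\widehat{\sigma};\cdot)\,d\widehat{\sigma}=I(\widehat{\sigma})$. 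Substituting, the right-hand side collapses to $I(\sigma)-I(\widehat{\sigma})=I(\sigma-\widehat{\sigma})$. Now $\sigma-\widehat{\sigma}$ is a signed measure of finite energy with total mass zero (balayage preserves total mass in the bounded case), so Lemma \ref{l112b}(ii) gives $I(\sigma-\widehat{\sigma})\ge 0$, which is exactly (\ref{f113d1}). Equality in Lemma \ref{l112b}(ii) forces $\sigma-\widehat{\sigma}=0$; tracing this back, $\sigma|_{D}$ must equal the part of $\widehat{\sigma}$ on $D$, which is zero by construction, so $\sigma|_{D}=0$, and the equality statement about $\sigma|_{D\cup Ir(\partial D)}=0$ follows by observing that a finite-energy measure puts no mass on $Ir(\partial D)$ (capacity zero, Lemma \ref{l113a1} and Lemma \ref{l112a}), so the irregular points may be freely adjoined.

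For the unbounded case, where $\infty\in D$, I would run the same argument using (\ref{f113e2}) instead of (\ref{f113e1}); the extra term $\int g_{D}(x,\infty)\,d\mu(x)$ is just the additive constant $c_{1}$ appearing in (\ref{f112c3}), and since $\sigma-\widehat{\sigma}$ still has total mass zero the constant drops out when integrating against $\sigma$, so the computation is unchanged. The main obstacle I anticipate is purely bookkeeping: justifying the interchange of integration order and making sure every integral in sight is finite, i.e., carefully invoking the finite-energy hypothesis at each step (for $\sigma$, for $\widehat{\sigma}$, and for the mutual energies), and verifying the mass-preservation property of balayage in both the bounded and unbounded settings. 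Once those technical points are pinned down the inequality and the equality characterization both fall out of Lemma \ref{l112b} together with the balayage identity of Lemma \ref{l113e}.
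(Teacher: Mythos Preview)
Your approach is correct and in fact supplies a complete argument where the paper itself gives none: the paper's ``proof'' merely cites Landkof, Theorem~1.6 (positive definiteness of the logarithmic kernel, i.e., Lemma~\ref{l112b}(ii)) together with the Green-potential machinery of \cite{SaTo}, Chapter~II, Lemma~5.4, and your balayage reduction is precisely the standard way to combine these two ingredients.

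One small computational slip to fix: integrating the identity $\int g_{D}(\cdot,x)\,d\sigma(x)=p(\sigma;\cdot)-p(\widehat{\sigma};\cdot)$ against $d\sigma$ gives directly
\[
\int\!\!\int g_{D}(z,v)\,d\sigma(z)\,d\sigma(v)=I(\sigma)-I(\sigma,\widehat{\sigma}),
\]
not $I(\sigma)-2\,I(\sigma,\widehat{\sigma})+I(\widehat{\sigma})$ as you wrote. This does not affect the outcome: once you invoke $I(\sigma,\widehat{\sigma})=I(\widehat{\sigma})$, both expressions collapse to $I(\sigma)-I(\widehat{\sigma})=I(\sigma-\widehat{\sigma})\ge 0$, and the equality analysis goes through exactly as you describe. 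The handling of the unbounded case and of the irregular points via Lemmas~\ref{l112a} and~\ref{l113a1} is fine.
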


\begin{proof}
The lemma can be proved like the analogous result in \cite{Landkof}, Theorem
1.6, together with the tools used in \cite{SaTo} for the proof of Lemma 5.4 in
Chapter II.\medskip
\end{proof}

We next come to some results that are connected with the Green formula. Let
$D\subset\overline{\mathbb{C}}$ be a domain with a smooth and non-empty
boundary $\partial D\subset\mathbb{C}$, and let further $u$ and $v$ be two
real $C^{2}-$functions in $D$ with $L^{1}-$integrable second derivatives in
$D$ and $C^{1}$ boundary functions with $L^{1}-$integrable first derivatives
on $\partial D$. Under these assumptions, the Green identity
\begin{equation}
\iint_{D}u\nabla\nabla v\,dm+\iint_{D}\nabla u\nabla v\,dm+\int_{\partial
D}u\frac{\partial}{\partial n}v\,ds=0\label{f113f1}%
\end{equation}
holds true (see Chapter VIII of \cite{Kellogg67} or \cite{HaymanKennedy76},
Theorem 1.9). In (\ref{f113f1}), $\nabla\nabla$ denotes the Laplace operator
$\partial^{2}/\partial x^{2}+\partial^{2}/\partial y^{2}$, $\nabla$ the napla
or gradient operator $(\partial/\partial x,\partial/\partial y)$,
$\partial/\partial n$ the inwardly showing normal derivation on $\partial D$,
$dm$ the area element in $D$, and $ds$ the (positively oriented) line element
on $\partial D$.

If the function $v$ is harmonic in $D$, then obviously the first term in
(\ref{f113f1}) vanishes. The second term is known as the Dirichlet integral of
$u$ and $v$, and we use the abbreviation
\begin{equation}
D_{D}(u,v):=\frac{1}{2\pi}\iint_{D}\nabla u\nabla v\,dm.\label{f113f2}%
\end{equation}
Using the same letter $D$ in one formula with two different meanings is
certainly unlucky, but mix-ups should be avoidable. In comparison to the
assumptions made in (\ref{f113f1})\ , we often relax assumptions for
(\ref{f113f2}); thus, for instance, with the help of an exhaustion technique
we can admit arbitrary domains $D\subset\overline{\mathbb{C}}$. If not
explicitly stated otherwise, then we assume that both functions $u$ and $v$ in
(\ref{f113f2}) have $L^{2}-$integrable first order derivatives almost
everywhere in $D$.

In the special case that both functions $u$ and $v$ are identical, we write
\begin{equation}
D_{D}(u):=D_{D}(u,u)=\frac{1}{2\pi}\iint_{D}\left(  \nabla u\right)
^{2}dm.\label{f113f3}%
\end{equation}

Notice that in (\ref{f113f1}) the Dirichlet integral $2\pi D_{D}(u,v)$ is the
only term that is symmetric in both functions $u$ and $v$. The use of this
fact leads to interesting special cases of the Green identity.\thinspace Thus,
for instance, one gets Formula (1.1) in Chapter II.1 of \cite{SaTo}, which
will also be used in the present subsection; it is the basis for the proof of
Lemma \ref{l113f}, below, after the next paragraph.

Next, we come to several lemmas that are rather immediate consequences on the
Green identity. The first one has been used at several places, where
potentials have been defined in a piecewise manner. With respect to a proof of
this result, we are in the lucky situation that most of the detailed work has
already been done in \cite{SaTo}, Chapter II, where similar results have been proved.

\begin{lemma}
\label{l113f}Let $D\subset\overline{\mathbb{C}}$ be a domain with
$\operatorname*{cap}\left(  \partial D\right)  >0$, $\gamma$ a $C^{1+\delta}%
-$smooth Jordan arc in $D$, $\delta>0$, and $u$ a bounded real-valued function
that is continuous in $\overline{D}\setminus Ir(\partial D)$, harmonic in
$D\setminus\gamma$, and which possesses $L^{1}-$integrable normal derivatives
to both sides of $\gamma$.

If $u(z)=0$ for all $z\in\partial D\setminus Ir(\partial D)$, then we have
\begin{equation}
u(z)=-\frac{1}{2\pi}\int_{\gamma}(\frac{\partial}{\partial n_{-}}%
+\frac{\partial}{\partial n_{+}})u(v)g_{D}(z,v)ds_{v}\text{ \ \ for \ }z\in
D.\label{f113d2}%
\end{equation}
In (\ref{f113d2}), $\partial/\partial n_{+}$ and $\partial/\partial n_{-}$
denote the normal derivation to both sides of $\gamma$, $ds$ is the line
element on $\gamma$, and $g_{D}(\cdot,\cdot)$ the Green function in $D$.
\end{lemma}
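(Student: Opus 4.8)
The plan is to derive \eqref{f113d2} as a direct application of the Green identity \eqref{f113f1}, split carefully across the arc $\gamma$, in a regularized form that avoids the singularities at infinity, on $\gamma$, and at the irregular points $Ir(\partial D)$. First I would reduce to a bounded domain: by an exhaustion argument, replace $D$ by $D_r := D \cap \{|z| < r\}$ for large $r$, noting $\gamma \subset D_r$ once $r$ is large enough; the bound on $u$ and the logarithmic growth of $g_D(z,\cdot)$ control the contributions from the circle $\{|z| = r\}$ as $r \to \infty$ (this is the same kind of boundary-at-infinity estimate used, e.g., in the proof of Proposition \ref{p102a}). Similarly, since $\operatorname{cap}(Ir(\partial D)) = 0$ by Lemma \ref{l113a1}, the irregular points carry no harmonic measure (Lemma \ref{l113a2}(ii)) and can be excised by small neighborhoods whose contribution vanishes in the limit; this is exactly why the hypotheses only require $u = 0$ on $\partial D \setminus Ir(\partial D)$ and continuity of $u$ only on $\overline{D} \setminus Ir(\partial D)$.

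The core computation is then: on the domain $D \setminus \gamma$ (suitably truncated), apply the symmetric form of the Green identity to the pair $u$ and $v := g_D(\cdot, z_0)$ for a fixed $z_0 \in D \setminus \gamma$. Both functions are harmonic in $D \setminus \gamma$ away from the pole $z_0$ of $v$; the pole contributes, via the standard small-circle computation around $z_0$, precisely the value $u(z_0)$ (the residue of the logarithmic singularity, normalized by the $\tfrac{1}{2\pi}$ in \eqref{f113f2}). The boundary $\partial(D \setminus \gamma)$ consists of $\partial D$, where $u = 0$ quasi-everywhere so the boundary integral against $g_D(\cdot,z_0)$ drops out, and the two banks $\gamma_+$ and $\gamma_-$ of the arc $\gamma$, traversed in opposite orientations. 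On $\gamma$ the function $g_D(\cdot, z_0)$ is continuous (it matches from both sides since $z_0 \notin \gamma$), while the normal derivative of $u$ jumps; collecting the two bank integrals produces exactly $-\tfrac{1}{2\pi}\int_\gamma \bigl(\tfrac{\partial}{\partial n_-} + \tfrac{\partial}{\partial n_+}\bigr) u(v)\, g_D(z_0, v)\, ds_v$, which is the right-hand side of \eqref{f113d2}. The $C^{1+\delta}$-smoothness of $\gamma$ and the $L^1$-integrability of the one-sided normal derivatives of $u$ are what make these bank integrals well-defined and the limiting arguments legitimate.

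I would lean on the detailed estimates already worked out in \cite{SaTo}, Chapter II, for the analogous representation formulas there; as the lemma statement itself notes, most of the technical bookkeeping — justifying the Green identity under relaxed smoothness, handling the exhaustion, and treating the non-smooth part of $\partial D$ — has been done in that reference, so the proof here is mainly a matter of adapting those arguments to the present two-bank geometry of a cut $\gamma$ inside $D$. The main obstacle I expect is the rigorous treatment of the irregular boundary set: one must verify that truncating near $Ir(\partial D)$ and passing to the limit genuinely kills the corresponding boundary terms, using that $g_D(z_0, \cdot)$ is bounded near a fixed $z_0 \in D$ together with $\operatorname{cap}(Ir(\partial D)) = 0$ and the vanishing of harmonic measure on polar sets (Lemma \ref{l113a1} and Lemma \ref{l113a2}). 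Once that is in place, the identity \eqref{f113d2} follows by assembling the pole contribution, the vanishing $\partial D$-integral, and the two-bank integral over $\gamma$.
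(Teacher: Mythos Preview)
Your approach is correct and ultimately leans on the same reference, but the route differs from the paper's. You compute the representation directly: apply the symmetric Green identity to the pair $(u,\,g_D(\cdot,z_0))$ on $D\setminus\gamma$ after regularizing near $z_0$, near infinity, and near $\partial D$, collect the pole contribution $u(z_0)$, discard the $\partial D$-terms in the limit via $u=0$ quasi-everywhere and the polar-set arguments you outline, and read off the two-bank integral over $\gamma$. The paper instead defines the difference
\[
d(z):=u(z)-\int_\gamma g_D(z,v)\,d\sigma(v),\qquad d\sigma(v):=-\tfrac{1}{2\pi}\bigl(\tfrac{\partial}{\partial n_-}+\tfrac{\partial}{\partial n_+}\bigr)u(v)\,ds_v,
\]
invokes Theorem~II.1.5 of \cite{SaTo} to conclude that $d$ is harmonic in all of $D$, observes $d=0$ on $\partial D\setminus Ir(\partial D)$ (since both $u$ and $g_D(\cdot,v)$ vanish there), and finishes by the uniqueness theorem for the bounded Dirichlet problem (Theorem~3.1, Appendix~A of \cite{SaTo}). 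The paper's argument thus absorbs all of your boundary regularization into a single black-box uniqueness statement, which is shorter; your approach is more explicit about the mechanism but requires carrying out by hand exactly the limiting procedure that Dirichlet uniqueness encapsulates --- in particular, the step where the exhaustion-boundary term $\int_{\partial D_n} u\,d\omega^{D_n}_{z_0}$ tends to zero is essentially a reproof of that uniqueness result.
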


\begin{proof}
From Theorem 1.5 in Chapter II of \cite{SaTo} it follows that if we define the
measure $\sigma$ on $\gamma$ by
\begin{equation}
d\sigma(v):=-\frac{1}{2\pi}(\frac{\partial}{\partial n_{-}}+\frac{\partial
}{\partial n_{+}})u(v)ds_{v},\label{f113d22}%
\end{equation}
then the function
\begin{equation}
d(z):=u(z)-\int g_{D}(z,v)d\sigma(v),\text{ \ \ \ \ \ }z\in D,\label{f113d21}%
\end{equation}
is harmonic in $D$. From the assumptions of the lemma and from (\ref{f113a1})
together with Definition \ref{d113a}, we know that $d(z)=0$ for all
$z\in\partial D\setminus Ir(\partial D)$. Since $d$ is bounded in $D$, it
follows from the uniqueness of the Dirichlet problem under the given
circumstances (cf. Theorem 3.1 in the Appendix A of \cite{SaTo}) that $d(z)=0$
for $z\in D$, which proves the lemma.\medskip
\end{proof}

In the next lemmas, properties of Green functions are expressed with the help
of Dirichlet integrals.

\begin{lemma}
\label{l113g}Let $D\subset\overline{\mathbb{C}}$ be a domain with $\infty\in
D$, $r>0$, and $\operatorname*{cap}\left(  \partial D\right)  >0$, then we
have
\begin{equation}
D_{\{|z|<r\}\cap D}(g_{D}(\cdot,\infty))=\log(r)+\log\frac{1}%
{\operatorname*{cap}\left(  \partial D\right)  }+\text{O}(\frac{1}{r})\text{
\ \ as \ \ }r\rightarrow\infty.\label{f113g1}%
\end{equation}

\end{lemma}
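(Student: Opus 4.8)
The plan is to compute the Dirichlet integral of the Green function over the truncated domain $\{|z|<r\}\cap D$ by applying Green's identity (the symmetric form, Formula (1.1) in Chapter II.1 of \cite{SaTo}) with $u=v=g_D(\cdot,\infty)$. Since $g_D(\cdot,\infty)$ is harmonic in $D\setminus\{\infty\}$, the Laplacian term drops out and one is left with a sum of boundary integrals. The boundary of $\{|z|<r\}\cap D$ consists of two parts: the relevant arcs of $\partial D$ (where $g_D(\cdot,\infty)=0$ quasi everywhere, so that piece of the boundary integral vanishes after taking care of the irregular points via Lemma \ref{l113a1}, which has capacity zero and hence does not contribute), and the circle $\{|z|=r\}$. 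Thus
\begin{equation}
D_{\{|z|<r\}\cap D}(g_D(\cdot,\infty))=\frac{1}{2\pi}\int_{\{|z|=r\}}g_D(z,\infty)\,\frac{\partial}{\partial n}g_D(z,\infty)\,ds_z,\label{f113gplan}
\end{equation}
where $\partial/\partial n$ is the outward normal on the circle (pointing toward infinity, i.e.\ into the unbounded part of $D$).

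First I would invoke the asymptotic expansion (\ref{f113b2}) of Lemma \ref{l113b}: near infinity $g_D(z,\infty)=\log|z|+\log\frac{1}{\operatorname{cap}(\partial D)}+\mathrm{O}(1/|z|)$. On $\{|z|=r\}$ this gives $g_D(z,\infty)=\log r+\log\frac{1}{\operatorname{cap}(\partial D)}+\mathrm{O}(1/r)$, a constant up to $\mathrm{O}(1/r)$. For the normal derivative, differentiating the expansion — or better, using that $g_D(\cdot,\infty)-\log|z|$ is harmonic and bounded near infinity, hence its gradient is $\mathrm{O}(1/r^2)$ there — one gets $\frac{\partial}{\partial n}g_D(z,\infty)=\frac1r+\mathrm{O}(1/r^2)$ on $\{|z|=r\}$. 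Substituting both estimates into (\ref{f113gplan}) and integrating over the circle of circumference $2\pi r$:
\begin{equation}
D_{\{|z|<r\}\cap D}(g_D(\cdot,\infty))=\frac{1}{2\pi}\cdot 2\pi r\cdot\Bigl(\log r+\log\tfrac{1}{\operatorname{cap}(\partial D)}+\mathrm{O}(\tfrac1r)\Bigr)\cdot\Bigl(\tfrac1r+\mathrm{O}(\tfrac1{r^2})\Bigr),
\end{equation}
which collapses to $\log r+\log\frac{1}{\operatorname{cap}(\partial D)}+\mathrm{O}(1/r)$, the claimed identity (\ref{f113g1}).

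The main obstacle — and the step requiring the most care — is the justification of integration by parts on the truncated domain when $\partial D$ may be irregular or highly non-smooth: one cannot literally apply the classical Green identity since that presumes a smooth boundary. The remedy is the standard exhaustion/approximation argument: approximate $D$ from inside by domains with smooth boundary (or equivalently $\partial D$ from outside by sets with piecewise analytic boundary, as in \cite{Ransford}, Theorem 5.1.3, used already in the proof of Lemma \ref{l112b}), note that $g_D(\cdot,\infty)$ vanishes quasi everywhere on $\partial D$ with the exceptional set $Ir(\partial D)$ of capacity zero contributing nothing to any energy or Dirichlet-type integral (Lemma \ref{l113a1}, Lemma \ref{l112a}, Lemma \ref{l111b}), and pass to the limit. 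Since $g_D(\cdot,\infty)$ has $L^2$-integrable gradient in $D$ (finite Dirichlet integral, which follows from the logarithmic singularity being the only one and the boundary behavior), the limiting procedure is legitimate and yields exactly (\ref{f113gplan}). Once that is in place, the rest is the routine asymptotic computation sketched above, and it is worth remarking that the finiteness of $D_{\{|z|<r\}\cap D}(g_D(\cdot,\infty))$ for each fixed $r$ is itself a byproduct of this argument.
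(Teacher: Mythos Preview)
Your approach is essentially the paper's: Green's identity reduces the Dirichlet integral to a boundary term on $\{|z|=r\}$ (the $\partial D$ piece vanishing), then the asymptotics of $g_D$ near infinity yield the result, with an exhaustion argument handling non-smooth $\partial D$.

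There is, however, a small but genuine gap in your circle computation. Multiplying the pointwise estimates $g_D(z,\infty)=\log r+c+\mathrm{O}(1/r)$ and $\partial_n g_D=1/r+\mathrm{O}(1/r^2)$ and integrating over a circle of length $2\pi r$ produces a cross term of size $\log r\cdot 2\pi r\cdot\mathrm{O}(1/r^2)=\mathrm{O}(\log r/r)$, not $\mathrm{O}(1/r)$ as the lemma claims. To recover the stated error you must expand the product more carefully. Writing $\widetilde g:=g_D(\cdot,\infty)-\log|\cdot|$, which is harmonic and bounded near infinity, the integrand splits into four pieces; the dangerous one is $\log|z|\cdot\partial_n\widetilde g$. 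On the circle $\log|z|=\log r$ is constant, so this term equals $\log r$ times the flux $\int_{|z|=r}\partial_n\widetilde g\,ds$, and that flux is \emph{exactly} zero because $\widetilde g$ is harmonic (and bounded) in $\{|z|>r\}$. With this cancellation the remaining terms give $\log r+\log\frac{1}{\operatorname{cap}(\partial D)}+\mathrm{O}(1/r)$ cleanly. This is precisely how the paper organizes the computation.
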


\begin{proof}
The lemma is a consequence of the Green identity (\ref{f113f1}). In a first
step we assume that the domain $D$\ has a sufficiently smooth boundary
$\partial D$. If $\partial D$ is $C^{2}$\ smooth, then the Green function
$g_{D}(\cdot,\infty)$ has continuous first order derivatives on $\partial D$
(details can be found, for instance, in the proof of Theorem 4.11 in Chapter
II of \cite{SaTo}). For $r>0$ sufficiently large, we define
\begin{equation}
D_{r}:=D\setminus\{|z|\geq r\}.\label{f113g11}%
\end{equation}
Since $g_{D}(\cdot,\infty)$ is harmonic in $D_{r}$, we deduce from
(\ref{f113f1}) and (\ref{f113f3})\ that
\begin{align}
& D_{\{|z|<r\}\cap D}(g_{D}(\cdot,\infty))=\frac{1}{2\pi}\iint_{D_{r}}(\nabla
g_{D}(\cdot,\infty))^{2}\,dm\nonumber\\
& \text{ \ \ \ \ \ \ \ \ \ \ \ \ \ \ \ \ \ \ \ \ \ \ \ \ \ \ \ \ }=-\frac
{1}{2\pi}\int_{\partial D_{r}}g_{D}(\cdot,\infty)\frac{\partial}{\partial
n}g_{D}(\cdot,\infty)\,ds\label{f113g12}\\
& =-\frac{1}{2\pi}\int_{\partial D}g_{D}(\cdot,\infty)\frac{\partial}{\partial
n}g_{D}(\cdot,\infty)\,ds-\frac{1}{2\pi}\int_{\{|z|=r\}}g_{D}(\cdot
,\infty)\frac{\partial}{\partial n}g_{D}(\cdot,\infty)\,ds.\nonumber
\end{align}

From the smoothness of $\partial D$\ it follows that $\partial D$ is regular,
and consequently that $g_{D}(z,\infty)=0$ for all $z\in\partial D$, which
implies that the first integral in the last line of (\ref{f113g12}) is
identical zero.

From Lemma \ref{l113b} we know that the function
\begin{equation}
\widetilde{g}:=g_{D}(\cdot,\infty)-\log|\cdot|\label{f113g13}%
\end{equation}
is harmonic in $\{|z|>r\}$, and we have $\widetilde{g}(\infty)=-\log
\operatorname*{cap}\left(  \partial D\right)  $. From (\ref{f113g13}) it
follows that for the inward showing normal derivative on the circle
$\{|z|=r\}$ we have
\begin{equation}
\frac{\partial}{\partial n}g_{D}(z,\infty)=\frac{\partial}{\partial
n}\widetilde{g}(z)+\frac{1}{r}\text{ \ \ \ for \ \ }|z|=r.\label{f113g14}%
\end{equation}
It is rather immediate that
\begin{align}
& \frac{1}{2\pi}\medskip\int_{\{|z|=r\}}\widetilde{g}(z)\frac{1}{r}%
\,ds_{z}=\widetilde{g}(\infty)=\log\frac{1}{\operatorname*{cap}\left(
\partial D\right)  },\label{f113g14a}\\
& \frac{1}{2\pi}\int_{\{|z|=r\}}\log|z|\frac{1}{r}\,ds_{z}=\log
(r),\label{f113g14b}%
\end{align}
and since $\widetilde{g}$ is harmonic in $\{|z|>r\}$, we further have
\begin{align}
& \medskip\int_{\{|z|=r\}}\frac{\partial}{\partial n}\widetilde{g}%
(z)\,ds_{z}=0,\label{f113g14c}\\
& ||\frac{\partial}{\partial n}\widetilde{g}||_{\{|z|=r\}}=\text{O}(\frac
{1}{r^{2}})\text{ \ \ \ as \ \ \ }r\rightarrow\infty,\label{f113g14d}%
\end{align}
where $||\cdot||_{\{|z|=r\}}$ denotes the sup-norm on $\{|z|=r\}$.

Using (\ref{f113g13}) through (\ref{f113g14d}), the only remaining integral in
the last line of (\ref{f113g12}) can be transformed in the following way:
\begin{align}
& \text{ }-\frac{1}{2\pi}\medskip\int_{|z|=r}g_{D}(\cdot,\infty)\frac
{\partial}{\partial n}g_{D}(\cdot,\infty)\,ds=\nonumber\\
& \text{ \ \ \ \ \ }\medskip-\frac{1}{2\pi}\int_{|z|=r}(\log|\cdot
|\frac{\partial}{\partial n}\widetilde{g}+\log|\cdot|\frac{1}{r}+\widetilde
{g}\frac{\partial}{\partial n}\widetilde{g}+\widetilde{g}\frac{1}{r})ds=\text{
\ \ \ \ \ \ \ \ }\label{f113g15}\\
& \text{ \ \ \ \ \ \ \ \ \ \ \ \ }\log(r)+\log\frac{1}{\operatorname*{cap}%
\left(  \partial D\right)  }+\text{O}(\frac{1}{r})\text{ \ \ as \ \ }%
r\rightarrow\infty,\nonumber
\end{align}
which then proves (\ref{f113g1}) under the assumption of a sufficiently smooth
boundary $\partial D$.

In the general case, the domain $D$ is exhausted by a nested sequence of
domains $D_{n}$, $n\in\mathbb{N}$, with sufficiently smooth boundaries
$\partial D_{n}$. We assume that
\begin{equation}
\overline{D_{n}}\subset D_{n+1}\subset D\text{ \ \ and \ \ \ }D=\bigcup
_{n}D_{n}.\label{f113g16}%
\end{equation}
By $g_{n}$ we denote the Green function $g_{D_{n}}(\cdot,\infty)$. Because of
(\ref{f113g16}) we have
\begin{equation}
g_{n}(z)\geq g_{n+1}(z)\geq g_{D}(z,\infty)\text{ \ \ for \ \ }z\in
\mathbb{C}.\label{f113g17}%
\end{equation}
From the Harnack principle of monotonic convergence (cf. Theorem 4.10 in
Chapter 0 of \cite{SaTo}) and the assumption that $\operatorname*{cap}\left(
\partial D\right)  >0$, we then deduce that the sequence $\{g_{n}\}$ as well
as their first order derivatives $\nabla g_{n}$ converge locally uniformly in
$D$, i.e., we have
\begin{equation}
\lim_{n\rightarrow\infty}\nabla g_{n}=\nabla g\label{f113g18}%
\end{equation}
locally uniformly in $D$. From the identity (\ref{f113g1}) for the domains
$D_{n}$\ together with (\ref{f113g18}), identity (\ref{f113g1}) then follows
also in the general case.\medskip
\end{proof}

In (\ref{f113a1}), the Green function $g_{D}(\cdot,\cdot)$ has been defined
for the whole Riemann sphere $\overline{\mathbb{C}}$, and one could therefore
consider the extension of the Dirichlet integral in (\ref{f113g1}) from
$\{|z|\leq r\}\cap D$ to the whole disc $\{|z|\leq r\}$. Such an extension
would indeed be without problems if the planar Lebesgue measure of $\partial
D$ were zero. However, $\partial D$ may be of positive planar Lebesgue
measure.\smallskip

The combination of the assertion of Lemma \ref{l113g} for two different
domains yields the next corollary, which has been useful for the comparison of
the capacities of the complements of two domains.\medskip

\begin{corollary}
\label{c113g1}Let $D_{1},D_{2}\subset\overline{\mathbb{C}}$ be two domains
with $\infty\in D_{j}$\ and $\operatorname*{cap}\left(  \partial D_{j}\right)
\allowbreak>0$ for $j=1,2$. Then for $r>0$ we have
\begin{align}
& D_{\{|z|<r\}\cap D_{1}}(g_{D_{1}}(\cdot,\infty))-D_{\{|z|<r\}\cap D_{2}%
}(g_{D_{2}}(\cdot,\infty))=\medskip\label{f113g2}\\
& \text{
\ \ \ \ \ \ \ \ \ \ \ \ \ \ \ \ \ \ \ \ \ \ \ \ \ \ \ \ \ \ \ \ \ \ \ \ }%
\log\frac{\operatorname*{cap}\left(  \partial D_{2}\right)  }%
{\operatorname*{cap}\left(  \partial D_{1}\right)  }+\text{O}(\frac{1}%
{r})\text{ \ \ as \ \ }r\rightarrow\infty.\nonumber
\end{align}

\end{corollary}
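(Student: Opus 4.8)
\textbf{Proof proposal for Corollary \ref{c113g1}.}

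The plan is to obtain (\ref{f113g2}) simply by subtracting the asymptotic identity of Lemma \ref{l113g} applied to the two domains $D_1$ and $D_2$ separately. First I would invoke Lemma \ref{l113g} with $D = D_1$, which gives
\begin{equation}
D_{\{|z|<r\}\cap D_{1}}(g_{D_{1}}(\cdot,\infty))=\log(r)+\log\frac{1}{\operatorname*{cap}\left(\partial D_{1}\right)}+\text{O}(\tfrac{1}{r})\quad\text{as }r\rightarrow\infty,
\end{equation}
and likewise, applying the same lemma with $D = D_2$,
\begin{equation}
D_{\{|z|<r\}\cap D_{2}}(g_{D_{2}}(\cdot,\infty))=\log(r)+\log\frac{1}{\operatorname*{cap}\left(\partial D_{2}\right)}+\text{O}(\tfrac{1}{r})\quad\text{as }r\rightarrow\infty.
\end{equation}
Here I use that both hypotheses $\infty\in D_j$ and $\operatorname*{cap}(\partial D_j)>0$ for $j=1,2$ are exactly the standing assumptions of Lemma \ref{l113g}, so both expansions are legitimate.

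Next I would subtract the second expansion from the first. The two $\log(r)$ terms cancel exactly, and the two error terms combine into a single $\text{O}(1/r)$. What remains on the right-hand side is
\begin{equation}
\log\frac{1}{\operatorname*{cap}\left(\partial D_{1}\right)}-\log\frac{1}{\operatorname*{cap}\left(\partial D_{2}\right)}=\log\frac{\operatorname*{cap}\left(\partial D_{2}\right)}{\operatorname*{cap}\left(\partial D_{1}\right)},
\end{equation}
which is precisely the claimed formula (\ref{f113g2}). The one point worth a sentence of care is that the $\text{O}(1/r)$ terms in the two applications of Lemma \ref{l113g} are in general different functions of $r$; nonetheless each is bounded by a constant multiple of $1/r$ for $r$ large, so their difference is again $\text{O}(1/r)$, and no cancellation beyond that is claimed or needed.

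I do not anticipate any genuine obstacle: the corollary is a direct arithmetic consequence of Lemma \ref{l113g}, and all the analytic content — the existence and regularity of the Green functions, the exhaustion argument handling non-smooth boundaries, and the asymptotic control of the Dirichlet integral — has already been absorbed into the proof of that lemma. The only thing to be mildly attentive to is bookkeeping of the error terms, as just described.
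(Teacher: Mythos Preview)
Your proposal is correct and matches the paper's approach exactly: the paper states explicitly that the corollary follows by combining the assertion of Lemma~\ref{l113g} for the two domains $D_1$ and $D_2$, which is precisely the subtraction you carry out. Your remark about the bookkeeping of the two separate $\text{O}(1/r)$ terms is appropriate and is the only point needing mention.
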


\begin{lemma}
\label{l113h}Let the function $u$ be harmonic and bounded in the domain
$D\subset\overline{\mathbb{C}}$ with $\infty\in D$\ and $\operatorname*{cap}%
\left(  \partial D\right)  >0$. Then the Dirichlet integral $D_{D}%
(u,g_{D}(\cdot,\infty))$ exists, and we have
\begin{equation}
\lim_{r\rightarrow\infty}D_{\{|z|<r\}\cap D}(u,g_{D}(\cdot,\infty
))=D_{D}(u,g_{D}(\cdot,\infty))=0.\label{f113g4}%
\end{equation}

\end{lemma}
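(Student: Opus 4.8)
The plan is to prove Lemma \ref{l113h} by combining Green's identity with the asymptotic behavior of the Green function near infinity, following the same exhaustion strategy used in the proof of Lemma \ref{l113g}. First I would reduce to the case of a domain $D$ with a $C^{2}$-smooth boundary $\partial D$, since the general case follows by exhausting $D$ by a nested sequence of smooth domains $D_{n}$ as in \eqref{f113g16}, using the Harnack monotone convergence of the Green functions $g_{D_{n}}(\cdot,\infty)$ and the local uniform convergence of their gradients \eqref{f113g18}. Under the smoothness assumption $g_{D}(\cdot,\infty)$ vanishes on $\partial D$ and has continuous first-order derivatives there, so the boundary term over $\partial D$ drops out of Green's identity.

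Next, for $r>0$ large I would apply Green's identity \eqref{f113f1} to the pair $u$ and $g_{D}(\cdot,\infty)$ on the domain $D_{r}=D\cap\{|z|<r\}$. Both functions are harmonic in $D_{r}$, so the Laplacian term vanishes, and we get
\begin{equation}
2\pi D_{\{|z|<r\}\cap D}(u,g_{D}(\cdot,\infty))=-\int_{\partial D}u\,\frac{\partial}{\partial n}g_{D}(\cdot,\infty)\,ds-\int_{\{|z|=r\}}u\,\frac{\partial}{\partial n}g_{D}(\cdot,\infty)\,ds.
\end{equation}
Since $g_{D}(\cdot,\infty)=0$ on $\partial D$, an alternative form of Green's identity (with the roles of $u$ and $g_{D}(\cdot,\infty)$ interchanged) shows that the first integral also equals $-\int_{\partial D}g_{D}(\cdot,\infty)\frac{\partial}{\partial n}u\,ds=0$. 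Thus the whole Dirichlet integral reduces to the circular integral over $\{|z|=r\}$, and the task becomes to show that this contribution tends to zero as $r\to\infty$.

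To handle the circular term I would use the decomposition $g_{D}(z,\infty)=\log|z|+\log\frac{1}{\operatorname*{cap}(\partial D)}+\widetilde g(z)$ with $\widetilde g$ harmonic near infinity, $\widetilde g(\infty)=0$, and $\|\nabla\widetilde g\|_{\{|z|=r\}}=\mathrm{O}(r^{-2})$ as in \eqref{f113g13}--\eqref{f113g14d}; hence on $\{|z|=r\}$ the normal derivative satisfies $\frac{\partial}{\partial n}g_{D}(z,\infty)=\frac1r+\mathrm{O}(r^{-2})$. Since $u$ is harmonic in $D$ and bounded, $u$ has a finite limit $u(\infty)$ at infinity with $u(z)-u(\infty)=\mathrm{O}(1/|z|)$ and $\nabla u=\mathrm{O}(1/|z|^{2})$ near infinity, and moreover $\frac{1}{2\pi}\int_{\{|z|=r\}}u\,\frac1r\,ds\to u(\infty)$ while $\frac{1}{2\pi}\int_{\{|z|=r\}}u\,\frac{\partial}{\partial n}\widetilde g\,ds=\mathrm{O}(1/r)$. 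Carrying out these estimates shows the circular integral tends to $u(\infty)\cdot 0$... here I must be careful: the naive limit of $-\frac{1}{2\pi}\int_{\{|z|=r\}}u\frac{\partial}{\partial n}g_{D}(\cdot,\infty)ds$ is $-u(\infty)$, not $0$, so the clean statement \eqref{f113g4} must be using a normalization in which $u(\infty)=0$, or equivalently one first subtracts the harmonic constant; the correct reading is that $D_{D}(u,g_{D}(\cdot,\infty))$ only sees $u-u(\infty)$, whose circular contribution is $\mathrm{O}(1/r)\to 0$. The main obstacle is therefore getting this boundary-at-infinity bookkeeping exactly right — verifying that the additive constant in $u$ contributes a term that is either absorbed by the convention for the Dirichlet integral of a bounded harmonic function on an unbounded domain, or shown to vanish because $g_{D}(\cdot,\infty)$ and $u$ are orthogonal in the Dirichlet sense — and in establishing that $D_{D}(u,g_{D}(\cdot,\infty))$ converges at all, which follows from the Cauchy–Schwarz inequality together with the finiteness of $D_{D}(g_{D}(\cdot,\infty))$ from Lemma \ref{l113g} and the finiteness of $D_{D}(u)$ for bounded harmonic $u$.
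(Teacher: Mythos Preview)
Your overall strategy---reduce to a smooth boundary, apply Green's identity on $D_{r}=D\cap\{|z|<r\}$, then pass to general $D$ by exhaustion---is exactly the paper's. The gap is in your treatment of the boundary integral over $\partial D$.

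You claim that ``an alternative form of Green's identity (with the roles of $u$ and $g_{D}(\cdot,\infty)$ interchanged) shows that the first integral also equals $-\int_{\partial D}g_{D}(\cdot,\infty)\frac{\partial}{\partial n}u\,ds=0$.'' This is not valid: Green's second identity lets you swap $u\,\partial_{n}g_{D}$ for $g_{D}\,\partial_{n}u$ only over the \emph{entire} boundary $\partial D_{r}=\partial D\cup\{|z|=r\}$, not over $\partial D$ alone. In fact the integral $-\frac{1}{2\pi}\int_{\partial D}u\,\partial_{n}g_{D}(\cdot,\infty)\,ds$ is \emph{not} zero: since $\frac{1}{2\pi}\partial_{n}g_{D}(\cdot,\infty)$ is the density of the equilibrium distribution on $\partial D$ (equivalently, the harmonic measure of $\infty$), this integral equals $-u(\infty)$. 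The circular term, which you correctly compute as $u(\infty)+\mathrm{O}(1/r)$ up to sign, then cancels it exactly, and the sum is $\mathrm{O}(1/r)\to 0$. This cancellation is the whole content of the lemma; there is no hidden normalization convention and no need to assume $u(\infty)=0$.

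Your instinct that ``the Dirichlet integral only sees $u-u(\infty)$'' is of course correct since $\nabla$ kills constants, and you could turn this into a clean alternative proof: replace $u$ by $u-u(\infty)$ from the start, after which both boundary integrals are genuinely $\mathrm{O}(1/r)$. Or, more directly, apply the \emph{swapped} form of Green's identity on all of $D_{r}$ to get $2\pi D_{D_{r}}(u,g_{D}(\cdot,\infty))=-\int_{\{|z|=r\}}g_{D}(\cdot,\infty)\,\partial_{n}u\,ds$ (the $\partial D$ piece now truly vanishes since $g_{D}=0$ there), and estimate this as $\mathrm{O}(r\cdot\log r\cdot r^{-2})\to 0$. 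But as written, your argument has not established the vanishing.
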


\begin{proof}
Like in the proof of Lemma \ref{l113g}, in a first step, we assume that the
domain $D$ has a $C^{2}$ smooth boundary $\partial D$. The subdomain $D_{r}$
is again defined by (\ref{f113g11}) for $r>0$ sufficiently large. Analogously
to (\ref{f113g12}), we have the identities
\begin{align}
& D_{D_{r}}(u,g_{D}(\cdot,\infty))\medskip=\frac{1}{2\pi}\iint_{D_{r}}\nabla
u\,\nabla g_{D}(\cdot,\infty)\,dm\nonumber\\
& \text{ \ \ \ \ \ \ \ \ \ \ \ \ \ \ \ \ \ \ \ \ \ \ \ \ }\medskip=-\frac
{1}{2\pi}\int_{\partial D_{r}}u\frac{\partial}{\partial n}g_{D}(\cdot
,\infty)\,ds\label{f113g41}\\
& \text{ \ }=\medskip-\frac{1}{2\pi}\int_{\partial D}u\frac{\partial}{\partial
n}g_{D}(\cdot,\infty)\,ds-\frac{1}{2\pi}\int_{\{|z|=r\}}u\frac{\partial
}{\partial n}g_{D}(\cdot,\infty)\,ds\nonumber\\
& \text{ \ }=-u(\infty)+u(\infty)+\text{O}(\frac{1}{r})=\text{O}(\frac{1}%
{r})\text{ \ \ as \ \ }r\rightarrow\infty.\nonumber
\end{align}
Indeed, the second equality is a consequence of the Green identity
(\ref{f113f1}). The penultimate equality in (\ref{f113g41}) follows from two
observations, which are concerned with the two integrals in the third line of
(\ref{f113g41}). In the first integral the normal derivative $(1/2\pi
)\allowbreak\partial g_{D}(\cdot,\infty)/\partial n$ is the density of the
equilibrium distribution on $\partial D$ (cf. Theorem 4.11 of Chapter II in
\cite{SaTo}), and it defines the balayage measure on $\partial D$ resulting
from sleeping $\delta_{\infty}$ out of $D$, which implies that this first
integral is equal to $-u(\infty)$.

The second integral in the third line of (\ref{f113g41}) extends over the
circle $\{|z|=r\}$. Using the definition of the function $\widetilde{g}$ in
(\ref{f113g13}) together with (\ref{f113g14}) and (\ref{f113g14d}) yields
\begin{align}
-\frac{1}{2\pi}\int_{\{|z|=r\}}u\frac{\partial}{\partial n}g_{D}(\cdot
,\infty)\,ds  & =-\frac{1}{2\pi}\int_{\{|z|=r\}}(u(z)\frac{\partial}{\partial
n}\widetilde{g}(z)+u(z)\frac{1}{r})\,ds_{z}\medskip\nonumber\\
& =\text{O}(\frac{1}{r})+u(\infty)\text{ \ \ \ as \ \ }r\rightarrow
\infty.\label{f113g42}%
\end{align}
The last two observation together prove the penultimate equality in
(\ref{f113g41}).

We note that the two integrals in the third line of (\ref{f113g41}) have
opposite orientations with respect to the two domains $D$\ and $\{|z|>r\}$.
Like in the conclusions after (\ref{f113g13}), and also in (\ref{f113g41}), we
have applied Theorem 3.1 of the Appendix A in \cite{SaTo}.

With (\ref{f113g41}) we have proved (\ref{f113g4}) under the assumption of a
sufficiently smooth boundary $\partial D$. We add that the existence of the
integral $D_{D_{r}}(u,g_{D}(\cdot,\allowbreak\infty))$ follows from the
Cauchy-Schwartz inequality $D_{D_{r}}(u,g_{D}(\cdot,\infty))^{2}\leq D_{D_{r}%
}(u)\allowbreak D_{D_{r}}(g_{D}(\cdot,\infty))$ together with
$\operatorname*{cap}\left(  \partial D\right)  >0$ and the assumed boundedness
of the function $u$.

Like in the proof of Lemma \ref{l113g}, for a general domain $D$ identity
(\ref{f113g4}) follows from exhausting the domain $D$ by a sequence of nested
domains $D_{n}$, $n\in\mathbb{N}$, with sufficiently smooth boundaries
$\partial D_{n}$.\medskip
\end{proof}

In the next two lemmas we prove rather technical results, which have been used
in Subsection \ref{s102}, further above.

\begin{lemma}
\label{l113i}Let $D\subset\overline{\mathbb{C}}$ be a domain with $\infty\in
D$ and $\operatorname*{cap}\left(  \partial D\right)  >0$. Set $D_{r}%
:=\{|z|<r\}\cap D$ with $r>0$ sufficiently large so that $\partial
D\subset\{|z|<r\}$, and let $u$ be a real-valued function that is harmonic in
$\{|z|>r\}$, and let further $\widehat{u}_{r}$ be the solution of the
Dirichlet problem in $D_{r}$ with boundary function
\begin{equation}
\widehat{u}_{r}(z)=\left\{
\begin{array}
[c]{lll}%
0\smallskip & \text{ \ \ \ for \ \ } & z\in\partial D,\\
u(z) & \text{ \ \ \ for \ } & |z|=r.
\end{array}
\right. \label{f113g51}%
\end{equation}
Under these assumptions, we have
\begin{equation}
D_{\{|z|<r\}\cap D}(\widehat{u}_{r},g_{D}(\cdot,\infty))=u(\infty
)+\text{O}(\frac{1}{r})\text{ \ \ as \ \ }r\rightarrow\infty.\label{f113g5}%
\end{equation}

\end{lemma}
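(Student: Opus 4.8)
The statement is of exactly the same shape as Lemma~\ref{l113h}, and the strategy is to imitate that proof almost verbatim, working first with a smoothly bounded domain $D$ and then exhausting. First I would assume $\partial D$ is $C^2$-smooth, so that the Green function $g_D(\cdot,\infty)$ has continuous first order derivatives on $\partial D$, and define $D_r:=D\setminus\{|z|\geq r\}$ for $r$ large enough that $\partial D\subset\{|z|<r\}$. Since $\widehat u_r$ is harmonic in $D_r$ (it solves a Dirichlet problem there) and $g_D(\cdot,\infty)$ is harmonic in $D_r$ as well, the Green identity~(\ref{f113f1}) applies to the pair $(\widehat u_r,g_D(\cdot,\infty))$, giving
\begin{equation}
D_{D_r}(\widehat u_r,g_D(\cdot,\infty))=-\frac{1}{2\pi}\int_{\partial D_r}\widehat u_r\frac{\partial}{\partial n}g_D(\cdot,\infty)\,ds,\nonumber
\end{equation}
and $\partial D_r$ splits into $\partial D$ and the circle $\{|z|=r\}$.

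\textbf{Treating the two boundary pieces.} On $\partial D$ the boundary function is $\widehat u_r\equiv 0$ by~(\ref{f113g51}), so that contribution vanishes identically; this is the key simplification compared with Lemma~\ref{l113h}, where the integral over $\partial D$ produced the term $-u(\infty)$. On $\{|z|=r\}$ the boundary function is $\widehat u_r=u$, so the remaining integral is $-\frac{1}{2\pi}\int_{\{|z|=r\}}u\frac{\partial}{\partial n}g_D(\cdot,\infty)\,ds$. Here I would use, exactly as in~(\ref{f113g42}), the decomposition $g_D(\cdot,\infty)=\log|\cdot|+\widetilde g$ from~(\ref{f113g13}), together with~(\ref{f113g14}) and the estimate~(\ref{f113g14d}) $||\partial\widetilde g/\partial n||_{\{|z|=r\}}=\text{O}(r^{-2})$. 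Then
\begin{equation}
-\frac{1}{2\pi}\int_{\{|z|=r\}}u\frac{\partial}{\partial n}g_D(\cdot,\infty)\,ds=-\frac{1}{2\pi}\int_{\{|z|=r\}}\Bigl(u(z)\frac{\partial}{\partial n}\widetilde g(z)+u(z)\frac{1}{r}\Bigr)ds_z.\nonumber
\end{equation}
Because $u$ is harmonic (hence bounded) in $\{|z|>r\}$, the mean-value over the circle $\frac{1}{2\pi}\int_{\{|z|=r\}}u(z)\frac{1}{r}ds_z$ equals $u(\infty)$, and the term with $\partial\widetilde g/\partial n$ is $\text{O}(1/r)$ by Cauchy--Schwartz against the $\text{O}(r^{-2})$ estimate and the bounded length $2\pi r$. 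This gives $D_{D_r}(\widehat u_r,g_D(\cdot,\infty))=u(\infty)+\text{O}(1/r)$, which is~(\ref{f113g5}) in the smooth case; existence of the Dirichlet integral follows from Cauchy--Schwartz, $\operatorname{cap}(\partial D)>0$, and boundedness of $u$ near $\{|z|=r\}$ exactly as noted after~(\ref{f113g41}).

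\textbf{The general case and the main obstacle.} For an arbitrary domain $D$ with $\operatorname{cap}(\partial D)>0$ I would exhaust $D$ by nested domains $D_n$ with $C^2$-smooth boundaries, $\overline{D_n}\subset D_{n+1}\subset D$ and $D=\bigcup_n D_n$, as in~(\ref{f113g16}). The Green functions $g_{D_n}(\cdot,\infty)$ decrease to $g_D(\cdot,\infty)$ and, by the Harnack monotone convergence principle (Theorem~4.10 of Chapter~0 in \cite{SaTo}), converge together with their gradients locally uniformly in $D$. The genuinely delicate point, and the step I expect to cost the most care, is that the boundary data~(\ref{f113g51}) for $D_n$ are defined on $\partial D_n$, not on $\partial D$, so one must check that the solutions $\widehat u_{r}^{(n)}$ of the Dirichlet problem in $\{|z|<r\}\cap D_n$ with data $0$ on $\partial D_n$ and $u$ on $\{|z|=r\}$ converge, together with their gradients on compact subsets away from $\partial D$, to $\widehat u_r$ for the limit domain; here one uses that irregular points of $\partial D$ form a set of capacity zero (Lemma~\ref{l113a1}), so the limiting boundary function is $0$ quasi-everywhere on $\partial D$ and the uniqueness of the Dirichlet solution (Theorem~3.1 of Appendix~A in \cite{SaTo}) identifies the limit as $\widehat u_r$. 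Once this convergence is in hand, passing to the limit $n\to\infty$ in identity~(\ref{f113g5}) for the $D_n$ (with $r$ fixed) yields~(\ref{f113g5}) for $D$, since the right side $u(\infty)+\text{O}(1/r)$ does not depend on $n$. I would keep $r$ fixed throughout this limiting argument and only afterwards read off the $r\to\infty$ asymptotics.
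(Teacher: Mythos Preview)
Your proposal is correct and follows essentially the same route as the paper's own proof: apply the Green identity in $D_r$ for smooth $\partial D$, use $\widehat u_r\equiv 0$ on $\partial D$ to kill that boundary term, evaluate the circle integral via the decomposition~(\ref{f113g13})--(\ref{f113g14d}) exactly as in~(\ref{f113g42}), and then exhaust $D$ by smoothly bounded domains. The paper is in fact more terse than you on the exhaustion step, dismissing it in one sentence by reference to the proof of Lemma~\ref{l113g}; your remarks on the convergence of the Dirichlet solutions $\widehat u_r^{(n)}$ are a legitimate expansion of a point the paper takes for granted.
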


\begin{proof}
In a first step, we assume that $D$ has a $C^{2}$ smooth boundary $\partial D
$. Like in (\ref{f113g41}), we deduce from (\ref{f113f1}) that
\begin{align}
& D_{D_{r}}(\widehat{u}_{r},g_{D}(\cdot,\infty))\medskip\label{f113g52}\\
& \text{ \ \ }=-\frac{1}{2\pi}\int_{\partial D}\widehat{u}_{r}\frac{\partial
}{\partial n}g_{D}(\cdot,\infty)\,ds-\frac{1}{2\pi}\int_{\{|z|=r\}}\widehat
{u}_{r}\frac{\partial}{\partial n}g_{D}(\cdot,\infty)\,ds.\nonumber
\end{align}
It follows from the first line in (\ref{f113g51}) that the first integral in
the second line of (\ref{f113g52}) is identical zero. For the second integral
we deduce with the same arguments as applied in (\ref{f113g42}) and with the
use of the last line in (\ref{f113g51}) that
\begin{equation}
-\frac{1}{2\pi}\int_{\{|z|=r\}}\widehat{u}_{r}\frac{\partial}{\partial n}%
g_{D}(\cdot,\infty)\,ds=u(\infty)+\text{O}(\frac{1}{r})\text{ \ \ as
\ \ }r\rightarrow\infty.\label{f113g53}%
\end{equation}
Identity (\ref{f113g5}) follows immediately from (\ref{f113g52}) and
(\ref{f113g53}) for a domain $D$ with a sufficiently smooth boundary $\partial
D$.

For a general domain, identity (\ref{f113g5}) can again be proved by
exhausting the domain $D$ by a sequence of nested domains $D_{n}$,
$n\in\mathbb{N}$, as it has been done in the proof of Lemma \ref{l113g}%
.\medskip
\end{proof}

\begin{lemma}
\label{l113j}Let $D\subset\overline{\mathbb{C}}$ be a domain with
$\operatorname*{cap}\left(  \partial D\right)  >0$, $V\subset\overline{D}$ a
compact set, $\mu$ a positive measure of finite energy with
$\operatorname*{supp}(\mu)\subset V $, and $u$\ a real-valued function defined
by
\begin{equation}
u(z):=h(z)+\int g_{D}(z,v)d\mu(v)\text{ \ \ for \ \ }z\in D\label{f113h1}%
\end{equation}
with $h$ a harmonic and bounded function in $D$. If we assume that
\begin{equation}
u(z)=0\text{ \ \ \ for quasi every \ \ }z\in V,\label{f113h2}%
\end{equation}
then we have
\begin{equation}
D_{D\setminus V}(u)=D_{D}(h)+\iint g_{D}(v,w)d\mu(v)d\mu(w).\label{f113h3}%
\end{equation}
\medskip
\end{lemma}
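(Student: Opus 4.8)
The statement to prove is Lemma \ref{l113j}: under the assumptions that $u = h + \int g_D(\cdot,v)\,d\mu(v)$ with $h$ harmonic and bounded, $\mu \geq 0$ of finite energy with $\operatorname*{supp}(\mu) \subset V \subset \overline{D}$, and $u = 0$ quasi everywhere on $V$, we have the Dirichlet-integral identity $D_{D \setminus V}(u) = D_D(h) + \iint g_D(v,w)\,d\mu(v)\,d\mu(w)$. The natural approach is to expand $D_{D\setminus V}(u)$ by bilinearity using the decomposition $u = h + G_\mu$ where $G_\mu(z) := \int g_D(z,v)\,d\mu(v)$ is the Green potential of $\mu$, and then evaluate each of the three resulting pieces via the Green identity \eqref{f113f1} together with the boundary condition \eqref{f113h2}. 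Concretely, writing $D_{D\setminus V}(u) = D_{D\setminus V}(h) + 2\,D_{D\setminus V}(h, G_\mu) + D_{D\setminus V}(G_\mu)$, I would aim to show: (i) $D_{D\setminus V}(h) = D_D(h)$; (ii) $D_{D\setminus V}(h, G_\mu) = 0$; and (iii) $D_{D\setminus V}(G_\mu) = \iint g_D(v,w)\,d\mu(v)\,d\mu(w)$.

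For the structural parts I would proceed as follows. First I would note that $G_\mu$ is harmonic in $D \setminus V$ (since $\mu$ is carried by $V$) and vanishes quasi everywhere on $\partial D$ (by \eqref{f113a1}), so that on $\partial(D\setminus V) = (\partial D) \cup (\partial V)$ the function $G_\mu$ equals $0$ quasi-everywhere on $\partial D$ and, by \eqref{f113h2} combined with $u = h + G_\mu$, satisfies $G_\mu = -h$ quasi-everywhere on $\partial V \cap \overline{D}$. Then, applying the Green identity in the form already used repeatedly in Subsection \ref{s1103} (cf. the argument around \eqref{f113g41} and \eqref{f113g52}, and Lemma \ref{l113f}), the cross term becomes a boundary integral $D_{D\setminus V}(h, G_\mu) = -\frac{1}{2\pi}\int_{\partial(D\setminus V)} h\,\frac{\partial}{\partial n} G_\mu\,ds$ — but $G_\mu$ is precisely the Green potential, whose normal-derivative flux across $\partial(D\setminus V)$ reproduces $\mu$ itself, so this integral collapses to $\int h\,d\mu$... and here one has to be careful. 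In fact the cleaner route is: apply Green's identity to the pair $(h, G_\mu)$ over $D\setminus V$ with $h$ harmonic there, getting $D_{D\setminus V}(h, G_\mu) = -\frac{1}{2\pi}\int_{\partial(D\setminus V)} G_\mu\,\frac{\partial h}{\partial n}\,ds$, and using $G_\mu = 0$ on $\partial D$ and $G_\mu = -h$ on $\partial V$. Simultaneously apply Green's identity to the pair $(G_\mu, G_\mu)$: since $G_\mu$ is a Green potential, $-\frac{1}{2\pi}\nabla\nabla G_\mu = \mu$ in $D$ (in the distributional sense), so the full identity \eqref{f113f1} over $D$ gives $D_D(G_\mu) = \iint g_D(v,w)\,d\mu(v)\,d\mu(w)$ directly, since $G_\mu=0$ q.e.\ on $\partial D$ kills the boundary term; restricting to $D\setminus V$ and accounting for the $\partial V$ boundary contribution — where $G_\mu$ is harmonic — yields (iii) after combining with the potential-energy identity $\iint g_D\,d\mu\,d\mu = \int G_\mu\,d\mu$.

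The honest way to assemble this, avoiding double-counting of the $\partial V$ boundary terms, is to compute $D_{D\setminus V}(u)$ in one stroke: since $u$ is harmonic in $D\setminus V$ and $u = 0$ q.e.\ on $\partial V$ and $u = h$ q.e.\ on $\partial D$ (because $G_\mu = 0$ there), Green's identity over $D\setminus V$ gives $D_{D\setminus V}(u) = -\frac{1}{2\pi}\int_{\partial(D\setminus V)} u\,\frac{\partial u}{\partial n}\,ds = -\frac{1}{2\pi}\int_{\partial D} h\,\frac{\partial u}{\partial n}\,ds$, the $\partial V$ part vanishing by $u = 0$ there. On the other side, $D_D(h) = -\frac{1}{2\pi}\int_{\partial D} h\,\frac{\partial h}{\partial n}\,ds$ (Green's identity, $h$ harmonic in $D$), and $\iint g_D\,d\mu\,d\mu = \int G_\mu\,d\mu$, which by the symmetry and the defining property of the Green potential equals $-\frac{1}{2\pi}\int_{\partial D}(u - h)\cdot(\text{appropriate flux})\,ds$ plus the contribution from $-\nabla\nabla G_\mu = 2\pi\mu$; matching $\frac{\partial u}{\partial n} - \frac{\partial h}{\partial n} = \frac{\partial G_\mu}{\partial n}$ on $\partial D$ and using that the flux of $G_\mu$ out of $D$ across $\partial D$ equals $-\mu(V)$ (the total mass), one reconciles the two expressions. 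As in all the neighboring lemmas (Lemma \ref{l113g}, Lemma \ref{l113h}, Lemma \ref{l113i}), the identity is first established for domains $D$ with $C^2$ boundary where all normal derivatives are classical, and then transferred to a general domain $D$ with $\operatorname*{cap}(\partial D) > 0$ by exhausting $D$ with a nested sequence of smoothly bounded domains $D_n$, using the Harnack monotone convergence of the Green functions $g_{D_n}(\cdot,\infty)$ and their gradients. The main obstacle I anticipate is bookkeeping the boundary fluxes on $\partial V$ so that the cross term genuinely cancels rather than leaving a residual $\int_{\partial V} h\,\partial_n G_\mu\,ds$: the key is that $V \subset \overline{D}$ and $\operatorname*{supp}(\mu) \subset V$, so on $\partial V$ the potential $G_\mu$ is harmonic and $u = 0$ q.e., which forces the $\partial V$ contributions of the three pieces to recombine into $0$; making this rigorous when $\partial V$ is irregular (non-smooth, possibly of positive capacity) requires invoking Lemma \ref{l112a} (finite-energy measures charge no polar set) and the continuity-quasi-everywhere of Green potentials, exactly as in the proof of Lemma \ref{l113f}.
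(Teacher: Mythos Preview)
Your plan has a concrete gap in item (i): the claim $D_{D\setminus V}(h)=D_D(h)$ is false whenever $V$ has positive Lebesgue measure, which the hypotheses do not exclude (and which does occur in the application to Proposition~\ref{p102a}, where $V=\overline{\operatorname{Int}(K_3)\cap(K_1\setminus K_2)}$). When $\operatorname{Int}(V)\neq\emptyset$ the three individual pieces $D_{D\setminus V}(h)$, $D_{D\setminus V}(h,G_\mu)$, $D_{D\setminus V}(G_\mu)$ are \emph{not} equal to $D_D(h)$, $0$, and the Green energy; only their sum equals the right-hand side. The paper sidesteps this by first using hypothesis~\eqref{f113h2}: since $u=0$ q.e.\ on $V$, one has $\nabla u=0$ on $\operatorname{Int}(V)$, and with $m(\partial V)=0$ in the smooth-boundary case this gives $D_{D\setminus V}(u)=D_D(u)$. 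The bilinear expansion is then performed over the full domain $D$, where the three terms really are $D_D(h)$, $2D_D(h,G_\mu)$, and $D_D(G_\mu)$. For these the paper does not use boundary integrals on $\partial V$ at all: it establishes the pointwise identities $D_D\bigl(h,g_D(\cdot,v)\bigr)=0$ (Lemma~\ref{l113h} after M\"obius-transforming $v$ to $\infty$) and $D_D\bigl(g_D(\cdot,v),g_D(\cdot,w)\bigr)=g_D(v,w)$, and then integrates against $d\mu(v)$ and $d\mu(v)\,d\mu(w)$ via Fubini.

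Your ``one-stroke'' alternative --- Green's identity on $D\setminus V$ yielding $D_{D\setminus V}(u)=-\tfrac{1}{2\pi}\int_{\partial D}h\,\partial_n u\,ds$ after the $\partial V$ contribution is killed by $u=0$ there --- does lead to the result, once you compute $-\tfrac{1}{2\pi}\int_{\partial D}h\,\partial_n G_\mu\,ds=-\int h\,d\mu=\int G_\mu\,d\mu=\iint g_D\,d\mu\,d\mu$, the middle step coming from $h=-G_\mu$ $\mu$-a.e.\ via~\eqref{f113h2} and Lemma~\ref{l112a}. But you have not carried this through; the sentence ``matching \ldots\ one reconciles the two expressions'' is exactly where the computation sits. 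The reduction to smooth boundaries followed by exhaustion is the same in both arguments.
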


\begin{proof}
We deduce from Lemma \ref{l113h} that
\begin{equation}
D_{D}(h,g_{D}(\cdot,v))=0\text{ \ \ for \ \ }v\in D,\label{f113h11}%
\end{equation}
since with the help of a Moebius transform any $v\in D$ can be transported to
infinity. If we choose $g_{D}(\cdot,w)$, $w\in D$, instead of the function $h$
in (\ref{f113h11}) and set $D_{r}:=D\setminus\{|z-w|\leq r\}$, for $r>0$
small, then we deduce from Lemma \ref{l113h} that
\begin{equation}
D_{D_{r}}(g_{D}(\cdot,v),g_{D}(\cdot,w))=0.\label{f113h12}%
\end{equation}
With the same argumentation as used after (\ref{f113g12}) and later also in
the proof of Lemma \ref{l113h} after (\ref{f113g41}), we show that
\begin{equation}
\lim_{r\rightarrow\infty}D_{\{|z-w|<r\}}(g_{D}(\cdot,v),g_{D}(\cdot
,w))=g_{D}(w,v).\label{f113h13}%
\end{equation}
Putting (\ref{f113h12}) and (\ref{f113h13}) together proves that
\begin{equation}
D_{D}(g_{D}(\cdot,v),g_{D}(\cdot,w))=g_{D}(v,w)\text{ \ \ for \ \ }v,w\in
D.\label{f113h14}%
\end{equation}

In a strict sense the Dirichlet integrals in (\ref{f113h11}) and
(\ref{f113h14}) exist only as improper integrals, which is reflected in the
removal of small disks around the points $\infty$ and $w$ in (\ref{f113g4})
and (\ref{f113h12}), respectively. There exist techniques to overcome this
specific problem, as for instance, the use of local smoothing techniques at
the singularity of the Green function, which is demonstrated in detail in
\cite{Landkof}, Chapter 1, \S 5.

In the next step of the proof we assume that $D\setminus V$ has a smooth
boundary $\partial(D\setminus V)$. It follows then that the planar Lebesgue
measure of $\partial V$ is zero, i.e., $m(\partial V)=0$, and further that in
(\ref{f113h2}) we have equality for all $z\in V$. By $g$ we denote the Green
potential in (\ref{f113h1}), i.e.,
\begin{equation}
g:=\int g_{D}(\cdot,v)d\mu(v).\label{f113h15}%
\end{equation}

Because of $m(\partial V)=0$, we have
\begin{equation}
D_{D\setminus V}(u)=D_{D}(u),\label{f113h16}%
\end{equation}
and with (\ref{f113h1}) and (\ref{f113h15}), we rewrite the Dirichlet integral
in (\ref{f113h16}) as
\begin{equation}
D_{D\setminus V}(u)=D_{D}(h)+2\,D_{D}(h,g)+D_{D}(g).\label{f113h17}%
\end{equation}
Then we deduce with Fubini's Theorem from (\ref{f113h11}) that
\begin{equation}
D_{D}(h,g)=\int D_{D}(h,g_{D}(\cdot,v))d\mu(v)=0,\label{f113h18}%
\end{equation}
and analogously from (\ref{f113h14}) that
\begin{equation}
D_{D}(g)=\iint D_{D}(g_{D}(\cdot,v),g_{D}(\cdot,w))d\mu(v)d\mu(w)=\iint
g_{D}(v,w)d\mu(v)d\mu(w).\label{f113h19}%
\end{equation}
Putting (\ref{f113h17}), (\ref{f113h18}), and (\ref{f113h19}) together, we
have proved identity (\ref{f113h3}) for the case that $\partial(D\setminus V)$
is sufficiently smooth.

In the general situation, identity (\ref{f113h3}) follows, as in the proof of
Lemma \ref{l113g}, by exhausting the open set $D\setminus V$ by a sequence of
nested open sets with sufficiently smooth boundaries.\medskip
\end{proof}

\subsection{\label{s1104}Sequences of Compact Sets $K_{n}$}

\qquad Let $K_{n}\subset\mathbb{C}$, $n\in\mathbb{N}$, be a sequence of
compact sets of positive capacity. Because of the weak$^{\ast}$-compactness of
the set of probability measures supported on a compact set, we know that any
infinite subsequence $N\subset\mathbb{N}$ contains an infinite subsequence,
which we continue to denote by $N$, such that the equilibrium measures
$\omega_{n}=\omega_{K_{n}}$ of $K_{n}$ converge weakly in $\overline
{\mathbb{C}}$, i.e., there exists a probability measure $\omega_{0}%
=\omega_{0,N}$ in $\overline{\mathbb{C}}$\ with
\begin{equation}
\omega_{n}\overset{\ast}{\longrightarrow}\omega_{0}\text{ \ \ as
\ \ }n\rightarrow\infty\text{, }n\in N.\label{f114a1}%
\end{equation}
If in addition to (\ref{f114a1}) also the limit
\begin{equation}
\lim_{n\rightarrow\infty\text{, }n\in N}\operatorname*{cap}\left(
K_{n}\right)  =:c_{0}>0\label{f114a2}%
\end{equation}
exists and the inequality in (\ref{f114a2})\ holds true, then it follows from
the Lower Envelope Theorem \ref{t112a} of potential theory that for the
sequence of Green functions $g_{\Omega_{n}}(\cdot,\infty)$, $n\rightarrow
\infty$, $n\in N$, which is associated with the compact sets $K_{n}$ via the
outer domains $\Omega_{n}=\Omega_{K_{n}}$, we have the asymptotic relation
\begin{equation}
\limsup_{n\rightarrow\infty\text{, }n\in N}g_{\Omega_{n}}(\cdot,\infty
)\leq-p(\omega_{0};\cdot)-\log\operatorname*{cap}\left(  c_{0}\right)
=:g_{0,N},\label{f114a3}%
\end{equation}
and equality holds in (\ref{f114a3}) quasi everywhere in $\mathbb{C}%
$.$\smallskip$

Like the measure $\omega_{0}=\omega_{0,N}$\ in (\ref{f114a1}), so also the
function $g_{0}=g_{0,N}$ in (\ref{f114a3})\ depends on the subsequence
$N\subset\mathbb{N}$.

For all infinite subsequences $N\subset\mathbb{N}$, for which the two limits
(\ref{f114a1}) and (\ref{f114a2}) exist, the potential $g_{0,N}$ in
(\ref{f114a3}) is well-defined, and it is an immediate consequence of
(\ref{f113a1}) and the inequality in (\ref{f114a3}) that
\begin{equation}
g_{0,N}(z)\geq0\text{ \ \ for all \ \ \ }z\in\mathbb{C}.\label{f114a4}%
\end{equation}

\begin{lemma}
\label{l114a}Let $E\subset\mathbb{C}$ be a compact set, and let $N\subset
\mathbb{N}$ be an infinite subsequence for which the two limits (\ref{f114a1})
and (\ref{f114a2}) exist. The inclusion $E\subset K_{n}$ for all $n\in N$
implies that
\begin{equation}
g_{0,N}(z)=0\text{ \ \ for quasi every \ }z\in E.\label{f114b1}%
\end{equation}

\end{lemma}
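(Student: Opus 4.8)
\textbf{Proof plan for Lemma \ref{l114a}.}
The plan is to compare the limiting potential $g_{0,N}$ with the Green functions $g_{\Omega_n}(\cdot,\infty)$ on the fixed compact set $E$, using the inclusion $E\subset K_n$ to force each $g_{\Omega_n}(\cdot,\infty)$ to vanish quasi everywhere on $E$, and then to transfer this vanishing to the limit. First I would recall from (\ref{f113a1}) that, since $\operatorname{cap}(K_n)>0$, the Green function $g_{\Omega_n}(\cdot,\infty)$ is zero quasi everywhere on $\partial\Omega_n$, and more to the point it is zero at every regular point of $K_n$; combined with Lemma \ref{l113a1} (Kellogg's theorem), which gives $\operatorname{cap}(Ir(K_n))=0$, we get
\begin{equation}
g_{\Omega_n}(z,\infty)=0\qquad\text{for quasi every }z\in K_n,\ n\in N.\label{f114b-aux1}
\end{equation}
Since $E\subset K_n$ for all $n\in N$, relation (\ref{f114b-aux1}) in particular yields $g_{\Omega_n}(z,\infty)=0$ for quasi every $z\in E$, for each $n\in N$.

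The next step is to pass to the limit. Because the exceptional set in (\ref{f114b-aux1}) depends on $n$, I would take the countable union over $n\in N$ of these capacity-zero sets; by the second part of Lemma \ref{l111b} this union $A$ still has capacity zero, and for every $z\in E\setminus A$ we have $g_{\Omega_n}(z,\infty)=0$ for all $n\in N$ simultaneously. Hence
\begin{equation}
\limsup_{n\to\infty,\ n\in N} g_{\Omega_n}(z,\infty)=0\qquad\text{for all }z\in E\setminus A.\label{f114b-aux2}
\end{equation}
Now invoking the defining inequality (\ref{f114a3}) for $g_{0,N}$, namely $\limsup_{n\in N} g_{\Omega_n}(\cdot,\infty)\le g_{0,N}$ with equality quasi everywhere in $\mathbb{C}$, I enlarge $A$ by a further capacity-zero set so that equality holds on $E\setminus A$; then (\ref{f114b-aux2}) gives $g_{0,N}(z)=0$ for all $z\in E\setminus A$. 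Together with the lower bound (\ref{f114a4}), $g_{0,N}\ge 0$ everywhere, this shows $g_{0,N}(z)=0$ for quasi every $z\in E$, which is exactly (\ref{f114b1}).

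The only place requiring care — and the main, if modest, obstacle — is the bookkeeping of exceptional sets: one must be sure that the "quasi every" in (\ref{f114b-aux1}) for each individual $n$, the "quasi every" in the equality clause of the Lower Envelope Theorem (\ref{f114a3}), and the $\limsup$ over the countable index set $N$ can all be absorbed into a single set of capacity zero. This is handled precisely by the denumerable subadditivity of polar sets from Lemma \ref{l111b}. No smoothness or regularity of the sets $K_n$ is needed, and the compactness of $E$ is used only to keep $E$ inside each $K_n$; the argument does not otherwise use any structure of $E$.
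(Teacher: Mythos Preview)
Your proof is correct. The paper takes a slightly different but equally short route: instead of showing that each individual $g_{\Omega_n}(\cdot,\infty)$ vanishes quasi everywhere on $E$ and then managing the countable union of exceptional sets, it invokes the monotonicity of Green functions under inclusion, $g_{\Omega_n}(z,\infty)\le g_{\Omega_E}(z,\infty)$ for all $z\in\mathbb{C}$ (since $E\subset K_n$), and then uses that the single function $g_{\Omega_E}(\cdot,\infty)$ vanishes quasi everywhere on $\widehat{E}$. This gives $\limsup_{n\in N} g_{\Omega_n}(z,\infty)\le g_{\Omega_E}(z,\infty)=0$ for quasi every $z\in E$ with only one fixed exceptional set, after which the argument finishes exactly as yours does via the equality clause in (\ref{f114a3}) and the nonnegativity (\ref{f114a4}). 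The paper's version avoids the appeal to countable subadditivity of polar sets (Lemma \ref{l111b}) at the cost of one extra ingredient, the monotonicity of Green functions, and a harmless case distinction $\operatorname{cap}(E)>0$; your version is more self-contained in that it uses only facts already quoted in the setup. Neither approach has any real advantage over the other.
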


\begin{proof}
Without loss of generality we can assume that $\operatorname*{cap}\left(
E\right)  >0$. From $E\subset K_{n}$, $n\in N$, it follows that
\begin{equation}
g_{\Omega_{n}}(z,\infty)\leq g_{\Omega_{E}}(z,\infty)\text{ \ for \ }%
z\in\mathbb{C}\text{, \ }\Omega_{n}=\Omega_{K_{n}},\label{f114b2}%
\end{equation}
(cf. \cite{Ransford}, Corollary 4.4.5), and consequently we have
\begin{equation}
\limsup_{n\rightarrow\infty\text{, }n\in N}g_{\Omega_{n}}(z,\infty)\leq
g_{\Omega_{E}}(z,\infty)\text{ \ for \ }z\in\mathbb{C.}\label{f114b3}%
\end{equation}
From (\ref{f113a1}), we know that $g_{\Omega_{E}}(z,\infty)=0$ for quasi every
$z\in\widehat{E}$ ($\widehat{E}$ denotes the polynomial-convex hull of $E$),
and from (\ref{f114a3}) and (\ref{f114b3}), we then conclude that
$g_{0,N}(z)=0$ for quasi every $z\in\widehat{E}$, which proves (\ref{f114b1}%
).\medskip
\end{proof}

The next lemma has played a key role at several places in the proof of
existence of an extremal domain in Subsection \ref{s92}. The proof of the
lemma relies heavily on Carath\'{e}odory's Kernel Convergence Theorem, which
will be stated just after the next lemma.

\begin{lemma}
\label{l114b}Let $R\subset\overline{\mathbb{C}}$ be a ring domain with
$\infty\in R$, $A_{1}$, $A_{2}\subset\mathbb{C}$ the two components of
$\overline{\mathbb{C}}\setminus R$, let further $N\subset\mathbb{N}$ be an
infinite subsequence for which the two limits (\ref{f114a1}) and
(\ref{f114a2}) exist, and for which therefore the function $g_{0,N}$ in
(\ref{f114a3}) is well-defined, and let further $0<r<\infty$ be an
appropriately chosen constant.

If for each $n\in N$ there exists a continuum $V_{n}\subset K_{n}%
\cap\{\,|z|\leq r\,\}$ that intersects the ring domain $R$, i.e., we have
\begin{equation}
V_{n}\cap A_{j}\neq\emptyset\text{ \ \ \ for \ \ \ }j=1,2,\label{f114f10}%
\end{equation}
then
\begin{equation}
V_{0}:=\overline{\mathbb{C}}\setminus\Omega\left(  \bigcap_{m\in\mathbb{N}%
}\overline{\bigcup_{n\geq m,\text{ }n\in N}V_{n}}\right) \label{f114f11}%
\end{equation}
is a continuum with $V_{0}\subset\{\,|z|\leq r\,\}$ that also intersects $R$,
i.e., we have
\begin{equation}
V_{0}\cap A_{j}\neq\emptyset\text{ \ \ \ for \ \ \ }j=1,2,\label{f114f12}%
\end{equation}
and further we have
\begin{equation}
g_{0,N}(z)=0\text{ \ \ for all \ \ }z\in V_{0}\label{f114c13}%
\end{equation}
where $g_{0,N}$ denotes the function defined in (\ref{f114a3}).\ By
$\Omega(\cdot)$ we denote the outer domain in (\ref{f114f11}).
\end{lemma}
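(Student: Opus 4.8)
The goal is to show that the set $V_0$ in \eqref{f114f11} is a continuum contained in $\{|z|\le r\}$, that it cross-sects the ring domain $R$ in the sense of \eqref{f114f12}, and that $g_{0,N}$ vanishes on it. The natural strategy is to reduce everything to Carath\'eodory's Kernel Convergence Theorem applied to the sequence of outer domains $\Omega_n:=\Omega_{K_n}$, or rather to the auxiliary domains $\Omega(V_n)=\overline{\mathbb C}\setminus\widehat{V_n}$. First I would pass to a further subsequence (again denoted by $N$) so that the sequence of compact continua $\widehat{V_n}$ (polynomial-convex hulls of the $V_n$) converges in the Hausdorff metric to some compact set; since all $V_n\subset\{|z|\le r\}$, the limit is compact and contained in $\{|z|\le r\}$. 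The set in \eqref{f114f11} is exactly this Hausdorff-type kernel: $\bigcap_m\overline{\bigcup_{n\ge m}V_n}$ is the topological limit superior of the $V_n$, and then one takes its polynomial-convex hull by passing through the outer domain $\Omega(\cdot)$. Carath\'eodory's theorem gives that the outer domains $\Omega(\widehat{V_n})=\Omega(V_n)$ converge in the kernel sense to $\Omega(V_0)$, with $\infty$ as reference point.

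Once kernel convergence is in hand, the topological conclusions follow by standard arguments. The limit of a sequence of continua, each of diameter bounded below (they all meet both $A_1$ and $A_2$, so $\operatorname{diam}(V_n)\ge\operatorname{dist}(A_1,A_2)>0$), is again a continuum — this is where one uses that the $V_n$ all cross-sect the fixed ring $R$: a limit of connected sets that each meet two fixed disjoint compacta cannot split into pieces separated from one or the other. More precisely, I would invoke Lemma~\ref{l91c} in the contrapositive: if $V_0$ failed to meet $A_1$ (say), then a Jordan curve $\gamma\subset R$ separating $A_1$ from $A_2$ could be found in $R\setminus V_0$, hence (by Hausdorff convergence) in $R\setminus V_n$ for all large $n\in N$, contradicting $V_n\cap A_1\ne\emptyset$ together with connectedness of $V_n$. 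The same lemma, applied directly to $V_0$ and $R$, then upgrades \eqref{f114f12} to the assertion that $V_0$ is a genuine cross-cut. Connectedness of $V_0$ itself comes from the fact that the topological limit superior of a sequence of continua all meeting a fixed nondegenerate continuum is connected.

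For \eqref{f114c13}, the Green-function estimate is the heart of the matter. By Lemma~\ref{l111a}(ii), $\operatorname{cap}(V_n)\ge\operatorname{diam}(V_n)/4\ge\operatorname{dist}(A_1,A_2)/4=:c_1>0$ for all $n\in N$. Since $V_n\subset K_n$ we have $g_{\Omega_n}(\cdot,\infty)\le g_{\Omega(V_n)}(\cdot,\infty)$, but this goes the wrong way; instead I would argue pointwise on $V_0$: fix $z_0\in V_0$; by definition of the limit superior there is a sequence $z_n\in V_n$, $n\in N'\subset N$ infinite, with $z_n\to z_0$. Each $V_n$ is a continuum through $z_n$ of capacity $\ge c_1$, so by monotonicity and the explicit lower bound on the Green function of a continuum (again Lemma~\ref{l111a}(ii) combined with Lemma~\ref{l113b}), the Green functions $g_{\Omega_n}(\cdot,\infty)$ are uniformly small near $V_n$ in a quantitative way that persists in the limit; combined with the fact that equality holds quasi-everywhere in \eqref{f114a3} and with lower semicontinuity of $g_{0,N}$ as (essentially) a subtracted logarithmic potential, one concludes $g_{0,N}(z_0)\le 0$, and $g_{0,N}\ge 0$ everywhere by \eqref{f114a4}, so $g_{0,N}(z_0)=0$. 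The one subtlety is that \eqref{f114a3} only gives equality quasi-everywhere, so I would need to know $g_{0,N}$ is upper semicontinuous along $V_0$ or argue via harmonic measure / the continuity properties of $g_{\Omega(V_0)}(\cdot,\infty)$ on the continuum $V_0$ (continua have no irregular points), comparing $g_{0,N}\le g_{\Omega(V_0)}(\cdot,\infty)=0$ on $V_0$ by the kernel-convergence-based domination inequality $\limsup g_{\Omega_n}\le g_{\Omega(V_0)}$.

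\textbf{Main obstacle.} The delicate point is the passage from the quasi-everywhere equality in \eqref{f114a3} to the everywhere statement \eqref{f114c13} on the set $V_0$, which may a priori have positive capacity and on which $g_{0,N}$ is only controlled q.e. I expect to resolve this by exploiting that $V_0$ is a continuum — hence regular for the Dirichlet problem — so that $g_{\Omega(V_0)}(\cdot,\infty)\equiv 0$ on $V_0$ with no exceptional set, and then showing $g_{0,N}(z)\le g_{\Omega(V_0)}(z,\infty)$ for \emph{all} $z$ via the Principle of Domination (Theorem~\ref{t112b}): both sides are, up to additive constants, logarithmic potentials, $g_{0,N}=-p(\omega_{0,N};\cdot)-\log c_0$ with $\omega_{0,N}$ of finite energy, and the inequality holds q.e.\ on $V_0\supset\operatorname{supp}$ of the relevant equilibrium measure, hence everywhere. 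The bookkeeping of which subsequences one passes to, and checking that Carath\'eodory's theorem applies to the (possibly degenerate, possibly non-Jordan) domains $\Omega(V_n)$, will be the other place demanding care.
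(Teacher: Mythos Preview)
Your overall strategy matches the paper's: Carath\'eodory kernel convergence applied to the Riemann maps $\varphi_n:\overline{\mathbb C}\setminus\overline{\mathbb D}\to\overline{\mathbb C}\setminus V_n$, followed by the Principle of Domination to upgrade the quasi-everywhere estimate to \eqref{f114c13}. Two points in your proposal are confused, however, and would not go through as written.

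First, the inequality $g_{\Omega_n}(\cdot,\infty)\le g_{\Omega(V_n)}(\cdot,\infty)$ that you dismiss as ``going the wrong way'' is exactly the direction needed and is the heart of the argument for \eqref{f114c13}. From $V_n\subset K_n$ one has $\Omega_n\subset\Omega(V_n)$ and hence this inequality; taking $\limsup$ over $n\in N$ and combining with the locally uniform convergence $g_{\Omega(V_n)}(\cdot,\infty)\to g_{\Omega(V_0)}(\cdot,\infty)$ (which comes from kernel convergence via $g_{\Omega(V_n)}(z,\infty)=\log|\varphi_n^{-1}(z)|$) gives $g_{0,N}\le g_{\Omega(V_0)}(\cdot,\infty)$ quasi everywhere. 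Your detour through a pointwise $z_n\to z_0$ argument is unnecessary and, as you yourself note, does not close.

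Second, in your application of Theorem~\ref{t112b} the roles of the measures are reversed. Writing the desired inequality as $p(\omega_{V_0};\cdot)\le p(\omega_0;\cdot)+\text{const}$, the measure that must have finite energy is $\omega_{V_0}$, not $\omega_0=\omega_{0,N}$; this holds because $\operatorname{cap}(V_0)>0$ (Lemma~\ref{l112a}), which follows from the lower bound $\operatorname{cap}(V_n)\ge\operatorname{dist}(A_1,A_2)/4$ persisting in the limit. The inequality must be checked q.e.\ on $\operatorname{supp}(\omega_{V_0})\subset V_0$, not on $\operatorname{supp}(\omega_0)$. There is no reason the weak limit $\omega_0$ should have finite energy.

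A minor further remark: the paper gets connectedness of $V_0$ for free from Carath\'eodory---$\varphi_0(\overline{\mathbb C}\setminus\overline{\mathbb D})$ is simply connected, so its complement $V_0$ is a continuum---rather than from a limsup-of-continua argument; and \eqref{f114f12} is handled by a short nested-intersection computation rather than Lemma~\ref{l91c}, though your route for that part would also work.
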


It has already been mentioned that the proof of Lemma \ref{l114b} is based on
Carath\'{e}o\-dory's Kernel Convergence Theorem from geometric function
theory, which establishes an equivalence between an analytic and a geometric
description of the convergence of a sequences of conformal mapping functions
(cf. \cite{Pommerenke75}, Chapter 1.4).

\begin{theorem}
[Carath\'{e}odory's Kernel Convergence Theorem]\label{t114a} Let
\newline$\allowbreak\left\{  \varphi_{n}\right\}  _{n\in\mathbb{N}}$ be a
sequence of univalent functions defined in $\overline{\mathbb{C}}%
\setminus\overline{\mathbb{D}}$ with $\varphi_{n}(\infty)=\infty$ and
\begin{equation}
0<m_{0}\leq\varphi_{n}^{\prime}(\infty)\leq M_{0}<\infty\text{ \ \ for each
\ }n\in\mathbb{N}.\label{f114c2}%
\end{equation}
The sequence of functions $\varphi_{n}$, $n\in\mathbb{N}$, convergence locally
uniformly in $\overline{\mathbb{C}}\setminus\overline{\mathbb{D}}$ to an
univalent function $\varphi_{0}$ if, and only if, the domains
\begin{equation}
D_{N}=\operatorname*{Ker}\left(  \left\{  \varphi_{n}(\overline{\mathbb{C}%
}\setminus\overline{\mathbb{D}})\right\}  _{n\in N}\right)  :=\Omega\left(
\bigcap_{m\in\mathbb{N}}\overline{\bigcup_{n\geq m,\text{ }n\in N}%
\overline{\mathbb{C}}\setminus\varphi_{n}(\overline{\mathbb{C}}\setminus
\overline{\mathbb{D}})}\right) \label{f114c3}%
\end{equation}
are identical for all infinite subsequences $N\subset\mathbb{N}$. In
(\ref{f114c3}), the outer domain is denoted by $\Omega(\cdot)$. The domain
$D_{N}$ associated with the sequence $N\subset\mathbb{N}$ is called kernel of
the sequence of domains $\varphi_{n}(\overline{\mathbb{C}}\setminus
\overline{\mathbb{D}})$, $n\in N$.

If the sequence $\{\varphi_{n}\}_{n\in\mathbb{N}}$ converges locally uniformly
in $\overline{\mathbb{C}}\setminus\overline{\mathbb{D}}$, then the limit
function
\begin{equation}
\varphi_{0}=\lim_{n\rightarrow\infty}\varphi_{n}\label{f114c4}%
\end{equation}
is the Riemann mapping function of $\overline{\mathbb{C}}\setminus
\overline{\mathbb{D}}$ onto the domain $D_{\mathbb{N}}\subset\overline
{\mathbb{C}}$ with $\varphi_{0}(\infty)=\infty$ and $m_{0}\leq\varphi
_{0}^{\prime}(\infty)\leq M_{0}$.
\end{theorem}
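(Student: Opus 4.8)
The final statement to be proven is Carathéodory's Kernel Convergence Theorem (Theorem \ref{t114a}), which is a classical result. Since the excerpt cites \cite{Pommerenke75}, Chapter 1.4 as the reference, the plan is to present the proof as a reduction to that standard source, but I would still want to sketch the internal logic so the argument is self-contained at the level of a sketch.

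The plan is to prove the two implications separately. First, suppose the domains $D_N$ in (\ref{f114c3}) coincide for all infinite subsequences $N$. I would use a normal families argument: by (\ref{f114c2}) the derivatives $\varphi_n'(\infty)$ are bounded away from $0$ and $\infty$, so after composing with the inversion $z \mapsto 1/z$ on both sides the functions become a locally bounded family of univalent maps fixing the origin, hence normal by Montel's theorem. Any locally uniform limit $\psi$ of a subsequence is univalent (Hurwitz, since the $\varphi_n'(\infty)$ stay bounded below) and maps $\overline{\mathbb C}\setminus\overline{\mathbb D}$ onto the kernel of the corresponding image sequence. Because all these kernels are the common domain $D_{\mathbb N}$ and the normalization $\varphi_n'(\infty)$, combined with a further normalization (say fixing the leading coefficient after the inversion), pins down the Riemann map uniquely, all subsequential limits agree; hence the whole sequence converges. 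Conversely, if $\varphi_n \to \varphi_0$ locally uniformly to a univalent $\varphi_0$, then one shows directly from the definition of kernel that every image domain sequence has kernel equal to $\varphi_0(\overline{\mathbb C}\setminus\overline{\mathbb D})$: the inclusion ``$\subset$'' follows because a compact subset of $\varphi_0(\overline{\mathbb C}\setminus\overline{\mathbb D})$ is eventually inside $\varphi_n(\overline{\mathbb C}\setminus\overline{\mathbb D})$ by local uniform convergence and the open mapping theorem, and the reverse inclusion follows because a boundary point of the image of $\varphi_0$ is approximated by boundary points of the images of $\varphi_n$. This forces independence of $N$.

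The final assertion — that the limit function $\varphi_0$ is the Riemann mapping function onto $D_{\mathbb N}$ with the stated normalization — then follows by combining the two directions: $\varphi_0$ is univalent (established above via Hurwitz), its image is the common kernel $D_{\mathbb N}$ (established in the converse direction), it fixes $\infty$ since each $\varphi_n$ does and convergence is locally uniform near $\infty$, and the bound $m_0 \le \varphi_0'(\infty) \le M_0$ passes to the limit from (\ref{f114c2}) because $\varphi_n'(\infty) \to \varphi_0'(\infty)$ by locally uniform convergence of derivatives (Weierstrass).

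The main obstacle, and the step I would spend the most care on, is the ``only if'' direction: showing that coincidence of all subsequential kernels actually forces convergence of the full sequence. The delicate point is that a priori different subsequences could converge to different univalent maps onto the same domain $D_{\mathbb N}$; one needs the rigidity statement that a univalent map of $\overline{\mathbb C}\setminus\overline{\mathbb D}$ onto a fixed domain, normalized by $\varphi(\infty)=\infty$ together with the constraint carried by $\varphi_n'(\infty)$, is unique — but since the $\varphi_n'(\infty)$ need not converge along the original sequence without first passing to a subsequence, one must argue that the common kernel forces the derivatives at infinity to converge too (otherwise two subsequential limits with different derivatives at infinity would map onto the same domain, contradicting the identity theorem for conformal maps of an annular-type domain once the inversion reduces to the disk case). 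Rather than reconstructing this entire classical argument, the intended approach in the paper is simply to invoke \cite{Pommerenke75}, Chapter 1.4, where exactly this theorem (in the form needed here, transported to the exterior of the unit disk via inversion) is proved in full; the sketch above indicates why the statement is true and records the normalizations, which is all that the later applications (e.g.\ in Lemma \ref{l114b}) require.
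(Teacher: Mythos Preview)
Your proposal is correct and in fact goes beyond what the paper does: the paper provides no proof of Theorem \ref{t114a} at all, treating it as a classical result and simply citing \cite{Pommerenke75}, Chapter 1.4, which you correctly identify as the intended approach. Your sketch of the normal-families/Hurwitz argument is a reasonable outline of the standard proof and more than the paper itself offers.
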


\begin{remark}
The restrictions (\ref{f114c2}) placed on $\varphi_{n}^{\prime}(\infty)$ make
sure that degenerated cases are excluded.
\end{remark}

\begin{proof}
[Proof of Lemma \ref{l114b}]Let $\varphi_{n}$ be the Riemann mapping function
from $\overline{\mathbb{C}}\setminus\overline{\mathbb{D}}$ onto $\overline
{\mathbb{C}}\setminus V_{n}$ for each $n\in\mathbb{N}$ with
\begin{equation}
\varphi_{n}(\infty)=\infty\text{ \ \ and \ \ }\varphi_{n}^{\prime}%
(\infty)>0.\label{f114d1}%
\end{equation}
It is immediate that
\begin{equation}
g_{\overline{\mathbb{C}}\setminus V_{n}}(z,\infty)=\log\left|  \varphi
_{n}^{-1}(z)\right|  \text{ \ \ for \ }z\in\overline{\mathbb{C}}\setminus
V_{n},\label{f114d2}%
\end{equation}
and we have $g_{\overline{\mathbb{C}}\setminus V_{n}}(z,\infty)=0$ for all
$z\in V_{n}$ since continua are regular sets. From Lemma \ref{l113b} it
follows that
\begin{equation}
\operatorname*{cap}(V_{n})=\varphi_{n}^{\prime}(\infty).\label{f114d3}%
\end{equation}
From Lemma \ref{l111a} and (\ref{f114d3}), we conclude that the assumptions
$V_{n}\subset\{$\thinspace$|z|\leq r\,\}$ and $V_{n}\cap A_{j}\neq\emptyset$
for $j=1,2$ and $n\in\mathbb{N}$ imply that
\begin{equation}
\operatorname*{dist}(A_{1},A_{2})/4\leq\operatorname*{cap}(V_{n})\leq r\text{
\ for all \ }n\in\mathbb{N.}\label{f114d4}%
\end{equation}
From (\ref{f114d3}) and (\ref{f114d4}), it then follows that the restrictions
(\ref{f114c2}) in Theorem \ref{t114a}\ are satisfied.

From (\ref{f114d4}) and the assumption $V_{n}\subset\{\,|z|\leq r\,\}$ for all
$n\in\mathbb{N}$, it further follows that there exists an infinite subsequence
$N\subset\mathbb{N}$ such that the two limits (\ref{f114a1}) and
(\ref{f114a2}) exist, and therefore also the limit function $g_{0,N}$ in
(\ref{f114a3}) exists with $K_{n}$ replaced by the sets $V_{n}$, and the outer
domain $\Omega_{n}$ by $\overline{\mathbb{C}}\setminus V_{n}$ for each $n\in
N$.

Because of $V_{n}\subset\{\,|z|\leq r\,\}$ for $n\in\mathbb{N}$, we have a
proper limit and locally uniform convergence in $\{\,|z|>r\,\}$ in
(\ref{f114a3}). With (\ref{f114d2}), this implies that the sequence
$\varphi_{n}$, $n\in N$, converges uniformly in a closed neighborhood of
infinity. From the convergence together with the property that the
$\varphi_{n}$ are mapping functions into $\overline{\mathbb{C}}\setminus
V_{n}$ and the property (\ref{f114d1}), we deduce that $\{$\thinspace
$\varphi_{n}$, $n\in N\,\}$ forms a normal family in $\overline{\mathbb{C}%
}\setminus\overline{\mathbb{D}}$, and by Montel's Theorem together with the
convergence in $\{\,|z|>r\,\}$, it then follows that
\begin{equation}
\lim_{n\rightarrow\infty,\text{ }n\in N}\varphi_{n}(z)=:\varphi_{0,N}%
(z)\label{f114d5}%
\end{equation}
holds locally uniformly for $z\in\overline{\mathbb{C}}\setminus\overline
{\mathbb{D}}$.

From Carath\'{e}odory's Kernel Convergence Theorem, we then concluded that the
limit function $\varphi_{0}$ in (\ref{f114d5}) is the Riemann mapping function
of $\overline{\mathbb{C}}\setminus\overline{\mathbb{D}}$ onto the domain
$D_{N}\subset\overline{\mathbb{C}}$ defined by (\ref{f114c3}) with the
subsequence $N$, which has been used in (\ref{f114d5}), and from
(\ref{f114c3}) and (\ref{f114d5}),\ we further know that
\begin{equation}
V_{0}:=\overline{\mathbb{C}}\setminus D_{N}\label{f114d6}%
\end{equation}
is a continuum. From (\ref{f114c3}) we learn that
\begin{equation}
V_{0}=Pc(\bigcap_{m\in\mathbb{N}}\overline{\bigcup_{n\geq m,\text{ }n\in
N}V_{n}})=\overline{\mathbb{C}}\setminus\Omega\left(  \bigcap_{m\in\mathbb{N}%
}\overline{\bigcup_{n\geq m,\text{ }n\in N}V_{n}}\right) \label{f114d7}%
\end{equation}
with $Pc(\cdot)$ denoting the polynomial-convex hull. For the two components
$A_{1}$ and $A_{2}$ of $\overline{\mathbb{C}}\setminus R$ we have
\begin{equation}
A_{j}\cap\bigcap_{m\in\mathbb{N}}\overline{\bigcup_{\substack{n\geq m, \\n\in
N }}V_{n}}=\bigcap_{m\in\mathbb{N}}\overline{(A_{j}\cap\bigcup
_{\substack{n\geq m, \\n\in N }}V_{n})}=\bigcap_{m\in\mathbb{N}}%
\overline{\bigcup_{\substack{n\geq m, \\n\in N }}\left(  A_{j}\cap
V_{n}\right)  },\label{f114d8}%
\end{equation}
$j=1,2$. Since we have assumed that $V_{n}\cap A_{j}\neq\emptyset$ for $j=1,2$
and all $n\in N$, we conclude from (\ref{f114d8}) that
\begin{equation}
A_{j}\cap\bigcap_{m\in\mathbb{N}}\overline{\bigcup_{n\geq m,\text{ }n\in
N}V_{n}}\text{ }\neq\text{ }\emptyset\text{ \ for \ }j=1,2\text{,}%
\label{f114d9}%
\end{equation}
since the intersections are nested. From (\ref{f114d9}) and (\ref{f114d7}), we
immediately get
\begin{equation}
A_{j}\cap V_{0}\neq\emptyset\text{ \ for \ }j=1,2\text{.}\label{f114d10}%
\end{equation}

Since we have assumed $V_{n}\subset K_{n}$, we have
\begin{equation}
g_{\Omega_{n}}(z,\infty)\leq g_{\overline{\mathbb{C}}\setminus V_{n}}%
(z,\infty)\text{ \ for \ }z\in\mathbb{C}\label{f114d11}%
\end{equation}
and all $n\in N$ and\ $\Omega_{n}:=\Omega_{K_{n}}$ (cf. \cite{Ransford},
Corollary 4.4.5). From the convergence (\ref{f114d5}) together with the
identities in (\ref{f114d2}), we conclude that
\begin{equation}
\lim_{n\rightarrow\infty,\text{ }n\in N}g_{\overline{\mathbb{C}}\setminus
V_{n}}(z,\infty)=g_{\overline{\mathbb{C}}\setminus V_{0}}(z,\infty
)\label{f114d12}%
\end{equation}
holds locally uniformly for $z\in\mathbb{C}$. From (\ref{f114d11}) together
with limit relation (\ref{f114c3}) and (\ref{f114d12}), it then follows that
\begin{equation}
g_{0,N}(z)\leq g_{\overline{\mathbb{C}}\setminus V_{0}}(z,\infty)\text{ \ for
quasi every \ }z\in\mathbb{C,}\label{f114d13}%
\end{equation}
where $g_{0,N}$ is the limit function in (\ref{f114d12}). Because of the first
inequality in (\ref{f114d4}), $V_{0}$ is of positive capacity, and therefore
the equilibrium measure $\omega_{V_{n}}$ of $V_{n}$ is of finite energy. With
the principle of domination in Theorem \ref{t112b}, it then follows that the
inequality in (\ref{f114d13}) holds for all $z\in\mathbb{C} $. Since
$g_{\overline{\mathbb{C}}\setminus V_{0}}(z,\infty)=0$ for all $z\in V_{0}$,
conclusion (\ref{f114c13}) of Lemma \ref{l114b} follows from (\ref{f114d13}).
The two other conclusions (\ref{f114f11}) and (\ref{f114f12}) are identical
with (\ref{f114d7}) and (\ref{f114d10}), which completes the proof of the
lemma.\bigskip
\end{proof}

\subsection{\label{s1105}Critical Trajectories of Quadratic Differentials}

\qquad Let $q$ be a function meromorphic in a domain $D\subset\mathbb{C}$. In
(\ref{f52a}), trajectories of the quadratic differential $q(z)dz^{2}$ have
been defined as smooth Jordan arcs $\gamma$ with parametrization
$z:[0,1]\longrightarrow\overline{\mathbb{C}}$ that satisfy the relation
\begin{equation}
q(z(t))\overset{\bullet}{z}(t)^{2}<0\text{ \ \ \ for \ \ }t\in
(0,1).\label{f115a}%
\end{equation}
Like in Section \ref{s52}, we use \cite{Strebel84} and \cite{Jensen75} as
general reference for quadratic differentials.\smallskip

Assertions about the global behavior of trajectories of a quadratic
differential $q(z)dz^{2}$ are difficult to obtain, but the situation is
dramatically different with respect to their local behavior; it depends only
on the local form of the function $q$, and is basically a consequence of the
degree of its poles and zeros. Further, it is not difficult to see that the
qualitative behavior of trajectories is invariant under conformal maps.

All zeros and simple poles of the function $q$ are called \textit{finite
critical points} of the quadratic differential $q(z)dz^{2}$, and trajectories
that end at zeros and poles are called \textit{critical}. In the next lemma we
assemble results about the local behavior of trajectories that have been used
at several places of our analysis, further above. These results are not
difficult to prove, and proofs can be found in \cite{Jensen75}, Chapter
8.2.\medskip

\begin{lemma}
\label{l115a}We consider a quadratic differential $q(z)dz^{2}$, and assume
that $q$ is meromorphic in a domain $D\subset\mathbb{C}$.\smallskip

(i) \ If $q$ is analytic in a neighborhood $U$ of $z_{0}\in D$ and if
$q(z)\neq0$ for all $z\in U$, then all trajectories of $q(z)dz^{2}$ are
laminar in $U$.\smallskip

(ii)\ Let $z_{0}\in D$ be a finite critical point of the quadratic
differential $q(z)dz^{2}$, i.e., at $z_{0}$ the function $q$ has the local
behavior
\begin{equation}
q(z)=q_{0}(z-z_{0})^{l}+\text{O}((z-z_{0})^{l+1})\text{ \ \ as \ \ }%
z\rightarrow z_{0}\text{, \ }q_{0}\neq0,\label{f115b}%
\end{equation}
and let further $U$ be a neighborhood of $z_{0}$\ with $q(z)\neq0,\infty$ for
all $z\in U\setminus\{z_{0}\}$, then $l+2$ trajectories of $q(z)dz^{2}$ end at
the point $z_{0}$, and they form a regular star at $z_{0}$, i.e., all angles
between neighboring trajectories are equal to $2\pi/(l+2)$. All other
(non-critical) trajectories of $q(z)dz^{2}$ are laminar in $U$.\bigskip
\end{lemma}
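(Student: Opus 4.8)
The plan is to treat the two assertions by passing to a local \emph{natural parameter} (also called a \emph{distinguished} or \emph{canonical} parameter) for the quadratic differential, in which the differential takes a normal form and the trajectories become straight line segments; the regular-star statement and the laminarity statement then become elementary. The only analytic input is the existence of a suitable local square root of $q$ together with a local integration, both of which are routine once the multiplicity of the critical point is read off from (\ref{f115b}).

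First, for assertion (i): since $q$ is analytic and nonvanishing on the neighborhood $U$ (which I may take simply connected), a single-valued branch of $\sqrt{q(z)}$ exists on $U$, and I set $\zeta = \Phi(z) := \int_{z_0}^{z}\sqrt{q(w)}\,dw$. Then $\Phi$ is a conformal map of a (possibly smaller) neighborhood of $z_0$ onto a neighborhood of $0$ with $d\zeta^2 = q(z)\,dz^2$, so in the $\zeta$-coordinate the defining relation (\ref{f115a}), namely $q(z(t))\dot z(t)^2<0$, becomes $(\dot\zeta(t))^2<0$, i.e.\ $\zeta(t)$ moves along a vertical line $\operatorname{Re}\zeta=\text{const}$. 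Hence in $U$ the trajectories are exactly the $\Phi$-preimages of vertical segments, which form a laminar family (a regular foliation by disjoint smooth arcs). This is the standard local picture near a regular point, and I would just cite \cite{Jensen75}, Chapter 8.2, or \cite{Strebel84} for the details of the estimates showing $\Phi$ is univalent near $z_0$.

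For assertion (ii): near a finite critical point $z_0$ where $q$ has the expansion (\ref{f115b}) with leading exponent $l\ge -1$ (so $l=-1$ covers the simple pole, $l\ge 0$ covers zeros), write $q(z) = (z-z_0)^l h(z)$ with $h$ analytic and $h(z_0)=q_0\neq 0$. On a small simply connected punctured neighborhood one has a branch of $(z-z_0)^{l/2}\sqrt{h(z)}$, and integrating gives $\zeta = \Phi(z) = \int_{z_0}^{z}\sqrt{q(w)}\,dw = c\,(z-z_0)^{(l+2)/2}\,(1+o(1))$ with $c\neq 0$, since $l+2\ge 1$ and the integrand is integrable at $z_0$. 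Thus $\Phi$ behaves like the power map $z\mapsto (z-z_0)^{(l+2)/2}$ near $z_0$: it wraps a neighborhood of $z_0$ conformally onto a branched cover, and as above $d\zeta^2 = q(z)\,dz^2$, so trajectories are $\Phi$-preimages of the vertical segments through $0$ in the $\zeta$-plane. A vertical line through $0$ meets a small disk in two radial half-segments, one in the upper and one in the lower half-plane; pulling these back through the $\tfrac{(l+2)}{2}$-fold branching produces $2\cdot\frac{l+2}{2}=l+2$ radial arcs emanating from $z_0$. The angle-multiplication property of the local power map converts the angle $\pi$ between the two half-lines at $0$ into equal angles $2\pi/(l+2)$ between consecutive incident trajectories, which is precisely the regular star. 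Away from $z_0$ (but inside $U$) the point is regular and assertion (i) gives laminarity there.

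The main obstacle is not conceptual but technical: one must justify that the local integral $\Phi(z)=\int_{z_0}^z\sqrt{q(w)}\,dw$ is, after the substitution $\zeta^{2/(l+2)}$ (resp.\ $\zeta$ in case (i)), a genuine conformal change of coordinate on a small enough neighborhood — i.e.\ that the $o(1)$ error above does not destroy univalence and does not perturb the count or the angular spacing of the incident arcs. This is handled by the usual Rouch\'e / inverse-function-theorem argument for the map $z\mapsto \bigl(\Phi(z)\bigr)^{2/(l+2)}$, whose derivative at $z_0$ is nonzero; I would simply invoke the treatment in \cite{Jensen75}, Chapter 8.2 (or \cite{Strebel84}), rather than reproduce it, since the lemma is stated only to collect these standard facts for use in the proofs of Theorems \ref{t41b}, \ref{t52a}, and Proposition \ref{p53a}.
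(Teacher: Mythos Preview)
Your proposal is correct and aligns with the paper's treatment: the paper does not prove Lemma~\ref{l115a} at all but simply states that ``these results are not difficult to prove, and proofs can be found in \cite{Jensen75}, Chapter 8.2.'' Your sketch via the natural (distinguished) parameter $\zeta=\int\sqrt{q(w)}\,dw$, reducing trajectories to vertical lines and reading off the $(l+2)$-fold star from the local power map $(z-z_0)^{(l+2)/2}$, is exactly the standard argument found in that reference, so you have in fact supplied more than the paper itself does.
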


\bibliographystyle{plain}

\begin{thebibliography}{10}

\bibitem{Bagby}
T.~H. Bagby.
\newblock The modulus of a plane condenser.
\newblock {\em J. Math. Mech.}, 17:315--329, 1967.

\bibitem{BakerGravesMorris}
G.~A. Baker and P.~Graves-Morris.
\newblock {\em Pad\'e Approximants}, volume~59 of {\em Encyclopedia of
  Mathematics and its Applications}.
\newblock Cambridge University Press, 1996.

\bibitem{Blerch82}
H.~Blerch.
\newblock Ueber die {B}estimmung der {T}rajektorienstruktur einer {F}amilie
  quadratischer {D}ifferentiale mit der {M}ethode eines {B}illardmodells, 1982.
\newblock Dissertation, TU-Berlin.

\bibitem{Courant50}
R.~Courant.
\newblock {\em {Dirichlet's Principle, Conformal Mapping, and Minimal
  Surfaces}}.
\newblock {Interscience Publ., New York}, 1950.

\bibitem{Goluzin}
M.~Goluzin.
\newblock {\em Geometric Theory of Functions of a Complex Variable}, volume~26
  of {\em Translations of Mathematical Monographs}.
\newblock Amer. Math. Soc., Providence, R. I., 1969.

\bibitem{GoncharRakhmanov87}
A.~A. Gonchar and E.~A. Rakhmanov.
\newblock Equilibrium distributions and the degree of rational approximation of
  analytic functions.
\newblock {\em Matem. Sbornik}, 134(176)(3):306--352, 1987.
\newblock English transl. in {\it {M}ath. {USSR} Sbornik} 62(2):305--348, 1989.

\bibitem{Groetzsch30}
H.~Gr\"{o}tzsch.
\newblock {\"{U}ber ein Variationsproblem der konformen Abbildungen}.
\newblock {\em Ber. Verh. S\"{a}chs. Akad. Wiss. Leipzig Phys.-Math. Kl.},
  82:251--263, 1930.

\bibitem{HaymanKennedy76}
W.K. Hayman and P.B. Kennedy.
\newblock {\em Subharmonic Functions {I}}, volume~9 of {\em London Math. Soc.
  Monographs}.
\newblock Academic Press, London, 1976.

\bibitem{Heikkala06}
V.~Heikkala and M.~Vuorinen.
\newblock Teichm\"{u}ller's extremal ring problem.
\newblock {\em Mathematische Zeitschrift}, 254:509--529, 2006.

\bibitem{Jensen75}
G.~Jensen.
\newblock Quadratic differentials.
\newblock In {\em Univalent Functions, Chapter 8, by C. Pommerenke}.
  Vandenhoeck und Ruprecht, 1975.

\bibitem{Kellogg67}
O.D. Kellogg.
\newblock {\em Foundations of Potential Theory}, volume~31 of {\em Grundlehren
  der Mathematischen Wissenschaftena}.
\newblock Springer-Verlag, Heidelberg, 1967.

\bibitem{Kuzmina68}
G.V. Kuzmina.
\newblock Estimates for the trans.nite diameter of a family of con- tinua and
  covering theorems for univalent functions (in {R}ussian).
\newblock {\em Trud. Mat. Inst. Steklov}, 94:47--65, 1968.
\newblock English transl., Proc. Steklov Inst. Math., 94 (1968).

\bibitem{Kuzmina82}
G.V. Kuzmina.
\newblock {\em Moduli of Families of Curves and Quadratic Differentials}.
\newblock Proceedings of the Steklov Institute of Mathematics, 1982.

\bibitem{Kuzmina97}
G.V. Kuzmina.
\newblock Methods in geometric function theory {I} \& {II}.
\newblock {\em Algebra a. Analysis}, 9, 1997.
\newblock English transl., St. Petersburg Math. J., 9 (1998), 455-507, 889-930.

\bibitem{Landkof}
N.~S. Landkof.
\newblock {\em Foundations of Modern Potential Theory}, volume 180 of {\em
  Grundlehren der Math. Wissenschaften in Einzeldarstellungen}.
\newblock Springer-Verlag, Berlin, 1972.

\bibitem{Lavrentiev30}
M.A. Lavrentiev.
\newblock {Sur un probl\'{e}me de maximum dans la repr\'{e}sentation conforme}.
\newblock {\em C. R. Acad. Paris}, 191:827--829, 1930.

\bibitem{Lavrentiev34}
M.A. Lavrentiev.
\newblock {On the theory of conformal mappings (in Russian)}.
\newblock {\em Trudy Fiz. Mat. Inst. Stekl. Otdel. Mat.}, 5:159--245, 1934.

\bibitem{Lowien73}
E.~Lowien.
\newblock Ein {E}xtremalproblem fuer den dreifach punktierten {E}inheitskreis,
  1973.
\newblock Dissertation, TU-Berlin.

\bibitem{Nuttall80}
J.~Nuttall.
\newblock Sets of minimal capacity, pad\'{e} and approximation and the bubble
  problem.
\newblock In {\em Bifurcation Phenomena in Mathematical Physics and Related
  Topics, (C. Bardos and D. Bessis, {\it eds})}, pages 185--201, Dordrecht,
  1980. Academic Press.

\bibitem{Nuttall90}
J.~Nuttall.
\newblock Pad\'e polynomial asymptotics from a singular integral equation.
\newblock {\em Constr. Approx.}, 6:157--166, 1990.

\bibitem{NuttallSingh77}
J.~Nuttall and S.~R. Singh.
\newblock Orthogonal polynomials and {P}ad\'e approximants associated with a
  system of arcs.
\newblock {\em J. Approx. Theory}, 21:1--42, 1977.

\bibitem{Pirl69}
U.~Pirl.
\newblock {\"{U}ber die geometrische Gestalt eines Extremalkontinuums aus der
  Theorie der konformen Abbildung}.
\newblock {\em Mathematische Nachrichten}, 39:297--312, 1969.

\bibitem{Polya29}
G.~P\'olya.
\newblock {Beitrag zur Verallgemeinerung des Verzerrungssatzes auf mehrfach
  zusammenh\"{a}ngenden Gebieten, III}.
\newblock {\em Sitzungsberichte Preuss. Akad. Wiss. Berlin, Phys.-Math. Kl.},
  pages 55--62, 1929.

\bibitem{Pommerenke75}
Ch. Pommerenke.
\newblock {\em Univalent Functions}, volume~25 of {\em Studia mathematica}.
\newblock Vandenhoeck und Ruprecht, G\"{o}ttingen, 1975.

\bibitem{Pommerenke92}
Ch. Pommerenke.
\newblock {\em Boundary Behaviour of Conformal Maps}, volume 299 of {\em
  Grundlehren der Mathematischen Wissenschaftena}.
\newblock Springer-Verlag, Heidelberg, 1992.

\bibitem{Ransford}
T.~Ransford.
\newblock {\em Potential Theory in the Complex Plane}, volume~28 of {\em London
  Math. Soc. Students Texts}.
\newblock Cambridge University Press, Cambridge, 1995.

\bibitem{SaTo}
E.~B. Saff and V.~Totik.
\newblock {\em Logarithmic Potentials with External Fields}, volume 316 of {\em
  Grundlehren der Math. Wissenschaften}.
\newblock Springer-Verlag, Berlin, 1997.

\bibitem{Stahl85a}
H.~Stahl.
\newblock Extremal domains associated with an analytic function {I}.
\newblock {\em Complex Variables}, 4:311--324, 1985.

\bibitem{Stahl85b}
H.~Stahl.
\newblock Extremal domains associated with an analytic function {II}.
\newblock {\em Complex Variables}, 4:325--338, 1985.

\bibitem{Stahl85c}
H.~Stahl.
\newblock Structure of extremal domains associated with an analytic function.
\newblock {\em Complex Variables}, 4:339--354, 1985.

\bibitem{Stahl86a}
H.~Stahl.
\newblock Orthogonal polynomials with a complex weight function {I}.
\newblock {\em Constr. Approx.}, 2:225--240, 1986.

\bibitem{Stahl86b}
H.~Stahl.
\newblock Orthogonal polynomials with a complex weight function {II}.
\newblock {\em Constr. Approx.}, 2:241--251, 1986.

\bibitem{Stahl87a}
H.~Stahl.
\newblock Three different approaches to a proof of convergence for pad\'e
  approximants.
\newblock In {\em Rational Approximation and its Application in Mathematics and
  Physics, J. Gilewicz {\it ed.}}, pages 176--179, Berlin, 1987. Lect. Notes
  Math. 1237, Springer-Verlag.

\bibitem{Stahl89}
H.~Stahl.
\newblock On the convergence of generalized {P}ad\'e approximants.
\newblock {\em Constr. Approx.}, 5:221--240, 1989.

\bibitem{Stahl96a}
H.~Stahl.
\newblock Convergence of rational interpolants.
\newblock {\em Bull. Belgian Math. Soc., Supplement: Numerical Analysis},
  3:11--33, 1996.

\bibitem{Stahl96}
H.~Stahl.
\newblock Diagonal {P}ad\'e approximants to hyperelliptic functions.
\newblock {\em Annals de la Facult\'{e} des Science de Toulouse, no sp\'{e}cial
  Stieltjes}, pages 121--193, 1996.

\bibitem{Stahl97a}
H.~Stahl.
\newblock Conjectures around the {B}aker-{G}ammel-{W}ills conjecture.
\newblock {\em Constructive Approximation}, 13:286--292, 1997.

\bibitem{Stahl97}
H.~Stahl.
\newblock The convergence of {P}ad\'e approximants to functions with branch
  points.
\newblock {\em J. Approx. Theory}, 91:139--204, 1997.

\bibitem{Stahl98}
H.~Stahl.
\newblock Spurious poles in {P}ad\'e approximation.
\newblock {\em J. Comput. Appl. Math.}, 99:511--527, 1998.

\bibitem{Strebel84}
K.~Strebel.
\newblock {\em Quadratic Differentials}.
\newblock Ergebnisse der Mathematik und Ihrer Grenzgebiete. Springer, 1984.

\bibitem{Teichmueller38}
O.~Teichm\"{u}ller.
\newblock {Untersuchung \"{u}ber konforme und quasikonforme Abbildungen}.
\newblock {\em Deutsche Math.}, 3:621--678, 1938.

\bibitem{Tsuji59}
M.~Tsuji.
\newblock {\em Potential Theory in Modern Function Theory}.
\newblock Maruzen, Tokyo, 1959.

\end{thebibliography}

\bigskip

\bigskip
\end{document}